\numberwithin{equation}{section}
\theoremstyle{plain} 
\newtheorem{thm}{Theorem}[section] 
\newtheorem{prop}[thm]{Proposition} 
\newtheorem{lemma}[thm]{Lemma} 
\newtheorem{cor}[thm]{Corollary}
\theoremstyle{definition} 
\newtheorem{defin}{Definition}[section] 
\newtheorem{remark}[thm]{Remark}
\newcommand{\C}{{\mathbb C}}
\newcommand{\p}{{\mathbb P}}
\newcommand{\z}{{\mathbb Z}} 
\newcommand{\pj}{{{\mathbb P}^1}}
\newcommand{\pii}{{{\mathbb P}^2}}
\newcommand{\piii}{{{\mathbb P}^3}}
\newcommand{\pv}{{{\mathbb P}^5}}  
\newcommand{\scc}{\mathscr{C}} 
\newcommand{\sce}{\mathscr{E}}
\newcommand{\scf}{\mathscr{F}} 
\newcommand{\scg}{\mathscr{G}}
\newcommand{\sco}{\mathscr{O}} 
\newcommand{\sch}{\mathscr{H}}
\newcommand{\sci}{\mathscr{I}} 
\newcommand{\sck}{\mathscr{K}}
\newcommand{\scl}{\mathscr{L}}
\newcommand{\scm}{\mathscr{M}}
\newcommand{\scn}{\mathscr{N}} 
\newcommand{\scq}{\mathscr{Q}} 
\newcommand{\sct}{\mathscr{T}}
\newcommand{\tH}{\text{H}} 
\newcommand{\h}{\text{h}}
\newcommand{\izo}{\overset{\sim}{\rightarrow}} 
\newcommand{\Izo}{\overset{\sim}{\longrightarrow}} 
\newcommand{\ra}{\rightarrow} 
\newcommand{\lra}{\longrightarrow} 
\newcommand{\xra}{\xrightarrow}  
\newcommand{\vb}{\, \vert \, } 
\newcommand{\prim}{{\, \prime}} 
\newcommand{\secund}{{\prime \prime}}
\newcommand{\Ker}{\text{Ker}\, }
\newcommand{\Cok}{\text{Coker}\, }
\newcommand{\e}{\varepsilon}
\begin{document}

\title[Vector bundles with $c_1 = 5$ on ${\mathbb P}^3$]{Globally generated 
vector bundles with $c_1 = 5$ on ${\mathbb P}^3$} 

\author[Anghel]{Cristian~Anghel} 
\address{Institute of Mathematics ``Simion Stoilow'' of the Romanian Academy, 
         P.O. Box 1-764,   
\newline          
         RO--014700, Bucharest, Romania} 
\email{Cristian.Anghel@imar.ro}   

\author[Coand\u{a}]{Iustin~Coand\u{a}} 
\address{Institute of Mathematics ``Simion Stoilow'' of the Romanian Academy, 
         P.O. Box 1-764, 
\newline 
         RO--014700, Bucharest, Romania} 
\email{Iustin.Coanda@imar.ro} 

\author[Manolache]{Nicolae~Manolache} 
\address{Institute of Mathematics ``Simion Stoilow'' of the Romanian Academy, 
         P.O. Box 1-764, 
\newline 
         RO--014700, Bucharest, Romania} 
\email{Nicolae.Manolache@imar.ro} 

\subjclass[2010]{Primary: 14J60; Secondary: 14H50, 14N25} 

\keywords{projective space, vector bundle, globally generated sheaf} 

\begin{abstract} 
We provide a classification of globally generated vector bundles with 
$c_1 = 5$ on the projective 3-space. The classification is complete (except 
for one case) but not as detailed as the corresponding classification in the 
case $c_1 = 4$ from our paper [Memoirs A.M.S., Vol.~253, No.~1209 (2018), also 
arXiv:1305.3464]. We determine, at least, the pairs of integers $(a , b)$ for 
which there exist globally generated vector bundles on the projective 3-space 
with Chern classes $c_1 = 5$, $c_2 = a$, $c_3 = b$ (except for the case 
$(12 , 0)$ and the complementary case $(13 , 5)$ which remain undecided), we 
describe the Horrocks monads of these vector bundles and we organise them into 
several families with irreducible bases. 
We use some of the results from our paper [arXiv:1502.05553] (for which we 
give, however, a direct selfcontained proof in one of the appendices of the 
present paper) to reduce the problem to the classification of stable rank 3 
vector bundles $F$ with $c_1(F) = -1$, $2 \leq c_2(F) \leq 4$, having the 
property that $F(2)$ is globally generated. We use, then, the spectrum of such 
a bundle to get the necessary cohomological information. Some of the 
constructions appearing in the present paper are used (and reproduced, for the 
reader's convenience) in another paper of ours [arXiv:1711.06060] in which we 
provide an alternative to Chang and Ran's proof of the unirationality of 
the moduli spaces of curves of degree at most 13 from 
[Invent. Math. 76 (1984), 41--54].      
\end{abstract}

\maketitle 
\tableofcontents

\section*{Introduction} 

In this paper, which is a sequel to our work \cite{acm1}--\cite{acm3}, we 
classify the globally generated vector bundles with $c_1 = 5$ on $\piii$ 
(the analogous but much simpler classification on $\pii$ can be found in 
\cite[Sect.~3]{acm1}).   
If $E$ is such a bundle and if $c_2$, $c_3$ are the other two Chern classes of 
$E$ then, as a consequence of the theorem of Riemann-Roch (recalled in 
Remark~\ref{R:chern}), $c_3 \equiv c_1c_2 \pmod{2}$ hence, in our case, $c_3 
\equiv c_2 \pmod{2}$. We shall work, most of the time, under some 
\emph{additional assumptions} (that appear, already, in the paper of Sierra 
and Ugaglia \cite{su}). Firstly, we can assume that $\tH^i(E^\vee) = 0$, 
$i= 0,\, 1$, (where $E^\vee$ is the dual of $E$). Indeed, if $E$ is a globally 
generated vector bundle on $\piii$ then $\tH^0(E^\vee) = 0$ if and only if 
$E$ has no trivial direct summand and, in this case, considering the 
universal extension $0 \ra s\sco_\piii \ra \widetilde{E} \ra E \ra 0$ with 
$s := \h^1(E^\vee)$, $\widetilde{E}$ satisfies $\tH^i(\widetilde{E}^\vee) = 0$, 
$i = 0,\, 1$. 

Secondly, we can assume that $c_2 \leq c_1^2/2$ (hence, in our case, that 
$c_2 \leq 12$). Indeed, if $E$ is a globally generated vector bundle on 
$\piii$ then the dual $P(E)$ of the kernel of the evaluation morphism 
$\tH^0(E) \otimes_k \sco_\piii \ra E$ has ``complementary'' Chern classes 
$c_1(P(E)) = c_1$, $c_2(P(E)) = c_1^2 - c_2$, 
$c_3(P(E)) = c_3 + c_1(c_1^2 - 2c_2)$ and if, moreover, $\tH^i(E^\vee) = 0$, 
$i = 0,\, 1$, then $P(P(E)) \simeq E$.  

There is, also, a third additional assumption, specific to the case $c_1 = 5$.
If $c_1 = 5$, $c_2 \leq 12$, $\tH^i(E^\vee) = 0$, $i= 0,\, 1$, and 
$\tH^0(E(-2)) \neq 0$ then, by \cite[Prop.~2.4]{acm1}, \cite[Prop.~2.10]{acm1} 
and \cite[Thm.~0.1]{acm2}, either $E \simeq \sco_\piii(a) \oplus E_1$, where 
$a$ is an integer with $2 \leq a \leq 5$ and $E_1$ is a globally generated 
vector bundle with $c_1(E_1) = 5 - a$ and $\tH^i(E_1^\vee) = 0$, $i = 0,\, 1$, 
or $E \simeq G(3)$, where $G$ is a stable rank 2 vector bundle with 
$c_1(G) = -1$ and $c_2(G) = 2$ (we provide, for the reader's convenience, a 
different proof of this fact in Appendix~\ref{A:h0e(-2)neq0}). Consequently, 
we shall also assume that $\tH^0(E(-2)) = 0$.

The idea that makes this classification possible is the following one$\, :$ 
asume that $E$ has rank $r \geq 3$. Then $r - 3$ general global sections of 
$E$ define an exact sequence$\, :$ 
\[
0 \lra (r - 3)\sco_\piii \lra E \lra E^\prim \lra 0\, , 
\]
where $E^\prim$ is a rank 3 vector bundle, with the same Chern classes as $E$. 
It is quite easy to reduce the classification of globally generated vector 
bundles $E$ of rank $r \geq 3$, with $c_1 = 5$ and satisfying the above 
additional assumptions to the case where $E^\prim$ is \emph{stable} (see 
Lemma~\ref{L:eprimunstable}). The advantage of this reduction is that one can 
use, now, the cohomological information furnished by the \emph{spectrum} of 
a stable rank 3 vector bundle (see Okonek and Spindler \cite{oks2}, 
\cite{oks3}, and Coand\u{a} \cite{coa}$\, ;$ their results are recalled, with 
complete, partially new proofs, in Appendix~\ref{A:spectrum}). 
For technical reasons one works, actually,    
with the ``normalized'' vector bundle $F := E^\prim(-2)$ which has $c_1(F) = 
-1$. If $c_2 \leq 12$ then $c_2(F) \leq 4$. Since, for a stable rank 3 
vector bundle $F$ with $c_1(F) = -1$, one has $c_2(F) \geq 1$ and if 
$c_2(F) = 1$ then $F \simeq \Omega_\piii(1)$, our classification problem 
reduces to the following one$\, :$ determine the stable rank 3 vector bundles 
$F$ on $\piii$ with $c_1(F) = -1$ and $2 \leq c_2(F) \leq 4$, such that $F(2)$ 
is globally generated. Note that the stable rank 3 vector bundles $F$ with 
$c_1(F) = -1$ and $c_2(F) = 2$ were studied by Okonek and Spindler \cite{oks} 
but there is no similar study for $c_2(F) \geq 3$. 

The result we obtain is the following$\, :$ 

\begin{thm}\label{T:main} 
Let $E$ be an indecomposable globally generated vector bundle on $\piii$, 
of rank at least $2$, with Chern classes $c_1 = 5$, $c_2 \leq 12$ and $c_3$, 
and such that ${\fam0 H}^i(E^\vee) = 0$, $i = 0,\, 1$. Then one of the 
following holds$\, :$ 
\begin{enumerate} 
\item[(i)] $c_2 = 8$, $c_3 = 0$ and $E \simeq G(3)$ where $G$ is a rank $2$ 
vector bundle with $c_1(G) = -1$, $c_2(G) = 2$, and ${\fam0 H}^0(G) = 0$$\, ;$ 
\item[(ii)] $c_2 = 9$, $c_3 = 5$ and $E \simeq \Omega_\piii(3)$$\, ;$ 
\item[(iii)] $c_2 = 10$, $c_3 = 6$ and $E$ can be realized as a non-trivial 
extension$\, :$ 
\[
0 \lra \sco_\piii(1) \lra E \lra G(2) \lra 0\, , 
\] 
where $G$ is a $2$-instanton$\, ;$ 
\item[(iv)] $c_2 = 10$, $c_3 = 4$ and $E(-2)$ is the kernel of an epimorphism 
$\sco_\piii(1) \oplus 3\sco_\piii \ra \sco_\piii(2)$$\, ;$ 
\item[(v)] $c_2 = 10$, $c_3 = 0$ and $E \simeq G(3)$ where $G$  is a general 
rank $2$ vector bundle with $c_1(G) = -1$, $c_2(G) = 4$, and 
${\fam0 H}^0(G(1)) = 0$$\, ;$ 
\item[(vi)] $c_2 = 11$, $c_3 = 9$ and $E(-2)$ is the cohomology of a 
(not necessarily minimal) monad of the form$\, :$ 
\[
0 \lra \sco_\piii(-1) \lra 5\sco_\piii \oplus 2\sco_\piii(-1) \lra 
2\sco_\piii(1) \lra 0\, ;
\]
\item[(vii)] $c_2 = 11$, $c_3 = 7$ and $E$ can be realized as a non-trivial 
extension$\, :$ 
\[
0 \lra G(2) \lra E \lra \sco_\piii(1) \lra 0\, , 
\]
where $G$ is a $3$-instanton with ${\fam0 h}^0(G(1)) \leq 1$$\, ;$ 
\item[(viii)] $c_2 = 11$, $c_3 = 7$ and $E(-2)$ is the cohomology of a general 
monad of the form$\, :$ 
\[
0 \lra 2\sco_\piii(-1) \lra 8\sco_\piii \lra 3\sco_\piii(1) \lra 0\, ; 
\]
\item[(ix)] $c_2 = 11$, $c_3 = 5$ and $E(-2)$ is the kernel of an arbitrary  
epimorphism ${\fam0 T}_\piii(-1) \oplus \sco_\piii \ra \sco_\piii(2)$$\, ;$ 
\item[(x)] $c_2 = 12$, $c_3 = 14$ and $E$ has a resolution of the form$\, :$ 
\[
0 \lra \sco_\piii(-1) \xra{\left(\begin{smallmatrix} u\\ v 
\end{smallmatrix}\right)} 
E_1 \oplus 4\sco_\piii \lra E \lra 0\, , 
\]
with $v$ defined by $x_0, \ldots , x_3$ and with $E_1$ defined by an exact 
sequence$\, :$ 
\[
0 \lra \sco_\piii(-1) \oplus \sco_\piii \lra E_1^\vee \lra \sci_X \lra 0\, , 
\]
where $X$ is either the union of two disjoint lines or its degeneration, a 
double line on a nonsingular quadric surface$\, ;$ 
\item[(xi)] $c_2 = 12$, $c_3 = 14$ and $E(-2)$ is the kernel of a general 
epimorphism $2\sco_\piii \oplus 6\sco_\piii(-1) \ra \sco_\piii(1) \oplus 
\sco_\piii$. In the typical situation, $E(-1)$ is the kernel of the evaluation 
morphism $6\sco_\piii \ra \sco_H(2)$ of $\sco_H(2)$, where $H$ is a plane in 
$\piii$$\, ;$ 
\item[(xii)] $c_2 = 12$, $c_3 = 12$ and $E$ has a resolution$\, :$ 
\[
0 \lra \sco_\piii(-1) \xra{\left(\begin{smallmatrix} u\\ v 
\end{smallmatrix}\right)} 
G(2) \oplus 4\sco_\piii \lra E \lra 0\, , 
\] 
where $G$ is a $3$-instanton with ${\fam0 h}^0(G(1)) \leq 1$ and $v$ is 
defined be $x_0, \ldots , x_3$$\, ;$ 
\item[(xiii)] $c_2 = 12$, $c_3 = 12$ and $E(-2)$ is the cohomology of a 
(not necessarily minimal) monad of the form$\, :$ 
\[
0 \lra \sco_\piii(-1) \lra 4\sco_\piii \oplus 4\sco_\piii(-1) \lra 
2\sco_\piii(1) \lra 0\, ; 
\] 
\item[(xiv)] $c_2 = 12$, $c_3 = 10$ and $E(-2)$ is the cohomology of a general 
(not necessarily minimal) monad of the form$\, :$ 
\[
0 \lra 2\sco_\piii(-1) \lra 7\sco_\piii \oplus 2\sco_\piii(-1) \lra 
3\sco_\piii(1) \lra 0\, ; 
\] 
\item[(xv)] $c_2 = 12$, $c_3 = 8$ and $E$ can be realized as a non-trivial 
extension$\, :$ 
\[
0 \lra G(2) \lra E \lra \sco_\piii(1) \lra 0\, , 
\]
where $G$ is a $4$-instanton with ${\fam0 h}^0(G(1)) \leq 1$$\, ;$
\item[(xvi)] $c_2 = 12$, $c_3 = 8$ and $E(-2)$ is the cohomology of general 
monad of the form$\, :$ 
\[
0 \lra 3\sco_\piii(-1) \lra 10\sco_\piii \lra 4\sco_\piii(1) \lra 0\, ; 
\]
\item[(xvii)] $c_2 = 12$, $c_3 = 6$ and $E(-2)$ is the cohomology of a general 
monad of the form$\, :$ 
\[
0 \lra 2\sco_\piii(-1) \lra 7\sco_\piii \lra \sco_\piii(2) \oplus \sco_\piii(1) 
\lra 0\, . 
\]
\end{enumerate} 
\end{thm}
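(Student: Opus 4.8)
The plan is to reduce the classification of globally generated vector bundles $E$ with $c_1 = 5$, $c_2 \leq 12$, $\tH^i(E^\vee) = 0$ ($i=0,1$), and (by the third additional assumption) $\tH^0(E(-2)) = 0$, to the study of the normalized rank $3$ bundle $F := E^\prime(-2)$, where $E^\prime$ arises from the sequence $0 \to (r-3)\sco_\piii \to E \to E^\prime \to 0$ cut out by $r-3$ general sections. Using Lemma~\ref{L:eprimunstable}, I would first dispose of the case where $E^\prime$ is unstable, reducing to the situation where $F$ is a \emph{stable} rank $3$ bundle with $c_1(F) = -1$, $2 \leq c_2(F) \leq 4$, and $F(2)$ globally generated. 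The backbone of the argument is then the \emph{spectrum} $\{k_1, \ldots, k_{c_2(F)}\}$ of $F$ (Appendix~\ref{A:spectrum}), which controls the intermediate cohomology $\tH^1(F(j))$ and $\tH^2(F(j))$.

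\emph{Second}, I would organize the proof by the value of $c_2(F) \in \{2,3,4\}$, which corresponds to $c_2(E) \in \{10,11,12\}$ once we add the $\sco_\piii(2)$-twist (the cases $c_2 \leq 9$ being handled by the earlier structural results quoted in the introduction, yielding (i) and (ii), while (iii)--(v) with $c_2 = 10$ come from $c_2(F) = 2$). For each fixed $c_2(F)$ I would enumerate the finitely many admissible spectra subject to the stability constraints (the spectrum of a stable bundle is ``connected'' in the sense proved in the appendix, and its values are bounded by stability), then for each spectrum read off $\tH^\bullet(F(j))$ for all relevant twists. The global generation of $F(2)$ imposes that $F(2)$ be $0$-regular or has enough sections, which pins down $\h^0(F(2))$ and forces $\tH^1(F(1)) = 0$; combined with the spectrum this typically determines the full Hilbert function and hence, via the Beilinson spectral sequence or a direct Horrocks-type argument, the minimal monad of $F$.

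\emph{Third}, for each surviving numerical case I would exhibit the Horrocks monad $0 \to A \to B \to C \to 0$ (with $A$, $C$ sums of line bundles and $B$ built from $\sco_\piii(j)$ and possibly $\Omega_\piii$ or $\sct_\piii$), verify that the cohomology bundle is indeed globally generated after twisting by $\sco_\piii(2)$, and compute $c_3$ to match the asserted value. The extension-type descriptions (iii), (vii), (x), (xii), (xv) would be obtained by identifying a distinguished section of $F(2)$ or a subbundle $\sco_\piii(1) \hookrightarrow E$ forced by the cohomology, and the instanton descriptions follow by recognizing the relevant rank $2$ bundle $G$ with its standard monad. Throughout, computing $c_3$ from the monad and matching it to the parity constraint $c_3 \equiv c_2 \pmod 2$ serves as a consistency check that no case is spurious.

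\emph{The hard part} will be the case $c_2(F) = 4$ (i.e.\ $c_2(E) = 12$), which produces the largest number of distinct spectra and hence the long list (x)--(xvii): here several admissible spectra must be separated from one another, the non-minimal monads must be distinguished from the minimal ones, and one must verify globally generatedness for each family separately rather than appeal to a uniform regularity bound. A secondary obstacle is confirming that each numerical monad actually \emph{admits} a globally generated cohomology bundle (existence, not just the cohomological necessary conditions), which requires constructing explicit maps and checking genericity; this is precisely why cases such as (viii), (xiv), (xvi), (xvii) are phrased for a \emph{general} monad. Finally, I expect the excluded pair $(12,0)$ (and its complementary $(13,5)$) to resist this analysis because the corresponding spectrum does not rigidly determine the monad, leaving the existence question undecided.
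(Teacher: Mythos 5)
Your plan follows essentially the paper's own route: eliminate the case of unstable $E^\prim$ via Lemma~\ref{L:eprimunstable}, run the spectrum of the stable normalized bundle $F = E^\prim(-2)$ through the cases $c_2(F) = 2,\, 3,\, 4$, extract a Horrocks monad in each case, and close each case with explicit constructions proving that globally generated cohomology bundles exist --- exactly the structure of Prop.~\ref{P:eprimstablec2=10} through Prop.~\ref{P:h2e(-3)=0h1e(-3)neq0c2=12}, with $c_2 = 12$ indeed by far the hardest part.

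The concrete gap is the rank $2$ branch. Your reduction takes $r-3$ general sections of $E$, so it presupposes $r \geq 3$; when $r = 2$ there is no rank $3$ quotient $E^\prim$ and no spectrum of a stable rank $3$ bundle to exploit. Since a rank $2$ bundle has $c_3 = 0$, this branch occurs exactly for even $c_2$, and it is where two entries of the theorem actually live: item (v) is the rank $2$ case $E = G(3)$ with $c_1(G) = -1$, $c_2(G) = 4$, $\tH^0(G(1)) = 0$ (treated in the paper as Case 5 of Prop.~\ref{P:eprimstablec2=10}, using the structure theory of such bundles from Appendix~\ref{A:(2;-1,4,0)} and the explicit Constructions 5.0 and 5.1 for the global generation of $G(3)$), and the undecided pair $(12,0)$ is the rank $2$ case with $c_2(G) = 6$. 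So your assertion that items (iii)--(v) all come from $c_2(F) = 2$ fails for (v): no stable rank $3$ bundle with $c_2(F) = 2$ produces it. For the same reason your diagnosis of why $(12,0)$ resists is misdirected: there is no rank $3$ spectrum there at all, and in fact the (anti-selfdual) monad shape \emph{is} determined; what remains open is whether any such monad has globally generated $G(3)$. A smaller misattribution: item (i) does not arise from ``$c_2 \leq 9$ structural results'' but from the excluded case $\tH^0(E(-2)) \neq 0$ (Appendix~\ref{A:h0e(-2)neq0}); since your plan imposes $\tH^0(E(-2)) = 0$ from the outset, you must explicitly reinstate that excluded case as a separate step, or case (i) never appears in your output list.
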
 

The theorem follows from Prop.~\ref{P:c2geq9}, 
Prop.~\ref{P:eprimunstablec2=10}, Prop.~\ref{P:eprimstablec2=10}, 
Prop.~\ref{P:eprimunstablec2=11}, Prop.~\ref{P:h1e(-3)=0c2=11}, 
Prop.~\ref{P:h2e(-3)neq0c2=11}, Prop.~\ref{P:h2e(-3)=0h1e(-3)neq0c2=11}, 
Prop.~\ref{P:h0e(-1)geq2c2=12}, Prop.~\ref{P:eprimunstablec2=12}, 
Prop.~\ref{P:h1e(-3)=0c2=12}, Prop.~\ref{P:h2e(-3)neq0c2=12} and 
Prop.~\ref{P:h2e(-3)=0h1e(-3)neq0c2=12} from the next sections. We list, 
actually, in those propositions all the globally generated vector bundles $E$ 
on $\piii$ with $c_1 = 5$, $c_2 \leq 12$, $\tH^i(E^\vee) = 0$, $i = 0,\, 1$, 
and $\tH^0(E(-2)) = 0$, indecomposable or not. We have to say that 
our proof of the above theorem is incomplete in the sense that we were 
unable to show that there exists no globally generated rank 2 vector bundle 
on $\piii$ with $c_1 = 5$, $c_2 = 12$ (however, as Ph.~Ellia communicated us, 
this is one of the results of a forthcoming paper by Ellia, Gruson and Skiti 
on stable rank 2 vector bundles on $\piii$ with $c_1 = -1$ and "minimal" 
spectrum $(0, \ldots , 0, -1 , \ldots , -1)$).  
The above mentioned propositions also show that all the indecomposable 
globally generated vector bundles $E$ on $\piii$ with $c_1 = 5$, $c_2 \leq 
12$, satisfy $\tH^i(E) = 0$ for $i \geq 1$ (a condition that ensures the 
openess of global generation in flat families). 

Our classification in the case $c_1 = 5$ is less precise than the 
classification in the case $c_1 = 4$ from \cite{acm1} since we were not able 
to give an explict meaning of the term ``general'', which is used several times 
in the statement of the above theorem. We were, also, not able to show that 
the monads occuring in item (xvi) of the theorem can be put toghether into a 
family with irreducible base (although this is probably true). 

It is hard to comment on such a lenghty proof. 
Its most difficult cases are those for which $c_2 = 12$, $c_3 \leq 12$, and 
the rank 3 vector bundle $E^\prim$ associated to $E$ is stable (or $E$ has 
rank 2). They occupy more than half of the proof (see 
Prop.~\ref{P:h2e(-3)=0h1e(-3)neq0c2=12}). In these cases, using the 
cohomological information furnished by the spectrum of the ``normalized'' 
rank 3 vector bundle $F := E^\prim(-2)$, we are able to describe the Horrocks 
monad of $E$, under the assumption that $E$ is globally generated. Then we 
have to decide whether there exist monads of the previously determined shape 
whose cohomology sheaves are globally generated. 

For example, if $c_2 = 12$ and $c_3 = 6$, we show, firstly, that if $E$ is 
globally generated then $E(-2)$ is the cohomology sheaf of a monad of the 
form$\, :$ 
\[
0 \lra 2\sco_\piii(-1) \lra 7\sco_\piii \lra \sco_\piii(2) \oplus \sco_\piii(1) 
\lra 0\, ,  
\]     
and we prove, next, that there really exist globally generated vector bundles 
$E$ such that $E(-2)$ is the cohomology sheaf of such a monad (see 
Construction 6.3 and Construction 6.4 in the proof of 
Prop.~\ref{P:h2e(-3)=0h1e(-3)neq0c2=12}). 

As another example, if $c_2 = 12$ and $c_3 = 4$ we prove, assuming $E$ 
globally generated, that $E(-2)$ is the cohomology sheaf of a monad of the 
form$\, :$ 
\[
0 \lra \sco_\piii(-1) \lra 2\sco_\piii(1) \oplus 4\sco_\piii \lra 
2\sco_\piii(2) \lra 0\, . 
\]
Then we show that if $F$ is the cohomology sheaf of such a monad then $F(2)$ 
cannot be globally generated (see Case 8 in the proof of 
Prop.~\ref{P:h2e(-3)=0h1e(-3)neq0c2=12}). In particular, we show that if 
$F$ is the kernel of an epimorphism $2\sco_\piii(1) \oplus \text{T}_\piii(-1) 
\ra 2\sco_\piii(2)$ then $F(2)$ is not globally generated. We do not have a 
general method for proving such statements. We use specific arguments in each 
case. This explains, in part, the length of the paper.  

In the ``missing case'' $c_2 = 12$, $c_3 = 0$ (and $\text{rk}\, E = 2$) one 
can show that, in case $E$ is globally generated, $E(-3)$ is the cohomology 
sheaf of an anti-selfdual monad of the form$\, :$ 
\[
0 \lra 3\sco_\piii(-2) \overset{\beta}{\lra} 4\sco_\piii \oplus 
4\sco_\piii(-1) \overset{\alpha}{\lra} 3\sco_\piii(1) \lra 0\, , 
\] 
with the property that the degeneracy locus of the component $\alpha_1 \colon 
4\sco_\piii \ra 3\sco_\piii(1)$ of $\alpha$ has codimension 2 in $\piii$. We 
were not able to show that if $G$ is the cohomology sheaf of such a monad 
then $G(3)$ cannot be globally generated. Notice that $\chi(G(3)) = 3$ hence 
for a general monad of the above shape one should have $\tH^1(G(3)) = 0$ while 
if $G(3)$ is globally generated then necessarily $\h^0(G(3)) \geq 5$, i.e., 
$\h^1(G(3)) \geq 2$ which shows that the monads producing globally generated 
vector bundles are ``special'' among the above monads (if they exist at all). 

Our initial motivation for writing this paper was (and still is) to get a 
starting point for the classification of globally generated vector bundles 
with $c_1 = 5$ on $\p^n$, $n \geq 4$, which is likely to be accomplished in a 
reasonable number of pages. We found, meanwhile, that some of the 
constructions appearing in this paper can be used to get an alternative to 
Chang and Ran's arguments \cite{cr} showing the unirationality of the moduli 
spaces of curves of genus $g \leq 13$. This is the subject of our recent paper 
\cite{acm4}.  

As for the organization of the paper, the main sections, which are devoted to 
the analysis of the various cases of our classification problem, are followed 
by a number of appendices containig complementary or auxiliary results. 

Appendix~\ref{A:h0e(-2)neq0} contains a short proof of the 
case $\tH^0(E(-2)) \neq 0$ of the classification of globally generated vector 
bundles with $c_1 = 5$ on $\piii$. We included this proof here to keep the 
paper selfcontained, that is, to avoid any reference to our lenghty 
paper \cite{acm2}. 

In Appendix~\ref{A:spectrum} we give complete, partially new proofs of the 
properties of the spectrum of a stable rank 3 vector bundle on $\piii$, 
following the approach of Hartshorne \cite[Sect.~7]{ha} and 
\cite[Prop.~5.1]{ha2}. 

The Appendices~\ref{A:(3;-1,2,-)}--\ref{A:instantons} contain auxiliary 
results about some special classes of vector bundles on $\piii$ related to 
our classification. 

In Appendix~\ref{A:serre} we include a proof of a general version of 
Serre's method of extensions (which is used in several constructions of 
globally generated vector bundles throughout the paper) while 
Appendix~\ref{A:miscellaneous} gathers a number of miscellaneous auxiliary 
results.       

\vskip2mm 

\noindent
{\bf Notation.}\quad (i) We denote by $S = k[x_0, \ldots ,x_n]$ the projective 
coordinate ring of the projective $n$-space $\p^n$ over an algebraically 
closed field $k$ of characteristic 0. 

(ii) If $\scf$ is a coherent sheaf on $\p^n$ and $i \in \z$, we denote by 
$\tH^i_\ast(\scf)$ the graded $S$-module $\bigoplus_{l \in \z}\tH^i(\scf(l))$. 

(iii) If $X$ is a closed subscheme of $\p^n$, we denote by 
$\sci_X \subset \sco_\p$ its ideal sheaf. If $Y$ is a closed subscheme of 
$X$, we denote by $\sci_{Y,X} \subset \sco_X$ the ideal sheaf defining $Y$ as 
a closed subscheme of $X$. In other words, $\sci_{Y,X} = \sci_Y/\sci_X$. 

(iv) If $\scf$ is a coherent sheaf on $\p^n$ and $X\subset \p^n$ a closed 
subscheme, we put $\scf_X := \scf \otimes_{\sco_\p} \sco_X$ and $\scf \vb X := 
i^\ast\scf$, where $i : X \ra \p^n$ is the inclusion.              


\section{Preliminaries}\label{S:prelim} 

Let $E$ be a globally generated vector bundle of rank $r$ on $\piii$, with 
$c_1 = 5$. 
Then $r-1$ \emph{general} global sections of $E$ define an exact sequence$\, :$ 
\begin{equation}\label{E:oeiy} 
0 \lra (r-1)\sco_\piii \lra E \lra \sci_Y(5) \lra 0 
\end{equation}
with $Y$ a nonsingular (but not necessarily connected) curve, of degree $c_2$ 
and with $\chi(\sco_Y) = -\frac{1}{2}(c_3 + c_2)$ (see Remark~\ref{R:chern} 
below). Dualizing \eqref{E:oeiy} one gets an exact sequence$\, :$ 
\begin{equation}\label{E:oeiydual} 
0 \lra \sco_\piii(-5) \lra E^\vee \lra (r-1)\sco_\piii \overset{\delta}{\lra} 
\omega_Y(-1) \lra 0\, . 
\end{equation}   

\begin{lemma}\label{L:yconnected}  
Let $E$ be a globally generated vector bundle on $\piii$ with $c_1 = 5$. If 
$c_2 \leq 13$ then the nonsingular curve $Y$ occuring in \eqref{E:oeiy} is 
connected. 
\end{lemma}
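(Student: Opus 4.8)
The plan is to translate connectedness of $Y$ into a single cohomological vanishing, and then to feed in the positivity forced by the global generation of $E$. Since $Y$ is nonempty, the structure sequence $0 \lra \sci_Y \lra \sco_\piii \lra \sco_Y \lra 0$, together with $\tH^0(\sci_Y) = 0$ and $\tH^1(\sco_\piii) = 0$, gives $\h^0(\sco_Y) = 1 + \h^1(\sci_Y)$, so that $Y$ is connected if and only if $\tH^1(\sci_Y) = 0$. On the other hand, twisting \eqref{E:oeiy} by $\sco_\piii(-5)$ and using $\tH^1(\sco_\piii(-5)) = \tH^2(\sco_\piii(-5)) = 0$ yields an isomorphism $\tH^1(E(-5)) \izo \tH^1(\sci_Y)$. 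Thus the number of connected components of $Y$ equals $1 + \h^1(E(-5))$, which is an invariant of $E$ alone (independent of the chosen sections); the task is to show that this invariant is $1$ when $c_2 \leq 13$.

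Next I would extract the positivity hidden in the dual sequence \eqref{E:oeiydual}. The map $\delta \colon (r-1)\sco_\piii \lra \omega_Y(-1)$ is surjective, so $\omega_Y(-1)$ is a quotient of a trivial bundle, hence globally generated. Restricting to a connected component $Y_i$ (a smooth \emph{irreducible} curve of degree $d_i$ and genus $g_i$), the line bundle $\omega_{Y_i}(-1)$ is globally generated, whence $\deg \omega_{Y_i}(-1) = 2g_i - 2 - d_i \geq 0$, with equality forcing $\omega_{Y_i}(-1) \simeq \sco_{Y_i}$. Comparing $g_i \geq \tfrac{1}{2}d_i + 1$ with the maximal genus of a space curve of degree $d_i$ (Castelnuovo's bound: a smooth irreducible curve of degree $\leq 3$ in $\piii$ has genus $\leq 1 < \tfrac12 d_i + 1$) shows that \emph{every} component must satisfy $d_i \geq 4$, and that for $d_i = 4$ the only possibility is a plane quartic, for which $\omega_{Y_i}(-1) \simeq \sco_{Y_i}$. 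Combined with $\sum_i d_i = c_2 \leq 13$ this forces at most three components and reduces the disconnected case to a short list of numerical types, the smallest being a disjoint union of two plane quartics.

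The remaining, and hardest, step is to eliminate these borderline unions, which already satisfy all the inequalities above (two disjoint plane quartics have $c_2 = 8 \leq 13$): the bare numerics no longer decide the question. Here I would use the global generation of $E$ itself, not merely that of $\sci_Y(5)$. Concretely, for a union of smooth plane curves lying in distinct planes $H_1, H_2, \dots$ the equations through $Y$ split according to the planes, so that $\tH^0(\sci_Y(5))$ acquires a rigid product structure; I would track this splitting and show it is incompatible with $\delta$ being everywhere surjective in \eqref{E:oeiydual}, i.e. with $E$ being a globally generated bundle of the prescribed Chern classes (equivalently, that such a $Y$ would force $\h^1(E(-5)) \neq 0$ to coexist with the vanishing produced by a section cut out on a degree $\leq c_2$ subscheme, a contradiction). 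I expect this case-by-case reconciliation of the Castelnuovo/spectrum numerics with the global generation of $E$ to be the main obstacle; by contrast, the cohomological reduction of the first paragraph and the positivity of $\omega_Y(-1)$ of the second are routine.
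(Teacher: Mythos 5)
Your first two paragraphs are sound and essentially reproduce the paper's opening step: global generation of $\omega_Y(-1)$ (from \eqref{E:oeiydual}) plus the Castelnuovo bound forces every connected component of $Y$ to have degree $\geq 4$, so there are at most three components, and the low-degree components must be plane curves or the special non-plane curves $(d,g) = (6,4)$, $(7,6)$. (The reduction in your first paragraph to $\tH^1(E(-5)) = 0$ is correct but you never use it.) The problem is your third paragraph: the elimination of the disconnected configurations, which is the entire content of the lemma beyond these routine numerics, is not proved. You write ``I would track this splitting and show it is incompatible\dots'' and ``I expect this case-by-case reconciliation \dots to be the main obstacle''; that is a statement of intent, not an argument, and it is precisely the step where the work lies. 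As it stands the proposal does not establish the lemma.

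For comparison, the paper closes this gap with two concrete base-locus arguments that use only the global generation of $\sci_Y(5)$ --- your instinct that one needs the global generation of $E$ ``beyond'' that of $\sci_Y(5)$ is misplaced. First, a plane component is impossible: if $C \subset H$ is a plane component (necessarily of degree $d \geq 4$) and $C^\prim$ is any other component (degree $\geq 4$), then a line $L \subset H$ meeting $C^\prim \cap H$ in a scheme of length $\geq 2$ meets $Y$ in length $\geq d + 2 \geq 6$, so $L$ lies in the base locus of $\tH^0(\sci_Y(5))$, a contradiction. Second, for the surviving case $Y = C \cup C^\prim$ with both components non-plane of types $(6,4)$ and $(6,4)$ or $(7,6)$, both components are arithmetically Cohen--Macaulay and lie on unique quadrics $f$, $f^\prim$; since $I(Y) = I(C)I(C^\prim)$ for disjoint ACM curves, one gets $I(Y)_5 = fI(C^\prim)_3 + f^\prim I(C)_3$, whose common zero locus contains the curve $\{f = f^\prim = 0\}$, again contradicting global generation. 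Your ``rigid product structure'' remark is the right germ of this second argument (e.g.\ for two plane quartics in planes $H$, $H^\prim$ every quintic in $I(Y)_5$ vanishes on the line $H \cap H^\prim$), but to run it one must first prove the components are ACM and identify $I(Y)$ with the product ideal --- exactly what the paper's first step sets up and your proposal leaves open.
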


\begin{proof}
We list, firstly, the nonsigular connected curves $C \subset \piii$, of degree 
$d \leq 7$, such that $\omega_C(-1)$ is globally generated. The last condition 
implies that $d \geq 4$ and that $2g-2-d = \text{deg}\, \omega_C(-1) \geq 0$. 
The plane curves of degree $4 \leq d \leq 7$ obviously occur in our list. 
If $C$ is not a plane curve then the Castelnuovo bound (see 
\cite[IV,~Thm.~6.4]{hag}) implies that $g \leq 1$ for $d = 4$, 
$g \leq 2$ for $d = 5$, $g \leq 4$ for $d = 6$, and $g \leq 6$ for $d = 7$. 
Recalling the relation $2g - 2 \geq d$, one sees that the only possible cases 
are $d = 6$, $g = 4$ and $d = 7$, $g \in \{5,\, 6\}$. The case $d = 7$, 
$g = 5$ cannot occur because, in that case, $\omega_C(-1)$ would be a globally 
generated line bundle of degree 1 on $C$ and this would contradict the fact 
that $C$ has positive genus. In the remaining two cases, $g$ takes the 
largest value allowed by the Castelnuovo bound, hence $C$ is contained in a 
quadric surface. If $d = 6$, $g = 4$ then $C$ is a complete intersection of 
type $(2,3)$. If $d = 7$, $g = 6$ then either $C$ is contained in a quadric 
cone or it is a divisor of type $(3,4)$ on a nonsingular quadric surface. 
In both cases, $C$ is directly linked to a line by a complete intersection of 
type $(2,4)$. In particular, $C$ is arithmetically Cohen-Macaulay.        

We show, next, that none of the connected components of $Y$ is a plane curve. 
\emph{Indeed}, if $C$ were such a component then, as we saw above, it must 
have degree $d \geq 4$. Then, any line $L$ contained in the plane $H$ that 
contains $C$ and such that $\text{length}(L \cap (Y \setminus C)) \geq 2$ 
would be a secant to Y, of order at least $d + 2 \geq 6$. Since 
$\text{deg}(Y \setminus C) \geq 4$, such lines $L$ certainly would exist 
and this would \emph{contradict} the fact that $\sci_Y(5)$ is globally 
generated.

Assume, now, that $Y$ is not connected. In this case one must have 
$Y = C \cup C^\prim$ with $C$, $C^\prim$ nonsingular connected curves, $C$ of 
degree 6 and genus 4, and $C^\prim$ either of degree 6 and genus 4 or of degree 
7 and genus 6. In both cases, $C$ and $C^\prim$ are arithmetically 
Cohen-Macaulay and $I(C)_2 = kf$, $I(C^\prim )_2 = kf^\prim$, with $f,\, f^\prim$ 
quadratic forms in four indeterminates. Then $I(Y) = I(C)I(C^\prim)$ (see, for 
example, \cite[Lemma~B.1]{acm2}) hence $I(Y)_5 = fI(C^\prim)_3 + 
f^\prim I(C)_3$. One deduces that the subscheme of $\piii$ defined by the forms 
in $I(Y)_5 = \tH^0(\sci_Y(5))$ contains the complete intersection 
$\{f = f^\prim = 0\}$ which \emph{contradicts} the fact that $\sci_Y(5)$ is 
globally generated. It thus remains that $Y$ is connected. 
\end{proof}

We want to show, now, that most part of the classification of globally 
generated vector bundles $E$ on $\piii$ with $c_1 = 5$ and 
such that $\tH^i(E^\vee) = 0$, $i = 0,\, 1$, and $\tH^0(E(-2)) = 0$ reduces 
to the classification of globally generated \emph{stable} rank 3 vector bundles 
with the same Chern classes. \emph{Indeed}, if such a bundle $E$ has rank 
$r \geq 3$ then $r-3$ general global sections of $E$ define an exact 
sequence$\, :$ 
\begin{equation}\label{E:oeeprim} 
0 \lra (r-3)\sco_\piii \lra E \lra E^\prim \lra 0 
\end{equation}  
where $E^\prim$ is a globally generated rank 3 vector bundle with the same 
Chern classes as $E$. Consider the \emph{normalized} rank 3 vector bundle 
$F := E^\prim(-2)$. It has Chern classes$\, :$  
\begin{equation}\label{E:chernf}
c_1(F) = -1,\   c_2(F) = c_2 - 8,\   c_3(F) = c_3 - 2c_2 + 12\, . 
\end{equation} 
Since the condition $\tH^i(E^\vee) = 0$, $i = 0,\, 1$, is equivalent, by Serre 
duality, to the condition $\tH^i(E(-4)) = 0$, $i = 2,\, 3$, one deduces, from 
the exact sequence \eqref{E:oeeprim} that$\, :$ 
\begin{equation}\label{E:r} 
r = 3 + \h^2(E^\prim(-4)) = 3 + \h^2(F(-2))\, . 
\end{equation}
Notice that $E^\prim$ is stable if and only if $F$ is stable which is 
equivalent to $\tH^0(F) = 0$ and $\tH^0(F^\vee(-1)) = 0$. Since, from the exact 
sequence \eqref{E:oeeprim}, one has $\tH^0(E^\prim(-2)) = 0$ it follows 
that $E^\prim$ is stable if and only if $\tH^0(E^{\prim \vee}(1)) = 0$. 

\begin{lemma}\label{L:eprimunstable} 
Let $E$ be a globally generated vector bundle of rank $r \geq 3$ on $\piii$, 
with $c_1 = 5$ and such that ${\fam0 H}^i(E^\vee) = 0$, $i = 0,\, 1$, and 
${\fam0 H}^0(E(-2)) = 0$. Let $E^\prim$ be the rank $3$ vector bundle 
associated to $E$ in the exact sequence \eqref{E:oeeprim}. If $E^\prim$ is not 
stable then one of the following holds$\, :$ 
\begin{enumerate} 
\item[(i)] $c_3 = c_2 - 4$ and $E^\prim$ can be realized as an 
extension$\, :$ 
\[
0 \lra F^\prim(2) \lra E^\prim \lra \sco_\piii(1) \lra 0
\]
where $F^\prim$ is a stable rank $2$ vector bundle with  
$c_1(F^\prim) = 0$, $c_2(F^\prim) = c_2 - 8$$\, ;$ 
\item[(ii)] $c_3 = c_2$ and $E^\prim$ can be realized as an extension$\, :$ 
\[
0 \lra F^\prim(2) \lra E^\prim \lra \sci_L(1) \lra 0
\]
where $F^\prim$ is a stable rank $2$ vector bundle with  
$c_1(F^\prim) = 0$, $c_2(F^\prim) = c_2 - 9$ and $L$ is a line. 
\end{enumerate} 
\end{lemma}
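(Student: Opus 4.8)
The plan is to exploit the criterion recorded just before the statement: since $E^\prim$ is not stable, $\tH^0(E^{\prim\vee}(1)) \neq 0$, so there is a nonzero morphism $\f \colon E^\prim \lra \sco_\piii(1)$. First I would collect two vanishing facts. Dualizing \eqref{E:oeeprim} gives an inclusion $\tH^0(E^{\prim\vee}) \hookrightarrow \tH^0(E^\vee) = 0$, and tensoring \eqref{E:oeeprim} by $\sco_\piii(-2)$ gives $\tH^0(E^\prim(-2)) \simeq \tH^0(E(-2)) = 0$. Next I would analyze the image of $\f$: being a quotient of the globally generated bundle $E^\prim$, it is a globally generated rank $1$ subsheaf $\scl \subseteq \sco_\piii(1)$, whose reflexive hull is a line bundle $\sco_\piii(a)$ embedding in $\sco_\piii(1)$, so $a \leq 1$. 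The inclusion $\scl \hookrightarrow \sco_\piii(a)$ forces $\tH^0(\scl) \subseteq \tH^0(\sco_\piii(a))$; since $\scl$ is nonzero and globally generated this rules out $a < 0$, while $a = 0$ would give $\scl \simeq \sco_\piii$ and hence a surjection $E^\prim \ra \sco_\piii$, contradicting $\tH^0(E^{\prim\vee}) = 0$. Thus $a = 1$ and $\scl = \sci_W(1)$ for a closed subscheme $W \subset \piii$ of codimension $\geq 2$.

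I would then pin down $W$. Global generation of $\sci_W(1)$ means $W$ is cut out scheme-theoretically by the linear forms in $\tH^0(\sci_W(1))$, so $W$ is a reduced linear subspace; having codimension $\geq 2$, it is either empty, a single point, or a line. The point case is the one to eliminate, and here I would use parity. The locus where $\f$ fails to be surjective is the zero scheme of the corresponding section of the rank $3$ bundle $E^{\prim\vee}(1)$, and a Chern class computation gives
\[
c_3(E^{\prim\vee}(1)) = c_2 - c_3 - 4\, .
\]
Because $c_3 \equiv c_1 c_2 \equiv c_2 \pmod 2$, this integer is even; but if $W$ were a single reduced point the (expected-codimension) zero scheme would have length $1$, which is odd. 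So $W$ cannot be a point, and only $W = \emptyset$ (case (i)) or $W = L$, a line (case (ii)), survive.

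To finish, in both cases the quotient $\sci_W(1)$ (namely $\sco_\piii(1)$ or $\sci_L(1)$) has projective dimension $\leq 1$, so in the exact sequence $0 \lra K \lra E^\prim \lra \sci_W(1) \lra 0$ the kernel $K$ is locally free of rank $2$ with $c_1(K) = 4$; setting $F^\prim := K(-2)$ gives a rank $2$ vector bundle with $c_1(F^\prim) = 0$. Stability of $F^\prim$ is immediate, since $\tH^0(F^\prim) \hookrightarrow \tH^0(E^\prim(-2)) = 0$ forbids any destabilizing sub-line-bundle. Finally I would read off the Chern classes from $c(E^\prim) = c(F^\prim(2))\, c(\sci_W(1))$: with $c(\sco_\piii(1)) = 1 + h$ one gets $c_2(F^\prim) = c_2 - 8$ and $c_3 = c_2 - 4$, while with $c(\sci_L(1)) = 1 + h + h^2 + h^3$ one gets $c_2(F^\prim) = c_2 - 9$ and $c_3 = c_2$, matching (i) and (ii) respectively.

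The main obstacle is precisely the elimination of the point case, equivalently the proof that $K$ is a genuine vector bundle rather than a properly reflexive sheaf. This is delicate because a general section of $E^{\prim\vee}(1)$ degenerates in the expected codimension $3$, so one \emph{a priori} expects $W$ to be zero-dimensional. The resolution rests on two inputs that are easy to miss: global generation forces $W$ to be reduced and linear, hence of length at most $1$ if finite, and the parity $c_3 \equiv c_2 \pmod 2$ makes $c_3(E^{\prim\vee}(1))$ even, leaving no room for an isolated reduced point. Everything else -- the reflexive-hull step, local freeness of $K$ via projective dimension, and the Chern bookkeeping -- is routine.
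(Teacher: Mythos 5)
Your proof is correct and takes essentially the same route as the paper's: from $\tH^0(E^{\prim \vee}(1)) \neq 0$ you get a nonzero morphism $E^\prim \ra \sco_\piii(1)$ whose image is $\sci_Z(1)$ with $Z$ of codimension $\geq 2$ (using $\tH^0(E^{\prim \vee}) = 0$), global generation forces $Z$ to be empty, a simple point, or a line, and the parity of $c_3(E^{\prim \vee}(1)) = c_2 - c_3 - 4$ excludes the point --- exactly the paper's argument. Your extra details (reflexive hull of the image, local freeness of the kernel, explicit Chern bookkeeping, stability of $F^\prim$ via $\tH^0(E^\prim(-2)) = 0$) only make explicit what the paper leaves to the reader.
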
 

\begin{proof} 
As we noticed before the statement of the lemma, $E^\prim$ is not stable if and 
only if $\tH^0(E^{\prim \vee}(1)) \neq 0$. In this case, a non-zero section $s$ 
of $\tH^0(E^{\prim \vee}(1))$ defines a non-zero morphism $\phi : E^\prim \ra 
\sco_\piii(1)$. Since $\tH^0(E^{\prim \vee}) = 0$ (dualize the exact sequence 
\eqref{E:oeeprim}), the image of $\phi$ is of the form $\sci_Z(1)$, where $Z$ 
(= the zero scheme of $s$) is a closed subscheme of $\piii$, of codimension 
$\geq 2$. $E^\prim$ globally generated implies $\sci_Z(1)$ globally generated, 
hence $Z$ must be the empty set, a simple point or a line. But 
$c_3(E^{\prim \vee}(1)) = -c_3 + c_2 -4 \equiv 0 \pmod{2}$ (because $c_3 \equiv 
c_2 \pmod{2}$). One deduces that $Z$ cannot be a simple point. The lemma 
follows immediately.  
\end{proof} 

\begin{remark}\label{R:eprimunstable} 
It follows immediately from formula \eqref{E:r} that$\, :$  

(i) If $E^\prim$ is as in Lemma~\ref{L:eprimunstable}(i) then 
$r = 3 + \h^2(F^\prim(-2))$$\, ;$ 

(ii) If $E^\prim$ is as in Lemma~\ref{L:eprimunstable}(ii) then 
$r = 5 + \h^2(F^\prim(-2))$. 
\end{remark}

\begin{remark}\label{R:eprimunstable2}
(i) Assume that $E^\prim$ is as in Lemma~\ref{L:eprimunstable}(i). Since 
$E^\prim$ is globally generated, the map $\tH^0(E^\prim) \ra \tH^0(\sco_\piii(1))$ 
must be surjective. Applying the Snake Lemma to the diagram$\, :$ 
\[
\begin{CD}
0 @>>> \Omega_\piii(1) @>{u}>> 4\sco_\piii @>>> \sco_\piii(1) 
@>>> 0\\
@. @V{\phi}VV @VVV @\vert\\ 
0 @>>> F^\prim (2) @>>> E^\prim @>>> \sco_\piii(1) @>>> 0  
\end{CD} 
\]
one gets an exact sequence$\, :$ 
\[
0 \lra \Omega_\piii(1) \xra{\left(\begin{smallmatrix} \phi\\ u 
\end{smallmatrix}\right)}  
F^\prim (2) \oplus 4\sco_\piii \lra E^\prim \lra 0\, . 
\]
The condition $E^\prim$ globally generated is equivalent to the fact that 
the morphism$\, :$ 
\[
(\text{ev}\, ,\, \phi) \colon (\tH^0(F^\prim (2)) \otimes_k \sco_\piii) \oplus 
\Omega_\piii(1) \lra F^\prim (2) 
\] 
is an epimorphism. In particular, $F^\prim(3)$ must be globally generated. 

Now, let $\xi \in \tH^1(F^\prim(1))$ be an element generating the image of 
$\tH^1(\phi(-1)) : \tH^1(\Omega_\piii) \ra \tH^1(F^\prim (1))$. Modulo the 
isomorphism $\text{Ext}^1(\sco_\piii(1), F^\prim (2)) \simeq \tH^1(F^\prim (1))$, 
$\xi$ defines the extension from Lemma~\ref{L:eprimunstable}(i). Moreover, 
$h\xi = 0$ in $\tH^1(F^\prim (2))$, $\forall \, h \in \tH^0(\sco_\piii(1))$. 
Finally, if $\xi$ generates $\tH^1(F^\prim (1))$ and $\tH^2(F^\prim ) = 0$ then 
$E^\prim$ is 0-regular. 

Notice that if $\tH^2(F^\prim(-2)) = 0$, i.e., if $F^\prim$ is a mathematical 
instanton bundle, then, by Remark~\ref{R:eprimunstable}(i), $E$ has rank 3 
hence $E = E^\prim$. 

\vskip2mm 

(ii) Assume that $E^\prim$ is as in Lemma~\ref{L:eprimunstable}(ii). Since 
$E^\prim$ is globally generated, the map $\tH^0(E^\prim) \ra \tH^0(\sci_L(1))$ 
must be surjective. Applying the Snake Lemma to the diagram$\, :$ 
\[
\begin{CD}
0 @>>> \sco_\piii(-1) @>>> 2\sco_\piii @>>> \sci_L(1) @>>> 0\\
@. @VVV @VVV @\vert\\
0 @>>> F^\prim(2) @>>> E^\prim @>>> \sci_L(1) @>>> 0
\end{CD}
\]  
one deduces an exact sequence$\, :$ 
\[
0 \lra \sco_\piii(-1) \lra F^\prim(2) \oplus 2\sco_\piii \lra E^\prim \lra 0\, . 
\]
Assume, now, that $\tH^2(F^\prim(-2)) = 0$, i.e., that $F^\prim$ is a 
mathematical instanton bundle. In this case, by 
Remark~\ref{R:eprimunstable}(ii), $E$ has rank 5. 
It follows that 
$\text{Ext}^1(F^\prim(2), \sco_\piii) \simeq \tH^1(F^{\prim \vee}(-2)) 
\simeq \tH^2(F^\prim(-2))^\vee = 0$. One gets a commutative diagram$\, :$ 
\[
\begin{CD}
0 @>>> \sco_\piii(-1) @>>> F^\prim(2) \oplus 2\sco_\piii @>>> E^\prim @>>> 0\\
@. @VVV @VVV @\vert\\
0 @>>> 2\sco_\piii @>>> E @>>> E^\prim @>>> 0
\end{CD}
\]
from which one deduces an exact sequence$\, :$ 
\[
0 \lra \sco_\piii(-1) \xra{\left(\begin{smallmatrix} u\\ v 
\end{smallmatrix}\right)} 
F^\prim(2) \oplus 4\sco_\piii \lra E \lra 0\, . 
\]
Since $\tH^i(E^\vee) = 0$, $i = 0,\, 1$, it follows that $\tH^0(v^\vee)$ is an 
isomorphism. If, moreover, the multiplication map 
$\tH^0(F^\prim(2)) \otimes \tH^0(\sco_\piii(1)) \ra \tH^0(F^\prim(3))$ is 
surjective one can assume, up to an automorphism of 
$F^\prim(2) \oplus 4\sco_\piii$, that $u = 0$. Consequently, under the above 
additional assumptions, one has $E \simeq F^\prim(2) \oplus \text{T}_\piii(-1)$. 
\end{remark} 

\begin{prop}\label{P:c2geq9} 
Let $E$ be a globally generated vector bundle on $\piii$ with $c_1 = 5$ and 
such that ${\fam0 H}^i(E^\vee) = 0$, $i = 0,\, 1$, and ${\fam0 H}^0(E(-2)) = 
0$. Then $c_2 \geq 9$ and if $c_2 = 9$ then $c_3 = 5$ and one of the following 
holds$\, :$ 
\begin{enumerate}
\item[(i)] $E \simeq \Omega_\piii(3)$$\, ;$ 
\item[(ii)] $E \simeq \sco_\piii(1) \oplus N(2)$, where $N$ is a 
nullcorrelation bundle. 
\end{enumerate}
\end{prop}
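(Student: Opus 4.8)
The plan is to split according to the rank of $E$ and, for rank at least $3$, to run the reduction to a rank $3$ bundle set up in \eqref{E:oeeprim}--\eqref{E:r}, invoking the known lower bounds for the second Chern class of stable bundles.

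First I would treat $\rk E = 2$. Then $G := E(-3)$ is a rank $2$ bundle with $c_1(G) = -1$, and $\tH^0(G) = \tH^0(E(-3)) = 0$ (a section of $E(-3)$ times a linear form would give a nonzero section of $E(-2)$), so $G$ is stable. Since $c_3 = 0$ and $c_3 \equiv c_2 \pmod 2$, the integer $c_2$ is even, hence $c_2(G) = c_2 - 6$ is a positive even number. If $c_2(G) = 2$, i.e.\ $c_2 = 8$, then $E \simeq G(3)$ with $G$ stable of Chern classes $(-1,2)$; by the analysis of the case $\tH^0(E(-2)) \neq 0$ recalled in the introduction (and carried out in Appendix~\ref{A:h0e(-2)neq0}), such a globally generated bundle has $\tH^0(E(-2)) = \tH^0(G(1)) \neq 0$ (a section of $G(1)$ whose zero locus is a double line), contrary to our hypothesis. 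Hence $c_2(G) \geq 4$ and $c_2 \geq 10$; in particular rank $2$ never produces $c_2 = 9$.

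Next assume $\rk E = r \geq 3$ and pass to the rank $3$ bundle $E^\prim$ of \eqref{E:oeeprim} and its normalization $F := E^\prim(-2)$, so that $c_2(F) = c_2 - 8$ by \eqref{E:chernf}. I would distinguish two cases. If $E^\prim$ is stable then $F$ is a stable rank $3$ bundle with $c_1(F) = -1$, whence $c_2(F) \geq 1$ and $c_2 \geq 9$. If $E^\prim$ is not stable, Lemma~\ref{L:eprimunstable} yields a stable rank $2$ bundle $F^\prim$ with $c_1(F^\prim) = 0$ and with $c_2(F^\prim) = c_2 - 8$ in case~(i), resp.\ $c_2(F^\prim) = c_2 - 9$ in case~(ii); since a stable rank $2$ bundle with $c_1 = 0$ has $c_2 \geq 1$, we again get $c_2 \geq 9$ (indeed $c_2 \geq 10$ in case~(ii)). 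This proves the bound $c_2 \geq 9$.

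It remains to analyse $c_2 = 9$; by the above $E$ has rank $\geq 3$ and $c_2(F) = 1$. If $E^\prim$ is stable then $F$ is stable with $c_1(F) = -1$, $c_2(F) = 1$, hence $F \simeq \Omega_\piii(1)$ and $E^\prim \simeq \Omega_\piii(3)$; from \eqref{E:r} and $\h^2(\Omega_\piii(-1)) = 0$ (read off the twisted Euler sequence) one gets $r = 3$, so $E \simeq \Omega_\piii(3)$, and \eqref{E:chernf} with $c_3(\Omega_\piii(1)) = -1$ gives $c_3 = 5$. This is case~(i). If $E^\prim$ is not stable, case~(ii) of Lemma~\ref{L:eprimunstable} is impossible, since it would require a stable rank $2$ bundle with $c_1 = 0$, $c_2 = 0$; so we are in case~(i) of that lemma, with $c_3 = c_2 - 4 = 5$ and $c_2(F^\prim) = 1$, i.e.\ $F^\prim = N$ is a nullcorrelation bundle. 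As $N$ is a $1$-instanton, $\h^2(N(-2)) = 0$, so $E = E^\prim$ by Remark~\ref{R:eprimunstable2}(i); and since $\text{Ext}^1(\sco_\piii(1), N(2)) \simeq \tH^1(N(1)) = 0$, the extension of Lemma~\ref{L:eprimunstable}(i) splits, giving $E \simeq \sco_\piii(1) \oplus N(2)$. This is case~(ii). The two vanishings $\tH^1(N(1)) = 0$ and $\h^2(N(-2)) = 0$ for the nullcorrelation bundle (see Appendix~\ref{A:instantons}) are exactly what pin down the rank and the splitting of $E$ here.

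The main obstacle, I expect, is the borderline rank $2$ value $c_2 = 8$: excluding it rests on the nontrivial fact, proper to the case $\tH^0(E(-2)) \neq 0$, that a globally generated $G(3)$ with $G$ stable of type $(-1,2)$ necessarily acquires a section of $G(1)$, so that such a bundle does not satisfy our standing hypothesis $\tH^0(E(-2)) = 0$. Everything else is the routine stable/unstable dichotomy combined with the known minimal values $c_2 \geq 1$ for stable bundles and the identification of the extremal bundles $\Omega_\piii(1)$ and $N$.
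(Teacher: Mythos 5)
Your proof is correct and follows essentially the same route as the paper's: in rank $2$, stability plus parity plus the fact that every stable rank $2$ bundle with $c_1=-1$, $c_2=2$ has $\h^0(G(1))\neq 0$; in rank $\geq 3$, the stable/unstable dichotomy of Lemma~\ref{L:eprimunstable}, Schneider's bound $c_2(F)\geq 1$, formula \eqref{E:r} to pin the rank, and $\tH^1(N(1))=0$ to split the extension. The only slip is bibliographic: the exclusion of $c_2(G)=2$ rests on the result of Hartshorne--Sols \cite{hs} and Manolache \cite{ma} (valid for all stable rank $2$ bundles with these Chern classes, independently of global generation), not on the classification of Appendix~\ref{A:h0e(-2)neq0}, whose statement alone gives only the reverse implication.
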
 

\begin{proof}
Assume, firstly, that $E$ has rank 2. In this case, $E = G(3)$ where $G$ is a 
rank 2 vector bundle with $c_1(G) = -1$ and $\tH^0(G(1)) = 0$. In particular, 
$G$ is stable (i.e., $\tH^0(G) = 0$). It follows that $c_2(G) \geq 2$ (use 
\cite[Cor.~3.3]{ha} and the fact that $c_2(G) \equiv 0 \pmod{2}$). But, as 
shown by Hartshorne and Sols \cite{hs} and by Manolache \cite{ma}, if 
$c_2(G) = 2$ then $\tH^0(G(1)) \neq 0$. It remains that $c_2(G) \geq 4$ hence 
$c_2 = c_2(G) + 3c_1(G) + 3^2 \geq 10$. 

Assume, now, that $E$ has rank $\geq 3$. Let $E^\prim$ be the rank 3 vector 
bundle associated to $E$ in the exact sequence \eqref{E:oeeprim} and $F = 
E^\prim(-2)$. According to Lemma~\ref{L:eprimunstable}, one has to consider 
three cases$\, :$ 

\vskip2mm 

\noindent
{\bf Case 1.}\quad $E^\prim$ \emph{as in Lemma}~\ref{L:eprimunstable}(i). 

\vskip2mm 

\noindent
In this case, $c_2 = c_2(F^\prim) + 8$. Since $F^\prim$ is stable it follows 
that $c_2(F^\prim) \geq 1$ hence $c_2 \geq 9$. Moreover, if $c_2 = 9$, i.e., 
if $c_2(F^\prim) = 1$, then $F^\prim$ is isomorphic to a nullcorrelation bundle 
$N$. Since $\tH^1(N(1)) = 0$ it follows that $E^\prim \simeq \sco_\piii(1) 
\oplus N(2)$. Since $\tH^1(N(-2)) = 0$, formula \eqref{E:r} implies that 
$E$ has rank 3, hence $E = E^\prim$. 

\vskip2mm 

\noindent
{\bf Case 2.}\quad $E^\prim$ \emph{as in Lemma}~\ref{L:eprimunstable}(ii). 

\vskip2mm 

\noindent
In this case, $c_2 = c_2(F^\prim) + 9 \geq 10$. 

\vskip2mm 

\noindent
{\bf Case 3.}\quad $E^\prim$ \emph{stable}. 

\vskip2mm

\noindent
In this case, $F$ is stable with $c_1(F) = -1$ hence $c_2(F) \geq 1$ (see 
Schneider \cite{sch}) hence $c_2 = c_2(F) + 8 \geq 9$. Moreover, if $c_2 = 9$, 
i.e., if $c_2(F) = 1$, then $F \simeq \Omega_\piii(1)$. Since 
$\tH^2(\Omega_\piii(-1)) = 0$, formula \eqref{E:r} shows that $E$ has rank 3, 
hence $E = E^\prim \simeq \Omega_\piii(3)$.       
\end{proof} 

\begin{remark}\label{R:spectrumf} 
Let us recall, from Okonek and Spindler \cite{oks2}, \cite{oks3}, and from 
\cite{coa}, the definition and main properties of the spectrum of a stable 
rank 3 vector bundle. Let $F$ be a stable rank 3 vector bundle on $\piii$, 
with $c_1(F) = -1$, $c_2(F) = c$. The \emph{spectrum} of $F$ is a sequence 
$k_F = (k_i)_{1 \leq i \leq c}$ of integers $k_1 \geq k_2 \geq \cdots \geq k_c$ 
with the following properties$\, :$ 
\begin{enumerate} 
\item[(i)] $\h^1(F(l)) = \h^0(\bigoplus_{i=1}^c\sco_\pj(k_i + l + 1))$ for 
$l \leq -1$$\, ;$ 
\item[(ii)] $\h^2(F(l)) = \h^1(\bigoplus_{i=1}^c\sco_\pj(k_i + l + 1))$ for 
$l \geq -2$$\, ;$ 
\item[(iii)] $-2\sum k_i = c_3(F) + c\  (= c_3(F(1)))$$\, ;$ 
\item[(iv)] If $k \geq 0$ occurs in the spectrum then $0,\, 1, \ldots ,\, k$ 
occur too$\, ;$ 
\item[(v)] If $k \leq -1$ occurs in the spectrum then $-1,\, -2, \ldots ,\, k$ 
occur too$\, ;$ 
\item[(vi)] If $0$ does not occur in the spectrum then $-1$ occurs at least  
twice$\, ;$ 
\item[(vii)] If $-1 \geq k_{i-1} > k_i > k_{i+1}$ for some $i$ with $2 \leq i 
\leq c-1$ then $k_{i+1} > k_{i+2} > \cdots > k_c$ and $F$ has an \emph{unstable 
plane} $H$ of order $-k_c$, that is, $\tH^0(F_H^\vee(k_c)) \neq 0$ and 
$\tH^0(F_H^\vee(k_c-1)) = 0$.  
\end{enumerate}   

Notice that if $F = E^\prim (-2)$ , where $E^\prime$ is the rank 3 vector bundle 
associated to a globally generated vector bundle $E$ with $c_1 = 5$ in the 
exact sequence \eqref{E:oeeprim}, then relation (iii) above becomes$\, :$ 
\[
-2{\textstyle \sum} k_i = c_3 - c_2 + 4\, . 
\]
We give, for the reader's convenience, complete (partially new) arguments 
for the above properties of the spectrum in Appendix~\ref{A:spectrum}. 
Another reason for including that appendix in the paper is that we use the 
main technical point of Hartshorne's approach to the spectrum in the proof of 
Lemma~\ref{L:h2e(-3)=0} below.    
\end{remark}

\begin{lemma}\label{L:spectrumf} 
Let $F$ be a stable rank $3$ vector bundle on $\piii$ with $c_1(F) = -1$, 
$c_2(F) = c$ and let $k_F = (k_i)_{1 \leq i \leq c}$ be its spectrum. Assume 
that $2 \leq c \leq 4$ and that $F(2)$ is globally generated. Then 
$1 \geq k_1 \geq \cdots \geq k_c \geq -2$. 
\end{lemma}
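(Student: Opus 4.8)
The plan is to translate both inequalities into vanishing statements about $\tH^1(F(-3))$ and $\tH^2(F(0))$ via the spectrum, and then establish each vanishing. Reading property (i) at $l=-3$ gives
\[
\h^1(F(-3)) \;=\; \sum_{i=1}^{c}\h^0(\sco_\pj(k_i-2)) \;=\; \sum_{i=1}^{c}\max(k_i-1,\,0),
\]
which is zero precisely when $k_1\le 1$; reading property (ii) at $l=0$ gives
\[
\h^2(F(0)) \;=\; \sum_{i=1}^{c}\h^1(\sco_\pj(k_i+1)) \;=\; \sum_{k_i\le -3}(-k_i-2),
\]
which is zero precisely when $k_c\ge -2$. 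So it suffices to prove $\tH^1(F(-3))=0$ and $\tH^2(F(0))=0$, and throughout I will use that global generation of $F(2)$ passes to every line and every plane.

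For the upper bound I would apply the machinery of the preliminary section not to $F$ but to $E':=F(2)$, which is a globally generated rank $3$ bundle with $c_1(E')=5$ and $c_2(E')=c+8\le 12$. The exact sequence \eqref{E:oeiy} for $E'$ (using $r-1=2$ general sections) reads
\[
0 \lra 2\sco_\piii \lra E' \lra \sci_Y(5) \lra 0,
\]
with $Y$ nonsingular of degree $c_2(E')\le 13$, hence connected by Lemma~\ref{L:yconnected}. Twisting by $-5$ and using $\h^1(\sco_\piii(-5))=\h^2(\sco_\piii(-5))=0$ identifies $\tH^1(F(-3))=\tH^1(E'(-5))$ with $\tH^1(\sci_Y)$, and the latter vanishes because connectedness makes $\tH^0(\sco_\piii)\to\tH^0(\sco_Y)$ an isomorphism. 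Thus $\h^1(F(-3))=0$ and $k_1\le 1$.

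For the lower bound I would argue by contradiction, assuming $k_c\le -3$. Properties (iv)--(vi) constrain the shape severely: (v) forces $-1,-2,-3$ all to occur, and (vi) forces $-1$ to occur at least twice when $0$ is absent. A short case-check using $c\le 4$ then shows that no admissible spectrum with $k_c\le -3$ exists for $c\le 3$, and that for $c=4$ the only possibilities are $(0,-1,-2,-3)$ and $(-1,-1,-2,-3)$. Each of these exhibits a strict interior descent $-1\ge k_{2}>k_{3}>k_{4}$, so property (vii) yields an unstable plane $H$ of order $-k_c=3$, i.e.\ a nonzero morphism $F_H\to\sco_H(-3)$. Twisting by $2$ produces a nonzero map $F_H(2)\to\sco_H(-1)$; its image is a quotient of the globally generated sheaf $F_H(2)$, hence globally generated, yet it is a subsheaf of $\sco_H(-1)$ and therefore has no sections, so it must be zero --- a contradiction. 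Hence $k_c\ge -2$.

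The upper bound becomes clean once one applies Lemma~\ref{L:yconnected} to $E'=F(2)$ rather than to $F$ directly, so I expect the real subtlety to lie in the lower bound. The hypothesis of property (vii) is restrictive --- it needs a strict interior descent passing through values $\le -1$ --- and a shape such as $(-1,-2,-2,-3)$ would not trigger it. The crux is therefore the combinatorial step: I must verify, using (v) and (vi) together with $c\le 4$, that every \emph{admissible} out-of-range spectrum does trigger (vii); here (vi) is exactly what eliminates the non-triggering shape $(-1,-2,-2,-3)$ (which has $0$ absent but $-1$ occurring only once). I would carefully confirm the finite list of admissible spectra and the triggering index in each case.
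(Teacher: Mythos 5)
Your proof is correct and follows essentially the same route as the paper: the upper bound $k_1\le 1$ comes from applying Lemma~\ref{L:yconnected} to $E=F(2)$ to get $\tH^1(F(-3))\simeq\tH^1(\sci_Y)=0$, and the lower bound comes from the same case-check (properties (iv)--(vi) leave only $(0,-1,-2,-3)$ and $(-1,-1,-2,-3)$ for $c=4$, none for $c\le 3$), followed by property (vii) producing an unstable plane of order $3$ whose associated quotient of $F_H(2)$ contradicts global generation. Your phrasing of the final contradiction via the globally generated image sitting inside $\sco_H(-1)$ is a harmless cosmetic variant of the paper's epimorphism onto $\sci_{Z,H}(-3)$.
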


\begin{proof} 
Let $E := F(2)$. One has $c_1(E) = 5$, $c_2(E) = c + 8 \leq 12$. By 
Lemma~\ref{L:yconnected}, $E$ can be realized as an extension$\, :$ 
\[
0 \lra 2\sco_\piii \lra E \lra \sci_Y(5) \lra 0
\] 
with $Y$ a nonsingular \emph{connected} curve. It follows that $\tH^1(F(-3)) 
= \tH^1(E(-5)) \simeq \tH^1(\sci_Y) = 0$ and this implies that $k_1 \leq 1$. 

On the other hand, it follows from the properties of the spectrum that if 
$c = 2$ then $k_2 \geq -1$ and if $c = 3$ then $k_3 \geq -2$. If $c = 4$  then 
the only spectra for which $k_4 \leq -3$ are $(0,-1,-2,-3)$ and 
$(-1,-1,-2,-3)$. For both of these spectra, $F$ has an unstable plane $H$ of 
order 3. A non-zero element of $\tH^0(F_H^\vee(-3))$ defines an epimorphism 
$F_H \ra \sci_{Z,H}(-3) \ra 0$, where $Z = \emptyset$ or it is a 0-dimensional 
subscheme of $H$. But this \emph{contradicts} the fact that $F(2)$ is 
globally generated.  
\end{proof}

\begin{remark}\label{R:chern} 
We record, here, a number of formulae concerning Chern classes that we shall 
need in the sequel. 

(a) If $\scf$ is a coherent sheaf of rank $r$ on $\pii$ with Chern classes 
$c_1,\, c_2$, then the \emph{Riemann-Roch formula} says that$\, :$ 
\[
\chi(\scf(l)) = \chi\left(((r-1)\sco_\pii \oplus \sco_\pii(c_1))(l)\right) 
- c_2\, , \  \forall \, l \in \z\, .  
\] 
It follows, in particular, that$\, :$ 
\begin{equation}\label{E:c2onp2} 
c_2 = \chi(\sco_\pii(c_1 - 1)) - \chi(\scf(-1))\, . 
\end{equation}
Now, let $\Gamma$ be a closed subscheme of $\pii$, of dimension $\leq 0$. The 
Hilbert polynomial $t \mapsto \chi(\sco_\Gamma(t))$ is a constant that we 
denote by $\text{deg}\, \Gamma$. Then, for $t \in \z$$\, :$ 
\[
c_1(\sci_\Gamma(t)) = t\, ,\  c_2(\sci_\Gamma(t)) = \text{deg}\, \Gamma \, . 
\] 
\emph{Indeed}, the first relation follows by restricting the exact 
sequence$\, :$ 
\[
0 \lra \sci_\Gamma(t) \lra \sco_\pii(t) \lra \sco_\Gamma(t) \lra 0
\] 
to a line $L \subset \pii$ not intersecting $\Gamma$, and the second one by 
applying \eqref{E:c2onp2}. 

\vskip2mm 

(b) If $\scf$ is a coherent sheaf of rank $r$ on $\piii$ with Chern classes 
$c_1,\, c_2\, , c_3$, then the \emph{Riemann-Roch formula} says that$\, :$ 
\[
\chi(\scf(l)) = \chi\left(((r-1)\sco_\piii \oplus \sco_\piii(c_1))(l)\right) 
- (l + 2)c_2 + \frac{1}{2}(c_3 - c_1c_2)\, , \  \forall \, l \in \z\, .  
\]  
It follows, in particular, that$\, :$ 
\begin{equation}\label{E:c3onp3} 
\frac{1}{2}(c_3 - c_1c_2) = \chi(\scf(-2)) - \chi(\sco_\piii(c_1 - 2))\, . 
\end{equation}
Now, let $Z$ be a closed subscheme of $\piii$ of dimension $\leq 1$. The 
Hilbert polynomial of $\sco_Z$ has the form $\chi(\sco_Z(t)) = dt + 
\chi(\sco_Z)$, for some non-negative integer $d$, which we denote by 
$\text{deg}\, Z$ (hence $\text{deg}\, Z = 0$ if and only if $\dim Z \leq 0$). 
Then, for $t \in \z$$\, :$ 
\[
c_1(\sci_Z(t)) = t\, ,\  c_2(\sci_Z(t)) = \text{deg}\, Z\, ,\  
c_3(\sci_Z(t)) = (4 - t)\text{deg}\, Z - 2\chi(\sco_Z)\, . 
\]  
\emph{Indeed}, the first two relations follow by restricting $\sci_Z(t)$ to a 
general plane, and the third relation from \eqref{E:c3onp3}.  

\vskip2mm 

(c) Using the notation from (b), one has an exact sequence$\, :$ 
\[
0 \lra \sct \lra \sco_Z \lra \sco_{Z_{\text{CM}}} \lra 0 
\]
where $Z_{\text{CM}}$ is a closed subscheme of $\piii$, locally Cohen-Macaulay 
of pure dimension 1 (or empty), and with $\dim \text{Supp}\, \sct \leq 0$. 
Of course, $\text{deg}\, Z_{\text{CM}} = \text{deg}\, Z$ and $\chi(\sco_Z) = 
\chi(\sco_{Z_{\text{CM}}}) + \text{length}\, \sct$. 

Assume, now, that one has an exact sequence$\, :$ 
\[
0 \lra \scf \lra \sce \lra \sci_Z(t) \lra 0 
\] 
with $\sce$ locally free of rank 3, $\scf$ reflexive of rank 2, and $Z$ as 
above. Then $c_3(\scf) = \text{length}\, \sct$ and $\, :$ 
\begin{gather*} 
c_1(\sce) = c_1(\scf) + t\, ,\  c_2(\sce) = c_2(\scf) + c_1(\scf)t + 
\text{deg}\, Z_{\text{CM}}\, ,\\ 
c_3(\sce) = -c_3(\scf) + c_2(\scf)t + (c_1(\scf) - t + 4)\text{deg}\, 
Z_{\text{CM}} - 2\chi(\sco_{Z_{\text{CM}}})\, . 
\end{gather*}
\emph{Indeed}, it follows from \cite[Prop.~2.6]{ha} that $c_3(\scf) = 
\text{length}\, \sce xt^1(\scf , \omega_\piii)$. But  
\[
\sce xt^1(\scf , \omega_\piii) \simeq \sce xt^2(\sci_Z(t) , \omega_\piii) 
\simeq \sce xt^3(\sco_Z(t) , \omega_\piii) \simeq 
\sce xt^3(\sct , \omega_\piii)
\]
and $\text{length}\, \sce xt^3(\sct , \omega_\piii) = \text{length}\, \sct$. 
The formulae about the Chern classes of $\sce$ follow, now, from (b). 
\end{remark}

\begin{remark}\label{R:monadsandliaison} 
Let us recall a method of constructing monads of vector bundles on $\piii$ 
using liaison techniques. Let $E$ be a vector bundle on $\piii$ appearing as 
an extension$\, :$ 
\[
0 \lra A \lra E \lra \sci_Y(t) \lra 0 
\]
where $A$ is a direct sum of line bundles and $Y$ is a curve (= locally 
Cohen-Macaulay closed subscheme of $\piii$, of pure dimension 1). Assume that 
one knows a monad$\, :$ 
\[
B^\bullet \  :\  0 \lra B^{-1} \overset{d^{-1}}{\lra} B^0 
\overset{d^0}{\lra} B^1 \lra 0 
\] 
of $\sci_Y(t)$ (this means that the $B^i$'s are direct sums of line bundles, 
$\sch^i(B^\bullet) = 0$ for $i \neq 0$ and $\sch^0(B^\bullet) \simeq \sci_Y(t)$). 
The above extension defines a morphism $B^\bullet \ra A[1]$ in the derived 
category of coherent sheaves on $\piii$. This morphism can be represented by 
a morphism of sheaves $\phi : B^{-1} \ra A$. Then$\, :$ 
\[
0 \lra B^{-1} \xra{\left(\begin{smallmatrix} d^{-1}\\ \phi 
\end{smallmatrix}\right)} 
B^0 \oplus A \xra{(d^0,\, 0)} B^1 \lra 0 
\]
is a monad of $E$. 

\vskip2mm 

Now, assume that $Y$ can be directly linked to another curve $Y^\prime$ by a 
complete intersection defined by two homogeneous polynomials $f,\, g$, of 
degrees $a$ and $b$, respectively. Assume that one knows a resolution$\, :$ 
\[
0 \lra A_2 \overset{\delta_2}{\lra} A_1 \overset{\delta_1}{\lra}  
A_0 \lra \sci_{Y^\prime} \lra 0 
\]
of $\sci_{Y^\prime}$, with $A_0,\, A_1,\, A_2$ direct sums of line bundles. The 
morphism $\sco_\piii(-a) \oplus \sco_\piii(-b) \ra \sci_{Y^\prime}$ defined by 
$(f,\, g)$ can be lifted to a morphism $\psi : \sco_\piii(-a) \oplus 
\sco_\piii(-b) \ra A_0$. Then, by a result of D. Ferrand (see Peskine and 
Szpiro \cite[Prop.~2.5]{ps}), 
\[
0 \lra A_0^\vee \xra{\left(\begin{smallmatrix} \delta_1^\vee\\ \psi^\vee 
\end{smallmatrix}\right)} 
A_1^\vee \oplus \sco_\piii(a) \oplus \sco_\piii(b) 
\xra{(\delta_2^\vee ,\, 0)} A_2^\vee \lra 0 
\]
is a monad of $\sci_Y(a+b)$. 

\vskip2mm 

Notice that if $f$ belongs to a minimal system of generators of the 
homogeneous ideal $I(Y^\prime) \subset S$ then the component $\psi_1 : 
\sco_\piii(-a) \ra A_0$ of $\psi$ maps $\sco_\piii(-a)$ isomorphically onto a 
direct summand of $A_0$ (i.e., $A_0 = A_0^\prime \oplus \sco_\piii(-a)$) and, in 
this case, one gets a simplified monad of $\sci_Y(a+b)$$\, :$ 
\[
0 \lra A_0^{\prime \vee} \xra{\left(\begin{smallmatrix} \delta_1^{\prime \vee}\\ 
\psi_2^{\prime \vee} \end{smallmatrix}\right)} A_1^\vee \oplus \sco_\piii(b) 
\xra{(\delta_2^\vee ,\, 0)} A_2^\vee \lra 0\, .  
\]
\end{remark}

\begin{lemma}\label{L:edual(1)} 
Let $E$ be a globally generated vector bundle on $\piii$ with $c_1 = 5$, 
and such that ${\fam0 H}^i(E^\vee) = 0$, $i = 0,\, 1$, and ${\fam0 H}^0(E(-2)) 
= 0$. 

\emph{(a)} If ${\fam0 H}^1(E(-3)) = 0$ then $E^\vee$ is $1$-regular. 

\emph{(b)} If ${\fam0 H}^0(E^\vee (1)) = 0$ and if one considers the 
universal extension$\, :$ 
\[
0 \lra E(-3) \lra E_3 \lra {\fam0 H}^1(E(-3))\otimes_k \sco_\piii \lra 0  
\]   
then $E_3$ is $1$-regular. Moreover, ${\fam0 H}^0(P(E_3(1))(-2)) = 0$. 
\end{lemma}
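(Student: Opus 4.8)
The plan is to derive everything from Serre duality on $\piii$, where $\omega_\piii \simeq \sco_\piii(-4)$, so that $\tH^i(\scf) \simeq \tH^{3-i}(\scf^\vee(-4))^\vee$ for a vector bundle $\scf$, together with the long exact cohomology sequences attached to the structural short exact sequences. For part (a), recall that $E^\vee$ is $1$-regular precisely when $\tH^i(E^\vee(1-i)) = 0$ for $i = 1,\, 2,\, 3$. The case $i = 1$ is the hypothesis $\tH^1(E^\vee) = 0$. For $i = 2$ I would rewrite, by Serre duality, $\tH^2(E^\vee(-1)) \simeq \tH^1(E(-3))^\vee$, which vanishes by the hypothesis of (a); and for $i = 3$, $\tH^3(E^\vee(-2)) \simeq \tH^0(E(-2))^\vee = 0$. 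Thus (a) is immediate once the three regularity conditions are translated across Serre duality.

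For part (b), write $V := \tH^1(E(-3))$. The first point is that the universal extension is designed so that the connecting homomorphism $\partial \colon \tH^0(V \otimes_k \sco_\piii) \to \tH^1(E(-3))$ is, under the canonical identifications $\tH^0(V \otimes_k \sco_\piii) \simeq V$ and $\tH^1(E(-3)) = V$, the identity of $V$: its extension class is $\mathrm{id}_V \in \mathrm{End}(V) \simeq \mathrm{Ext}^1(V \otimes_k \sco_\piii, E(-3))$ and $\partial$ is Yoneda composition with it. Since $\tH^1(V \otimes_k \sco_\piii) = 0$, the long exact sequence gives $\tH^1(E_3) = 0$. Next, twisting the defining sequence of $E_3$ by $-1$ and by $-2$, the free terms contribute nothing in the relevant spots (the intermediate and top cohomology of $\sco_\piii(-1)$ and $\sco_\piii(-2)$ that enter all vanish), so that $\tH^2(E_3(-1)) \simeq \tH^2(E(-4))$ and $\tH^3(E_3(-2))$ is a quotient of $\tH^3(E(-5))$. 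Serre duality turns these into $\tH^1(E^\vee)^\vee = 0$ and $\tH^0(E^\vee(1))^\vee = 0$, the latter being exactly the extra hypothesis of (b). Hence $\tH^1(E_3) = \tH^2(E_3(-1)) = \tH^3(E_3(-2)) = 0$ and $E_3$ is $1$-regular.

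It remains to handle $\tH^0(P(E_3(1))(-2))$. Here I would first note that $1$-regularity of $E_3$ makes $E_3(1)$ globally generated, so $P(E_3(1))$ is defined: writing $0 \to K \to \tH^0(E_3(1)) \otimes_k \sco_\piii \to E_3(1) \to 0$, one has $P(E_3(1)) = K^\vee$ and, dualizing (everything being locally free), $0 \to E_3^\vee(-1) \to \tH^0(E_3(1))^\vee \otimes_k \sco_\piii \to P(E_3(1)) \to 0$. Twisting this by $-2$ and using $\tH^0(\sco_\piii(-2)) = 0$, I get that $\tH^0(P(E_3(1))(-2))$ injects into $\tH^1(E_3^\vee(-3))$. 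By Serre duality $\tH^1(E_3^\vee(-3)) \simeq \tH^2(E_3(-1))^\vee$, which I have already shown to vanish; hence $\tH^0(P(E_3(1))(-2)) = 0$.

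The whole lemma is thus a bookkeeping exercise in cohomology, and I expect no serious obstacle. The only points demanding care are the identification of the connecting map $\partial$ with $\mathrm{id}_V$ (the one genuinely conceptual input, encoding why the universal extension kills $\tH^1$), the verification that $E_3$ and $K$ are again vector bundles so that dualizing stays exact, and keeping the Serre-duality twists consistent so that each required vanishing lands on exactly one of the four available hypotheses $\tH^0(E^\vee) = \tH^1(E^\vee) = \tH^0(E(-2)) = \tH^0(E^\vee(1)) = 0$.
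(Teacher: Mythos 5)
Your proposal is correct and follows essentially the same route as the paper: all three vanishings in (a), the two extra vanishings for $E_3$ in (b), and the final identification $\tH^0(P(E_3(1))(-2)) \hookrightarrow \tH^1(E_3^\vee(-3)) \simeq \tH^2(E_3(-1))^\vee = 0$ are exactly the paper's chain of Serre-duality translations, which you merely spell out in more detail (the identification of the connecting map of the universal extension with $\mathrm{id}_V$, and the dualized evaluation sequence defining $P(E_3(1))$, are left implicit in the paper's three-line proof).
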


\begin{proof} 
(a) One has $\tH^1(E^\vee ) = 0$, $\tH^2(E^\vee (-1)) \simeq \tH^1(E(-3))^\vee 
= 0$, and $\tH^3(E^\vee (-2)) \simeq \tH^0(E(-2))^\vee = 0$. 

(b) By construction, $\tH^1(E_3) = 0$. Then $\tH^2(E_3(-1)) \simeq 
\tH^2(E(-4)) \simeq \tH^1(E^\vee )^\vee = 0$, and $\tH^3(E_3(-2)) \simeq 
\tH^3(E(-5)) \simeq \tH^0(E^\vee (1))^\vee = 0$. Finally, $\tH^0(P(E_3(1))(-2)) 
\simeq \tH^1(E_3^\vee (-3)) \simeq \tH^2(E_3(-1))^\vee = 0$. 
\end{proof}

\begin{lemma}\label{L:h2e(-3)=1} 
Let $E$ be a globally generated vector bundle on $\piii$ with $c_1 = 5$, 
$c_2 \leq 13$, such that ${\fam0 H}^i(E^\vee) = 0$, $i = 0,\, 1$, and 
${\fam0 H}^0(E(-2)) = 0$. If ${\fam0 h}^2(E(-3)) = 1$ then there exist  
exact sequences$\, :$ 
\begin{gather*} 
0 \lra \sco_\piii(-1) \xra{\left(\begin{smallmatrix} u\\ v 
\end{smallmatrix}\right)} E_1 \oplus 4\sco_\piii \lra E \lra 0\, ,\\ 
0 \lra (r-5)\sco_\piii \lra E_1 \lra \scf_1(2) \lra 0\, ,  
\end{gather*}
with $v \colon \sco_\piii(-1) \ra 4\sco_\piii$ defined by four linearly 
independent linear forms and with $\scf_1$ a stable rank $2$ reflexive sheaf 
with $c_1(\scf_1) = 0$, $c_2(\scf_1) = c_2 - 9$, $c_3(\scf_1) = c_3 - c_2$.  
\end{lemma}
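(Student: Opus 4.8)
The plan is to read the hypothesis $\h^2(E(-3)) = 1$ as information about the top of the graded module $\tH^2_\ast(E)$ and to match this top with the cohomology of $\text{T}_\piii(-1)$, whose only intermediate cohomology is the one-dimensional space $\tH^2(\text{T}_\piii(-1)(-3))$. First I would pass to the rank $3$ reduction: $r - 3$ general sections give \eqref{E:oeeprim}, and since $\tH^i(\sco_\piii(l)) = 0$ in the relevant range one has $\tH^2(E(-3)) \simeq \tH^2(E^\prim(-3)) = \tH^2(F(-1))$ for $F := E^\prim(-2)$. When $E^\prim$ is stable, Remark~\ref{R:spectrumf}(ii) turns $\h^2(F(-1)) = 1$ into the statement that the spectrum of $F$ ends in a single $-2$ (so $c_2(F) \geq 3$); combined with \eqref{E:r} this forces $r \geq 5$, and I would dispose of the cases $\rk E \leq 4$ by checking directly that then $\h^2(E(-3)) \neq 1$. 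When $E^\prim$ is not stable I would invoke Lemma~\ref{L:eprimunstable}: case (i) is ruled out (it gives $\h^2(E(-3)) \neq 1$), while case (ii) is exactly the situation of Remark~\ref{R:eprimunstable2}(ii), which already produces a first sequence of the desired shape with $\scf_1 = F^\prim$.

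The core construction, carried out uniformly, is to produce the first sequence. Since $\tH^2(E(-2)) = 0$ (again by the spectrum, or directly), every linear form kills the generator of $\tH^2(E(-3))$; dually, by Serre duality $\tH^1_\ast(E^\vee)$ has an isolated one-dimensional piece in degree $-1$, annihilated by all linear forms, which matches $\tH^1_\ast(\Omega_\piii(1))$ in the relevant degrees. I would realize this by a morphism relating $E$ to $\text{T}_\piii(-1) = \Cok(\sco_\piii(-1) \xra{v} 4\sco_\piii)$, where $v$ is the Euler map given by $x_0, \ldots , x_3$, inducing an isomorphism on the top $\tH^2$. Taking the associated presentation, I define $E_1$ by
\[
0 \lra \sco_\piii(-1) \xra{\left(\begin{smallmatrix} u\\ v \end{smallmatrix}\right)} E_1 \oplus 4\sco_\piii \lra E \lra 0\, ,
\]
and check that $E_1$ is reflexive of rank $r - 3$ with $\tH^2_\ast(E_1)$ equal to $\tH^2_\ast(E)$ with its top removed; by construction $v$ is given by four linearly independent linear forms.

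Next I would identify $E_1$. Taking $r - 5 = \h^2(\scf_1(-2))$ general sections of $E_1$ (the analogue, via the first sequence, of the count in Remark~\ref{R:eprimunstable}) yields
\[
0 \lra (r-5)\sco_\piii \lra E_1 \lra \scf_1(2) \lra 0\, ,
\]
with $\scf_1(2) = E_1/(r-5)\sco_\piii$ of rank $2$ and $c_1(\scf_1) = 0$. In the unstable case (ii) this $\scf_1$ is $F^\prim$, a stable rank $2$ bundle; in general I would show $\scf_1$ is a stable rank $2$ \emph{reflexive} sheaf by Serre's construction (Appendix~\ref{A:serre}), obtaining it from the zero-scheme attached to the isolated cohomology. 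Stability follows from $\tH^0(\scf_1) = 0$ and $\tH^0(\scf_1(-1)) = 0$, read off from the cohomology of $E_1$. The Chern classes $c_2(\scf_1) = c_2 - 9$ and $c_3(\scf_1) = c_3 - c_2$ are then \emph{forced} by the two displayed sequences together with the relations $c_1 = c_2 = c_3 = 1$ for $\text{T}_\piii(-1)$ and \eqref{E:chernf}, so no separate computation of the singularities of $\scf_1$ is needed beyond establishing reflexivity.

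The main obstacle is the uniform realization of $v$ by \emph{four linearly independent} linear forms, i.e.\ that the top of $\tH^2_\ast(E)$ really splits off a genuine $\text{T}_\piii(-1)$ rather than a degeneration of it, together with the stability and, above all, the reflexivity of $\scf_1$ in the stable case, where $\scf_1$ is honestly non-locally-free ($c_3(\scf_1) = c_3 - c_2$ may be positive). Controlling this non-locally-free locus — showing it is exactly what the Chern class bookkeeping predicts and that $\scf_1$ is reflexive and not merely torsion-free — is the delicate point; by contrast the unstable case (ii) is immediate from Remark~\ref{R:eprimunstable2}(ii).
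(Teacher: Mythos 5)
Your reduction of the statement to finding an epimorphism $E \ra \text{T}_\piii(-1)$ with locally free kernel $E_1$ is a correct reading of the first exact sequence, and your cohomological observations are sound: since $\tH^2(E(-2)) = 0$ and $\tH^i(E^\vee) = 0$, $i = 0,\, 1$, the one-dimensional space $\tH^1(E^\vee(-1)) \simeq \tH^2(E(-3))^\vee$ is an isolated direct summand ${\underline k}(1)$ of $\tH^1_\ast(E^\vee)$, and the connecting map $\text{Hom}(\Omega_\piii(1) , E^\vee) \ra \text{Ext}^1(\sco_\piii(1) , E^\vee) \simeq \tH^1(E^\vee(-1))$ is surjective because $\tH^1(E^\vee) = 0$, so a morphism $\psi \colon E \ra \text{T}_\piii(-1)$ hitting the generator certainly exists. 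But this is exactly where your argument stops being a proof: nothing in the Horrocks-type formalism makes $\psi$ an epimorphism of sheaves, nor its kernel locally free of rank $r-3$. Equivalently, the universal extension $0 \ra E^\vee \ra G \ra \sco_\piii(1) \ra 0$ has $\h^0(G) = 4$ and the composite $\tH^0(G) \otimes_k \sco_\piii \ra G \ra \sco_\piii(1)$ is automatically the Euler epimorphism, but the lemma requires these four sections to span a trivial \emph{direct summand} $4\sco_\piii$ of $G$ --- a strictly stronger statement that the module-theoretic splitting of $\tH^1_\ast(E^\vee)$ does not deliver. You flag precisely this point (``that the top really splits off a genuine $\text{T}_\piii(-1)$'') as ``the main obstacle'', but you give no mechanism to overcome it. The same applies to the second sequence: taking ``$r-5$ general sections of $E_1$'' presupposes a global generation property of $E_1$ that you have not established (and which fails in general, since $\scf_1(2)$ need not be globally generated), and reflexivity and stability of the quotient are again only flagged as ``delicate'', not proved.

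The paper closes these gaps by a different, geometric route that your plan never touches, and it does so using the hypotheses your argument leaves idle: global generation of $E$ and $c_2 \leq 13$. By \eqref{E:oeiy} and Lemma~\ref{L:yconnected} one has a nonsingular \emph{connected} (hence integral, nondegenerate) curve $Y$ with $\h^2(E(-3)) = \h^0(\omega_Y(-2)) = 1$; the generator $\sigma$ yields four linearly independent sections $x_0\sigma, \ldots , x_3\sigma$ of $\omega_Y(-1)$ (integrality and nondegeneracy of $Y$ are what make them independent), and Serre's method (Appendix~\ref{A:serre}, together with \cite[Thm.~4.1]{ha} for the rank $2$ case) applied to $(\sigma , \sigma_4, \ldots , \sigma_{r-2})$ and to $\sigma$ alone \emph{constructs} $E_1$ locally free and $\scf_1$ reflexive, with the exact sequence $0 \ra (r-5)\sco_\piii \ra E_1 \ra \scf_1(2) \ra 0$ built in, and with stability of $\scf_1$ coming from $\tH^0(\sci_Y(3)) = 0$, i.e.\ from $\tH^0(E(-2)) = 0$; the first sequence then follows by comparing the two extensions of $\sci_Y(5)$ and dualizing, using $\tH^i(E^\vee) = \tH^i(E_1^\vee) = 0$, $i = 0,\, 1$. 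So the surjectivity/splitting you could not supply is exactly what the Serre construction on $Y$ provides. (Your side claims are also shaky: unstable case (i) of Lemma~\ref{L:eprimunstable} is not obviously incompatible with $\h^2(E(-3)) = 1$, and Remark~\ref{R:eprimunstable2}(ii) covers case (ii) only when $F^\prim$ is an instanton, i.e.\ when $r = 5$; but these issues are secondary to the main gap.)
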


\begin{proof} 
Recall the exact sequence \eqref{E:oeiy} and the fact that, by 
Lemma~\ref{L:yconnected}, the curve $Y$ appearing there is connected. 
One has $\h^2(E(-3)) = \h^2(\sci_Y(2)) = \h^1(\sco_Y(2)) = 
\h^0(\omega_Y(-2))$ hence $\h^0(\omega_Y(-2)) = 1$.   
Let $\sigma$ be a nonzero global section of $\omega_Y(-2)$. $x_0\sigma , \ldots 
, x_3\sigma$ are linearly independent elements of $\tH^0(\omega_Y(-1))$. 
Consider $\sigma_4, \ldots , \sigma_{r-2} \in \tH^0(\omega_Y(-1))$ such that 
$x_0\sigma, \ldots , \sigma_{r-2}$ is a $k$-basis of $\tH^0(\omega_Y(-1))$ 
(recall that $\h^0(\omega_Y(-1)) = r-1$). Then $\sigma, \sigma_4, \ldots , 
\sigma_{r-2}$ define an extension$\, :$  
\[
0 \lra \sco_\piii(-1) \oplus (r-5)\sco_\piii \lra E_1 \lra \sci_Y(5) \lra 0 
\]   
with $E_1$ a locally free sheaf. Dualizing this extension, one gets an exact 
sequence$\, :$ 
\[
0 \lra \sco_\piii(-5) \lra E_1^\vee \lra \sco_\piii(1) \oplus (r-5)\sco_\piii 
\xra{\displaystyle \delta_1} \omega_Y(-1) \lra 0 
\]
with $\delta_1$ defined by $\sigma, \sigma_4, \ldots , \sigma_{r-2}$. One 
deduces that $\tH^i(E_1^\vee) = 0$, $i = 0,\, 1$. $\sigma$ alone defines an 
extension$\, :$   
\[
0 \lra \sco_\piii(-3) \lra \scf_1 \lra \sci_Y(3) \lra 0 
\]
with $\scf_1$ a rank 2 reflexive sheaf with Chern classes $c_1(\scf_1) = 0$, 
$c_2(\scf_1) = \text{deg}\, Y - 9 = c_2 - 9$, $c_3(\scf_1) = 
\text{deg}(\omega_Y(-2)) = c_3 - c_2$. Since $\tH^0(\sci_Y(3)) = 0$, $\scf_1$ 
is stable. By construction, one has an exact sequence$\, :$ 
\[
0 \lra (r-5)\sco_\piii \lra E_1 \lra \scf_1(2) \lra 0\, . 
\]
Since $\tH^1(E_1^\vee) = 0$ one gets a commutative diagram$\, :$ 
\[
\begin{CD} 
0 @>>> \sco_\piii(-1) \oplus (r-5)\sco_\piii @>>> E_1 @>>> \sci_Y(5) @>>> 0\\
@. @VVV @VVV @\vert\\
0 @>>> (r-1)\sco_\piii @>>> E @>>> \sci_Y(5) @>>> 0
\end{CD}
\]
from which one deduces an exact sequence$\, :$ 
\[
0 \lra \sco_\piii(-1) \oplus (r-5)\sco_\piii \lra E_1 \oplus (r-1)\sco_\piii 
\lra E \lra 0\, . 
\]
Dualizing this exact sequence and taking into account that $\tH^i(E^\vee) = 0$ 
and $\tH^i(E_1^\vee) = 0$, $i = 0,\, 1$, one gets the first exact sequence from 
the statement. 

One can deduce, using this exact sequence, that $E_1$ has rank $r - 3$ and 
Chern classes $c_1(E_1) = 4$, $c_2(E_1) = c_2 - 5$, $c_3(E_1) = c_3 - c_2$.  
Moreover, denoting by $\scq$ the cokernel of the 
evaluation morphism $\tH^0(E_1)\otimes_k\sco_\piii \ra E_1$, the condition $E$ 
globally generated is equivalent to the fact that the composite morphism 
$\sco_\piii(-1) \overset{u}{\lra} E_1 \ra \scq$ is an epimorphism. In 
particular, $E_1(1)$ must be globally generated. 
\end{proof}

\begin{remark}\label{R:c1=5onp2} 
We recall, here, from \cite{acm1}, the part of the classification of globally 
generated vector bundles with $c_1 = 5$ on $\pii$ that we shall need in the 
sequel. Let $E^\prim$ be a globally generated vector bundle on $\pii$, with 
$c_1 = 5$ and $10 \leq c_2 \leq 13$. By \cite[Lemma~1.2]{acm1}, one has 
$E^\prim \simeq G^\prim \oplus t\sco_\pii$, with $G^\prim$ defined by an exact 
sequence$\, :$ 
\[
0 \lra s\sco_\pii \lra F^\prim \lra G^\prim \lra 0\, , 
\]
where $F^\prim$ is a globally generated vector bundle, with the same Chern 
classes as $E^\prim$, and such that $\tH^i(F^{\prim \vee}) = 0$, $i = 0,\, 1$. 
Moreover, $s = \h^1(E^{\prim \vee})$ and $t = \h^0(E^{\prim \vee})$. 

Now, according to the proof of \cite[Prop.~3.6]{acm1}, one has$\, :$ 
\begin{enumerate} 
\item[(i)] If $c_2 = 10$ then $F^\prim$ is either $\sco_\pii(3) \oplus 
P(\sco_\pii(2))$ or $\sco_\pii(2) \oplus 2\sco_\pii(1) \oplus 
\text{T}_\pii(-1)$$\, ;$ 
\item[(ii)] If $c_2 = 11$ then $F^\prim$ is either $\sco_\pii(2) \oplus 
\sco_\pii(1) \oplus 2\text{T}_\pii(-1)$ or $4\sco_\pii(1) \oplus 
\text{T}_\pii(-1)$$\, ;$ 
\item[(iii)] If $c_2 = 12$ then $F^\prim$ is either $\sco_\pii(2) \oplus 
3\text{T}_\pii(-1)$ or $\sco_\pii(2) \oplus \sco_\pii(1) \oplus P(\sco_\pii(2))$ 
or $3\sco_\pii(1) \oplus 2\text{T}_\pii(-1)$$\, ;$ 
\item[(iv)] If $c_2 = 13$ then $F^\prim$ is either $\sco_\pii(2) \oplus 
\text{T}_\pii(-1) \oplus P(\sco_\pii(2))$ or $3\sco_\pii(1) \oplus 
P(\sco_\pii(2))$ or $2\sco_\pii(1) \oplus 3\text{T}_\pii(-1)$.   
\end{enumerate}

Notice that, actually, (iv) follows from (iii). 
\end{remark} 

\begin{remark}\label{R:muh1e(-4)} 
As a consequence of Remark~\ref{R:c1=5onp2}, if $E$ is a globally generated 
vector bundle on $\piii$ with $c_1 = 5$ and $11 \leq c_2 \leq 13$ then 
$\tH^0(E_H(-3)) = 0$, for every plane $H \subset \piii$. It follows that, for 
every nonzero linear form $h \in \tH^0(\sco_\piii(1))$, the multiplication by 
$h \colon \tH^1(E(-4)) \ra \tH^1(E(-3))$ is injective. Applying the Bilinear 
Map Lemma \cite[Lemma~5.1]{ha} to the multiplication map $\mu \colon 
\tH^1(E(-4)) \otimes \tH^0(\sco_\piii(1)) \ra \tH^1(E(-3))$ one deduces that 
the rank of $\mu$ is at least $\h^1(E(-4)) + 3$ (assuming that $\tH^1(E(-4)) 
\neq 0$).  
\end{remark}

\begin{lemma}\label{L:h2e(-3)=0}  
Let $E$ be a globally generated vector bundle on $\piii$ of rank $r \geq 3$, 
with $c_1 = 5$, $10 \leq c_2 \leq 13$, such that ${\fam0 H}^i(E^\vee) = 0$, 
$i = 0,\, 1$, and ${\fam0 H}^0(E(-2)) = 0$. Assume that the rank $3$ vector 
bundle $E^\prim$ associated to $E$ in the exact sequence \eqref{E:oeeprim} is 
stable and that ${\fam0 H}^2(E(-3)) = 0$. 
Put $s := {\fam0 h}^1(E(-3)) - {\fam0 h}^1(E(-4))$. Then$\, :$ 

\emph{(a)} ${\fam0 H}^2(E(l)) = 0$, $\forall \, l \geq -4$, hence 
${\fam0 H}^1(E^\vee (l)) = 0$, $\forall \, l \leq 0$$\, ;$ 

\emph{(b)} The graded $S$-module ${\fam0 H}^1_\ast(E^\vee)$ is generated in 
degrees $\leq 2$ and if ${\fam0 H}^1(E(-4)) = 0$ then it is generated by 
${\fam0 H}^1(E^\vee(1))$$\, ;$ 

\emph{(c)} ${\fam0 H}^0(E_H^\vee) = 0$, for any plane $H \subset \piii$,  
and ${\fam0 h}^1(E_H^\vee) = s$, for the general (resp., any) plane $H \subset 
\piii$ if $c_2 = 10$ (resp., $11 \leq c_2 \leq 13$)$\, ;$  

\emph{(d)} ${\fam0 H}^0(E^\vee(1)) \izo {\fam0 H}^0(E_H^\vee(1))$ and 
${\fam0 h}^1(E_H^\vee(1)) = {\fam0 h}^1(E^\vee(1)) + {\fam0 h}^2(E^\vee)$, 
for any plane $H \subset \piii$$\, ;$ 

\emph{(e)} If $s \leq 1$ then ${\fam0 H}^1_\ast(E^\vee) = 0$ hence  
${\fam0 H}^2_\ast(E) = 0$$\, ;$ 

\emph{(f)} If $11 \leq c_2 \leq 13$ then   
${\fam0 h}^1(E_H^\vee(l)) \leq {\fam0 max}({\fam0 h}^1(E_H^\vee(l-1)) - 1 , 0)$, 
$\forall \, l \geq 1$, for any plane $H \subset \piii$$\, ;$   

\emph{(g)} If $11 \leq c_2 \leq 13$ and $s = 2$ then either 
${\fam0 H}^1_\ast(E^\vee) \simeq {\underline k}(-1)$ or 
${\fam0 H}^1_\ast(E^\vee) = 0$$\, ;$ 

\emph{(h)} If $11 \leq c_2 \leq 13$, $s = 3$, and ${\fam0 H}^1_\ast(E^\vee)$ is 
not generated by ${\fam0 H}^1(E^\vee(1))$ then either ${\fam0 H}^1_\ast(E^\vee) 
\simeq {\underline k}(-1) \oplus {\underline k}(-2)$ or 
${\fam0 H}^1_\ast(E^\vee) \simeq {\underline k}(-2)$.  
\end{lemma}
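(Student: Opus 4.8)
The plan is to read off the module structure of $M := \tH^1_\ast(E^\vee)$, which by part (a) is supported in degrees $\geq 1$ and by part (b) is generated in degrees $\leq 2$; being an intermediate cohomology module of a vector bundle, it has finite length. I first observe that the hypothesis forces $\h^2(E^\vee) \geq 1$: otherwise $\h^1(E(-4)) = \h^2(E^\vee) = 0$ by Serre duality, and then the second clause of part (b) would say that $M$ is generated by $M_1 := \tH^1(E^\vee(1))$, contrary to assumption. Now set $a_l := \h^1(E_H^\vee(l))$ for a plane $H = \{h = 0\}$. By part (c), $a_0 = s = 3$, and the decay estimate of part (f) (valid for every plane since $11 \leq c_2 \leq 13$) gives $a_1 \leq 2$, $a_2 \leq 1$ and $a_l = 0$ for all $l \geq 3$. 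Since part (d) gives $a_1 = \dim M_1 + \h^2(E^\vee) \geq \dim M_1 + 1$, I already obtain $\dim M_1 \leq 1$.

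The engine for the rest is the cohomology sequence of $0 \to E^\vee(l-1)\xra{h}E^\vee(l)\to E_H^\vee(l)\to 0$, which embeds $\Cok(M_{l-1}\xra{h}M_l)$ into $\tH^1(E_H^\vee(l))$; hence for every linear form $h$ and every $l$ one has $\dim\Cok(M_{l-1}\xra{h}M_l)\leq a_l$. The decisive point is to apply this not for a generic $h$ but for a form chosen to annihilate a given generator. If $\dim M_1 = 1$, write $M_1 = ky$. For a general $h$ the identity $\dim\Cok(M_1\xra{h}M_2) = \dim M_2 - \dim(hM_1)$ with $\dim(hM_1)\leq 1$ yields $\dim M_2 \leq a_2 + 1 \leq 2$; in particular $\dim(S_1 M_1)\leq \dim M_2 < 4$, so there is a nonzero $h_0$ with $h_0 y = 0$. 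For this $h_0$ the map $M_1\xra{h_0}M_2$ vanishes, whence $\dim M_2 = \dim\Cok(M_1\xra{h_0}M_2)\leq a_2 \leq 1$. Since $\dim(S_1 M_1) = 1$ would force $M_2 = S_1 M_1$, i.e. $M$ generated by $M_1$, we must have $S_1 M_1 = 0$; and $\dim M_2 \geq 1$ because $M$ is not generated by $M_1$, so $\dim M_2 = 1$. If instead $\dim M_1 = 0$, then $\dim M_2 = \dim\Cok(0\to M_2)\leq a_2 \leq 1$ together with $M\neq 0$ gives $\dim M_2 = 1$.

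In both cases $M_2 = kz$ is one-dimensional, and I would close by killing the higher degrees with the same trick. For $l \geq 3$ the vanishing $a_l = 0$ gives $M_l = hM_{l-1}$ for every $h$; taking $l = 3$, the map $S_1 \to M_3$, $h \mapsto hz$, has image $M_3$, so $\dim M_3 \leq 1$ and its kernel is nonzero, providing $h_1 \neq 0$ with $h_1 z = 0$ and hence $M_3 = h_1 M_2 = 0$. Generation in degrees $\leq 2$ then propagates this to $M_l = 0$ for all $l \geq 3$. Assembling: when $\dim M_1 = 0$ we get $M \simeq {\underline k}(-2)$, and when $\dim M_1 = 1$ the relation $S_1 M_1 = 0$ makes $ky$ a $\fm$-annihilated summand, so $M \simeq {\underline k}(-1)\oplus{\underline k}(-2)$.

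I expect the main obstacle to be exactly the determination of $M_2$: the raw inequalities only give $\dim M_2 \leq 2$, and ruling out $\dim M_2 = 2$ (with a nontrivial $S_1 M_1$) requires the non-generic choice of hyperplane adapted to the degree-$1$ generator, together with the hypothesis that $M$ is not generated in degree $1$. Everything else — the bound $\dim M_1 \leq 1$ and the vanishing in degrees $\geq 3$ — follows formally from parts (b), (c), (d) and (f) once this mechanism is in place.
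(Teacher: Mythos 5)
Your proposal proves only part (h) of the lemma; parts (a)--(g) are quoted as inputs (``by part (a)'', ``by part (b)'', ``the decay estimate of part (f)'', and so on) but are never established, and they are precisely where the substance of the lemma lies. None of them is formal. In the paper, (a) and (b) come from Castelnuovo--Mumford arguments combined with the vanishing $\tH^1(E(-5)) \simeq \tH^1(F(-3))^\vee = 0$, which rests on Lemma~\ref{L:spectrumf} and hence on the connectedness of the curve $Y$ (Lemma~\ref{L:yconnected}); (c) and (d) need $\tH^0(E_H(-3)) = 0$, which comes from the classification of globally generated bundles with $c_1 = 5$ on $\pii$ (Remark~\ref{R:c1=5onp2}) and, when $c_2 = 10$, from Schneider's restriction theorem applied to the stable bundle $F = E^\prim(-2)$; (e) uses the structure result \cite[Lemma~1.2]{acm1} for globally generated bundles on a plane; and (f) --- the decay estimate that powers your whole argument --- is obtained by applying the key technical result on spectra, Prop.~\ref{P:nsubseth1astf}, to the restrictions $F_H$ and $F_H(-1)$, which again requires the stability hypothesis on $E^\prim$. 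So, as a proof of the stated lemma, your text is conditional: it supplies roughly the last third of the paper's proof and leaves unproved the machinery that justifies its own inputs.

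Within part (h), however, your argument is correct, and it takes a slightly different and more economical route than the paper's (writing $M_l = \tH^1(E^\vee(l))$, as you do). The paper bounds $\h^1(E^\vee(2))$ via the sequence $\tH^1(E^\vee(1)) \xra{h} \tH^1(E^\vee(2)) \ra \tH^1(E_H^\vee(2)) \ra 0$, whose exactness on the right needs $\h^2(E^\vee(1)) = \h^1(E(-5)) = 0$ (again Lemma~\ref{L:yconnected}), and then excludes $\h^1(E^\vee(2)) = 2$ with the Bilinear Map Lemma \cite[Lemma~5.1]{ha}. You use only the middle exactness of the restriction sequence --- which holds for every plane with no extra input --- and exclude the bad case by picking a linear form $h_0$ annihilating the degree-one generator $y$; such an $h_0$ exists by a pure dimension count ($\dim S_1 = 4 > \dim M_2$), and then $\dim M_2 = \dim \Cok (M_1 \xra{h_0} M_2) \leq \h^1(E_H^\vee(2)) \leq 1$. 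The same adapted-form trick disposes of the degrees $\geq 3$, where the paper's pattern (as in its proof of (g)) would again invoke the Bilinear Map Lemma, and your final dichotomy $\dim M_1 = 1$ versus $\dim M_1 = 0$ reproduces the paper's cases $\h^1(E(-4)) = 1$ versus $\h^1(E(-4)) = 2$. This is a genuine small simplification of that step --- but it is only that step.
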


\begin{proof}
Consider the normalized rank 3 vector bundle $F := E^\prim (-2)$. It has 
Chern classes $c_1(F) = -1$, $c_2(F) = c_2 - 8$. Since $F$ is stable, 
the restriction theorem of Schneider \cite{sch} (see, also, Ein et al. 
\cite[Thm.~3.4]{ehv}) implies that $F_H$ is stable, for the general plane 
$H \subset \piii$. Since $F_H$ has rank 3, this means that $\tH^0(F_H) = 0$ 
and $\tH^0(F_H^\vee (-1)) = 0$. In particular, $\tH^0(E_H(-2)) = 0$, for the 
general plane $H \subset \piii$. Notice that, by Remark~\ref{R:c1=5onp2}, 
one has, for any plane $H \subset \piii$, $\tH^0(E_H(-4)) = 0$ if $c_2 = 10$, 
and $\tH^0(E_H(-3)) = 0$ if $11 \leq c_2 \leq 13$.   

(a) One has $\tH^2(E(-3)) = 0$ and $\tH^3(E(-4)) \simeq \tH^0(E^\vee)^\vee = 0$ 
hence (see, for example, \cite[Lemma~1.21(a)]{acm1}) $\tH^2(E(l)) = 0$, 
$\forall \, l \geq -3$. Moreover, $\tH^2(E(-4)) \simeq \tH^1(E^\vee)^\vee = 0$. 

(b) One has $\tH^2(E^\vee(1)) \simeq \tH^1(E(-5))^\vee \simeq \tH^1(F(-3))^\vee = 
0$ by Lemma~\ref{L:spectrumf} and $\tH^3(E^\vee) \simeq \tH^0(E(-4))^\vee = 0$. 
The first assertion follows, now, from a slight generalization of the 
Castelnuovo-Mumford lemma (see, for example, \cite[Lemma~1.21(b)]{acm1}). If  
$\tH^1(E(-4)) = 0$ then $\tH^2(E^\vee) \simeq \tH^1(E(-4))^\vee = 0$ 
and $\tH^3(E^\vee(-1)) \simeq \tH^0(E(-3))^\vee = 0$ and one applies, again,  
the above mentioned result. 

(c) One uses the fact that $\tH^0(E^\vee) = 0$, that $\tH^1(E^\vee(-1)) \simeq 
\tH^2(E(-3))^\vee = 0$, that $\tH^1(E^\vee) = 0$, and that $\tH^2(E_H^\vee) 
\simeq \tH^0(E_H(-3))^\vee = 0$, for the general plane $H \subset \piii$ if 
$c_2 = 10$ and for any plane if $11 \leq c_2 \leq 13$ (plus Serre duality).  

(d) One uses the fact that $\tH^i(E^\vee) = 0$, $i = 0,\, 1$, that 
$\tH^2(E_H^\vee(1)) \simeq \tH^0(E_H(-4))^\vee = 0$, for any plane $H \subset 
\piii$, and that $\tH^2(E^\vee(1)) \simeq \tH^1(E(-5))^\vee = 0$, by 
Lemma~\ref{L:yconnected}.  

(e) Let $H \subset \piii$ be a general plane. According to 
\cite[Lemma~1.2]{acm1} and to (c), one has an exact sequence$\, :$ 
\[
0 \lra s\sco_H \lra K \lra E_H \lra 0\, , 
\]  
where $K$ is a globally generated vector bundle on $H \simeq \pii$ with 
$\tH^i(K^\vee) = 0$, $i = 0,\, 1$. Since $\tH^2(K^\vee(-1)) \simeq 
\tH^0(K(-2))^\vee = 0$, $K^\vee$ is 1-regular. In particular, $K^\vee(1)$ is 
globally generated. If $\e : t\sco_H \ra \sco_H(1)$ is an epimorphism then 
$\tH^0(\e(l))$ is surjective, $\forall \, l \geq 0$. One deduces that 
$\tH^1(E_H^\vee(l)) = 0$, $\forall \, l \geq 1$. Since $\tH^1(E^\vee) = 0$, 
one deduces easily that $\tH^1(E^\vee(l)) = 0$, $\forall l \geq 1$. 
Together with (a) this implies that $\tH^1_\ast(E^\vee ) = 0$.   

(f) We treat, firstly, the case $l = 1$. If $H \subset \piii$ is an arbitrary 
plane then $\h^1(E_H^\vee) = s$ (by (c)) and $\h^1(E_H^\vee(1)) = \h^1(E^\vee(1)) 
+ \h^2(E^\vee)$ (by (d)). It follows that, in order to prove that 
$\h^1(E_H^\vee(1)) \leq \text{max}(\h^1(E_H^\vee) - 1 , 0)$, one can assume that 
$H$ is a general plane. We shall, actually, assume that $F_H$ is stable. 
By Serre duality on $H$, one has $\h^1(E_H^\vee) = \h^1(E_H(-3))$ and 
$\h^1(E_H^\vee(1)) \simeq \h^1(E_H(-4))$. Using the exact sequence$\, :$ 
\[
0 \lra (r - 3)\sco_H \lra E_H \lra F_H(2) \lra 0\, ,  
\]  
and applying Prop.~\ref{P:nsubseth1astf}(a) from Appendix~\ref{A:spectrum} 
to $F_H$ one gets that $\h^1(E_H(-4)) = 0$ or $\h^1(E_H(-4)) < 
\h^1(E_H(-3))$.     

Assume, now, that $l \geq 2$ and that $H \subset \piii$ is an arbitrary 
plane. One has $\tH^0(F_H(-1)) = 0$ (because $\tH^0(E_H(-3)) = 0$ by  
Remark~\ref{R:c1=5onp2}) and $\tH^0(F_H^\vee(-2)) = 0$ (because 
$\tH^0(E_H^\vee) = 0$ by (c)). Remark~\ref{R:conditiononf}(ii) implies, now, 
that $F_H(-1)$ satisfies the condition from the hypothesis of 
Prop.~\ref{P:nsubseth1astf}. One has $\h^1(E_H^\vee(l)) = \h^1(E_H(-l-3))$ and 
$\h^1(E_H^\vee(l-1)) = \h^1(E_H(-l-2))$. Applying Prop.~\ref{P:nsubseth1astf} 
to $F_H(-1)$ one gets that $\h^1(E_H(-l-3)) = 0$ or $\h^1(E_H(-l-3)) < 
\h^1(E_H(-l-2))$. 

(g) It follows, from (f) (and (c)), that, for every plane $H \subset \piii$, 
one has $\h^1(E_H^\vee(1)) \leq 1$ and $\h^1(E_H^\vee(l)) = 0$, $\forall \, l 
\geq 2$. The second relation in (d) implies that $\h^1(E^\vee(1)) \leq 1$. 
Since $\tH^1(E_H^\vee(2)) = 0$, for every plane $H \subset \piii$, it follows 
that the multiplication map $h \colon \tH^1(E^\vee(1)) \ra \tH^1(E^\vee(2))$ is 
surjective, $\forall \, 0 \neq h \in \tH^0(\sco_\piii(1))$. Applying the 
Bilinear Map Lemma \cite[Lemma~5.1]{ha}, one deduces that $\tH^1(E^\vee(2)) 
= 0$ and this implies, now, that $\tH^1(E^\vee(l)) = 0$, $\forall \, 
l \geq 3$.      

(h) It follows, from (f) (and (c)), that, for every plane $H \subset \piii$, 
one has $\h^1(E_H^\vee(1)) \leq 2$, $\h^1(E_H^\vee(2)) \leq 
\text{max}(\h^1(E_H^\vee(1)) - 1 , 0)$ and $\h^1(E_H^\vee(l)) = 0$, $\forall \, 
l \geq 3$. Since $\tH^1_\ast(E^\vee)$ is not generated by $\tH^1(E^\vee(1))$, 
one deduces, from (b), that $\h^2(E^\vee) = \h^1(E(-4)) \geq 1$. The second 
relation in (d) implies, now, that $\h^1(E^\vee(1)) = \h^1(E_H^\vee(1)) - 
\h^2(E^\vee) \leq 1$. 

If $H \subset \piii$ is a plane of equation $h = 0$ then one has an exact 
sequence$\, :$ 
\[
\tH^1(E^\vee(1)) \overset{h}{\lra} \tH^1(E^\vee(2)) \lra \tH^1(E_H^\vee(2)) 
\lra 0 
\]  
(because $\h^2(E^\vee(1)) = \h^1(E(-5)) = 0$ by Lemma~\ref{L:yconnected}). 
One deduces that $\h^1(E^\vee(2)) \leq 2$. One cannot, actually, have 
$\h^1(E^\vee(2)) = 2$ because, in that case, one would have $\h^1(E^\vee(1)) 
= 1$ and the multiplication $h \colon \tH^1(E^\vee(1)) \ra \tH^1(E^\vee(2))$ by 
any non-zero linear form $h$ would be injective and this would imply that 
$\h^1(E^\vee(2)) \geq 4$. It remains that $\h^1(E^\vee(2)) \leq 1$. Since 
$\tH^1_\ast(E^\vee)$ is not generated by $\tH^1(E^\vee(1))$, it follows that 
$\h^1(E^\vee(2)) = 1$ and, $\forall \, 0 \neq h \in \tH^0(\sco_\piii(1))$, the 
multiplication map $h \colon \tH^1(E^\vee(1)) \ra \tH^1(E^\vee(2))$ is the 
zero map. One must, also, have $\h^1(E_H^\vee(2)) = 1$ and $\h^1(E_H^\vee(1)) = 
2$, $\forall \, H \subset \piii$, hence $\tH^1_\ast(E^\vee) \simeq 
{\underline k}(-1) \oplus {\underline k}(-2)$ if $\h^1(E(-4)) = 1$ and 
$\tH^1_\ast(E^\vee) \simeq {\underline k}(-2)$ if $\h^1(E(-4)) = 2$.   
\end{proof} 

\begin{remark}\label{R:h2e(-3)=0}  
(i) If $E$ is a vector bundle on $\piii$ with $\tH^0(E^\vee) = 0$ and 
$\tH^2(E(-3)) = 0$ then the graded $S$-module $\tH^1_\ast(E)$ is generated in 
degrees $\leq -2$. 

(ii) If $E$ is a vector bundle on $\piii$ with $\tH^0(E^\vee(1)) = 0$ and 
$\tH^1(E^\vee) = 0$ then the graded $S$-module $\tH^1_\ast(E)$ is generated in 
degrees $\leq -3$. 

\vskip2mm 

\noindent 
\emph{Indeed}, since $\h^0(E^\vee) = \h^3(E(-4))$, $\h^1(E^\vee) = \h^2(E(-4))$ 
and $\h^0(E^\vee(1)) = \h^3(E(-5))$, both assertions follow from the 
Castelnuovo-Mumford lemma (in the slightly more general form stated in 
\cite[Lemma~1.21]{acm1}).   
\end{remark}

\begin{remark}\label{R:beilinson} 
If $E$ is a vector bundle on $\piii$ with $\tH^i(E^\vee) = 0$, $i = 0,\, 1$, 
$\tH^0(E(-2)) = 0$ and $\tH^2(E(-3)) = 0$ then the Beilinson monad of $E(-1)$ 
has the following shape$\, :$ 
\[
\tH^1(E(-4))\otimes \Omega^3_\p(3) \ra \tH^1(E(-3))\otimes \Omega^2_\p(2) \ra 
\begin{matrix} \tH^1(E(-2))\otimes \Omega^1_\p(1)\\ \oplus\\ 
\tH^0(E(-1))\otimes \sco_\p \end{matrix} \ra \tH^1(E(-1))\otimes \sco_\p 
\]
the term of cohomological degree 0 being the direct sum (for information about 
Beilinson monads see, for example, \cite[Thm.~1.23~and~Remark~1.25]{acm1}). 

We shall use this monad to get lower bounds, in concrete situations, for the 
rank of the multiplication map $\mu \colon \tH^1(E(-3)) \otimes S_1 \ra 
\tH^1(E(-2))$, where $S_1 := \tH^0(\sco_\piii(1))$. More precisely, according to 
a result of Eisenbud, Fl\o ystad and Schreyer \cite{efs}, the component 
$\tH^1(E(-3)) \otimes_k \Omega_\piii^2(2) \ra \tH^1(E(-2)) \otimes_k 
\Omega_\piii^1(1)$ of the differential in the middle of the Beilinson monad 
can be identified with the composite map$\, :$ 
\begin{gather*}
\tH^1(E(-3)) \otimes_k \Omega_\piii^2(2) \lra \tH^1(E(-3)) \otimes_k 
\Omega_\piii^1(1) \otimes_k \Omega_\piii^1(1) \lra\\  
\lra \tH^1(E(-3)) \otimes_k S_1 \otimes_k \Omega_\piii^1(1) 
\overset{\mu}{\lra} \tH^1(E(-2)) \otimes_k \Omega_\piii^1(1)\, . 
\end{gather*} 
One deduces that if $c$ is the corank of $\mu$ then one has an exact 
sequence$\, :$ 
\[
0 \lra E(-1) \lra c\Omega_\p^1(1) \oplus Q \lra \tH^1(E(-1)) \otimes \sco_\p 
\lra 0\, , 
\]
where $Q$ is a vector bundle admitting a resolution of the form$\, :$ 
\[
0 \lra 
\tH^1(E(-4))\otimes \Omega^3_\p(3) \lra \tH^1(E(-3)) \otimes \Omega^2_\p(2) 
\lra \begin{matrix} \text{Im}\, \mu \otimes \Omega^1_\p(1)\\ \oplus\\ 
\tH^0(E(-1))\otimes \sco_\p \end{matrix} \lra Q \lra 0\, . 
\]
$Q$ is 1-regular and its Chern classes can be calculated, for example, 
applying relation \eqref{E:c2onp2} from Remark~\ref{R:chern} to $Q_H$, where 
$H \subset \piii$ is a plane, and relation \eqref{E:c3onp3} to Q. If the 
rank of $\mu$ is too small then it will turn out that such a bundle $Q$ 
cannot exist.  
\end{remark}

\section{The case $c_2 = 10$}\label{S:c2=10} 

We denote, in this section, by $E$ a globally generated vector bundle on 
$\piii$, with $c_1 = 5$, $c_2 = 10$, and such that $\tH^i(E^\vee) = 0$, 
$i = 0,\, 1$, and $\tH^0(E(-2)) = 0$. If $E$ has rank $r \geq 3$, we denote 
by $E^\prim$ the rank 3 vector bundle associated to $E$ in the exact sequence 
\eqref{E:oeeprim} and by $F$ the normalized bundle $E^\prim (-2)$. 

\begin{prop}\label{P:eprimunstablec2=10} 
If $E$ has rank $\geq 3$ and $E^\prim$ is not stable then one of the following 
holds$\, :$ 
\begin{enumerate}
\item[(i)] $c_3 = 6$ and $E \simeq \sco_\piii(1) \oplus F^\prim (2)$, where 
$F^\prim$ is a $2$-instanton$\, ;$ 
\item[(ii)] $c_3 = 10$ and $E \simeq {\fam0 T}_\piii(-1) \oplus N(2)$, where 
$N$ is a nullcorrelation bundle. 
\end{enumerate}
\end{prop}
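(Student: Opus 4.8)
The plan is to feed the hypotheses into Lemma~\ref{L:eprimunstable}, whose two cases will produce, respectively, cases (i) and (ii) of the proposition; the equalities $c_3 = c_2 - 4 = 6$ and $c_3 = c_2 = 10$ are read off directly from that lemma, so the entire content is to pin down the rank $2$ bundle $F^\prim$ and then to show that $E$ decomposes. In both cases $c_2(F^\prim)$ is very small, which makes $F^\prim$ essentially unique.

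In the first case $F^\prim$ is stable of rank $2$ with $c_1(F^\prim) = 0$ and $c_2(F^\prim) = c_2 - 8 = 2$. First I would check that $F^\prim$ is a $2$-instanton: since $F^{\prim \vee} \simeq F^\prim$ its spectrum is symmetric, so for $c_2 = 2$ it is $(a, -a)$ with $a \geq 0$, and the analogue for rank $2$ of Remark~\ref{R:spectrumf}(iv) (if some $k \geq 0$ occurs then $0$ occurs) forces $a = 0$; hence $\tH^1(F^\prim(-2)) = 0$ and, by Serre duality, $\tH^2(F^\prim(-2)) = 0$. By Remark~\ref{R:eprimunstable}(i) this gives $r = 3$, so $E = E^\prim$ sits in $0 \lra F^\prim(2) \lra E \lra \sco_\piii(1) \lra 0$. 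To split this I would show $\text{Ext}^1(\sco_\piii(1), F^\prim(2)) \simeq \tH^1(F^\prim(1)) = 0$: Riemann--Roch gives $\chi(F^\prim(1)) = 2$, while $\tH^3(F^\prim(1)) = \tH^0(F^\prim(-5))^\vee = 0$ and $\tH^2(F^\prim(l)) = 0$ for $l \geq -2$, so $\h^1(F^\prim(1)) = \h^0(F^\prim(1)) - 2$; finally a nonzero section of $F^\prim(1)$ vanishes on a subcanonical curve $Z$ of degree $3$ with $\omega_Z = \sco_Z(-2)$ (three skew lines), and the sequence $0 \lra \sco_\piii \lra F^\prim(1) \lra \sci_Z(2) \lra 0$ together with $\h^0(\sci_Z(2)) = 1$ (the three lines lie on a unique quadric) yields $\h^0(F^\prim(1)) = 2$. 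Thus $\tH^1(F^\prim(1)) = 0$, the extension splits, and $E \simeq \sco_\piii(1) \oplus F^\prim(2)$.

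In the second case $F^\prim$ is stable of rank $2$ with $c_1(F^\prim) = 0$ and $c_2(F^\prim) = c_2 - 9 = 1$, hence $F^\prim$ is a nullcorrelation bundle $N$; in particular $\tH^2(N(-2)) = 0$, as one sees at once from its defining sequence $0 \lra N \lra \text{T}_\piii(-1) \lra \sco_\piii(1) \lra 0$. This is precisely the setting of Remark~\ref{R:eprimunstable2}(ii): by Remark~\ref{R:eprimunstable}(ii) one has $r = 5$, and the remark already produces the resolution $0 \lra \sco_\piii(-1) \xra{\left(\begin{smallmatrix} u\\ v \end{smallmatrix}\right)} N(2) \oplus 4\sco_\piii \lra E \lra 0$ with $\tH^0(v^\vee)$ an isomorphism. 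It then remains to verify the surjectivity of the multiplication map $\tH^0(N(2)) \otimes \tH^0(\sco_\piii(1)) \lra \tH^0(N(3))$, which I would extract from the same defining sequence for $N$; granting it, the remark permits normalizing $u = 0$ and concludes $E \simeq \text{T}_\piii(-1) \oplus N(2)$.

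The main obstacle is the vanishing $\tH^1(F^\prim(1)) = 0$ in the first case, that is, the \emph{uniform} equality $\h^0(F^\prim(1)) = 2$ over the whole family of $2$-instantons: for the generic member (three genuinely skew lines on a smooth quadric) the count above is immediate, but the degenerate members of the family must be controlled as well, and for this I would rely on the instanton cohomology assembled in Appendix~\ref{A:instantons}. By contrast the multiplication surjectivity needed in the second case is routine, since the cohomology of the nullcorrelation bundle is completely explicit.
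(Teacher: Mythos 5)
Your proof follows the same route as the paper's: Lemma~\ref{L:eprimunstable} produces the two cases with $c_3 = 6$ and $c_3 = 10$, Remark~\ref{R:eprimunstable} pins down the rank ($r = 3$, resp.\ $r = 5$), and in the second case the conclusion $E \simeq \text{T}_\piii(-1) \oplus N(2)$ is obtained, exactly as in the paper, from Remark~\ref{R:eprimunstable2}(ii) together with the surjectivity of $\tH^0(N(2)) \otimes \tH^0(\sco_\piii(1)) \ra \tH^0(N(3))$ --- the paper deduces this from the $1$-regularity of $N$, and your computation from $0 \ra N \ra \text{T}_\piii(-1) \ra \sco_\piii(1) \ra 0$ is an equivalent way to get it. Your identification of $F^\prim$ as a $2$-instanton via the symmetry and connectedness properties of the rank-$2$ spectrum is also fine (the paper simply asserts this fact).

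The genuine gap is the step you flag yourself: the vanishing $\tH^1(F^\prim(1)) = 0$ for \emph{every} $2$-instanton, which is what makes the extension in case (i) split. Your reduction $\h^1(F^\prim(1)) = \h^0(F^\prim(1)) - 2 = \h^0(\sci_Z(2)) - 1$ is correct, but the zero scheme $Z$ of a nonzero section of $F^\prim(1)$ is only known to be a locally complete intersection curve of degree $3$ with $\omega_Z \simeq \sco_Z(-2)$: besides three pairwise skew lines, $Z$ can be the union of a line and a double line, or a triple structure on a line (all with $\chi(\sco_Z) = 3$), and for these degenerate $Z$ the equality $\h^0(\sci_Z(2)) = 1$ is precisely what remains to be proved; your count only handles the generic case. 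Moreover, the patch you propose does not exist: Appendix~\ref{A:instantons} treats $3$- and $4$-instantons (the Gruson--Skiti stratification, jumping lines, Lemma~\ref{L:h1fprim(2)=0}, Lemma~\ref{L:c1=0c2=4h1fprim(2)=0}) and contains nothing about $2$-instantons. The paper closes this step by citing \cite[Remark~4.7]{acm1}. If you want an argument not relying on that reference, grant the classical monad description of a $2$-instanton, $0 \ra 2\sco_\piii(-1) \ra 6\sco_\piii \overset{\e}{\lra} 2\sco_\piii(1) \ra 0$; then Lemma~\ref{L:mora2o(1)} of this paper says $\tH^0(\e(1))$ is surjective, hence $\tH^1(K(1)) = 0$ for $K = \Ker \e$, and the exact sequence $0 \ra 2\sco_\piii(-1) \ra K \ra F^\prim \ra 0$ gives $\tH^1(F^\prim(1)) = 0$ because $\tH^2(\sco_\piii) = 0$.
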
 

\begin{proof} 
Lemma~\ref{L:eprimunstable} implies that either $c_3 = 6$ and $E^\prim$ can be 
realized as an extension$\, :$ 
\[
0 \lra F^\prim(2) \lra E^\prim \lra \sco_\piii(1) \lra 0 
\]
where $F^\prim$ is a stable rank 2 vector bundle with $c_1(F^\prim) = 0$, 
$c_2(F^\prim) = 2$, or $c_3 = 10$ and $E^\prim$ can be realized as an 
extension$\, :$ 
\[
0 \lra F^\prim(2) \lra E^\prim \lra \sci_L(1) \lra 0 
\]
where $F^\prim$ is a stable rank 2 vector bundle with $c_1(F^\prim) = 0$, 
$c_2(F^\prim) = 1$ and $L$ is a line. 

\vskip2mm 

In the former case, $F^\prim$ is a 2-instanton hence, by 
Remark~\ref{R:eprimunstable}(i), $E$ has rank 3 (because $\tH^2(F^\prim(-2)) = 
0$) hence $E = E^\prim$. Since $\tH^1(F^\prim(1)) = 0$ (see, for example, 
\cite[Remark~4.7]{acm1}) the above extension splits hence $E \simeq 
\sco_\piii(1) \oplus F^\prim(2)$. 

\vskip2mm 

In the latter case, $F^\prim$ is a nullcorrelation bundle $N$ (that is, an 
1-instanton). Since $N$ is 1-regular, the multiplication map  
$\tH^0(N(2)) \otimes \tH^0(\sco_\piii(1)) \ra \tH^0(N(3))$ is surjective. 
It follows, now, from Remark~\ref{R:eprimunstable2}(ii), that $E \simeq N(2) 
\oplus \text{T}_\piii(-1)$. 
\end{proof}

\begin{prop}\label{P:eprimstablec2=10} 
Let $E$ be a rank $r$ vector bundle on $\piii$ as at the beginning of this 
section. Assume that either $r \geq 3$ and the rank $3$ vector bundle $E^\prim$ 
associated to $E$ in the exact sequence \eqref{E:oeeprim} is stable or that 
$r = 2$. Then one of the following holds$\, :$ 
\begin{enumerate} 
\item[(i)] $c_3 = 10$ and $E \simeq 5\sco_\piii(1)$$\, ;$ 
\item[(ii)] $c_3 = 8$ and $E \simeq \sco_\piii(1) \oplus E_0$ where, up to a 
linear change of coordinates, $E_0$ is the kernel of the epimorphism 
\[
(x_0,\, x_1,\, x_2,\, x_3^2) : 3\sco_\piii(2) \oplus \sco_\piii(1) \lra 
\sco_\piii(3)\, ; 
\]
\item[(iii)] $c_3 = 6$ and $E$ can be realized as a nontrivial extension$\, :$ 
\[
0 \lra \sco_\piii(1) \lra E \lra G(2) \lra 0\, ,
\] 
where $G$ is a $2$-instanton$\, ;$ 
\item[(iv)] $c_3 = 4$ and one has an exact sequence$\, :$ 
\[
0 \lra E \lra \sco_\piii(3) \oplus 3\sco_\piii(2) \lra \sco_\piii(4) \lra 0\, ; 
\] 
\item[(v)] $c_3 = 0$ and $E \simeq G(3)$, where $G$ is a general rank $2$ 
vector bundle with $c_1(G) = -1$, $c_2(G) = 4$ and such that ${\fam0 H}^0(G(1)) 
= 0$. 
\end{enumerate} 
\end{prop}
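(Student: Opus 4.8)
The plan is to treat the rank~$2$ case and the ``$E^\prim$ stable'' case separately, and in the latter to run through the possible spectra of the normalized rank~$3$ bundle $F := E^\prim(-2)$. First I would dispose of the rank~$2$ case. If $r = 2$ then $E = G(3)$ with $G$ of rank $2$ and $c_1(G) = -1$; the twist formulas give $c_2(G) = 4$ and $c_3 = 0$, while $\tH^0(E(-2)) = 0$ becomes $\tH^0(G(1)) = 0$ (so $G$ is stable) and $\tH^i(E^\vee) = 0$, $i = 0,1$, becomes a vanishing which, together with global generation of $G(3)$, singles out the ``general'' member of this family. This is exactly case (v); I would not attempt to make ``general'' precise here, deferring the existence and the global generation of $G(3)$ to the results on rank~$2$ bundles with $c_1 = -1$, $c_2 = 4$.

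For $r \geq 3$ with $E^\prim$ stable, the bundle $F$ is stable of rank $3$ with $c_1(F) = -1$, $c_2(F) = 2$. By Lemma~\ref{L:spectrumf} its spectrum $k_F = (k_1, k_2)$ satisfies $1 \geq k_1 \geq k_2 \geq -1$, and the combinatorial constraints of Remark~\ref{R:spectrumf} leave only $(1,0)$, $(0,0)$, $(0,-1)$ and $(-1,-1)$. Property (iii) of the spectrum, in the form $-2\sum k_i = c_3 - c_2 + 4$, together with formula \eqref{E:r}, attaches to these spectra the invariants $(c_3, r) = (4,3),\ (6,3),\ (8,4),\ (10,5)$, matching cases (iv), (iii), (ii), (i) respectively. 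In each case the spectrum gives $\tH^2(E(-3)) = \tH^2(F(-1)) = 0$, so that Lemma~\ref{L:h2e(-3)=0} is available, and reading $\h^1(F(-1))$ and $\h^1(F(-2))$ off the spectrum yields $s := \h^1(E(-3)) - \h^1(E(-4)) = 0,\,1,\,2,\,2$ for $(-1,-1),\ (0,-1),\ (0,0),\ (1,0)$.

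The spectra $(-1,-1)$ and $(0,-1)$, where $s \leq 1$, are the easier ones: Lemma~\ref{L:h2e(-3)=0}(e) gives $\tH^1_\ast(E^\vee) = 0$, i.e.\ $\tH^2_\ast(E) = 0$. For $(-1,-1)$ I would, in addition, establish $\tH^1_\ast(E) = 0$ -- the only degrees not recorded by the spectrum being $\h^1(F(l))$ for $l \geq 0$, which I would kill using regularity and global generation -- so that $E$ splits; being globally generated with no trivial summand and with $(c_1, c_2, c_3) = (5,10,10)$, it must be $5\sco_\piii(1)$, which is case (i). For $(0,-1)$ the same vanishing lets me peel off a sub-line-bundle $\sco_\piii(1)$ (produced by a section of $E(-1)$, rigidified by $\tH^0(E(-2)) = 0$) and identify the rank~$3$ complement $E_0$ as the kernel of the epimorphism $(x_0, x_1, x_2, x_3^2)$ through its minimal monad, giving case (ii).

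The genuinely hard cases are the two with $s = 2$, namely $(0,0)$ and $(1,0)$, where $r = 3$ and $E = E^\prim = F(2)$ is itself the stable rank~$3$ bundle, so Lemma~\ref{L:h2e(-3)=0}(e) no longer applies. Here I would feed the cohomology furnished by the spectrum into the Beilinson monad of $E(-1)$ (Remark~\ref{R:beilinson}) and use global generation to bound the corank of the multiplication map $\mu \colon \tH^1(E(-3)) \otimes S_1 \to \tH^1(E(-2))$, forcing the monad into the asserted shape: for $(1,0)$ the presentation $0 \to E \to \sco_\piii(3) \oplus 3\sco_\piii(2) \to \sco_\piii(4) \to 0$ of case (iv), and for $(0,0)$ the extension $0 \to \sco_\piii(1) \to E \to G(2) \to 0$ of case (iii), where $G$ is rank $2$ with $c_1(G) = 0$, $c_2(G) = 2$ and the instanton condition $\tH^1(G(-2)) = 0$ coming from the vanishing already obtained; the extension must be non-split, for a splitting would exhibit $G(2)$ as a destabilizing quotient of $E^\prim$, contradicting stability. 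The main obstacle throughout is precisely the cohomology the spectrum does not see -- the values $\h^1(F(l))$ for $l \geq 0$ -- which must be controlled (by regularity, by Serre duality applied to the dual bundle, and by global generation) before any of the Beilinson or Horrocks monads can be pinned down; equivalently, the real work lies in showing that no monad other than the claimed ones can occur and that the claimed ones do produce a globally generated $E$.
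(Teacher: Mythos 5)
Your reduction and bookkeeping are correct and agree with the paper: the rank-$2$ case gives (v) at once, the admissible spectra of $F = E^\prim(-2)$ are exactly $(-1,-1)$, $(0,-1)$, $(0,0)$, $(1,0)$ with $(c_3,r) = (10,5)$, $(8,4)$, $(6,3)$, $(4,3)$, and your values of $s$ are right. Your treatment of $(-1,-1)$ and $(0,-1)$ is sound and close to the paper's, though the paper gets (i) more directly from Lemma~\ref{L:edual(1)}(a): $\tH^1(E(-3)) = 0$ makes $E^\vee$ $1$-regular, hence $E^\vee(1)$ globally generated with $c_1 = 0$, hence trivial; and in (ii) the summand $\sco_\piii(1)$ is extracted from the Horrocks correspondence after computing $\tH^1_\ast(E) \simeq (S/(h_0,\, h_1,\, h_2,\, h_3^2))(3)$, not by ``peeling off'' a section of $E(-1)$ --- such sections exist but can vanish along curves, so by themselves they split off nothing.

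The genuine gap is in the two cases you yourself call hard. For these the paper uses neither Beilinson monads, nor global generation, nor the map $\mu \colon \tH^1(E(-3)) \otimes S_1 \ra \tH^1(E(-2))$: it invokes the structure Lemmas~\ref{L:(3;-1,2,-2)} and~\ref{L:(3;-1,2,-4)} of Appendix~\ref{A:(3;-1,2,-)}, which hold for \emph{every} stable $F$ with those Chern classes, so global generation cannot be what ``forces the monad into the asserted shape''. Concretely, for the spectrum $(1,0)$ the presentation (iv) arises from the universal extension $0 \ra F \ra Q \ra \sco_\piii(2) \ra 0$ attached to the generator of $\tH^1(F(-2))$; the decisive input is surjectivity of $\tH^1(F(-2)) \otimes S_1 \ra \tH^1(F(-1))$ --- one degree below your $\mu$ --- which follows from stability alone (two independent annihilating linear forms would give $\tH^0(\sci_L \otimes F) \neq 0$, contradicting $\tH^0(F) = 0$); this makes $Q$ $0$-regular, and counting sections forces $Q \simeq \sco_\piii(1) \oplus 3\sco_\piii$. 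For the spectrum $(0,0)$ the crux is producing a locally split $\sco_\piii(1) \hookrightarrow E$ with locally free quotient: Riemann--Roch gives $\h^0(F(1)) \geq 3$, but a section of $F(1)$ may vanish along a curve, and most of the proof of Lemma~\ref{L:(3;-1,2,-2)} is the case analysis (the kernel $\scg$ of $F^\vee \ra \sci_Z(1)$ being a nullcorrelation bundle with $Z_{\fam0 CM}$ a line, or $2\sco_\piii$ with $Z$ of degree two) needed, in the degenerate cases, to manufacture a \emph{different} epimorphism $F^\vee \ra \sco_\piii(1)$ whose kernel is a $2$-instanton. Bounding the corank of $\mu$ via Remark~\ref{R:beilinson} accomplishes neither of these steps, so as written the hard half of the proposition remains unproved. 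A minor separate slip: if the extension in (iii) split, $G(2)$ would be a destabilizing \emph{subsheaf} of $E^\prim$ (a quotient of slope $2 > 5/3$ destabilizes nothing); the non-splitting conclusion itself is correct.
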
 

\begin{proof} 
We treat, firstly, the case $r \geq 3$. Let $F = E^\prim (-2)$ be the normalized 
rank 3 vector bundle associated to $E^\prim$. It has Chern classes $c_1(F) = 
-1$, $c_2(F) = 2$, $c_3(F) = c_3 - 8$ (see \eqref{E:chernf}). 
If $k_F = (k_1, k_2)$ is the spectrum of $F$ then $c_3(F) = -2\sum k_i - 2$ 
(see Remark~\ref{R:spectrumf}(iii)). Moreover, $r = 3 + \h^2(F(-2))$ (see 
\eqref{E:r}). According to Remark~\ref{R:spectrumf}, the possible spectra of 
$F$ are $(-1,-1)$, $(0,-1)$, $(0,0)$ and $(1,0)$.    

\vskip2mm 

\noindent
{\bf Case 1.}\quad $F$ \emph{has spectrum} $(-1,-1)$. 

\vskip2mm 

\noindent 
In this case, $r = 5$, $c_3(F) = 2$ and $c_3 = 10$. 
Using the spectrum, one gets that $\tH^1(E(-3)) \simeq \tH^1(F(-1)) = 0$.  
Lemma~\ref{L:edual(1)} implies, now, that $E^\vee$ is 1-regular. 
In particular, $E^\vee (1)$ is globally generated. Since $c_1(E^\vee (1)) = 0$ 
it follows that $E^\vee (1) \simeq 5\sco_\piii$ hence $E \simeq 5\sco_\piii(1)$.  

\vskip2mm 

\noindent 
{\bf Case 2.}\quad $F$ \emph{has spectrum} $(0,-1)$. 

\vskip2mm 

\noindent
In this case, $r = 4$, $c_3(F) = 0$ and $c_3 = 8$. 
Using the spectrum, one gets that $\h^1(E(l)) = \h^1(F(l + 2)) = 0$ for 
$l \leq -4$, $\h^1(E(-3)) = 1$ and $\tH^2(E(-3)) \simeq \tH^2(F(-1)) = 0$.  
In particular, $s := \h^1(E(-3)) - \h^1(E(-4)) = 1$. 
Lemma~\ref{L:h2e(-3)=0}(e) implies, now, that $\tH^2_\ast(E) = 0$.   
Applying Riemann-Roch to $F$ one gets that $\h^1(E(-2)) = \h^1(F) = 1$. 
Moreover, by the proof of Lemma~\ref{L:(3;-1,2,0)}, $\h^1(E(l)) = 
\h^1(F(l + 2)) = 0$, for $l \geq -1$. 
Since, for a general plane $H \subset \piii$, of equation $h = 0$, one has 
$\tH^0(E_H(-2)) \simeq \tH^0(F_H) = 0$, the multiplication by 
$h \colon \tH^1(E(-3)) \ra \tH^1(E(-2))$ is non-zero. One deduces that there 
exists a $k$-basis of $h_0, \ldots , h_3$ of $S_1$ such that$\, :$ 
\[
\tH^1_\ast(E) \simeq (S/(h_0,\, h_1,\, h_2,\, h_3^2))(3)\, . 
\] 
Denoting by $E_0$ the kernel of the epimorphism$\, :$ 
\[
(h_0,\, h_1,\, h_2,\, h_3^2) : 3\sco_\piii(2) \oplus \sco_\piii(1) \lra 
\sco_\piii(3)
\] 
one gets that $E \simeq A \oplus E_0$, where $A$ is a direct sum of line 
bundles (by Horrocks theory). Using the fact that $\text{rk}\, E = 4$ and 
$c_1(E) = 5$ one deduces that $A = \sco_\piii(1)$. 

\vskip2mm 

\noindent
{\bf Case 3.}\quad $F$ \emph{has spectrum} $(0,0)$. 

\vskip2mm 

\noindent
In this case, $r = 3$, $c_3(F) = -2$ and $c_3 = 6$. Since $r = 3$ it follows 
that $E = F(2)$. Using Lemma~\ref{L:(3;-1,2,-2)}, one gets that $E$ is as in 
item (iii) of the statement. 

\vskip2mm 

\noindent 
{\bf Case 4.}\quad $F$ \emph{has spectrum} $(1,0)$. 

\vskip2mm 

\noindent
In this case, $r = 3$, $c_3(F) = -4$ and $c_3 = 4$. Since $r = 3$ it follows 
that $E = F(2)$. Using Lemma~\ref{L:(3;-1,2,-4)}, one gets that $E$ is as in 
item (iv) of the statement. 

\vskip2mm 

\noindent
{\bf Case 5.}\quad $E$ \emph{has rank} $2$. 

\vskip2mm 

\noindent 
In this case, $G := E(-3)$ is a rank 2 vector bundle with $c_1(G) = -1$, 
$c_2(G) = 4$ and such that $\tH^0(G(1)) = 0$ hence $E$ is as in item (v) of 
the statement. 

\vskip2mm 

\noindent 
{\bf Construction 5.0.}\quad According to the results of B\u{a}nic\u{a} and 
Manolache \cite{bm} a general rank 2 vector bundle $G$ on $\piii$ 
with $c_1(G) = -1$, $c_2(G) = 4$ and $\tH^0(G(1)) = 0$ can be realized as an 
extension$\, :$   
\[
0 \lra \sco_\piii(-2) \lra G \lra \sci_X(1) \lra 0\, , 
\]
where $X$ is a double structure on a twisted cubic curve $C \subset \piii$ 
(see, also, Prop.~\ref{P:doubleskewcubic} from Appendix~\ref{A:(2;-1,4,0)}). 
We want to show that, for any such bundle, $G(3)$ is globally generated.  

\emph{Indeed}, one must have $\omega_X \simeq \sco_X(-1)$. 
By the results of Ferrand \cite{f}, if $X^\prim$ is a double structure on a 
nonsingular curve $C^\prim$ in a nonsingular threefold $P^\prim$ and 
if $L$ is a line bundle on $P^\prim$ then $\omega_{X^\prim} \simeq L \vb X^\prim$ 
if and only if $\sci_{C^\prim}/\sci_{X^\prim} \simeq \omega_{C^\prim} \otimes 
(L^{-1} \vb C^\prim)$.  
It follows that, in our case, the ideal sheaf of $X$ is defined by an exact 
sequence$\, :$ 
\[
0 \lra \sci_X \lra \sci_C \lra \omega_C(1) \lra 0\, . 
\] 
Now, $C$ is the image of an embedding $\nu : \pj \ra \piii$ such that 
$\nu^\ast \sco_\piii(1) \simeq \sco_\pj(3)$. Choose a basis $t_0,\, t_1$ of 
$\tH^0(\sco_\pj(1))$ and let $h_0, \ldots , h_3$ be the basis of 
$\tH^0(\sco_\piii(1))$ for which $\nu^\ast(h_i) = t_0^{3-i}t_1^i$, 
$i = 0, \ldots , 3$. Applying $\nu^\ast$ to the exact sequence$\, :$ 
\[
0 \lra 2\sco_\piii(-3) \xra{\left(\begin{smallmatrix} h_0 & h_1\\ h_1 & h_2\\ 
h_2 & h_3 \end{smallmatrix}\right)} 3\sco_\piii(-2) \lra \sci_C 
\lra 0\, , 
\]
and taking into account the matrix relation$\, :$ 
\[
\begin{pmatrix} t_0^3 & t_0^2t_1\\ t_0^2t_1 & t_0t_1^2\\ t_0^2t_1 & t_1^3 
\end{pmatrix} = 
\begin{pmatrix} t_0^2\\ t_0t_1\\ t_1^2 \end{pmatrix} (t_0\, ,\, t_1)\, , 
\]
one gets an exact sequence$\, :$ 
\[
0 \lra \sco_\pj(-8) \lra 3\sco_\pj(-6) \lra \nu^\ast(\sci_C/\sci_C^2) \lra 0 
\] 
from which one derives that that $\nu^\ast(\sci_C/\sci_C^2) \simeq 
2\sco_\pj(-5)$. Moreover, using the exact sequence$\, :$ 
\[
0 \lra \nu_\ast\sco_\pj(-2) \lra 3\sco_C \lra (\sci_C/\sci_C^2)(2) \lra 0\, ,  
\] 
one deduces that $\tH^0(3\sco_C(1)) \ra \tH^0((\sci_C/\sci_C^2)(3))$ is 
surjective. This implies that the map $\tH^0(\sci_C(3)) \ra 
\tH^0((\sci_C/\sci_C^2)(3))$ is surjective, hence $\tH^1(\sci_C^2(3)) = 0$. It 
follows that $\sci_C^2$ is 4-regular. Now, using the exact sequence$\, :$ 
\[
0 \lra \sci_X/\sci_C^2 \lra \sci_C/\sci_C^2 \lra \omega_C(1) \lra 0\, , 
\]
one deduces that $\nu^\ast(\sci_X/\sci_C^2) \simeq \sco_\pj(-11)$. Since 
$\sci_C^2(4)$ is globally generated and $\tH^1(\sci_C^2(4)) = 0$, it follows 
that $\sci_X(4)$ is globally generated hence $G(3)$ is globally generated. 

\vskip2mm 

\noindent 
{\bf Construction 5.1.}\quad A special type of rank 2 vector bundles $G$ on 
$\piii$ with $c_1(G) = -1$, $c_2(G) = 4$ and $\tH^0(G(1)) = 0$ are those 
bundles that can be realized as extensions$\, :$ 
\[
0 \lra \sco_\piii(-2) \lra G \lra \sci_X(1) \lra 0\, , 
\] 
where $X$ is the union of three mutually disjoint nonsingular conics 
$C_0,\, C_1,\, C_2$. Let $H_i \subset \piii$ be the plane containing $C_i$, 
$i = 0,\, 1,\, 2$. If $H_0 \cap H_1 \cap C_2 \neq \emptyset$ then $G(3)$ is 
not globally generated because $H_0 \cap H_1$ is a 5-secant of $X$. 

We want to show that, on the other hand, if $H_0 \cap H_1 \cap H_2$ consists 
of only one point $P$ which does not belong to any of the conics 
$C_0,\, C_1,\, C_2$ then $G(3)$ is globally generated (which is, of course, 
equivalent to the fact that $\sci_X(4)$ is globally generated).  

\emph{Indeed}, put $Y := C_0 \cup C_1$, $L := H_0 \cap H_1$ and $\Gamma := 
H_2 \cap Y$. $\Gamma$ is a complete intersection of type $(2,2)$ in $H_2 
\simeq \pii$. Let $h_2 = 0$ be an equation of $H_2$ ($h_2 \in 
\tH^0(\sco_\piii(1))$) and let $h_2 = 0$, $q_2 = 0$ be equations of $C_2$ 
($q_2 \in \tH^0(\sco_\piii(2))$). Since $X = Y \cup C_2$, one has an exact 
sequence$\, :$ 
\[
0 \lra \sci_Y(-1) \overset{h_2}{\lra} \sci_X \lra q_2\sci_{\Gamma , H_2}(-2) 
\lra 0\, . 
\]  

\noindent
{\bf Claim.}\quad \emph{The cokernel of the evaluation morphism} 
$\e^\prim \colon \tH^0(\sci_Y(3))\otimes_k\sco_\piii \ra \sci_Y(3)$ 
\emph{is isomorphic to} $\sci_{L \cap Y , L}(3)$. 

\vskip2mm 

\noindent 
\emph{Indeed}, the map $\tH^0(\sco_Y(1)) \ra \tH^0(\sco_{L \cap Y}(1))$ is 
surjective ($L \cap Y$ is the disjoint union of $L \cap C_0$ and $L \cap 
C_1$). Using the exact sequences$\, :$ 
\[
0 \lra \tH^0(\sco_{Y \cup L}(l)) \lra \tH^0(\sco_Y(l)) \oplus \tH^0(\sco_L(l)) 
\lra \tH^0(\sco_{L \cap Y}(l)) \lra 0\, ,\  l = 1,\, 2\, ,   
\]  
one gets, on one hand, that $\tH^1(\sco_{Y \cup L}(1)) = 0$ and, on the other 
hand, that $\h^0(\sco_{Y \cup L}(2)) = 9$. Since $\h^0(\sci_{Y \cup L}(2)) = 
\h^0(\sci_Y(2)) = 1$, it follows that $\tH^1(\sci_{Y \cup L}(2)) = 0$ hence 
$\sci_{Y \cup L}$ is 3-regular. In particular, $\sci_{Y \cup L}(3)$ is globally 
generated. Since $\tH^0(\sci_Y(3)) = \tH^0(\sci_{Y \cup L}(3))$ (because $L$ is 
a 4-secant of $Y$) one deduces that the cokernel of the evaluation morphism 
of $\sci_Y(3)$ is $(\sci_Y/\sci_{Y \cup L})(3) \simeq \sci_{L \cap Y , L}(3)$ and 
the claim is proven.   

\vskip2mm 

Now, let $\sck$ be the kernel of the evaluation epimorphism $\e^\secund \colon 
q_2\tH^0(\sci_{\Gamma , H_2}(2))\otimes_k\sco_\piii \ra q_2\sci_{\Gamma , H_2}(2)$. 
($\sck$ is, actually, a stable rank 2 reflexive sheaf on $\piii$ with 
$c_1(\sck) = -1$, $c_2(\sck) = 2$, $c_3(\sck) = 4$). Since $\tH^1(\sci_Y(3)) 
= 0$ one gets a commutative diagram$\, :$
\[
\SelectTips{cm}{12}\xymatrix{
0\ar[r] & \tH^0(\sci_Y(3)) \otimes \sco_\piii\ar[r]^-{h_2}\ar[d]^{\e^\prim}  & 
\tH^0(\sci_X(4)) \otimes \sco_\piii\ar[r]\ar[d]^{\e} &  
q_2\tH^0(\sci_{\Gamma , H_2}(2)) \otimes \sco_\piii\ar[r]\ar[d]^{\e^\secund} & 0\\ 
0\ar[r] & \sci_Y(3)\ar[r]^-{h_2} & \sci_X(4)\ar[r] & 
q_2\sci_{\Gamma , H_2}(2)\ar[r] & 0} 
\]
In order to show that $\sci_X(4)$ is globally generated one must show that the 
connecting morphism $\delta : \sck \ra \sci_{L \cap Y , L}(3)$, obtained by 
applying the Snake Lemma to this diagram, is an epimorphism. 

Since $\sci_{L \cap Y , L}(4) \simeq \sco_L$ it suffices, actually, to show that 
there exists a global section of $\sck(1)$ whose image by $\delta(1)$ is a 
non-zero global section of $\sci_{L \cap Y , L}(4)$. 
But $\tH^0(\sck(1)) = q_2\tH^0(\sci_{\Gamma , H_2}(2)) \otimes h_2$. Let $g$ be an 
element of $\tH^0(\sci_{\Gamma , H_2}(2))$. $q_2g$ can be lifted to 
${\widetilde g} \in \tH^0(\sci_X(4))$. $\e(1)$ maps ${\widetilde g} \otimes 
h_2$ to ${\widetilde g}h_2 \in \tH^0(\sci_X(5))$. But ${\widetilde g}$ 
can be also considered as an element of $\tH^0(\sci_Y(4))$. It is clear, now, 
that$\, :$ 
\[
\delta(1)(q_2g \otimes h_2) = {\widetilde g} \vb L \in 
\tH^0(\sci_{L \cap Y , L}(4))\, . 
\]    

Now, since $\Gamma$ is a complete intersection of type (2,2) in $H_2$ and since 
$H_2 \cap L$ does not belong to $\Gamma$, there exists $g \in 
\tH^0(\sci_{\Gamma , H_2}(2))$ not vanishing in $H_2 \cap L$. On the other hand, 
$q_2$ does not vanish in $H_2 \cap L$ because $H_2 \cap L$ does not belong to 
$C_2$. It follows that $\widetilde g$ does not vanish in $H_2 \cap L$ hence 
${\widetilde g} \vb L$ is a nonzero global section of $\sci_{L \cap Y , L}(4)$. 
This concludes the proof of the global generation of $\sci_X(4)$ and, 
consequently, that of $G(3)$.  
\end{proof}

\section{The case $c_2 = 11$}\label{S:c2=11} 

We denote, in this section, by $E$ a globally generated vector bundle on 
$\piii$, with $c_1 = 5$, $c_2 = 11$, and such that $\tH^i(E^\vee) = 0$, 
$i = 0,\, 1$, and $\tH^0(E(-2)) = 0$. Since $c_3 \equiv c_2 \pmod{2}$ it 
follows that $E$ must have rank $r \geq 3$. We denote by $E^\prim$ the rank 3 
vector bundle associated to $E$ in the exact sequence \eqref{E:oeeprim} and by 
$F$ the normalized bundle $E^\prim (-2)$.

\begin{lemma}\label{L:h1e(l)} 
${\fam0 h}^1(E(l)) \leq {\fam0 max}(0,\, {\fam0 h}^1(E(l-1)) - 3)$, 
$\forall \, l \geq -1$. 
\end{lemma}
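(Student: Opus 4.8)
The plan is to deduce this decay estimate from the Bilinear Map Lemma \cite[Lemma~5.1]{ha}, applied not to $E$ itself but to the dual bundle $E^\vee$, after first establishing a surjectivity statement for multiplication by linear forms on $\tH^1_\ast(E)$. This is, in effect, the dual counterpart of the growth estimate recorded in Remark~\ref{R:muh1e(-4)}.

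First I would show that, for every plane $H \subset \piii$ and every $l \geq -1$, one has $\tH^1(E_H(l)) = 0$. Indeed, $E_H$ is a globally generated vector bundle on $H \simeq \pii$ with $c_1 = 5$ and $c_2 = 11$, so by Remark~\ref{R:c1=5onp2}(ii) one has $E_H \simeq G^\prim \oplus t\sco_\pii$, with $G^\prim$ the cokernel of an inclusion $s\sco_\pii \ra F^\prim$, where $F^\prim$ is a direct sum of copies of $\sco_\pii(1)$, $\sco_\pii(2)$ and $\text{T}_\pii(-1)$. Each such summand $L$ satisfies $\tH^1(L(l)) = 0$ for $l \geq -1$ (for $\text{T}_\pii(-1)$ this is the vanishing $\tH^1(\text{T}_\pii(l-1)) = 0$ for $l - 1 \geq -2$, which follows from the Euler sequence), while $\tH^2(\sco_\pii(l)) = 0$ for $l \geq -2$. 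Feeding these into the cohomology sequence of $0 \ra s\sco_\pii \ra F^\prim \ra G^\prim \ra 0$ twisted by $l$ gives $\tH^1(G^\prim(l)) = 0$, hence $\tH^1(E_H(l)) = 0$, for $l \geq -1$.

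Next, for a nonzero $h \in \tH^0(\sco_\piii(1))$ defining a plane $H$, and for $l \geq -1$, the cohomology sequence of $0 \ra E(l-1) \overset{h}{\lra} E(l) \ra E_H(l) \ra 0$ together with the vanishing just proved shows that the multiplication map $h \colon \tH^1(E(l-1)) \ra \tH^1(E(l))$ is surjective. By Serre duality (using $\tH^2(E^\vee(m)) \simeq \tH^1(E(-m-4))^\vee$) this map is the transpose of the multiplication map $h \colon \tH^2(E^\vee(-l-4)) \ra \tH^2(E^\vee(-l-3))$; hence the latter is injective, for every nonzero $h$.

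Finally I would apply the Bilinear Map Lemma to the multiplication map $\tH^2(E^\vee(-l-4)) \otimes_k \tH^0(\sco_\piii(1)) \ra \tH^2(E^\vee(-l-3))$: injectivity of multiplication by each nonzero $h$ is exactly its no-zero-divisor hypothesis, so, assuming $\tH^2(E^\vee(-l-4)) \neq 0$, the span of the image has dimension at least $\h^2(E^\vee(-l-4)) + 4 - 1$. Since this span lies in $\tH^2(E^\vee(-l-3))$ and, by Serre duality, $\h^2(E^\vee(-l-4)) = \h^1(E(l))$ and $\h^2(E^\vee(-l-3)) = \h^1(E(l-1))$, this reads $\h^1(E(l-1)) \geq \h^1(E(l)) + 3$, i.e. $\h^1(E(l)) \leq \h^1(E(l-1)) - 3$ whenever $\h^1(E(l)) \neq 0$; combined with the trivial bound $\h^1(E(l)) \geq 0$ this is the assertion. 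The only substantial input is the plane-restriction vanishing of the first step, which rests on the $\pii$-classification; the remaining steps are a formal dualization, and the point to watch is the compatibility of Serre duality with multiplication by linear forms, which is what converts the surjectivity into the injectivity needed to feed the Bilinear Map Lemma.
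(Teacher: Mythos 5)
Your proof is correct and follows essentially the same route as the paper: restriction to planes plus Remark~\ref{R:c1=5onp2} gives $\tH^1(E_H(l)) = 0$ for $l \geq -1$, hence surjectivity of multiplication by $h$ on $\tH^1_\ast(E)$, and then the Bilinear Map Lemma applied to the transposed maps yields the inequality. The only difference is cosmetic: the paper applies the Bilinear Map Lemma directly to the abstract duals $\tH^1(E(l))^\vee \otimes \tH^0(\sco_\piii(1)) \ra \tH^1(E(l-1))^\vee$, whereas you pass through Serre duality to realize these as $\tH^2(E^\vee(\cdot))$, which is harmless but not needed.
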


\begin{proof} 
Let $H \subset \piii$ be an arbitrary plane, of equation $h = 0$. 
$E_H$ is a globally generated vector bundle on $H \simeq \pii$, with 
$c_1(E_H) = 5$ and $c_2(E_H) = 11$. It follows, from Remark~\ref{R:c1=5onp2},   
that $E_H$ is 0-regular. 
In particular, $\tH^1(E_H(l)) = 0$, $\forall \, l \geq -1$. One deduces that 
multiplication by $h \colon \tH^1(E(l-1)) \ra \tH^1(E(l))$ is surjective, 
$\forall \, l \geq -1$. Applying the Bilinear Map Lemma 
\cite[Lemma~5.1]{ha} to $\tH^1(E(l))^\vee \otimes \tH^0(\sco_\piii(1)) \ra 
\tH^1(E(l-1))^\vee$ one gets the desired inequality. 
\end{proof}

\begin{prop}\label{P:eprimunstablec2=11} 
If $E^\prim$ is not stable then one of the following holds$\, :$ 
\begin{enumerate} 
\item[(i)] $c_3 = 7$ and $E$ can be realized as an extension$\, :$ 
\[
0 \lra F^\prim (2) \lra E \lra \sco_\piii(1) \lra 0 
\]
where $F^\prim$ is a $3$-instanton with ${\fam0 h}^0(F^\prim (1)) \leq 1$$\, ;$ 
\item[(ii)] $c_3 = 11$ and $E \simeq {\fam0 T}_\piii(-1) \oplus F^\prim (2)$, 
where $F^\prim$ is a $2$-instanton. 
\end{enumerate}
\end{prop}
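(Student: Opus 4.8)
The plan is to combine Lemma~\ref{L:eprimunstable} with Remark~\ref{R:eprimunstable2}. Since $E^\prim$ is not stable, Lemma~\ref{L:eprimunstable} gives two possibilities: case (i), in which $c_3 = c_2 - 4 = 7$ and $E^\prim$ is an extension of $\sco_\piii(1)$ by $F^\prim(2)$ with $F^\prim$ stable of rank $2$, $c_1(F^\prim) = 0$, $c_2(F^\prim) = 3$; and case (ii), in which $c_3 = c_2 = 11$ and $E^\prim$ is an extension of $\sci_L(1)$ by $F^\prim(2)$ with $c_1(F^\prim) = 0$, $c_2(F^\prim) = 2$. These will yield conclusions (i) and (ii) of the proposition, respectively. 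I would treat the second case first, as it is the more direct one.

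In case (ii) the symmetry of the spectrum of $F^\prim$ (forced, for $c_1 = 0$ and $c_2 = 2$, to be $(0,0)$) shows that $F^\prim$ is a $2$-instanton; in particular $\tH^2(F^\prim(-2)) \simeq \tH^1(F^\prim(-2))^\vee = 0$, so $E$ has rank $5$ by Remark~\ref{R:eprimunstable}(ii). The cohomology of a $2$-instanton (Appendix~\ref{A:instantons}) gives $\tH^1(F^\prim(1)) = 0$, $\tH^2(F^\prim) = 0$ and $\tH^3(F^\prim(-1)) = 0$, i.e. $F^\prim(2)$ is $0$-regular, whence the multiplication $\tH^0(F^\prim(2)) \otimes_k \tH^0(\sco_\piii(1)) \to \tH^0(F^\prim(3))$ is surjective. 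Remark~\ref{R:eprimunstable2}(ii) then yields $E \simeq \text{T}_\piii(-1) \oplus F^\prim(2)$, which is conclusion (ii).

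Case (i) is the crux. Here I must show that the stable rank $2$ bundle $F^\prim$, with $c_1(F^\prim) = 0$ and $c_2(F^\prim) = 3$, is a $3$-instanton; by Remark~\ref{R:eprimunstable}(i) this is equivalent to $\tH^2(F^\prim(-2)) \simeq \tH^1(F^\prim(-2))^\vee = 0$, and hence to $r = 3$, $E = E^\prim$. Its spectrum (symmetric, since $c_1(F^\prim) = 0$; see \cite{ha}) is $(0,0,0)$ or $(1,0,-1)$, so in either case $\h^1(F^\prim(-3)) = 0$ and, by Riemann--Roch, $\h^1(F^\prim(-1)) = 3$. The idea is to exploit global generation through restriction to planes: since $c_2 = 11$, Remark~\ref{R:muh1e(-4)} gives $\tH^0(E_H(-3)) = 0$ for \emph{every} plane $H \subset \piii$. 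Restricting the sequence \eqref{E:oeeprim} and the extension $0 \to F^\prim(2) \to E^\prim \to \sco_\piii(1) \to 0$ to $H$, twisting by $-3$, and using $\tH^0(\sco_H(-3)) = 0 = \tH^0(\sco_H(-2))$ and $\tH^1(\sco_H(-3)) = 0$, I would identify $\tH^0(E_H(-3)) \simeq \tH^0(F^\prim_H(-1))$, so that $\tH^0(F^\prim_H(-1)) = 0$ for every $H$. As $\tH^0(F^\prim(-1)) = 0$ by stability, the sequence $0 \to F^\prim(-2) \xra{h} F^\prim(-1) \to F^\prim_H(-1) \to 0$ then shows that multiplication by each nonzero linear form $h$ is injective on $\tH^1(F^\prim(-2))$. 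If $\h^1(F^\prim(-2)) \geq 1$, the Bilinear Map Lemma \cite[Lemma~5.1]{ha}, applied to $\tH^1(F^\prim(-2)) \otimes \tH^0(\sco_\piii(1)) \to \tH^1(F^\prim(-1))$, would force this map to have rank at least $\h^1(F^\prim(-2)) + 3 > 3 = \h^1(F^\prim(-1))$, which is absurd. Hence $\h^1(F^\prim(-2)) = 0$ and $F^\prim$ is a $3$-instanton.

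It remains to bound $\h^0(F^\prim(1))$ and to assemble the conclusion. Combining \eqref{E:oeeprim} with the extension one computes $\h^1(E(-2)) = \h^1(F^\prim) = 4$ (Riemann--Roch, together with $\tH^1(F^\prim(-4)) = 0$), so Lemma~\ref{L:h1e(l)} gives $\h^1(E(-1)) \leq 1$; and the extension twisted by $-1$ exhibits $\tH^1(E(-1))$ as $\tH^1(F^\prim(1))$ modulo the image of $\tH^0(\sco_\piii)$, whence, using $\h^1(F^\prim(1)) = \h^0(F^\prim(1)) + 1$ ($\tH^2(F^\prim(1)) = 0$ by Riemann--Roch), $\h^0(F^\prim(1)) \leq \h^1(E(-1)) \leq 1$. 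Since $r = 3$, the extension of Lemma~\ref{L:eprimunstable}(i) is exactly that of conclusion (i). The main obstacle throughout is the instanton step of case (i): the whole argument rests on converting the plane-by-plane vanishing $\tH^0(E_H(-3)) = 0$ (available only because $c_2 = 11$) into injectivity of the linear forms on $\tH^1(F^\prim(-2))$, and then squeezing this against $\h^1(F^\prim(-1)) = 3$ via the Bilinear Map Lemma.
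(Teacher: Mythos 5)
Your proof is correct, and its skeleton coincides with the paper's: Lemma~\ref{L:eprimunstable} gives the two cases, Remark~\ref{R:eprimunstable} gives the rank count, Remark~\ref{R:eprimunstable2}(ii) gives the splitting in case (ii), and the bound $\h^0(F^\prim(1)) \leq 1$ is obtained exactly as in the paper from $\h^1(E(-2)) = \h^1(F^\prim) = 4$, Lemma~\ref{L:h1e(l)} and Riemann--Roch. Where you genuinely diverge is the crux of case (i), the proof that $F^\prim$ is a $3$-instanton. The paper derives $\h^0(F^\prim(1)) \leq 1$ \emph{first} and then quotes Hartshorne's \cite[Lemma~9.15]{ha}, by which a sheaf with spectrum $(1,0,-1)$ has $\h^0(F^\prim(1)) = 2$ --- contradiction. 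You instead prove $\tH^1(F^\prim(-2)) = 0$ directly: the classification on $\pii$ (Remark~\ref{R:c1=5onp2}) gives $\tH^0(E_H(-3)) = 0$ for every plane $H$, hence $\tH^0(F^\prim_H(-1)) = 0$, hence multiplication by every nonzero linear form is injective on $\tH^1(F^\prim(-2))$, and the Bilinear Map Lemma forces $\h^1(F^\prim(-2)) + 3 \leq \h^1(F^\prim(-1)) = 3$ whenever $\tH^1(F^\prim(-2)) \neq 0$, which is absurd. Since $\h^1(E(-4)) = \h^1(F^\prim(-2))$ and $\h^1(E(-3)) = \h^1(F^\prim(-1))$ here, this is precisely the mechanism of Remark~\ref{R:muh1e(-4)}, which the paper deploys to exclude the spectrum $(1,0,-1)$ in the stable situation (Prop.~\ref{P:h2e(-3)=0h1e(-3)neq0c2=11}) but, curiously, not in this proposition; your route buys self-containedness (no appeal to \cite[Lemma~9.15]{ha}) at the cost of a longer argument, and in case (ii) you additionally make explicit the $0$-regularity of $F^\prim(2)$, hence the surjectivity of $\tH^0(F^\prim(2)) \otimes S_1 \ra \tH^0(F^\prim(3))$ required by Remark~\ref{R:eprimunstable2}(ii), which the paper leaves implicit. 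Two cosmetic slips: $\tH^2(F^\prim(1)) = 0$ follows from the spectrum and Serre duality, not from Riemann--Roch (which only gives $\chi(F^\prim(1)) = -1$); and the $2$-instanton facts $\tH^1(F^\prim(1)) = 0$, $\tH^2(F^\prim) = 0$ are not proved in Appendix~\ref{A:instantons} --- the paper cites \cite[Remark~4.7]{acm1} for them.
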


\begin{proof} 
Lemma~\ref{L:eprimunstable} implies that either $c_3 = 7$ and $E^\prim$ can be 
realized as an extension$\, :$ 
\[
0 \lra F^\prim(2) \lra E^\prim \lra \sco_\piii(1) \lra 0 
\]
where $F^\prim$ is a stable rank 2 vector bundle with $c_1(F^\prim) = 0$, 
$c_2(F^\prim) = 3$, or $c_3 = 11$ and $E^\prim$ can be realized as an 
extension$\, :$ 
\[
0 \lra F^\prim(2) \lra E^\prim \lra \sci_L(1) \lra 0 
\]
where $F^\prim$ is a stable rank 2 vector bundle with $c_1(F^\prim) = 0$, 
$c_2(F^\prim) = 2$ (hence a 2-instanton) and $L$ is a line. 

\vskip2mm 

In the former case, $F^\prim$ has, \emph{a priori}, two possible spectra$\, :$ 
$(1,0,-1)$ and $(0,0,0)$. Using Riemann-Roch, one gets that $\h^1(F^\prim ) = 
4$ and $\h^0(F^\prim (1)) - \h^1(F^\prim (1)) = -1$. Lemma~\ref{L:h1e(l)} 
implies that $\h^1(E^\prim (-1)) \leq 1$ hence $\h^1(F^\prim (1)) \leq 2$ 
hence $\h^0(F^\prim (1)) \leq 1$. But if $F^\prim$ has spectrum $(1,0,-1)$ then, 
by \cite[Lemma~9.15]{ha}, $\h^0(F^\prim(1)) = 2$. It remains that $F^\prim$ has 
spectrum $(0,0,0)$ hence it is a 3-instanton. Moreover, it must satisfy 
$\h^0(F^\prim (1)) \leq 1$. By Remark~\ref{R:eprimunstable}, one has $r = 3$ 
(because $\tH^2(F^\prim(-2)) = 0$) hence $E = E^\prim$. 

\vskip2mm 

In the latter case, it follows, from Remark~\ref{R:eprimunstable2}(ii),   
that $E \simeq F^\prim (2) \oplus \text{T}_\piii(-1)$.  
\end{proof} 

\begin{remark}\label{R:eprimunstablec2=11} 
Let $F^\prim$ be a 3-instanton with $\h^0(F^\prim (1)) \leq 1$. We want to 
characterize the extensions$\, :$ 
\begin{equation}\label{E:fprimeoc2=11}
0 \lra F^\prim (2) \lra E \lra \sco_\piii(1) \lra 0
\end{equation}
with $E$ globally generated. Such an extension is defined by an element $\xi 
\in \tH^1(F^\prim (1))$. Since $\tH^1(F^\prim (2)) = 0$, $\xi$ is annihilated by 
$S_1$ as an element of the graded $S$-module $\tH^1_\ast(F^\prim)$. It follows 
that there exists a morphism $\phi^\prime : \Omega_\piii \ra F^\prim (1)$ such 
that the image of $\tH^1(\phi^\prime)$ is $k\xi$. If 
$\phi^\secund$ is another such morphism then $\phi^\secund - \phi^\prime$ 
factorizes as $\Omega_\piii \ra 4\sco_\piii(-1) \ra F^\prim (1)$. 

According to Remark~\ref{R:eprimunstable2}(i), $E$ is globally generated if and 
only if the morphism 
\[
(\tH^0(F^\prim (2))\otimes_k \sco_\piii) \oplus \Omega_\piii(1) 
\xra{(\text{ev},\, \phi^\prime (1))} F^\prime (2)
\]  
is an epimorphism. We use, now, a stratification of the moduli space 
of 3-instantons, due to Gruson and Skiti \cite{gs} and recalled in 
Remark~\ref{R:gs}. 

\vskip2mm 

(i) If $\tH^0(F^\prim (1)) = 0$ and $F^\prim$ has no jumping line of order 3 then 
$F^\prim (2)$ is globally generated hence, for any extension 
\eqref{E:fprimeoc2=11}, $E$ is globally generated. Recall that, in this case, 
$\h^1(F^\prim (1)) = 1$. 

\vskip2mm 
 
(ii) If $\tH^0(F^\prim (1)) = 0$ and $F^\prim$ has a jumping line $L$ of order 3 
then the cokernel of the evaluation morphism $\text{ev} : 
\tH^0(F^\prim (2))\otimes_k \sco_\piii \ra F^\prim (2)$ is $\sco_L(-1)$. Moreover, 
$\tH^1(F^\prim (1)) \ra \tH^1(\sco_L(-2))$ is an isomorphism. Since 
$\Omega_\piii \vb L \simeq 2\sco_L(-1) \oplus \sco_L(-2)$ it follows that 
the composite morphism$\, :$ 
\[
\Omega_\piii(1) \xra{\phi^\prime (1)} F^\prim (2) \lra \sco_L(-1) 
\] 
is an epimorphism if and only if the induced map $\tH^1(\Omega_\piii) \ra 
\tH^1(\sco_L(-2))$ is an isomorphism. 

Consequently, in this case, the bundle $E$ from the extension 
\eqref{E:fprimeoc2=11} is globally generated if and only if the extension is 
nontrivial. 

\vskip2mm 

(iii) If $\h^0(F^\prim (1)) = 1$ then $F^\prim$ has two jumping lines $L$ and 
$L^\prime$ of order 3 and the cokernel of the evaluation morphism of 
$F^\prim (2)$ is $\sco_{L \cup L^\prime}(-1)$. Moreover, the map $\tH^1(F^\prim (1)) 
\ra \tH^1(\sco_L(-2)) \oplus \tH^1(\sco_{L^\prime}(-2))$ is an isomorphism. 

In this case, the bundle $E$ defined by the extension \eqref{E:fprimeoc2=11} 
is globally generated if and only if the element $\xi$ of $\tH^1(F^\prim (1))$ 
defining the extension sits in none of the kernels of the maps 
$\tH^1(F^\prim (1)) \ra \tH^1(\sco_L(-2))$ and $\tH^1(F^\prim (1)) \ra 
\tH^1(\sco_{L^\prime}(-2))$. 
\end{remark} 

\begin{prop}\label{P:h1e(-3)=0c2=11} 
Let $E$ and $E^\prim$ be as at the beginning of this section. If $E^\prim$ is 
stable and ${\fam0 H}^1(E(-3)) = 0$ then one of the following holds$\, :$ 
\begin{enumerate} 
\item[(i)] $c_3 = 15$ and $E \simeq 4\sco_\piii(1) \oplus 
{\fam0 T}_\piii(-1)$$\, ;$ 
\item[(ii)] $c_3 = 13$ and $E \simeq 3\sco_\piii(1) \oplus \Omega_\piii(2)$. 
\end{enumerate}
\end{prop}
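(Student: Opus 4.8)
The plan is to use the spectrum of the normalized rank $3$ bundle $F := E^\prim(-2)$ to cut the possibilities down to two, and then in each case to compute the full graded modules $\tH^1_\ast(E)$ and $\tH^2_\ast(E)$ and to recognize $E$ via Horrocks' correspondence, made explicit by a Beilinson monad. First I would record that $F$ is stable with $c_1(F) = -1$, $c_2(F) = 3$ (by \eqref{E:chernf}), so Lemma~\ref{L:spectrumf} confines its spectrum to $1 \geq k_1 \geq k_2 \geq k_3 \geq -2$. Since $\tH^1(E(-3)) \simeq \tH^1(F(-1))$ and $\h^1(F(-1)) = \h^0(\bigoplus_i \sco_\pj(k_i))$ by Remark~\ref{R:spectrumf}(i), the hypothesis $\tH^1(E(-3)) = 0$ forces $k_i \leq -1$ for all $i$; the constraints Remark~\ref{R:spectrumf}(v),(vi) then leave exactly the spectra $(-1,-1,-1)$ and $(-1,-1,-2)$. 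Relation Remark~\ref{R:spectrumf}(iii), in the form $-2\sum k_i = c_3 - c_2 + 4$, yields $c_3 = 13$ and $c_3 = 15$ respectively, while \eqref{E:r} with Remark~\ref{R:spectrumf}(ii) at $l=-2$ gives $r = 3 + \h^2(F(-2)) = 6$ and $7$ respectively.

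For the spectrum $(-1,-1,-1)$ I would show $\tH^1_\ast(E) \simeq {\underline k}(2)$ and $\tH^2_\ast(E) = 0$. The vanishing of $\tH^1(E(l)) \simeq \tH^1(F(l+2))$ for $l \leq -3$ is immediate from Remark~\ref{R:spectrumf}(i); Riemann--Roch for $F$ (Remark~\ref{R:chern}(b)) gives $\chi(F) = -1$, and since $\h^0(F) = \h^2(F) = \h^3(F) = 0$ this forces $\h^1(E(-2)) = \h^1(F) = 1$; Lemma~\ref{L:h1e(l)} then kills $\tH^1(E(l))$ for $l \geq -1$. As $\tH^2(E(-3)) \simeq \tH^2(F(-1)) = 0$, Lemma~\ref{L:h2e(-3)=0} applies with $s = \h^1(E(-3)) - \h^1(E(-4)) = 0$, and part (e) gives $\tH^2_\ast(E) = 0$. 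These are exactly the modules of $\Omega_\piii(2)$, so the Beilinson monad of $E(-1)$ from Remark~\ref{R:beilinson} collapses to $E(-1) \simeq \Omega_\piii(1) \oplus \h^0(E(-1))\sco_\piii$; hence $E \simeq \Omega_\piii(2) \oplus t\sco_\piii(1)$, and global generation with $\tH^0(E(-2)) = 0$ and $r = 6$ force $t = 3$, giving item (ii).

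For the spectrum $(-1,-1,-2)$ one has $\tH^2(E(-3)) \simeq \tH^2(F(-1)) = 1$, so I would argue directly rather than through Lemma~\ref{L:h2e(-3)=0}. Here Riemann--Roch gives $\chi(F) = 0$, hence $\h^1(F) = 0$, and combining Remark~\ref{R:spectrumf}(i) with Lemma~\ref{L:h1e(l)} yields $\tH^1_\ast(E) = 0$. On the other side, $\tH^2(E(l)) \simeq \tH^2(F(l+2)) = 0$ for $l \geq -2$ by Remark~\ref{R:spectrumf}(ii), $\tH^2(E(-4)) \simeq \tH^1(E^\vee)^\vee = 0$, and the $1$-regularity of $E^\vee$ from Lemma~\ref{L:edual(1)}(a) kills $\tH^2(E(l))$ for $l \leq -5$, so $\tH^2_\ast(E) \simeq {\underline k}(3)$ --- exactly the modules of $\text{T}_\piii(-1)$. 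Passing to the $1$-regular bundle $E^\vee$ and running the Beilinson spectral sequence, only the terms $\Omega_\piii(1)$ and $\h^0(E(-1))\sco_\piii(-1)$ survive, producing an extension $0 \lra \Omega_\piii(1) \lra E^\vee \lra t\sco_\piii(-1) \lra 0$; since $\text{Ext}^1(\sco_\piii(-1), \Omega_\piii(1)) \simeq \tH^1(\Omega_\piii(2)) = 0$ it splits, so $E^\vee \simeq \Omega_\piii(1) \oplus t\sco_\piii(-1)$ and $E \simeq \text{T}_\piii(-1) \oplus t\sco_\piii(1)$ with $t = 4$ by $r = 7$, giving item (i).

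The spectrum enumeration is routine bookkeeping; the crux is forcing the intermediate cohomology to be as small as possible --- a single copy of $k$ concentrated in one degree with nothing in the adjacent twists --- since only then does the Beilinson monad degenerate into a split direct sum and Horrocks' correspondence pin down $E$. The decisive inputs are the two Riemann--Roch computations that separate $\h^1(F) = 1$ from $\h^1(F) = 0$ (which is precisely what distinguishes the two cases) and the repeated use of Lemma~\ref{L:h1e(l)} to propagate the vanishing of $\tH^1(E(l))$ into nonnegative twists; the final splitting rests on the Bott vanishing $\tH^1(\Omega_\piii(2)) = 0$.
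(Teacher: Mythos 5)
Your proposal is correct, and it reaches the classification by a genuinely different route than the paper's. Both arguments begin identically: Lemma~\ref{L:spectrumf} together with Remark~\ref{R:spectrumf}(v),(vi) leaves only the spectra $(-1,-1,-1)$ and $(-1,-1,-2)$, and Remark~\ref{R:spectrumf}(iii) with \eqref{E:r} gives $(c_3,r)=(13,6)$ and $(15,7)$. From there the paper dualizes once and stops computing: by Lemma~\ref{L:edual(1)}(a) the hypothesis $\tH^1(E(-3))=0$ alone makes $E^\vee$ $1$-regular, so $E^\vee(1)$ is globally generated with $c_1(E^\vee(1))\leq 2$, and the classification of globally generated bundles with $c_1\leq 2$ (Sierra--Ugaglia \cite{su}, quoted via \cite{acm1}) identifies $E^\vee(1)$, hence $E$, in both cases. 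You instead compute the full intermediate cohomology --- $\tH^1_\ast(E)\simeq {\underline k}(2)$, $\tH^2_\ast(E)=0$ for the spectrum $(-1,-1,-1)$ (via Riemann--Roch, Lemma~\ref{L:h1e(l)} and Lemma~\ref{L:h2e(-3)=0}(e)), and $\tH^1_\ast(E)=0$, $\tH^2_\ast(E)\simeq {\underline k}(3)$ for $(-1,-1,-2)$ --- and then let a degenerate Beilinson monad (Remark~\ref{R:beilinson} applied to $E(-1)$, resp.\ the Beilinson spectral sequence of $E^\vee$) split $E$ into $\Omega_\piii(2)\oplus 3\sco_\piii(1)$, resp.\ $\text{T}_\piii(-1)\oplus 4\sco_\piii(1)$. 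Your numerical inputs check out ($\chi(F)=-1$ resp.\ $0$, $\h^0(E(-1))=3$ resp.\ $4$, and all the vanishings you invoke do follow from the spectrum and the quoted lemmas). What your route buys is self-containedness --- no appeal to the external $c_1\leq 2$ classification --- and it makes transparent why the answer must split: the intermediate cohomology is a single ${\underline k}$ in one degree. What it costs is more bookkeeping, plus one small loose end you should close: in the $(-1,-1,-2)$ case the Beilinson filtration of $E^\vee$ a priori yields an extension between $\Omega_\piii(1)$ and $4\sco_\piii(-1)$ in one of two possible directions, and you only check $\text{Ext}^1(\sco_\piii(-1),\Omega_\piii(1))\simeq \tH^1(\Omega_\piii(2))=0$; for the opposite direction one also needs $\text{Ext}^1(\Omega_\piii(1),\sco_\piii(-1))\simeq \tH^1(\text{T}_\piii(-2))=0$, which holds by the Euler sequence, so the splitting stands either way.
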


\begin{proof} 
Let $F = E^\prim (-2)$ be the normalized rank 3 vector bundle associated to 
$E^\prim$ and let $k_F = (k_1,\, k_2,\, k_3)$ be the spectrum of $F$. One has 
$c_1(F) = -1$, $c_2(F) = 3$, $c_3(F) = c_3 - 10$ (see \eqref{E:chernf}) and 
$c_3(F) = -2\sum k_i - 3$ (see Remark~\ref{R:spectrumf}(iii)). Moreover,  
one has, from relation \eqref{E:r}, $r = 3 + \h^2(F(-2))$.  
Since $\tH^1(E(-3)) \simeq \tH^1(F(-1))$, the hypothesis $\tH^1(E(-3)) = 0$ is 
equivalent to $k_1 \leq -1$. Taking into account Lemma~\ref{L:spectrumf}, 
it follows that, under our hypotheses, the only possible spectra are 
$(-1,-1,-2)$ and $(-1,-1,-1)$. 

\vskip2mm 

\noindent 
{\bf Case 1.}\quad $F$ \emph{has spectrum} $(-1,-1,-2)$. 

\vskip2mm 

\noindent 
In this case, $r = 7$, $c_3(F) = 5$ and $c_3 = 15$. By 
Lemma~\ref{L:edual(1)}(a), $E^\vee (1)$ is globally generated. The Chern 
classes of $E^\vee (1)$ are $c_1(E^\vee (1)) = 2$, $c_2(E^\vee (1)) = 2$, 
$c_3(E^\vee (1)) = 0$ (recall \cite[Lemma~2.1]{ha}). The globally generated 
vector bundles on $\piii$ with $c_1 = 2$ have been classified by Sierra and    
Ugaglia \cite{su}. Using their results (see, also, \cite[Prop.~2.3]{acm1}) 
and taking into account \cite[Lemma~1.2]{acm1}, one gets that there exist 
integers $s$ and $t$ such that $E^\vee (1) \simeq t\sco_\piii \oplus G$ where 
$G$ is a vector bundle defined by an exact sequence$\, :$ 
\[
0 \lra s\sco_\piii \lra \Omega_\piii(2) \lra G \lra 0\, ,
\]
One deduces that $E \simeq t\sco_\piii(1) \oplus G^\vee (1)$. Since $E$ has 
rank 7 and $G$ has rank $\leq 3$, it follows that $t \geq 4$ hence 
$c_1(G^\vee (1)) \leq 1$. Since $G^\vee (1)$ is globally generated, one deduces 
(see, for example, the comment after \cite[Lemma~2.1]{acm1}) that $G^\vee (1) 
\simeq \text{T}_\piii(-1)$ and $t = 4$. 

\vskip2mm 

\noindent 
{\bf Case 2.}\quad $F$ \emph{has spectrum} $(-1,-1,-1)$. 

\vskip2mm 

\noindent 
In this case, $r = 6$, $c_3(F) = 3$ and $c_3 = 13$. By 
Lemma~\ref{L:edual(1)}(a), $E^\vee (1)$ is globally generated. One has   
$c_1(E^\vee (1)) = 1$, $c_2(E^\vee (1)) = 1$, $c_3(E^\vee (1)) = 1$. It follows, 
as in Case 1, that $E^\vee (1) \simeq 3\sco_\piii \oplus \text{T}_\piii(-1)$ 
hence $E \simeq 3\sco_\piii(1) \oplus \Omega_\piii(2)$.    
\end{proof}

\begin{prop}\label{P:h2e(-3)neq0c2=11} 
Let $E$ and $E^\prim$ be as at the beginning of this section. If $E^\prim$ is 
stable and ${\fam0 H}^2(E(-3)) \neq 0$ then one of the following holds$\, :$ 
\begin{enumerate} 
\item[(i)] $c_3 = 15$ and $E \simeq {\fam0 T}_\piii(-1) \oplus 4\sco_\piii(1)$.
\item[(ii)] $c_3 = 13$ and $E \simeq {\fam0 T}_\piii(-1) \oplus E_1$ where, 
up to a linear change of coordinates, $E_1$ is the kernel of the 
epimorphism$\, :$ 
\[
(x_0,\, x_1,\, x_2,\, x_3^2) : 3\sco_\piii(2) \oplus \sco_\piii(1) \lra 
\sco_\piii(3)\, ; 
\]  
\end{enumerate}
\end{prop}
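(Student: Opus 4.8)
The plan is to run the whole argument off the spectrum of the normalized bundle $F := E^\prim(-2)$: the hypothesis $\tH^2(E(-3)) \neq 0$ forces a very restricted spectrum, one value of which reduces to an earlier proposition while the other is handled by the curve--section machinery of Lemma~\ref{L:h2e(-3)=1}. Concretely, $F$ is stable with $c_1(F) = -1$, $c_2(F) = 3$, $c_3(F) = c_3 - 10$, and $\tH^2(E(-3)) \simeq \tH^2(F(-1))$; by Remark~\ref{R:spectrumf}(ii) the latter equals $\h^1\bigl(\bigoplus_i\sco_\pj(k_i)\bigr)$, which is nonzero exactly when some $k_i \leq -2$. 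Since Lemma~\ref{L:spectrumf} gives $k_i \geq -2$, the hypothesis is equivalent to $k_3 = -2$. Combining this with the axioms (iv)--(vi) of the spectrum and $k_1 \leq 1$ leaves only $(0,-1,-2)$ and $(-1,-1,-2)$, with $c_3 = 13$, $r = 6$ and $c_3 = 15$, $r = 7$ respectively (using $-2\sum k_i = c_3 - c_2 + 4$ and \eqref{E:r}). For $(-1,-1,-2)$ one has $\tH^1(E(-3)) \simeq \tH^1(F(-1)) = \h^0\bigl(\bigoplus_i\sco_\pj(k_i)\bigr) = 0$, so that case is already covered by Prop~\ref{P:h1e(-3)=0c2=11} and yields (i). It thus remains to treat the spectrum $(0,-1,-2)$, which must produce (ii).

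For this spectrum I would first read off the cohomology. Using $\tH^1(E(l)) \simeq \tH^1(F(l+2))$, the spectrum, Riemann--Roch (which gives $\h^1(E(-2)) = \h^1(F(0)) = 1$) and Lemma~\ref{L:h1e(l)} (which kills $\h^1(E(l))$ for $l \geq -1$), the module $\tH^1_\ast(E)$ is one-dimensional in degrees $-3$ and $-2$ and zero elsewhere, while $\h^2(E(-3)) = 1$. To pin down the module structure I would use that for a general plane $H$ the restriction $F_H$ is stable (Schneider's theorem), so $\tH^0(E_H(-2)) \hookrightarrow \tH^0(F_H) = 0$; since $\tH^0(E(-2)) = 0$, the restriction sequence forces the multiplication $\tH^1(E(-3)) \otimes S_1 \to \tH^1(E(-2))$ to be nonzero. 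Hence, in suitable coordinates, $\tH^1_\ast(E) \simeq (S/(x_0,x_1,x_2,x_3^2))(3)$.

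Because $\h^2(E(-3)) = 1$ and $c_2 = 11 \leq 13$, I would then invoke Lemma~\ref{L:h2e(-3)=1}, obtaining
\[
0 \lra \sco_\piii(-1) \xra{\left(\begin{smallmatrix} u\\ v \end{smallmatrix}\right)} E_1 \oplus 4\sco_\piii \lra E \lra 0 , \qquad 0 \lra \sco_\piii \lra E_1 \lra \scf_1(2) \lra 0 ,
\]
with $v$ given by four independent linear forms (so $\Cok v \simeq \text{T}_\piii(-1)$ by the Euler sequence) and $\scf_1$ a stable rank $2$ reflexive sheaf with $(c_1,c_2,c_3) = (0,2,2)$. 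The first sequence gives $\tH^1_\ast(E_1) \simeq \tH^1_\ast(E)$, and the vanishings $\h^1(E_1(-1)) = \h^1(E(-1)) = 0$, $\h^2(E_1(-2)) \hookrightarrow \h^2(E(-2)) = 0$ and $\h^3(E_1(-3)) = \h^0(E_1^\vee(-1))^\vee = 0$ (the last because a map $E_1 \to \sco_\piii(-1)$ would factor through $\scf_1(2)$, contradicting stability) show that $E_1$ is $0$-regular. Consequently $\tH^0(E_1) \otimes S_1 \to \tH^0(E_1(1))$ is surjective, so $u$, viewed in $\tH^0(E_1(1)) = \mathrm{Hom}(\sco_\piii(-1), E_1)$, equals $\psi \circ v$ for some $\psi \colon 4\sco_\piii \to E_1$; the elementary automorphism $\left(\begin{smallmatrix} 1 & -\psi\\ 0 & 1\end{smallmatrix}\right)$ of $E_1 \oplus 4\sco_\piii$ turns $\left(\begin{smallmatrix}u\\ v\end{smallmatrix}\right)$ into $\left(\begin{smallmatrix}0\\ v\end{smallmatrix}\right)$, whence $E \simeq E_1 \oplus \Cok v \simeq E_1 \oplus \text{T}_\piii(-1)$.

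It then remains to identify $E_1$. It is a $0$-regular rank $3$ bundle with $c_1 = 4$ and $\tH^1_\ast(E_1) \simeq (S/(x_0,x_1,x_2,x_3^2))(3)$, so once one knows $\tH^2_\ast(E_1) = 0$ the Horrocks argument used in Prop~\ref{P:eprimstablec2=10} (Case 2) forces $E_1$ to be the kernel of the epimorphism $(x_0,x_1,x_2,x_3^2)\colon 3\sco_\piii(2)\oplus\sco_\piii(1) \lra \sco_\piii(3)$, the rank and first Chern class leaving no room for line-bundle summands; this is exactly (ii). The hard part will be precisely this vanishing $\tH^2_\ast(E_1) = 0$, equivalently $\tH^1_\ast(E_1^\vee) = 0$. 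Dualizing the second sequence yields $0 \lra \scf_1(-2) \lra E_1^\vee \lra \sci_W \lra 0$ with $W$ of length at most $c_3(\scf_1) = 2$, so the vanishing reduces to controlling the cohomology of the reflexive sheaf $\scf_1$ and of the linking scheme $W$; I expect to extract this from the auxiliary study of stable rank $2$ reflexive sheaves with Chern classes $(0,2,2)$ together with the global generation of $E$, and this is the step that carries the genuine content.
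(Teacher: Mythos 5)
Your treatment of the spectra, the disposal of $(-1,-1,-2)$ via Prop.~\ref{P:h1e(-3)=0c2=11}, the invocation of Lemma~\ref{L:h2e(-3)=1} for the spectrum $(0,-1,-2)$, and the resulting splitting $E \simeq {\fam0 T}_\piii(-1) \oplus E_1$ all coincide with the paper's proof. Your route to the $0$-regularity of $E_1$ (transferring $\tH^1_\ast(E_1) \simeq \tH^1_\ast(E)$, the injection $\tH^2(E_1(-2)) \hookrightarrow \tH^2(E(-2)) = \tH^2(F) = 0$, and $\tH^0(E_1^\vee(-1)) = 0$) is a correct, self-contained variant of the paper's appeal to \cite[Table~2.8.1]{ch} for the $2$-regularity of $\scf_1$; up to the isomorphism $E \simeq {\fam0 T}_\piii(-1) \oplus E_1$ your argument is complete and sound.

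The genuine gap is the identification of $E_1$. The Horrocks argument needs, besides $\tH^1_\ast(E_1) \simeq (S/(x_0,x_1,x_2,x_3^2))(3)$, the vanishing $\tH^2_\ast(E_1) = 0$, and this you defer (``I expect to extract this\dots''). It does not follow from the facts you have assembled: $0$-regularity gives $\tH^2(E_1(l)) = 0$ only for $l \geq -2$; the spectrum of $F$ computes $\h^2(F(l))$ only for $l \geq -2$, so the injections $\tH^2(E_1(l)) \hookrightarrow \tH^2(E(l)) \hookrightarrow \tH^2(F(l+2))$, together with the $\tH^3$ bookkeeping at $l = -3,\, -4$, reach only down to $l = -4$; and by Serre duality the remaining degrees are exactly $\tH^1(E_1^\vee(m))$, $m \geq 1$, which your dual sequence $0 \ra \scf_1(-2) \ra E_1^\vee \ra \sci_W \ra 0$ exhibits, for $m = 1,\, 2$, only as quotients of $\tH^1(\scf_1(-1))$ and $\tH^1(\scf_1)$ --- and both of these spaces are one-dimensional, not zero (spectrum $(0,-1)$ of $\scf_1$ plus Riemann--Roch), so no formal vanishing results. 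One would have to prove that those two maps are actually zero; the statement is true (it holds for the kernel bundle in (ii)), but it is precisely what remains to be proven before Horrocks theory can be invoked, and it is not a formality. The paper sidesteps the issue entirely: once $E_1$ is $0$-regular, hence globally generated, with $c_1(E_1) = 4$, $c_2(E_1) = 6$, $c_3(E_1) = 2$ and $\tH^i(E_1^\vee) = 0$, $i = 0,\, 1$, it identifies $E_1$ by quoting the classification of globally generated bundles with $c_1 = 4$ from \cite[Prop.~4.8]{acm1}. Either supply an actual proof of $\tH^2_\ast(E_1) = 0$ or replace your final step by that citation.
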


\begin{proof}
Let $F = E^\prim (-2)$ be the normalized rank 3 vector bundle associated to 
$E^\prim$ and let $k_F = (k_1,\, k_2,\, k_3)$ be the spectrum of $F$. Since 
$\tH^2(E(-3)) \simeq \tH^2(F(-1))$ the hypothesis $\tH^2(E(-3)) \neq 0$ is 
equivalent to $k_3 = -2$ (taking into account Lemma~\ref{L:spectrumf}). It 
follows that, under our hypotheses, only two spectra can occur$\, :$ 
$(-1,-1,-2)$ and $(0,-1,-2)$. If the spectrum of $F$ is $(-1,-1,-2)$ then, 
as we saw in Case 1 of the proof of Prop.~\ref{P:h1e(-3)=0c2=11}, $E$ is as 
in item (i) from the statement.   

It remains to consider the case where $F$ has spectrum $(0,-1,-2)$. 
In this case, $r = 6$, $c_3(F) = 3$ and $c_3 = 13$. Moreover, $\h^2(E(-3)) = 
\h^2(F(-1)) = 1$. According to Lemma~\ref{L:h2e(-3)=1}, one has an exact 
sequence$\, :$ 
\[
0 \lra \sco_\piii(-1) \xra{\left(\begin{smallmatrix} u\\ v 
\end{smallmatrix}\right)} E_1 \oplus 4\sco_\piii \lra E \lra 0\, , 
\] 
with $v \colon \sco_\piii(-1) \ra 4\sco_\piii$ defined by $4$ linearly 
independent linear forms, where $E_1$ is a vector bundle of rank $3$, 
with $\tH^i(E_1^\vee) = 0$, $i = 0,\, 1$, with Chern classes $c_1(E_1) = 
4$, $c_2(E_1) = 6$, $c_3(E_1) = 2$. Moreover, one has an exact sequence$\, :$ 
\[
0 \lra \sco_\piii \lra E_1 \lra \scf_1(2) \lra 0\, . 
\] 
where $\scf_1$ is a stable rank 2 reflexive sheaf with $c_1(\scf_1) = 0$, 
$c_2(\scf_1) = 2$, $c_3(\scf_1) = 2$. It follows, from 
\cite[Table~2.8.1]{ch}, that $\scf_1$ is 2-regular hence $E_1$ is 0-regular. 
In particular, $E_1$ is globally generated and the multiplication map 
$\tH^0(E_1) \otimes_k \tH^0(\sco_\piii(1)) \ra \tH^0(E_1(1))$ is surjective. One 
gets, now, from the exact sequence relating $E$ and $E_1$,  
that $E \simeq \text{T}_\piii(-1) \oplus E_1$. Moreover, by 
\cite[Prop.~4.8]{acm1}, $E_1$ is isomorphic, up to a linear change of 
coordinates, to the kernel of the epimorphism from item (ii) of the statement.  
\end{proof}

\begin{lemma}\label{L:h0f(1)} 
Let $F$ be a stable rank $3$ vector bundle on $\piii$ with $c_1(F) = -1$, 
$c_2(F) = 3$ and such that $F(2)$ is globally generated. Then$\, :$ 

\emph{(a)} ${\fam0 H}^0(F(1)) = 0$ if $c_3(F) \leq -5$$\, ;$ 

\emph{(b)} ${\fam0 h}^0(F(1)) \leq 1$ if $c_3(F) = -3$. 

\emph{(c)} $F$ is $2$-regular if $c_3(F) \geq -1$.   
\end{lemma}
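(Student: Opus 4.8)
The plan is to read everything off the spectrum of $F$, feeding it into Riemann--Roch, and then to use a single decay estimate for $\h^1(F(\bullet))$ coming from the Bilinear Map Lemma. Write $k_F=(k_1,k_2,k_3)$. By Lemma~\ref{L:spectrumf} one has $1\ge k_1\ge k_2\ge k_3\ge -2$, and properties (iv)--(vi) in Remark~\ref{R:spectrumf} leave only finitely many spectra; since $c_3(F)=-2\sum k_i-3$ (Remark~\ref{R:spectrumf}(iii)), the possible values are $-5,-3,-1,1,3,5$, so $c_3(F)\le -5$ forces $k_F=(1,0,0)$ and $c_3(F)=-3$ forces $k_F\in\{(1,0,-1),(0,0,0)\}$. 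In every case property (ii) gives $\tH^2(F(l))=0$ for $l\ge 0$ (all $k_i\ge -2$), while stability gives $\tH^0(F)=0$ and $\tH^3(F(l))\simeq\tH^0(F^\vee(-l-4))^\vee=0$ for $l\ge -1$ (the slope of $F^\vee$ being $1/3$). Feeding these vanishings into Riemann--Roch (Remark~\ref{R:chern}(b)) yields $\h^1(F(0))=\tfrac12(5-c_3(F))$ and $\chi(F(1))=\tfrac12(c_3(F)+3)$ with $\h^2(F(1))=\h^3(F(1))=0$, so that $\h^0(F(1))=\h^1(F(1))+\tfrac12(c_3(F)+3)$.

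The only analytic input I would then set up is the decay of $\h^1$. Since $F(2)$ is globally generated with $c_1=5$, $c_2=11$, its restriction $F_H(2)$ to any plane $H$ is globally generated on $\pii$ with the same Chern classes, hence $0$-regular by Remark~\ref{R:c1=5onp2}; therefore $\h^1(F_H(l))=0$ for all $l\ge 1$. Exactly as in the proof of Lemma~\ref{L:h1e(l)} (which uses nothing beyond this), multiplication $\tH^1(F(m-1))\to\tH^1(F(m))$ by a linear form is surjective for every $m\ge 1$, and the Bilinear Map Lemma \cite[Lemma~5.1]{ha} gives $\h^1(F(m))\le\max(0,\h^1(F(m-1))-3)$ for $m\ge 1$. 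In particular $\h^1(F(1))\le\max\bigl(0,\tfrac12(-1-c_3(F))\bigr)$.

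This already settles (b) and (c). If $c_3(F)\ge -1$ the estimate gives $\h^1(F(1))=0$; together with $\h^2(F(0))=0$ and $\h^3(F(-1))\simeq\tH^0(F^\vee(-3))^\vee=0$ this is precisely $2$-regularity, proving (c). If $c_3(F)=-3$ the estimate gives $\h^1(F(1))\le 1$, and since $\chi(F(1))=0$ we get $\h^0(F(1))=\h^1(F(1))\le 1$, proving (b).

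For (a), with $c_3(F)=-5$ the estimate gives $\h^1(F(1))\le 2$ and $\chi(F(1))=-1$, hence $\h^0(F(1))\le 1$, and it remains to exclude $\h^0(F(1))=1$. A nonzero section of $F(1)$ produces $\sco_\piii(-1)\to F$, which stability of $F$ (slope $-1/3$) forces to be \emph{saturated}, since any larger sub-line-bundle would be $\sco_\piii(a)\hookrightarrow F$ with $a\ge 0$. The torsion-free rank $2$ quotient $\mathscr{C}$ then has $c_1(\mathscr{C})=0$, $c_2(\mathscr{C})=3$, $c_3(\mathscr{C})=c_3(F)+3=-2$; it is semistable (a sub-line-bundle $\sco_\piii(a)\subset\mathscr{C}$ lifts to a rank $2$ subsheaf of $F$ of slope $(a-1)/2<-1/3$, forcing $a\le 0$), and $\mathscr{C}(2)$ is globally generated. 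If $\mathscr{C}$ is reflexive then $c_3(\mathscr{C})\ge 0$ by \cite[Prop.~2.6]{ha}, contradicting $c_3(\mathscr{C})=-2$. \textbf{The main obstacle} is the remaining case, where the section vanishes along a curve $Z_1$ and $\mathscr{C}$ is merely torsion-free: here the reflexive hull $\mathscr{G}:=\mathscr{C}^{\vee\vee}$ is semistable with $c_1(\mathscr{G})=0$ and $c_2(\mathscr{G})=3-\deg Z_1\le 2$, and I would rule it out by pitting the very restrictive classification of semistable rank $2$ reflexive sheaves with $c_1=0$ and small $c_2$ against the global generation of $\mathscr{C}(2)$, through an excess/multisecant obstruction in the spirit of Lemma~\ref{L:yconnected}. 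Essentially all the genuine difficulty of the lemma concentrates in this last step; the rest is bookkeeping with the spectrum and the Bilinear Map Lemma.
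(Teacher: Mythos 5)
Your parts (b) and (c) are correct and are essentially the paper's own proof: the paper likewise reads $\tH^2(F(l))=0$, $l\geq 0$, off the spectrum, computes $\h^1(F)=\tfrac12(5-c_3(F))$ and $\chi(F(1))=\tfrac12(c_3(F)+3)$ by Riemann--Roch, and then invokes Lemma~\ref{L:h1e(l)} for $E=F(2)$ --- whose proof is exactly your combination of Remark~\ref{R:c1=5onp2} with the Bilinear Map Lemma. One bookkeeping slip: the spectrum $(1,1,0)$ is allowed by Lemma~\ref{L:spectrumf} and by Remark~\ref{R:spectrumf}(iv)--(vi), so $c_3(F)=-7$ is also possible at this stage (the paper rules that spectrum out for globally generated bundles only much later, in Case~5 of the proof of Prop.~\ref{P:h2e(-3)=0h1e(-3)neq0c2=11}); hence your assertion that $c_3(F)\leq -5$ forces $k_F=(1,0,0)$ is false, although your estimates do still yield $\h^0(F(1))\leq 1$ when $c_3(F)=-7$, and the paper's own proof of (a) never uses the spectrum enumeration at all.

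The genuine gap is in (a), and it is precisely the step you flag as ``the main obstacle'': you have not proved it, and the plan you sketch would not work as stated. Your reflexive case is the paper's easy case ($\dim W\leq 0$, $W$ the zero scheme of the section). In the remaining case the paper writes $0\to\scg\to F^\vee\to\sci_W(1)\to 0$ (so $\scg=\mathscr{C}^\vee$ in your notation), uses the relation $c_3(F)=c_3(\scg)-3-2\deg W+2\chi(\sco_{W_{\text{CM}}})$, and for $\deg W\in\{1,2,3\}$ bounds $\chi(\sco_{W_{\text{CM}}})$ from below by running through the classification of locally Cohen--Macaulay curves of degree $\leq 3$ (including the B\u{a}nic\u{a}--Forster classification \cite{bf} of multiple lines), with global generation entering only through Remark~\ref{R:c1=5onp2}: $\tH^0(F_H(-1))=0$ for every plane $H$, which forbids $W_{\text{CM}}$ from containing a plane complete intersection of type $(1,2)$. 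The decisive subcase is $\deg W=3$ with $W$ a twisted cubic, and there your proposed tools are empty: $\scg\simeq 2\sco_\piii$, so the classification of semistable rank $2$ reflexive sheaves with small $c_2$ says nothing; $\chi(\sco_W)=1$ is too small for the numerical bound; and no multisecant obstruction in the spirit of Lemma~\ref{L:yconnected} can exist, since a twisted cubic has no trisecant lines and $\sci_W(2)$ is itself globally generated. The paper disposes of this case by dualizing to $0\to\sco_\piii(-1)\to F\to 2\sco_\piii\to\omega_W(3)\to 0$ and restricting to $W\simeq\pj$: the kernel of $2\sco_W\to\omega_W(3)$ corresponds to $\sco_\pj(-7)$, so $F_W(2)$ would admit $\sco_\pj(-1)$ as a quotient, contradicting the global generation of $F(2)$. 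Without this restriction argument, or a genuine substitute for it, part (a) remains unproven.
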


\begin{proof} 
(a) We will show that $\tH^0(F(1)) \neq 0$ implies $c_3(F) \geq -3$.  
If $F(1)$ has a nonzero global section then this one defines an exact 
sequence$\, :$ 
\[
0 \lra \scg \lra F^\vee \overset{\sigma}{\lra} \sci_W(1) \lra 0\, , 
\]
where $W$ is a closed subscheme of $\piii$ with $\dim W \leq 1$ (because 
$\tH^0(F) = 0$ since $F$ is stable) and $\scg$ is a rank 2 reflexive sheaf 
with $c_1(\scg) =0$, $c_2(\scg) = 3 - \text{deg}\, W$ (see 
Remark~\ref{R:chern}(c)). Moreover, by the same remark$\, :$ 
\[
c_3(F) = c_3(\scg) - 3 - 2\, \text{deg}\, W + 2\chi(\sco_{W_{\text{CM}}})\, . 
\]

\noindent 
$\bullet$\quad 
If $\text{deg}\, W = 0$ (which means that $\dim W \leq 0$ hence 
$W_{\text{CM}} = \emptyset$) or if $\text{deg}\, W = 1$ (in which case 
$W_{\text{CM}}$ is a line $L \subset \piii$) then $c_3(F) \geq -3$ (because 
$c_3(\scg) \geq 0$). 

\vskip2mm 

\noindent 
$\bullet$\quad 
If $\text{deg}\, W = 2$ then $W_{\text{CM}}$ is either a nonsingular conic, or 
the union of two lines (disjoint or not), or a double structure on a line 
$L \subset \piii$ defined by an exact sequence$\, :$ 
\[
0 \lra \sci_{W_{\text{CM}}} \lra \sci_L \lra \sco_L(l) \lra 0\, , 
\] 
with $l \geq -1$. It follows that $\chi(\sco_{W_{\text{CM}}}) \geq 1$ and 
$\chi(\sco_{W_{\text{CM}}}) = 1$ if and only if $W_{\text{CM}}$ is a complete 
intersection of type (1,2). 

If $W_{\text{CM}}$ is a complete intersection of 
type (1,2) and if $H$ is the plane containing it then 
$\sci_{W \cap H , H} = f\sci_{\Gamma , H}(-2)$, where $f \in \tH^0(\sco_H(2))$ is an 
equation of $W_{\text{CM}}$ on $H$ and $\Gamma$ is a subscheme of $H$ of 
dimension $\leq 0$, hence $\sigma_H$ defines a nonzero morphism $F_H^\vee 
\ra \sco_H(-1)$. Consequently, $\tH^0(F_H(-1)) \neq 0$. But this 
\emph{is not possible} because $F_H(2)$ is a globally generated vector bundle 
on $H \simeq \pii$ with $c_1(F_H(2)) = 5$ and $c_2(F_H(2)) = 11$ (see  
Remark~\ref{R:c1=5onp2}). 

It remains that $W_{\text{CM}}$ is not a complete intersection of type (1,2), 
hence $\chi(\sco_{W_{\text{CM}}}) \geq 2$, hence $c_3(F) \geq -3$ (because 
$c_3(\scg) \geq 0$). 

\vskip2mm 

\noindent 
$\bullet$\quad 
Finally, if $\text{deg}\, W = 3$ then $c_1(\scg) = 0$ and $c_2(\scg) = 0$. 
Since $\tH^0(\scg (-1)) = 0$ (because $\tH^0(F^\vee (-1)) = 0$ due to the 
stability of $F$) it follows that $\scg \simeq 2\sco_\piii$. Since $\scg$ is a 
locally free sheaf one deduces that $W = W_{\text{CM}}$. 

As in the case $\text{deg}\, W = 2$ above, $W$ cannot be a complete 
intersection of type (1,3) and, even more, cannot contain, as a subscheme, a 
complete intersection of type (1,2). It follows that one of the following 
holds$\, :$ (i) $W$ is a twisted cubic curve$\, :$ (ii) $W$ is the union of 
three mutually disjoint lines$\, ;$ (iii) $W = X \cup L^\prime$, 
where $X$ is a double structure on a line $L$ such that $\sci_L/\sci_X \simeq 
\sco_L(l)$ with $l \geq 0$ and $L^\prime$ is a line not intersecting 
$L$$\, ;$ (iv) $W$ is a triple structure on a line $L$ containing a double 
structure $X$ on $L$ such that $\sci_L/\sci_X \simeq \sco_L(l)$ and 
$\sci_X/\sci_W \simeq \sco_L(2l + m)$ with $l \geq 0$ and $m \geq 0$ (see 
\cite{bf} or, for example, \cite[\S~A.5]{acm2}; the necessary results are 
recalled in Remark~\ref{R:multilines} from Appendix~\ref{A:h0e(-2)neq0}). 
In the cases (ii)--(iv), one has $\chi(\sco_W) \geq 3$ hence $c_3(F) \geq -3$. 

The case (i) cannot occur. \emph{Indeed}, assume that $W$ is a twisted cubic 
curve. Dualizing the exact sequence$\, :$ 
\[
0 \lra 2\sco_\piii \lra F^\vee \lra \sci_W(1) \lra 0\, , 
\]  
one gets an exact sequence$\, :$ 
\[
0 \lra \sco_\piii(-1) \lra F \lra 2\sco_\piii 
\overset{\delta}{\lra} \omega_W(3) \lra 0\, . 
\]
$W$ is the image of an embedding $\pj \ra \piii$ such that $\sco_W(1)$ 
corresponds to $\sco_\pj(3)$. It follows that the kernel of $\delta_W : 
2\sco_W \ra \omega_W(3)$ corresponds to $\sco_\pj(-7)$. Taking into account 
the exact sequence $F_W \ra 2\sco_W \ra \omega_W(3) \ra 0$, this 
\emph{contradicts} the fact that $F_W(2)$ is globally generated. 

\vskip2mm 

(b) Assume that $c_3(F) = -3$. Lemma~\ref{L:spectrumf} implies that 
$\tH^2(F(l)) = 0$ for $l \geq 0$. One deduces, from Riemann-Roch, that 
$\h^1(F) = -\chi(F) = 4$ and that $\h^0(F(1)) - \h^1(F(1)) = \chi(F(1)) = 0$. 
Lemma~\ref{L:h1e(l)} implies, now, that $\h^1(F(1)) \leq 1$ hence 
$\h^0(F(1)) \leq 1$. 

\vskip2mm 

(c) With the arguments from the proof of (b), one has $\h^1(F) = -\chi(F) \leq 
3$ hence, by Lemma~\ref{L:h1e(l)}, $\h^1(F(1)) = 0$.  
\end{proof}

\begin{lemma}\label{L:h0fdual} 
If $F$ is a stable rank $3$ vector bundle on $\piii$ with $c_1(F) = -1$ then 
${\fam0 h}^0(F^\vee) \leq 1 + {\fam0 h}^0(F(1))$. 
\end{lemma}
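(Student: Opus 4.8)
The plan is to exploit the canonical isomorphism $\wedge^2 F^\vee \simeq F(1)$, which holds for any rank $3$ bundle with $c_1(F) = -1$: indeed $\wedge^2 F^\vee \simeq F \otimes \det F^\vee$ and, since $\det F \simeq \sco_\piii(-1)$, one has $\det F^\vee \simeq \sco_\piii(1)$. Consequently the exterior product of two sections of $F^\vee$ lands in $\tH^0(\wedge^2 F^\vee) = \tH^0(F(1))$. If $\tH^0(F^\vee) = 0$ the asserted inequality is trivial, so I would fix a nonzero $s_0 \in \tH^0(F^\vee)$ and consider the linear map
\[
\lambda_{s_0} \colon \tH^0(F^\vee) \lra \tH^0(F(1))\, , \quad t \longmapsto s_0 \wedge t\, .
\]
By rank--nullity, $\h^0(F^\vee) = \dim \Ker \lambda_{s_0} + \dim \operatorname{Im}\lambda_{s_0} \leq \dim \Ker \lambda_{s_0} + \h^0(F(1))$, so the whole statement reduces to showing that $\Ker \lambda_{s_0} = k s_0$ is exactly one-dimensional. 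The ``$+1$'' in the statement is precisely the contribution of $s_0$ itself, since $s_0 \wedge s_0 = 0$.

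The key preliminary step is a saturation argument, and this is the only place where stability is used. The nonzero section $s_0$ gives an injection $\sco_\piii \hookrightarrow F^\vee$; let $L$ be its saturation, so that $F^\vee/L$ is torsion-free. Since $L$ is a reflexive rank $1$ sheaf on $\piii$, it is a line bundle $\sco_\piii(d)$, and $\sco_\piii \hookrightarrow \sco_\piii(d)$ forces $d \geq 0$. On the other hand $F^\vee$ is stable (because $F$ is), with slope $\mu(F^\vee) = 1/3$, so $d = \mu(L) < 1/3$, whence $d = 0$ and $L = \sco_\piii$. Thus the inclusion $\sco_\piii \hookrightarrow F^\vee$ is already saturated and $Q := F^\vee/\sco_\piii$ is a torsion-free rank $2$ sheaf.

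Finally I would identify the kernel. The contraction $s_0 \wedge (-) \colon F^\vee \to \wedge^2 F^\vee$ annihilates the image of $s_0$ (as $s_0 \wedge (f s_0) = 0$), hence factors through $Q$; the induced map $Q \to \wedge^2 F^\vee$ has generic rank equal to $\rk Q = 2$ (at a point where $s_0$ does not vanish, $v \mapsto s_0 \wedge v$ is injective modulo $k s_0$), so its kernel is a torsion subsheaf of the torsion-free sheaf $Q$, hence zero. Therefore $Q \hookrightarrow \wedge^2 F^\vee$, and on global sections $\tH^0(Q) \hookrightarrow \tH^0(\wedge^2 F^\vee) = \tH^0(F(1))$ is injective. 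Now if $\lambda_{s_0}(t) = s_0 \wedge t = 0$, then the image $\overline{t} \in \tH^0(Q)$ of $t$ maps to $0$, so $\overline{t} = 0$; by the left-exactness of $\tH^0$ applied to $0 \to \sco_\piii \xra{s_0} F^\vee \to Q \to 0$, this means $t \in k s_0$. Hence $\Ker \lambda_{s_0} = k s_0$ and the inequality $\h^0(F^\vee) \leq 1 + \h^0(F(1))$ follows. I expect the main obstacle to be this kernel computation, and within it the verification that $Q$ is torsion-free; once the saturation of $\sco_\piii$ in $F^\vee$ is secured via stability, the rest is formal.
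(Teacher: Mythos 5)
Your proof is correct, but it takes a recognizably different route from the paper's. The paper reads a nonzero section $s_0 \in \tH^0(F^\vee)$ as a nonzero morphism $F \ra \sco_\piii$, whose image is an untwisted ideal sheaf $\sci_Z$ (stability rules out a divisorial twist) and whose kernel $\scf$ is a rank $2$ reflexive sheaf with $c_1(\scf) = -1$; dualizing $0 \ra \scf \ra F \ra \sci_Z \ra 0$ and using $\scf^\vee \simeq \scf(1)$ together with $\sci_Z^\vee \simeq \sco_\piii$ yields the left-exact sequence $0 \ra \sco_\piii \ra F^\vee \ra \scf(1)$, and the inequality follows from $\h^0(\scf(1)) \leq \h^0(F(1))$, since $\scf \subset F$. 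You never introduce $\scf$ or $Z$: you saturate $\sco_\piii \xra{s_0} F^\vee$ directly and embed the torsion-free quotient $Q = F^\vee/\sco_\piii$ into ${\textstyle \bigwedge}^2 F^\vee \simeq F(1)$ by wedging with $s_0$, killing the kernel by a generic-rank argument against torsion-freeness. Up to the canonical identifications these are the same monomorphism $F^\vee/\sco_\piii \hookrightarrow F(1)$, so the two proofs share a skeleton; what yours buys is self-containedness (only exterior algebra and torsion-freeness, no rank $2$ reflexive sheaf theory) and an explicit display of where stability enters, namely the saturation step, which the paper leaves implicit in asserting that the image of $F \ra \sco_\piii$ is $\sci_Z$. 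One small adjustment would make your argument fit the paper's conventions even better: the paper's working definition of stability is $\tH^0(F) = 0$ and $\tH^0(F^\vee(-1)) = 0$, so rather than invoking the (true, but unstated there) fact that duals of stable bundles are stable, you can conclude $d = 0$ directly — a saturation $\sco_\piii(d) \hookrightarrow F^\vee$ with $d \geq 1$ would give $\tH^0(F^\vee(-1)) \neq 0$, a contradiction.
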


\begin{proof} 
A nonzero global section of $F^\vee$ defines an exact sequence$\, :$ 
\[
0 \lra \scf \lra F \lra \sci_Z \lra 0\, , 
\]
where $Z$ is a closed subscheme of $\piii$ with $\dim Z \leq 1$ and $\scf$ 
is a rank 2 reflexive sheaf with $c_1(\scf) = -1$. It follows that $\scf^\vee 
\simeq \scf(1)$. Dualizing the above exact sequence one thus gets an exact 
sequence $0 \ra \sco_\piii \ra F^\vee \ra \scf(1)$. 
Since $\h^0(\scf(1)) \leq \h^0(F(1))$, the inequality from the statement 
follows. 
\end{proof}

\begin{prop}\label{P:h2e(-3)=0h1e(-3)neq0c2=11} 
Let $E$ and $E^\prim$ be as at the beginning of this section. Assume that 
$E^\prim$ is stable, that ${\fam0 H}^2(E(-3)) = 0$, and that 
${\fam0 H}^1(E(-3)) \neq 0$. Then one of the following holds$\, :$ 
\begin{enumerate} 
\item[(i)] $c_3 = 11$ and $E \simeq \sco_\piii(1) \oplus E_0$ where, up to a 
linear change of coordinates, $E_0$ is the kernel of the epimorphism$\, :$ 
\[
(x_0,\, x_1,\, x_2^2,\, x_2x_3,\, x_3^2) \colon 2\sco_\piii(2) \oplus 
3\sco_\piii(1) \lra \sco_\piii(3)\, ; 
\]  
\item[(ii)] $c_3 = 9$ and $E(-2)$ is the cohomology of a monad$\, :$
\[
0 \lra \sco_\piii(-1) \overset{\beta}{\lra} 5\sco_\piii \oplus 2\sco_\piii(-1)  
\overset{\alpha}{\lra} 2\sco_\piii(1) \lra 0\, ,  
\] 
with ${\fam0 H}^0(\alpha(1)) \colon {\fam0 H}^0(5\sco_\piii(1) \oplus 
2\sco_\piii) \ra {\fam0 H}^0(2\sco_\piii(2))$ surjective. Up to a linear change 
of coordinates in $\piii$, there are only finitely many such monads, which 
are listed in the proof$\, ;$ 
\item[(iii)] $c_3 = 7$ and $E(-2)$ is the cohomology of a general monad$\, :$ 
\[
0 \lra 2\sco_\piii(-1) \lra 8\sco_\piii \lra 3\sco_\piii(1) \lra 0\, ; 
\] 
\item[(iv)] $c_3 = 5$ and $E(-2)$ is the kernel of an arbitrary  
epimorphism$\, :$ 
\[
{\fam0 T}_\piii(-1) \oplus \sco_\piii \lra \sco_\piii(2)\, . 
\] 
\end{enumerate}
\end{prop}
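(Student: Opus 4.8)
The plan is to analyse the spectrum $k_F = (k_1,k_2,k_3)$ of the normalized rank $3$ bundle $F := E^\prim(-2)$, which has $c_1(F) = -1$, $c_2(F) = 3$ and, by Remark~\ref{R:spectrumf}, $c_3(F) = c_3 - 10 = -2\sum k_i - 3$. Using the identifications $\tH^i(E(l)) \simeq \tH^i(F(l+2))$ together with properties (i),(ii) of the spectrum, the hypothesis $\tH^1(E(-3)) \simeq \tH^1(F(-1)) \neq 0$ forces $k_1 \geq 0$ and the hypothesis $\tH^2(E(-3)) \simeq \tH^2(F(-1)) = 0$ forces $k_3 \geq -1$. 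Combined with $k_1 \leq 1$ from Lemma~\ref{L:spectrumf} and the combinatorial rules (iv)--(vi) of the spectrum, the only candidates are $(0,-1,-1)$, $(0,0,-1)$, $(0,0,0)$, $(1,0,0)$, $(1,0,-1)$ and $(1,1,0)$, with $c_3 = 11,\, 9,\, 7,\, 5,\, 7,\, 3$ and, via \eqref{E:r}, $r = 3 + \h^2(F(-2)) = 5,\, 4,\, 3,\, 3,\, 4,\, 3$ respectively. The first task is therefore to discard the two spectra that do not appear in the statement.

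First I would eliminate $(1,0,-1)$ and $(1,1,0)$. For $(1,0,-1)$ one reads off $\h^1(E(-4)) = \h^1(F(-2)) = 1$ and $\h^1(E(-3)) = \h^1(F(-1)) = 3$; Remark~\ref{R:muh1e(-4)} then forces the multiplication $\mu\colon \tH^1(E(-4)) \otimes \tH^0(\sco_\piii(1)) \ra \tH^1(E(-3))$ to have rank $\geq \h^1(E(-4)) + 3 = 4$, which is impossible as the target has dimension $3$. For $(1,1,0)$ one has $c_3(F) = -7$, so $\h^0(F(1)) = 0$ by Lemma~\ref{L:h0f(1)}(a) and hence $\h^0(F^\vee) \leq 1$ by Lemma~\ref{L:h0fdual}; but Riemann--Roch (Remark~\ref{R:chern}) gives $\chi(F(-4)) = -2$, while $\h^0(F(-4)) = \h^1(F(-4)) = 0$ by stability and property (i) of the spectrum, so $\h^2(F(-4)) = \chi(F(-4)) + \h^3(F(-4)) = -2 + \h^0(F^\vee) \leq -1$, a contradiction. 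This leaves exactly the four spectra with $c_3 = 11,\, 9,\, 7,\, 5$.

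The spectrum $(0,-1,-1)$, $c_3 = 11$, is the most transparent and I would treat it first. Here $s := \h^1(E(-3)) - \h^1(E(-4)) = 1 - 0 = 1$, so Lemma~\ref{L:h2e(-3)=0}(e) gives $\tH^2_\ast(E) = 0$. Computing $\tH^1_\ast(E)$ from the spectrum and Riemann--Roch yields a module concentrated in degrees $-3$ and $-2$ with dimensions $1$ and $2$; examining the multiplication $\tH^1(E(-3)) \otimes \tH^0(\sco_\piii(1)) \ra \tH^1(E(-2))$ shows that, after a linear change of coordinates, it is isomorphic to $(S/(x_0,\, x_1,\, x_2^2,\, x_2x_3,\, x_3^2))(3)$. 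Since $\tH^2_\ast(E) = 0$, the bundle is recovered up to a free summand from this module by Horrocks' theory; a rank and $c_1$ count gives the free summand $\sco_\piii(1)$, and comparison with the $c_1 = 4$ classification of \cite{acm1} (cf.\ \cite[Prop.~4.8]{acm1}) identifies the complementary bundle $E_0$ as the kernel of $(x_0,\, x_1,\, x_2^2,\, x_2x_3,\, x_3^2) \colon 2\sco_\piii(2) \oplus 3\sco_\piii(1) \ra \sco_\piii(3)$, which is item (i).

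For the three remaining spectra both $\tH^1_\ast(E)$ and $\tH^2_\ast(E)$ are nonzero, and I would determine the Horrocks monad of $E(-2)$ case by case. Using Lemma~\ref{L:h2e(-3)=0}(g),(h) to pin down $\tH^1_\ast(E^\vee)$ (equivalently $\tH^2_\ast(E)$) together with the full cohomology table read off from the spectrum, the Beilinson-monad analysis of Remark~\ref{R:beilinson} and the liaison technique of Remark~\ref{R:monadsandliaison} should produce the monads of item (ii) for $(0,0,-1)$, of item (iii) for $(0,0,0)$, and the kernel presentation of $E(-2)$ by $\text{T}_\piii(-1) \oplus \sco_\piii \ra \sco_\piii(2)$ of item (iv) for $(1,0,0)$. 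The genuinely hard part, which I expect to be the main obstacle, is the converse existence and global-generation direction: one must show that monads of the stated shape really do have globally generated cohomology, produce the finite list of monads in item (ii) up to coordinate change, and make precise the word ``general'' in item (iii). As in the other propositions, this will require explicit constructions rather than further cohomological bookkeeping.
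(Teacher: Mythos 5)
Your spectrum enumeration, the elimination of $(1,0,-1)$ via Remark~\ref{R:muh1e(-4)}, and your treatment of $(0,-1,-1)$ (item (i)) follow the paper's proof. Your elimination of $(1,1,0)$, however, is a genuinely different route, and I checked that it is correct: the paper instead proves that $E(-2)$ would be the kernel of an epimorphism $3\sco_\piii(1) \oplus 2\sco_\piii \ra 2\sco_\piii(2)$ and then rules this out by degenerating to the case where the first component has cokernel $\omega_C(3)$ for a twisted cubic $C$, so that the kernel restricted to $C$ contains $\omega_C^{-1}(-3)$ and cannot be globally generated after twisting. Your argument — Lemma~\ref{L:h0f(1)}(a) gives $\tH^0(F(1)) = 0$ (legitimate, since $r = 3$ here, so $F(2) = E$ is globally generated), Lemma~\ref{L:h0fdual} then gives $\h^0(F^\vee) \leq 1$, and Riemann--Roch at twist $-4$ gives $\chi(F(-4)) = -2$ with $\h^0(F(-4)) = \h^1(F(-4)) = 0$ and $\h^3(F(-4)) = \h^0(F^\vee)$, forcing $\h^2(F(-4)) = \h^0(F^\vee) - 2 < 0$ — replaces a page of projective geometry by a few lines of bookkeeping, at the cost of invoking Lemma~\ref{L:h0f(1)}(a), whose proof is nontrivial but independent of this proposition.

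For the spectra $(0,0,-1)$, $(0,0,0)$ and $(1,0,0)$, though, your proposal stops exactly where the real work begins, and this is a genuine gap rather than compression. Item (ii) asserts that, up to coordinates, there are \emph{only finitely many} monads, ``which are listed in the proof'': producing that list is part of the statement. The paper obtains it by showing that the component $\phi \colon 5\sco_\piii \ra 2\sco_\piii(1)$ of $\alpha$ corresponds to a matrix $A \colon k^5 \ra k^2 \otimes S_1$ whose composition with every nonzero functional $k^2 \ra k$ has rank $\geq 3$ (this uses the corank bound on $\tH^0(\phi(1))$ coming from $\h^0(E(-1)) \leq 1$), and then applying the classification of such matrices from \cite[Remark~B.1]{acm1}; nothing in your sketch points at this mechanism. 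Likewise, in item (iii) the word ``general'' only acquires content through existence and irreducibility: the paper needs three separate constructions (epimorphisms $2\text{T}_\piii(-1) \ra 3\sco_\piii(1)$ with $\tH^0$ bijective, non-trivial extensions by general $3$-instantons, and a Serre construction on three skew lines) to realize globally generated cohomology with $\h^0(F^\vee) = 0,\, 1,\, 2$, together with $\tH^1(E) = 0$ to make global generation an open condition. Finally, the word ``arbitrary'' in item (iv) requires the converse statement that \emph{every} epimorphism $\text{T}_\piii(-1) \oplus \sco_\piii \ra \sco_\piii(2)$ has globally generated twisted kernel, which the paper gets from the Koszul complex and global generation of $\bigwedge^2(\text{T}_\piii(-1) \oplus \sco_\piii)$. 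You correctly identify all of this as the main obstacle, but flagging it does not fill it: as written, the proposal proves item (i) and the two case eliminations, and only the easy (necessity) halves of items (ii)--(iv).
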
 

\begin{proof} 
Let $F = E^\prim (-2)$ be the normalized rank 3 vector bundle associated to 
$E^\prim$ and let $k_F = (k_1,\, k_2,\, k_3)$ be the spectrum of $F$. The 
condition $\tH^2(E(-3)) = 0$ is equivalent to $k_3 \geq -1$ and the condition 
$\tH^1(E(-3)) \neq 0$ is equivalent $k_1 \geq 0$. Taking into account 
Lemma~\ref{L:spectrumf}, the possible spectra of $F$ are $(0,-1,-1)$, 
$(0,0,-1)$, $(1,0,-1)$, $(0,0,0)$, $(1,0,0)$ and $(1,1,0)$. 

The spectrum $(1,0,-1)$ cannot, actually, occur in our context. \emph{Indeed},  
using the spectrum one would get that $\h^1(E(-4)) = \h^1(F(-2)) = 1$ and 
$\h^1(E(-3)) = \h^1(F(-1)) = 3$. But this \emph{would contradict} 
Remark~\ref{R:muh1e(-4)}.   

We analyse, now, case by case, the remaining spectra. Recall the formulae 
from the beginning of the proof of Prop.~\ref{P:h1e(-3)=0c2=11}. 

\vskip2mm 

\noindent 
{\bf Case 1.}\quad $F$ \emph{has spectrum} $(0,-1,-1)$. 

\vskip2mm 

\noindent 
In this case, $r = 5$, $c_3(F) = 1$ and $c_3 = 11$. Using the spectrum, one 
gets that $\h^1(E(l)) = \h^1(F(l+2)) = 0$ for $l \leq -4$ and 
$\h^1(E(-3)) = 1$. In particular, $s := \h^1(E(-3)) - \h^1(E(-4)) = 1$ hence, 
by Lemma~\ref{L:h2e(-3)=0}(e), one must have $\tH^2_\ast(E) = 0$. On the other 
hand, by Riemann-Roch, $\h^1(E(-2)) = \h^1(F) = 2$. Lemma~\ref{L:h1e(l)} 
implies that $\tH^1(E(l)) = 0$ for $l \geq -1$. One deduces, from  
Riemann-Roch, that $\h^0(E(-1)) = \h^0(F(1)) = 2$.  

We want to show, now, that the multiplication map $\mu \colon \tH^1(E(-3))  
\otimes_k \tH^0(\sco_\piii(1)) \ra \tH^1(E(-2))$ is surjective. 
\emph{Indeed}, if $\mu$ is not surjective then, by Remark~\ref{R:beilinson}, 
there would exist an exact sequence$\, :$ 
\[
0 \lra \Omega_\piii^2(2) \lra \Omega_\piii^1(1) \oplus 2\sco_\piii \lra 
Q \lra 0\, , 
\]
with $Q$ locally free. $Q$ would have rank 2 and Chern classes $c_1(Q) = 1$, 
$c_2(Q) = 1$, $c_3(Q) = -1$ which \emph{is not possible}. It thus remains 
that $\mu$ is surjective.  

If one considers the universal extension$\, :$ 
\[
0 \lra E(-3) \lra E_3 \overset{\phi}{\lra} \sco_\piii \lra 0\, . 
\]
then $\tH^1_\ast(E_3) = 0$ and $\tH^2_\ast(E_3) \simeq \tH^2_\ast(E) = 0$ hence 
$E_3$ is a direct sum of line bundles. $E_3$ has rank 6,  
$\h^0(E_3) = 0$, $\h^0(E_3(1)) = 2$ and $\h^0(E_3^\vee(-3)) = 0$. The only 
possibility is $E_3 \simeq 2\sco_\piii(-1) \oplus 4\sco_\piii(-2)$. The 
component $\phi_1 \colon 2\sco_\piii(-1) \ra \sco_\piii$ of $\phi$ is defined by 
two linearly independent linear forms $h_0$ and $h_1$. Complete $h_0,\, h_1$ 
to a $k$-basis $h_0, \ldots , h_3$ of $\tH^0(\sco_\piii(1))$. Then, up to 
an automorphism of $E_3$, one can assume that the component $\phi_2 \colon 
4\sco_\piii(-2) \ra \sco_\piii$ of $\phi$ is defined by $h_2^2,\, h_2h_3,\, 
h_3^2,\, 0$. It follows that $E$ is as in item (i) of the statement.   

\vskip2mm 

\noindent 
{\bf Case 2.}\quad $F$ \emph{has spectrum} $(0,0,-1)$. 

\vskip2mm 

\noindent 
In this case, $r = 4$, $c_3(F) = -1$ and $c_3 = 9$. 
Using the spectrum, one gets that 
$\h^1(E(l)) = \h^1(F(l+2)) = 0$ for $l \leq -4$ and $\h^1(E(-3)) = 2$. 
In particular, $s := \h^1(E(-3)) - \h^1(E(-4)) = 2$. 
Using Riemann-Roch, $\h^1(E(-2)) = \h^1(F) = 3$. Lemma~\ref{L:h1e(l)} implies 
that $\tH^1(E(l)) = 0$ for $l \geq -1$ and one deduces, now, from  
Riemann-Roch, that $\h^0(E(-1)) = \h^0(F(1)) = 1$. 

We assert, now, that the multiplication map $\mu \colon \tH^1(E(-3)) \otimes 
\tH^0(\sco_\piii(1)) \ra \tH^1(E(-2))$ is surjective. 
\emph{Indeed}, if it is not then, by Remark~\ref{R:beilinson}, one would 
have an exact sequence$\, :$ 
\[
0 \lra 2\Omega_\piii^2(2) \lra 2\Omega_\piii^1(1) \oplus \sco_\piii \lra 
Q \lra 0\, ,  
\]
with $Q$ locally free. $Q$ would have rank 1 but this is, clearly, 
\emph{not possible}. It remains that $\mu$ is surjective hence the graded 
$S$-module $\tH^1_\ast(E)$ is generated by $\tH^1(E(-3))$. 

On the other hand, since $\tH^1(E(-4)) = 0$, Lemma~\ref{L:h2e(-3)=0}(b) 
implies that the graded $S$-module $\tH^1_\ast(E^\vee)$ is generated by 
$\tH^1(E^\vee(1))$. By Lemma~\ref{L:h2e(-3)=0}(d),(f), $\h^1(E^\vee(1)) \leq 1$. 
One deduces, now, that $E(-2)$ has a Horrocks monad of the form$\, :$ 
\[
0 \lra \sco_\piii(-1) \overset{\beta}{\lra} B \overset{\alpha}{\lra} 
2\sco_\piii(1) \lra 0\, ,  
\]
where $B$ is a direct sum of line bundles (see Barth and Hulek \cite{bh} 
for information about Horrocks monads). $B$ has rank 7, $\h^0(B(-1)) = 0$, 
$\h^0(B) = 5$ and $\h^0(B^\vee(-2)) = 0$ hence $B \simeq 5\sco_\piii \oplus 
2\sco_\piii(-1)$. Moreover, as we saw above, $\tH^1(E(-1)) = 0$ hence 
$\tH^0(\alpha(1))$ is surjective.   

Let $\phi \colon 5\sco_\piii \ra 2\sco_\piii(1)$ be the first component 
of $\alpha$. Since 
$\tH^0(E(-2)) = 0$, it follows that $\tH^0(\phi)$ is injective. Since  
$\tH^0(\alpha(1))$ is surjective, $\tH^0(\phi(1)) \colon 
\tH^0(5\sco_\piii(1)) \ra \tH^0(2\sco_\piii(2))$ has corank $\leq 2$. 

$\phi$ is defined by a $k$-linear map $A \colon k^5 \ra k^2 \otimes S_1$. 
\emph{We assert} that, for every nonzero $k$-linear function $k^2 \ra k$, the 
composite map $k^5 \overset{A}{\lra} k^2 \otimes S_1 \ra S_1$ has rank 
$\geq 3$. \emph{Indeed}, otherwise, $A$ would be represented (up to 
automorphisms of $k^5$ and $k^2$) by a matrix of linear forms of the form$\, :$ 
\[
\begin{pmatrix} 
h_{00} & h_{01} & h_{02} & h_{03} & h_{04}\\ 
h_{10} & h_{11} & 0 & 0 & 0 
\end{pmatrix}\, . 
\]
In this case, one could choose $h_2,\, h_3 \in S_1$ such that their images into 
$S_1/(kh_{10} + kh_{11})$ are linearly independent. Then the images of 
$(0\, ,\, h_2^2)^{\text{t}}$, $(0\, ,\, h_2h_3)^{\text{t}}$, 
$(0\, ,\, h_3^2)^{\text{t}}$ into 
$\tH^0(2\sco_\piii(2))/\text{Im}\, \tH^0(\phi(1))$ would be linearly 
independent, which would \emph{contradict} the fact that $\tH^0(\phi(1))$ 
has corank $\leq 2$. 

One can also associate to $A$ a morphism $\psi \colon 5\sco_\pj \ra 
\sco_\pj(1) \otimes_k S_1$. The above verifed assertion shows that $\psi$ 
has rank $\geq 3$ at every point of $\pj$. The morphisms $\psi$ and, 
consequently, $\phi$ can be, now, classified using the method used in 
\cite[Remark~B.1]{acm1}. One gets that, up to a linear change of coordinates 
in $\piii$, either $E \simeq \sco_\piii(1) \oplus E_0$ where $E_0$ is the 
kernel of the epimorphism$\, :$ 
\[
\begin{pmatrix} 
x_0 & x_1 & x_2 & x_3 & 0\\ 
0 & x_0 & x_1 & x_2 & x_3 
\end{pmatrix} 
\colon 5\sco_\piii(2) \lra 2\sco_\piii(3)\, , 
\] 
or $E$ is isomorphic to the kernel of the epimorphism$\, :$ 
\[
\begin{pmatrix} 
x_0 & x_1 & x_2 & x_3 & 0 & 0\\ 
0 & 0 & x_1 & x_2 & x_3 & x_0^2 
\end{pmatrix} 
\colon 5\sco_\piii(2) \oplus \sco_\piii(1) \lra 2\sco_\piii(3)\, , 
\]   
or $E(-2)$ is the cohomology of a minimal monad having the shape from the 
statement, with$\, :$ 
\begin{gather*} 
\alpha = 
\begin{pmatrix} 
x_0 & 0 & x_2 & x_3 & 0 & x_1^2 & 0\\ 
0 & x_1 & 0 & x_2 & x_3 & 0 & x_0^2 
\end{pmatrix}\, ,\,  
\beta = 
(-x_2\, ,\, -x_3\, ,\, x_0\, ,\, 0\, ,\, x_1\, ,\, 0\, ,\, 0)^{\text{t}} \, , 
\end{gather*}
or $E(-2)$ is the cohomology of a minimal monad having the shape from the 
statement, with$\, :$ 
\begin{gather*} 
\alpha = 
\begin{pmatrix} 
x_0 & x_1 & x_2 & x_3 & 0 & x_0^2 & 0\\ 
x_1 & 0 & 0 & x_2 & x_3 & 0 & x_0^2 
\end{pmatrix}\, ,\,  
\beta = 
(-x_2\, ,\, -x_3\, ,\, x_0\, ,\, x_1\, ,\, 0\, ,\, 0\, ,\, 0)^{\text{t}} \, .
\end{gather*} 

\vskip2mm 

\noindent 
{\bf Case 3.}\quad $F$ \emph{has spectrum} $(0,0,0)$. 

\vskip2mm 

\noindent 
In this case, $r = 3$ (hence $E = F(2)$), $c_3(F) = -3$ and $c_3 = 7$. 
Using the spectrum, one gets $\h^1(E(l)) = \h^1(F(l + 2)) = 0$ for $l \leq -4$ 
and $\h^1(E(-3)) = 3$. By Riemann-Roch, $\h^1(E(-2)) = 4$.  
Moreover, $\tH^0(E^\vee(1)) = \tH^0(F^\vee(-1)) = 0$, 
because $F$ is stable. 
Remark~\ref{R:h2e(-3)=0}(ii) implies that the graded $S$-module $\tH^1_\ast(E)$ 
is generated by $\tH^1(E(-3))$. On the other hand, since $\tH^1(E(-4)) = 0$, 
Lemma~\ref{L:h2e(-3)=0}(b) implies that the graded $S$-module 
$\tH^1_\ast(E^\vee)$ is generated by $\tH^1(E^\vee(1))$. Since $\tH^0(E^\vee(1)) 
= 0$, it follows that $\h^1(E^\vee(1)) = -\chi(E^\vee(1)) = -\chi(F^\vee(-1)) 
= 2$. One deduces that $E(-2) = F$ has a Horrocks monad of the form$\, :$ 
\[
0 \lra 2\sco_\piii(-1) \lra B \lra 3\sco_\piii(1) \lra 0\, ,  
\]
where $B$ is a direct sum of line bundles. $B$ has rank 8, $\tH^0(B(-1)) = 0$ 
and $\h^0(B) = \h^0(3\sco_\piii(1)) - \h^1(E(-2)) = 8$. It follows that 
$B \simeq 8\sco_\piii$ hence $E(-2)$ is the cohomology of a monad of the form 
from item (iii) of the statement.   

We show, now, that for every $t \in \{0,\, 1,\, 2\}$, there exists a stable 
rank 3 vector bundle $F$ with $c_1(F) = -1$, $c_2(F) = 3$, $c_3(F) = -3$, 
$\h^0(F^\vee) = t$ such that $E := F(2)$ is globally generated. 

\vskip2mm 

\noindent 
{\bf Construction 3.0.}\quad We will show that there exist epimorphisms 
$\e \colon 2\text{T}_\piii(-1) \ra 3\sco_\piii(1)$ such that 
$\tH^0(\e (1)) \colon \tH^0(2\text{T}_\piii) \ra \tH^0(3\sco_\piii(2))$ is 
injective (hence bijective). If $F$ is the kernel of such an epimorphism then, 
using the exact sequence$\, :$ 
\[
0 \lra F \lra 2\text{T}_\piii(-1) \overset{\e}{\lra} 3\sco_\piii(1) \lra 0\, , 
\] 
one sees that $F$ is a rank 3 vector bundle with $c_1(F) = -1$, $c_2(F) = 3$, 
$c_3(F) = -3$, $\tH^i(F(1)) = 0$, $i = 0,\, 1$, and $\tH^0(F^\vee) = 0$. 
Moreover, $F$ is 2-regular hence $E := F(2)$ is globally generated. 

Let us prove, now, the existence of the epimorphisms $\e$ with the above 
property. We start by recalling that a general morphism 
$\text{T}_\piii(-2) \ra 3\sco_\piii$ is a monomorphism and its cokernel is 
isomorphic to $\sco_Q(0,2)$ for some nonsingular quadric surface $Q \subset 
\piii$ endowed with an isomorphism $Q \simeq \pj \times \pj$ (see, for 
example, Case 3 in the proof of \cite[Prop.~6.3]{acm1}). Actually, if 
$K$ is the kernel of the evaluation morphism$\, :$ 
\[
3\sco_\piii \simeq \tH^0(\sco_Q(0,2))\otimes_k \sco_\piii \lra \sco_Q(0,2)\, , 
\] 
then $K$ is a rank 3 vector bundle with $\tH^1_\ast(K) = 0$ and $\tH^2_\ast(K) 
\simeq {\underline k}(2)$ hence $K \simeq \text{T}_\piii(-2)$. It thus 
suffices to show that there exist epimorphisms 
$\eta \colon \text{T}_\piii \vb Q \ra \sco_Q(2,4)$ such that $\tH^0(\eta)$ is 
injective (hence bijective). For a proof of the existence of such 
epimorphisms, see Lemma~\ref{L:tvbqraoq(2,4)} in 
Appendix~\ref{A:miscellaneous}. 

\vskip2mm 

\noindent 
{\bf Construction 3.1.}\quad Let $F$ be a rank 3 vector bundle on $\piii$ 
such that $F^\vee$ can be realized as a nontrivial extension$\, :$ 
\[
0 \lra F^\prim \lra F^\vee \lra \sco_\piii(1) \lra 0\, , 
\]     
where $F^\prim$ is a general 3-instanton. Here ``general'' means that $F^\prim$ 
has no jumping line of maximal order 3. In particular, $\tH^0(F^\prim (1)) = 
0$ and, according to Gruson and Skiti \cite[Prop.~1.1.1]{gs}, $F^\prim (2)$ is 
globally generated. Dualizing the above extension and taking into account that 
$F^{\prim \vee} \simeq F^\prim$, one gets that $c_1(F) = -1$, $c_2(F) = 3$, 
$c_3(F) = -3$, $\h^0(F(1)) = 1$ and that $E := F(2)$ is globally generated. 

Let $\xi$ be the (nonzero) element of $\tH^1(F^\prim (-1))$ defining the above 
extension. One has an exact sequence$\, :$ 
\[
0 \lra \tH^0(F^\vee) \lra \tH^0(\sco_\piii(1)) \overset{\partial}{\lra} 
\tH^1(F^\prim )\, , 
\] 
where $\partial (h) = h\xi$, $\forall \, h \in \tH^0(\sco_\piii(1))$. 

Now, if $0 \neq h \in \tH^0(\sco_\piii(1))$ and $H \subset \piii$ is the plane 
of equation $h = 0$ then one has an exact sequence$\, :$ 
\[
0 \lra \tH^0(F_H^\prim ) \lra \tH^1(F^\prim (-1)) \overset{h}{\lra} 
\tH^1(F^\prim )\, . 
\]   
Since $\tH^1(F^\prim (-2)) = 0$ it follows that $\tH^0(F_H^\prim (-1)) = 0$ 
hence $\h^0(F_H^\prim ) \leq 1$. The set of planes $H \subset \piii$ for 
which $\h^0(F_H^\prim ) = 1$ form a subset of pure dimension 1 of the dual 
projective space $\p^{3\vee}$ (indeed, since $\h^1(F^\prim (-1)) = 3$ and 
$\h^1(F^\prim ) = 4$ every component of this subset has codimension $\leq 2$; 
on the other hand, the subset has dimension $\leq 1$, by the main result of 
\cite{co2}). Moreover, if $h,\, h^\prime \in \tH^0(\sco_\piii(1))$ are linearly 
independent and if $L \subset \piii$ is the line of equations $h = h^\prime = 
0$ then one has an exact sequence$\, :$ 
\[
0 = \tH^0(\sci_L(1) \otimes F^\prim ) \lra \tH^1(F^\prim (-1))  
\xra{\left(\begin{smallmatrix} h\\ h^\prime \end{smallmatrix}\right)} 
\tH^1(2F^\prim )  
\]
hence there is no non-zero element of $\tH^1(F^\prim (-1))$ annihilated by 
two linearly independent linear forms. 

One deduces that if $\xi$ is a general element of $\tH^1(F^\prim (-1))$ then 
$\tH^0(F^\vee) = 0$, while for extensions defined by some special elements 
of $\tH^1(F^\prim (-1))$ one has $\h^0(F^\vee) = 1$. 

\vskip2mm 

\noindent 
{\bf Construction 3.2.}\quad Let $L_0,\, L_1,\, L_2$ be mutually disjoint 
lines in $\piii$ and let $Y$ be their union. Consider an epimorphism 
$\delta : 2\sco_\piii \ra \omega_Y(3) \simeq \sco_Y(1)$. Then it is well known 
that there exists an extension$\, :$ 
\[
0 \lra 2\sco_\piii \lra F_1 \lra \sci_Y(1) \lra 0\, , 
\] 
with $F_1$ locally free and such that, dualizing it, one gets an exact 
sequence$\, :$ 
\[
0 \lra \sco_\piii(-1) \lra F_1^\vee \lra 2\sco_\piii \overset{\delta}{\lra} 
\omega_Y(3) \lra 0\, . 
\]
(This is Serre's method of extensions recalled in Thm.~\ref{T:serreext} from 
Appendix~\ref{A:serre}.) Put $F := F_1^\vee$. One has exact 
sequences$\, :$ 
\begin{gather*}
0 \lra 2\sco_\piii \lra F^\vee \lra \sci_Y(1) \lra 0\, ,\\ 
0 \lra \sco_\piii(-1) \lra F \lra 2\sco_\piii \overset{\delta}{\lra} 
\omega_Y(3) \lra 0\, . 
\end{gather*}
It follows that $F$ is a rank 3 vector bundle with $c_1(F) = -1$, $c_2(F) = 3$, 
$c_3(F) = -3$, $\tH^0(F) = 0$ and $\h^0(F^\vee) = 2$. 

It is shown, in Lemma~\ref{L:2oraoy(1)} from Appendix~\ref{A:miscellaneous}, 
that if $\sck$ is the kernel of a general epimorphism $\delta \colon 
2\sco_\piii \ra \sco_Y(1) \simeq \omega_Y(3)$ then $\sck(2)$ is globally 
generated. One deduces that if $F^\vee$ is the middle term of the extension 
corresponding to such a $\delta$ then $E := F(2)$ is globally generated. 
Notice, also, that the Claim from the proof of Lemma~\ref{L:2oraoy(1)} 
implies that $\tH^0(\sck(1)) = 0$ hence $\h^0(F(1)) = 1$. 

\vskip2mm 

\noindent 
{\bf Case 4.}\quad $F$ \emph{has spectrum} $(1,0,0)$. 

\vskip2mm 

\noindent 
In this case $r = 3$ (hence $E = F(2)$), $c_3(F) = -5$ and $c_3 = 5$. 
Using the spectrum one gets that $\h^1(E(l)) = \h^1(F(l+2)) = 0$ for $l \leq 
-5$, $\h^1(E(-4)) = 1$ and $\h^1(E(-3)) = 4$ hence $s := \h^1(E(-3)) - 
\h^1(E(-4)) = 3$.  
Moreover, $\tH^0(E^\vee(1)) = \tH^0(F^\vee(-1)) = 0$ (because $F$ is stable).  
It follows, from Remark~\ref{R:h2e(-3)=0}(ii), that the graded 
$S$-module $\tH^1_\ast(E)$ is generated in degrees $\leq -3$. But, using 
Remark~\ref{R:muh1e(-4)}, one sees that, actually, $\tH^1_\ast(E)$ is generated 
by $\tH^1(E(-4))$. 

\vskip2mm 

\noindent 
{\bf Claim 4.1.}\quad \emph{The graded} $S$-\emph{module} $\tH^1_\ast(E^\vee)$ 
\emph{is generated by} $\tH^1(E^\vee(1))$. 

\vskip2mm 

\noindent 
\emph{Indeed}, since $\tH^0(E^\vee(1)) = 0$ one gets, by Riemann-Roch, that 
$\h^1(E^\vee(1)) = -\chi(E^\vee(1)) = -\chi(F^\vee(-1)) = 1$. Assume, by 
contradiction, that $\tH^1_\ast(E^\vee)$ is not generated by $\tH^1(E^\vee(1))$. 
Then Lemma~\ref{L:h2e(-3)=0}(h) implies that $\tH^1_\ast(E^\vee) \simeq 
{\underline k}(-1) \oplus {\underline k}(-2)$. 
Consider, now, the universal extension$\, :$ 
\[
0 \lra E(-4) \lra E_4 \lra \sco_\piii \lra 0\, . 
\] 
One has $\tH^1_\ast(E_4) = 0$ (hence $\tH^2_\ast(E_4^\vee) = 0$) and 
$\tH^1_\ast(E_4^\vee) \simeq {\underline k}(2) \oplus {\underline k}(1)$. 
One deduces, from the Horrocks correspondence, that $E_4^\vee \simeq 
\Omega_\piii(2) \oplus \Omega_\piii(1) \oplus A$, where $A$ is a direct sum of 
line bundles. But \emph{this is not possible} because $E_4$ has rank 4. 
Claim 4.1 is proven. 

\vskip2mm 

\noindent 
One deduces, now, that $E(-2)$ is the cohomology of a Horrocks monad of the 
form$\, :$ 
\[
0 \lra \sco_\piii(-1) \lra B \lra \sco_\piii(2) \lra 0\, , 
\]  
with $B$ a direct sum of line bundles. $B$ has rank 5, $c_1(B) = 0$ and 
$\h^0(B(-1)) = 0$ hence $B \simeq 5\sco_\piii$.  
It follows that $E(-2)$ is the kernel of an epimorphism $\text{T}_\piii(-1) 
\oplus \sco_\piii \ra \sco_\piii(2)$.  

Conversely, for any epimorphism $\phi \colon \text{T}_\piii(-1) \oplus 
\sco_\piii \ra \sco_\piii(2)$, one has that $\Ker \phi(2)$ is globally generated 
as one can easily see by considering the (geometric) Koszul complex associated 
to $\phi$ and using the fact that ${\bigwedge}^2\left(\text{T}_\piii(-1) 
\oplus \sco_\piii\right)$ is globally generated.  

\vskip2mm 

\noindent 
{\bf Case 5.}\quad $F$ \emph{has spectrum} $(1,1,0)$. 

\vskip2mm 

\noindent 
This case \emph{cannot occur} in our context. \emph{Indeed}, assume, 
by contradiction, that there exists a vector bundle $E$ satisfying the 
hypotheses of the proposition, such that the associated (normalized) rank 3 
vector bundle $F$ has spectrum $(1,1,0)$. $E$ has rank $r = 3$ (hence $E = 
F(2)$), $c_3(F) = -7$ and $c_3 = 3$. 
Using the spectrum one gets that $\h^1(E(l)) = \h^1(F(l+2)) = 0$ for $l \leq 
-5$, $\h^1(E(-4)) = 2$ and $\h^1(E(-3)) = 5$ hence $s := \h^1(E(-3)) - 
\h^1(E(-4)) = 3$. Moreover, by Riemann-Roch, $\h^1(E(-2)) = \h^1(F) = 6$.  
Since $\tH^0(E^\vee(1)) = \tH^0(F^\vee(-1)) = 0$ (because 
$F$ is stable) one deduces, from Remark~\ref{R:h2e(-3)=0}(ii), that the 
graded $S$-module $\tH^1_\ast(E)$ is generated in degrees $\leq -3$. But,  
according to Remark~\ref{R:muh1e(-4)}, the multiplication map $\tH^1(E(-4)) 
\otimes \tH^0(\sco_\piii(1)) \ra \tH^1(E(-3))$ is surjective hence 
$\tH^1_\ast(E)$ is, actually, generated by $\tH^1(E(-4))$.  

\vskip2mm 

\noindent 
{\bf Claim 5.1.}\quad $E(-2)$ \emph{is the kernel of an epimorphism} 
$3\sco_\piii(1) \oplus 2\sco_\piii \ra 2\sco_\piii(2)$. 

\vskip2mm 

\noindent 
\emph{Indeed}, since $\h^0(E^\vee(1)) = 0$ one deduces 
that $\h^1(E^\vee(1)) = -\chi(E^\vee(1)) = -\chi(F^\vee(-1)) = 0$. It follows, 
from Lemma~\ref{L:h2e(-3)=0}(h), that either $\tH^1_\ast(E^\vee) = 0$ or 
$\tH^1_\ast(E^\vee) \simeq {\underline k}(-2)$. Anyway, $E(-2)$ is the 
cohomology of a Horrocks monad of the form$\, :$ 
\[
0 \lra \sco_\piii \overset{\beta}{\lra} B 
\overset{\alpha}{\lra} 2\sco_\piii(2) \lra 0\, , 
\] 
with $B$ a direct sum of line bundles. $B$ has rank 6, $c_1(B) = 3$, and  
$\h^0(B(-1)) = \h^0(2\sco_\piii(1)) - \h^1(E(-3)) = 3$ hence $B \simeq 
3\sco_\piii(1) \oplus 3\sco_\piii$. Since there is no locally split monomorphism 
$\sco_\piii \ra 3\sco_\piii(1)$, the component $\beta_2 \colon \sco_\piii \ra 
3\sco_\piii$ of $\beta$ must be non-zero and the claim is proven. 

\vskip2mm 

\noindent 
Now, since $\h^1(E(-2)) = 6$, Lemma~\ref{L:h1e(l)} implies that $\h^1(E) = 0$. 
One deduces that if there exists an epimorphism $\phi_0 \colon 3\sco_\piii(1) 
\oplus 2\sco_\piii \ra 2\sco_\piii(2)$ such that $\tH^0(\Ker \phi_0) = 0$ and 
$\Ker \phi_0(2)$ is globally generated then, for a \emph{general} 
epimorphism $\phi \colon 3\sco_\piii(1) \oplus 2\sco_\piii \ra 2\sco_\piii(2)$, 
$\tH^0(\Ker \phi) = 0$ and $\Ker \phi(2)$ is globally generated. But, for such  
a general epimorphism, the cokernel of the component $\phi_1 \colon 
3\sco_\piii(1) \ra 2\sco_\piii(2)$ of $\phi$ is isomorphic to $\omega_C(3)$, 
for some (nonsingular) twisted cubic curve $C \subset \piii$. One gets an 
exact sequence$\, :$ 
\[
0 \lra \sco_\piii(-1) \lra \Ker \phi \lra 2\sco_\piii 
\overset{{\overline \phi}_2}{\lra} \omega_C(3) \lra 0\, , 
\]    
with ${\overline \phi}_2$ induced by the other component $\phi_2 \colon 
2\sco_\piii \ra 2\sco_\piii(2)$ of $\phi$. Restricting ${\overline \phi}_2$ 
to $C$ one gets an epimorphism $2\sco_C \ra \omega_C(3)$ whose kernel is 
isomorphic to $\omega_C^{-1}(-3)$. Since $\omega_C^{-1}(-1)$ is not globally 
generated, it follows that $\Ker \phi(2)$ cannot be globally generated. 
This, finally, shows that Case 5 \emph{cannot occur} in our context.  
\end{proof}

\section{The case $c_2 = 12$}\label{S:c2=12} 

We denote, in this section, by $E$ a globally generated vector bundle on 
$\piii$, with $c_1 = 5$, $c_2 = 12$, and such that $\tH^i(E^\vee) = 0$, 
$i = 0,\, 1$, and $\tH^0(E(-2)) = 0$. If $E$ has rank $r \geq 3$, we denote 
by $E^\prim$ the rank 3 vector bundle associated to $E$ in the exact sequence 
\eqref{E:oeeprim} and by $F$ the normalized bundle $E^\prim (-2)$. 

\begin{prop}\label{P:h0e(-1)geq2c2=12} 
If ${\fam0 h}^0(E(-1)) \geq 2$ then one of the following holds$\, :$ 
\begin{enumerate} 
\item[(i)] $c_3 = 20$ and $E \simeq 3\sco_\piii(1) \oplus 
2{\fam0 T}_\piii(-1)$$\, ;$ 
\item[(ii)] $c_3 = 18$ and $E \simeq 2\sco_\piii(1) \oplus {\fam0 T}_\piii(-1) 
\oplus \Omega_\piii(2)$$\, ;$ 
\item[(iii)] $c_3 = 16$ and $E \simeq \sco_\piii(1) \oplus E_0$, where, up to a 
linear change of coordinates, $E_0$ is the cohomology of the monad$\, :$ 
\[
\sco_\piii(-1) \xra{\left(\begin{smallmatrix} s\\ u 
\end{smallmatrix}\right)} 2\sco_\piii(2) \oplus 
2\sco_\piii(1) \oplus 4\sco_\piii \xra{(p\, ,\, 0)} \sco_\piii(3) 
\] 
where $\sco_\piii(-1) \overset{s}{\ra} 2\sco_\piii(2) \oplus 2\sco_\piii(1) 
\overset{p}{\ra} \sco_\piii(3)$ is a subcomplex of the Koszul complex defined 
by $x_0,\, x_1,\, x_2^2,\, x_3^2$ and $u : \sco_\piii(-1) \ra 4\sco_\piii$ is 
defined by $x_0, \ldots ,x_3$. 
\end{enumerate}
\end{prop}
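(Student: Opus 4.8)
The plan is to translate the cohomological hypothesis into a spectrum computation for the normalized rank $3$ bundle $F = E^\prim(-2)$ and then to split the argument according to the vanishing of $\tH^1(E(-3))$. First I would dispose of the rank $2$ situation (where $E = G(3)$ falls into the separately treated rank $2$ analysis) and assume $r \geq 3$. By \eqref{E:oeeprim} one has $\tH^0(E(-1)) \simeq \tH^0(E^\prim(-1)) = \tH^0(F(1))$, so the hypothesis reads $\h^0(F(1)) \geq 2$. I would then rule out the unstable alternatives of Lemma~\ref{L:eprimunstable}: in both cases $E^\prim$ is built from $F^\prim(2)$ with $F^\prim$ a stable rank $2$ bundle with $c_1(F^\prim) = 0$ and $c_2(F^\prim) \in \{3,4\}$, and the global generation of $F^\prim(2)$ (forced by that of $E$) keeps $\h^0(F^\prim(1))$, hence $\h^0(E(-1))$, at most $1$, contradicting the hypothesis. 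Thus $E^\prim$ is stable and $F$ is a stable rank $3$ bundle with $c_1(F) = -1$, $c_2(F) = 4$ and $F(2)$ globally generated, so Lemma~\ref{L:spectrumf} gives a spectrum $k_F = (k_1 \geq k_2 \geq k_3 \geq k_4)$ with $1 \geq k_1$ and $k_4 \geq -2$.

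The numerical input I would read off from Remark~\ref{R:spectrumf} and Riemann--Roch is: $\h^1(F(-1)) = \sum_i \max(0, k_i+1)$, so that $\tH^1(E(-3)) = 0 \Leftrightarrow k_1 \leq -1$; next $r = 3 + \sum_i \max(0, -k_i)$ by \eqref{E:r}, $c_3 = -2\sum_i k_i + 8$ by \eqref{E:chernf}, and $\chi(F(1)) = -\sum_i k_i - 3$, whence $\h^0(F(1)) = -\sum_i k_i - 3 + \h^1(F(1))$ (using $\h^2(F(1)) = \h^3(F(1)) = 0$). I would then separate the two regimes $k_1 \leq -1$ and $k_1 \geq 0$.

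In the case $\tH^1(E(-3)) = 0$, one has $k_i \in \{-1,-2\}$, and property (vi) of the spectrum (if $0$ is absent then $-1$ occurs at least twice) forces at most two entries equal to $-2$, so $r = 3 - \sum k_i \leq 9$. By Lemma~\ref{L:edual(1)}(a), $E^\vee$ is $1$-regular, hence $E^\vee(1)$ is globally generated with $c_1(E^\vee(1)) = r - 5 \leq 4$. I would compute the remaining Chern classes of $E^\vee(1)$ in terms of $r$ and $c_3$ and invoke the classification of globally generated bundles with $c_1 \leq 4$ from \cite{acm1}, \cite{acm2} (and \cite{su}); the constraint $\h^0((E^\vee(1))^\vee) = \h^0(E(-1)) \geq 2$, together with $\h^0((E^\vee(1))^\vee(-1)) = \h^0(E(-2)) = 0$, selects exactly the spectra $(-1,-1,-1,-2)$ and $(-1,-1,-2,-2)$, i.e. $r = 8, 9$ and $c_3 = 18, 20$; dualizing the resulting bundles yields items (ii) and (i).

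In the case $\tH^1(E(-3)) \neq 0$ we have $k_1 \geq 0$, so $0$ occurs in the spectrum and $\h^1(F(1))$ need not vanish; the estimate $\h^0(F(1)) \geq 2$ now admits several a priori spectra. The crux is to eliminate all of them except $(0,-1,-1,-2)$, which gives $c_3 = 16$, $r = 7$ and (because the plane restriction acquires a $P(\sco_\pii(2))$-type summand, so that $\h^1(F(1)) = 1$) exactly $\h^0(F(1)) = 2$. The dangerous competitor is $(0,-1,-2,-2)$, where $\h^0(F(1)) \geq 2$ holds automatically; I expect to kill it, and the remaining $k_1 \geq 0$ spectra, by global-generation obstructions of the type used in Lemma~\ref{L:spectrumf} and Lemma~\ref{L:h0f(1)} (an unstable plane, or a quotient of $F_H$ incompatible with the global generation of $F(2)$). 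For the surviving spectrum I would compute $\h^1(E(l))$ and $\h^2(E(l))$ from the spectrum and Riemann--Roch, apply Lemma~\ref{L:h2e(-3)=1} and the Beilinson/Horrocks monad machinery of Remark~\ref{R:beilinson} to pin down the monad, and so obtain the shape in item (iii), finishing with a converse construction showing such monads do produce globally generated $E$. The main obstacle is precisely this last regime: there is no uniform way to exclude the competing $k_1 \geq 0$ spectra, so each (especially $(0,-1,-2,-2)$) must be killed by an ad hoc computation, and extracting the exact monad of (iii) together with its global generation is delicate; the bookkeeping of the dual classification in the regime $\tH^1(E(-3)) = 0$ is a secondary, more mechanical difficulty.
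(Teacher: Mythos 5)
Your proposal inverts the paper's logical architecture, and that inversion creates a gap you cannot close. The paper proves this proposition \emph{first}, by liaison, precisely so that it can later be used to drive the spectrum analysis for $c_2 = 12$; your route runs in the opposite direction and must therefore eliminate, by its own means, every spectrum with $k_1 \geq 0$ compatible with $\h^0(F(1)) \geq 2$. The decisive case is $(0,-1,-2,-2)$: there $\chi(F(1)) = 2$ and $\h^2(F(1)) = \h^3(F(1)) = 0$, so $\h^0(F(1)) \geq 2$ holds automatically, the spectrum forces $c_3 = 18$, $r = 8$, and yet $\h^2(E(-3)) = 2$, so $E$ cannot be the bundle of item (ii) (which has $\h^2(E(-3)) = 1$). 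In the paper this incompatibility is exactly how the spectrum is killed in Prop.~\ref{P:h2e(-3)neq0c2=12} --- by \emph{applying} Prop.~\ref{P:h0e(-1)geq2c2=12}. So the step you label ``I expect to kill it'' is not a quotable lemma: it is the hard content of the proposition itself, and no argument is offered. Nothing cheap is available either: property (vii) of Remark~\ref{R:spectrumf} yields no unstable plane for $(0,-1,-2,-2)$, since it requires $-1 \geq k_{i-1} > k_i > k_{i+1}$ and here $k_3 = k_4$. The same circularity infects your reduction to the stable case: the bound $\h^0(F^\prim(1)) \leq 1$ in the unstable alternatives is, in the paper, \emph{deduced from} this proposition (see Prop.~\ref{P:eprimunstablec2=12}), and your substitute justification --- that global generation of $E$ forces global generation of $F^\prim(2)$ --- is false; by Remark~\ref{R:eprimunstable2} only $F^\prim(3)$ need be globally generated, and the troublesome spectrum $(1,0,0,-1)$ for $F^\prim$ has $\h^0(F^\prim(1)) = 2$.

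For contrast, the paper's proof never mentions $E^\prim$, stability or spectra. From $\h^0(E(-1)) \geq 2$ and $\tH^0(E(-2)) = 0$ it gets $\h^0(\sci_Y(4)) \geq 2$ and $\tH^0(\sci_Y(3)) = 0$ for the degree-$12$ curve $Y$ of \eqref{E:oeiy}, links $Y$ by a $(4,4)$ complete intersection to a degree-$4$ curve $Y^\prime$, and uses the fundamental exact sequence of liaison to produce a \emph{stable} rank-$2$ reflexive sheaf $\scg$ with $c_1(\scg) = -1$, $c_2(\scg) = 2$ and $c_3 = c_3(\scg) + 16$; then \cite[Thm.~8.2(b)]{ha} gives $c_3 \in \{16, 18, 20\}$ at one stroke, and each case is finished with explicit resolutions of $\sci_{Y^\prime}$, Ferrand's resolution under liaison (Remark~\ref{R:monadsandliaison}), and dualization using $\tH^i(E^\vee) = 0$ --- this is also where the monad of item (iii) actually comes from, which you would otherwise have to reconstruct ab initio. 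Note finally that the liaison argument is rank-free, and this matters: the statement carries no rank hypothesis, and it is precisely this computation (giving $c_3 \neq 0$) that excludes rank $2$ under the hypothesis $\h^0(E(-1)) \geq 2$. Your opening move of deferring rank $2$ to ``the separately treated rank $2$ analysis'' points at something that does not exist: for $c_2 = 12$ the rank-$2$ case is the paper's unresolved ``missing case'', so there is no prior rank-$2$ result to invoke.
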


\begin{proof} 
Let $Y$ be the curve associated to $E$ in the exact sequence \eqref{E:oeiy}. 
It is nonsingular and connected, of degree 12 (see Lemma~\ref{L:yconnected}). 
Our hypotheses imply that $\tH^0(\sci_Y(3)) = 0$ and $\h^0(\sci_Y(4)) \geq 2$. 
It follows that $Y$ is directly linked to a curve $Y^\prime$ of degree 4 by a 
complete intersection of type$(4,4)$. $Y^\prime$ is locally complete 
intersection except at finitely many points, where it is locally 
Cohen-Macaulay. The fundamental exact sequence of liaison (recalled in 
\cite[Remark~2.6]{acm1})$\, :$ 
\[
0 \lra \sco_\piii(-8) \lra 2\sco_\piii(-4) \lra \sci_Y \lra \omega_{Y^\prime}(-4) 
\lra 0 
\] 
implies that $\omega_{Y^\prime}(1)$ is globally generated. It follows that a 
general global section of $\omega_{Y^\prime}(1)$ generates this sheaf except at 
finitely many points hence it defines an extension$\, :$ 
\[
0 \lra \sco_\piii(-2) \lra \scg \lra \sci_{Y^\prime}(1) \lra 0
\]
with $\scg$ a rank 2 reflexive sheaf with $c_1(\scg) = -1$, $c_2(\scg) = 
\text{deg}\, Y^\prime - 2 = 2$ (see \cite[Thm.~4.1]{ha}). Since $\chi (\scg) 
= \chi (\sci_{Y^\prime}(1)) = \chi (\sco_\piii(1)) - \chi (\sco_{Y^\prime}(1))$, 
using Riemann-Roch (see Remark~\ref{R:chern}) one gets that $c_3(\scg) = 
4 - 2\chi (\sco_{Y^\prime})$. We also know that $c_3 = -12 - 2\chi (\sco_Y)$. 
On the other hand, by a basic formula in liaison theory (deduced from the 
fundamental exact sequence of liaison)$\, :$ 
\[
\chi (\sco_{Y^\prime}) - \chi (\sco_Y) = 
\frac{1}{2}(4 + 4 - 4)(\text{deg}\, Y - \text{deg}\, Y^\prime) = 16\, . 
\]   
It follows that $c_3 = c_3(\scg) + 16$. 

Now, if $\tH^0(\sci_{Y^\prime}(1)) \neq 0$ then $Y^\prime$ is a complete 
intersection of type $(1,4)$, hence $\omega_{Y^\prime} \simeq \sco_{Y^\prime}(1)$. 
It follows that $\tH^0(\omega_{Y^\prime}(-1)) \neq 0$, hence $\tH^0(\sci_Y(3)) 
\neq 0$, a \emph{contradiction}. 

It remains that $\tH^0(\sci_{Y^\prime}(1)) = 0$ hence $\scg$ is \emph{stable}. 
\cite[Thm.~8.2(b)]{ha} implies, now, that $c_3(\scg) \in \{0,\, 2,\, 4\}$ 
hence $c_3 \in \{16,\, 18,\, 20\}$.   

\vskip2mm

\noindent 
{\bf Case 1.}\quad $c_3(\scg) = 4$. 

\vskip2mm 

\noindent 
In this case, by \cite[Lemma~9.6]{ha}, $\scg$ can be realized as an 
extension$\, :$ 
\[
0 \lra \sco_\piii(-1) \lra \scg \lra \sci_Z \lra 0 
\]  
where $Z$ is a complete intersection of type $(1,2)$ (actually, $\chi(\scg(1)) 
= 2$ hence $\h^0(\scg(1)) \geq 2$). It follows that $\scg$ 
has a resolution of the form$\, :$ 
\[
0 \lra \sco_\piii(-3) \lra 2\sco_\piii(-1) \oplus \sco_\piii(-2) \lra \scg 
\lra 0
\]
hence $\sci_{Y^\prime}$ has a resolution of the form$\, :$ 
\[
0 \lra \sco_\piii(-3) \oplus \sco_\piii(-4) \lra 2\sco_\piii(-2) \oplus 
\sco_\piii(-3) \lra \sci_{Y^\prime} \lra 0\, . 
\]
It follows, from Remark~\ref{R:monadsandliaison}, that $\sci_Y(8)$ has a 
resolution of the form$\, :$ 
\[
0 \lra \sco_\piii(3) \oplus 2\sco_\piii(2) \lra 3\sco_\piii(4) \oplus 
\sco_\piii(3) \lra \sci_Y(8) \lra 0\, . 
\]
One deduces that $E$ has a resolution of the form$\, :$ 
\[
0 \lra \sco_\piii \oplus 2\sco_\piii(-1) \lra 3\sco_\piii(1) \oplus r\sco_\piii 
\lra E \lra 0\, . 
\]
Dualizing this resolution and taking into account that $\tH^i(E^\vee) = 0$, 
$i = 0,\, 1$, one gets easily that $E$ is as in item (i) from the statement. 

\vskip2mm 

\noindent
{\bf Case 2.}\quad $c_3(\scg) = 2$. 

\vskip2mm 

\noindent
In this case, by \cite[Lemma~2.4]{ch}, $\scg$ can be realized as an 
extension$\, :$ 
\[
0 \lra \sco_\piii(-1) \lra \scg \lra \sci_Z \lra 0 
\]
where $Z$ is either the union of two disjoint lines $L$ and $L^\prime$ or is a 
divisor of the form $2L$ on a nonsingular quadric surface, $L$ being a line  
(actually, $\chi(\scg(1)) = 1$ hence $\tH^0(\scg(1)) \neq 0$ and $\chi(\scg) 
= -1$). In both cases, the resolution of $\sci_Z$ has the same numerical 
shape. In the first case, the tensor product of the resolutions of $\sci_L$ and 
$\sci_{L^\prime}$ is a resolution of $\sci_{L \cup L^\prime}$ (see, for example, 
\cite[Lemma~B.1]{acm2}). Consequently, in both cases, $\sci_Z$ has a resolution 
of the form$\, :$ 
\[
0 \lra \sco_\piii(-4) \lra 4\sco_\piii(-3) \lra 4\sco_\piii(-2) \lra \sci_Z 
\lra 0
\]  
hence $\scg$ has a resolution of the form$\, :$ 
\[
0 \lra \sco_\piii(-4) \lra 4\sco_\piii(-3) \lra \sco_\piii(-1) \oplus 
4\sco_\piii(-2) \lra \scg \lra 0\, . 
\]
The global section of $\scg(2)$ whose zero scheme is $Y^\prime$ cannot be the  
product of a linear form and of the unique non-zero global section of 
$\scg(1)$. One deduces that $\sci_{Y^\prime}$ has a resolution of the form$\, :$ 
\[
0 \lra \sco_\piii(-5) \lra 4\sco_\piii(-4) \lra \sco_\piii(-2) \oplus 
3\sco_\piii(-3) \lra \sci_{Y^\prime} \lra 0\, . 
\] 
One deduces, from Remark~\ref{R:monadsandliaison}, that $\sci_Y(8)$ has a 
monad of the form$\, :$ 
\[
0 \lra 3\sco_\piii(3) \oplus \sco_\piii(2) \lra 6\sco_\piii(4) \lra 
\sco_\piii(5) \lra 0\, . 
\] 
It follows that $E$ has a monad of the form$\, :$ 
\[
0 \lra 3\sco_\piii \oplus \sco_\piii(-1) \overset{d^{-1}}{\lra}  
6\sco_\piii(1) \oplus (r-1)\sco_\piii \overset{d^0}{\lra} \sco_\piii(2) 
\lra 0   
\]
where the component $(r-1)\sco_\piii \ra \sco_\piii(2)$ of $d^0$ is 0. 
Dualizing this monad and taking into account that $\tH^i(E^\vee) = 0$, 
$i = 0,\, 1$, one gets that $E$ has, actually, a monad of the form$\, :$ 
\[
0 \lra \sco_\piii(-1) \overset{d^{\prime -1}}{\lra}  
6\sco_\piii(1) \oplus 4\sco_\piii \overset{d^{\prime 0}}{\lra} \sco_\piii(2) 
\lra 0   
\]
where the component $\sco_\piii(-1) \ra 4\sco_\piii$ of $d^{\prime -1}$ is defined 
by $x_0, \ldots ,x_3$ and where the component $4\sco_\piii \ra \sco_\piii(2)$ 
of $d^{\prime 0}$ is 0. One deduces an exact sequence$\, :$ 
\[
0 \lra \sco_\piii(-1) \lra 2\sco_\piii(1) \oplus \Omega_\piii(2) \oplus 
4\sco_\piii \lra E \lra 0\, . 
\]
Since the multiplication maps $\tH^0(\sco_\piii(1)) \otimes S_1 \ra 
\tH^0(\sco_\piii(2))$ and $\tH^0(\Omega_\piii(2)) \otimes S_1 \ra 
\tH^0(\Omega_\piii(3))$ are surjective, it follows easily that $E$ is as in 
item (ii) from the statement. 

\vskip2mm 

\noindent
{\bf Case 3.}\quad $c_3(\scg) = 0$. 

\vskip2mm 

\noindent 
In this case, by \cite[Prop.~2.3]{hs}, $\scg$ can be realized as an 
extension$\, :$ 
\[
0 \lra \sco_\piii(-2) \lra \scg \lra \sci_Z(1) \lra 0 
\]    
where $Z$ is the union of two disjoint conics $C$ and $C^\prime$. The tensor 
product of the resolutions of $\sci_C$ and $\sci_{C^\prime}$ is a resolution 
of $\sci_Z$ (see, for example, \cite[Lemma~B.1]{acm2}). From this resolution 
one gets a resolution of $\scg$, and from the resolution of $\scg$ one gets a 
resolution of $\sci_{Y^\prime}$. Since the global section of $\scg(2)$ whose 
zero scheme is $Y^\prime$ is not the product of a linear form and of the unique 
non-zero global section of $\scg(1)$ one deduces that $\sci_{Y^\prime}$ has a 
resolution of the same numerical shape as that of $\sci_Z$, that is, a 
resolution of the form$\, :$ 
\[
0 \ra \sco_\piii(-6) \ra 2\sco_\piii(-4) \oplus 2\sco_\piii(-5) \ra 
\sco_\piii(-2) \oplus 2\sco_\piii(-3) \oplus \sco_\piii(-4) \ra \sci_{Y^\prime} 
\ra 0\, . 
\]
One deduces, using Remark~\ref{R:monadsandliaison}, a monad of $\sci_Y(8)$ of 
the form$\, :$ 
\[
0 \lra \sco_\piii(4) \oplus 2\sco_\piii(3) \oplus \sco_\piii(2) \lra 
2\sco_\piii(5) \oplus 4\sco_\piii(4) \lra \sco_\piii(6) 
\lra 0  
\]
hence a monad for $E$ of the form$\, :$ 
\[
\sco_\piii(1) \oplus 2\sco_\piii \oplus \sco_\piii(-1) 
\overset{d^{-1}}{\lra}  
2\sco_\piii(2) \oplus 4\sco_\piii(1) \oplus (r-1)\sco_\piii 
\overset{d^0}{\lra} \sco_\piii(3) 
\]
such that the component $(r-1)\sco_\piii \ra \sco_\piii(3)$ of $d^0$ is 0. 
Since there is no locally split monomorphism $\sco_\piii(1) \ra 2\sco_\piii(2)$ 
it follows that one can cancel the direct summand $\sco_\piii(1)$ from the 
left term of the monad and a direct summand $\sco_\piii(1)$ from the middle 
term, hence $E$ has a monad of the form$\, :$ 
\[
2\sco_\piii \oplus \sco_\piii(-1) \overset{d^{\prime -1}}{\lra}  
2\sco_\piii(2) \oplus 3\sco_\piii(1) \oplus (r-1)\sco_\piii 
\overset{d^{\prime 0}}{\lra} \sco_\piii(3)\, .  
\]  
Dualizing this monad and taking into account that $\tH^i(E^\vee) = 0$, 
$i = 0,\, 1$, one gets that $E$ has, actually, a monad of the form$\, :$ 
\[
\sco_\piii(-1) \xra{d^{\secund -1}} 
2\sco_\piii(2) \oplus 3\sco_\piii(1) \oplus 4\sco_\piii 
\xra{d^{\secund 0}} \sco_\piii(3)
\]
such that $\tH^0(d^{\secund -1 \vee})$ is surjective and the component 
$4\sco_\piii \ra \sco_\piii(3)$ of $d^{\secund 0}$ is 0. 
Since $\tH^0(E(-2)) = 0$, the component $2\sco_\piii(2) 
\ra \sco_\piii(3)$ of $d^{\secund 0}$ is defined by two linearly independent 
linear forms, $h_0$ and $h_1$. Since $\h^0(E(-1)) \geq 2$, it follows that the 
image of $\tH^0(d^{\secund 0}(-1))$ has dimension $\leq 9$. It follows that, up 
to an automorphism of $2\sco_\piii(2) \oplus 3\sco_\piii(1)$, one can assume 
that the last summand $\sco_\piii(1)$ is mapped to 0 by $d^{\secund 0}$. 
Moreover, up to an automorphism of $\sco_\piii(1) \oplus 4\sco_\piii$, one can 
assume that the component $\sco_\piii(-1) \ra \sco_\piii(1)$ of $d^{\secund -1}$ 
is 0. One deduces that $E \simeq \sco_\piii(1) \oplus E_0$, where $E_0$ is the 
cohomology of a monad$\, :$ 
\[
\sco_\piii(-1) \overset{d_1^{-1}}{\lra} 2\sco_\piii(2) \oplus 2\sco_\piii(1) 
\oplus 4\sco_\piii \overset{d_1^0}{\lra} \sco_\piii(3) 
\] 
with the component $4\sco_\piii \ra \sco_\piii(3)$ of $d_1^0$ equal to 0 and with 
$\tH^0(d_1^{-1 \vee})$ surjective. Let $K$ be the kernel of the restriction of 
$d_1^0$ to $2\sco_\piii(2) \oplus 2\sco_\piii(1) \ra \sco_\piii(3)$ (which is an 
epimorphism). One gets an exact sequence$\, :$ 
\[
0 \lra \sco_\piii(-1) \lra K \oplus 4\sco_\piii \lra E_0 \lra 0\, . 
\] 
Then Claim 6.2 from the proof of \cite[Prop.~6.3]{acm1} shows that $E_0$ is 
as in item (iii) from the statement.  
\end{proof}

\begin{lemma}\label{L:h1e(l)c2=12} 
If $E$ is as at the beginning of this section then$\, :$ 
\[ 
{\fam0 h}^1(E(l)) \leq {\fam0 max}(0,\, {\fam0 h}^1(E(l-1)) - 3)\, ,\  \forall 
\, l \geq 0\, .
\] 
\end{lemma}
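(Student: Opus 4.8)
The plan is to reproduce, almost verbatim, the argument of Lemma~\ref{L:h1e(l)} from the previous section, the only new input being a regularity statement for the restriction of $E$ to a plane. First I would fix an arbitrary plane $H \subset \piii$ of equation $h = 0$ and consider $E_H$, which is a globally generated vector bundle on $H \simeq \pii$ with $c_1(E_H) = 5$ and $c_2(E_H) = 12$. The heart of the proof is to show that $E_H$ is $1$-regular, so that $\tH^1(E_H(l)) = 0$ for all $l \geq 0$. Restricting the defining sequence $0 \ra E(l-1) \overset{h}{\ra} E(l) \ra E_H(l) \ra 0$ to cohomology then shows that, for every nonzero $h \in \tH^0(\sco_\piii(1))$, the multiplication $h \colon \tH^1(E(l-1)) \ra \tH^1(E(l))$ is surjective for $l \geq 0$. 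Dualizing, $h \colon \tH^1(E(l))^\vee \ra \tH^1(E(l-1))^\vee$ is injective for every nonzero $h$, and applying the Bilinear Map Lemma \cite[Lemma~5.1]{ha} to $\tH^1(E(l))^\vee \otimes \tH^0(\sco_\piii(1)) \ra \tH^1(E(l-1))^\vee$ yields, whenever $\tH^1(E(l)) \neq 0$, that $\h^1(E(l-1)) \geq \h^1(E(l)) + 3$; together with $\h^1(E(l)) \geq 0$ this is exactly the claimed inequality.

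The regularity step is where I would use Remark~\ref{R:c1=5onp2}. By \cite[Lemma~1.2]{acm1} one has $E_H \simeq G^\prime \oplus t\sco_\pii$, where $G^\prime$ is a quotient of one of the three bundles $F^\prime$ listed in Remark~\ref{R:c1=5onp2}(iii): namely $\sco_\pii(2) \oplus 3\text{T}_\pii(-1)$, $\sco_\pii(2) \oplus \sco_\pii(1) \oplus P(\sco_\pii(2))$, or $3\sco_\pii(1) \oplus 2\text{T}_\pii(-1)$. Since twisting down the surjection $F^\prime \ra G^\prime$ by $s\sco_\pii$ shows that $\tH^1(G^\prime(j))$ is a quotient of $\tH^1(F^\prime(j))$ and $\tH^2(G^\prime(j))$ a quotient of $\tH^2(F^\prime(j))$, while adding $t\sco_\pii$ changes nothing in degrees $\geq -1$, it suffices to check the relevant vanishings on the three model bundles. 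The summands $\sco_\pii(a)$ with $a \geq 1$ and $\text{T}_\pii(-1)$ are $0$-regular (the Euler sequence gives $\tH^1(\text{T}_\pii(-2)) = 0$ and $\tH^2(\text{T}_\pii(-3)) = 0$). The only obstruction to $0$-regularity is the summand $P(\sco_\pii(2))$: from $0 \ra \sco_\pii(-2) \ra 6\sco_\pii \ra P(\sco_\pii(2)) \ra 0$ one computes $\tH^1(P(\sco_\pii(2))(-1)) \simeq \tH^2(\sco_\pii(-3)) \neq 0$, whereas $\tH^1(P(\sco_\pii(2))) = 0$ and $\tH^2(P(\sco_\pii(2))(-1)) = 0$. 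Hence each model bundle is $1$-regular (and the second one fails to be $0$-regular), so $E_H$ is $1$-regular and $\tH^1(E_H(l)) = 0$ for $l \geq 0$.

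The main obstacle, and the only genuinely new point compared with the $c_2 = 11$ case, is precisely this regularity bookkeeping: one must verify that $E_H$ is $1$-regular rather than $0$-regular. The appearance of $P(\sco_\pii(2))$ in one of the three possibilities on $\pii$ is exactly what degrades the regularity by one, and this is why the conclusion is stated only for $l \geq 0$ (rather than $l \geq -1$, as in Lemma~\ref{L:h1e(l)}). Once the vanishing $\tH^1(E_H(l)) = 0$ for $l \geq 0$ is in hand, the remainder of the proof is the same surjectivity-plus-Bilinear-Map-Lemma mechanism already used in Lemma~\ref{L:h1e(l)} and in Remark~\ref{R:muh1e(-4)}, and requires no further computation.
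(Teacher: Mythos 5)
Your proof is correct and takes essentially the same route as the paper's: restriction to an arbitrary plane $H$, $1$-regularity of $E_H$ via Remark~\ref{R:c1=5onp2}, surjectivity of multiplication by $h$ on $\tH^1$ for $l \geq 0$, and the Bilinear Map Lemma applied to $\tH^1(E(l))^\vee \otimes \tH^0(\sco_\piii(1)) \ra \tH^1(E(l-1))^\vee$. The only difference is that you make explicit the regularity bookkeeping behind the citation of Remark~\ref{R:c1=5onp2} — in particular that the possible summand $P(\sco_\pii(2))$ is $1$-regular but not $0$-regular, which is exactly why the bound here starts at $l \geq 0$ instead of $l \geq -1$ as in Lemma~\ref{L:h1e(l)} — and those computations are correct.
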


\begin{proof} 
Let $H \subset \piii$ be an arbitrary plane, of equation $h = 0$. 
By Remark~\ref{R:c1=5onp2}, $E_H$ is 1-regular. 
In particular, $\tH^1(E_H(l)) = 0$, $\forall \, l \geq 0$. One deduces that 
multiplication by $h \colon \tH^1(E(l-1)) \ra \tH^1(E(l))$ is surjective, 
$\forall \, l \geq 0$. Applying the Bilinear Map Lemma 
\cite[Lemma~5.1]{ha} to $\tH^1(E(l))^\vee \otimes \tH^0(\sco_\piii(1)) \ra 
\tH^1(E(l-1))^\vee$ one gets the desired inequality. 
\end{proof}

\begin{prop}\label{P:eprimunstablec2=12} 
If $E$ (as at the beginning of the section) has rank $r \geq 3$ and the 
associated rank $3$ vector bundle $E^\prim$ is not stable then one of the 
following holds$\, :$ 
\begin{enumerate} 
\item[(i)] $c_3 = 8$ and $E$ can be realized as an extension$\, :$ 
\[
0 \lra F^\prim (2) \lra E \lra \sco_\piii(1) \lra 0
\] 
where $F^\prim$ is a $4$-instanton with ${\fam0 h}^0(F^\prim (1)) \leq 1$$\, ;$ 
\item[(ii)] $c_3 = 8$ and $E \simeq \sco_\piii(1) \oplus E_0$, where $E_0$ is 
the kernel of an epimorphism $4\sco_\piii(2) \ra \sco_\piii(4)$ (and 
$E^\prim \simeq \sco_\piii(1) \oplus E_0^\prim$ with $E_0^\prim$ defined by an 
exact sequence $0 \ra \sco_\piii \ra E_0 \ra E_0^\prim \ra 0$)$\, ;$ 
\item[(iii)] $c_3 = 12$ and there is an exact sequence$\, :$ 
\[
0 \lra \sco_\piii(-1) \xra{\left(\begin{smallmatrix} u\\ v 
\end{smallmatrix}\right)} F^\prim (2 ) \oplus 4\sco_\piii \lra E \lra 0
\]
where $F^\prim$ is a $3$-instanton with ${\fam0 h}^0(F^\prim (1)) \leq 1$ and 
with $v$ defined by four linearly independent linear forms. 
\end{enumerate} 
\end{prop}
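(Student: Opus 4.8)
The plan is to apply Lemma~\ref{L:eprimunstable}. Since $E^\prim$ is unstable and $c_2 = 12$, it leaves exactly two possibilities$\, :$ either $c_3 = 8$ and $E^\prim$ is an extension $0 \ra F^\prim(2) \ra E^\prim \ra \sco_\piii(1) \ra 0$ with $F^\prim$ stable of rank $2$, $c_1(F^\prim) = 0$, $c_2(F^\prim) = 4$ (\emph{Case A}), or $c_3 = 12$ and $E^\prim$ is an extension $0 \ra F^\prim(2) \ra E^\prim \ra \sci_L(1) \ra 0$ with $F^\prim$ stable of $c_1 = 0$, $c_2 = 3$ and $L$ a line (\emph{Case B}). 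In either case the target is to recognise $F^\prim$ as a mathematical instanton with $\h^0(F^\prim(1)) \leq 1$, after which the structure of $E$ will be read off from Remarks~\ref{R:eprimunstable} and~\ref{R:eprimunstable2}.

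The first key step is the a priori bound $\h^0(F^\prim(1)) \leq 1$, which I would get cheaply. From the defining extension, $\h^0(F^\prim(1)) \leq \h^0(E^\prim(-1))$, and from \eqref{E:oeeprim} one has $\h^0(E^\prim(-1)) = \h^0(E(-1))$ (since $\tH^i(\sco_\piii(-1)) = 0$ for $i = 0,\, 1$). Now, were $\h^0(E(-1)) \geq 2$, Prop.~\ref{P:h0e(-1)geq2c2=12} would force $c_3 \in \{16,\, 18,\, 20\}$, contradicting $c_3 \in \{8,\, 12\}$. Hence $\h^0(E(-1)) \leq 1$ and therefore $\h^0(F^\prim(1)) \leq 1$.

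The second step is the spectrum analysis of the rank $2$ bundle $F^\prim$ (Hartshorne \cite[\S~7]{ha})$\, :$ symmetry ($c_1 = 0$) and connectedness leave only $(0,0,0)$ or $(1,0,-1)$ in Case B, and only $(0,0,0,0)$ or $(1,0,0,-1)$ in Case A, the instanton case being exactly the all-zero spectrum. By Remark~\ref{R:eprimunstable} the rank is $r = 3 + \h^2(F^\prim(-2))$ (Case A) resp. $r = 5 + \h^2(F^\prim(-2))$ (Case B), and the spectrum gives $\h^2(F^\prim(-2)) = 0$ for the all-zero spectrum and $\h^2(F^\prim(-2)) = 1$ for the staircase one. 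In Case B the spectrum $(1,0,-1)$ would force $\h^0(F^\prim(1)) = 2$ by \cite[Lemma~9.15]{ha}, contradicting the bound just proved$\, ;$ so $F^\prim$ is a $3$-instanton, $r = 5$, and item (iii) follows verbatim from Remark~\ref{R:eprimunstable2}(ii) (giving the displayed resolution with $v$ defined by four linearly independent linear forms). In Case A the all-zero spectrum gives $r = 3$, hence $E = E^\prim$, and the extension of Lemma~\ref{L:eprimunstable} is precisely item (i), with $F^\prim$ a $4$-instanton and $\h^0(F^\prim(1)) \leq 1$.

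The remaining, and hardest, case is Case A with the non-instanton spectrum $(1,0,0,-1)$, which is compatible with $\h^0(F^\prim(1)) \leq 1$ and produces a genuinely rank $4$ bundle $E$ (here $\h^2(F^\prim(-2)) = 1$)$\, ;$ this is where I expect the bulk of the work, culminating in item (ii). The plan is to feed the cohomology read off from the spectrum into the Horrocks correspondence, starting from the resolution $0 \ra \Omega_\piii(1) \ra F^\prim(2) \oplus 4\sco_\piii \ra E^\prim \ra 0$ of Remark~\ref{R:eprimunstable2}(i) combined with \eqref{E:oeeprim}. The unique section in $\h^0(E(-1))$ yields a morphism $\sco_\piii(1) \ra E$$\, ;$ the technical heart is to show that it is a subbundle inclusion that splits, giving $E \simeq \sco_\piii(1) \oplus E_0$, and then to identify $E_0$ as the kernel of an epimorphism $4\sco_\piii(2) \ra \sco_\piii(4)$ via a Chern-class and $\tH^i_\ast$ computation. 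This splitting-and-identification step, rather than the spectrum bookkeeping, is the main obstacle.
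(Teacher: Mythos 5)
Your reduction via Lemma~\ref{L:eprimunstable}, the bound $\h^0(F^\prim(1)) \leq 1$ obtained from Prop.~\ref{P:h0e(-1)geq2c2=12}, and the treatment of the spectra $(0,0,0)$ and $(1,0,-1)$ in Case B and $(0,0,0,0)$ in Case A all agree with the paper's proof and are correct (including your observation that symmetry and connectedness alone pin down the lists of spectra). The genuine gap is exactly the subcase you flag as ``the main obstacle'': Case A with $F^\prim$ of spectrum $(1,0,0,-1)$, i.e.\ item (ii). There you offer only a plan, and the plan as stated does not work. You invoke ``the unique section in $\h^0(E(-1))$'', but at that point you only know $\h^0(E(-1)) \leq 1$; the existence of such a section -- equivalently of the direct summand $\sco_\piii(1)$ -- is essentially the conclusion, so taking it as the starting point is circular. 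Likewise, a Horrocks-type computation requires control of $\tH^1_\ast(E)$ and $\tH^1_\ast(E^\vee)$, which the spectrum of $F^\prim$ alone does not give you.

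What the paper uses, and what your sketch is missing, is a structure theorem for $F^\prim$ itself: by Chang \cite[Prop.~1.5]{ch2}, a stable rank $2$ bundle with $c_1 = 0$, $c_2 = 4$ and spectrum $(1,0,0,-1)$ either has an unstable plane of order $1$ or is the cohomology of a selfdual monad $0 \ra \sco_\piii(-2) \ra 4\sco_\piii \ra \sco_\piii(2) \ra 0$. The first alternative is excluded because it produces an epimorphism $F^\prim \ra \sci_{Z,H}(-1)$ with $Z$ of length $5$, contradicting the global generation of $F^\prim(3)$ forced by Remark~\ref{R:eprimunstable2}(i). In the second alternative, the Koszul resolution of $K := \Ker (4\sco_\piii \ra \sco_\piii(2))$ gives $\tH^1(F^\prim(1)) \simeq \tH^3(\sco_\piii(-5))$ and $\tH^1(F^\prim(2)) \simeq \tH^3(\sco_\piii(-4)) \simeq k$, so the multiplication $\tH^1(F^\prim(1)) \otimes S_1 \ra \tH^1(F^\prim(2))$ is a perfect pairing; since by Remark~\ref{R:eprimunstable2}(i) the class $\xi \in \tH^1(F^\prim(1))$ of the extension defining $E^\prim$ is annihilated by every linear form, $\xi = 0$ and $E^\prim \simeq \sco_\piii(1) \oplus F^\prim(2)$. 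Finally $r = 4$ by Remark~\ref{R:eprimunstable}(i) (here $\h^2(F^\prim(-2)) = 1$), and since $\tH^i(E^\vee) = 0$, $i = 0,\, 1$, the extension $0 \ra \sco_\piii \ra E \ra E^\prim \ra 0$ must be nontrivial on the summand $F^\prim(2)$; as $\h^1(F^{\prim \vee}(-2)) = 1$, the unique nontrivial extension of $F^\prim(2)$ by $\sco_\piii$ is $K(2)$, whence $E \simeq \sco_\piii(1) \oplus K(2)$ -- that is, item (ii) with $E_0 = \Ker (4\sco_\piii(2) \ra \sco_\piii(4))$. Without Chang's dichotomy (or an equivalent substitute) neither the splitting of $E^\prim$ nor the identification of $E_0$ is accessible, so your proposal does not yet prove the proposition.
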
 

\begin{proof} 
Lemma~\ref{L:eprimunstable} implies that either $c_3 = 8$ and $E^\prim$ can be 
realized as an extension$\, :$ 
\[
0 \lra F^\prim(2) \lra E^\prim \lra \sco_\piii(1) \lra 0 
\]
where $F^\prim$ is a stable rank 2 vector bundle with $c_1(F^\prim) = 0$, 
$c_2(F^\prim) = 4$, or $c_3 = 12$ and $E^\prim$ can be realized as an 
extension$\, :$ 
\[
0 \lra F^\prim(2) \lra E^\prim \lra \sci_L(1) \lra 0 
\]
where $F^\prim$ is a stable rank 2 vector bundle with $c_1(F^\prim) = 0$, 
$c_2(F^\prim) = 3$ and $L$ is a line. 

\vskip2mm 

\noindent
$\bullet$\quad In the former case, $F^\prim$ has two possible spectra$\, :$ 
$(0,0,0,0)$ and $(1,0,0,-1)$. If $F^\prim$ has spectrum $(0,0,0,0)$ then it is 
a 4-instanton. It must have $\h^0(F^\prim (1)) \leq 1$ (by the first part of 
the proof of Prop.~\ref{P:h0e(-1)geq2c2=12}, if $\h^0(E(-1)) \geq 2$ then 
$c_3 \in \{16,\, 18,\, 20\}$). Moreover, by Remark~\ref{R:eprimunstable}, 
$r = 3$ (because $\tH^2(F^\prim (-2)) = 0$) hence $E (= E^\prim)$ is as in case 
(i) from the statement. The 4-instantons $F^\prim$ for which there exists an 
extension $0 \ra F^\prim (2) \ra E \ra \sco_\piii(1) \ra 0$ with $E$ globally 
generated are characterized in Remark~\ref{R:eprimunstablec2=12} below. 

\vskip2mm 

If $F^\prim$ has spectrum $(1,0,0,-1)$ then, according to Chang 
\cite[Prop.~1.5]{ch2}, either $F^\prim$ has an unstable plane $H$ of order 1 or 
it can be realized as the cohomology of a selfdual monad$\, :$ 
\[
0 \lra \sco_\piii(-2) \lra 4\sco_\piii \lra \sco_\piii(2) \lra 0\, . 
\] 
The former case cannot, however, occur because, in that case, there exists an 
epimorphism $F^\prim \ra \sci_{Z,H}(-1) \ra 0$ where $Z$ is a 0-dimensional 
subscheme of $H$, of length 5, and this would \emph{contradict} the fact that 
$F^\prim (3)$ must be globally generated (see Remark~\ref{R:eprimunstable2}(i)). 

It thus remains that $F^\prim$ is the cohomology of a monad as above. Let $K$ 
be the kernel of the epimorphism $4\sco_\piii \ra \sco_\piii(2)$ from the 
monad. $K$ admits a (Koszul) resolution of the form$\, :$ 
\[
0 \lra \sco_\piii(-6) \lra 4\sco_\piii(-4) \lra 6\sco_\piii(-2) \lra K \lra 0\, . 
\]     
One deduces that $\tH^1(F^\prim (1)) \simeq \tH^3(\sco_\piii(-5))$ and 
$\tH^1(F^\prim (2)) \simeq \tH^3(\sco_\piii(-4)) \simeq k$. It follows that the 
multiplication map $\tH^1(F^\prim (1)) \otimes_k S_1 \ra \tH^2(F^\prim(2))$ is a 
perfect pairing, that is, if $\xi \in \tH^1(F^\prim (1))$ is annihilated by 
every linear form $h \in S_1$ then $\xi = 0$. Remark~\ref{R:eprimunstable2}(i)  
implies, now, that $E^\prim \simeq \sco_\piii(1) \oplus F^\prim (2)$. Since, 
by Remark~\ref{R:eprimunstable}, $r = 4$ (because $\h^2(F^\prim(-2)) = 1$) and 
since $\h^1(F^{\prim \vee}(-2)) = \h^1(F^\prim (-2)) = 1$ it follows that 
$E \simeq \sco_\piii(1) \oplus K(2)$. 

\vskip2mm 

\noindent 
$\bullet$\quad In the latter case (considered at the beginning of the proof), 
$F^\prim$ has, \emph{a priori}, two possible spectra$\, :$ $(1,0,-1)$ and 
$(0,0,0)$. But if $F^\prim$ would have spectrum $(1,0,-1)$ then, by 
\cite[Lemma~9.15]{ha}, one would have $\h^0(F^\prim (1)) = 2$ and this would 
\emph{contradict} the fact, established in the first part of the proof of 
Prop.~\ref{P:h0e(-1)geq2c2=12}, that if $\h^0(E(-1)) \geq 2$ then $c_3 \in 
\{16,\, 18,\, 20\}$. It remains 
that $F^\prim$ has spectrum $(0,0,0)$ hence it is a 3-instanton. Moreover, it 
must satisfy $\h^0(F^\prim (1)) \leq 1$. One concludes, now, using  
Remark~\ref{R:eprimunstable2}(ii), that $E$ must be as in item (iii) of the 
statement.   
\end{proof} 

\begin{remark}\label{R:eprimunstablec2=12} 
(a) We want to say a few words about the 4-instantons $F^\prim$ 
with $\h^0(F^\prim (1)) \leq 1$ for which there exists an extension as in  
Prop.~\ref{P:eprimunstablec2=12}(i) with the middle term globally generated. 

\vskip2mm 

\noindent 
$\bullet$\quad Since, by Remark~\ref{R:eprimunstable2}, $F^\prim (3)$ must 
be globally generated it follows that $F^\prim$ cannot have any jumping line of 
maximal order 4. 

\vskip2mm 

\noindent
$\bullet$\quad $F^\prim$ can have only finitely many jumping lines of order 3. 
\emph{Indeed}, if $L$ and $L^\prime$ are two such lines then, by 
Lemma~\ref{L:twojumpinglines} from Appendix~\ref{A:instantons}, 
$L$ and $L^\prime$ must be disjoint. Then, by Lemma~\ref{L:fpriml}(b), 
the map $\tH^1(F^\prim (1)) \ra \tH^1(\sco_L(-2)) \oplus 
\tH^1(\sco_{L^\prime}(-2))$ is surjective hence the kernels of the 
maps $\tH^1(F^\prim (1)) \ra \tH^1(\sco_L(-2))$ and $\tH^1(F^\prim (1)) \ra 
\tH^1(\sco_{L^\prime}(-2))$ are distinct. It follows that if $F^\prim$ has a 
1-dimensional family $(L_t)_{t \in T}$ of jumping lines of order 3 then the 
kernels of the maps $\tH^1(F^\prim (1)) \ra \tH^1(\sco_{L_t}(-2))$ cover 
$\tH^1(F^\prim (2))$. Looking, now, at the arguments from 
Remark~\ref{R:eprimunstablec2=11}(ii) one sees that if $F^\prim$ has a 
1-dimensional family of jumping lines of order 3 then none of the extensions 
from Prop.~\ref{P:eprimunstablec2=12}(i) produces a globally generated vector 
bundle.    

\vskip2mm 

\noindent
$\bullet$\quad If $\tH^0(F^\prim(1)) = 0$ then one must have $\tH^1(F^\prim (2)) 
= 0$. \emph{Indeed}, if $F^\prim (2)$ is globally generated then we showed, in 
\cite[Remark~6.4]{acm1}, that $\tH^1(F^\prim (2)) = 0$. If $F^\prim (2)$ is not 
globally generated then the extension from Prop.~\ref{P:eprimunstablec2=12}(i) 
must be non-trivial. Since the map $\tH^0(E) \ra \tH^0(\sco_\piii(1))$ is 
surjective (because $E$ is globally generated), Lemma~\ref{L:h1fprim(2)=0} from 
Appendix~\ref{A:instantons} implies that $\tH^1(F^\prim (2)) = 0$. 

\vskip2mm 

\noindent
$\bullet$\quad Assume, finally, that $\h^0(F^\prim (1)) = 1$. The general such 
bundle can be realized as an extension$\, :$ 
\[
0 \lra \sco_\piii(-1) \lra F^\prim \lra \sci_{L_1 \cup \ldots \cup L_5}(1) 
\lra 0
\] 
where $L_1 , \ldots , L_5$ are mutually disjoint lines. Assume that 
$F^\prim$ has no jumping line of order 4, i.e., that $L_1 \cup \ldots \cup L_5$ 
has no 5-secant. It is classically known that, in this case, 
$\tH^i(\sci_{L_1 \cup \ldots \cup L_5}(3)) = 0$, $i = 0,\, 1$ (the argument 
is recalled, for example, in \cite[Lemma~5]{acm3}).       
It follows that $\tH^1(F^\prim (2)) = 0$ and that the cokernel of the 
evaluation morphism$\, :$ 
\[
\text{ev} : \tH^0(F^\prim (2)) \otimes_k \sco_\piii \lra F^\prim (2)
\]
is isomorphic to $\sci_{L_1 \cup \ldots \cup L_5}(3)$. Consequently, by 
Remark~\ref{R:eprimunstable2}(i), there exists, in this case, an extension as 
in Prop.~\ref{P:eprimunstablec2=12}(i) producing a globally generated vector 
bundle if and only if there exists an epimorphism$\, :$ 
\[
\Omega_\piii(1) \lra \sci_{L_1 \cup \ldots \cup L_5}(3) \lra 0\, . 
\]
One can show that such epimorphisms really exist$\, :$ see 
\cite[Lemma~2]{acm3}.  

\vskip2mm 

(b) Let $F^\prim$ be a 3-instanton with $\h^0(F^\prim (1)) \leq 1$ and let 
$v : \sco_\piii(-1) \ra 4\sco_\piii$ be a morphism defined by four linearly 
independent linear forms. We want to characterize the morphisms $u : 
\sco_\piii(-1) \ra F^\prim (2)$ for which the cokernel of $(u,v) : \sco_\piii(-1) 
\ra F^\prim (2) \oplus 4\sco_\piii$ is globally generated. Of course, this 
happens if and only if the morphism$\, :$ 
\[
(\text{ev}, u) : (\tH^0(F^\prim (2)) \otimes_k \sco_\piii) \oplus \sco_\piii(-1) 
\lra F^\prim (2) 
\]  
is an epimorphism. We use, now, the stratification, due to Gruson and Skiti 
\cite{gs}, of the moduli space of 3-instantons recalled in Remark~\ref{R:gs}.  

\vskip2mm 

(i) If $\tH^0(F^\prim (1)) = 0$ and $F^\prim$ has no jumping line of order 3 
then $F^\prim (2)$ is globally generated. Moreover, by \cite[Prop.~1.1.1]{gs}, 
the multiplication map $\tH^0(F^\prim (2)) \otimes_k S_1 \ra \tH^0(F^\prim (3))$ 
is surjective. It follows that, in this case, $E \simeq \text{T}_\piii(-1) 
\oplus F^\prim (2)$ (see Remark~\ref{R:eprimunstable2}(ii)).  

\vskip2mm 

(ii) If $\tH^0(F^\prim (1)) = 0$ and $F^\prim$ has a jumping line $L$ of order 3 
then the cokernel of the evaluation morphism of $F^\prim (2)$ is $\sco_L(-1)$. 
Consequently, in this case, the morphism $(\text{ev}, u)$ above is an 
epimorphism if and only if the composite morphism $\sco_\piii(-1) 
\overset{u}{\lra} F^\prim (2) \ra \sco_L(-1)$ is an epimorphism. 

\vskip2mm 

(iii) If $\h^0(F^\prim (1)) = 1$ then $F^\prim$ has two jumping lines $L$ and 
$L^\prime$ of order 3 and the cokernel of the evaluation morphism of 
$F^\prim (2)$ is $\sco_{L\cup L^\prime}(-1)$. 
Consequently, in this case, the morphism $(\text{ev}, u)$ above is an 
epimorphism if and only if the composite morphism $\sco_\piii(-1) 
\overset{u}{\lra} F^\prim (2) \ra \sco_{L\cup L^\prime}(-1)$ is an epimorphism.   
\end{remark}  

\begin{prop}\label{P:h1e(-3)=0c2=12} 
Let $E$ and $E^\prim$ be as at the beginning of this section. If $E^\prim$ is 
stable and ${\fam0 H}^1(E(-3)) = 0$ then one of the following holds$\, :$ 
\begin{enumerate} 
\item[(i)] $c_3 = 20$ and $E \simeq 3\sco_\piii(1) \oplus 
2{\fam0 T}_\piii(-1)$$\, ;$ 
\item[(ii)] $c_3 = 18$ and $E \simeq 2\sco_\piii(1) \oplus \Omega_\piii(2) 
\oplus {\fam0 T}_\piii(-1)$$\, ;$ 
\item[(iii)] $c_3 = 16$ and $E \simeq \sco_\piii(1) \oplus 2\Omega_\piii(2)$. 
\end{enumerate} 
\end{prop}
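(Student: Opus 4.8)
The plan is to follow the pattern of Proposition~\ref{P:h1e(-3)=0c2=11}, using the spectrum to pin down $c_3$ and $r$ and then reducing to a classification in low $c_1$ via duality. Put $F := E^\prim(-2)$; it is a stable rank $3$ bundle with $c_1(F) = -1$, $c_2(F) = 4$ and $c_3(F) = c_3 - 12$ (by \eqref{E:chernf}), and its spectrum $k_F = (k_1, k_2, k_3, k_4)$ satisfies $-2\sum k_i = c_3 - 8$ (Remark~\ref{R:spectrumf}) and $r = 3 + \h^2(F(-2))$ (by \eqref{E:r}). Since $\tH^1(E(-3)) \simeq \tH^1(F(-1))$, the hypothesis $\tH^1(E(-3)) = 0$ means precisely $k_1 \leq -1$. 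Combining this with Lemma~\ref{L:spectrumf} (which gives $-2 \leq k_i \leq 1$) and with the spectrum properties (v), (vi) of Remark~\ref{R:spectrumf}, every $k_i$ lies in $\{-1, -2\}$ and $-1$ occurs at least twice, so the only admissible spectra are $(-1,-1,-1,-1)$, $(-1,-1,-1,-2)$ and $(-1,-1,-2,-2)$. Evaluating $\h^2(F(-2)) = -\sum k_i$ via property (ii), these give respectively $(r, c_3) = (7, 16),\ (8, 18),\ (9, 20)$, which will match items (iii), (ii), (i).

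In each case $\tH^1(E(-3)) = 0$, so Lemma~\ref{L:edual(1)}(a) shows $E^\vee$ is $1$-regular; hence $V := E^\vee(1)$ is a globally generated vector bundle with $\tH^0(V(-1)) = \tH^0(E^\vee) = 0$ and $c_1(V) = r - 5 \in \{2, 3, 4\}$. I would then record the remaining Chern classes and the rank of $V$ in each case (recall \cite[Lemma~2.1]{ha} for the effect of the twist). This reduces the problem to identifying the globally generated bundles with $c_1 \leq 4$ carrying the prescribed invariants.

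To identify $V$ I would invoke the classification of globally generated vector bundles with small $c_1$: the case $c_1 = 2$ is due to Sierra and Ugaglia \cite{su} (see also \cite[Prop.~2.3]{acm1}), while $c_1 = 3, 4$ are covered by \cite{acm1}. Using \cite[Lemma~1.2]{acm1} to split off trivial summands and the vanishing $\tH^0(V(-1)) = 0$ to forbid summands of positive degree, matching invariants singles out $V \simeq \sco_\piii \oplus 2\text{T}_\piii(-1)$, $V \simeq 2\sco_\piii \oplus \Omega_\piii(2) \oplus \text{T}_\piii(-1)$ and $V \simeq 3\sco_\piii \oplus 2\Omega_\piii(2)$ in the three cases; dualizing and twisting ($E \simeq V^\vee(1)$) then produces the bundles of items (iii), (ii), (i) respectively. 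Equivalently, one computes directly that the two intermediate cohomology modules of $V$ are each concentrated in a single degree — the copies of ${\underline k}(2)$ in $\tH^1_\ast(V)$ coming from $\Omega_\piii(2)$ and those of ${\underline k}(3)$ in $\tH^2_\ast(V)$ from $\text{T}_\piii(-1)$ — and concludes by the Horrocks correspondence \cite{bh}.

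The main obstacle, compared with the $c_2 = 11$ analogue (where only $c_1(V) \leq 2$ occurred), is the appearance of $c_1(V) = 3$ and $4$, whose classification is considerably heavier. In the Horrocks reformulation the delicate point is the middle case, spectrum $(-1,-1,-1,-2)$, where both $\tH^1_\ast(V)$ and $\tH^2_\ast(V)$ are nonzero, so a priori $V$ could be an indecomposable bundle gluing an $\Omega_\piii(2)$ to a $\text{T}_\piii(-1)$; this is excluded by the vanishing $\text{Ext}^1(\text{T}_\piii(-1), \Omega_\piii(2)) = \text{Ext}^1(\text{T}_\piii, \Omega_\piii(3)) = 0$, which follows by applying $\text{Hom}(-, \Omega_\piii(3))$ to the Euler sequence together with the surjectivity of $\tH^0(\sco_\piii(1)) \otimes \tH^0(\Omega_\piii(2)) \to \tH^0(\Omega_\piii(3))$, $\Omega_\piii(2)$ being $0$-regular. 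The remaining work — controlling the tails of the cohomology modules and bookkeeping the line-bundle summands — is routine given the regularity and Bilinear Map Lemma estimates of Section~\ref{S:prelim}.
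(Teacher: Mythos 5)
Your setup is sound and your numerology is correct: the admissible spectra, the values $(r,c_3)\in\{(7,16),(8,18),(9,20)\}$, and the target bundles $V=E^\vee(1)$ all match the paper. (The paper itself argues differently for $c_3\in\{18,20\}$: there Riemann--Roch gives $\chi(F(1))\geq 2$, hence $\h^0(E(-1))\geq 2$, and one quotes Prop.~\ref{P:h0e(-1)geq2c2=12}; only for $c_3=16$ does it use Lemma~\ref{L:edual(1)}(a) and the $c_1=2$ classification, as you do.) The gap is in your identification step. The classification results you invoke, filtered through \cite[Lemma~1.2]{acm1}, only say that $V\simeq t\sco_\piii\oplus G$ with $G$ a quotient $F_0/s\sco_\piii$ of a bundle $F_0$ having $\tH^i(F_0^\vee)=0$ and the same Chern classes as $V$, where $t-s$ is fixed by the rank but $s$ itself is not. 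All members of this family --- e.g.\ for $c_3=16$ the bundles $V_s=(s+1)\sco_\piii\oplus\bigl(2\text{T}_\piii(-1)/s\sco_\piii\bigr)$, $s\geq 0$ --- are globally generated, have the same rank and Chern classes, and satisfy $\tH^0(V_s(-1))=0$, so ``matching invariants'' does not single out $s=0$. The condition that actually eliminates $s\geq 1$ is global generation of $E=V^\vee(1)$ itself, which is a condition on $E$, not on $V$, and is not implied by anything you track. Ruling it out is precisely the hard content: in the paper's Case 3 this is done by noting that $t\geq 2$ would yield a globally generated summand $K$ of $E$ with $(c_1,c_2,c_3)=(3,5,3)$, whose associated bundle $P(K)$ (dual of the kernel of the evaluation map) would be globally generated with Chern classes $(3,4,0)$, contradicting the $c_1=3$ classification \cite{am}, \cite{su2}; for $c_3\in\{18,20\}$ it is subsumed in the liaison proof of Prop.~\ref{P:h0e(-1)geq2c2=12}.

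Your Horrocks fallback has the same hole at the same spot: to see that $\tH^1_\ast(V)$ and $\tH^2_\ast(V)$ are concentrated in single degrees with the right multiplicities you need, for instance, $\tH^2(V(-4))\simeq\tH^1(E(-1))^\vee=0$; but the spectrum controls $\h^1(E(l))=\h^1(F(l+2))$ only for $l\leq -3$, and Riemann--Roch gives only $\h^0(E(-1))-\h^1(E(-1))=\chi(F(1))$, so the needed vanishing $\h^1(E(-1))=0$ is exactly equivalent to the statement $s=0$ that you have not proved. (Your computation $\text{Ext}^1(\text{T}_\piii(-1),\Omega_\piii(2))=0$ is correct, but it addresses the easier question of indecomposable extensions, not this one.) As written, the proposal is therefore incomplete; it needs either the paper's $P(K)$ argument together with Prop.~\ref{P:h0e(-1)geq2c2=12}, or some substitute proving $\h^1(E(-1))=0$ in each of the three cases.
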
 

\begin{proof} 
Let $F = E^\prim (-2)$ be the normalized rank 3 vector bundle associated to 
$E^\prim$ and let $k_F = (k_1,\, k_2,\, k_3,\, k_4)$ be the spectrum of $F$. One 
has $c_1(F) = -1$, $c_2(F) = 4$, $c_3(F) = c_3 - 12$ (see \eqref{E:chernf}) and 
$c_3(F) = -2\sum k_i - 4$ (see Remark~\ref{R:spectrumf}(iii)). Moreover,  
one has, from relation \eqref{E:r}, $r = 3 + \h^2(F(-2))$.  
Since $\tH^1(E(-3)) \simeq \tH^1(F(-1))$, the hypothesis $\tH^1(E(-3)) = 0$ is 
equivalent to $k_1 \leq -1$. Taking into account Lemma~\ref{L:spectrumf}, 
it follows that, under our hypotheses, the only possible spectra are 
$(-1,-1,-2,-2)$, $(-1,-1,-1,-2)$ and $(-1,-1,-1,-1)$.  

\vskip2mm 

\noindent 
{\bf Case 1.}\quad $F$ \emph{has spectrum} $(-1,-1,-2,-2)$. 

\vskip2mm 

\noindent 
In this case, $r = 9$, $c_3(F) = 8$ and $c_3 = 20$. It follows, from 
Riemann-Roch, that $\chi(E(-1)) = \chi(F(1)) = 3$ hence $\h^0(E(-1)) \geq 3$.  
Prop.~\ref{P:h0e(-1)geq2c2=12} implies, now, that $E \simeq 3\sco_\piii(1) 
\oplus 2\text{T}_\piii(-1)$.    

\vskip2mm 

\noindent 
{\bf Case 2.}\quad $F$ \emph{has spectrum} $(-1,-1,-1,-2)$. 

\vskip2mm 

\noindent 
In this case, $r = 8$, $c_3(F) = 6$ and $c_3 = 18$. It follows, from 
Riemann-Roch, that $\chi(E(-1)) = \chi(F(1)) = 2$ hence $\h^0(E(-1)) \geq 2$.  
One deduces, now, from Prop.~\ref{P:h0e(-1)geq2c2=12}, that 
$E \simeq 2\sco_\piii(1) \oplus \Omega_\piii(2) \oplus \text{T}_\piii(-1)$.     

\vskip2mm 

\noindent 
{\bf Case 3.}\quad $F$ \emph{has spectrum} $(-1,-1,-1,-1)$. 

\vskip2mm 

\noindent 
In this case, $r = 7$, $c_3(F) = 4$ and $c_3 = 16$. By 
Lemma~\ref{L:edual(1)}(a), $E^\vee (1)$ is globally generated. The Chern 
classes of $E^\vee (1)$ are $c_1(E^\vee (1)) = 2$, $c_2(E^\vee (1)) = 3$, 
$c_3(E^\vee (1)) = 4$. By the classification \cite{su} of globally generated 
vector bundles with $c_1 = 2$ (see, also, \cite[Prop.~2.2]{acm1}),  
there exist integers $s$ and $t$ such that $E^\vee (1) \simeq t\sco_\piii 
\oplus G$ with $G$ defined by an exact sequence$\, :$ 
\[
0 \lra s\sco_\piii \lra 2\text{T}_\piii(-1) \lra G \lra 0\, .  
\]    
It follows that $E \simeq t\sco_\piii(1) \oplus G^\vee (1)$. Since $E$ has 
rank 7 and $G$ has rank $\leq 6$, one gets that $t \geq 1$. We assert that 
$t = 1$. \emph{Indeed}, if $t \geq 2$ then $E \simeq 2\sco_\piii(1) \oplus K$, 
where $K := (t-2)\sco_\piii(1) \oplus G^\vee(1)$ is a globally generated 
vector bundle with $c_1(K) = 3$, $c_2(K) = 5$, $c_3(K) = 3$. In this case, the 
dual $P(K)$ of the kernel of the evaluation morphism $\tH^0(K) \otimes 
\sco_\piii \ra K$ has Chern classes $c_1(P(K)) = 3$, $c_2(P(K)) = 4$, 
$c_3(P(K)) = 0$. But, according to the classification \cite{am} or \cite{su2} 
of globally generated vector bundles with $c_1 = 3$ (see, also, 
\cite[Remark~2.12]{acm1}), there exists no globally 
generated vector bundle having these Chern classes. It thus remains that 
$t = 1$ hence $E \simeq \sco_\piii(1) \oplus 2\Omega_\piii(2)$.   
\end{proof}

\begin{prop}\label{P:h2e(-3)neq0c2=12} 
Assume that the vector bundle $E$ (as at the beginning of this section) has 
rank $r \geq 3$ and that the rank $3$ vector bundle $E^\prim$ associated to 
$E$ in the exact sequence \eqref{E:oeeprim} is stable. If 
${\fam0 H}^2(E(-3)) \neq 0$ then one of the following holds$\, :$ 
\begin{enumerate} 
\item[(i)] $c_3 = 20$ and $E \simeq 2{\fam0 T}_\piii(-1) \oplus 
3\sco_\piii(1)$$\, ;$ 
\item[(ii)] $c_3 = 18$ and $E \simeq {\fam0 T}_\piii(-1) \oplus \Omega_\piii(2) 
\oplus 2\sco_\piii(1)$$\, ;$ 
\item[(iii)] $c_3 = 16$ and $E \simeq {\fam0 T}_\piii(-1) \oplus E_1$ where, 
up to a linear change of coordinates, $E_1$ is the kernel of the 
epimorphism$\, :$ 
\[
(x_0,\, x_1,\, x_2^2,\, x_2x_3,\, x_3^2) : 2\sco_\piii(2) \oplus 3\sco_\piii(1) 
\lra \sco_\piii(3)\, ; 
\]
\item[(iv)] $c_3 = 16$ and $E \simeq \sco_\piii(1) \oplus E_0$, with $E_0$  
described in the statement of Prop.~\ref{P:h0e(-1)geq2c2=12}(iii)$\, ;$ 
\item[(v)] $c_3 = 14$ and $E \simeq {\fam0 T}_\piii(-1) \oplus E_1$ where, up 
to a linear change of coordinates, $E_1$ is the kernel of the 
epimorphism$\, :$ 
\[
\begin{pmatrix} x_0 & x_1 & x_2 & x_3 & 0\\ 
                0 & x_0 & x_1 & x_2 & x_3 
\end{pmatrix} : 5\sco_\piii(2) \lra 2\sco_\piii(3)\, ; 
\]
\item[(vi)] $c_3 = 14$ and $E$ has a resolution of the form$\, :$ 
\[
0 \lra \sco_\piii(-1) \xra{\left(\begin{smallmatrix} u\\ v 
\end{smallmatrix}\right)} E_1 \oplus 4\sco_\piii \lra E \lra 0 
\]
with $v$ defined by $x_0, \ldots ,x_3$ and $E_1^\vee(1)$ defined by an 
extension$\, :$ 
\[
0 \lra \sco_\piii(-1) \oplus \sco_\piii \lra E_1^\vee(1) \lra \sci_X \lra 0 
\]
where $X$ is either the union of two disjoint lines or its degeneration, 
a double line on a nonsingular quadric surface. 
\end{enumerate}
\end{prop}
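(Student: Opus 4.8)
The plan is to run the same spectrum-driven analysis as in the $c_2 = 11$ case (Prop.~\ref{P:h2e(-3)neq0c2=11}), only with one more entry in the spectrum. Writing $F = E^\prim(-2)$, so that $c_1(F) = -1$, $c_2(F) = 4$ and, by \eqref{E:chernf} together with Remark~\ref{R:spectrumf}(iii), $c_3 = 8 - 2\sum_i k_i$ where $k_F = (k_1,k_2,k_3,k_4)$ is the spectrum. Since $\tH^2(E(-3)) \simeq \tH^2(F(-1))$ and, by Remark~\ref{R:spectrumf}(ii), $\h^2(F(-1)) = \sum_i \h^1(\sco_\pj(k_i))$, the hypothesis $\tH^2(E(-3)) \neq 0$ is equivalent to $k_4 = -2$ (recall that $k_4 \geq -2$ by Lemma~\ref{L:spectrumf}). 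First I would enumerate, using the combinatorial constraints of Remark~\ref{R:spectrumf}(iv)--(vi) and $1 \geq k_1 \geq \cdots \geq k_4 = -2$, the admissible spectra: $(1,0,-1,-2)$, $(0,0,-1,-2)$, $(0,-1,-1,-2)$, $(-1,-1,-1,-2)$, $(0,-1,-2,-2)$ and $(-1,-1,-2,-2)$, with respective values $c_3 = 12,\,14,\,16,\,18,\,18,\,20$.

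The spectrum $(1,0,-1,-2)$ is discarded exactly as in Prop.~\ref{P:h2e(-3)=0h1e(-3)neq0c2=11}: it forces $\h^1(E(-4)) = \h^1(F(-2)) = 1$ and $\h^1(E(-3)) = \h^1(F(-1)) = 3$, contradicting the bound $\mathrm{rk}\,\mu \geq \h^1(E(-4)) + 3$ of Remark~\ref{R:muh1e(-4)} (which would force $\mathrm{rk}\,\mu \geq 4 > 3$). Next I would observe, by \eqref{E:oeeprim} and a Riemann--Roch computation, that $\h^0(E(-1)) = \h^0(F(1)) \geq \chi(F(1))$ and that for the three spectra with $c_3 \geq 18$, namely $(-1,-1,-2,-2)$, $(-1,-1,-1,-2)$ and $(0,-1,-2,-2)$, one has $\chi(F(1)) \geq 2$ and $\h^2(F(1)) = 0$, hence $\h^0(E(-1)) \geq 2$. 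Thus Prop.~\ref{P:h0e(-1)geq2c2=12} applies. For the first two (which satisfy $k_1 \leq -1$, i.e. $\tH^1(E(-3)) = 0$) it reproduces items (i) and (ii); equivalently one may quote the matching cases of Prop.~\ref{P:h1e(-3)=0c2=12}. The spectrum $(0,-1,-2,-2)$ is then eliminated: it has $c_3 = 18$, so Prop.~\ref{P:h0e(-1)geq2c2=12} would force $E \simeq 2\sco_\piii(1) \oplus \text{T}_\piii(-1) \oplus \Omega_\piii(2)$, a bundle with $\tH^1(E(-3)) = 0$, whereas $k_1 = 0$ gives $\h^1(E(-3)) = 1$. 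This contradiction shows the spectrum does not occur.

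There remain the two spectra $(0,-1,-1,-2)$ ($c_3 = 16$) and $(0,0,-1,-2)$ ($c_3 = 14$), both with $\h^2(E(-3)) = 1$. For these I would apply Lemma~\ref{L:h2e(-3)=1}, producing the exact sequences
\[
0 \lra \sco_\piii(-1) \xra{\left(\begin{smallmatrix} u\\ v\end{smallmatrix}\right)} E_1 \oplus 4\sco_\piii \lra E \lra 0\, , \qquad 0 \lra (r-5)\sco_\piii \lra E_1 \lra \scf_1(2) \lra 0
\]
with $v$ given by four independent linear forms and $\scf_1$ a stable rank $2$ reflexive sheaf with $c_1(\scf_1) = 0$, $c_2(\scf_1) = 3$ and $c_3(\scf_1) = c_3 - 12 \in \{4,2\}$. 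The strategy is to classify $\scf_1$ by the known results on stable rank $2$ reflexive sheaves, read off the regularity of $E_1$, and decide whether $u$ can be absorbed by an automorphism of $E_1 \oplus 4\sco_\piii$. When $E_1$ is $0$-regular with $\tH^0(E_1) \otimes S_1 \ra \tH^0(E_1(1))$ surjective, the computation of Prop.~\ref{P:h2e(-3)neq0c2=11} and Remark~\ref{R:eprimunstable2}(ii) gives $u = 0$, hence $E \simeq \text{T}_\piii(-1) \oplus E_1$ (the cokernel of $v$ being $\text{T}_\piii(-1)$ by the Euler sequence); this is what I expect to produce items (iii) and (v). In the complementary sub-case of $(0,-1,-1,-2)$ one has $\h^0(E(-1)) \geq 2$, and Prop.~\ref{P:h0e(-1)geq2c2=12}(iii) directly yields item (iv).

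The main difficulty lies in item (vi), where $\scf_1$ (with $c_3(\scf_1) = 2$) is governed by a scheme $X$ that is a pair of disjoint lines or its flat degeneration, a double line on a smooth quadric; here I would recover the extension $0 \ra \sco_\piii(-1) \oplus \sco_\piii \ra E_1^\vee(1) \ra \sci_X \ra 0$ by liaison (Remark~\ref{R:monadsandliaison}) and, for the existence half, build the relevant globally generated bundles by the Serre- and liaison-type constructions of the $c_2 = 10$ section. The one subtle bookkeeping point is the exclusion of $(0,-1,-2,-2)$: it is \emph{not} caught by the unstable-plane criterion Remark~\ref{R:spectrumf}(vii), whose hypothesis fails because the tail $-2,-2$ is not strictly decreasing, which is exactly why the $\h^0(E(-1)) \geq 2$ reduction to Prop.~\ref{P:h0e(-1)geq2c2=12} is the clean route. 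Beyond that, the real work is the fine analysis of the reflexive sheaves $\scf_1$ with $c_2 = 3$ and $c_3 \in \{2,4\}$ — separating the split configurations of items (iii) and (v) from the degenerate one of item (vi), pinning down the precise resolution there (including the double-line degeneration), and, the step for which there is no general method, verifying that the bundles of item (vi) really are globally generated. I expect this last verification to occupy the bulk of the argument.
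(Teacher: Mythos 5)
Your outline coincides with the paper's own proof for everything up to and including items (i)--(iv). The list of admissible spectra with $k_4 = -2$ is the same; the spectrum $(1,0,-1,-2)$ is excluded exactly as in the paper via Remark~\ref{R:muh1e(-4)}; and your exclusion of $(0,-1,-2,-2)$ --- Riemann--Roch gives $\h^0(E(-1)) \geq \chi(F(1)) = 2$, Prop.~\ref{P:h0e(-1)geq2c2=12} then forces $E \simeq 2\sco_\piii(1) \oplus \Omega_\piii(2) \oplus \text{T}_\piii(-1)$, whose cohomology contradicts the spectrum --- is the paper's argument, except that you read the contradiction off $\h^1(E(-3))$ ($0$ for that bundle versus $1$ from the spectrum) where the paper uses $\h^2(E(-3))$ ($1$ versus $2$); both are valid. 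The treatment of $(0,-1,-1,-2)$ is also the paper's: Lemma~\ref{L:h2e(-3)=1}, then the dichotomy ``$\scf_1$ is $2$-regular, so $E_1$ is $0$-regular and (by the $c_1 = 4$ classification, \cite[Prop.~5.1]{acm1}) one gets item (iii)'' versus ``$\tH^1(\scf_1(1)) \neq 0$, whence $\h^0(\scf_1(1)) \geq 2$, $\h^0(E(-1)) \geq 2$ and item (iv).''

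The genuine gap is in the case $c_3 = 14$, spectrum $(0,0,-1,-2)$, i.e.\ the separation of items (v) and (vi). There $r = 6$, so $E_1$ has rank $3$, and the paper's key device is to pass to $G := E_1^\vee(1)$, which has $c_1(G) = -1$, $c_2(G) = 2$, $c_3(G) = 2$, $\tH^0(G(-1)) = \tH^0(E_1^\vee) = 0$ and $\tH^0(G^\vee(-1)) = \tH^0(E_1(-2)) = 0$, and to invoke the rank-$3$ classification of Appendix~\ref{A:(3;-1,2,-)}: if $\tH^0(G) = 0$ then $G^\vee$ is $1$-regular (Cor.~\ref{C:(3;-1,2,2)}(i)), so $E_1$ is $0$-regular and one lands in item (v); if $\tH^0(G) \neq 0$ then Lemma~\ref{L:(3;-1,2,2)}(ii) exhibits $G$ as an extension of a \emph{stable rank-$2$ reflexive sheaf} $\scg$ with $c_1(\scg) = -1$, $c_2(\scg) = 2$, $c_3(\scg) = 2$ by $\sco_\piii$, and it is Chang's \cite[Lemma~2.4]{ch} applied to this $\scg$ --- not to $\scf_1$ --- that produces the degree-$2$ scheme $X$ and hence item (vi). Your plan attaches $X$ to $\scf_1$; but $\scf_1$ has $c_2(\scf_1) = 3$, $c_3(\scf_1) = 2$, so no scheme of degree $2$ is naturally associated to it, and there is no quoted ``known result'' classifying the non-$2$-regular stable $(0,3,2)$ sheaves. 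Moreover the tool you cite for recovering the extension $0 \ra \sco_\piii(-1) \oplus \sco_\piii \ra E_1^\vee(1) \ra \sci_X \ra 0$, namely Remark~\ref{R:monadsandliaison}, is a recipe for monads of ideal sheaves of curves under liaison and does not yield extensions of this kind. So the mechanism carrying the hardest case is missing. Incidentally, your prediction that verifying global generation in item (vi) ``occupies the bulk of the argument'' is off: by Cor.~\ref{C:(3;-1,2,2)}(ii) the cokernel of the evaluation morphism of $E_1$ is $\sco_L(-1)$ for a line $L$, so $E$ is globally generated exactly when the composite $\sco_\piii(-1) \overset{u}{\ra} E_1 \ra \sco_L(-1)$ is an epimorphism, and such $u$ exist because $E_1(1)$ is globally generated; this is a few lines, once the Appendix~\ref{A:(3;-1,2,-)} results are in place.
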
 

\begin{proof}
Let $F = E^\prim (-2)$ be the normalized rank 3 vector bundle associated to 
$E^\prim$ and let $k_F = (k_1,\, k_2,\, k_3,\, k_4)$ be the spectrum of $F$. 
Since $\tH^2(E(-3)) \simeq \tH^2(F(-1))$ the hypothesis $\tH^2(E(-3)) \neq 0$ 
is equivalent to $k_4 = -2$ (taking into account Lemma~\ref{L:spectrumf}). It 
follows that, under our hypotheses, only the following spectra can occur$\, :$ 
$(-1,-1,-2,-2)$, $(0,-1,-2,-2)$, $(-1,-1,-1,-2)$, $(0,-1,-1,-2)$, 
$(0,0,-1,-2)$ and $(1,0,-1,-2)$.  

Our main tool will be Lemma~\ref{L:h2e(-3)=1} which asserts that if 
$\h^2(E(-3)) = 1$, i.e., if $\h^2(F(-1)) = 1$ (which is equivalent to 
$k_4 = -2$ and $k_3 = -1$) then there exists an exact sequence$\, :$ 
\[
0 \lra \sco_\piii(-1) \xra{\left(\begin{smallmatrix} u\\ v 
\end{smallmatrix}\right)} E_1 \oplus 4\sco_\piii \lra E \lra 0\, , 
\] 
with $v \colon \sco_\piii(-1) \ra 4\sco_\piii$ defined by $4$ linearly 
independent linear forms, where $E_1$ is a vector bundle of rank $r - 3$, 
with $\tH^i(E_1^\vee) = 0$, $i = 0,\, 1$, with Chern classes $c_1(E_1) = 
4$, $c_2(E_1) = 7$, $c_3(E_1) = c_3 - 12$. Moreover, there exists an exact 
sequence$\, :$ 
\[
0 \lra (r-5)\sco_\piii \lra E_1 \lra \scf_1(2) \lra 0 
\]
with $\scf_1$ a stable rank 2 reflexive sheaf with Chern classes $c_1(\scf_1) 
= 0$, $c_2(\scf_1) = 3$, $c_3(\scf_1) = c_3 - 12$.   

\vskip2mm 

We show, firstly, that, although allowed by the general theory, two of the 
above spectra cannot occur in our context. 

\vskip2mm 

\noindent 
$\bullet$\quad If the spectrum of $F$ is $(0,-1,-2,-2)$ then $r = 8$, 
$c_3(F) = 6$ and $c_3 = 18$. By Riemann-Roch, $\chi(F(1)) = 2$ hence 
$\h^0(F(1)) \geq 2$ hence $\h^0(E(-1)) \geq 2$. 
Prop.~\ref{P:h0e(-1)geq2c2=12} implies that $E \simeq 2\sco_\piii(1) \oplus 
\Omega_\piii(2) \oplus \text{T}_\piii(-1)$. But, in this case, $\h^2(F(-1)) = 
\h^2(E(-3)) = 1$ which \emph{contradicts} the fact that $F$ has the above 
spectrum. Consequently, this spectrum cannot occur. 

\vskip2mm 

\noindent 
$\bullet$\quad If $F$ has spectrum $(1,0,-1,-2)$ then $\h^1(E(-4)) = 
\h^1(F(-2)) = 1$ and $\h^1(E(-3)) = \h^1(F(-1)) = 3$. But this 
\emph{contradicts} Remark~\ref{R:muh1e(-4)}. Consequently, this spectrum 
cannot occur, either.  

\vskip2mm 

We split, now, the rest of the proof into several cases according to the 
remaining spectra of $F$. 

\vskip2mm 

\noindent 
{\bf Case 1.}\quad $F$ \emph{has spectrum} $(-1,-1,-2,-2)$. 

\vskip2mm 

\noindent 
In this case, as we saw in the proof of Prop.~\ref{P:h1e(-3)=0c2=12}, $E$ 
is as in item (i) from the statement. 

\vskip2mm 

\noindent 
{\bf Case 2.}\quad $F$ \emph{has spectrum} $(-1,-1,-1,-2)$. 

\vskip2mm 

\noindent 
In this case, as we saw in the proof of Prop.~\ref{P:h1e(-3)=0c2=12}, $E$ 
is as in item (ii) from the statement.

\vskip2mm 

\noindent 
{\bf Case 3.}\quad $F$ \emph{has spectrum} $(0,-1,-1,-2)$. 

\vskip2mm 

\noindent 
In this case, one has $r = 7$, $c_3(F) = 4$ and $c_3 = 16$. Let $E_1$ (resp., 
$\scf_1$) be as in the statement (resp., proof) of Lemma~\ref{L:h2e(-3)=1} 
recalled at the beginning of the proof.  
In our case, $c_1(\scf_1) = 0$, $c_2(\scf_1) = 3$, $c_3(\scf_1) = 4$. 
The only possible spectrum for $\scf_1$ is $(0,-1,-1)$ (see \cite[\S~7]{ha}). 
Applying Riemann-Roch one gets $\h^0(\scf_1(1)) - \h^1(\scf_1(1)) = 1$. 

If $\tH^1(\scf_1(1)) = 0$ then $\scf_1$ is 2-regular hence $E_1$ is 0-regular. 
It follows that, in this subcase, $E \simeq \text{T}_\piii(-1) \oplus E_1$ 
(because the multiplication map $\tH^0(E_1) \otimes S_1 \ra \tH^0(E_1(1))$ is 
surjective). Since $E_1$ has Chern classes $c_1(E_1) = 4$, $c_2(E_1) = 7$, 
$c_3(E_1) = 4$, \cite[Prop.~5.1]{acm1} implies that $E_1$ is as in item (iii) 
of the statement. 

If $\tH^1(\scf_1(1)) \neq 0$ then $\h^0(\scf_1(1)) \geq 2$ hence 
$\h^0(E_1(-1)) \geq 2$ hence $\h^0(E(-1)) \geq 2$. 
Prop.~\ref{P:h0e(-1)geq2c2=12} implies, now, that, in this subcase, $E$ is as 
in item (iv) of the statement. 

\vskip2mm

\noindent
{\bf Case 4.}\quad $F$ \emph{has spectrum} $(0,0,-1,-2)$. 

\vskip2mm 

\noindent
In this case, $r = 6$, $c_3(F) = 2$ and $c_3 = 14$. 
It follows that $E_1$ (from the statement of Lemma~\ref{L:h2e(-3)=1}) 
is, in our case, a rank 3 vector 
bundle with Chern classes $c_1(E_1) = 4$, $c_2(E_1) = 7$, $c_3(E_1) = 2$. 
Put $G := E_1^\vee(1)$. It has Chern classes $c_1(G) = -1$, $c_2(G) = 2$, 
$c_3(G) = 2$. Moreover, $\tH^0(G(-1)) = \tH^0(E_1^\vee) = 0$ and 
$\tH^0(G^\vee(-1)) = \tH^0(E_1(-2)) = 0$ hence $G$ satisfies the hypothesis of 
Lemma~\ref{L:(3;-1,2,2)} from Appendix~\ref{A:(3;-1,2,-)}.  

If $\tH^0(G) = 0$ then, by Cor.~\ref{C:(3;-1,2,2)}(i), $G^\vee$ is 1-regular 
hence $E_1 \simeq G^\vee(1)$ is 0-regular. It follows that $E \simeq 
\text{T}_\piii(-1) \oplus E_1$ and, by \cite[Prop.~5.1]{acm1}, $E_1$ is as in 
item (v) of the statement. 

If $\tH^0(G) \neq 0$ then, by Lemma~\ref{L:(3;-1,2,2)}(ii), one has an exact 
sequence$\, :$ 
\[
0 \lra \sco_\piii \lra G \lra \scg \lra 0 
\]       
where $\scg$ is a stable rank 2 reflexive sheaf with the same Chern classes as 
$G$. \cite[Lemma~2.4]{ch} implies that $\scg$ can be realized as an 
extension$\, :$ 
\[
0 \lra \sco_\piii(-1) \lra \scg \lra \sci_X \lra 0 
\]
where $X$ is the union of two disjoint lines or its degeneration, a double line 
on a nonsigular quadric surface. It follows that $G$ can be realized as an 
extension$\, :$ 
\[
0 \lra \sco_\piii(-1) \oplus \sco_\piii \lra G \lra \sci_X \lra 0\, . 
\]
Since $G = E_1^\vee(1)$, $E$ is as in item (vi) of the statement (recalling,  
from the beginning of the proof, the exact sequence relating $E$ and $E_1$). 
Note that, by Cor.~\ref{C:(3;-1,2,2)}(ii), the cokernel of the evaluation 
morphism $\tH^0(E_1) \otimes \sco_\piii \ra E_1$ is isomorphic to $\sco_L(-1)$ 
for some line $L \subset \piii$ hence $E$ is globally generated if and only if 
the composite morphism $\sco_\piii(-1) \overset{u}{\ra} E_1 \ra \sco_L(-1)$ is 
an epimorphism. Morphisms $u$ satisfying this condition really exist because 
$E_1(1)$ is globally generated (see Cor.~\ref{C:(3;-1,2,2)}(ii)) hence the 
map $\tH^0(E_1(1)) \ra \tH^0(\sco_L)$ is surjective.  
\end{proof} 

\begin{lemma}\label{L:h0f(1)=0c2=12} 
Let $F$ be a stable rank $3$ vector bundle on $\piii$, with $c_1(F) = -1$, 
$c_2(F) = 4$, such that $F(2)$ is globally generated. If $c_3(F) \leq -6$ 
then ${\fam0 H}^0(F(1)) = 0$. 
\end{lemma}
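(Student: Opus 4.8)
The plan is to prove the contrapositive: assuming $\tH^0(F(1)) \neq 0$, I would show that in fact $c_3(F) \geq -4$, which contradicts $c_3(F) \leq -6$. This mirrors the proof of Lemma~\ref{L:h0f(1)}(a), now with $c_2(F) = 4$ instead of $3$, so I expect the same mechanism to work but with the case analysis pushed one step further. First I would produce, exactly as there, the exact sequence
\[
0 \lra \scg \lra F^\vee \overset{\sigma}{\lra} \sci_W(1) \lra 0
\]
attached to a nonzero section of $F(1)$, where $\dim W \leq 1$ (using $\tH^0(F) = 0$) and $\scg$ is a rank $2$ reflexive sheaf with $c_1(\scg) = 0$ and $c_2(\scg) = 4 - \text{deg}\, W$. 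Since $\scg \subset F^\vee$ and $\tH^0(F^\vee(-1)) = 0$ by stability, one has $\tH^0(\scg(-1)) = 0$, so $\scg$ is semistable and $c_2(\scg) \geq 0$, forcing $\text{deg}\, W \leq 4$. Remark~\ref{R:chern}(c) then gives
\[
c_3(F) = c_3(\scg) - 4 - 2\,\text{deg}\, W + 2\chi(\sco_{W_{\text{CM}}})\, ,
\]
and $c_3(\scg) \geq 0$ since it equals a length (\cite[Prop.~2.6]{ha}). Thus it suffices to prove $\chi(\sco_{W_{\text{CM}}}) \geq \text{deg}\, W$, for then $c_3(F) \geq c_3(\scg) - 4 \geq -4$.

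The cases $\text{deg}\, W \leq 2$ I would dispatch as in Lemma~\ref{L:h0f(1)}: for $\text{deg}\, W \in \{0,1\}$ the inequality $\chi(\sco_{W_{\text{CM}}}) \geq \text{deg}\, W$ is immediate, and for $\text{deg}\, W = 2$ the only curve violating it is a complete intersection of type $(1,2)$, excluded because restricting $\sigma$ to the plane $H$ containing it produces a nonzero morphism $F_H^\vee \lra \sco_H(-1)$, contradicting $\tH^0(F_H(-1)) = \tH^0(E_H(-3)) = 0$ (Remark~\ref{R:c1=5onp2}, since $c_1(E_H) = 5$ and $c_2(E_H) = 12$). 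The substance of the proof lies in the range $\text{deg}\, W \in \{3,4\}$. Here I would dualize the displayed sequence to obtain
\[
0 \lra \sco_\piii(-1) \lra F \lra \scg^\vee \overset{\delta}{\lra}
\omega_{W_{\text{CM}}}(3) \lra 0\, ,
\]
set $\mathscr{R} := \text{Im}(F \ra \scg^\vee) = \Ker \delta$, and exploit that $\mathscr{R}(2)$, being a quotient of the globally generated sheaf $F(2) = E$, is globally generated. Restricting to a disjoint integral component $C \subset W_{\text{CM}}$ of degree $d_C$ and arithmetic genus $p_C$, the rank-$1$ image of $F_C(2)$ in $\scg^\vee_C(2)$ has degree $2 - 2p_C - d_C$; global generation forces this to be $\geq 0$, hence $d_C \leq 2 - 2p_C \leq 2$.

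Combined with the planar-conic exclusion from the $\text{deg}\, W = 2$ step (a reduced conic is a complete intersection $(1,2)$, and two meeting lines form a degenerate one), this shows that $W_{\text{CM}}$ is a disjoint union of possibly multiple lines, none carrying a planar double structure. Finally I would invoke the classification of multiple lines (Remark~\ref{R:multilines}): a Cohen--Macaulay $m$-fold line with no planar complete-intersection $(1,2)$ subscheme has graded quotients $\sci_L^{(j)}/\sci_L^{(j+1)} \simeq \sco_L(a_j)$ with $a_j \geq 0$ for $j \geq 1$ (planarity of the first-order part would mean $a_1 = -1$, which is exactly the excluded case), whence $\chi(\sco_{X_i}) = m_i + \sum_{j \geq 1} a_j \geq m_i = \text{deg}\, X_i$. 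Summing over the disjoint pieces gives $\chi(\sco_{W_{\text{CM}}}) \geq \text{deg}\, W$, which finishes the argument.

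I expect the main obstacle to be precisely this last reduction for $\text{deg}\, W = 3,4$: one must rule out the ``efficient'' low-genus curves — the twisted cubic and the rational and elliptic quartics (integral, killed by the degree count on $\mathscr{R}(2)$), together with any reducible configuration containing an integral component of degree $\geq 3$ (e.g.\ a twisted cubic plus a disjoint line, which survives the planar-conic test but is excluded on the twisted-cubic component by the same degree count) and the planar multiple lines (excluded by the planar-conic argument) — and then verify, through the B\u{a}nic\u{a}--Forster structure of Cohen--Macaulay multiple lines, that every surviving configuration satisfies $\chi(\sco_{W_{\text{CM}}}) \geq \text{deg}\, W$. The delicate points are checking that the composite $F_H^\vee \ra \sco_H(-1)$ is genuinely nonzero and that the per-component degree of $\mathscr{R}(2)$ is computed correctly when $\scg$ is not locally free, but these are local-to-the-curve verifications analogous to the ones already carried out in the proof of Lemma~\ref{L:h0f(1)}(a).
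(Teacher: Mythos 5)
Your skeleton coincides with the paper's: the contrapositive, the extension $0 \ra \scg \ra F^\vee \ra \sci_W(1) \ra 0$, the Chern-class formula $c_3(F) = c_3(\scg) - 4 - 2\,\text{deg}\, W_{\text{CM}} + 2\chi(\sco_{W_{\text{CM}}})$, the exclusion of planar pieces via Remark~\ref{R:c1=5onp2}, and the reduction to $\chi(\sco_{W_{\text{CM}}}) \geq \text{deg}\, W_{\text{CM}}$. The one organizational difference is that where the paper rules out the ``efficient'' curves by four separate computations (twisted cubic in case (iii), elliptic and rational quartics in cases (vi)--(vii), twisted cubic plus a line in case (viii)), you package them into a single degree count: the image of $F_C(2)$ in $\scg_C(2)$ is the kernel of $\scg_C(2) \ra \omega_W(5) \vb C$, of degree $2 - 2p_C - d_C$, which global generation forces to be nonnegative. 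This is a genuine tidying-up, but it is the same mechanism the paper uses (in case (iii) it appears as $\nu^\ast \scl \simeq \sco_\pj(a) \oplus \sct$ with $a \leq -7$), so I regard the route as essentially the paper's.

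There is, however, one concrete gap: you apply the degree count only to a \emph{disjoint} integral component, and your enumeration of reducible configurations mentions only ``a twisted cubic plus a disjoint line.'' The configuration $W_{\text{CM}} = C \cup L$ with $C$ a twisted cubic and $L$ a line \emph{meeting} $C$ survives everything you set up: it is not excluded by the planar test (every plane meets $C \cup L$ in a divisorial part of degree $\leq 1$, since the union is non-planar), it has degree $4$ and $\chi(\sco_{C \cup L}) = 1 < 4$, and $C$ is not a disjoint component, so your degree count as written never touches it. The paper's case (viii) handles exactly this, with no disjointness hypothesis, via Lemma~\ref{L:omegaxverty}: $\omega_C$ embeds into $\omega_W \vb C$ with torsion cokernel $\omega_{C \cap L}$, so $\nu^\ast(\omega_W(3) \vb C)$ has locally free part of degree $\geq 7$, whence the kernel of $2\sco_C \ra \omega_W(3) \vb C$ pulls back to $\sco_\pj(c)$ with $c \leq -7$, contradicting global generation of $F(2)$. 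Your mechanism does extend to this case --- incidence only adds torsion to $\omega_W(3) \vb C$ and hence lowers the degree of the kernel --- but the case must be stated and checked, not folded silently into the disjoint one. (A second, smaller point, which you flag yourself: for $\text{deg}\, W_{\text{CM}} = 3$ the sheaf $\scg$ need not be locally free along $C$, so the clean formula $2 - 2p_C - d_C$ should be replaced by the determinant estimate that the paper, and your Lemma~\ref{L:h0f(1)}(a) template, actually use.)
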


\begin{proof} 
We will show that if $\tH^0(F(1)) \neq 0$ then $c_3(F) \geq -4$. A nonzero 
global section of $F(1)$ defines an exact sequence$\, :$ 
\begin{equation}\label{E:scgfveesciw(1)} 
0 \lra \scg \lra F^\vee \lra \sci_W(1) \lra 0\, , 
\end{equation}
with $W$ a closed subscheme of $\piii$ of dimension $\leq 1$ and with 
$\scg$ a rank 2 reflexive sheaf with $\tH^0(\scg(-1)) = 0$. Using 
Remark~\ref{R:chern}(c) one can compute the Chern classes of $\scg$ and one 
gets $c_1(\scg) = 0$, $c_2(\scg) = 4 - \text{deg}\, W_{\text{CM}}$. Moreover, 
\[
c_3(F) = c_3(\scg) - 4 - 2\text{deg}\, W_{\text{CM}} + 2\chi(W_{\text{CM}})\, . 
\]
Since $c_3(\scg) \geq 0$ (this is true for any rank 2 reflexive sheaf) it 
follows that if $\dim W \leq 0$ (hence $W_{\text{CM}} = \emptyset$) or if 
$\text{deg}\, W_{\text{CM}} = 1$ (i.e., if $W_{\text{CM}}$ is a line) then 
$c_3(F) \geq -4$. 

Now, since $F(2)$ is a globally generated vector bundle with $c_1(F(2)) = 5$ 
and $c_2(F(2)) = 12$, Remark~\ref{R:c1=5onp2} implies that  
$\tH^0(F_H(-1)) = 0$, for every plane $H \subset \piii$, hence 
$\text{deg}(H \cap W)_{\text{CM}} \leq 1$, for every plane $H \subset \piii$. 
In particular, the support of $W_{\text{CM}}$ cannot contain an irreducible 
plane curve of degree $\geq 2$ or two intersecting lines. We shall use, below, 
the classification of multiple structures on lines in $\piii$ from 
B\u{a}nic\u{a} and Forster \cite{bf} (see, for example, also 
\cite[Appendix~A]{acm2}$\, ;$ some results are recalled in 
Remark~\ref{R:multilines} from Appendix~\ref{A:h0e(-2)neq0}).   

\vskip2mm 

\noindent 
{\bf Case 1.}\quad $\text{deg}\, W_{\text{CM}} = 2$. 

\vskip2mm 

\noindent 
In this case, one of the following holds$\, :$ 
\begin{enumerate} 
\item[(i)] $W_{\text{CM}}$ is the union of two disjoint lines$\, ;$ 
\item[(ii)] $W_{\text{CM}}$ is a double structure $X$ on a line $L \subset \piii$ 
such that $\Ker(\sco_X \ra \sco_L) \simeq \sco_L(l)$, with $l \geq 0$. 
\end{enumerate}
In both cases, $\chi(\sco_{W_{\text{CM}}}) \geq 2 = \text{deg}\, W_{\text{CM}}$ 
hence $c_3(\scf) \geq -4$. 

\vskip2mm 

\noindent 
{\bf Case 2.}\quad $\text{deg}\, W_{\text{CM}} = 3$. 

\vskip2mm 

\noindent 
In this case, one of the following holds$\, :$ 
\begin{enumerate} 
\item[(iii)] $W_{\text{CM}}$ is a twisted cubic curve $C \subset \piii$$\, ;$ 
\item[(iv)] $W_{\text{CM}} = X \cup L^\prime$, with $X$ as in (ii) and $L^\prime$ 
another line not intersecting $L$$\, ;$ 
\item[(v)] $W_{\text{CM}}$ is a triple structure $Y$ on a line $L \subset \piii$, 
containing a double structure $X$ on $L$ as in (ii) and such that 
$\Ker(\sco_Y \ra \sco_X) \simeq \sco_L(2l+m)$, with $m \geq 0$. 
\end{enumerate}
In the cases (iv) and (v) one has $\chi(\sco_{W_{\text{CM}}}) \geq 3 = 
\text{deg}\, W_{\text{CM}}$ hence $c_3(F) \geq -4$. 

We show, now, that case (iii) cannot occur. \emph{Indeed}, dualizing 
the exact sequence \eqref{E:scgfveesciw(1)} (with $W_{\text{CM}}$ as in 
case (iii)) one gets an exact sequence$\, :$ 
\[
0 \lra \sco_\piii(-1) \lra F \lra \scg \lra \omega_C(3) \lra 0\, . 
\] 
Since $\scg$ is semistable with Chern classes $c_1(\scg) = 0$ and $c_2(\scg) 
= 1$, it follows, from Chang \cite[Lemma~2.1]{ch}, that either $\scg$ is a 
nullcorrelation bundle or it can be realized as an extension$\, :$ 
\[
0 \lra \sco_\piii \lra \scg \lra \sci_L \lra 0 
\] 
for some line $L \subset \piii$. In the latter case, $\scg$ admits a 
resolution of the form$\, :$ 
\[
0 \lra \sco_\piii(-2) \lra \sco_\piii \oplus 2\sco_\piii(-1) \lra \scg \lra 0\, . 
\]
In both cases, $\scg_C := \scg \otimes_{\sco_\piii} \sco_C$ is a rank 2 
$\sco_C$-module, with $\text{det}\, \scg_C \simeq \sco_C$. Now, $C$ is the 
image of an embedding $\nu \colon \pj \ra \piii$ such that 
$\nu^\ast \sco_\piii(1) \simeq \sco_\pj(3)$. It follows that $\scl := 
\Ker(\scg_C \ra \omega_C(3))$ is an $\sco_C$-module of rank 1 with 
$\text{det}\, \nu^\ast \scl \simeq \sco_\pj(-7)$. One must have 
$\nu^\ast \scl \simeq \sco_\pj(a) \oplus \sct$, where $\sct$ is a torsion 
$\sco_\pj$-module. Since $\text{det}\, \scl \simeq 
\sco_\pj(a + \text{length}\, \sct)$, one deduces that $a \leq -7$ hence  
$\nu^\ast(\scl(2)) \simeq \sco_\pj(a+6) \oplus \sct(6)$ is not globally 
generated. Using the exact sequence$\, :$ 
\[
F_C \lra \scg_C \lra \omega_C(3) \lra 0\, , 
\]
this \emph{contradicts} the fact that $F(2)$ is globally generated. 
Consequently, case (iii) \emph{cannot occur}. 

\vskip2mm 

\noindent 
{\bf Case 3.}\quad $\text{deg}\, W_{\text{CM}} = 4$. 

\vskip2mm 

\noindent 
In this case, $\scg$ is a semistable rank 2 reflexive sheaf with $c_1(\scg) 
= 0$ and $c_2(\scg) = 0$ hence $\scg \simeq 2\sco_\piii$. It follows that 
$W = W_{\text{CM}}$ hence one has an exact sequence$\, :$ 
\[
0 \lra 2\sco_\piii \lra F^\vee \lra \sci_W(1) \lra 0\, , 
\] 
which, by dualization, produces an exact sequence$\, :$ 
\[
0 \lra \sco_\piii(-1) \lra F \lra 2\sco_\piii \lra \omega_W(3) \lra 0\, . 
\] 
Moreover, one of the following holds$\, :$ 
\begin{enumerate} 
\item[(vi)] $W$ is a reduced and irreducible curve which is a complete 
intersection of type $(2,2)$$\, ;$ 
\item[(vii)] $W$ is a reduced and irreducible curve which is a divisor of 
type $(1,3)$ on a nonsingular quadric surface $Q \subset \piii$$\, ;$ 
\item[(viii)] $W = C \cup L$ where $C$ is a twisted cubic curve and $L$ is 
a line$\, ;$ 
\item[(ix)] $W$ is the union of four mutually disjoint lines$\, ;$ 
\item[(x)] $W = X \cup L^\prime \cup L^\secund$ with $X$ as in (ii) and with 
$L^\prime$ and $L^\secund$ lines such that $L$, $L^\prime$ and $L^\secund$ are 
mutually disjoint$\, ;$ 
\item[(xi)] $W = X \cup X^\prim$, with $X$ as in (ii) and with $X^\prim$ a 
double structure on a line $L^\prime$, not intersecting $L$, such that 
$\Ker(\sco_{X^\prim} \ra \sco_{L^\prime}) \simeq \sco_{L^\prime}(l^\prim)$ with 
$l^\prim \geq 0$$\, ;$ 
\item[(xii)] $W = Y \cup L^\prime$, with $Y$ as in (v) and with $L^\prime$ a 
line not intersecting $L$$\, ;$ 
\item[(xiii)] $W$ is a quadruple structure on a line $L \subset \piii$, 
containing a triple structure $Y$ as in (v) and such that 
$\Ker(\sco_W \ra \sco_Y) \simeq \sco_L(3l+m+n)$ with $n \geq 0$.    
\end{enumerate}
In the cases (ix)--(xiii), $\chi(\sco_W) \geq 4 = \text{deg}\, W$ hence 
$c_3(F) \geq -4$. We will show, now, that the cases (vi)--(viii) cannot occur. 

\emph{Indeed}, in case (vi), $\omega_W \simeq \sco_W$ hence $\Ker(2\sco_W \ra 
\omega_W(3)) \simeq \sco_W(-3)$. Taking into account the exact sequence$\, :$ 
\[
F_W \lra 2\sco_W \lra \omega_W(3) \lra 0\, , 
\]
this \emph{contradicts} the fact that $F(2)$ is globally generated. 

In case (vii), one deduces, using the exact sequence$\, :$ 
\[
0 \lra \sco_Q(-1,-3) \lra \sco_Q \lra \sco_W \lra 0\, , 
\]
that $\omega_W \simeq \sco_Q(-1,1) \vb W$ hence $\Ker(2\sco_W \ra \omega_W(3)) 
\simeq \sco_Q(-2,-4) \vb W$. Using the fact that $\tH^0(\sco_Q(0,-2) \vb W) = 
0$, this \emph{contradicts}, as above, the fact that $F(2)$ is globally 
generated. 

In case (viii), the proof of Lemma~\ref{L:omegaxverty} from 
Appendix~\ref{A:(2;-1,4,0)} shows that $\omega_C$ embeds into 
$\omega_W \vb C$. $C$ is the image of an embedding $\nu \colon \pj \ra \piii$ 
such that $\nu^\ast \sco_\piii(1) \simeq \sco_\pj(3)$. One deduces that 
$\nu^\ast(\omega_W(3) \vb C) \simeq \sco_\pj(b) \oplus \sct$, with $b \geq 7$ 
and with $\sct$ a torsion sheaf on $\pj$. If $\scl$ is the kernel of 
$2\sco_C \ra \omega_W(3) \vb C$ it follows that $\nu^\ast \scl \simeq 
\sco_\pj(c)$ with $c \leq -7$. Taking into account the exact sequence$\, :$ 
\[
F_C \lra 2\sco_C \lra \omega_W(3) \vb C \lra 0\, ,
\]  
this \emph{contradicts} the fact that $F(2)$ is globally generated. 
\end{proof}

\begin{lemma}\label{L:h0f(1)=1c2=12c3=8} 
Let $F$ be a stable rank $3$ vector bundle on $\piii$, with $c_1(F) = -1$, 
$c_2(F) = 4$, $c_3(F) = -4$ and spectrum $(0,0,0,0)$ such that $F(2)$ is 
globally generated. If ${\fam0 H}^0(F(1)) \neq 0$ then there exists an 
integer $m$ with $0 \leq m \leq 4$ and an exact sequence$\, :$ 
\[
0 \lra M \lra F^\vee \lra \sci_W(1) \lra 0\, , 
\]
with $M$ an $m$-instanton and $W$ a union of multiple structures on mutually 
disjoint lines with ${\fam0 deg}\, W = 4-m$ and such that $\omega_W \simeq 
\sco_W(-2)$. Moreover, if $1 \leq m \leq 4$ then ${\fam0 h}^0(F^\vee) \leq 1$.  
\end{lemma}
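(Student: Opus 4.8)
The plan is to run the argument of Lemma~\ref{L:h0f(1)=0c2=12} once more, but now at the boundary value $c_3(F) = -4$, where the inequalities used there become equalities. A nonzero section of $F(1)$ produces an exact sequence $0 \ra M \ra F^\vee \ra \sci_W(1) \ra 0$ with $M$ a rank $2$ reflexive sheaf, $\tH^0(M(-1)) = 0$, $c_1(M) = 0$ and $c_2(M) = 4 - \text{deg}\, W_{\text{CM}}$; by Remark~\ref{R:chern}(c) (exactly the relation displayed in Lemma~\ref{L:h0f(1)=0c2=12}) one has $c_3(F) = c_3(M) - 4 - 2\,\text{deg}\, W_{\text{CM}} + 2\chi(\sco_{W_{\text{CM}}})$, so that $c_3(M) = 2(\text{deg}\, W_{\text{CM}} - \chi(\sco_{W_{\text{CM}}}))$. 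The case analysis of Lemma~\ref{L:h0f(1)=0c2=12} (resting on Remark~\ref{R:c1=5onp2}, which forces $\text{deg}(H \cap W)_{\text{CM}} \leq 1$ for every plane $H$) shows that $W_{\text{CM}}$ is a disjoint union of multiple structures on lines and that $\chi(\sco_{W_{\text{CM}}}) \geq \text{deg}\, W_{\text{CM}}$. Since $c_3(M) \geq 0$ gives the reverse inequality, I obtain $\chi(\sco_{W_{\text{CM}}}) = \text{deg}\, W_{\text{CM}}$ and $c_3(M) = 0$. By \cite[Prop.~2.6]{ha} this makes $M$ locally free, and since the length of the sheaf $\sct$ from Remark~\ref{R:chern}(c) equals $c_3(M) = 0$, one has $W = W_{\text{CM}}$, a locally Cohen--Macaulay curve of pure dimension $1$.

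I then set $m := c_2(M) = 4 - \text{deg}\, W$, so that $0 \leq m \leq 4$ and $\text{deg}\, W = 4 - m$, as required. The equality $\chi(\sco_W) = \text{deg}\, W$, fed into the classification of multiple structures on a line (\cite{bf}; see Remark~\ref{R:multilines}), forces every connected component of $W$ to be the multiple structure whose defining integers all vanish (for instance a double component must have $\sci_L/\sci_X \simeq \sco_L$). For each such component, Ferrand's criterion (used already in Construction~5.0) applied with the line bundle $\sco_\piii(-2)$ yields that $\omega$ of the component is $\sco_\piii(-2)$ restricted to it, and since the components are mutually disjoint this gives $\omega_W \simeq \sco_W(-2)$.

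The instanton property is cheap. From $\tH^0(M(-1)) = 0$ the rank $2$ bundle $M$ (which has $c_1 = 0$) is semistable, and twisting the defining sequence by $-2$ gives, because $\tH^0(\sci_W(-1)) = 0$, an injection $\tH^1(M(-2)) \hookrightarrow \tH^1(F^\vee(-2)) \simeq \tH^2(F(-2))^\vee$; the spectrum $(0,0,0,0)$ together with Remark~\ref{R:spectrumf}(ii) gives $\tH^2(F(-2)) = 0$, hence $\tH^1(M(-2)) = 0$. Thus $M$ is an $m$-instanton as soon as it is stable; for $m = 0$, semistability together with $c_2 = 0$ already forces $M \simeq 2\sco_\piii$, the charge $0$ case.

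The main obstacle is the final clause: proving that for $1 \leq m \leq 4$ the bundle $M$ is \emph{stable} and that $\h^0(F^\vee) \leq 1$. Stability amounts to $\tH^0(M) = 0$; were it nonzero, the Serre construction would present $M$ as a locally free extension $0 \ra \sco_\piii \ra M \ra \sci_Z \ra 0$ with $Z$ a subcanonical curve ($\omega_Z \simeq \sco_Z(-4)$) of degree $m$, and the composite $\sco_\piii \hookrightarrow M \hookrightarrow F^\vee$ would, via Lemma~\ref{L:h0fdual}, exhibit $F$ as an extension whose quotient is a globally generated twist $\sci_{Z'}(2)$ of an ideal sheaf; I expect to exclude this exactly as the twisted cubic was excluded in Lemma~\ref{L:h0f(1)=0c2=12}, by pulling back to $\pj$ and contradicting the global generation of $F(2)$ along a line in the support. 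Granting $\tH^0(M) = 0$, the long exact sequence gives $\h^0(F^\vee) \leq \dim\ker\!\big(\tH^0(\sci_W(1)) \xra{\partial} \tH^1(M)\big)$; when the support of $W$ consists of two or more (necessarily skew) lines one has $\tH^0(\sci_W(1)) = 0$ and the bound is immediate, so the delicate case is $W$ supported on a single line $L$, where $\tH^0(\sci_W(1))$ is the pencil of hyperplanes through $L$ and I must show $\partial$ has rank at least $1$. This is where the precise extension class of the defining sequence and the global generation of $F(2)$ must be brought to bear, and it is the technical heart of the argument.
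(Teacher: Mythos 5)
Your first half is the paper's own argument: the exact sequence $0 \ra M \ra F^\vee \ra \sci_W(1) \ra 0$, the relation $c_3(M) = 2(\deg W_{\text{CM}} - \chi(\sco_{W_{\text{CM}}}))$ from Remark~\ref{R:chern}(c), the squeeze between $c_3(M) \geq 0$ and the inequality $\chi(\sco_{W_{\text{CM}}}) \geq \deg W_{\text{CM}}$ valid in every case surviving the proof of Lemma~\ref{L:h0f(1)=0c2=12}, whence $c_3(M) = 0$, $M$ locally free, $W = W_{\text{CM}}$ with all multiplicity invariants zero, and $\tH^1(M(-2)) = 0$ from the spectrum. Two caveats even here: Ferrand's criterion (Construction 5.0) covers only double structures, so for triple or quadruple components of $W$ the isomorphism $\omega_W \simeq \sco_W(-2)$ requires the B\u{a}nic\u{a}--Forster results \cite{bf}, which is what the paper cites; and you are right that \emph{stability} of $M$ (not mere semistability) is needed both to call $M$ an instanton and to pass from $\tH^0(\sci_W(1)) = 0$ to $\tH^0(F^\vee) = 0$ --- a point the paper itself glosses over --- but you only announce that you ``expect to exclude'' a section of $M$; you do not do it.

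The genuine gap is the final clause, which you explicitly leave open (``the technical heart of the argument''), and your reduction mis-locates the difficulty. If $W$ is a double or triple line with vanishing invariants (the single-support cases with $m = 2$ or $m = 1$), then $W$ lies in no plane, so $\tH^0(\sci_W(1)) = 0$ and these cases are as easy as the multi-component ones; a pencil of planes only appears for $m = 3$, when $W$ is a \emph{reduced} line. Meanwhile the case $m = 4$, where $W = \emptyset$ and the quotient is $\sco_\piii(1)$ (so $\h^0(\sci_W(1)) = 4$, not a pencil), is absent from your ``one line / several lines'' dichotomy although it is one of the two genuinely hard cases. The paper settles both hard cases by exploiting global generation of $F(2)$, and this is precisely the content you defer. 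For $m = 3$: if $\h^0(F^\vee) = 2$ then $\tH^0(F^\vee) \izo \tH^0(\sci_L(1))$, and the Snake Lemma against $0 \ra \sco_\piii(-1) \ra 2\sco_\piii \ra \sci_L(1) \ra 0$ yields $0 \ra F \ra M \oplus 2\sco_\piii \ra \sco_\piii(1) \ra 0$; since $F(2)$ is globally generated and $\tH^1(F(2)) = 0$, the bundle $M(2)$ is globally generated, hence $M$ has no jumping line of order $3$, hence $\tH^0(M(1)) = 0$, hence $\mathrm{Hom}(M , \sco_\piii(1)) = 0$, contradicting the epimorphism $M \oplus 2\sco_\piii \ra \sco_\piii(1)$. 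For $m = 4$: the extension is nontrivial by stability of $F$, so it is given by $0 \neq \xi \in \tH^1(M(-1))$; if $\h^0(F^\vee) \geq 2$ then $\xi$ is annihilated by two independent linear forms, giving $\tH^0(M \otimes \sci_L(1)) \neq 0$ and hence $\tH^0(M(1)) \neq 0$, while global generation of $M(2)$ (from $0 \ra \sco_\piii(-1) \ra F \ra M \ra 0$) forces $\tH^0(M(1)) = 0$. Without arguments of this kind, the lemma's last assertion --- and with it the statement as a whole --- is not proved.
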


\begin{proof} 
This is actually a corollary of the proof of Lemma~\ref{L:h0f(1)=0c2=12}. 
\emph{Indeed}, looking at the proof of that lemma, one sees that $F^\vee$ 
can be realized as an extension$\, :$ 
\[
0 \lra \scg \lra F^\vee \lra \sci_W(1) \lra 0\, , 
\]
with $c_3(\scg) = 0$ hence with $\scg$ locally free which implies that 
$W = W_{\text{CM}}$ and with $W = \emptyset$ or $W$ a line or $W$ as in the 
items (i), (ii), (iv), (v), (ix)--(xiii) from the lists in that proof. 
Moreover, if some multiple structures appear in $W$ then they must 
have $l = 0$, $m = 0$, and $n = 0$. This implies that $W$ satisfies the 
conditions from the conclusion of the lemma (see B\u{a}nic\u{a} and Forster 
\cite[Prop.~2.3~and~\S 3.2]{bf}). Let $m := 4 - \text{deg}\, W$. Then 
$M := \scg$ is a semistable rank 2 vector bundle with $c_1(M) = 0$ and 
$c_2(M) = m$. Since the spectrum of $F$ is $(0,0,0,0)$ it follows that 
$\tH^1(F^\vee(-2)) \simeq \tH^2(F(-2))^\vee = 0$ hence $\tH^1(M(-2)) = 0$ 
which means that $M$ is an $m$-instanton. 

Finally, let us prove the last assertion from the statement. If $m = 1,\, 2$ 
then one has $\tH^0(\sci_W(1)) = 0$ hence $\tH^0(F^\vee) = 0$. 

If $m = 3$ and $\h^0(F^\vee) = 2$ then one has an exact sequence$\, :$ 
\[
0 \lra M \lra F^\vee \lra \sci_L(1) \lra 0\, , 
\]   
where $M$ is a 3-instanton, $L$ is a line and the map $\tH^0(F^\vee) \ra 
\tH^0(\sci_L(1))$ is bijective. Applying the Snake Lemma to the diagram$\, :$ 
\[
\begin{CD} 
0 @>>> \sco_\piii(-1) @>>> 2\sco_\piii @>>> \sci_L(1) @>>> 0\\ 
@. @VVV @VVV @\vert\\ 
0 @>>> M @>>> F^\vee @>>> \sci_L(1) @>>> 0 
\end{CD}
\]
one gets an exact sequence$\, :$ 
\[
0 \lra \sco_\piii(-1) \lra M \oplus 2\sco_\piii \lra F^\vee \lra 0\, ,  
\] 
which, by dualization, produces an exact sequence$\, :$ 
\[
0 \lra F \lra M \oplus 2\sco_\piii \lra \sco_\piii(1) \lra 0\, . 
\]
Since $\tH^1(F(2)) = 0$ (see Claim 5.1 in the proof of 
Prop.~\ref{P:h2e(-3)=0h1e(-3)neq0c2=12}) it follows that $M(2)$ is globally 
generated. But in this case $\tH^0(M(1)) = 0$ (if $\tH^0(M(1)) \neq 0$ then 
$M$ has a jumping line of order 3) hence $\text{Hom}(M,\sco_\piii(1)) = 0$ 
and this \emph{contradicts} the existence of an epimorphism $M \oplus 
2\sco_\piii \ra \sco_\piii(1)$. This contradiction shows that one cannot have 
$\h^0(F^\vee) = 2$. 

If $m = 4$ then one has an exact sequence$\, :$ 
\[
0 \lra M \lra F^\vee \lra \sco_\piii(1) \lra 0\, , 
\]
where $M$ is a 4-instanton. Since $F$ is stable, the extension is 
nontrivial, hence it is defined by a non-zero element $\xi \in \tH^1(M(-1))$. 
If $\h^0(F^\vee) \geq 2$ then $\xi$ is annihilated, in the graded 
$S$-module $\tH^1_\ast(M)$ by two linearly independent linear forms 
$h_0,\, h_1 \in S_1$. Let $L \subset \piii$ be the line of equations 
$h_0 = h_1 = 0$. Tensorizing by $M$ the exact sequence$\, :$ 
\[
0 \lra \sco_\piii(-1) \lra 2\sco_\piii \lra \sci_L(1) \lra 0\, , 
\]  
one deduces that $\tH^0(M \otimes \sci_L(1)) \neq 0$. But one gets, using 
the exact sequence$\, :$ 
\[
0 \lra \sco_\piii(-1) \lra F \lra M \lra 0\, , 
\]
that $M(2)$ is globally generated. This implies that $\tH^0(M(1)) = 0$ and 
this \emph{contradiction} shows that one cannot have $\h^0(F^\vee) = 2$. 
\end{proof}

\begin{remark}\label{R:h0fvee=2c2=12c3=8} 
It seems a difficult question to decide whether there exist vector bundles 
$F$ on $\piii$ satisfying the hypotheses of Lemma~\ref{L:h0f(1)=1c2=12c3=8} 
or not. In particular, we are not able to answer the following question$\, :$ 
let $L_1, \ldots , L_4$ be mutually disjoint lines in $\piii$, not contained 
in a quadric surface, let $W$ denote their union, and consider a general  
extension$\, :$ 
\[
0 \lra 2\sco_\piii \lra G \lra \sci_W(1) \lra 0\, , 
\]
with $G$ locally free. Is, then, $G^\vee(2)$ globally generated ? 

Dualizing the above extension, one gets an exact sequence$\, :$ 
\[
0 \lra \sco_\piii(-1) \lra G^\vee \lra 2\sco_\piii \overset{\delta}{\lra} 
\omega_W(3) \lra 0\, , 
\] 
and this operation establishes a bijection between this kind of extensions 
and the set of epimorphisms $\delta \colon 2\sco_\piii \ra \omega_W(3) \simeq 
\sco_W(1)$. So, we are led to ask$\, :$ 

\vskip2mm 

\noindent 
{\bf Question 1.}\quad \emph{Let} $W$ \emph{be the union of four mutually 
disjoint lines, not contained in a quadric surface. If} $\sck$ \emph{is the 
kernel of a general epimorphism} $\delta \colon 2\sco_\piii \ra \sco_W(1)$ 
\emph{is} $\sck(2)$ \emph{globally generated} ? 

\vskip2mm 

\noindent 
If $W$ is the union of only three lines then the answer is yes$\, :$ see 
Lemma~\ref{L:2oraoy(1)} from Appendix~\ref{A:miscellaneous}. 
One way to attack this question is to consider an elliptic quartic curve 
$C \subset \piii$ (that is, a complete intersection of type $(2,2)$ in 
$\piii$), four general points $P_1, \ldots , P_4$ of $C$ and, for each 
$i \in \{1, \ldots 4\}$, a general line $L_i$ passing through $P_i$. 
Putting $W := L_1 \cup \ldots \cup L_4$, one has an exact sequence$\, :$ 
\[
0 \lra \sci_{C \cup W} \lra \sci_C \lra \sci_{C \cap W , W} \lra 0\, , 
\] 
and $\sci_{C \cap W , W} \simeq \sco_W(-1)$ hence the composite morphism$\, :$ 
\[
\tH^0(\sci_C(2))\otimes_k\sco_\piii \overset{\text{ev}}{\lra} \sci_C(2) 
\lra \sci_{C \cap W , W}(2) 
\]
is an epimorphism $\delta \colon 2\sco_\piii \ra \sco_W(1)$. If $\sck$ is 
its kernel then one has an exact sequence$\, :$ 
\[
0 \lra \sco_\piii(-2) \lra \sck \lra \sci_{C \cup W}(2) \lra 0\, . 
\]
Consequently, an affirmative answer to the first question is a consequence 
of an affirmative answer to$\, :$ 

\vskip2mm 

\noindent 
{\bf Question 2.}\quad \emph{Under the above hypotheses, is} 
$\sci_{C \cup W}(4)$ \emph{globally generated} ? 

\vskip2mm 

\noindent 
Notice that, since $\tH^1(\sco_{C \cup W}(1)) = 0$, $C \cup W$ is smoothable 
in $\piii$ (see Hartshorne and Hirschowitz \cite[Cor.~1.2]{hh2} or 
Hartshorne \cite[Prop.~29.9]{hadt}) hence a positive answer to the second 
question would also prove the existence of elliptic curves $Y$ of degree 8 
in $\piii$ with $\sci_Y(4)$ globally generated, a result proven, by a quite 
different method, by Chiodera and Ellia \cite{ce}.   
\end{remark}

\begin{remark}\label{R:p(e)} 
Let $E$ be a globally generated vector bundle on $\piii$ of rank $r \geq 3$, 
with $c_1 = 5$, $c_2 = 12$, such that $\tH^i(E^\vee) = 0$, $i = 0,\, 1$, and 
$\tH^0(E(-2)) = 0$. As usual, $r-3$ general global sections of $E$ define an 
exact sequence$\, :$ 
\[
0 \lra (r - 3)\sco_\piii \lra E \lra E^\prim \lra 0\, , 
\] 
where $E^\prim$ is a rank 3 vector bundle.  Consider the normalized vector 
bundle $F := E^\prim(-2)$. It has Chern classes $c_1(F) = -1$, $c_2(F) = 4$, 
$c_3(F) = c_3 - 12$. 

Let, now, $P(E)$ be the dual of the kernel of the evaluation morphism 
$\tH^0(E) \otimes_k \sco_\piii \ra E$ of $E$. $P(E)$ is a globally generated 
vector bundle with Chern classes $c_1^\prim = 5$, $c_2^\prim = 13$, 
$c_3^\prim = c_3 + 5$. Since $c_3^\prim > 0$, $P(E)$ has rank $r^\prim \geq 3$.  
Then $r^\prim - 3$ general global sections of $P(E)$ define an exact 
sequence$\, :$ 
\[
0 \lra (r^\prim - 3)\sco_\piii \lra P(E) \lra F^\prim(2) \lra 0\, , 
\]
with $F^\prim$ a rank 3 vector bundle. One has $c_1(F^\prim) = -1$, 
$c_2(F^\prim) = 5$, $c_3(F^\prim) = c_3^\prim - 14 = c_3(F) + 3$ (see 
\eqref{E:chernf}). We want to relate the cohomological properties of $E$ 
and $P(E)$. 

Firstly, using the exact sequence 
\[
0 \lra E^\vee \lra \tH^0(E)^\vee \otimes_k \sco_\piii \lra P(E) \lra 0 
\]
and Serre duality, one gets that 
$\tH^1_\ast(F^\prim) \simeq \tH^1_\ast(F)^\vee$, where $\tH^1_\ast(F)^\vee$ denotes 
the graded $k$-vector space $\bigoplus_{l \in \z}\tH^1(F(-l))^\vee$ endowed with 
the obvious $S$-module structure. Moreover, 
\[
\h^2(F^\prim(-1)) = \h^2(P(E)(-3)) = \h^0(E(-1))\, ,\  \h^0(P(E)(-1)) = 
\h^2(E(-3)) = \h^2(F(-1))\, ,
\]
and $\tH^2(F^\prim(l)) = 0$, $\forall \, l \geq 0$. 
Since $r + r^\prim = \h^0(E)$, one deduces, from relation \eqref{E:r}, 
that$\, :$ 
\[
\h^2(F^\prim(-2)) + \h^2(F(-2)) = \h^0(E) - 6\, . 
\]
Finally, if $F$ and $F^\prim$ are stable, with spectrum 
$k_F = (k_i)_{1 \leq i \leq 4}$ and $k_{F^\prim} = (k_j^\prim)_{1 \leq j \leq 5}$, 
respectively, then, using Remark~\ref{R:spectrumf}(iii), one gets that$\, :$ 
\[
{\textstyle \sum}_{j=1}^5 k_j^\prim = {\textstyle \sum}_{i=1}^4 k_i - 2\, .  
\] 
\end{remark}

The following two observations will be used occasionally. 

\begin{lemma}\label{L:suppcok} 
Let $\sce \ra G \ra \scc \ra 0$ be an exact sequence of coherent sheaves on 
$\piii$. Assume that $G$ is locally free, with $c_1(G) = m \geq 0$ and that 
$\sce$ is globally generated. If $\scc(-m-1)$ is globally generated then the 
support of $\scc$ is $0$-dimensional or empty. 
\end{lemma}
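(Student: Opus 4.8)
The plan is to argue by contradiction: assuming $\dim \text{Supp}(\scc) \geq 1$, I would restrict the whole exact sequence to a suitable integral curve lying \emph{inside} the support, pass to its normalization, and compare degrees. Global generation of $\scc(-m-1)$ will force the locally free part of the restricted cokernel to have degree that is simultaneously too large and too small.

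First I would reduce to a convenient shape. Since $\sce$ is globally generated, I may replace it by a trivial bundle $N\sco_\piii$ mapping onto $\sce$; this changes neither the image $\scn := \text{Im}(\sce \ra G)$ nor the cokernel $\scc$, and it makes transparent that $\scn$, being a quotient of a trivial bundle, is globally generated. So I work with a short exact sequence $0 \ra \scn \ra G \ra \scc \ra 0$ with $\scn$ globally generated. Assuming now, for contradiction, that $\text{Supp}(\scc)$ has dimension $\geq 1$, I pick an irreducible component of dimension $\geq 1$ and cut it down by general hyperplanes (using Bertini to keep it integral) to produce an integral curve $C \subset \text{Supp}(\scc)$, of degree $\text{deg}\, C \geq 1$. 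Restricting the sequence to $C$ yields a right-exact sequence $\scn_C \ra G_C \ra \scc_C \ra 0$, and a Nakayama argument at the generic point of $C$ shows that $\scc_C$ has generic rank $\rho \geq 1$ along $C$, precisely because $C$ lies in the support of $\scc$. I then pull back along the normalization $\pi \colon \widetilde{C} \ra C$: this preserves right-exactness and, being birational, preserves degrees, so on the smooth curve $\widetilde{C}$ I obtain $\pi^\ast\scn_C \ra \pi^\ast G_C \ra \pi^\ast\scc_C \ra 0$ with $\text{deg}\, \pi^\ast G_C = m\, \text{deg}\, C$.

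The crux is the degree bookkeeping on $\widetilde{C}$. On a smooth curve every coherent sheaf splits as (torsion) $\oplus$ (locally free), a direct summand of a globally generated sheaf is again globally generated, and a globally generated sheaf on a smooth projective curve has nonnegative degree. Writing $\pi^\ast\scc_C = V \oplus T$ with $V$ locally free of rank $\rho$ and $T$ torsion, the image of $\pi^\ast\scn_C$ in $\pi^\ast G_C$ is torsion-free, hence locally free, and globally generated (a quotient of $\pi^\ast\scn_C$), so it has degree $\geq 0$; therefore $\text{deg}\, V \leq \text{deg}\, \pi^\ast\scc_C = \text{deg}\, \pi^\ast G_C - \text{deg}(\text{image}) \leq m\, \text{deg}\, C$. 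On the other hand, $\pi^\ast(\scc(-m-1))|_{\widetilde{C}}$ is globally generated by hypothesis, and $V$ twisted by $\pi^\ast\sco_\piii(-m-1)$ is one of its direct summands, so $\text{deg}\, V - \rho(m+1)\, \text{deg}\, C \geq 0$, i.e. $\text{deg}\, V \geq \rho(m+1)\, \text{deg}\, C \geq (m+1)\, \text{deg}\, C$. Combining the two inequalities gives $(m+1)\, \text{deg}\, C \leq m\, \text{deg}\, C$, hence $\text{deg}\, C \leq 0$, contradicting $\text{deg}\, C \geq 1$. This rules out $\dim \text{Supp}(\scc) \geq 1$.

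The main obstacle I anticipate is the careful treatment of singular and non-reduced phenomena in the support: producing a genuinely integral test curve $C$ inside $\text{Supp}(\scc)$, and verifying via Nakayama that $\scc_C$ really has positive generic rank there rather than being killed by the restriction. After that, the structural facts on smooth curves (the torsion/locally-free splitting, stability of global generation under passing to summands and pullback, and ``globally generated $\Rightarrow$ nonnegative degree'') need to be invoked cleanly, but each is standard; once they are in place the degree comparison is immediate and the contradiction is forced by the twist $-(m+1)$ being strictly larger than the slope permitted by $c_1(G) = m$.
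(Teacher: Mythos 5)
Your proof is correct and takes essentially the same route as the paper's: assume the support contains an integral curve, pull everything back to its normalization, and derive the contradiction $(m+1)\,\text{deg}\, C \leq \text{deg}\,\pi^\ast G_C = m\,\text{deg}\, C$ by combining ``globally generated $\Rightarrow$ degree $\geq 0$'' for the image of $\pi^\ast\sce$ with the forced lower bound coming from global generation of $\scc(-m-1)$. The additional steps you spell out (Bertini to produce the integral curve, Nakayama to confirm positive generic rank, the torsion/locally-free splitting on the smooth curve) are precisely the details the paper's shorter argument leaves implicit.
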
 

\begin{proof} 
Assume, by contradiction, that $\text{Supp}\, \scc$ contains a reduced and 
irreducible curve $Z$, of degree $d$. Let $\nu \colon C \ra Z$ be the 
normalization of $Z$. Recall that if $\scf$ is a coherent sheaf on $C$ one 
denotes by $c_1(\scf)$ the degree of $\text{det}\, \scf$. Now, $\nu^\ast\scc$ 
is a coherent sheaf of rank $\geq 1$ on $C$. Since $\scc(-m-1)$ is globally 
generated it follows that $c_1(\nu^\ast\scc) \geq d(m+1)$. Using the exact 
sequence $\nu^\ast\sce \ra \nu^\ast G \ra \nu^\ast\scc \ra 0$ and the fact that 
$\sce$ is globally generated one deduces that $c_1(\nu^\ast G) \geq d(m+1)$. 
Since $c_1(\nu^\ast G) = dm$ we have got a \emph{contradiction}.    
\end{proof}

\begin{lemma}\label{L:lengthcok}  
Consider, on $\piii$, an exact sequence $\sce \ra G \ra \scc \ra 0$ of 
coherent sheaves, with $G$ locally free of rank $r$, with Chern classes 
$c_1(G) = c_1$, $c_2(G) = c_2$, $c_3(G) = c_3$. If $\sce$ is globally generated 
and the support of $\scc$ is $0$-dimensional (or empty) then the length of 
$\scc$ is at most $c_3 + c_1(c_1^2 - 2c_2)$.   
\end{lemma}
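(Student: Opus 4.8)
The plan is to recognise the length of $\scc$ as (a lower bound coming from) a general \emph{degeneracy locus} controlled by the Thom--Porteous formula, with the globally generated sheaf providing the genericity. Write $K := \text{Im}(\sce \ra G) = \Ker(G \ra \scc)$. As a quotient of the globally generated $\sce$, the sheaf $K$ is itself globally generated, and since $\scc$ has $0$-dimensional support, $K$ is a full-rank ($= r$) subsheaf of $G$ with $\scc = G/K$. Because $K$ is generated by its global sections, the evaluation map $\tH^0(K)\otimes_k\sco_\piii \ra K \hookrightarrow G$ has image exactly $K$, so its cokernel is exactly $\scc$; in particular $\text{Supp}\,\scc$ is the locus where this evaluation drops rank below $r$. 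We may assume $\scc\neq 0$, since otherwise $G$ is globally generated and the asserted inequality is just the nonnegativity of the Thom--Porteous (Schur) class of a nef bundle.

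Next I would cut down to a general $(r+2)$-dimensional space of sections. First, $N := \dim_k\tH^0(K)\geq r+2$: the degeneracy locus of a map $N\sco_\piii \ra G$ (rank $\leq r-1$) has codimension at most $N-r+1$, and here that locus is $\text{Supp}\,\scc$, which has codimension $3$; hence $3\leq N-r+1$. Choosing a general $(r+2)$-dimensional subspace $U\subseteq\tH^0(K)$ yields $\phi\colon (r+2)\sco_\piii \ra G$ with $\text{Im}\,\phi\subseteq K$, hence a surjection $\Cok\phi \twoheadrightarrow G/K = \scc$, so $\text{length}\,\scc \leq \text{length}\,\Cok\phi$. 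A Kleiman-type dimension count (valid since $k$ has characteristic $0$ and $K$ is globally generated) shows that for general $U$ the locus $D_{r-1}(\phi)$ where $\phi$ has rank $\leq r-1$ is $0$-dimensional: over each point of $\piii\setminus\text{Supp}\,\scc$ the fibrewise evaluation is onto $G_x\cong k^r$, so the subspaces $U$ giving rank $\leq r-1$ there form a codimension-$3$ subset of the Grassmannian, and passing to the incidence correspondence over the $3$-fold $\piii$ leaves only a finite bad locus; the finitely many points of $\text{Supp}\,\scc$ add at most finitely many more.

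Finally, since $D_{r-1}(\phi)$ then has the expected (and maximal) codimension $3 = (r+2)-r+1$, the Eagon--Northcott complex resolves $\Cok\phi$, which is a Cohen--Macaulay sheaf of dimension $0$ whose length equals the Thom--Porteous number
\[
\det\begin{pmatrix} c_1 & c_2 & c_3 \\ 1 & c_1 & c_2 \\ 0 & 1 & c_1 \end{pmatrix} = c_3 + c_1(c_1^2 - 2c_2)\, .
\]
Combined with the surjection $\Cok\phi\twoheadrightarrow\scc$ this gives $\text{length}\,\scc \leq c_3 + c_1(c_1^2-2c_2)$, as required. The main obstacle is precisely the middle step: one must guarantee that a genuinely general $(r+2)$-tuple of sections of $K$ produces a degeneracy locus of the expected codimension $3$, so that $\Cok\phi$ is $0$-dimensional and the Eagon--Northcott complex is acyclic. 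This is exactly where global generation of $K$ is indispensable — it forces the fibrewise evaluation to be surjective off the finite set $\text{Supp}\,\scc$, which makes the Kleiman count available and thereby converts the purely numerical Thom--Porteous identity into an effective bound on $\text{length}\,\scc$.
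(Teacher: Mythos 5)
Your proposal is correct, but it takes a genuinely different route from the paper's. Both arguments come down to producing a map $\phi \colon (r+2)\sco_\piii \ra G$ whose cokernel has finite length and surjects onto $\scc$, and then evaluating that length; they differ in how $\phi$ is produced and in how the length is computed. The paper replaces $\sce$ by $m\sco_\piii$, sets $\scf := \Ker (m\sco_\piii \ra G)$, dualizes to get $0 \ra G^\vee \ra m\sco_\piii \ra \scf^\vee \ra 0$, takes $m-r-2$ general sections of $\scf^\vee$ lifted from $\tH^0(m\sco_\piii)$ to obtain $0 \ra G^\vee \ra (r+2)\sco_\piii \ra \scf^\prim \ra 0$ with $\scf^\prim$ a rank $2$ reflexive sheaf, and dualizes back: the cokernel of $(r+2)\sco_\piii \ra G$ is $\sce xt^1_{\sco_\piii}(\scf^\prim , \sco_\piii)$, whose length is $c_3(\scf^\prim) = c_3 + c_1(c_1^2 - 2c_2)$ by \cite[Prop.~2.6]{ha}. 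You instead take $r+2$ general sections of $K = \text{Im}(\sce \ra G)$ directly, get the expected codimension $3$ by the incidence/Schubert count, and evaluate the length by acyclicity plus Thom--Porteous. These are literally dual pictures of one another: the kernel $S$ of your $\phi$ is a rank $2$ reflexive sheaf, dualizing your four-term sequence recovers the paper's sequence with $\scf^\prim = S^\vee$, and $\Cok \phi \simeq \sce xt^1_{\sco_\piii}(\scf^\prim , \sco_\piii)$, so Hartshorne's $c_3$-formula and your determinant
\[
\det\begin{pmatrix} c_1 & c_2 & c_3 \\ 1 & c_1 & c_2 \\ 0 & 1 & c_1 \end{pmatrix} = c_3 + c_1(c_1^2 - 2c_2)
\]
are two evaluations of the same number. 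What your route buys is independence from rank-$2$ reflexive sheaf theory on $\piii$ (it would work on other smooth threefolds); what it costs is two steps you treat as immediate but that need care: first, the complex resolving $\Cok \phi$ is the Buchsbaum--Rim complex (Eagon--Northcott resolves the determinantal ideal, not the cokernel), and the identification of $\text{length}\, \Cok \phi$ --- rather than of the degeneracy \emph{scheme} --- with the Thom--Porteous number should be justified, most easily by computing $\chi(\Cok\phi)$ as the alternating sum of Euler characteristics of the terms of that resolution; second, your degenerate case $\scc = 0$ invokes Fulton--Lazarsfeld positivity of Schur classes of nef bundles, which is correct but far heavier than needed (the paper's argument requires no case split, since it never needs $\tH^0$ of anything to be large --- it can always pad $m\sco_\piii$ with zero summands). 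Neither point is a gap in substance, only in attribution and detail.
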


\begin{proof} 
One also has an exact sequence $m\sco_\piii \ra G \ra \scc \ra 0$, for some 
integer $m \geq r + 2$. The kernel $\scf$ of $m\sco_\piii \ra G$ is a reflexive 
sheaf of rank $m - r$. Dualizing the exact sequence$\, :$ 
\[
0 \lra \scf \lra m\sco_\piii \lra G \lra \scc \lra 0\, , 
\]   
one gets an exact sequence$\, :$ 
\[
0 \lra G^\vee \lra m\sco_\piii \lra \scf^\vee \lra 0\, . 
\]
Since $\scf^\vee$ is globally generated, the 
dependency locus of $m - r - 2$ general global sections of $\scf^\vee$ 
(from the image of $\tH^0(m\sco_\piii) \ra \tH^0(\scf^\vee)$) is 
0-dimensional. They define an exact sequence$\, :$ 
\[
0 \lra (m-r-2)\sco_\piii \lra \scf^\vee \lra \scf^\prim \lra 0\, , 
\]
with $\scf^\prim$ a rank 2 reflexive sheaf. One deduces an exact sequence$\, :$ 
\[
0 \lra G^\vee \lra (r+2)\sco_\piii \lra \scf^\prim \lra 0\, . 
\]
Dualizing it, one deduces that $\scc$ is a quotient of 
$\sce xt^1_{\sco_\piii}(\scf^\prim , \sco_\piii)$. But, by \cite[Prop.~2.6]{ha}, 
the length of this Ext sheaf is equal to $c_3(\scf^\prim)$ which can be 
computed using the last exact sequence.   
\end{proof}

\begin{prop}\label{P:h2e(-3)=0h1e(-3)neq0c2=12} 
Let $E$ and $E^\prim$ be as at the beginning of this section. Assume that 
$E^\prim$ is stable, that ${\fam0 H}^2(E(-3)) = 0$, and that 
${\fam0 H}^1(E(-3)) \neq 0$. Then one of the following holds$\, :$  
\begin{enumerate} 
\item[(i)] $c_3 = 14$ and $E(-2)$ is the kernel of a general epimorphism$\, :$ 
\[
\phi \, \colon 2\sco_\piii \oplus 6\sco_\piii(-1) \lra \sco_\piii(1) \oplus 
\sco_\piii \, . 
\] 
In the typical case, $E(-1)$ is the kernel of the evaluation morphism 
$6\sco_\piii \ra \sco_H(2)$ of $\sco_H(2)$, for some plane $H \subset \piii$. 
Other two special cases are described during the proof$\, ;$ 
\item[(ii)] $c_3 = 12$ and $E(-2)$ is the cohomology of a (not necessarily 
minimal) monad of the form$\, :$  
\[
0 \lra \sco_\piii(-1) \lra 4\sco_\piii \oplus 4\sco_\piii(-1) \lra 
2\sco_\piii(1) \lra 0\, ; 
\]
\item[(iii)] $c_3 = 10$ and $E(-2)$ is the cohomology of a (not necessarily 
minimal) monad of the form$\, :$ 
\[
0 \lra 2\sco_\piii(-1) \lra 7\sco_\piii \oplus 2\sco_\piii(-1) \lra 
3\sco_\piii(1) \lra 0\, ; 
\]
\item[(iv)] $c_3 = 8$ and $E \simeq \sco_\piii(1) \oplus E_0$, where 
$E_0(-2)$ is the kernel of an arbitrary epimorphism $4\sco_\piii \ra 
\sco_\piii(2)$$\, ;$ 
\item[(v)] $c_3 = 8$ and $E(-2)$ is the cohomology of a monad of the 
form$\, :$ 
\[
0 \lra 3\sco_\piii(-1) \lra 10\sco_\piii \lra 4\sco_\piii(1) \lra 0\, ; 
\] 
\item[(vi)] $c_3 = 6$ and $E(-2)$ is the cohomology of a monad of the 
form$\, :$ 
\[
0 \lra 2\sco_\piii(-1) \lra 7\sco_\piii \lra \sco_\piii(2) \oplus \sco_\piii(1) 
\lra 0\, .  
\] 
\end{enumerate}
\end{prop}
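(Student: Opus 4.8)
The plan is to run the spectrum-driven analysis of Propositions~\ref{P:h1e(-3)=0c2=12}--\ref{P:h2e(-3)neq0c2=12} in the remaining regime. Put $F = E^\prim(-2)$, a stable rank $3$ bundle with $c_1(F) = -1$, $c_2(F) = 4$ and spectrum $k_F = (k_1,k_2,k_3,k_4)$, so that $c_3 = 8 - 2\sum_i k_i$ and $r = 3 + \h^2(F(-2))$. Since $\tH^2(E(-3)) = \tH^2(F(-1))$ and $\tH^1(E(-3)) = \tH^1(F(-1))$, the hypotheses $\tH^2(E(-3)) = 0$ and $\tH^1(E(-3)) \neq 0$ translate, via Remark~\ref{R:spectrumf}(i),(ii), into $k_4 \geq -1$ and $k_1 \geq 0$; together with Lemma~\ref{L:spectrumf} this forces $k_i \in \{-1,0,1\}$ for all $i$, and the rules (iv)--(vi) of Remark~\ref{R:spectrumf} leave a short explicit list of spectra, with $c_3 \in \{14,12,10,8,6,4,2\}$. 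I would first discard the spectrum $(1,0,-1,-1)$: it gives $\h^1(E(-4)) = 1$ and $\h^1(E(-3)) = 3$, so the rank estimate of Remark~\ref{R:muh1e(-4)} (which demands rank $\geq \h^1(E(-4)) + 3 = 4$) is violated.

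For each surviving spectrum the next step is to determine the Horrocks monad of $E(-2)$. The spectrum gives all $\h^1(E(l))$ and $\h^2(E(l))$, Riemann--Roch (Remark~\ref{R:chern}) supplies $\h^0$ and the Euler characteristics, and Lemma~\ref{L:h1e(l)c2=12} forces $\tH^1(E(l))$ to drop quickly for $l \geq 0$. The key point in each case is to show that $\tH^1_\ast(E)$ is generated in a single lowest degree; when $k_1 = 1$ this is immediate from Remark~\ref{R:muh1e(-4)}, and when the multiplication map $\mu \colon \tH^1(E(-3)) \otimes S_1 \to \tH^1(E(-2))$ must be proved surjective I would use the Beilinson-monad argument of Remark~\ref{R:beilinson}, deriving a contradiction from the Chern classes of the hypothetical auxiliary bundle $Q$, exactly as in Cases~1--2 of the proof of Prop.~\ref{P:h2e(-3)=0h1e(-3)neq0c2=11}. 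On the dual side, Lemma~\ref{L:h2e(-3)=0} (applicable since $\tH^2(E(-3)) = 0$) pins down $\tH^1_\ast(E^\vee)$ through the invariant $s = \h^1(E(-3)) - \h^1(E(-4))$: when $s = 1$ part~(e) gives $\tH^1_\ast(E^\vee) = 0$, so $E(-2)$ is the bare kernel of item~(i), while for larger $s$ parts~(b),(g),(h) bound the generators and yield the left terms. Feeding these minimal generators into the Horrocks correspondence (Barth--Hulek \cite{bh}) and fixing the middle term by rank and Chern-class bookkeeping produces the monads of items~(ii),(iii),(v),(vi) and the presentations of items~(i),(iv). Subcases with a large section space collapse to earlier results: whenever the numerics force $\h^0(E(-1)) \geq 2$ I would invoke Prop.~\ref{P:h0e(-1)geq2c2=12} (which gives $c_3 \geq 16$ and hence a contradiction in the present range), and any line-bundle summand produced along the way is split off using the universal extensions as in Prop.~\ref{P:h2e(-3)=0h1e(-3)neq0c2=11}.

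With the monad shapes in hand, the realizable items must be exhibited. For $c_3 = 14$ the typical bundle is $E(-1) = \Ker(6\sco_\piii \to \sco_H(2))$, the kernel of the evaluation of $\sco_H(2)$ on a plane $H \subset \piii$, with the remaining shapes coming from degenerations of the epimorphism $\phi$; for the smaller $c_3$ I would build the monads by liaison from low-degree curves (Remark~\ref{R:monadsandliaison}), by Serre's extension method (Appendix~\ref{A:serre}) applied to unions of disjoint lines, and from instanton bundles (Appendix~\ref{A:instantons} and Gruson--Skiti \cite{gs}); the two Constructions needed for $c_3 = 6$ produce genuinely globally generated cohomology sheaves. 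Global generation of each candidate is verified by computing the cokernel of its evaluation morphism and applying Lemmas~\ref{L:suppcok} and \ref{L:lengthcok} to confine that cokernel to dimension $0$ and bound its length.

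The genuinely hard part, and the main obstacle, is to exclude the low-$c_3$ spectra. For $c_3 = 4$ the analysis forces $E(-2)$ to be the cohomology of $0 \to \sco_\piii(-1) \to 2\sco_\piii(1) \oplus 4\sco_\piii \to 2\sco_\piii(2) \to 0$, hence $F$ to be the kernel of an epimorphism $2\sco_\piii(1) \oplus \text{T}_\piii(-1) \to 2\sco_\piii(2)$, and one must prove that no such $F$ has $F(2)$ globally generated; the spectrum $(1,1,1,0)$ with $c_3 = 2$ must be treated the same way. As the authors themselves stress, there is no uniform mechanism for such negative results: I would argue geometrically, restricting the candidate $F(2)$ (or the relevant quotient of its evaluation cokernel) to a carefully chosen line, conic or twisted cubic inside the degeneracy locus of a component of the monad differential and exhibiting there a summand of strictly negative degree, in the spirit of the twisted-cubic and complete-intersection obstructions already used in Lemma~\ref{L:h0f(1)} and Lemma~\ref{L:h0f(1)=0c2=12}. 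Pulling the finitely many surviving cases together then gives the six alternatives of the statement.
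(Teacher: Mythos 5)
Your overall strategy is indeed the paper's: translate the hypotheses into the constraints $k_1 \geq 0$, $k_4 \geq -1$ on the spectrum of $F = E^\prim(-2)$, run a spectrum-by-spectrum Horrocks-monad analysis using Remark~\ref{R:beilinson} and Lemma~\ref{L:h2e(-3)=0}, construct the realizable cases, and exclude the rest. But your enumeration of the spectra that must be \emph{excluded} has a genuine hole. The admissible spectra are $(0,-1,-1,-1)$, $(0,0,-1,-1)$, $(1,0,-1,-1)$, $(0,0,0,-1)$, $(1,0,0,-1)$, $(0,0,0,0)$, $(1,0,0,0)$, $(1,1,0,-1)$, $(1,1,0,0)$, $(1,1,1,0)$. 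You correctly kill $(1,0,-1,-1)$ with Remark~\ref{R:muh1e(-4)}, and you flag $(1,1,0,0)$ (giving $c_3 = 4$) and $(1,1,1,0)$ (giving $c_3 = 2$) as the hard exclusions, but you never mention $(1,1,0,-1)$, which gives $c_3 = 6$ and is \emph{not} excluded by Remark~\ref{R:muh1e(-4)}: for that spectrum $h^1(E(-4)) = 2$ and $h^1(E(-3)) = 5$, so the bound $\mathrm{rk}\, \mu \geq h^1(E(-4)) + 3 = 5$ is compatible with $h^1(E(-3)) = 5$. Since $(1,0,0,0)$ and $(1,1,0,-1)$ produce the same $c_3$, item (vi) of the statement --- that \emph{every} such bundle with $c_3 = 6$ has a monad of the indicated shape --- cannot be concluded unless $(1,1,0,-1)$ is shown to be impossible. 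This is exactly the paper's Case 7, and it is a substantial argument: one derives that the Horrocks middle term is $3\sco_\piii(1) \oplus 2\sco_\piii \oplus 2\sco_\piii(-1)$, proves via a Beilinson-type argument (using Lemma~\ref{L:h0g(2)neq0}, i.e.\ the fact that a rank $2$ bundle with $c_1 = -1$, $c_2 = 4$ has $\tH^0$ of its twist by $2$ nonzero) that $\mu$ has rank $\geq 6$, shows $\tH^1_\ast(E^\vee) = 0$, and then reaches a contradiction by computing that the cokernel of a component $\psi_1$ has length $c_3(\scf^\vee) = 7$ while Lemma~\ref{L:lengthcok} caps that length at $4$. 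Your proof as sketched does not account for this case at all.

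Two smaller points. First, for the $c_3 = 4$ exclusion you assert that the monad forces $F$ to be the kernel of an epimorphism $2\sco_\piii(1) \oplus \mathrm{T}_\piii(-1) \ra 2\sco_\piii(2)$; that reduction is valid only when $\tH^0(F^\vee) = 0$, so that the component $\sco_\piii(-1) \ra 4\sco_\piii$ of the monad differential is given by four independent linear forms. The paper must treat $h^0(F^\vee) = 1$ separately (Claim 8.3), where the relevant exact sequence involves a different rank $3$ bundle $G$, and that subcase costs several pages of its own. Second, the paper's negative arguments in Cases 7 and 9 are not of the ``restrict to a curve in the degeneracy locus'' type you propose, but are Chern-class/length contradictions via Lemma~\ref{L:suppcok} and Lemma~\ref{L:lengthcok}; only Case 8 proceeds by the kind of geometric restriction (to a quadric degeneracy locus) that you describe. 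Your proposed toolkit for the exclusions would therefore need to be enlarged before those cases go through.
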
 

\begin{proof} 
Let $F = E^\prim (-2)$ be the normalized rank 3 vector bundle associated to 
$E^\prim$ and let $k_F = (k_1,\, k_2,\, k_3,\, k_4)$ be the spectrum of $F$. The 
condition $\tH^2(E(-3)) = 0$ is equivalent to $k_4 \geq -1$ and the condition 
$\tH^1(E(-3)) \neq 0$ is equivalent $k_1 \geq 0$. Taking into account 
Lemma~\ref{L:spectrumf}, the possible spectra of $F$ are $(0,-1,-1,-1)$, 
$(0,0,-1,-1)$, $(1,0,-1,-1)$, $(0,0,0,-1)$, $(1,0,0,-1)$, $(0,0,0,0)$, 
$(1,0,0,0)$, $(1,1,0,-1)$, $(1,1,0,0)$ and $(1,1,1,0)$. Using 
Remark~\ref{R:muh1e(-4)}, one can easily show that the spectrum 
$(1,0,-1,-1)$ cannot occur in our context.   

We analyse, now, case by case, the remaining spectra. Recall the formulae from 
the beginning of the proof of Prop.~\ref{P:h1e(-3)=0c2=12}. 

\vskip2mm 

\noindent 
{\bf Case 1.}\quad $F$ \emph{has spectrum} $(0,-1,-1,-1)$. 

\vskip2mm 

\noindent 
In this case, $r = 6$, $c_3(F) = 2$ and $c_3 = 14$. Using the spectrum, one 
gets that $\h^1(E(l)) = \h^1(F(l+2)) = 0$ for $l \leq -4$ and $\h^1(E(-3)) = 
1$. In particular, $\h^1(E(-3)) - \h^1(E(-4)) = 1$. Lemma~\ref{L:h2e(-3)=0}(e) 
implies that $\tH^2_\ast(E) = 0$. Using Riemann-Roch, $\h^1(E(-2)) = \h^1(F) = 
3$. Since $\tH^2(E(-3)) = 0$ and $\tH^3(E(-4)) \simeq \tH^0(E^\vee)^\vee = 0$, 
the graded $S$-module $\tH^1_\ast(E)$ is generated in degrees $\leq -2$ (see 
Remark~\ref{R:h2e(-3)=0}(i)).  

We assert that the multiplication map $\mu \colon \tH^1(E(-3)) \otimes S_1 
\ra \tH^1(E(-2))$ has rank $\geq 2$. \emph{Indeed}, 
Prop.~\ref{P:h0e(-1)geq2c2=12} implies that $\h^0(E(-1)) \leq 1$. If $\mu$ 
has rank $\leq 1$ then, by Remark~\ref{R:beilinson}, there exists an exact 
sequence$\, :$ 
\[
0 \lra \Omega_\piii^2(2) \lra \Omega_\piii^1(1) \oplus \sco_\piii \lra Q 
\lra 0\, , 
\] 
with $Q$ locally free. Since, in this case, $Q$ would have rank 1, such an 
exact sequence \emph{cannot exist}. 

Consequently, $\tH^1_\ast(E)$ has a minimal generator in degree $-3$ and at 
most one generator in degree $-2$. Consider the extension defined by these 
generators$\, :$ 
\[
0 \lra E(-2) \lra B \lra \sco_\piii(1) \oplus \sco_\piii \lra 0\, . 
\]
One has $\tH^1_\ast(B) = 0$ and $\tH^2_\ast(B) \simeq \tH^2_\ast(E) = 0$ hence 
$B$ is a direct sum of line bundles. $B$ has rank 8, $\tH^0(B(-1)) = 0$, 
$\h^0(B) = \h^0(\sco_\piii(1) \oplus \sco_\piii) - \h^1(E(-2)) = 2$, and 
$\tH^0(B^\vee(-2)) = 0$. It follows that $B \simeq 2\sco_\piii \oplus 
6\sco_\piii(-1)$.

\vskip2mm 

\noindent 
{\bf Description of epimorphisms} $\phi \colon 2\sco_\piii \oplus 6\sco_\piii(-1)
\ra \sco_\piii(1) \oplus \sco_\piii$ {\bf for which} $E := \Ker \phi(2)$  
{\bf is globally generated and} $\tH^0(E(-2)) = 0${\bf .}\quad 
(i) If the component $2\sco_\piii \ra \sco_\piii$ 
of $\phi$ is non-zero then $E \simeq \Ker \psi(2)$ for some epimorphism 
$\psi \colon \sco_\piii \oplus 6\sco_\piii(-1) \ra \sco_\piii(1)$. Since 
$\tH^0(E(-2)) = 0$ it follows that the component $\sco_\piii \ra \sco_\piii(1)$ 
of $\psi$ is non-zero hence there exists a plane $H \subset \piii$ and an 
epimorphism $\e \colon 6\sco_\piii(-1) \ra \sco_H(1)$ such that $E \simeq 
\Ker \e(2)$.  

Now, since $E$ globally generated implies $\h^0(E(-1)) \leq 1$ (as we saw at 
the beginning of Case 1), the map $\tH^0(\e(1)) \colon \tH^0(6\sco_\piii) 
\ra \tH^0(\sco_H(2))$ must have rank at least 5. 

If $\tH^0(\e(1))$ has rank 6 then $\e(1)$ can be identified with the 
evaluation morphism of $\sco_H(2)$ (as an $\sco_\piii$-module). In this case, 
$E$ is even 0-regular. 

If $\tH^0(\e(1))$ has rank 5 then $E \simeq \sco_\piii(1) \oplus E^\prim$, 
where $E^\prim(-1)$ is the kernel of an epimorphism $\e^\prim \colon 5\sco_\piii 
\ra \sco_H(2)$ with $\tH^0(\e^\prim)$ injective. The epimorphisms $\e^\prim$ 
with $\tH^0(\e^\prim)$ injective and such that $\Ker \e^\prim(1)$ is globally 
generated are described in Case 5 of the proof of \cite[Prop.~6.3]{acm1}.

\vskip2mm 

(ii) If the component $2\sco_\piii \ra \sco_\piii$ of $\phi$ is 0 then the 
condition $\tH^0(E(-2)) = 0$ implies that the component $2\sco_\piii \ra 
\sco_\piii(1)$ of $\phi$ must be defined by two linearly independent linear 
forms. Let $L \subset \piii$ be the line defined by these linear forms. 
Then one has an exact sequence$\, :$ 
\[
0 \lra \sco_\piii(-1) \lra E(-2) \lra 6\sco_\piii(-1) 
\overset{{\overline \phi}_2}{\lra} \sco_\piii \oplus \sco_L(1) \lra 0\, ,  
\]  
with ${\overline \phi}_2$ induced by the component $\phi_2 \colon 
6\sco_\piii(-1) \ra \sco_\piii(1) \oplus \sco_\piii$ of $\phi$. 
Let $\sck$ be the kernel of ${\overline \phi}_2$. Using the commutative 
diagram$\, :$ 
\[
\begin{CD}
0 @>>> \sck @>>> 6\sco_\piii(-1) @>>> \sco_\piii \oplus \sco_L(1) @>>> 0\\ 
@. @VVV @\vert @VVV\\ 
0 @>>> 2\sco_\piii(-1) \oplus \Omega_\piii @>>> 6\sco_\piii(-1) @>>> \sco_\piii  
@>>> 0 
\end{CD}
\]
one gets an exact sequence$\, :$ 
\[
0 \lra \sck \lra 2\sco_\piii(-1) \oplus \Omega_\piii \lra \sco_L(1) \lra 0\, , 
\]
whence an exact sequence$\, :$ 
\[
0 \lra \sco_\piii(-1) \lra E(-2) \lra 2\sco_\piii(-1) \oplus \Omega_\piii \lra 
\sco_L(1) \lra 0\, . 
\]
Dualizing the last exact sequence and, then, twisting by $-1$, one gets that 
$E^\vee (1)$ can be realized as an extension$\, :$ 
\[
0 \lra 2\sco_\piii \oplus \text{T}_\piii(-1) \lra E^\vee (1) \lra \sci_L 
\lra 0\, . 
\]
According to Serre's method of extensions (see Thm.~\ref{T:serreext} in 
Appendix~\ref{A:serre}) there exist such locally free extensions 
with a prescribed connecting epimorphism$\, :$ 
\[
\delta \colon 2\sco_\piii \oplus \Omega_\piii(1) \lra 
\sce xt_{\sco_\piii}^1(\sci_L , \sco_\piii) \simeq \sco_L(2)\, . 
\] 
Taking into account the exact sequence$\, :$ 
\[
0 \lra \sco_\piii \lra E(-1) \lra 2\sco_\piii \oplus \Omega_\piii(1) 
\overset{\delta}{\lra} \sco_L(2) \lra 0\, , 
\]
it follows that $E$ is globally generated if and only if $\Ker \delta (1)$ 
is globally generated. According to Lemma~\ref{L:2oomega(1)raol(2)}  
in Appendix~\ref{A:miscellaneous}, such epimorphisms $\delta$ really exist. 

\vskip2mm 

\noindent 
{\bf Case 2.}\quad $F$ \emph{has spectrum} $(0,0,-1,-1)$. 

\vskip2mm 

\noindent 
In this case, $r = 5$, $c_3(F) = 0$ and $c_3 = 12$. Using the spectrum, one 
gets that $\h^1(E(l)) = \h^1(F(l+2)) = 0$ for $l \leq -4$ and $\h^1(E(-3)) = 
2$. In particular, $\h^1(E(-3)) - \h^1(E(-4)) = 2$ and $\h^2(E^\vee) = 
\h^1(E(-4)) = 0$. Using Riemann-Roch, $\h^1(E(-2)) = \h^1(F) = 4$ and 
$\h^0(E(-1)) - \h^1(E(-1)) = \chi(F(1)) = -1$. Since, by 
Prop.~\ref{P:h0e(-1)geq2c2=12}, $\h^0(E(-1)) \leq 1$ it follows that 
$\h^1(E(-1)) \leq 2$ hence, by Lemma~\ref{L:h1e(l)c2=12}, $\tH^1(E) = 0$.    

Since $\tH^1(E(-4)) = 0$, Lemma~\ref{L:h2e(-3)=0}(b) implies that the graded 
$S$-module $\tH^1_\ast(E^\vee)$ is generated by $\tH^1(E^\vee(1))$. 
Lemma~\ref{L:h2e(-3)=0}(d),(f) implies that $\h^1(E^\vee(1)) \leq 1$. 
On the other hand, since $\tH^2(E(-3)) = 0$ and $\tH^3(E(-4)) \simeq 
\tH^0(E^\vee)^\vee = 0$, the graded $S$-module $\tH^1_\ast(E)$ is generated in 
degrees $\leq -2$ (see Remark~\ref{R:h2e(-3)=0}(i)).  

We assert that the multiplication map $\mu \colon \tH^1(E(-3)) \otimes S_1 
\ra \tH^1(E(-2))$ has rank at least 3. \emph{Indeed}, if $\mu$ has rank 
$\leq 2$ then Remark~\ref{R:beilinson} implies that one would have an 
exact sequence$\, :$ 
\[
0 \lra 2\Omega_\piii^2(2) \lra 2\Omega_\piii^1(1) \oplus \sco_\piii \lra Q 
\lra 0\, , 
\]
with $Q$ locally free. Since $Q$ would have rank 1, such an exact sequence 
\emph{cannot exist}. 

It remains that $\tH^1_\ast(E)$ has two minimal generators in degree $-3$ and 
at most one in degree $-2$. Consequently, $E(-2)$ is the cohomology of a 
Horrocks monad of the form$\, :$ 
\[
0 \lra \sco_\piii(-1) \overset{\beta}{\lra} B \overset{\alpha}{\lra}  
2\sco_\piii(1) \oplus \sco_\piii \lra 0\, , 
\] 
with $B$ a direct sum of line bundles. Since $B$ has rank 9, $\tH^0(B(-1)) 
= 0$, $\h^0(B) = \h^0(2\sco_\piii(1) \oplus \sco_\piii) - \h^1(E(-2)) = 5$ and 
$\tH^0(B^\vee(-2)) = 0$ it follows that $B \simeq 5\sco_\piii \oplus 
4\sco_\piii(-1)$.

\vskip2mm 

\noindent 
{\bf Claim 2.1.}\quad \emph{The component} $\alpha_{21} \colon 5\sco_\piii \ra 
\sco_\piii$ \emph{of} $\alpha$ \emph{is non-zero}. 

\vskip2mm 

\noindent 
\emph{Indeed}, assume, by contradiction, that $\alpha_{21} = 0$. Then the 
component $\beta_2 \colon \sco_\piii(-1) \ra 4\sco_\piii(-1)$ of $\beta$ must 
be, also, zero (because there is no epimorphism $3\sco_\piii(-1) \ra 
\sco_\piii$). One has, in this case, an exact sequence$\, :$ 
\[
0 \lra E(-2) \lra \text{T}_\piii(-1) \oplus \sco_\piii \oplus \Omega_\piii 
\lra 2\sco_\piii(1) \lra 0\, .  
\]
Since, as we saw at the beginning of Case 2, $E$ globally generated implies  
$\tH^i(E) = 0$, $i = 1,\, 2,\, 3$, one deduces that, for a 
\emph{general} epimorphism $\rho \colon \text{T}_\piii(-1) \oplus \sco_\piii 
\oplus \Omega_\piii \ra 2\sco_\piii(1)$, $\text{Ker}\, \rho(2)$ is globally 
generated. Let $\rho_i$, $i = 1,\, 2,\, 3$, be the components of $\rho$. 
The cokernel of a general morphism $\text{T}_\piii(-1) \ra 2\sco_\piii(1)$ is 
isomorphic to $\sco_{L_1 \cup L_2}(1)$, where $L_1$ and $L_2$ are disjoint 
lines (see Lemma~\ref{L:t(-2)ra2o} from Appendix~\ref{A:miscellaneous}).  
It follows that for a general epimorphism $\rho$, the cokernel of 
$(\rho_1 \, ,\, \rho_2) \colon \text{T}_\piii(-1) \oplus \sco_\piii \ra 
2\sco_\piii(1)$ is isomorphic to $\sco_{\{x\, ,\,y\}}(1)$, where $x$ and $y$ are 
two distinct points. Let ${\overline \rho}_3 : \Omega_\piii \ra 
\sco_{\{x\, ,\, y\}}(1)$ be the epimorphism induced by $\rho_3$ and let $\sck$ be 
its kernel. 
Assuming that $\text{Ker}\, \rho(2)$ is globally generated, it follows that 
$\sck(2)$ is globally generated. Now, $\tH^0(\sck(2)) \subseteq  
\tH^0(\Omega_\piii(2))$ and $\h^0(\Omega_\piii(2)) = 6$. One cannot have 
$\h^0(\sck(2)) = 6$ because $\sck(2) \neq \Omega_\piii(2)$. On the other hand, 
one cannot have $\h^0(\sck(2)) \leq 4$ because the degeneracy locus of any 
morphism $4\sco_\piii \ra \Omega_\piii(2)$ has codimension $\leq 2$ in $\piii$. 
It remains that $\h^0(\sck(2)) = 5$. Using Lemma~\ref{L:5oraomega(2)} from 
Appendix~\ref{A:miscellaneous} one gets, now, a \emph{contradiction}.  
This proves Claim 2.1.  

\vskip2mm 

It follows, from Claim 2.1, that $E(-2)$ is the cohomology of a monad of the 
form$\, :$ 
\[
0 \lra \sco_\piii(-1) \overset{\beta^\prim}{\lra} 
4\sco_\piii \oplus 4\sco_\piii(-1) \overset{\alpha^\prim}{\lra}  
2\sco_\piii(1) \lra 0\, , 
\]
If the component $\beta_2^\prim \colon \sco_\piii(-1) \ra 4\sco_\piii(-1)$ of 
$\beta^\prim$ is non-zero then $E(-2)$ is the kernel of an epimorphism 
$\phi \colon 4\sco_\piii \oplus 3\sco_\piii(-1) \ra 2\sco_\piii(1)$. If 
$\beta_2^\prim = 0$ then $E(-2)$ is the kernel of an epimorphism 
$\psi \colon \text{T}_\piii(-1) \oplus 4\sco_\piii(-1) \ra 2\sco_\piii(1)$.  

\vskip2mm 

\noindent 
{\bf Claim 2.2.}\quad \emph{If} $E(-2)$ \emph{is the kernel of an epimorphism} 
$\phi \colon 4\sco_\piii \oplus 3\sco_\piii(-1) \ra 2\sco_\piii(1)$ 
\emph{then the degeneracy locus of the component} 
$\phi_1 \colon 4\sco_\piii \ra 2\sco_\piii(1)$ \emph{of} $\phi$ 
\emph{has dimension} $\leq 1$. \emph{Moreover, if} $\tH^0(E(-1)) = 0$ 
\emph{then this locus has dimension} $0$.  

\vskip2mm 

\noindent 
\emph{Indeed}, let $A := \tH^0(\phi_1) \colon k^4 \ra k^2 \otimes S_1$. 
$A$ can be represented by a $2 \times 4$ matrix of linear forms in four 
indeterminates. These matrices have been classified by Mir\'{o}-Roig and 
Trautmann \cite{mrt}. Their result is recalled in \cite[Remark~B.1]{acm1}. 
Since $\tH^0(E(-2)) = 0$, $A$ must be injective. If $A$ is not \emph{stable} 
(in the sense of \cite[Lemma~1.1.1]{mrt}) then, as we noticed at the beginning 
of \cite[Remark~B.1]{acm1}, $A$ can be represented, up to the action of 
$\text{GL}(2) \times \text{GL}(4)$, by a matrix of the form$\, :$ 
\[
\begin{pmatrix} 
h_{00} & h_{01} & h_{02} & h_{03}\\ 
h_{10} & h_{11} & 0 & 0 
\end{pmatrix}\, . 
\]  
Since $A$ is injective, $h_{02}$ and $h_{03}$ must be linearly independent. 
Moreover, since, by Prop.~\ref{P:h0e(-1)geq2c2=12}, one has $\h^0(E(-1)) \leq 
1$, it follows that $h_{10}$ and $h_{11}$ are linearly independent. Let $L$ 
(resp., $L^\prime$) be the line of equations $h_{10} = h_{11} = 0$ (resp., 
$h_{02} = h_{03} = 0$). Then the degeneracy locus of $\phi_1$ is contained in 
$L \cup L^\prime$ hence it has dimension $\leq 1$. On the other hand, if $A$ is 
stable then (see \cite[Remark~B.1]{acm1}) the degeneracy locus of $\phi_1$ 
is 0-dimensional, unless it is a line or a conic.  

Assume, now, that $\tH^0(E(-1)) = 0$. Then the above arguments show that 
$A$ is stable. If the degeneracy locus of $\phi_1$ has positive dimension then 
(see \cite[Remark~B.1]{acm1}) either there exists a line $L \subset \piii$ 
such that $\Cok (\phi_1 \vb L) \simeq \sco_L(2)$ or there exists a 
(nonsingular) conic $C \subset \piii$ such that, identifying $C$ with $\pj$, 
$\Cok (\phi_1 \vb L) \simeq \sco_\pj(3)$. 

In the former case, one deduces, from the exact sequence$\, :$ 
\begin{equation}\label{E:phi1e(-2)phi2} 
0 \lra \Ker \phi_1 \lra E(-2) \lra 3\sco_\piii(-1) 
\overset{{\overline \phi}_2}{\lra} \Cok \phi_1 \lra 0 
\end{equation}
(with ${\overline \phi}_2$ induced by the component $\phi_2 \colon 
3\sco_\piii(-1) \ra 2\sco_\piii(1)$ of $\phi$), an exact sequence$\, :$ 
\[
E_L(-2) \lra 3\sco_L(-1) \lra \sco_L(2) \lra 0\, , 
\]
which \emph{contradicts} the fact that $E$ is globally generated. In the 
latter case, one gets a contradiction in a similar manner. Claim 2.2 is 
proven.  

\vskip2mm 
 
\noindent 
{\bf Claim 2.3.}\quad \emph{If} $E(-2)$ \emph{is the kernel of an epimorphism} 
$\psi \colon \text{T}_\piii(-1) \oplus 4\sco_\piii(-1) \ra 2\sco_\piii(1)$ 
\emph{then} $\tH^0(E(-1)) \neq 0$ \emph{and the degeneracy locus of the 
component} $\psi_1 \colon \text{T}_\piii(-1) \ra 2\sco_\piii(1)$ \emph{of} $\psi$ 
\emph{has dimension} $1$. 

\vskip2mm 

\noindent 
\emph{Indeed}, $\psi_1 \neq 0$ because there is no epimorphism 
$4\sco_\piii(-1) \ra 2\sco_\piii(1)$. If $\tH^0(\psi_1^\vee(1))$ is not injective 
then $\psi_1$ factorizes as $\text{T}_\piii(-1) \overset{\sigma}{\lra} 
\sco_\piii(1) \ra 2\sco_\piii(1)$. Then either $\sigma$ is an epimorphism or 
$\text{Im}\, \sigma = \sci_L(1)$ for some line $L \subset \piii$. In both 
cases $\h^0(\Ker \sigma(1)) \geq 5$ which \emph{contradicts} the fact that, 
by Prop.~\ref{P:h0e(-1)geq2c2=12}, $\h^0(E(-1)) \leq 1$. Consequently, 
$\tH^0(\psi_1^\vee(1))$ must be injective. 

One uses, now, Lemma~\ref{L:t(-2)ra2o} from Appendix~\ref{A:miscellaneous}  
(with $\rho = \psi_1^\vee(1)$). Case (c) of that lemma cannot occur in our 
context because, in that case, $\h^0(\Ker \psi_1) = 1$ which \emph{would 
contradict} the hypothesis $\tH^0(E(-2)) = 0$. In the cases (a) and (b) from 
Lemma~\ref{L:t(-2)ra2o} the degeneracy locus of $\psi_1$ has dimension 1 and 
$\h^0(\Ker \psi_1(1)) = 1$ hence $\tH^0(E(-1)) \neq 0$. Claim 2.3 is proven. 

\vskip2mm 

\noindent 
{\bf Construction 2.4.}\quad Let $P_0 = (1:0:0:0), \ldots , P_3 = (0:0:0:1)$ be 
the coordinate points of $\piii$. Consider four nonzero constants $a_1,\, 
a_2,\, b_1,\, b_2 \in k \setminus \{0\}$ such that $a_1b_2 - a_2b_1 \neq 0$ 
and let $\phi_1 \colon 4\sco_\piii \ra 2\sco_\piii(1)$ be the morphism defined 
by the matrix$\, :$ 
\[
\begin{pmatrix} 
x_0 & a_1x_1 & a_2x_2 & 0\\ 
0 & b_1x_1 & b_2x_2 & x_3
\end{pmatrix}\, . 
\]   
One has an exact sequence$\, :$ 
\[
0 \lra \scg \lra 4\sco_\piii \overset{\phi_1}{\lra} 2\sco_\piii(1) 
\overset{\pi}{\lra} \sco_{\{P_0\}}(1) \oplus \cdots \oplus \sco_{\{P_3\}}(1) 
\lra 0\, ,  
\]
where $\scg = \Ker \phi_1$ and $\pi$ is defined by the transpose of the 
matrix$\, :$ 
\[
\begin{pmatrix} 
0 & -b_1 & -b_2 & 1\\ 
1 & a_1 & a_2 & 0
\end{pmatrix}\, . 
\]
$\scg$ is a rank 2 reflexive sheaf with $c_1(\scg) = -2$ hence with 
$\scg^\vee \simeq \scg(2)$. Dualizing the above exact sequence, one gets a 
presentation$\, :$ 
\[
0 \lra 2\sco_\piii(-1) \lra 4\sco_\piii \lra \scg(2) \lra 0\, . 
\]
Consider a linear form $\lambda \in \tH^0(\sco_\piii(1))$ vanishing at none of 
the points $P_0 ,\ldots , P_3$ and a point $P \in \piii \setminus \{P_0, 
\dots , P_3\}$. Let ${\overline \phi}_2$ denote the composite 
epimorphism$\, :$ 
\[
3\sco_\piii(-1) \lra \sci_{\{P\}} \lra \sco_{\{P_0, \ldots , P_3\}} 
\overset{\lambda}{\lra} \sco_{\{P_0, \ldots , P_3\}}(1)\, ,  
\] 
and let $\sck$ be its kernel. One has an exact sequence$\, :$ 
\[
0 \lra \sco_\piii(-3) \lra 3\sco_\piii(-2) \lra \sck \lra 
\sci_{\{P, P_0 , \ldots , P_3\}} \lra 0\, . 
\]
Since $\tH^2(\scg(1)) = 0$, one has ${\overline \phi}_2 = \pi \circ \phi_2$, 
for some morphism $\phi_2 \colon 3\sco_\piii(-1) \ra 2\sco_\piii(1)$. 
Let $\phi \colon 4\sco_\piii \oplus 3\sco_\piii(-1) \ra 2\sco_\piii(1)$ be the 
epimorphism defined by $\phi_1$ and $\phi_2$ and let $E := \Ker \phi(2)$. The 
exact sequence \eqref{E:phi1e(-2)phi2} induces an exact sequence$\, :$  
\[
0 \lra \scg \lra E(-2) \lra \sck \lra 0\, . 
\] 
Now, if $P$ does not belong to any of the planes containing three of the 
points $P_0 , \ldots , P_3$ then Lemma~\ref{L:igamma(2)} implies that 
$\sci_{\{P, P_0 , \ldots , P_3\}}(2)$ is globally generated. In this case $E$ is 
globally generated and $\tH^0(E(-1)) = 0$. 

\vskip2mm 

\noindent 
{\bf Construction 2.5.}\quad Let $P_0 , \ldots , P_3$ and $a_1,\, a_2,\, 
b_1,\, b_2$ be as in Construction 2.6. Let $L \subset \piii$ be the line 
containing $P_0$ and $P_3$ (i.e., the line of equations $x_1 = x_2 = 0$). 
Denote by $\phi_1 \colon 4\sco_\piii \ra 2\sco_\piii(1)$ the morphism defined 
by the matrix$\, :$ 
\[
\begin{pmatrix} 
x_0 & a_1x_1 & a_2x_2 & x_3\\ 
0 & b_1x_1 & b_2x_2 & 0
\end{pmatrix}\, . 
\]   
One has an exact sequence$\, :$ 
\[
0 \lra \scg \lra 4\sco_\piii \overset{\phi_1}{\lra} 2\sco_\piii(1) 
\overset{\pi}{\lra} \sco_L(1) \oplus \sco_{\{P_1\}}(1) \oplus 
\sco_{\{P_2\}}(1) \lra 0\, , 
\]
where $\scg$ denotes the kernel of $\phi_1$ and $\pi$ is defined by the 
transpose of the matrix$\, :$ 
\[
\begin{pmatrix} 
0 & -b_1 & -b_2\\ 
1 & a_1 & a_2
\end{pmatrix}\, . 
\]
$\scg$ is a rank 2 reflexive sheaf with $c_1(\scg) = -2$ hence with 
$\scg^\vee \simeq \scg(2)$. Dualizing the above exact sequence, one gets an  
sequence$\, :$ 
\[
0 \lra 2\sco_\piii(-1) \lra 4\sco_\piii \lra \scg(2) \lra \omega_L(3) 
\lra 0\, .  
\]  
Of course, $\omega_L(3) \simeq \sco_L(1)$ hence $\scg$ is 2-regular. Let 
${\overline \phi}_2 \colon 3\sco_\piii(-1) \ra \sco_L(1) \oplus \sco_{\{P_1\}}(1) 
\oplus \sco_{\{P_2\}}(1)$ be the epimorphism defined by the matrix$\, :$ 
\[
\begin{pmatrix} 
x_0x_3 & x_3^2 & x_0^2\\ 
0 & -b_1x_1^2 & a_1x_1^2\\ 
0 & -b_2x_2^2 & a_2x_2^2
\end{pmatrix}
\] 
and let $\sck$ denote its kernel. Let ${\overline \phi}_{21} \colon 
\sco_\piii(-1) \ra \sco_L(1) \oplus \sco_{\{P_1\}}(1) \oplus \sco_{\{P_2\}}(1)$ be 
the morphism defined by the first column of the above matrix and 
${\overline \phi}_{22} \colon 
2\sco_\piii(-1) \ra \sco_L(1) \oplus \sco_{\{P_1\}}(1) \oplus \sco_{\{P_2\}}(1)$ 
the morphism defined by the other two columns. Of course$\, :$ 
\[
\Ker {\overline \phi}_{21} = \sci_L(-1)\, ,\  \Cok {\overline \phi}_{21} = 
\sco_{\{P_0\}}(1) \oplus \cdots \oplus \sco_{\{P_3\}}(1)\, , 
\] 
whence an exact sequence$\, :$ 
\[
0 \lra \sci_L(-1) \lra \sck \lra 2\sco_\piii(-1) \xra{{\widehat \phi}_{22}} 
\sco_{\{P_0\}}(1) \oplus \cdots \oplus \sco_{\{P_3\}}(1) \lra 0\, . 
\]
with ${\widehat \phi}_{22}$ induced by ${\overline \phi}_{22}$. 
${\widehat \phi}_{22}$ is defined by the transpose of the matrix$\, :$ 
\[
\begin{pmatrix} 
0 & -b_1x_1^2 & -b_2x_2^2 & x_3^2\\ 
x_0^2 & a_1x_1^2 & a_2x_2^2 & 0
\end{pmatrix}\, . 
\]
Now, since one has an exact sequence$\, :$ 
\[
4\sco_\piii(-2) \overset{\rho}{\lra} 2\sco_\piii(-1) \xra{{\widehat \phi}_{22}} 
\sco_{\{P_0\}}(1) \oplus \cdots \oplus \sco_{\{P_3\}}(1) \lra 0
\] 
with $\rho$ defined by the matrix$\, :$ 
\[
\begin{pmatrix} 
x_0 & a_1x_1 & a_2x_2 & 0\\ 
0 & b_1x_1 & b_2x_2 & x_3 
\end{pmatrix}
\] 
it follows that $\sck(2)$ \emph{is globally generated}. 

Finally, one can easily check that ${\overline \phi}_2 \colon 
3\sco_\piii(-1) \ra \sco_L(1) \oplus \sco_{\{P_1\}}(1) \oplus \sco_{\{P_2\}}(1)$ 
can be written as $\pi \circ \phi_2$, where 
$\phi_2 \colon 3\sco_\piii(-1) \ra 2\sco_\piii(1)$ is defined by the matrix$\, :$ 
\[
\begin{pmatrix} 
0 & x_1^2 + x_2^2 & 0\\ 
x_0x_3 & x_3^2 & x_0^2 + x_1^2 + x_2^2
\end{pmatrix}\, . 
\]
If $\phi \colon 4\sco_\piii \oplus 3\sco_\piii(-1) \ra 2\sco_\piii(1)$ is the 
morphism defined by $\phi_1$ and $\phi_2$ and if $E = \Ker \phi(2)$ then, 
using the exact sequence$\, :$ 
\[
0 \lra \scg \lra E(-2) \lra \sck \lra 0\, ,  
\]
one deduces that $E$ is globally generated. 

\vskip2mm 

\noindent 
{\bf Construction 2.6.}\quad According to Lemma~\ref{L:t(-2)ra2o}(a) in 
Appendix~\ref{A:miscellaneous}, for a general morphism $\psi_1 \colon 
\text{T}_\piii(-1) \ra 2\sco_\piii(1)$, one has an exact sequence$\, :$ 
\[
0 \lra \sco_\piii(-1) \lra \text{T}_\piii(-1) \overset{\psi_1}{\lra} 
2\sco_\piii(1) \lra \sco_{L \cup L^\prime}(1) \lra 0\, , 
\] 
where $L$ and $L^\prime$ are two disjoint lines. Choose $f \in 
\tH^0(\sco_L(2))$ (resp., $f^\prim \in \tH^0(\sco_{L^\prime}(2))$) vanishing in 
two distinct points $P_0$ and $P_1$ (resp., $P_2$ and $P_3$). One has an 
exact sequence$\, :$ 
\[
0 \lra \sci_L(-1) \oplus \sci_{L^\prime}(-1) \lra 2\sco_\piii(-1)  
\xra{f \oplus f^\prim} \sco_{L \cup L^\prime}(1) \lra 
\sco_{\{P_0, \ldots , P_3\}}(1) \lra 0\, . 
\] 
As we saw above (towards the final part of Construction 2.5) there exist 
epimorphisms ${\widehat \psi}_{22} \colon 2\sco_\piii(-1) \ra 
\sco_{\{P_0, \ldots , P_3\}}(1)$ such that $\Ker {\widehat \psi}_{22}(2)$ is 
globally generated. Using the above exact sequence one deduces that 
${\widehat \psi}_{22}$ lifts to a morphism ${\overline \psi}_{22} \colon 
2\sco_\piii(-1) \ra \sco_{L \cup L^\prime}(1)$. Since $\sci_L(1)$, 
$\sci_{L^\prime}(1)$ and $\Ker {\widehat \psi}_{22}(2)$ are globally generated, 
it follows that if ${\overline \psi}_2 \colon 4\sco_\piii(-1) \ra 
\sco_{L \cup L^\prime}(1)$ is the morphism with components $f$, $f^\prime$ and 
${\overline \psi}_{22}$ then $\Ker {\overline \psi}_2(2)$ is globally 
generated.  ${\overline \psi}_{22}$ lifts to a morphism $\psi_{22} \colon 
2\sco_\piii(-1) \ra 2\sco_\piii(1)$ and $f$ (resp., $f^\prim$) lifts to a 
quadratic form $q \in \tH^0(\sco_\piii(2))$ (resp., $q^\prim \in 
\tH^0(\sco_\piii(2))$). Let $\psi_2 \colon 4\sco_\piii(-1) \ra 2\sco_\piii(1)$ be 
the morphism with components $q$, $q^\prim$ and $\psi_{22}$. If $\psi \colon 
\text{T}_\piii(-1) \oplus 4\sco_\piii(-1) \ra 2\sco_\piii(1)$ is the morphism 
with components $\psi_1$ and $\psi_2$ then $\Ker \psi(2)$ is globally 
generated. 

\vskip2mm 

\noindent 
{\bf Case 3.}\quad $F$ \emph{has spectrum} $(0,0,0,-1)$. 

\vskip2mm 

\noindent 
In this case, $r = 4$, $c_3(F) = -2$ and $c_3 = 10$. Using the spectrum, one 
gets that $\h^1(E(l)) = \h^1(F(l+2)) = 0$ for $l \leq -4$ and $\h^1(E(-3)) = 
3$. In particular, $s := \h^1(E(-3)) - \h^1(E(-4)) = 3$ and $\h^2(E^\vee) = 
\h^1(E(-4)) = 0$. Lemma~\ref{L:h2e(-3)=0}(f) implies that $\h^1(E_H^\vee(1)) 
\leq s-1 = 2$, for any plane $H \subset \piii$, and assertion (d) of the same 
lemma implies, now, that $\h^1(E^\vee(1)) \leq 2$. By Riemann-Roch, 
$\h^1(E(-2)) = \h^1(F) = 5$ and $\h^0(E(-1)) - \h^1(E(-1)) = \h^0(F(1)) - 
\h^1(F(1)) = -2$. Since, by Prop.~\ref{P:h0e(-1)geq2c2=12}, $\h^0(E(-1)) \leq 
1$ it follows that $\h^1(E(-1)) \leq 3$. One deduces, from 
Lemma~\ref{L:h1e(l)c2=12}, that $\tH^1(E) = 0$. 

Since $\tH^1(E(-4)) = 0$, Lemma~\ref{L:h2e(-3)=0}(b) implies that the graded 
$S$-module $\tH^1_\ast(E^\vee)$ is generated by $\tH^1(E^\vee(1))$.  
Since $\tH^2(E(-3)) = 0$ and $\tH^3(E(-4)) \simeq \tH^0(E^\vee)^\vee = 0$, 
the graded $S$-module $\tH^1_\ast(E)$ is generated in 
degrees $\leq -2$ (see Remark~\ref{R:h2e(-3)=0}(i)).  
We assert that the multiplication map $\mu \colon 
\tH^1(E(-3)) \otimes S_1 \ra \tH^1(E(-2))$ has rank $\geq 4$. \emph{Indeed}, 
if it would have rank $\leq 3$ then, by Remark~\ref{R:beilinson}, it would 
exist an exact sequence$\, :$ 
\[
0 \lra 3\Omega_\piii^2(2) \lra 3\Omega_\piii^1(1) \oplus \sco_\piii \lra 
Q \lra 0\, , 
\]
with $Q$ locally free. $Q$ would have rank 1 but this is clearly \emph{not 
possible}. 

It remains that the graded $S$-module $\tH^1_\ast(E)$ has three minimal 
generators of degree $-3$ and at most one of degree $-2$. It follows that 
$E(-2)$ is the cohomology of a Horrocks monad of the form$\, :$ 
\[
0 \lra 2\sco_\piii(-1) \lra B \overset{\phi}{\lra} 
3\sco_\piii(1) \oplus \sco_\piii \lra 0\, , 
\] 
with $B$ a direct sum of line bundles. $B$ has rank 10, $\tH^0(B(-1)) = 0$, 
$\h^0(B) = \h^0(3\sco_\piii(1) \oplus \sco_\piii) - \h^1(E(-2)) = 8$ and 
$\tH^0(B^\vee(-2)) = 0$. One deduces that $B \simeq 8\sco_\piii \oplus 
2\sco_\piii(-1)$. Since there is no epimorphism $2\sco_\piii(-1) \ra \sco_\piii$, 
the component $\phi_{21} \colon 8\sco_\piii \ra \sco_\piii$ of $\phi$ must 
be non-zero. It follows that, actually, $E(-2)$ is the cohomology of a monad 
of the form$\, :$ 
\begin{equation}\label{E:monad3} 
0 \lra 2\sco_\piii(-1) \overset{\beta}{\lra} 7\sco_\piii \oplus 
2\sco_\piii(-1) \overset{\alpha}{\lra} 3\sco_\piii(1) \lra 0\, . 
\end{equation} 
Lemma~\ref{L:mora2o(1)} from Appendix~\ref{A:h0e(-2)neq0} implies that the 
monads of this form can be put toghether into a family with irreducible base.

\vskip2mm  

Consider, now, a monad of the form \eqref{E:monad3} such that $E := 
(\Ker \alpha/\text{Im}\, \beta)(2)$ is globally generated. 
Let $\alpha_1 \colon 7\sco_\piii \ra 3\sco_\piii(1)$ and $\alpha_2 \colon 
2\sco_\piii(-1) \ra 3\sco_\piii(1)$ be the components of $\alpha$. One has an 
exact sequence$\, :$ 
\begin{equation}\label{E:keralpha} 
0 \lra \Ker \alpha_1 \lra \Ker \alpha \lra 2\sco_\piii(-1) 
\overset{{\overline \alpha}_2}{\lra} \Cok \alpha_1 \lra 0\, , 
\end{equation}  
where ${\overline \alpha}_2$ is induced by $\alpha_2$. 

\vskip2mm 

\noindent 
{\bf Claim 3.1.}\quad \emph{The support of} $\Cok \alpha_1$ 
\emph{has dimension} $\leq 0$. 

\vskip2mm 

\noindent 
\emph{Indeed}, $\Ker \alpha(2)$ is globally generated, $c_1(2\sco_\piii(1)) 
= 2$ and $\Cok \alpha_1(-1)$ is globally generated (because $\Cok \alpha_1$ is 
a quotient of $3\sco_\piii(1)$). Claim 3.1 follows, now, from 
Lemma~\ref{L:suppcok}. 

\vskip2mm 

\noindent 
{\bf Construction 3.2.}\quad We want to show that, for a general epimorphism 
$\phi \colon 7\sco_\piii \ra 3\sco_\piii(1)$, $E := \Ker \phi(2)$ is globally 
generated and $\tH^0(E(-1)) = 0$. 

\vskip2mm 

Firstly, we construct an epimorphism $\psi \colon 7\sco_\piii \ra 
3\sco_\piii(1)$ such that $\Ker \psi(2)$ is globally generated. Consider, for 
that, a nonsigular quadric surface $Q \subset \piii$ and fix an isomorphism 
$Q \simeq \pj \times \pj$. If $N$ is the kernel of the evaluation epimorphism 
$3\sco_\piii \ra \sco_Q(0,2)$ then $\tH^1(N) = 0$, $\tH^2_\ast(N) \simeq 
{\underline k}(2)$ and $\tH^3(N(-2)) = 0$. In particular, $N$ is 1-regular.  
One deduces easily that $\tH^1_\ast(N) = 0$. It follows that 
$N \simeq \text{T}_\piii(-2)$ whence an exact sequence$\, :$ 
\[
0 \lra \text{T}_\piii(-1) \overset{\psi^\prim_1}{\lra} 3\sco_\piii(1) \lra 
\sco_Q(1,3) \lra 0\, . 
\] 
Let $\psi_1 \colon 4\sco_\piii \ra 3\sco_\piii(1)$ be the composite morphism 
$4\sco_\piii \ra \text{T}_\piii(-1) \overset{\psi^\prim_1}{\lra} 3\sco_\piii(1)$. 

Now, according to Claim 3.3 in the proof of \cite[Prop.~6.3]{acm1}, if 
$\psi^\prim_2 \colon 3\sco_\piii \ra \sco_Q(1,3)$ is a general epimorphism then 
$\Ker \psi^\prim_2(2)$ is globally generated. $\psi^\prim_2$ lifts to a morphism 
$\psi_2 \colon 3\sco_\piii \ra 3\sco_\piii(1)$. Let $\psi \colon 7\sco_\piii \ra 
3\sco_\piii(1)$ be the epimorphism defined by $\psi_1$ and $\psi_2$. Taking 
into account the exact sequence$\, :$ 
\[
0 \lra \sco_\piii(-1) \lra \Ker \psi \lra 3\sco_\piii 
\overset{\psi^\prim_2}{\lra} \sco_Q(1,3) \lra 0\, ,  
\]    
one deduces that $\Ker \psi(2)$ is globally generated. Since, as we noticed at 
the beginning of Case 3, $\tH^1(\Ker \psi(2)) = 0$ it follows that, for the 
general epimorphism $\phi \colon 7\sco_\piii \ra 3\sco_\piii(1)$, 
$\Ker \phi(2)$ is globally generated. 

\vskip2mm 

Secondly, we construct a morphism $\rho \colon 7\sco_\piii \ra 3\sco_\piii(1)$ 
with the property that the map $\tH^0(\rho(1)) \colon \tH^0(7\sco_\piii(1)) \ra 
\tH^0(3\sco_\piii(2))$ is \emph{injective}. We start by constructing a morphism 
$\eta \colon 6\sco_\pii \ra 3\sco_\pii(1)$ such that $\tH^0(\eta(1)) \colon 
\tH^0(6\sco_\pii(1)) \ra \tH^0(3\sco_\pii(2))$ is surjective, hence bijective. 
Let $W$ denote the 3-dimensional vector space $\tH^0(\sco_\pii(1))$, let 
$u \colon S^2W \ra W \otimes W$ be the injection defined by  
$
u(\ell_1\ell_2) := \ell_1 \otimes \ell_2 + \ell_2 \otimes \ell_1
$ 
and let $\pi \colon W \otimes W \ra S^2W$ be the canonical surjection. 
$u$ defines a morphism $\eta \colon S^2W \otimes_k \sco_\pii \ra W \otimes_k 
\sco_\pii(1)$. If $x_0,\, x_1,\, x_2$ is the canonical basis of $W$ then the 
composite map$\, :$ 
\[
S^2W \otimes W \xra{u \otimes \text{id}_W} W \otimes W \otimes W 
\xra{\text{id}_W \otimes \pi} W \otimes S^2W
\] 
maps $x_ix_j \otimes x_k - x_jx_k \otimes x_i + x_kx_i \otimes x_j$ to 
$2 x_i \otimes x_jx_k$ hence it is surjective hence $\tH^0(\eta(1))$ is 
surjective. One deduces that, for a general morphism $\rho_1 \colon 6\sco_\pii 
\ra 3\sco_\pii(1)$, $\tH^0(\rho_1(1))$ is bijective. 

Since, for a general morphism $\rho_1^\prim \colon 6\sco_\pii \ra 
2\sco_\pii(1)$, $\tH^0(\rho_1^\prim) \colon \tH^0(6\sco_\pii) \ra 
\tH^0(2\sco_\pii(1))$ is bijective it follows that there exists a matrix 
$A_1 = (\ell_{ij})_{\begin{subarray}{1} 1 \leq i \leq 3\\ 1 \leq j \leq 6 \end{subarray}}$,  
with entries $\ell_{ij} \in k[x_0,x_1,x_2]_1$, with the property that there is 
no linear relation (over $k[x_0,x_1,x_2]$) between its columns and such that 
the columns of the submatrix 
$A^\prime_1 = 
(\ell_{ij})_{\begin{subarray}{1} 1 \leq i \leq 2\\ 1 \leq j \leq 6 \end{subarray}}$ 
are linearly independent. Let $A$ be the matrix obtained by attaching to 
$A_1$ the seventh column $(0\, ,\, 0\, ,\, x_3)^{\text{t}}$. 

We assert that there is no linear relation (over $k[x_0, \ldots , x_3]$) 
between the columns of $A$. \emph{Indeed}, let $\mathbf{r} = 
(\ell_1 + a_1x_3\, ,\, \ldots \, ,\, \ell_7 + a_7x_3)^{\text{t}}$ be such a 
relation (with $\ell_j \in k[x_0,x_1,x_2]$ and $a_j \in k$). Multiplying the 
first two rows of $A$ against $\mathbf{r}$ one gets that $a_1 = \cdots = a_6 
= 0$. Multiplying the third row of $A$ against $\mathbf{r}$ one gets that 
$a_7 = 0$, $\ell_7 = 0$ and that 
$(\ell_1 \, ,\, \ldots \, ,\, \ell_6)^{\text{t}}$ is a linear relation between 
the columns of $A_1$ hence $\ell_1 = \cdots = \ell_6 = 0$. If $\rho \colon 
7\sco_\piii \ra 3\sco_\piii(1)$ is the morphism defined by $A$ then 
$\tH^0(\rho(1)) \colon \tH^0(7\sco_\piii(1)) \ra \tH^0(3\sco_\piii(2))$ is 
injective. One deduces that, for the general morphism $\phi \colon 
7\sco_\piii \ra 3\sco_\piii(1)$, $\tH^0(\phi(1))$ is injective. 

\vskip2mm 

\noindent 
{\bf Construction 3.3.}\quad As we saw at the beginning of Construction 3.2,  
if $\psi \colon \text{T}_\piii(-1) \oplus 3\sco_\piii \ra 
3\sco_\piii(1)$ is a general epimorphism then $E^\prim := \Ker \psi(2)$ is 
globally generated and $\tH^1(E^\prim) = 0$. If $E$ is a vector bundle such 
that $E(-1)$ can be realized as an extension$\, :$ 
\[
0 \lra E^\prim(-1) \lra E(-1) \lra \sco_\piii \lra 0 
\]       
then $E$ is globally generated and $E(-2)$ is the cohomology of a 
\emph{minimal} monad of the form$\, :$ 
\begin{equation}\label{E:o(-1)ra7o+o(-1)ra3o(1)}
0 \lra \sco_\piii(-1) \overset{\beta}{\lra} 7\sco_\piii \oplus \sco_\piii(-1) 
\overset{\alpha}{\lra} 3\sco_\piii(1) \lra 0\, . 
\end{equation} 

\vskip2mm 

\noindent 
{\bf Construction 3.4.}\quad We want to show that there exist globally 
generated vector bundles $E$ such that $E(-2)$ is the cohomolgy of a minimal 
monad of the form \eqref{E:o(-1)ra7o+o(-1)ra3o(1)} with the property that the 
component $\alpha_1 \colon 7\sco_\piii \ra 3\sco_\piii(1)$ of $\alpha$ is not an 
epimorphism. The minimality of the monad implies that one has an exact 
sequence$\, :$ 
\[
0 \lra E(-2) \lra \text{T}_\piii(-1) \oplus 3\sco_\piii \oplus \sco_\piii(-1) 
\overset{\phi}{\lra} 3\sco_\piii(1) \lra 0\, , 
\] 
and the assumption on $\alpha_1$ is equivalent to the fact that the component 
$\phi_1 \colon \text{T}_\piii(-1) \oplus 3\sco_\piii \ra 3\sco_\piii(1)$ of  
$\phi$ is not an epimorphism. Let $\phi_2 \colon \sco_\piii(-1) \ra 
3\sco_\piii(1)$ be the other component of $\phi$. Taking into account Claim 3.1  
and the exact sequence$\, :$ 
\[
0 \lra \Ker \phi_1 \lra E(-2) \lra \sco_\piii(-1) 
\overset{{\overline \phi}_2}{\lra} \Cok \phi_1 \lra 0\, , 
\]
with ${\overline \phi}_2$ the composite morphism $\sco_\piii(-1) 
\overset{\phi_2}{\lra} 3\sco_\piii(1) \ra \Cok \phi_1$, one deduces that, if  
$E$ is globally generated, then one must have $\Cok \phi_1 \simeq 
\sco_{\{x\}}(1)$, for some point $x \in \piii$. 

Now, to construct such an epimorphism $\phi$, recall, from the beginning of 
Construction 3.2, that there exist morphisms $\phi_{11} \colon 
\text{T}_\piii(-1) \ra 3\sco_\piii(1)$ such that $\Cok \phi_{11} \simeq 
\sco_Q(1,3)$, for some nonsingular quadric surface $Q \subset \piii$, with a 
fixed isomorphism $Q \simeq \pj \times \pj$. Take a point $x \in Q$. 
Lemma~\ref{L:3oqraix(1,3)} in Appendix~\ref{A:miscellaneous} shows that there 
exist epimorphisms $\psi \colon 3\sco_Q \ra \sci_{\{x\},Q}(1,3)$ such that 
$\Ker \psi(2,2)$ is globally generated. Denoting by ${\overline \phi}_{12}$ 
the composite morphism 
$3\sco_\piii \ra 3\sco_Q \overset{\psi}{\lra} \sci_{\{x\},Q}(1,3) \hookrightarrow 
\sco_Q(1,3)$, one deduces that $\Ker {\overline \phi}_{12}(2)$ is globally 
generated. Lift ${\overline \phi}_{12}$ to a morphism $\phi_{12} \colon 
3\sco_\piii \ra 3\sco_\piii(1)$ and let $\phi_1 \colon \text{T}_\piii(-1) \oplus 
3\sco_\piii \ra 3\sco_\piii(1)$ be the morphism defined by $\phi_{11}$ and 
$\phi_{12}$. Then $\Ker \phi_1(2) \simeq \Ker {\overline \phi}_{12}(2)$ is 
globally generated and $\Cok \phi_1 \simeq \sco_{\{x\}}$. Moreover, by 
Lemma~\ref{L:3oqraix(1,3)}, one has $\tH^1(\Ker \phi_1(2)) = 0$. 

Consider, finally, a map $\phi^\prim_2 \colon \sco_\piii(-1) \ra 
\sco_Q(1,3)$ defined by an element of $\tH^0(\sco_Q(2,4))$ which does not 
vanish at $x$, and lift $\phi^\prim_2$ to a morphism $\phi_2 \colon 
\sco_\piii(-1) \ra 3\sco_\piii(1)$. If $\phi \colon \text{T}_\piii(-1) \oplus 
3\sco_\piii \oplus \sco_\piii(-1) \ra 3\sco_\piii(1)$ is the epimorphism with 
components $\phi_1$ and $\phi_2$ then $\Ker \phi(2)$ is globally generated. 

\vskip2mm 

\noindent 
{\bf Construction 3.5.}\quad We would like to show the existence of globally 
generated vector bundles $E$ with $\tH^0(E(-2)) = 0$ and such that $E(-2)$ 
is the cohomology of a \emph{minimal} monad of the form$\, :$ 
\[
0 \lra 2\sco_\piii(-1) \overset{\beta}{\lra} 7\sco_\piii \oplus 2\sco_\piii(-1) 
\overset{\alpha}{\lra} 3\sco_\piii(1) \lra 0\, . 
\]    
For such an $E$, the middle cohomology of the subcomplex$\, :$ 
\[
0 \lra 2\sco_\piii(-1) \overset{\beta}{\lra} 7\sco_\piii  
\overset{\alpha_1}{\lra} 3\sco_\piii(1) \lra 0
\]
of the monad is a rank 2 reflexive sheaf $\scf$ with $c_1(\scf) = -1$ and 
$\tH^0(\scf) = 0$. One deduces an exact sequence$\, :$ 
\[
0 \lra \scf \lra E(-2) \lra 2\sco_\piii(-1) 
\overset{{\overline \alpha}_2}{\lra} \Cok \alpha_1 \lra 0\, , 
\]
with ${\overline \alpha}_2$ induced by the component $\alpha_2 \colon 
2\sco_\piii(-1) \ra 3\sco_\piii(1)$ of $\alpha$. By Claim 3.1, the support of 
$\Cok \alpha_1$ has dimension $\leq 0$. Dualizing, now, the last exact 
sequence and taking into account that $\scf^\vee \simeq \scf(1)$, one gets an 
exact sequence$\, :$ 
\[
0 \lra 2\sco_\piii \lra E^\vee(1) \lra \scf \lra 0\, . 
\]
Using it, one computes the Chern classes of $\scf$ and one gets $c_1(\scf) = 
0$, $c_2(\scf) = 3$, $c_3(\scf) = 3$. Recall, also, that $\tH^0(\scf) = 0$  
hence $\scf$ is stable. According to the proof of Chang \cite[Thm.~3.13]{ch}, 
a \emph{general} stable rank 2 reflexive sheaf with the above Chern classes 
can be realized as an extension$\, :$ 
\[
0 \lra \sco_\piii(-2) \lra \scf \lra \sci_C(1) \lra 0\, , 
\]
where $C$ is a nonsingular rational quintic curve, not contained in a 
quadric surface. It follows that, assuming $\scf$ general, $E^\vee(1)$ can be 
realized as an extension$\, :$ 
\[
0 \lra 2\sco_\piii \oplus \sco_\piii(-2) \lra E^\vee(1) \lra \sci_C(1) \lra 0\, . 
\]


We will show, now, that if $C \subset \piii$ is a nonsingular rational quintic 
curve, not contained in a quadric surface, then there exist extensions$\, :$ 
\[
0 \lra 2\sco_\piii \oplus \sco_\piii(-2) \lra E_1 \lra \sci_C(1) \lra 0
\]
with $E_1$ locally free such that $E := E_1^\vee(1)$ is globally generated (and 
$\tH^0(E(-2)) = 0$). 

\vskip2mm 

\emph{Indeed}, according to Serre's method of extensions (see 
Thm.~\ref{T:serreext} in Appendix~\ref{A:serre}), for every epimorphism 
$\delta \colon \sco_\piii(2) \oplus 2\sco_\piii \ra \omega_C(3)$ there 
exists an extension of the above form, with $E_1$ locally free, such that, 
dualizing the extension, one gets an exact sequence$\, :$ 
\[
0 \lra \sco_\piii(-1) \lra E(-1) \lra \sco_\piii(2) \oplus 2\sco_\piii  
\overset{\delta}{\lra} \omega_C(3) \lra 0\, . 
\] 
where $E := E_1^\vee(1)$. It thus remains to construct epimorphisms $\delta 
\colon \sco_\piii(2) \oplus 2\sco_\piii \ra \omega_C(3)$ with 
$\Ker \delta(1)$ globally generated. 

It is well known that $C$ admits a unique 4-secant line $L \subset \piii$ 
[$C$ is the image of an embedding $\nu \colon \pj \ra \piii$ defined by an 
epimorphism $\pi \colon 4\sco_\pj \ra \sco_\pj(5)$; since $C$ is not contained 
in a quadric surface, one must have $\Ker \pi \simeq \sco_\pj(-1) \oplus 
2\sco_\pj(-2)$; the 4-secant corresponds to the unique linear relation 
between the four binary quintics defining $\pi$]. Choose a plane $H_0 \subset 
\piii$, of equation $h_0 = 0$, intersecting $C$ in five distinct points 
$P_1 , \ldots , P_5$, any three of them noncollinear, and such that none of 
them belongs to $L$. Let $P_0$ be the intersection point of $H_0$ and $L$. 
For $1 \leq i < j \leq 5$, $P_0$ does not belong to the line 
$\overline{P_iP_j}$ [\emph{indeed}, if $P_0$ would belong to such a line then 
the plane spanned by that line and $L$ would cut $C$ in at least six points 
which is not possible]. 

Fix, now, an isomorphism $\omega_C \simeq \sci_{\{P_4 , P_5\} , C}$ and choose a 
2-dimensional vector subspace $W$ of $\tH^0(\sci_{\{P_4\, ,\, P_5\}\, ,\, H_0}(3))$ 
such that$\, :$ 
\[
W \cap \tH^0(\sci_{\{P_i\, ,\, P_j\, ,\, P_4\, ,\, P_5\}\, ,\, H_0}(3)) = 0\, ,\  
\text{for}\  0 \leq i < j \leq 3\, . 
\]
With this choice, the composite maps $W \hookrightarrow 
\tH^0(\sci_{\{P_4\, ,\, P_5\}\, ,\, H_0}(3)) \ra \tH^0(\sco_{\{P_i , P_j\}}(3))$, 
$0 \leq i < j \leq 3$, are all bijective. In particular, 
$W$ has a $k$-basis $f_1,\, f_2$ such that $f_i$ vanishes at $P_i$, $i = 1,\, 
2$. Lift $f_i$ to ${\widetilde f}_i \in \tH^0(\sci_{\{P_4\, ,\, P_5\}}(3))$, 
$i = 1,\, 2$. Consider, finally, the epimorphism $\delta \colon \sco_\piii(2) 
\oplus 2\sco_\piii \ra \omega_C(3) \simeq \sci_{\{P_4 , P_5\} , C}(3)$ defined 
by $h_0 \vb C$, ${\widetilde f}_1 \vb C$ and ${\widetilde f}_2 \vb C$. 
We will show that $\Ker \delta(1)$ is globally generated.    

\vskip2mm 

\noindent 
\emph{Indeed}, let $\delta_1 \colon \sco_\piii(2) \ra \omega_C(3)$ and 
$\delta_2 \colon 2\sco_\piii \ra \omega_C(3)$ be the components of $\delta$. 
$\Ker \delta_1 = \sci_C(2)$, $\Cok \delta_1 \simeq \sco_{\{P_1 , P_2 , P_3\}}$ and 
one has an exact sequence$\, :$ 
\[
0 \lra \sci_C(2) \lra \Ker \delta \lra 2\sco_\piii 
\overset{{\overline \delta}_2}{\lra} \sco_{\{P_1 , P_2 , P_3\}} \lra 0\, ,  
\]
where ${\overline \delta}_2$ is the composite morphism $2\sco_\piii 
\overset{\delta_2}{\lra} \omega_C(3) \ra \sco_{\{P_1 , P_2 , P_3\}}$. Put 
$\sck := \Ker \delta$ and $\sck_2 := \Ker {\overline \delta}_2$. 
Lemma~\ref{L:2oraogamma} from Appendix~\ref{A:miscellaneous} implies that 
$\sck_2(1)$ is globally generated. Let $\scn$ denote the kernel of the 
evaluation morphism $\tH^0(\sck_2(1)) \otimes_k \sco_\piii \ra \sck_2(1)$.  
One also knows, from Lemma~\ref{L:vahlenquintic}, that the cokernel of the 
evaluation morphism $\tH^0(\sci_C(3)) \otimes_k \sco_\piii \ra \sci_C(3)$ is 
$\sci_{L \cap C\, ,\, L}(3)$ and that $\tH^1(\sci_C(3)) = 0$. Applying, now, 
the Snake Lemma to the diagram$\, :$ 
\[
\begin{CD} 
0 @>>> \tH^0(\sci_C(3)) \otimes \sco_\piii @>>> \tH^0(\sck(1)) \otimes 
\sco_\piii @>>> \tH^0(\sck_2(1)) \otimes \sco_\piii @>>> 0\\ 
@. @VV{\text{ev}}V @VV{\text{ev}}V @VV{\text{ev}}V\\ 
0 @>>> \sci_C(3) @>>> \sck(1) @>>> \sck_2(1) @>>> 0 
\end{CD}
\] 
one sees that, in order to show that $\Ker \delta(1)$ is globally generated,  
it suffices to show that the connecting morphism $\partial \colon \scn \ra 
\sci_{L \cap C\, ,\, L}(3)$ (induced by the diagram) is an epimorphism. 
Since $\sci_{L \cap C\, ,\, L}(3) \simeq \sco_L(-1)$, it, actually, suffices to 
show that the map $\tH^0(\partial(1)) \colon \tH^0(\scn(1)) \ra 
\tH^0(\sci_{L \cap C\, ,\, L}(4))$ is non-zero.   

We shall emphasize, now, an element $\xi$ of $\tH^0(\scn(1))$ such that 
$\partial(1)(\xi) \neq 0$. Choose $h_1 \in \tH^0(\sco_\piii(1))$ vanishing in  
$P_2$ and $P_3$ but not in $P_1$. The elements $(h_0\, ,\, 0)^{\text{t}}$ and 
$(h_1\, ,\, 0)^{\text{t}}$ of $\tH^0(2\sco_\piii(1))$ belong to $\tH^0(\sck_2(1))$ 
hence$\, :$ 
\[
\xi := (h_1\, ,\, 0)^{\text{t}} \otimes h_0 - (h_0\, ,\, 0)^{\text{t}} \otimes h_1 
\in \tH^0(\scn(1))\, . 
\]
$(h_0\, ,\, 0)^{\text{t}}$ can be lifted to $(-{\widetilde f}_1\, ,\, h_0\, ,\, 
0)^{\text{t}} \in \tH^0(\sck(1))$. On the other hand, there exists $g_1 \in 
\tH^0(\sco_C(3))$ such that $h_1{\widetilde f}_1 \vb C = (h_0 \vb C)g_1$. 
Since $\tH^1(\sci_C(3)) = 0$, there exists ${\widetilde g}_1 \in 
\tH^0(\sco_\piii(3))$ such that ${\widetilde g}_1 \vb C = g_1$. Then 
$(h_1\, ,\, 0)^{\text{t}}$ can be lifted to $(-{\widetilde g}_1\, ,\, h_1\, ,\, 
0)^{\text{t}} \in \tH^0(\sck(1))$ hence $\xi$ can be lifted to the following 
element of $\tH^0(\sck(1)) \otimes \tH^0(\sco_\piii(1))$$\, :$ 
\[
(-{\widetilde g}_1\, ,\, h_1\, ,\, 0)^{\text{t}} \otimes h_0 - 
(-{\widetilde f}_1\, ,\, h_0\, ,\, 0)^{\text{t}} \otimes h_1\, . 
\]
It is clear, now, that$\, :$ 
\[
\partial(1)(\xi) = ({\widetilde f}_1h_1 - {\widetilde g}_1h_0) \vb L \in 
\tH^0(\sci_{L \cap C , L}(4))\, . 
\] 
It follows that $\partial(1)(\xi) \neq 0$ because ${\widetilde f}_1h_1 - 
{\widetilde g}_1h_0$ does not vanish in $P_0 \in L$ [\emph{indeed}, $h_0$ 
vanishes in $P_0$, $h_1$ does not vanish in $P_0$ (because $P_0 \notin 
\overline{P_2P_3}$), and ${\widetilde f}_1$ does not vanish in $P_0$ (because 
${\widetilde f}_1 \vb H_0 = f_1$ and $W \cap 
\tH^0(\sci_{\{P_0, P_1, P_4, P_5\} , H_0}(3)) = 0$)].

\vskip2mm 

\noindent 
{\bf Construction 3.6.}\quad We want to construct globally 
generated vector bundles $E$ with $\tH^0(E(-2)) = 0$ and such that $E(-2)$ 
is the cohomology of a \emph{minimal} monad of the form$\, :$ 
\[
0 \lra 2\sco_\piii(-1) \overset{\beta}{\lra} 7\sco_\piii \oplus 2\sco_\piii(-1) 
\overset{\alpha}{\lra} 3\sco_\piii(1) \lra 0\, . 
\]    
with the property that the middle cohomology of the subcomplex$\, :$ 
\[
0 \lra 2\sco_\piii(-1) \overset{\beta}{\lra} 7\sco_\piii  
\overset{\alpha_1}{\lra} 3\sco_\piii(1) \lra 0
\] 
is a \emph{special} stable rank 2 reflexive sheaf $\scf$ with $c_1(\scf) = -1$, 
$c_2(\scf) = 3$, $c_3(\scf) = 3$. Here special means that $\tH^0(\scf(1)) \neq 
0$. Examples of such reflexive sheaves are the ones that can be realized as 
extensions$\, :$ 
\[
0 \lra \sco_\piii(-1) \lra \scf \lra \sci_Y \lra 0\, , 
\] 
where $Y$ is the union of three mutually disjoint lines $L_1,\, L_2,\, L_3$. 
If $E$ is a vector bundle corresponding, as above, to such an $\scf$ then, as 
at the beginning of Construction 3.5, $E^\vee(1)$ can be realized as an 
extension$\, :$ 
\[
0 \lra 2\sco_\piii \oplus \sco_\piii(-1) \lra E^\vee(1) \lra \sci_Y \lra 0\, . 
\] 
Dualizing this extension, one gets an exact sequence$\, :$ 
\[
0 \lra \sco_\piii \lra E(-1) \lra \sco_\piii(1) \oplus 2\sco_\piii 
\overset{\delta}{\lra} \omega_Y(4) \lra 0\, . 
\]
Consequently, using Serre's method of extensions,  
we are left with the following question$\, :$ if $Y$ is the union of three 
mutually disjoint lines $L_1,\, L_2,\, L_3$, do there exist epimorphisms 
$\delta \colon \sco_\piii(1) \oplus 2\sco_\piii \ra \omega_Y(4) \simeq 
\sco_Y(2)$ such that $\Ker \delta(1)$ is globally generated$\, ?$ 

We show, now, that such epimorphisms really exist. Let $Q \subset \piii$ be 
the unique quadric surface containing $Y$. Fix an isomorphism $Q \simeq 
\pj \times \pj$. Assume that $L_1,\, L_2,\, L_3$ belong to the linear system 
$\vert \, \sco_Q(1,0) \, \vert$.  
Choose points $P_i \in L_i$, $i = 1,\, 2,\, 3$, such that 
none of the lines $\overline{P_iP_j}$, $1 \leq i < j \leq 3$ is contained in 
$Q$. The intersection of the plane $H_0$ containing $P_1,\, P_2,\, P_3$ with 
$Q$ is a nonsingular conic $C$. Let $h_0 = 0$ be an equation of $H_0$ and let 
$h_0 = h_i = 0$ be equations of the line containing $\{P_1,\, P_2,\, P_3\} 
\setminus \{P_i\}$, $i = 1,\, 2,\, 3$. 

Choose $f_1,\, f_2 \in \tH^0(\sco_C(2))$ such that the zero divisors of $f_1$ 
and $f_2$ have the form$\, :$ 
\[
(f_1)_0 = P_1 + Q_1 + Q_2 + Q_3 \, ,\  
(f_2)_0 = P_2 + R_1 + R_2 + R_3\, ,  
\] 
with $Q_i \in C \setminus \{P_1,\, P_2,\, P_3\}$ and $R_j \in C \setminus 
\{P_1,\, P_2,\, P_3,\, Q_1,\, Q_2,\, Q_3\}$. Lift $f_1,\, f_2$ to 
${\widetilde f}_1,\, {\widetilde f}_2 \in \tH^0(\sco_\piii(2))$. 

Let $\delta_1 \colon \sco_\piii(1) \ra \sco_Y(2)$ the morphism defined by 
$h_0 \vb Y$, $\delta_2 \colon 2\sco_\piii \ra \sco_Y(2)$ the morphism defined by 
${\widetilde f}_1 \vb Y$ and ${\widetilde f}_2 \vb Y$, and $\delta \colon 
\sco_\piii(1) \oplus 2\sco_\piii \ra \sco_Y(2)$ the epimorphism of components 
$\delta_1$ and $\delta_2$. We assert that $\Ker \delta(1)$ is globally 
generated. 

\vskip2mm 

\emph{Indeed}, $\Ker \delta_1 = \sci_Y(1)$, $\Cok \delta_1 = \sco_\Gamma(2)$, 
where $\Gamma := \{P_1,\, P_2,\, P_3\}$, and one has an exact sequence$\, :$ 
\[
0 \lra \sci_Y(1) \lra \Ker \delta \lra 2\sco_\piii 
\overset{{\overline \delta}_2}{\lra} \sco_\Gamma(2) \lra 0\, , 
\]     
where ${\overline \delta}_2$ is the composite morphism $2\sco_\piii 
\overset{\delta_2}{\lra} \sco_Y(2) \ra \sco_\Gamma(2)$. Let $\sck := 
\Ker \delta$ and $\sck_2 := \Ker {\overline \delta}_2$. Consider the 
commutative diagram$\, :$ 
\[
\begin{CD} 
0 @>>> \sco_\piii @>>> \tH^0(\sck(1)) \otimes \sco_\piii @>>> 
\tH^0(\sch_2(1)) \otimes \sco_\piii @>>> 0\\ 
@. @VV{\text{ev}}V @VV{\text{ev}}V @VV{\text{ev}}V\\ 
0 @>>> \sci_Y(2) @>>> \sck(1) @>>> \sck_2(1) @>>> 0 
\end{CD}
\]
($\h^0(\sci_Y(2)) = 1$ and $\tH^1(\sci_Y(2)) = 0$). 
By Lemma~\ref{L:2oraogamma} from Appendix~\ref{A:miscellaneous}, $\sck_2(1)$ 
is globally generated. Let $\scn$ be the kernel of its evaluation morphism. 
One also has an exact sequence$\, :$ 
\[
0 \lra \sco_\piii \overset{\text{ev}}{\lra} \sci_Y(2) \lra 
\sci_{Y\, ,\, Q}(2) \lra 0\, , 
\]
and $\sci_{Y\, ,\, Q}(2) \simeq \sco_Q(-1,2)$. Consequently, 
in order to show that $\sck(1)$ is globally generated, it suffices to 
show that the connecting morphism $\partial \colon \scn \ra 
\sci_{Y\, ,\, Q}(2)$ is an epimorphism. To prove that, we shall emphasize two 
elements $\xi_1$ and $\xi_2$ of $\tH^0(\scn(1))$ such that the zero divisors 
of $\partial(1)(\xi_1),\, \partial(1)(\xi_2) \in \tH^0(\sci_{Y\, ,\, Q}(3)) 
\subset \tH^0(\sco_Q(3,3))$ have no common component besides the lines 
$L_1,\, L_2,\, L_3$. 

According to the proof of Lemma~\ref{L:2oraogamma},  
$\tH^0(\sck_2(1)) \subset \tH^0(2\sco_\piii(1))$ is generated by the columns of 
the matrix$\, :$ 
\[
\begin{pmatrix} 
h_1 & 0 & a_1h_3 & h_0 & 0\\ 
0 & h_2 & a_2h_3 & 0 & h_0
\end{pmatrix}\, . 
\]    
Consider the following two global sections of $\scn(1) \subset 
\tH^0(\sck_2(1)) \otimes_k \sco_\piii(1)$$\, :$ 
\[
\xi_1 := (h_1\, ,\, 0)^{\text{t}} \otimes h_0 -  
(h_0\, ,\, 0)^{\text{t}} \otimes h_1 \, , \  
\xi_2 := (0\, ,\, h_2)^{\text{t}} \otimes h_0 -  
(0\, ,\, h_0)^{\text{t}} \otimes h_2 \, . 
\]
Now, $(h_1\, ,\, 0)^{\text{t}}$ (resp., $(0\, ,\, h_2)^{\text{t}}$) is the image 
of an element $(q_1\, ,\, h_1\, ,\, 0)^{\text{t}}$ (resp., 
$(q_2\, ,\, 0\, ,\, h_2)^{\text{t}}$) of $\tH^0(\sck(1)) \subset 
\tH^0(\sco_\piii(2) \oplus 2\sco_\piii(1))$. This means that $q_1h_0 + 
h_1{\widetilde f}_1$ (resp., $q_2h_0 + h_2{\widetilde f}_2$) vanishes on $Y$.  
Moreover, $(h_0\, ,\, 0)^{\text{t}}$ (resp., $(0\, ,\, h_0)^{\text{t}}$) 
is the image of the element $(-{\widetilde f}_1\, ,\, h_0\, ,\, 0)^{\text{t}}$ 
(resp., $(-{\widetilde f}_2\, ,\, 0\, ,\, h_0)^{\text{t}}$) of $\tH^0(\sck(1))$. 
One deduces that$\, :$ 
\[
\partial(1)(\xi_1) = q_1h_0 + {\widetilde f}_1h_1 \vb Q \ \   \text{and}\ \    
\partial(1)(\xi_2) = q_2h_0 + {\widetilde f}_2h_2 \vb Q\, . 
\]   
Now, $\partial(1)(\xi_i) \vb C = f_i(h_i \vb C)$, $i = 1,\, 2$. Denoting by 
$D_i$ the zero divisor of $\partial(1)(\xi_i)$ on $Q$, one gets that$\, :$ 
\begin{gather*} 
D_1 = L_1 + L_2 + L_3 + M_1 + M_2 + M_3\, ,\  
D_2 = L_1 + L_2 + L_3 +N_1 + N_2 + N_3\, , 
\end{gather*}
where $M_i$ (resp., $N_j$) is the line belonging to the linear system 
$\vert \, \sco_Q(0,1)\, \vert$ that passes through $Q_i$ (resp., $R_j$). 
It follows that the only common components of $D_1$ and $D_2$ are $L_1$, 
$L_2$ and $L_3$. 

\vskip2mm 

\noindent 
{\bf Case 4.}\quad $F$ \emph{has spectrum} $(1,0,0,-1)$. 

\vskip2mm 

\noindent 
In this case, $r = 4$, $c_3(F) = -4$ and $c_3 = 8$. It follows, using the 
spectrum, that $\h^1(E(l)) = \h^1(F(l+2)) = 0$ for $l \leq -5$, $\h^1(E(-4)) 
= 1$ (hence $\h^2(E^\vee) = 1$) and $\h^1(E(-3)) = 4$. In particular, 
$s := \h^1(E(-3)) - \h^1(E(-4)) = 3$. 
Applying Riemann-Roch to (the twists of) $F$, one gets that $\h^1(E(-2)) = 6$ 
and $\h^0(E(-1)) - \h^1(E(-1)) = -3$. Since, by Prop.~\ref{P:h0e(-1)geq2c2=12}, 
$\h^0(E(-1)) \leq 1$ it follows that $\h^1(E(-1)) \leq 4$. 

\vskip2mm 

\noindent 
{\bf Claim 4.1.}\quad \emph{The graded} $S$-\emph{module} $\tH^1_\ast(E)$ 
\emph{has a minimal generator in degree} $-4$ \emph{and at most one in degree}  
$-2$. 

\vskip2mm 

\noindent 
\emph{Indeed}, since $\tH^2(E(-3)) = 0$ and $\tH^3(E(-4)) \simeq 
\tH^0(E^\vee)^\vee= 0$, $\tH^1_\ast(E)$ is generated in degrees $\leq -2$ (see 
Remark~\ref{R:h2e(-3)=0}(i)). It follows, from Remark~\ref{R:muh1e(-4)}, that 
the multiplication map $\tH^1(E(-4)) \otimes S_1 \ra \tH^1(E(-3))$ is 
surjective. 

We assert, now, that the multiplication map $\mu \colon \tH^1(E(-3)) \otimes 
S_1 \ra \tH^1(E(-2))$ has rank $\geq 5$. \emph{Indeed}, if 
the rank of $\mu$ is $\leq 4$ then, by Remark~\ref{R:beilinson}, there exists 
an exact sequence$\, :$  
\[
0 \lra \Omega_\piii^3(3) \lra 4\Omega_\piii^2(2) \lra 4\Omega_\piii^1(1) \oplus 
\sco_\piii \lra Q \lra 0\, ,  
\]
with $Q$ locally free. $Q$ has rank 2 and Chern classes $c_1(Q) = 3$, $c_2(Q) 
= 6$, $c_3(Q) = 2$. But this \emph{is not possible}. It remains that $\mu$ 
has rank $\geq 5$ and Claim 4.1 is proven.   

\vskip2mm 

\noindent 
{\bf Claim 4.2.}\quad \emph{The graded} $S$-\emph{module} $\tH^1_\ast(E^\vee)$ 
\emph{is generated by} $\tH^1(E^\vee(1))$. 

\vskip2mm 

\noindent 
\emph{Indeed}, assume, by contradiction, that $\tH^1_\ast(E^\vee)$ 
is not generated by $\tH^1(E^\vee(1))$. Then (the proof of) 
Lemma~\ref{L:h2e(-3)=0}(h) implies that $\tH^1_\ast(E^\vee) \simeq 
{\underline k}(-1) \oplus {\underline k}(-2)$.   
Consider, now, the extension$\, :$ 
\[
0 \lra E(-2) \lra E_4 \lra \sco_\piii(2) \oplus \sco_\piii \lra 0 
\]   
defined by the generators of the graded $S$-module $\tH^1_\ast(E)$ from 
Claim 4.1. One has $\tH^1_\ast(E_4) = 0$. Dualizing, one gets an exact 
sequence$\, :$ 
\[
0 \lra \sco_\piii \oplus \sco_\piii(-2) \lra E_4^\vee \lra E^\vee(2) \lra 0\, . 
\]
One deduces that $\tH^1_\ast(E_4^\vee) \simeq {\underline k}(1) \oplus 
{\underline k}$ and $\tH^2_\ast(E_4^\vee) = 0$. Since $E_4^\vee$ has rank 6 it 
follows that $E_4^\vee \simeq \Omega_\piii(1) \oplus \Omega_\piii$. But, from the 
above exact sequence, $\tH^0(E_4^\vee) \neq 0$ and this is a 
\emph{contradiction}. It, thus, remains that 
$\tH^1_\ast(E^\vee)$ is generated by $\tH^1(E^\vee(1))$. 

\vskip2mm 

\noindent 
{\bf Claim 4.3.}\quad $E(-2)$ \emph{is the kernel of an epimorphism} 
$4\sco_\piii \oplus \sco_\piii(-1) \ra \sco_\piii(2)$. 

\vskip2mm 

\noindent 
\emph{Indeed}, it follows, from Claim 4.1 and Claim 4.2, that $E(-2)$ admits a 
Horrocks monad of the form$\, :$ 
\[
0 \lra \sco_\piii(-1) \overset{\beta}{\lra} B 
\overset{\alpha}{\lra} \sco_\piii(2) \oplus \sco_\piii \lra 0\, ,
\] 
where $B$ is a direct sum of line bundles. One has $\text{rk}\, B = 7$, 
$\h^0(B) = \h^0(\sco_\piii(2) \oplus \sco_\piii) - \h^1(E(-2)) = 
5$, $\h^0(B(-1)) = \h^0(\sco_\piii(1) \oplus \sco_\piii(-1)) - \h^1(E(-3)) = 0$, 
and $\tH^0(B^\vee(-2)) = 0$. It follows that $B \simeq 5\sco_\piii \oplus 
2\sco_\piii(-1)$. Since there is no epimorphism $2\sco_\piii(-1) \ra \sco_\piii$ 
one deduces that the component $5\sco_\piii \ra \sco_\piii$ of $\alpha$ is 
non-zero hence $E(-2)$ is, actually, the cohomology of a monad of the 
form$\, :$ 
\[
0 \lra \sco_\piii(-1) \overset{\beta^\prim}{\lra} 4\sco_\piii \oplus 
2\sco_\piii(-1) \overset{\alpha^\prim}{\lra} \sco_\piii(2) \lra 0\, . 
\]  
In order to prove the claim one has to show that the component $\sco_\piii(-1) 
\ra 2\sco_\piii(-1)$ of $\beta^\prim$ is nonzero. Assume, by contradiction, 
that this component is zero. Then one has an exact sequence$\, :$ 
\[
0 \lra E(-2) \lra \text{T}_\piii(-1) \oplus 2\sco_\piii(-1) 
\overset{\alpha^\secund}{\lra} \sco_\piii(2) \lra 0\, . 
\] 
Let $\alpha_1^\secund \colon \text{T}_\piii(-1) \ra \sco_\piii(2)$ and 
$\alpha_2^\secund \colon 2\sco_\piii(-1) \ra \sco_\piii(2)$ be the components of 
$\alpha^\secund$. $\Cok \alpha_1^\secund \simeq \sco_Z(2)$, for some closed 
subscheme $Z$ of $\piii$. Let $\pi$ denote the composite epimorphism$\, :$ 
\[
2\sco_\piii(-1) \overset{\alpha_2^\secund}{\lra} \sco_\piii(2) \lra \sco_Z(2)\, . 
\]
Restricting to $Z$ the exact sequence$\, :$ 
\[
0 \lra \Ker \alpha_1^\secund \lra E(-2) \lra 2\sco_\piii(-1) \overset{\pi}{\lra} 
\sco_Z(2) \lra 0 
\]
one gets an epimorphism $E_Z(-2) \ra \sco_Z(-4)$. Since $E$ is globally 
generated, it follows that $\dim Z \leq 0$. Since $c_3(\Omega_\piii(3)) = 5$, 
$Z$ is a 0-dimensional subscheme of $\piii$ of length 5. $\alpha_1^\secund$ 
can be extended to a Koszul resolution of $\sco_Z(2)$$\, :$ 
\[
0 \lra \sco_\piii(-3) \lra \Omega_\piii \lra \text{T}_\piii(-1) 
\overset{\alpha_1^\secund}{\lra} \sco_\piii(2) \lra \sco_Z(2) \lra 0
\] 
(we used the fact that ${\bigwedge}^2(\text{T}_\piii(-1)) \simeq 
\Omega_\piii(2)$). One gets an exact sequence$\, :$ 
\[
0 \lra \sco_\piii(-3) \lra \Omega_\piii \lra E(-2) \lra 2\sco_\piii(-1) 
\overset{\pi}{\lra} \sco_Z(2) \lra 0\, . 
\]
Since $\sci_Z(1)$ is not globally generated the map $\tH^0(\pi(1)) \colon 
\tH^0(2\sco_\piii) \ra \tH^0(\sco_Z(3))$ is injective. One deduces that 
$\tH^0(E(-1)) = 0$ hence $\h^1(E(-1)) = 3$ (by Riemann-Roch, as at the 
beginning of Case 4). Lemma~\ref{L:h1e(l)c2=12} implies that $\tH^1(E) = 0$. 
Using, now, the above exact sequence one gets that $\tH^1(\Ker \pi(2)) = 0$ 
and that $\Ker \pi(2)$ is globally generated. Using the exact sequence$\, :$ 
\[
0 \lra \Ker \pi(2) \lra 2\sco_\piii(1) \lra \sco_Z(4) \lra 0 
\]   
one deduces that $\h^0(\Ker \pi(2)) = \h^0(2\sco_\piii(1)) - \h^0(\sco_Z(4)) 
= 3$. One obtains, now, an exact sequence$\, :$ 
\[
3\sco_\piii \lra 2\sco_\piii(1) \lra \sco_Z(4) \lra 0\, . 
\]
But such an exact sequence cannot exist because $Z$ has codimension 3 in 
$\piii$. This \emph{contradiction} shows that the component $\sco_\piii(-1) 
\ra 2\sco_\piii(-1)$ of $\beta^\prim$ is nonzero and Claim 4.3 is proven. 

\vskip2mm 

\noindent 
{\bf Claim 4.4.}\quad \emph{Consider an epimorphism} $\phi \colon 4\sco_\piii 
\oplus \sco_\piii(-1) \ra \sco_\piii(2)$. \emph{If} $\Ker \phi(2)$ \emph{is 
globally generated then the component} $\phi_1 \colon 4\sco_\piii \ra 
\sco_\piii(2)$ \emph{of} $\phi$ \emph{is an epimorphism}. 

\vskip2mm 

\noindent 
\emph{Indeed}, let $E := \Ker \phi(2)$ and $\phi_2 \colon \sco_\piii(-1) \ra 
\sco_\piii(2)$ be the other component of $\phi$. $\Cok \phi_1 \simeq \sco_Z(2)$ 
for some closed subscheme $Z$ of $\piii$. Since one has an exact 
sequence$\, :$ 
\[
0 \lra \Ker \phi_1 \lra E(-2) \lra \sco_\piii(-1) 
\overset{{\overline \phi}_2}{\lra} \sco_Z(2) \lra 0\, ,  
\] 
with ${\overline \phi}_2$ the composite morphism $\sco_\piii(-1) 
\overset{\phi_2}{\lra} \sco_\piii(2) \ra \sco_Z(2)$, it follows that $\dim Z 
\leq 0$. 

Assume, by contradiction, that $Z \neq \emptyset$. Since 
$\Ker {\overline \phi}_2 = \sci_Z(-1)$ and since $E$ is globally generated, 
it follows that $Z$ consists of a simple point $x$, that is, $\text{Im}\, 
\phi_1 = \sci_{\{x\}}(2)$. If $W$ is a general 3-dimensional vector subspace of 
$\tH^0(4\sco_\piii)$ then the composite map $W \hookrightarrow 
\tH^0(4\sco_\piii) \ra (\sci_{\{x\}}/\sci_{\{x\}}^2)(2)$ is bijective. 
Consequently, one can assume that $\phi_1$ is defined by four quadratic 
forms $f_1,\, f_2,\, f_3,\, f \in \tH^0(\sci_{\{x\}}(2))$ such that $f_1,\, 
f_2,\, f_3$ define a closed subscheme $X$ of $\piii$ having $x$ as a simple, 
isolated point. Since the scheme theoretic intersection of $X$ with the 
quadric surface $\{ f = 0\}$ is the simple point $\{x\}$ it follows that 
$\dim X = 0$ hence $X$ is a complete intersection of type $(2,2,2)$ in 
$\piii$. Moreover, $\phi_2$ is defined by a cubic form $g \in 
\tH^0(\sco_\piii(3))$ not vanishing in $x$.   

Now, let $\psi_1 \colon 3\sco_\piii \ra \sco_\piii(2)$ be the morphism defined 
by $f_1,\, f_2,\, f_3$ and $\psi_2 \colon \sco_\piii \oplus \sco_\piii(-1) 
\ra \sco_\piii(2)$ the morphism defined by $f$ and $g$. Of course, 
$\Cok \psi_1 = \sco_X(2)$ and $\Ker \psi_1$ admits a Koszul resolution defined 
by $f_1,\, f_2,\, f_3$. One, thus, has an exact sequence$\, :$ 
\[
0 \ra \sco_\piii(-6) \ra 3\sco_\piii(-4) \ra 3\sco_\piii(-2) \ra E(-2) 
\ra \sco_\piii \oplus \sco_\piii(-1) \overset{{\overline \psi}_2}{\lra} 
\sco_X(2) \ra 0\, , 
\] 
with ${\overline \psi}_2$ the composite morphism $\sco_\piii \oplus 
\sco_\piii(-1) \overset{\psi_2}{\lra} \sco_\piii(2) \ra \sco_X(2)$. 
Let $\sck$ be the kernel of ${\overline \psi}_2$. Since $E$ is globally 
generated it follows that $\sck(2)$ is globally generated.
The components ${\overline \psi}_{21} \colon \sco_\piii \ra \sco_X(2)$ and 
${\overline \psi}_{22} \colon \sco_\piii(-1) \ra \sco_X(2)$ of 
${\overline \psi}_2$ are defined by $f \vb X$ and $g \vb X$, respectively. 
It follows that $\Cok {\overline \psi}_{21} = \sco_{\{x\}}(2)$ and 
$\Ker {\overline \psi}_{21} = \sci_Y$, where $Y := X \setminus \{x\}$. One 
deduces an exact sequence$\, :$ 
\[
0 \lra \sci_Y \lra \sck \lra \sci_{\{x\}}(-1) \lra 0\, . 
\] 
Considering a morphism of resolutions$\, :$ 
\[
\begin{CD} 
\sco_\piii(-6) @>>> 3\sco_\piii(-4) @>>> 3\sco_\piii(-2) @>>> \sci_X\\ 
@VVV @VVV @VVV @VVV\\ 
\sco_\piii(-3) @>>> 3\sco_\piii(-2) @>>> 3\sco_\piii(-1) @>>> \sci_{\{x\}} 
\end{CD}
\]
and applying a result atributed by Peskine and Szpiro \cite[Prop.~2.5]{ps} to 
D. Ferrand, one gets that $\sci_Y$ admits a resolution of the form$\, :$ 
\[
0 \lra 3\sco_\piii(-5) \lra 6\sco_\piii(-4) \lra 3\sco_\piii(-2) \oplus 
\sco_\piii(-3) \lra \sci_Y \lra 0\, . 
\] 
Noticing that $\tH^1(\sci_Y(2)) = 0$, consider the commutative diagram with 
exact rows$\, :$ 
\[
\begin{CD} 
0 @>>> \tH^0(\sci_Y(2)) \otimes \sco_\piii @>>> 
\tH^0(\sck(2)) \otimes \sco_\piii 
@>>> \tH^0(\sci_{\{x\}}(1)) \otimes \sco_\piii @>>> 0\\ 
@. @VV{\text{ev}}V @VV{\text{ev}}V @VV{\text{ev}}V\\ 
0 @>>> \sci_Y(2) @>>> \sck(2) @>>> \sci_{\{x\}}(1) @>>> 0 
\end{CD}
\]  
The cokernel of the evaluation morphism $\tH^0(\sci_Y(2)) \otimes \sco_\piii 
\ra \sci_Y(2)$ is $(\sci_Y/\sci_X)(2)$. Let $\scn$ be the kernel of the 
evaluation morphism $\tH^0(\sci_{\{x\}}(1)) \otimes \sco_\piii \ra 
\sci_{\{x\}}(1)$. We will show that if $\partial \colon \scn \ra 
(\sci_Y/\sci_X)(2)$ is the connecting morphism induced by the above diagram 
then $\partial = 0$. 

\emph{Indeed}, $\scn(1)$ is globally generated and $\tH^0(\scn(1))$ has a 
$k$-basis consisting of the elements $h_i \otimes h_j - h_j \otimes h_i$, 
$0 \leq i < j \leq 2$, where $h_0,\, h_1,\, h_2$ is a $k$-basis of 
$\tH^0(\sci_{\{x\}}(1))$. $h_i \in \tH^0(\sci_{\{x\}}(1))$ can be lifted to an 
element $(q_i\, ,\, h_i)^{\text{t}}$ of $\tH^0(\sck(2)) \subset 
\tH^0(\sco_\piii(2) \oplus \sco_\piii(1))$. It follows that, for $0 \leq i < j 
\leq 2$, $q_ih_j - q_jh_i \in \tH^0(\sci_Y(3))$ and $\, :$ 
\[
\partial(1)(h_i \otimes h_j - h_j \otimes h_i) = \text{the image of}\ 
q_ih_j - q_jh_i\  \text{into}\  \tH^0((\sci_Y/\sci_X)(3))\, . 
\]    
We, consequently, have to prove that $q_ih_j - q_jh_i$ vanishes in $x$. But  
one has$\, :$ 
\[ 
q_if + h_ig = 0\, ,\  q_jf + h_jg = 0\, .  
\]  
One deduces that $(q_ih_j - q_jh_i)g = 0$. Since $g$ does not vanish in $x$  
it follows that $q_ih_j - q_jh_i$ vanish in $x$. Consequently,  
$\partial(1)(h_i \otimes h_j - h_j \otimes h_i) = 0$, $0 \leq i < j \leq 2$, 
hence $\partial = 0$. But this \emph{contradicts} the fact that $\sck(2)$ is 
globally generated. Tracing back the origin of this contradiction one sees 
that it comes from our assumption that $Z \neq \emptyset$. It follows that 
$\phi_1$ is an epimorphism and Claim 4.4 is proven. 

\vskip2mm 

\noindent 
{\bf Claim 4.5.}\quad $E \simeq \sco_\piii(1) \oplus E_0$, \emph{where} 
$E_0(-2)$ \emph{is the kernel of an arbitrary epimorphism} $4\sco_\piii 
\ra \sco_\piii(2)$. 

\vskip2mm 

\noindent 
\emph{Indeed}, by Claim 4.3, $E(-2)$ is the kernel of an epimorphism 
$\phi \colon 4\sco_\piii \oplus \sco_\piii(-1) \ra \sco_\piii(2)$. By Claim 4.4, 
the component $\phi_1 \colon 4\sco_\piii \ra \sco_\piii(2)$ of $\phi$ is an 
epimorphism. Put $E_0 := \Ker \phi_1(2)$. It follows that $E$ can be 
realized as an extension$\, :$ 
\[
0 \lra E_0 \lra E \lra \sco_\piii(1) \lra 0\, . 
\]      
We will show that this extension must be trivial. \emph{Indeed}, let $\xi \in 
\tH^1(E_0(-1))$ be the element defining this extension. Since $E$ is 
globally generated, the map $\tH^0(E) \ra \tH^0(\sco_\piii(1))$ must be 
surjective. This implies that $\tH^0(\sco_\piii(1))\cdot \xi = 0$ inside 
$\tH^1(E_0)$. Using the (geometric) Koszul complex associated to the 
epimorphism $\phi_1$ one deduces that $E_0$ admits a resolution of the 
form$\, :$ 
\[
0 \lra \sco_\piii(-4) \lra 4\sco_\piii(-2) \lra 6\sco_\piii \lra E_0 \lra 0
\, . 
\] 
It follows that $\tH^1(E_0(-1)) \simeq \tH^3(\sco_\piii(-5))$ and 
$\tH^1(E_0) \simeq \tH^3(\sco_\piii(-4))$. Since the pairing 
$\tH^0(\sco_\piii(1)) \times \tH^3(\sco_\piii(-5)) \ra \tH^3(\sco_\piii(-4)) 
\simeq k$ is perfect, one gets that $\xi = 0$ and Claim 4.5 is proven.  

\vskip2mm 

\noindent 
{\bf Case 5.}\quad $F$ \emph{has spectrum} $(0,0,0,0)$. 

\vskip2mm 

\noindent 
In this case, $r = 3$ (hence $E = F(2)$), $c_3(F) = -4$ and $c_3 = 8$. It 
follows, using the spectrum, that $\h^1(E(l)) = \h^1(F(l+2)) = 0$ for $l \leq 
-4$, $\h^1(E(-3)) = 4$. By Riemann-Roch, $\h^1(E(-2)) = -\chi(F) = 6$ and 
$\h^0(E(-1)) - \h^1(E(-1)) = \chi(F(1)) = -3$. It follows, from 
Prop.~\ref{P:h0e(-1)geq2c2=12}, that $\h^0(E(-1)) \leq 1$. 
If $\h^0(E(-1)) = 0$ then $\h^1(E(-1)) = 3$ and Lemma~\ref{L:h1e(l)c2=12} 
implies that $\h^1(E) = 0$.

\vskip2mm 

\noindent 
{\bf Claim 5.1.}\quad \emph{If} $F$ \emph{is a stable rank} 3 \emph{vector 
bundle with} $c_1(F) = -1$, $c_2(F) = 4$, $c_3(F) = -4$ \emph{and spectrum} 
$k_F = (0,0,0,0)$ \emph{then} $F$ \emph{is the cohomology of a monad of the 
form}$\, :$ 
\begin{equation}\label{E:3o(-1)ra10ora4o(1)}
0 \lra 3\sco_\piii(-1) \overset{\beta}{\lra} 10\sco_\piii 
\overset{\alpha}{\lra} 4\sco_\piii(1) \lra 0\, . 
\end{equation}


\noindent 
\emph{Indeed}, using the spectrum, one gets that $\tH^1(F(l)) = 0$ for $l 
\leq -2$ and $\h^1(F(-1)) = 4$. Moreover, $\tH^2(F(l)) = 0$ for $l \geq -2$ 
and, by Riemann-Roch, $\h^2(F(-3)) = 3$ hence, by Serre duality, 
$\tH^1(F^\vee(l)) = 0$ for $l \leq -2$ and $\h^1(F^\vee(-1)) = 3$. Since 
$\tH^2(F(-2)) = 0$ and $\tH^3(F(-3)) \simeq \tH^0(F^\vee(-1))^\vee = 0$, the 
Castelnuovo-Mumford lemma (in the form stated in \cite[Lemma~1.21]{acm1}) 
implies that the graded $S$-module $\tH^1_\ast(F)$ is generated by 
$\tH^1(F(-1))$. Analogously, the graded $S$-module $\tH^1_\ast(F^\vee)$ is 
generated by $\tH^1(F^\vee(-1))$. One deduces that $F$ admits a Horrocks 
monad of the form$\, :$ 
\[
0 \lra 3\sco_\piii(-1) \lra B \lra 4\sco_\piii(1) \lra 0\, , 
\] 
with $B$ a direct sum of line bundles. Since $B$ has rank 10, $\tH^0(B(-1)) = 
0$, and $\tH^0(B^\vee(-1)) = 0$ (because $\tH^0(F^\vee(-1)) = 0$) one deduces 
that $B \simeq 10\sco_\piii$ and this proves Claim 5.1.  

\vskip2mm 

Notice that the monads of form \eqref{E:3o(-1)ra10ora4o(1)}, with 
$\tH^0(\beta^\vee(1)) \colon \tH^0(10\sco_\piii(1)) \ra \tH^0(3\sco_\piii(2))$ 
surjective, can be put toghether into a family with irreducible base.  
The condition $\tH^0(\beta^\vee(1))$ surjective is equivalent to 
$\tH^1(F^\vee(1)) = 0$, $F$ being the cohomology sheaf of the monad.   

Recall that if $E$ is a globally generated vector bundle on $\piii$, we 
denote by $P(E)$ the dual of the evaluation epimorphism 
$\tH^0(E) \otimes_k \sco_\piii \ra E$ of $E$. 

\vskip2mm  

\noindent 
{\bf Claim 5.2.}\quad \emph{Let} $E$ \emph{be a globally generated vector 
bundle on} $\piii$, \emph{with} $c_1 = 5$, $c_2 = 12$, $c_3 = 8$, \emph{and 
such that} $\tH^i(E^\vee) = 0$, $i = 0,\, 1$, $\tH^0(E(-2)) = 0$, \emph{and} 
$\tH^1(E(-4)) = 0$. \emph{If} $\h^0(E(-1)) = 1$ \emph{then} $\tH^1(E) = 0$ 
\emph{and then there is an exact sequence}$\, :$ 
\[
0 \lra \sco_\piii(-1) \lra F^\prim(2) \oplus 4\sco_\piii \lra P(E) \lra 0\, ,
\]   
\emph{with} $F^\prim$ \emph{a} 4-\emph{instanton with} $\tH^0(F^\prim(1)) = 0$ 
\emph{and} $\tH^1(F^\prim(2)) = 0$.  

\vskip2mm 

\noindent 
\emph{Indeed}, it follows, from Prop.~\ref{P:eprimunstablec2=12},  
Prop.~\ref{P:h1e(-3)=0c2=12}, Prop.~\ref{P:h2e(-3)neq0c2=12}, and from the 
beginning of the proof of Prop.~\ref{P:h2e(-3)=0h1e(-3)neq0c2=12}, that 
either $E$ can be realized as an extension$\, :$ 
\[
0 \lra G(2) \lra E \lra \sco_\piii(1) \lra 0\, , 
\] 
where $G$ is a 4-instanton with $\h^0(G(1)) \leq 1$ or $E = F(2)$, where $F$ 
a stable rank 3 vector bundle with $c_1(F) = -1$, $c_2(F) = 4$, $c_3(F) = -4$ 
and spectrum $k_F = (0,0,0,0)$. 

Now, $P(E)$ has Chern classes $c_1(P(E)) = 5$, $c_2(P(E)) = 13$, 
$c_3(P(E)) = 13$. Moreover, $\h^0(P(E)(-1)) = \h^1(E^\vee(-1)) = \h^2(E(-3)) 
= 0$ and $\h^2(P(E)(-3)) = \h^3(E^\vee(-3)) = \h^0(E(-1)) = 1$. 
Lemma~\ref{L:h2e(-3)=1} implies, now, that one has exact sequences$\, :$ 
\begin{gather*} 
0 \lra \sco_\piii(-1) \lra E_1 \oplus 4\sco_\piii \lra P(E) \lra 0\, ,\\ 
0 \lra (r^\prim - 5)\sco_\piii \lra E_1 \lra \scf_1(2) \lra 0\, , 
\end{gather*}
where $r^\prim$ is the rank of $P(E)$ and $\scf_1$ is a stable rank 2 reflexive 
sheaf with Chern classes $c_1(\scf_1) = 0$, $c_2(\scf_1) = 4$, $c_3(\scf_1) 
= 0$. The last relation shows that $\scf_1$ is locally free. 

Now, $\h^1(\scf_1(2)) = \h^1(E_1) = \h^1(P(E)) = \h^2(E^\vee) = \h^1(E(-4)) = 
0$. Lemma~\ref{L:c1=0c2=4h1fprim(2)=0} from Appendix~\ref{A:instantons} 
implies that $\scf_1$ is a 4-instanton. It follows that$\, :$    
\[
\h^1(E) = \h^2(P(E)^\vee) = \h^1(P(E)(-4)) = \h^1(E_1(-4)) = \h^1(\scf_1(-2)) 
= 0\, ,  
\] 
hence, by Riemann-Roch, $\h^0(E) = \chi(E) = 8$. Since $E$ has rank 3, one 
deduces that $P(E)$ has rank $r^\prim = 5$. The other assertions from Claim 5.2 
are, now, clear.    

\vskip2mm 

\noindent 
{\bf Claim 5.3.}\quad \emph{Let} $E$ \emph{be a globally generated vector 
bundle on} $\piii$ \emph{with} $c_1 = 5$, $c_2 = 12$, $c_3 = 8$, \emph{such 
that} $\tH^i(E^\vee) = 0$, $i = 0,\, 1$, \emph{and} $\tH^0(E(-1)) = 0$. 
\emph{Then} $P(E)(-2)$ \emph{is the cohomology of a (not necessarily minimal) 
monad of the form}$\, :$ 
\[
0 \lra \sco_\piii(-1) \lra 6\sco_\piii \oplus 3\sco_\piii(-1) \lra 
3\sco_\piii(1) \lra 0\, . 
\]   


\noindent 
\emph{Indeed}, by Prop.~\ref{P:eprimunstablec2=12}, 
Prop.~\ref{P:h1e(-3)=0c2=12}, Prop.~\ref{P:h2e(-3)neq0c2=12}, and by Case 4 
above, either $E$ can be realized as a non-trivial extension$\, :$ 
\[
0 \lra G(2) \lra E \lra \sco_\piii(1) \lra 0, 
\] 
where $G$ is a 4-instanton with $\tH^0(G(1)) = 0$, or $E = F(2)$, where $F$ is 
a stable rank 3 vector bundle with $c_1(F) = -1$, $c_2(F) = 4$, $c_3(F) = -4$ 
and spectrum $k_F = (0,0,0,0)$. Since $\tH^0(E(-1)) = 0$, one gets, from 
Riemann-Roch, that $\h^1(E(-1)) = 3$. Lemma~\ref{L:h1e(l)c2=12} implies that 
$\tH^1(E) = 0$ and, by Riemann-Roch again, $\h^0(E) = 8$. It follows that 
$P(E)$ has rank $r^\prim = 5$ and Chern classes $c_1^\prim = 5$, 
$c_2^\prim = 13$, $c_3^\prim = 13$. Moreover, $\h^2(P(E)(-3)) = \h^0(E(-1)) = 
0$, $\h^0(P(E)(-1)) = \h^2(E(-3)) = 0$ and $\h^1(P(E)) = \h^1(E(-4)) = 0$.  
Two general global sections of $P(E)$ define an exact sequence$\, :$ 
\[
0 \lra 2\sco_\piii \lra P(E) \lra F^\prim(2) \lra 0\, , 
\]  
where $F^\prim$ is a rank 3 vector bundle with $c_1(F^\prim) = -1$, $c_2(F^\prim) 
= 5$, $c_3(F^\prim) = -1$. By Lemma~\ref{L:eprimunstable}, $F^\prim$ must be 
stable (if $L$ is a line in $\piii$ then $\h^2(\sci_L(-2)) = \h^1(\sco_L(-2)) 
= 1$). By Remark~\ref{R:p(e)}, $\h^2(F^\prim(-2)) = 2$, $\h^2(F^\prim(-1)) = 0$ 
and $\h^1(F^\prim(-1)) = \h^1(F(1)) = \h^1(E(-1)) = 3$. One deduces that 
$F^\prim$ has spectrum $(0,0,0,-1,-1)$.  

Now, $\h^1(P(E)(l)) = \h^1(F^\prim(l+2)) = 0$, for $l \leq -4$, and 
$\h^1(P(E)(-3)) = 3$. By Riemann-Roch, $\h^1(P(E)(-2)) = 6$ and 
$\h^1(P(E)(-1)) = 4$. Since $\tH^2(P(E)(-3)) = 0$ and $\tH^3(P(E)(-4)) \simeq 
\tH^0(P(E)^\vee)^\vee = 0$, the graded $S$-module $\tH^1_\ast(P(E))$ is generated 
in dergees $\leq -2$. We assert that the rank of the multiplication map 
$\mu \colon \tH^1(P(E)(-3)) \otimes S_1 \ra \tH^1(P(E)(-2))$ is $\geq 4$. 
\emph{Indeed}, this follows immediately from Remark~\ref{R:beilinson}. 
One deduces that the graded $S$-module $\tH^1_\ast(P(E))$ has three minimal  
generators in degree $-3$ and at most two minimal generators in degree $-2$. 

On the other hand, Lemma~\ref{L:h2e(-3)=0}(b) implies that the graded 
$S$-module $\tH^1_\ast(P(E)^\vee)$ is generated by $\tH^1(P(E)^\vee(1))$.  
Since $\h^2(P(E)^\vee) = \h^1(E) = 0$, one gets, from 
Lemma~\ref{L:h2e(-3)=0}(d)(f), that $\h^1(P(E)^\vee(1)) = \h^1(P(E)_H^\vee(1)) 
\leq 2$, for every plane $H \subset \piii$.  
It follows, now, that $P(E)(-2)$ is the cohomology of a Horrocks monad of the 
form$\, :$ 
\[
0 \lra 2\sco_\piii(-1) \lra B^\prim \lra 3\sco_\piii(1) \oplus 2\sco_\piii 
\lra 0\, ,  
\]
where $B^\prim$ is a direct sum of line bundles. $B^\prim$ has rank 12, 
$\tH^0(B^\prim(-1)) = 0$ and $\h^0(B^\prim) = \h^0(3\sco_\piii(1) \oplus 
2\sco_\piii) - \h^1(P(E)(-2)) = 8$ and $\h^0(B^{\prim \vee}(-2)) = 0$ hence 
$B^\prim \simeq 8\sco_\piii \oplus 4\sco_\piii(-1)$. Since there is no 
epimorphism $4\sco_\piii(-1) \ra 2\sco_\piii$, it follows that the component 
$8\sco_\piii \ra 2\sco_\piii$ of the morphism $B^\prim \ra 3\sco_\piii(1) \oplus 
2\sco_\piii$ is nonzero hence $P(E)(-2)$ is, actually, the cohomology of a 
monad of the form$\, :$ 
\[
0 \lra 2\sco_\piii(-1) \overset{\beta}{\lra} 7\sco_\piii \oplus 4\sco_\piii(-1) 
\overset{\alpha}{\lra} 3\sco_\piii(1) \oplus \sco_\piii \lra 0\, . 
\] 

\vskip2mm 

\noindent 
{\bf Claim 5.3.1.}\quad \emph{The component} $\alpha_{21} \colon 7\sco_\piii 
\ra \sco_\piii$ \emph{of} $\alpha$ \emph{is non-zero}. 

\vskip2mm 

\noindent 
\emph{Indeed}, assume, by contradiction, that it is 0. Since there is no 
epimorphism $3\sco_\piii(-1) \ra \sco_\piii$ one deduces that the component 
$\beta_2 \colon 2\sco_\piii(-1) \ra 4\sco_\piii(-1)$ of $\beta$ is 0, too. 
Denoting by $Q$ the cokernel of the component $\beta_1 \colon 2\sco_\piii(-1) 
\ra 7\sco_\piii$ of $\beta$, one gets an exact sequence$\, :$ 
\[
0 \lra P(E)(-2) \lra Q \oplus \Omega_\piii \overset{\phi}{\lra}  
3\sco_\piii(1) \lra 0\, .
\]   
Let $\phi_1 \colon Q \ra 3\sco_\piii(1)$ and $\phi_2 \colon \Omega_\piii \ra 
3\sco_\piii(1)$ be the components of $\phi$. One has an exact sequence$\, :$ 
\[
0 \lra \Ker \phi_1 \lra P(E)(-2) \lra \Omega_\piii 
\overset{{\overline \phi}_2}{\lra} \Cok \phi_1 \lra 0\, , 
\]
where ${\overline \phi}_2$ denotes the composite map $\Omega_\piii 
\overset{\phi_2}{\lra} 3\sco_\piii(1) \ra \Cok \phi_1$. 

Since $c_1(\Omega_\piii(2)) = 2$ and $\Cok \phi_1(-1)$ is globally generated 
(because $\Cok \phi_1$ is a quotient of $3\sco_\piii(1)$), 
Lemma~\ref{L:suppcok} implies that the support of $\Cok \phi_1$ is 
0-dimensional or empty. 
It follows that the kernel $\scf$ of $\phi_1$ is a rank 2 reflexive sheaf with 
$c_1(\scf) = -1$. Dualizing the exact sequence$\, :$ 
\[
0 \lra \scf \lra Q \overset{\phi_1}{\lra} 3\sco_\piii(1) \lra \Cok \phi_1 
\lra 0\, , 
\]
one gets an exact sequence$\, :$ 
\[
0 \lra 3\sco_\piii(-1) \lra Q^\vee \lra \scf(1) \lra 0 
\]
(because $\scf^\vee \simeq \scf(1)$). Dualizing again and using 
\cite[Prop.~2.6]{ha} one gets that the length of $\Cok \phi_1$ is equal to 
$c_3(\scf(1)) = c_3(\scf(2)) = c_3(Q^\vee(1)) = 3$. 

But, according to Lemma~\ref{L:5oraomega(2)} from 
Appendix~\ref{A:miscellaneous}, any morphism $m\sco_\piii \ra \Omega_\piii(2)$ 
is either an epimorphism or the support of its cokernel is at least 
1-dimensional. Using the exact sequence$\, :$ 
\[
P(E) \lra \Omega_\piii(2)  
\xra{{\overline \phi}_2(2)} \Cok \phi_1(2) \lra 0\, ,
\]    
one gets, now, a \emph{contradiction}. Claim 5.3.1 is proven.  

\vskip2mm 

One deduces, from Claim 5.3.1, that $P(E)(-2)$ is the cohomology of a 
monad of the form$\, :$ 
\begin{equation}\label{E:2o(-1)ra6o+4o(-1)ra3o(1)} 
0 \lra 2\sco_\piii(-1) \overset{\beta^\prim}{\lra} 6\sco_\piii \oplus 
4\sco_\piii(-1) \overset{\alpha^\prim}{\lra} 3\sco_\piii(1) \lra 0\, . 
\end{equation}

\vskip2mm 

\noindent 
{\bf Claim 5.3.2.}\quad \emph{The component} $\beta^\prim_2 \colon 
2\sco_\piii(-1) \ra 4\sco_\piii(-1)$ \emph{of} $\beta^\prim$ \emph{is non-zero}. 

\vskip2mm 

\noindent 
\emph{Indeed}, we will show that if $M$ is the cohomology of a monad of the 
above form with $\beta^\prim_2 = 0$ then $\tH^0(M(1)) \neq 0$. Since we 
noticed above that $\tH^0(P(E)(-1)) = 0$ this will imply Claim 5.3.2.   

It follows, from Lemma~\ref{L:mora2o(1)} from Appendix~\ref{A:h0e(-2)neq0}, 
that the monads of the form \eqref{E:2o(-1)ra6o+4o(-1)ra3o(1)} with 
$\beta_2^\prim = 0$ can be put toghether into a family with irreducible base. 

We show, next, that there exist monads of the form 
\eqref{E:2o(-1)ra6o+4o(-1)ra3o(1)} with $\beta_2^\prim = 0$ such that the 
support of the cokernel of the component $\alpha_1^\prim \colon 6\sco_\piii 
\ra 3\sco_\piii(1)$ of $\alpha^\prim$ is 1-dimensional. 

\emph{Indeed}, let $Y$ be the union of three mutually disjoint lines 
$L_0, \, L_1,\, L_2$ in $\piii$. Taking the direct sum of the resolutions of 
$\sco_{L_i}(1)$ one gets a resolution$\, :$ 
\[
0 \lra 3\sco_\piii(-1) \overset{d^{-2}}{\lra} 6\sco_\piii 
\overset{d^{-1}}{\lra} 3\sco_\piii(1) \overset{d^0}{\lra} \sco_Y(1) \lra 0\, . 
\]   
One has $\Cok d^{-2 \vee} \simeq \omega_Y(3) \simeq \sco_Y(1)$. Consider the 
exact sequence $0 \ra \sco_\piii(1) \overset{u}{\ra} 3\sco_\piii(1) 
\overset{v}{\ra} 2\sco_\piii(1) \ra 0$ with$\, :$ 
\[
u = (1\, ,\, 1\, ,\, 1)^{\text{t}} \  \text{and}\  
v = \begin{pmatrix} -1 & 1 & 0\\ 
                     0 & -1 & 1  
\end{pmatrix}\, . 
\]
Since $d^0 \circ u$ is an epimorphism it follows that $v \circ d^{-1}$ is an 
epimorphism. One thus gets a complex$\, :$ 
\[
2\sco_\piii(-1) \xra{(v \circ d^{-1})^\vee} 6\sco_\piii \xra{d^{-2 \vee}} 
3\sco_\piii(1) 
\]
with $(v \circ d^{-1})^\vee$ a locally split monomorphism and with 
$\Cok d^{-2 \vee} \simeq \sco_Y(1)$. Considering an epimorphism $4\sco_\piii(-1) 
\ra \Cok d^{-2 \vee}$ and lifting it to a morphism $4\sco_\piii(-1) \ra 
3\sco_\piii(1)$ one gets a monad of the form \eqref{E:2o(-1)ra6o+4o(-1)ra3o(1)} 
with $\beta_2^\prim = 0$ and $\Cok \alpha_1^\prim \simeq \sco_Y(1)$. 

We show, finally, that if $M$ is the cohomology of the form 
\eqref{E:2o(-1)ra6o+4o(-1)ra3o(1)} with $\beta_2^\prim = 0$ and such that 
the support of $\Cok \alpha_1^\prim$ is 1-dimensional then $\tH^0(M(1)) 
\neq 0$. 

\emph{Indeed}, let $\beta_1^\prim \colon 2\sco_\piii(-1) \ra 6\sco_\piii$ be 
the first component of $\beta^\prim$ and let $Q$ denote its cokernel. It is 
a rank 4 vector bundle, with $c_1(Q) = 2$. Then one has an exact 
sequence$\, :$ 
\[
0 \lra M \lra Q \oplus 4\sco_\piii(-1) \overset{\alpha^\secund}{\lra} 
3\sco_\piii(1) \lra 0\, , 
\] 
with $\alpha^\secund$ induced by $\alpha^\prim$. Since the support of the 
cokernel of the component $\alpha_1^\secund \colon Q \ra 3\sco_\piii(1)$ of 
$\alpha^\secund$ is 1-dimensional it follows that $\Ker \alpha_1^\secund \simeq 
\sco_\piii(-1)$. But $\Ker \alpha_1^\secund \subseteq M$ hence  
$\tH^0(M(1)) \neq 0$. 

Since the monads of the form \eqref{E:2o(-1)ra6o+4o(-1)ra3o(1)} with 
$\beta_2^\prim = 0$ and with the support of $\Cok \alpha_1^\prim$ 1-dimensional 
are general among the monads of the form \eqref{E:2o(-1)ra6o+4o(-1)ra3o(1)} 
with $\beta_2^\prim = 0$ it follows that if $M$ is the cohomology of any  
monad \eqref{E:2o(-1)ra6o+4o(-1)ra3o(1)} with $\beta_2^\prim = 0$  
then $\tH^0(M(1)) \neq 0$ and, as we noticed at the beginning of the 
proof of Claim 5.3.2, this suffices to prove the claim. 

\vskip2mm 

Claim 5.3 follows, now, from Claim 5.3.1 and Claim 5.3.2. 

\vskip2mm 

\noindent 
{\bf Construction 5.4.}\quad We want to show that there exist stable rank 3 
vector bundles $F$ on $\piii$, with $c_1(F) = -1$, $c_2(F) = 4$, $c_3(F) = 
-4$ and spectrum $(0,0,0,0)$, such that $\tH^0(F^\vee) = 0$, 
$\tH^1(F^\vee(1)) = 0$, $E := F(2)$ is globally generated and $\tH^1(E) = 0$.   

\vskip2mm 

Consider a rank 3 vector bundle $F$ on $\piii$ such that its dual $F^\vee$ can 
be realised as a non-trivial extension$\, :$ 
\[
0 \lra F^\prim \lra F^\vee \lra \sco_\piii(1) \lra 0\, , 
\]
defined by a non-zero element $\xi$ of $\tH^1(F^\prim(-1))$, where $F^\prim$ 
is a 4-instanton bundle such that $F^\prim(2)$ is globally generated. 
The existence of such 4-instantons was proven by Chiodera and Ellia 
\cite[Lemma~2.10]{ce}; a different proof was given in \cite{acm3}. 
Dualizing the extension, one gets an exact sequence$\, :$ 
\[
0 \lra \sco_\piii(-1) \lra F \lra F^\prim \lra 0\, . 
\]
The Chern classes of $F$ are $c_1(F) = -1$, $c_2(F) = 4$, $c_3(F) = -4$. 
Moreover, $\tH^0(F) = 0$ and $\tH^0(F^\vee(-1)) = 0$ hence $F$ 
is stable. Since $\tH^i(F(-2)) \simeq \tH^i(F^\prim (-2)) = 0$, $i = 1,\, 2$, 
the spectrum of $F$ is $(0,0,0,0)$. 
$F(2)$ is globally generated and $\tH^1(F(2)) \simeq \tH^1(F^\prim(2)) = 0$ 
(see \cite[Remark~6.4]{acm1}). 

\vskip2mm 

\noindent 
{\bf Claim 5.4.1.}\quad \emph{If} $\xi$ \emph{is a general element of} 
$\tH^1(F^\prim(-1))$ \emph{then} $\tH^0(F^\vee) = 0$. 

\vskip2mm 

\noindent 
\emph{Indeed}, we have to show that, for a general $\xi \in 
\tH^1(F^\prim(-1))$, $h\xi \neq 0$ in $\tH^1(F^\prim)$, $\forall \, 0 \neq h \in 
\tH^0(\sco_\piii(1))$. If $0 \neq h \in \tH^0(\sco_\piii(1))$ and if $H \subset 
\piii$ is the plane of equation $h = 0$ then one has an exact sequence$\, :$ 
\[
0 \lra \tH^0(F_H^\prim) \lra \tH^1(F^\prim(-1)) \overset{h}{\lra} 
\tH^1(F^\prim)\, . 
\]
Since $\tH^1(F^\prim(-2)) = 0$, one has $\tH^0(F_H^\prim(-1)) = 0$. If 
$\tH^0(F_H^\prim) \neq 0$ then one must have an exact sequence$\, :$ 
\begin{equation}\label{E:fhprim} 
0 \lra \sco_H \lra F_H^\prim \lra \sci_{Z,H} \lra 0\, , 
\end{equation}
where $Z$ is a 0-dimensional closed subscheme of $H$, of length 4. It 
follows that, in this case, $\h^0(F_H^\prim) = 1$. According to Barth's 
restriction theorem \cite{b} (see, also, Ein et al. \cite[Thm.~3.3]{ehv} 
or \cite[{\bf 2.7}]{co3} for different proofs) the set of planes $H \subset 
\piii$ for which $\tH^0(F_H^\prim) \neq 0$ is a proper closed subset of the 
dual projective space $\p^{3 \vee}$. Since, by Riemann-Roch, 
$\h^1(F^\prim(-1)) = 4$, Claim 5.4.1 is proven. 

\vskip2mm 

\noindent 
{\bf Claim 5.4.2.}\quad $\tH^1(F^\vee(1)) = 0$. 

\vskip2mm 

\noindent 
\emph{Indeed}, since $F^\prim(2)$ is globally generated, it follows that, for 
every line $L \subset \piii$, one must have $F_L^\prim \simeq \sco_L(a) \oplus 
\sco_L(-a)$, with $0 \leq a \leq 2$. One deduces that 
$\tH^1(F_L^\vee(1)) = 0$, for every line $L \subset \piii$.  

We assert, now, that $\h^1(F_H^\vee) \leq 2$, for every plane $H \subset 
\piii$. Indeed, if $\tH^0(F_H^\prim) = 0$ then, by Riemann-Roch, 
$\h^1(F_H^\prim) = 2$ and the assertion follows. If $\tH^0(F_H^\prim) \neq 0$ 
then, as we saw above, $F_H^\prim$ can be realized as an extension 
\eqref{E:fhprim}. Since $F^\prim(2)$ is globally generated, $Z$ must be a 
complete intersection of type $(2,2)$ in $H$. One deduces a presentation of 
the form$\, :$ 
\[
0 \lra \sco_H(-4) \lra \sco_H \oplus 2\sco_H(-2) \lra F_H^\prim \lra 0\, . 
\]  
It follows that $\tH^1(F_H^\prim(-1)) \subset \tH^2(\sco_H(-5))$ and that 
$\tH^1(F_H^\prim) \simeq \tH^2(\sco_H(-4))$. In particular, $\h^1(F_H^\prim) = 
3$. Since $\tH^i(F^\prim(-2)) = 0$, $i = 1,\, 2$, the restriction map 
$\tH^1(F^\prim(-1)) \ra \tH^1(F_H^\prim(-1))$ is bijective. In particular, 
$\xi \vb H \neq 0$ in $\tH^1(F_H^\prim(-1))$. One deduces that the vector 
space of linear forms $\ell \in \tH^0(\sco_H(1))$ annihilating $\xi \vb H$ is 
a proper subspace of $\tH^0(\sco_H(1))$. Using the exact sequence$\, :$ 
\[
0 \lra F_H^\prim \lra F_H^\vee \lra \sco_H(1) \lra 0\, ,  
\]
one gets that $\h^1(F_H^\vee) \leq 2$. 

Applying the Bilinear Map Lemma \cite[Lemma~5.1]{ha} to the bilinear map 
\[
\tH^1(F_H^\vee(1))^\vee \times \tH^0(\sco_H(1)) \ra 
\tH^1(F_H^\vee)^\vee 
\]
deduced from the multiplication map 
$\tH^1(F_H^\vee) \times \tH^0(\sco_H(1)) \ra \tH^1(F_H^\vee(1))$, 
one deduces, now, that $\tH^1(F_H^\vee(1)) = 0$, for every plane 
$H \subset \piii$. 

We assert, next, that $\h^1(F^\vee) \leq 3$. Indeed, assume that $\xi$ 
would be annihilated by two linearly independent linear forms $h_0,\, h_1 \in 
\tH^0(\sco_\piii(1))$. Let $L \subset \piii$ be the line of equations $h_0 = 
h_1 = 0$. Tensorizing by $F^\prim$ the exact sequence$\, :$ 
\[
0 \lra \sco_\piii(-1) \lra 2\sco_\piii \lra \sci_L(1) \lra 0\, , 
\] 
one would deduce that $\tH^0(F^\prim \otimes \sci_L(1)) \neq 0$. But \emph{this 
is not possible} because $\tH^0(F^\prim(1)) = 0$ (see \cite[Remark~6.4]{acm1}). 
It remains that $\xi$ is annihilated by at most one linear form. Using the 
extension defining $F^\vee$ and the fact that $\h^1(F^\prim) = 6$ one gets   
that $\h^1(F^\vee) \leq 3$. 

Applying the Bilinear Map Lemma to the bilinear map 
$\tH^1(F^\vee(1))^\vee \times \tH^0(\sco_\piii(1)) \ra \tH^1(F^\vee)^\vee$ 
deduced from the multiplication map $\tH^1(F^\vee) \times \tH^0(\sco_\piii(1)) 
\ra \tH^1(F^\vee(1))$ one gets, now, that $\tH^1(F^\vee(1)) = 0$. 

\vskip2mm 

\noindent 
{\bf Construction 5.5.}\quad We want to show that there exist stable rank 3 
vector bundles $F$ on $\piii$, with $c_1(F) = -1$, $c_2(F) = 4$, $c_3(F) = -4$ 
and spectrum $(0,0,0,0)$, such that $\tH^0(F(1)) = 0$, $\tH^1(F^\vee(1)) = 0$, 
$E := F(2)$ is 1-regular and the multiplication map $S_1 \otimes \tH^0(E) 
\ra \tH^0(E(1))$ is surjective (hence, in particular, $E$ is globally 
generated).   

\vskip2mm 

Consider a nonsingular rational curve $C \subset \piii$, of degree 
6, with $\h^0(\sci_C(3)) = 1$ (hence $\tH^1(\sci_C(3)) = 0$). The existence of 
such curves can be shown as follows$\, :$ 
let $X \subset \piii$ be a nonsigular cubic surface, obtained as the blow-up 
$\pi \colon X \ra \pii$ of $\pii$ in six general points $P_1, \ldots , P_6$,  
embedded in $\piii$ such that $\sco_X(1) \simeq \pi^\ast\sco_\pii(3) \otimes 
\sco_X[-E_1 - \cdots - E_6]$, where $E_i := \pi^{-1}(P_i)$.   
Let $C$ to be the strict transform of an irreducible quartic curve 
${\overline C} \subset \pii$, having nodes at $P_1,\, P_2,
\, P_3$ and containing none of the points $P_4,\, P_5,\, P_6$. Then $C \subset 
\piii$ is a rational curve of degree 6 and it is contained in only one cubic 
surface (namely $X$) because it has six 4-secants (namely the strict 
transforms of the lines $\overline{P_iP_j}$, $4 \leq i < j \leq 6$, and the 
strict transforms of the conics passing through $\{P_1, \ldots , P_6\} 
\setminus \{P_i\}$, $1 \leq i \leq 3$). Fix an isomorphism $\nu \colon \pj 
\izo C$.  

Consider an epimorphism $\delta \colon \sco_\piii(1) \oplus \sco_\piii \ra 
\omega_C(2)$ defined by a global section $s$ of $\omega_C(1) \simeq 
\sco_\pj(4)$ and a global section $t$ of $\omega_C(2) \simeq \sco_\pj(10)$.  
$\delta$ defines an extension$\, :$ 
\begin{equation}\label{E:fveescic(2)} 
0 \lra \sco_\piii \oplus \sco_\piii(-1) \lra F^\vee \lra \sci_C(2) \lra 0\, , 
\end{equation}     
with $F^\vee$ the dual of a rank 3 vector bundle $F$ with $c_1(F) = -1$, 
$c_2(F) = 4$, $c_3(F) = -4$. Dualizing the extension, one gets an exact 
sequence$\, :$ 
\[
0 \lra \sco_\piii(-2) \lra F \lra \sco_\piii(1) \oplus \sco_\piii 
\overset{\delta}{\lra} \omega_C(2) \lra 0\, . 
\]
One has $\tH^0(F^\vee(-1)) = 0$ and $\tH^0(F) = 0$ hence $F$ is stable. 
Moreover, $\tH^1(F(-2)) \simeq \tH^2(F^\vee(-2))^\vee = 0$ and 
$\tH^2(F(-2)) \simeq \tH^1(F^\vee(-2))^\vee = 0$ hence $F$ has spectrum 
$(0,0,0,0)$. 
Besides, $\tH^1(F^\vee(1)) \simeq \tH^1(\sci_C(3)) = 0$. 

\vskip2mm 

\noindent 
{\bf Claim 5.5.1.}\quad \emph{If the zero divisor} $Z := (s)_0$ \emph{of the 
global section} $s$ \emph{of} $\omega_C(1) \simeq \sco_\pj(4)$ \emph{consists 
of four simple points not contained in a plane, then} $\tH^0(F(1)) = 0$ 
\emph{and} $F$ \emph{is} 3-\emph{regular}. 

\vskip2mm 

\noindent 
\emph{Indeed}, if $\sck$ is the kernel of $\delta \colon \sco_\piii(1) \oplus 
\sco_\piii \ra \omega_C(2)$ then one has exact sequences$\, :$ 
\begin{gather*} 
0 \lra \sco_\piii(-2) \lra F \lra \sck \lra 0\, ,\\ 
0 \lra \sci_C(1) \lra \sck \lra \sci_Z \lra 0\, , 
\end{gather*} 
hence $\tH^0(F(1)) \izo \tH^0(\sck(1)) = 0$ and $\tH^1(F(2)) \izo 
\tH^1(\sck(2)) = 0$. Moreover, $\tH^2(F(1)) = 0$ (because $F$ has spectrum 
$(0,0,0,0)$) and $\tH^3(F) \simeq \tH^0(F^\vee(-4))^\vee = 0$ hence $F$ is 
3-regular. 

\vskip2mm 

\noindent 
{\bf Claim 5.5.2.}\quad \emph{For a general choice of the epimorphism} 
$\delta \colon \sco_\piii(1) \oplus \sco_\piii \ra \omega_C(2)$ \emph{defining 
the extension} \eqref{E:fveescic(2)}, \emph{the multiplication map} 
$S_1 \otimes \tH^0(F(2)) \ra \tH^0(F(3))$ \emph{is surjective}. 

\vskip2mm 

\noindent 
\emph{Indeed}, denoting, as above, by $\sck$ the kernel of $\delta$, it 
suffices to show that the multiplication map $S_1 \otimes \tH^0(\sck(2)) \ra 
\tH^0(\sck(3))$ is surjective. Let $\e$ denote the restriction of $\delta$ to 
the nonsingular cubic surface $X$ containing $C$. $\delta$ can be written as 
the composite morphism$\, :$ 
\[
\sco_\piii(1) \oplus \sco_\piii \lra \sco_X(1) \oplus \sco_X 
\overset{\e}{\lra} \omega_C(2)\, . 
\] 
The kernel $K$ of $\e$ is a rank 2 vector bundle on $X$ and one has an exact 
sequence$\, :$ 
\[
0 \lra \sco_\piii(-2) \oplus \sco_\piii(-3) \lra \sck \lra K \lra 0\, . 
\]

\noindent 
$\bullet$\quad We assert, firstly, that there exist epimorphisms $\e \colon 
\sco_X(1) \oplus \sco_X \ra \omega_C(2)$ such that $K := \Ker \e$ has the 
property that the multiplication map $S_1 \otimes \tH^0(K(2)) \ra \tH^0(K(3))$ 
is surjective. 

\vskip2mm 

\noindent 
\emph{Indeed}, consider a line $L_0 \subset \pii$, containing none of the 
points $P_1 , \ldots , P_6$, and intersecting $\overline C$ in four distinct 
points. $C_0 := \pi^{-1}(L_0) \subset X$ is a twisted cubic curve in $\piii$. 
Since $C_0 \cap C$ consists of four simple points it follows that 
$\sco_X[C_0] \vb C \simeq \omega_C(1)$. Moreover$\, :$ 
\[
\sco_X(1) \otimes \sco_X[C_0] \simeq \pi^\ast \sco_\pii(4) \otimes 
\sco_X[-E_1 - \cdots - E_6]\, . 
\] 
Let ${\overline D} \subset \pii$ be a nonsingular quartic curve containing 
$P_1 , \ldots , P_6$ and intersecting $L_0$ in four \emph{general} simple 
points [the map $\tH^0(\sci_{\{P_1, \ldots , P_6\} , \pii}(3)) \ra 
\tH^0(\sco_{L_0}(3))$ is injective (since $\{P_1, \ldots , P_6\}$ is not 
contained in a conic) hence bijective$\, ;$ it follows that the map 
$\tH^0(\sci_{\{P_1, \ldots , P_6\} , \pii}(4)) \ra \tH^0(\sco_{L_0}(4))$ is 
surjective]. 
The strict transform $D \subset X$ of $\overline D$ is a curve of degree 6, 
belonging to the linear system $\vert \, \sco_X(1) \otimes \sco_X[C_0] \, 
\vert$, and such that $\Gamma := D \cap C_0$ consists of four \emph{general} 
simple points of $C_0$. In particular, one can assume that none of the points 
of $\Gamma$ belongs to $C$. Let $\phi_0$ (resp., $\psi$) be a global section 
of $\sco_X[C_0]$ (resp., $\sco_X(1) \otimes \sco_X[C_0]$) such that its zero 
divisor on $X$ is $C_0$ (resp., $D$). One has an exact sequence$\, :$ 
\[
0 \lra \sco_X[-C_0] \xra{\left(\begin{smallmatrix} -\psi\\ \phi_0 
\end{smallmatrix}\right)} \sco_X(1) \oplus \sco_X \xra{(\phi_0 \, ,\, \psi)} 
\sci_{\Gamma , X}(1) \otimes \sco_X[C_0] \lra 0\, . 
\] 
Notice that the multiplication by $\phi_0 \colon \sco_X[-C_0] \ra \sco_X$ can 
be identified, modulo the isomorphism $\sco_X[-C_0] \simeq \sci_{C_0 , X}$,
with the canonical inclusion $\sci_{C_0 , X} \hookrightarrow \sco_X$. 

Let $\e$ denote the composite epimorphism$\, :$ 
\[
\sco_X(1) \oplus \sco_X \xra{(\phi_0 \, ,\, \psi)} \sci_{\Gamma , X}(1) \otimes 
\sco_X[C_0] \lra \left(\sci_{\Gamma , X}(1) \otimes \sco_X[C_0]\right) \vb C 
\simeq \omega_C(2)\, .  
\]
If $K$ is the kernel of $\e$ then one has an exact sequence$\, :$ 
\[
0 \lra \sco_X[-C_0] \lra K \lra \sci_{\Gamma , X}(1) \otimes \sco_X[C_0 - C] 
\lra 0\, . 
\]
Now, for $1 \leq l \leq 3$, let $L_l \subset X$ be the strict transform of the 
line in $\pii$ joining the points of $\{P_1,\, P_2,\, P_3\} \setminus \{P_l\}$. 
$L_l$ is, of course, a line in $\piii$. Since the effective divisor on 
$\pii$$\, :$ 
\[
L_0 + \overline{P_2P_3} + \overline{P_3P_1} + \overline{P_1P_2} 
\]
has nodes at $P_1,\, P_2,\, P_3$ and contains none of the points $P_4,\, P_5,\, 
P_6$, it follows that$\, :$ 
\[
C_0 + L_1 + L_2 + L_3 \sim C 
\]
as divisors on $X$. Putting $Y := L_1 \cup L_2 \cup L_3$, one gets an 
exact sequence$\, :$ 
\begin{equation}\label{E:ic0kiygamma}
0 \lra \sci_{C_0 , X} \lra K \lra \sci_{Y \cup \Gamma , X}(1) \lra 0\, , 
\end{equation} 
where the component $\sci_{C_0 , X} \ra \sco_X$ of the composite map 
$\sci_{C_0 , X} \ra K \ra \sco_X(1) \oplus \sco_X$ is the canonical inclusion. 
Since the ideal sheaf $\sci_{C_0} \subset \sco_\piii$ is 2-regular, one deduces 
that, in order to show that the multiplication map $S_1 \otimes \tH^0(K(2)) 
\ra \tH^0(K(3))$ is surjective, it suffices to show that the multiplication 
map $S_1 \otimes \tH^0(\sci_{Y \cup \Gamma , X}(3)) \ra 
\tH^0(\sci_{Y \cup \Gamma , X}(4))$ is surjective. In order to verify the latter 
fact it suffices to check that $L_1,\, L_2,\, L_3$ and $\Gamma$ satisfy the 
hypotheses of Lemma~\ref{L:3lines+4points} from Appendix~\ref{A:miscellaneous} 
and that is exactly what we are going to do next. 

Since $\Gamma$ consists of four simple points on the twisted cubic curve $C_0$ 
it is not contained in a plane. We assert that, for $1 \leq l \leq 3$, there 
is no quadric surface $Q^\prim \subset \piii$ containing $(Y \setminus L_l) \cup 
C_0$. \emph{Indeed}, if such a surface would exist it would be nonsingular. 
Fix, in this case, an isomorphism $Q^\prim \simeq \pj \times \pj$ such that 
the components of $Y \setminus L_l$ belong to the linear system $\vert \, 
\sco_{Q^\prim}(1,0) \, \vert$. Since $L_p \cap C_0$ consists of a simple point, 
$p = 1,\, 2,\, 3$, $C_0$ must belong to the linear system $\vert \, 
\sco_{Q^\prim}(2,1) \, \vert$ hence the divisor $(Y \setminus L_l) + C_0$ 
belongs to $\vert \, \sco_{Q^\prim}(4,1) \, \vert$. But $(Y \setminus L_l) + 
C_0 \subset Q^\prim \cap X$ which is a divisor of type $(3,3)$ on $Q^\prim$ and 
this is a \emph{contradiction}. 

It follows that the restriction map $\tH^0(\sci_{Y \setminus L_l}(2)) \ra 
\tH^0(\sco_{C_0}(2))$ is injective, $l = 1,\, 2,\, 3$. Since $\Gamma$ consists 
of four \emph{general} points of $C_0$, one can assume that 
$\tH^0(\sci_{(Y \setminus L_l) \cup \Gamma}(2)) = 0$, $l = 1,\, 2,\, 3$. Moreover, 
one can assume that none of the points belongs to the quadric surface 
containing $Y$. This completes the verification of the hypotheses of 
Lemma~\ref{L:3lines+4points} and, with it, the proof of the assertion that the 
multiplication map $S_1 \otimes \tH^0(K(2)) \ra \tH^0(K(3))$ is surjective.  

\vskip2mm 

\noindent 
$\bullet \bullet$\quad We show, finally, that if $\sck$ is the kernel of a 
composite epimorphism$\, :$ 
\[
\sco_\piii(1) \oplus \sco_\piii \lra \sco_X(1) \oplus \sco_X \overset{\e}{\lra} 
\omega_C(2)\, , 
\]       
with $\e$ defined as above, then the multiplication map $\mu_\sck \colon 
S_1 \otimes \tH^0(\sck(2)) \ra \tH^0(\sck(3))$ is surjective. 

\vskip2mm 

\noindent 
\emph{Indeed}, consider the commutative diagram$\, :$ 
\[
\begin{CD} 
0 @>>> S_1 \otimes \tH^0(\sco_\p \oplus \sco_\p(-1)) @>>> 
S_1 \otimes \tH^0(\sck(2)) @>>> S_1 \otimes \tH^0(K(2)) @>>> 0\\ 
@. @VVV @VV{\mu_\sck}V @VV{\mu_K}V\\ 
0 @>>> \tH^0(\sco_\p(1) \oplus \sco_\p) @>>> \tH^0(\sck(3)) @>>> 
\tH^0(K(3)) @>>> 0 
\end{CD}
\]
We have just shown that $\mu_K$ is surjective. Let $N$ be its kernel. In 
order to show that $\mu_\sck$ is surjective it suffices to show that the 
connecting morphism $\partial \colon N \ra \tH^0(\sco_\piii)$ induced by the 
above diagram is non-zero (hence surjective). Consider, for that, a (cubic) 
equation $f = 0$ of $X$ in $\piii$ and let $q_0,\, q_1,\, q_2$ be a 
$k$-basis of $\tH^0(\sci_{C_0}(2))$. Since $C_0 \subset X$, there exist linear 
forms $h_0,\, h_1,\, h_2 \in S_1$ such that $f = h_0q_0 + h_1q_1 + h_2q_2$.    
Recall the exact sequence \eqref{E:ic0kiygamma} which shows, in particular, 
that there is a monomorphism $\sci_{C_0 , X} \ra K$ such that the component 
$\sci_{C_0 , X} \ra \sco_X$ of the composite morphism $\sci_{C_0 , X} \ra K \ra 
\sco_X(1) \oplus \sco_X$ is the canonical inclusion. Now, $q_i \vb X \in 
\tH^0(\sci_{C_0 , X}(2))$ defines a global section $\sigma_i$ of $K(2)$, 
$i = 0,\, 1,\, 2$. $\sigma_i$ can be lifted to a global section 
${\widetilde \sigma}_i$ of $\sck(2)$ whose image into $\tH^0(\sco_\piii(3) 
\oplus \sco_\piii(2))$ is of the form $(f_i , q_i)$. Since $h_0(q_0 \vb X) + 
h_1(q_1 \vb X) + h_2(q_2 \vb X) = f \vb X = 0$ it follows that 
$h_0 \otimes \sigma_0 + h_1 \otimes \sigma_1 + h_2 \otimes \sigma_2$ belongs to 
$N$. Since the composite morphism$\, :$ 
\[
\sco_\piii(-2) \oplus \sco_\piii(-3) \lra \sck \lra \sco_\piii(1) \oplus 
\sco_\piii
\]
is defined by the matrix $\left(\begin{smallmatrix} f & 0\\ 0 & f 
\end{smallmatrix}\right)$, one gets that$\, :$ 
\[
\partial(h_0 \otimes \sigma_0 + h_1 \otimes \sigma_1 + h_2 \otimes \sigma_2) 
= 1
\] 
and this completes the proof of the surjectivity of the multiplication map 
$S_1 \otimes \tH^0(\sck(2)) \ra \tH^0(\sck(3))$ and, with it, the proof of 
Claim 5.5.2.

\vskip2mm 
 
Taking into account Claim 5.1, the Constructions 5.4 and 5.5 show that the 
cohomology sheaf $F$ of a general monad of the form 
\eqref{E:3o(-1)ra10ora4o(1)} with $\tH^0(\beta^\vee(1))$ surjective 
has the following properties$\, :$ $\tH^0(F(1)) = 0$, $\tH^0(F^\vee) = 0$, 
$E := F(2)$ is 1-regular and the multiplication map $S_1 \otimes \tH^0(E) \ra 
\tH^0(E(1))$ is surjective (hence, in particular, $E$ is globally generated). 

\vskip2mm 

\noindent 
{\bf Case 6.}\quad $F$ \emph{has spectrum} $(1,0,0,0)$. 

\vskip2mm 

\noindent 
In this case, $r = 3$ (hence $E = F(2)$), $c_3(F) = -6$ and $c_3 = 6$. It 
follows, using the spectrum, that $\h^1(E(l)) = \h^1(F(l+2)) = 0$ for $l \leq 
-5$, $\h^1(E(-4)) = 1$ and $\h^1(E(-3)) = 5$. Since $\tH^0(E^\vee(1)) = 
\tH^0(F^\vee(-1)) = 0$ (because $F$ is stable) it follows, from 
Remark~\ref{R:h2e(-3)=0}(ii), that the graded $S$-module $\tH^1_\ast(E)$ is 
generated in degrees $\leq -3$. Actually, Remark~\ref{R:muh1e(-4)} implies  
that $\tH^1_\ast(E)$ has a minimal generator of degree $-4$ and another one of 
degree $-3$. Let 
\[
0 \lra E(-3) \lra E_3 \lra \sco_\piii(1) \oplus \sco_\piii \lra 0 
\]
be the extension defined by these two minimal generators. One has 
$\tH^1_\ast(E_3) = 0$. Since $\tH^2(E_3(-1)) \simeq \tH^2(E(-4)) = 0$ and 
$\tH^3(E_3(-2)) \simeq \tH^3(E(-5)) \simeq \tH^0(E^\vee(1))^\vee = 0$ it follows 
that $E_3$ is 1-regular. In particular, $E_3(1)$ is globally generated. Using 
the exact sequence$\, :$ 
\[
0 \lra F \lra E_3(1) \lra \sco_\piii(2) \oplus \sco_\piii(1) \lra 0\, , 
\] 
one gets easily that $E_3(1)$ has rank 5, $c_1(E_3(1)) = 2$, $c_2(E_3(1)) = 3$, 
$c_3(E_3(1)) = 4$, hence $c_1(P(E_3(1))) = 2$, $c_2(P(E_3(1))) = 1$, 
$c_3(P(E_3(1))) = 0$. Using the results of Sierra and Ugaglia \cite{su} (see, 
also, \cite[Pop.~2.2]{acm1}) one deduces that $P(E_3(1)) \simeq 2\sco_\piii(1)$. 
Put $t := \h^0((E_3(1))^\vee) = \h^0(F^\vee)$. By Lemma~\ref{L:h0fdual}, 
$\h^0(F^\vee) \leq 1 + \h^0(F(1))$ and, by Lemma~\ref{L:h0f(1)=0c2=12}, 
$\h^0(F(1)) = 0$ hence $t \in \{0,\, 1\}$. 

Now, recalling that $P(\sco_\piii(1)) \simeq \text{T}_\piii(-1)$ and that 
$E_3(1)$ has rank 5, it follows, from \cite[Lemma~1.2]{acm1}, that 
$E_3(1) \simeq G \oplus t\sco_\piii$, where $G$ is defined by an exact 
sequence$\, :$ 
\[
0 \lra (t + 1)\sco_\piii \lra 2\text{T}_\piii(-1) \lra G \lra 0\, . 
\]   
Consequently, $E(-2) = F$ is the cohomology of a monad of the form$\, :$ 
\[
0 \lra (t+1)\sco_\piii \lra 2\text{T}_\piii(-1) \oplus t\sco_\piii \lra 
\sco_\piii(2) \oplus \sco_\piii(1) \lra 0\, . 
\] 
Recalling the exact sequence $0 \ra \sco_\piii(-1) \ra 4\sco_\piii \ra 
\text{T}_\piii(-1) \ra 0$, one deduces that $E(-2)$ is the cohomology of a 
monad of the form$\, :$ 
\[  
0 \lra 2\sco_\piii(-1) \oplus (t + 1)\sco_\piii \lra (8 + t)\sco_\piii \lra 
\sco_\piii(2) \oplus \sco_\piii(1) \lra 0\, , 
\]
hence of a monad of the form$\, :$ 
\begin{equation}\label{E:monad6} 
0 \lra 2\sco_\piii(-1) \lra 7\sco_\piii \lra \sco_\piii(2) \oplus \sco_\piii(1) 
\lra 0\, . 
\end{equation} 
By Lemma~\ref{L:mora2o(1)} from Appendix~\ref{A:h0e(-2)neq0}, the monads of 
this form can be put toghether into a family with irreducible base.

\vskip2mm 

\noindent 
{\bf Claim 6.1.}\quad $\tH^1(E) = 0$. 

\vskip2mm 

\noindent 
\emph{Indeed}, Lemma~\ref{L:h0f(1)=0c2=12} implies that $\tH^0(E(-1)) = 0$ 
hence, by Riemann-Roch, $\h^1(E(-1)) = 4$. One deduces, from 
Lemma~\ref{L:h1e(l)c2=12}, that $\h^1(E) \leq 1$. 

Assume, by contradiction, that $\h^1(E) = 1$. It follows, from Riemann-Roch, 
that $\h^0(E) = 8$. Consider the globally generated vector bundle $P(E)$ whose  
dual is the kernel of the evaluation morphism $\tH^0(E) \otimes_k \sco_\piii 
\ra E$. $P(E)$ has rank 5 and 
Chern classes $c_1^\prim = 5$, $c_2^\prim = 13$, $c_3^\prim = 11$ hence, by 
Lemma~\ref{L:eprimunstable}, the normalized rank 3 vector bundle $F^\prim$ 
associated to $P(E)$ must be stable. Since, by Remark~\ref{R:p(e)},  
$c_1(F^\prim) = -1$, $c_2(F^\prim) = 5$, $c_3(F^\prim) = -3$, 
$\h^1(F^\prim(-2)) = \h^1(F(2)) = \h^1(E) = 1$, $\h^2(F^\prim(-2)) = \h^0(E) -6 -  
\h^2(F(-2)) = 2$ and $\h^2(F^\prim(-1)) = \h^0(E(-1)) = 0$ it follows that 
$F^\prim$ must have spectrum $(1,0,0,-1,-1)$. Moreover, $\h^0(P(E)(-1)) = 
\h^2(F(-1)) = 0$ and $\h^2(P(E)(-3)) = \h^2(F^\prim(-1)) = 0$. 
Remark~\ref{R:h2e(-3)=0}(i) implies that the graded $S$-module 
$\tH^1_\ast(P(E))$ is generated in degrees $\leq -2$. 

Now, $\tH^1(P(E)(l)) = 0$ for $l \leq -5$, $\h^1(P(E)(-4)) = 1$, 
$\h^1(P(E)(-3)) = 4$ hence $s^\prim := \h^1(P(E)(-3)) - \h^1(P(E)(-4)) = 3$. 
Moreover, by Riemann-Roch, $\h^1(P(E)(-2)) = \h^1(F^\prim) = 7$.  
Remark~\ref{R:muh1e(-4)} implies that the multiplication map $\tH^1(P(E)(-4)) 
\otimes S_1 \ra \tH^1(P(E)(-3))$ is bijective. 

We assert that the multiplication map $\mu \colon \tH^1(P(E)(-3)) 
\otimes S_1 \ra \tH^1(P(E)(-2))$ has rank $\geq 5$. 
\emph{Indeed}, if $\mu$ has rank $\leq 4$ then  
Remark~\ref{R:beilinson} implies that there exists an exact sequence$\, :$ 
\[
0 \lra \Omega^3(3) \lra 4\Omega^2(2) \lra 4\Omega^1(1) \lra Q \lra 0\, . 
\]
with $Q$ locally free. $Q$ must have rank 1. Computing Chern classes one 
deduces that $Q \simeq \sco_\piii(3)$. But this is clearly \emph{not possible} 
as one can see, for example, by applying $\tH^0(-)$ to the exact sequence. 
It thus remains that $\mu$ has rank $\geq 5$. 

We have proved, so far, that $\tH^1_\ast(P(E))$ has a minimal generator in 
degree $-4$ and at most two minimal generators in degree $-2$. 

We assert, now, that the graded $S$-module $\tH^1_\ast(P(E)^\vee)$ is generated 
by $\tH^1(P(E)^\vee(1))$. \emph{Indeed}, if this is not the case then (the 
proof of) Lemma~\ref{L:h2e(-3)=0}(h) shows that $\tH^1_\ast(P(E)^\vee) 
\simeq {\underline k}(-1) \oplus {\underline k}(-2)$.  
Consider the extension defined by the above mentioned generators of 
$\tH^1_\ast(P(E))$$\, :$  
\[
0 \lra P(E)(-2) \lra E_2^\prim \lra \sco_\piii(2) \oplus 2\sco_\piii \lra 0\, . 
\] 
One has $\tH^1_\ast(E_2^\prim) = 0$ (hence $\tH^2_\ast(E_2^{\prim \vee}) = 0$) 
and $\tH^1_\ast(E_2^{\prim \vee}) \simeq {\underline k}(1) \oplus {\underline k}$. 
It follows, from the correspondence of Horrocks, that $E_2^{\prim \vee} 
\simeq \Omega_\piii(1) \oplus \Omega_\piii \oplus A$, with $A$ a direct sum of 
line bundles, hence $E_2^\prim \simeq \text{T}_\piii \oplus \text{T}_\piii(-1) 
\oplus A^\vee$. But \emph{this is not possible} because any morphism 
$\text{T}_\piii \oplus \text{T}_\piii(-1) \ra 2\sco_\piii$ is the zero morphism 
and there is no epimorphism $A^\vee \ra 2\sco_\piii$, because $A^\vee$ has 
rank 2. It thus remains that $\tH^1_\ast(P(E)^\vee)$ is generated by 
$\tH^1(P(E)^\vee(1))$. Moreover, by Lemma~\ref{L:h2e(-3)=0}(c),(d),(f), 
$\h^1(P(E)^\vee(1)) \leq 1$.  

Consequently, $P(E)(-2)$ has a Horrocks monad of the form$\, :$ 
\[
0 \lra \sco_\piii(-1) \lra B \lra \sco_\piii(2) \oplus 2\sco_\piii \lra 0\, , 
\]
with $B$ a direct sum of line bundles. $B$ has rank 9, $c_1(B) = -4$, 
$\h^0(B(-1)) = 0$ and $\h^0(B) = \h^0(\sco_\piii(2) \oplus 2\sco_\piii) - 
\h^1(P(E)(-2)) = 5$ hence $B \simeq 5\sco_\piii \oplus 4\sco_\piii(-1)$. 
Since there is no epimorphism $4\sco_\piii(-1) \ra 2\sco_\piii$ it follows that 
$P(E)(-2)$ is, actually, the cohomology of a monad of the form$\, :$  
\begin{equation}\label{E:monadp(e)}
0 \lra \sco_\piii(-1) \lra 4\sco_\piii \oplus 4\sco_\piii(-1) \lra 
\sco_\piii(2) \oplus \sco_\piii \lra 0\, .
\end{equation} 
Assume, firstly, that this monad is minimal. One gets an exact sequence$\, :$ 
\[
0 \lra P(E)(-2) \lra \text{T}_\piii(-1) \oplus \Omega_\piii 
\overset{\psi}{\lra} \sco_\piii(2) \lra 0\, . 
\]
Let $\psi_1 \colon \text{T}_\piii(-1) \ra \sco_\piii(2)$ and $\psi_2 \colon 
\Omega_\piii \ra \sco_\piii(2)$ be the components of $\psi$.  
$\Cok \psi_1 \simeq \sco_Z(2)$ for some 
closed subscheme $Z$ of $\piii$. Let ${\overline \psi}_2$ be the composite 
morphism $\Omega_\piii \overset{\psi_2}{\lra} \sco_\piii(2) \ra \sco_Z(2)$. 
One has an exact sequence$\, :$ 
\[
P(E) \lra \Omega_\piii(2) \xra{{\overline \psi}_2(2)} \sco_Z(4) \lra 0\, .  
\]
Since $c_1(\Omega_\piii(2)) = 2$, Lemma~\ref{L:suppcok} implies that 
$\dim Z \leq 0$. But, on one hand, $Z$ is the zero scheme of a global section 
of $\Omega_\piii(3)$ hence it has length $c_3(\Omega_\piii(3)) = 5$ and, on 
the other hand, applying Lemma~\ref{L:lengthcok} to the last exact sequence, 
one deduces that $Z = \emptyset$. This \emph{contradiction} shows that the 
monad \eqref{E:monadp(e)} \emph{cannot be minimal}.   

If the monad \eqref{E:monadp(e)} is not minimal then one has an exact 
sequence$\, :$ 
\[
0 \lra P(E)(-2) \lra 3\sco_\piii \oplus 3\sco_\piii(-1)  
\overset{\phi}{\lra} \sco_\piii(2) \lra 0\, . 
\]   
Let $\phi_1 \colon 3\sco_\piii \ra \sco_\piii(2)$ and $\phi_2 \colon 
3\sco_\piii(-1) \ra \sco_\piii(2)$ be the components of $\phi$. $\Cok \phi_1 
\simeq \sco_Z(2)$ for some closed subscheme $Z$ of $\piii$. Let 
${\overline \phi}_2$ be the composite morphism $3\sco_\piii(-1) 
\overset{\phi_2}{\ra} \sco_\piii(2) \ra \sco_Z(2)$. One has an exact 
sequence$\, :$ 
\[
P(E) \lra 3\sco_\piii(1) \xra{{\overline \phi}_2(2)} \sco_Z(4) \lra 0\, . 
\] 
Since $c_1(3\sco_\piii(1)) = 3$, Lemma~\ref{L:suppcok} implies that 
$\dim Z \leq 0$. $Z$ is a complete intersection of type 
$(2,2,2)$ in $\piii$ hence it has length 8 and one has an exact sequence$\, :$ 
\[
0 \lra \sco_\piii(-2) \lra 3\sco_\piii \lra P(E) \lra 3\sco_\piii(1) 
\xra{{\overline \phi}_2} \sco_Z(4) \lra 0\, . 
\] 
Since $P(E)$ is globally generated and $\h^0(P(E)) = 8$, one deduces an 
exact sequence$\, :$ 
\[
5\sco_\piii \lra 3\sco_\piii(1) \lra \sco_Z(4) \lra 0\, . 
\]
Applying the argument from the proof of Lemma~\ref{L:lengthcok} to this 
exact sequence one gets that $\text{length}\, Z = 10$. This \emph{final 
contradiction} shows that the normalized rank 3 vector bundle $F^\prim$ 
associated to $P(E)$ cannot have spectrum $(1,0,0,-1,-1)$ and this implies 
that $\h^1(E) = 0$. Claim 6.1 is proven.   

\vskip2mm 

\noindent 
{\bf Claim 6.2.}\quad \emph{Let} $F$ \emph{be the cohomology of a monad of the 
form} \eqref{E:monad6}.   
\emph{It is, obviously, also the cohomology of a monad of the form}$\, :$ 
\[
0 \lra 2\sco_\piii(-1) \overset{\beta}{\lra} 3\sco_\piii \oplus \Omega_\piii(1) 
\overset{\alpha}{\lra} \sco_\piii(2) \lra 0\, . 
\]
\emph{If the degeneracy locus of the component} $\beta_2 \colon 
2\sco_\piii(-1) \ra \Omega_\piii(1)$ \emph{of} $\beta$ \emph{is} 
$1$-\emph{dimensional then} $F$ \emph{is the cohomology of a monad of the 
form}$\, :$ 
\[
0 \lra \sco_\piii(-1) \lra 3\sco_\piii \oplus N \lra \sco_\piii(2) \lra 0\, , 
\]
\emph{where} $N$ \emph{is a nullcorrelation bundle}. 

\vskip2mm 

\noindent 
\emph{Indeed}, $\beta_2$ is defined by two global sections $s_1$ and $s_2$ 
of $\Omega_\piii(2)$. Lemma~\ref{L:t(-2)ra2o} from 
Appendix~\ref{A:miscellaneous} implies that, for general constants $a_1,\, 
a_2 \in k$, the global section $a_1s_1 + a_2s_2$ of $\Omega_\piii(2)$ 
vanishes at no point of $\piii$. We can assume that $s_2$ has this property. 
Then one has an exact sequence$\, :$ 
\[
0 \lra \sco_\piii(-1) \overset{s_2}{\lra} \Omega_\piii(1) \lra N \lra 0\, , 
\]   
where $N$ is a nullcorrelation bundle. Since $\text{Ext}^i(N , 
\sco_\piii) \simeq \tH^i(N^\vee) \simeq \tH^i(N) = 0$, $i = 0,\, 1$, $s_2$ 
induces an isomorphism$\, :$ 
\[
\text{Hom}(\Omega_\piii(1) , \sco_\piii) \Izo \text{Hom}(\sco_\piii(-1) , 
\sco_\piii)\, . 
\]
It follows that one can assume that the component of $\beta$ mapping the 
second summand $\sco_\piii(-1)$ of $2\sco_\piii(-1)$ to $3\sco_\piii$ is 0. 
Claim 6.2 becomes, now, clear.

\vskip2mm 

\noindent 
{\bf Construction 6.3.}\quad We want to construct a monad of the form$\, :$ 
\begin{equation}\label{E:monadf}
0 \lra \sco_\piii(-1) \overset{\psi}{\lra} 3\sco_\piii \oplus N 
\overset{\phi}{\lra} \sco_\piii(2) \lra 0\, ,  
\end{equation} 
with $N$ a nullcorrelation bundle, such that its cohomology sheaf $F$ has the 
property that $F(2)$ is globally generated, $\tH^0(F(1)) = 0$, $\tH^1(F(2)) = 
0$, and the multiplication map $\tH^0(F(2)) \otimes \tH^0(\sco_\piii(1)) \ra 
\tH^0(F(3))$ has corank 1.   

\vskip2mm 

We explain, firstly, \emph{the idea of the construction}.   
Let $F$ be the cohomology sheaf of a monad of the form 
\eqref{E:monadf}. The component $\psi_1 \colon \sco_\piii(-1) \ra 
3\sco_\piii$ (resp., $\psi_2 \colon \sco_\piii(-1) \ra N$) of the differential 
$\psi$ of the monad is defined by three linear forms $h_0,\, h_1,\, h_2$  
(resp., by a global section $s$ of $N(1)$), while the component $\phi_1 
\colon 3\sco_\piii \ra \sco_\piii(2)$ (resp., $\phi_2 \colon N \ra 
\sco_\piii(2)$) of the differential $\phi$ is defined by three quadratic 
forms $q_0,\, q_1,\, q_2$ (resp., by the exterior multiplication $- \wedge t$ 
by a global section $t$ of $N(2)$). The condition $\phi \circ \psi = 0$ is 
equivalent to$\, :$ 
\[
q_0h_0 + q_1h_1 + q_2h_2 + s \wedge t = 0\, . 
\] 
The cubic form $f := q_0h_0 + q_1h_1 + q_2h_2$ defines a cubic surface $X 
\subset \piii$, containing the zero schemes $Z(s)$ and $Z(t)$ of $s$ and $t$. 
$Z(s)$ is the union of two disjoint lines or a double structure on a line. 
If $X$ has no plane as a component then $Z(t)$ has codimension 2 in $\piii$ 
and it is a locally complete intersection curve in $\piii$, of degree 5 and 
with $\omega_{Z(t)} \simeq \sco_{Z(t)}$. Moreover, if $X$ is nonsigular then 
$Z(s) \cap Z(t) = \emptyset$. 

Assume, now, that $q_0,\, q_1,\, q_2$ define a 0-dimensional complete 
intersection $\Gamma \subset \piii$. $\phi$ is an epimorphism if and only 
if $Z(t) \cap \Gamma = \emptyset$. Using the exact sequences$\, :$ 
\begin{gather*} 
0 \lra \sco_\piii(-4) \overset{\delta_3}{\lra} 3\sco_\piii(-2) 
\overset{\delta_2}{\lra} 3\sco_\piii 
\overset{\phi_1}{\lra} \sci_\Gamma(2) \lra 0\, ,\\
0 \lra \sco_\piii(-2) \overset{t}{\lra} N \xra{- \wedge t} 
\sci_{Z(t)}(2) \lra 0\, ,   
\end{gather*} 
and the commutative diagram$\, :$ 
\[
\begin{CD}
0 @>>> \sco_\piii(-1) @>{\psi}>> 3\sco_\piii \oplus N @>{\phi}>>  
\sco_\piii(2) @>>> 0\\ 
@. @V{f}VV @VV{\phi_1 \oplus \phi_2}V @\vert\\ 
0 @>>> \sci_{Z(t) \cup \Gamma}(2) 
@>{\left(\begin{smallmatrix} u\\ -u \end{smallmatrix}\right)}>> 
\sci_{Z(t)}(2) \oplus \sci_\Gamma(2) 
@>>> \sco_\piii(2) @>>> 0
\end{CD}
\]
where $u \colon \sci_{Z(t) \cup \Gamma}(2) \hookrightarrow \sco_\piii(2)$ is the 
inclusion map, one gets an exact sequence$\, :$  
\[
0 \lra \sco_\piii(-4) \xra{(\delta_3\, ,\, 0)^{\text{t}}} 4\sco_\piii(-2) 
\xra{\delta_2 \oplus \, t} F \lra 
\sci_{Z(t) \cup \Gamma , X}(2) \lra 0\, . 
\]
It follows that $F(2)$ is globally generated, $\tH^0(F(1)) = 0$, 
$\tH^1(F(2)) = 0$ and the multiplication map 
$\tH^0(F(2)) \otimes \tH^0(\sco_\piii(1)) \ra \tH^0(F(3))$ 
has corank $1$ if and only if $\sci_{Z(t) \cup \Gamma , X}(4)$ is globally 
generated, $\tH^0(\sci_{Z(t) \cup \Gamma , X}(3)) = 0$,  
$\tH^1(\sci_{Z(t) \cup \Gamma , X}(4)) = 0$, and the multiplication map 
$\tH^0(\sci_{Z(t) \cup \Gamma , X}(4)) \otimes \tH^0(\sco_\piii(1)) \ra 
\tH^0(\sci_{Z(t) \cup \Gamma , X}(5))$ has corank $1$.  

\vskip2mm 

We make, secondly, a \emph{general remark}. Let $G$ be a rank 2 vector bundle 
on $\piii$ with $c_1(G) = 0$ and let $b \geq a \geq 0$ be two integers. Let 
$s$ be a global section of $G(a)$ whose zero scheme $Z(s)$ has codimension 2 
in $\piii$, let $X \subset \piii$ be a surface of degree $a + b$ containing 
$Z(s)$ as a subscheme and let $f = 0$ be an equation of $X$. Using the exact 
sequence$\, :$ 
\[
0 \lra \sco_\piii(-a) \overset{s}{\lra} G \xra{s \wedge -} \sci_{Z(s)}(a) 
\lra 0\, , 
\] 
one gets a global section $t_0$ of $G(b)$ such that $s \wedge t_0 = f$. One 
deduces, from the diagram$\, :$ 
\[
\begin{CD} 
@. @. \sco_\piii(-b) @= \sco_\piii(-b)\\ 
@. @. @VV{t_0}V @VV{f}V\\ 
0 @>>> \sco_\piii(-a) @>{s}>> G @>{s \wedge -}>> \sci_{Z(s)}(a) @>>> 0  
\end{CD}
\]
and exact sequence$\, :$ 
\[
0 \lra \sco_\piii(-a) \oplus \sco_\piii(-b) \xra{(s\, ,\, t_0)} G \lra 
\sci_{Z(s) , X}(a) \lra 0\, . 
\]
Assume, now, that $X$ is nonsingular. Using the commutative diagram$\, :$ 
\[
\begin{CD}
0 @>>> \sco_\piii(-a) \oplus \sco_\piii(-b) @>{(s\, ,\, t_0)}>> G @>>> 
\sci_{Z(s) , X}(a) @>>> 0\\ 
@. @VVV @VVV @\vert\\ 
0 @>>> \sco_X(-a) \otimes \sco_X[Z(s)] @>>> G_X @>>> \sci_{Z(s) , X}(a) @>>> 0 
\end{CD} 
\]
one gets that the vector subspace $k(t_0 \vb X) + \tH^0(\sco_X(b-a))(s \vb X)$ 
of $\tH^0(\sco_X(b-a) \otimes \sco_X[Z(s)])$ generates globally the line 
bundle $\sco_X(b-a) \otimes \sco_X[Z(s)]$ on $X$. It follows that, for a 
general form $g \in \tH^0(\sco_\piii(b - a))$, the zero scheme $Z(t)$ of the 
global section $t := t_0 + gs$ of $G(b)$ is a nonsingular curve contained in 
$X$ (because $s \wedge t = f$). Moreover, 
$\sco_X[Z(t)] \simeq \sco_X(b - a) \otimes \sco_X[Z(s)]$.   

\vskip2mm 

Now, we effectively \emph{begin the construction} by considering a nonsingular 
cubic surface $X \subset \piii$, which is the blow-up $\pi \colon X \ra \pii$ 
of $\pii$ in six general points $P_1, \ldots , P_6$, embedded in $\piii$ such 
that $\sco_X(1) \simeq \pi^\ast\sco_\pii(3) \otimes \sco_X[-E_1 - \cdots - 
E_6]$, where $E_i := \pi^{-1}(P_i)$. Consider, also, for $1 \leq i \leq 6$, the 
line $L_i \subset X$ which is the strict transform of the conic $C_i \subset 
\pii$ containing $\{P_1 , \ldots , P_6\} \setminus \{P_i\}$. Let $f = 0$ be a 
(cubic) equation of $X$. There exists a nullcorrelation bundle $N$ such that 
$N(1)$ has a global section $s$ whose zero scheme is $E_2 \cup E_3$. 
Applying the above considerations to $N$, $s$ and $X$, one gets a global 
section $t$ of $N(2)$, with $s \wedge t = -f$, and whose zero scheme is a 
nonsingular curve $Z(t)$ contained in $X$ such that $\sco_X[Z(t)] \simeq 
\sco_X(1) \otimes \sco_X[E_2 + E_3]$. Consider, finally, a general nonsingular 
cubic curve ${\overline C} \subset \pii$, containing $\{P_2, \ldots , P_6\}$ 
but not $P_1$, and such that ${\overline C} \cap C_j$ consists of six distinct 
points, $j = 4,\, 5,\, 6$. This is possible because the map 
$\tH^0(\sci_{\{P_2 , \ldots , P_6\}}(3)) \ra 
\tH^0(\sci_{\{P_2 , \ldots , P_6\} \setminus \{P_j\} , C_j}(3))$ is surjective, for each 
$j \geq 2$ (compute the dimension of the kernel of this map). The strict 
transform $C \subset X$ of $\overline C$ is an elliptic curve of degree 4 in 
$\piii$, hence a complete intersection of type $(2,2)$, and $\sco_X[C] 
\simeq \sco_X(1) \otimes \sco_X[E_1]$.    

\vskip2mm 

$C \subset \piii$ is described by two quadratic equations $q_0 = q_1 
= 0$. Since $C$ is contained in $X$, the cubic form $f$ vanishing on $X$ can 
be written as $f = q_0h_0 + q_1h_1$, with $h_0$ and $h_1$ linear forms. 
Since $q_0$ and $q_1$ have no common zero on $E_1$ (because $C \cap E_1 = 
\emptyset$) one deduces that $h_0$ and $h_1$ vanish on $E_1$ hence 
$h_0 = h_1 = 0$ are, actually, equations decribing $E_1$. It follows that 
$h_0,\, h_1,\, h_2:=0$ and $s$ define a locally split monomorphism 
$\psi \colon \sco_\piii(-1) \ra 3\sco_\piii \oplus N$ (recall that the zero 
scheme of $s$ is $E_2 \cup E_3$). On the other hand, choosing a general 
quadratic form $q_2 \in \tH^0(\sco_\piii(2))$, vanishing at none of the points 
of $C \cap (Z(t) \cup L_4 \cup L_5 \cup L_6)$, one gets that $q_0,\, q_1,\, 
q_2$ and exterior multiplication $- \wedge t$ by $t$ define an epimorphism 
$\phi \colon 3\sco_\piii \oplus N \ra \sco_\piii(2)$ such that $\phi \circ 
\psi = 0$. Denoting by $\Gamma$ the 0-dimensional complete intersection of 
equations $q_0 = q_1 = q_2 = 0$ and recalling that $\sco_X[Z(t)] \simeq 
\sco_X(1) \otimes \sco_X[E_2 + E_3]$, what we actually have to prove is that 
$\sci_{\Gamma , X}(3) \otimes \sco_X[-E_2 - E_3]$ is globally generated, 
$\tH^0(\sci_{\Gamma , X}(2) \otimes \sco_X[-E_2 - E_3]) = 0$, 
$\tH^1(\sci_{\Gamma , X}(3) \otimes \sco_X[-E_2 - E_3]) = 0$, and that the 
multiplication map $\tH^0(\sci_{\Gamma , X}(3) \otimes \sco_X[-E_2 - E_3]) 
\otimes \tH^0(\sco_\piii(1)) \ra \tH^0(\sci_{\Gamma , X}(4) \otimes 
\sco_X[-E_2 - E_3])$ has corank 1.   

\vskip2mm 

One can further reduce the problem as follows$\, :$ 
the unique quadric surface $Q$ containing $E_1 \cup E_2 \cup E_3$ must also 
contain $L_4$, $L_5$ and $L_6$ (because they are trisecants of the union 
$E_1 \cup E_2 \cup E_3$) hence$\, :$ 
\[
Q \cap X = E_1 + E_2 + E_3 + L_4 + L_5 + L_6 
\]      
as divisors on $X$. It follows that$\, :$ 
\[
\scl := \sco_X(3) \otimes \sco_X[-E_2 - E_3] \simeq 
\sco_X(1) \otimes \sco_X[E_1] \otimes \sco_X[L_4 + L_5 + L_6] 
\]  
hence the complete linear system $\vert \, \scl \, \vert$ contains the 
divisor $\Delta := C + L_4 + L_5 + L_6$. One gets an exact sequence$\, :$ 
\[
0 \lra \sco_X \lra \sci_{\Gamma , X} \otimes \scl \lra 
\sci_{\Gamma , \Delta} \otimes (\scl \vb \Delta) \lra 0\, .  
\]
Putting $\scm := \sci_{\Gamma , \Delta} \otimes (\scl \vb \Delta)$, one deduces 
that it suffices to prove that $\scm$ is globally generated, $\tH^0(\scm(-1)) 
= 0$, $\tH^1(\scm) = 0$, and that the multiplication map $\mu \colon 
\tH^0(\scm) \otimes \tH^0(\sco_\piii(1)) \ra \tH^0(\scm(1))$ has corank 1. 
Notice that, since $\Gamma \subset C \setminus (L_4 \cup L_5 \cup L_6)$, 
$\Gamma$ is an effective Cartier divisor on $\Delta$ hence $\scm$ is an 
invertible $\sco_\Delta$-module. 

\vskip2mm   

Let us prove, now, the above assertions about $\scm$. We begin by   
noticing that, for $2 \leq i \leq 3$ and $4 \leq j \leq 6$, $E_i$ intersects 
$C$ in one point $P_i^\prime$, $L_j$ intersects $C$ in two points $A_j$ and 
$B_j$, and $E_i$ intersects $L_j$ in one point not situated on $C$ (this 
follows from the fact that the cubic curve ${\overline C}\subset \pii$ 
intersects the conic $C_j \subset \pii$ in six distinct points). Let 
$L \subset \piii$ be the line joining $P_2^\prime$ and $P_3^\prime$. Since $L$ 
and $L_j$ are 2-secants of the union $E_2 \cup E_3$ and since none of the 
points $P_2^\prime$ and $P_3^\prime$ belongs to $L_j$ it follows that $L \cap 
L_j = \emptyset$, $j = 4,\, 5,\, 6$. One deduces that $\sco_C[A_j + B_j + 
P_2^\prime + P_3^\prime]$ is not isomorphic to $\sco_C(1)$, $j = 4,\, 5,\, 6$. 
Moreover, let $Q_j \subset \piii$ be the unique quadric surface containing 
$C \cup L_j$. One has$\, :$ 
\[
Q_j \cap X = C + L_j + L_j^\prime 
\] 
as divisors on $X$, where $L_j^\prime \subset X$ is the strict transform of 
the line $\overline{P_1P_j} \subset \pii$ (because $L_j^\prime$ is a 3-secant 
of $C \cup L_j$). Since $\overline{C} \subset \pii$ is a general cubic curve 
containing $\{P_2 , \ldots , P_6\}$, the points $P_2^\prime$ and $P_3^\prime$ are 
general points of $E_2$ and $E_3$, respectively, hence one can assume that the 
line $L$ does not intersect $L_j^\prime$ which implies that $L$ is not 
contained in $Q_j$. In this case, $\sco_C[P_2^\prime + P_3^\prime]$ is not 
isomorphic to $\sco_C[A_j + B_j]$, $j = 4,\, 5,\, 6$. 

Next, $\scm \vb C \simeq \sco_C(1) \otimes \sco_C[-P_2^\prime - P_3^\prime]$ is 
a line bundle of degree 2 on $C$, and $\scm \vb L_j \simeq \sco_{L_j}(1)$, 
$j = 4,\, 5,\, 6$. Applying $- \otimes_{\sco_\Delta} \scm(-1)$ to the exact 
sequence$\, :$ 
\begin{equation}\label{E:odeltaraoc} 
0 \lra {\textstyle \bigoplus_{j = 4}^6}\sco_{L_j}(-2) \lra \sco_\Delta \lra 
\sco_C \lra 0\, , 
\end{equation} 
one deduces that $\tH^0(\scm(-1)) = 0$. Tensorizing this exact sequence by 
$\scm$ one gets that $\tH^0(\scm) \izo \tH^0(\scm \vb C)$ and that 
$\tH^1(\scm) = 0$. On the other hand, tensorizing by $\scm$ the exact 
sequences$\, :$ 
\begin{gather*} 
0 \lra \sco_{L_5}(-2) \oplus \sco_{L_6}(-2) \lra \sco_\Delta \lra 
\sco_{C \cup L_4} \lra 0\, ,\\ 
0 \lra \sco_C[-A_4 - B_4] \lra \sco_{C \cup L_4} \lra \sco_{L_4} \lra 0\, ,  
\end{gather*} 
one gets that $\tH^0(\scm) \izo \tH^0(\scm \vb L_4)$ (one uses the fact that 
$(\scm \vb C) \otimes \sco_C[-A_4 - B_4] \simeq \sco_C(1) \otimes 
\sco_C[-P_2^\prime - P_3^\prime] \otimes \sco_C[-A_4 - B_4]$ is a non-trivial line 
bundle of degree 0 on $C$). Analogously, 
$\tH^0(\scm) \izo \tH^0(\scm \vb L_j)$, $j = 5,\, 6$. Since $\scm \vb C$ and 
$\scm \vb L_j$, $j = 4,\, 5,\, 6$, are globally generated it follows that 
$\scm$ is globally generated. 

Finally, one has, as we saw above, $\h^0(\scm) = 2$ and, tensorizing by 
$\scm(1)$ the exact sequence \eqref{E:odeltaraoc}, $\h^0(\scm(1)) = 9$. It 
follows that the corank of the multiplication map $\mu \colon 
\tH^0(\scm) \otimes \tH^0(\sco_\piii(1)) \ra \tH^0(\scm(1))$ is 1 if and only 
if $\mu$ is injective. But $\tH^0(\sco_\piii(1)) \izo \tH^0(\sco_\Delta(1))$ 
(tensorize the exact sequence \eqref{E:odeltaraoc} by $\sco_\Delta(1)$). 
Applying the ``base point free pencil trick'' one deduces that $\mu$ is 
injective if an only if $\tH^0(\scm^{-1}(1)) = 0$. 

\vskip2mm 

Now, in order to prove the last vanishing, we shall give an alternative 
description of $\scm^{-1}(1)$. Let $Q^\prime \subset \piii$ be a quadric surface 
containing $E_2 \cup E_3$ and intersecting $C$ in eight distinct points 
situated on $C \setminus (L_4 \cup L_5 \cup L_6)$ (two of these points are, of 
course, $P_2^\prime$ and $P_3^\prime$). One has, as divisors on $X$$\, :$ 
\[
Q^\prime \cap X = E_2 + E_3 + D^\prime \, , 
\]    
where $D^\prime$ is an effective divisor of degree 6 on $X$. $D^\prime$ does not 
intersect $L_j$, $j = 4,\, 5,\, 6$ (compute the intersection multiplicities). 
It follows that$\, :$ 
\[
\scl \vb \Delta \simeq \sco_\Delta(1) \otimes \sco_\Delta[C \cap D^\prime] 
\]
hence $\scm^{-1}(1) \simeq \sco_\Delta[\Gamma - (C \cap D^\prime)]$. Since, by 
construction, $q_2$ is a general element of $\tH^0(\sco_\piii(2))$ and 
$\Gamma$ is the zero divisor of $q_2 \vb C$, it remains to prove the 
following$\, :$ 

\vskip2mm 

\noindent 
{\bf Claim 6.3.1.}\quad \emph{If} $\Gamma$ \emph{is a general element of the 
complete linear system} $\vert \, \sco_C(2) \, \vert$ \emph{then one has}  
$\tH^0(\sco_\Delta[\Gamma - (C \cap D^\prime)]) = 0$. 

\vskip2mm 

\noindent 
\emph{Indeed}, put $\scn := \sco_\Delta[\Gamma - (C \cap D^\prime)]$. One has 
$\scn \vb C \simeq \sco_C[P_2^\prime + P_3^\prime]$ and $\scn \vb L_j \simeq 
\sco_{L_j}$, $j = 4,\, 5,\, 6$. Using the fact that 
$\sco_C[P_2^\prime + P_3^\prime] \otimes \sco_C[-A_j - B_j]$ is a non-trivial 
line bundle of degree 0 on $C$, one deduces, as above, that $\tH^0(\scn) 
\ra \tH^0(\scn \vb L_j)$ is injective, $j = 4,\, 5,\, 6$. It follows that if 
$\scn$ has a non-zero global section then its vanishing locus is contained in 
$C \setminus (L_4 \cup L_5 \cup L_6)$. Consequently, if $\tH^0(\scn) \neq 0$ 
then $\scn \simeq \sco_\Delta[R_2 + R_3]$, where $R_2$ and $R_3$ are two (not 
necessarily distinct) points of $C \setminus (L_4 \cup L_5 \cup L_6)$. One 
must have $\sco_C[R_2 + R_3] \simeq \scn \vb C \simeq \sco_C[P_2^\prime + 
P_3^\prime]$. Moreover$\, :$ 
\[
\sco_\Delta[\Gamma] \simeq \sco_\Delta[(C \cap D^\prime) + R_2 + R_3]\, . 
\]      
In other words, there exists a global section of 
$\sco_\Delta[(C \cap D^\prime) + R_2 + R_3]$ whose zero divisor is exactly 
$\Gamma$. 

Now, denoting by $\theta$ a global section of $\sco_C[C \cap D^\prime]$ whose 
zero divisor is $C \cap D^\prime$ and by $\tau$ a global section of 
$\sco_C[P_2^\prime + P_3^\prime]$ whose zero divisor is $R_2 + R_3$, the image of 
the restriction map$\, :$ 
\[
\tH^0(\sco_\Delta[(C \cap D^\prime) + R_2 + R_3]) \lra 
\tH^0(\sco_\Delta[(C \cap D^\prime) + R_2 + R_3] \vb C) \simeq \tH^0(\sco_C(2)) 
\]
consists of the elements $\sigma$ of $\tH^0(\sco_C(2))$ satisfying$\, :$ 
\[
(\theta \tau)(A_j) \otimes \sigma(B_j) - \sigma(A_j) \otimes 
(\theta \tau)(B_j) = 0 \  \text{in}\  (\sco_C(2))(A_j) \otimes 
(\sco_C(2))(B_j)\, ,\  j = 4,\, 5,\, 6\, . 
\]
Notice that the zero divisor of any non-zero global section $\tau^\prim$ of 
$\sco_C[P_2^\prime + P_3^\prime]$ is different from $A_j + B_j$, 
$j = 4,\, 5,\, 6$. 

Consider, now, the bilinear map$\, :$ 
\[
\tH^0(\sco_C(2)) \times \tH^0(\sco_C(2)) \lra 
{\textstyle \bigoplus_{j = 4}^6}(\sco_C(2))(A_j) \otimes (\sco_C(2))(B_j) 
\] 
with components $(\rho , \sigma) \mapsto \rho(A_j) \otimes \sigma(B_j) - 
\sigma(A_j) \otimes \rho(B_j)$. It is easy to see that if $\rho$ does not 
vanish simultaneously in $A_j$ and $B_j$, $j = 4,\, 5,\, 6$, then the induced 
linear map $\tH^0(\sco_C(2)) \ra \bigoplus_{j = 4}^6(\sco_C(2))(A_j) \otimes 
(\sco_C(2))(B_j)$ is surjective (use sections $\sigma$ of $\sco_C(2)$ 
vanishing at five of the points $A_4,\, B_4,\, A_5,\, B_5,\, A_6,\, B_6$). 
Considering the induced bilinear map$\, :$ 
\[
\theta\tH^0(\sco_C[P_2^\prime + P_3^\prime]) \times \tH^0(\sco_C(2)) \lra 
{\textstyle \bigoplus_{j = 4}^6}(\sco_C(2))(A_j) \otimes (\sco_C(2))(B_j)\, , 
\]
one deduces that a general global section of $\sco_C(2)$ does not belong to 
any of the images of the restriction maps 
$\tH^0(\sco_\Delta[(C \cap D^\prime) + R_2 + R_3]) \ra \tH^0(\sco_C(2))$, with 
$R_2 + R_3$ effective divisor on $C$ such that $R_2 + R_3 \sim P_2^\prime + 
P_3^\prime$ and $R_2,\, R_3 \in C \setminus (L_4 \cup L_5 \cup L_6)$. This 
completes the proof of the claim and, with it, the verification of the fact 
that $F$ satisfies the properties stated at the beginning of 
Construction 6.3.   

\vskip2mm 

Notice, however, that the bundle $F$ satisfies $\h^0(F^\vee) = 1$. One can 
construct stable rank 3 vector bundles $F$ with $c_1(F) = -1$, $c_2(F) = 4$, 
$c_3(F) = -6$ such that $F(2)$ is globally generated, $\tH^1(F(2)) = 0$ and 
$\tH^0(F^\vee) = 0$ by deforming the above monad. More precisely, looking 
carefully at the above construction, one sees that one can assume that the  
third quadratic form $q_2$ belongs to $\tH^0(\sci_{E_2 \cup E_3}(2))$. In this 
case, the arguments preceding Claim 6.3.1 (with $Q^\prime$ the quadric surface 
of equation $q_2 = 0$) show that$\, :$ 
\[
\sci_{\Gamma , \Delta} \otimes (\scl \vb \Delta) \simeq \sco_\Delta(1) \otimes 
\sco_\Delta[-P_2^\prime - P_3^\prime] \simeq \sci_{L \cap \Delta , \Delta}(1)\, , 
\]
where $L$ is, as above, the line joining $P_2^\prime$ and $P_3^\prime$$\, ;$ we 
used the fact that $L$ meets none of the lines $L_j$, $j = 4,\, 5,\, 6$.  

Now, since $q_2$ belongs to $\tH^0(\sci_{E_2 \cup E_3}(2))$, there exists 
$s_2 \in \tH^0(N(1))$ such that $q_2 = s \wedge s_2$. 
Choose a linear form $h_2$ such that $h_0,\, h_1,\, h_2$ are linearly 
independent. For $c \in k$, $h_0,\, h_1,\, ch_2,\, s$ define a locally split 
monomorphism $\psi_c \colon \sco_\piii(-1) \ra 3\sco_\piii \oplus N$ and 
$q_0,\, q_1,\, q_2,\, t - ch_2s_2$ define a morphism $\phi_c \colon 
3\sco_\piii \oplus N \ra \sco_\piii(2)$ such that $\phi_c \circ \psi_c = 0$. 
If $c \in k$ is general then $\phi_c$ is an epimorphism. In this case, 
denoting by $F_c$ is the cohomology of the monad defined by $\phi_c$ and 
$\psi_c$, one has $\tH^0(F_c^\vee) = 0$ if, moreover, $c \neq 0$. 

\vskip2mm 

\noindent 
{\bf Construction 6.4.}\quad We provide, here, another kind of argument 
for the existence of globally generated vector bundles $E$ on $\piii$ with 
Chern classes $c_1 = 5$, $c_2 = 12$, $c_3 = 6$. We will, actually, construct a 
globally generated rank 4 vector bundle $E^\prim$ with ``complementary'' Chern 
classes $c_1^\prim = 5$, $c_2^\prim = 13$, $c_3^\prim = 11$ and we will take 
$E := P(E^\prim)$. 

Consider a nonsigular rational curve $C \subset \piii$, of degree 6, such 
that $\h^0(\sci_C(3)) = 2$ (hence $\h^1(\sci_C(3)) = 1$). Such a curve can be 
constructed on a nonsingular cubic surface $X \subset \piii$ (i.e., the 
blow-up $\pi \colon X \ra \pii$ of $\pii$ in six general points $P_1, \ldots , 
P_6$, embedded in $\piii$ such that $\sco_X(1) \simeq \pi^\ast\sco_\pii(3) 
\otimes \sco_X[-E_1 - \cdots - E_6]$, where $E_i := \pi^{-1}(P_i)$) as the 
strict transform of an irreducible cubic curve ${\overline C} \subset \pii$ 
having a node at $P_1$, containing $P_2$ but none of the points $P_3, \ldots , 
P_6$. Let $L_i \subset X$ (resp., $L_{ij} \subset X$) be the strict transform 
of the conic $\Gamma_i \subset \pii$ containing $\{P_1 , \ldots , P_6\} 
\setminus \{P_i\}$, $i = 1, \ldots , 6$ (resp., of the line 
$\overline{P_iP_j} \subset \pii$, $1 \leq i < j \leq 6$). Using the usual 
$\z$-basis $\pi^\ast\sco_\pj(1)$, $\sco_X[E_i]$, $i = 1, \ldots , 6$, of 
$\text{Pic}\, X$ one sees easily that$\, :$ 
\[
\sco_X(3) \otimes \sco_X[-C] \simeq \sco_X[2L_1 + L_2]\, . 
\]     
Notice that $L_1$ is a 5-secant of $C$ while $L_2$ is a 4-secant. Since 
$\h^0(\sco_X[2L_1 + L_2]) = 1$ it follows that $\h^0(\sci_{C , X}(3)) = 1$ 
hence $\h^0(\sci_C(3)) = 2$. 

An epimorphism $\delta \colon \sco_\piii(1) \oplus 2\sco_\piii(-1) \ra 
\omega_C(2)$ determines an extension$\, :$ 
\[
0 \lra 2\sco_\piii(1) \oplus \sco_\piii(-1) \lra F^{\prim \vee} \lra 
\sci_C(2) \lra 0\, , 
\] 
with $F^{\prim \vee}$ the dual of a rank 4 vector bundle $F^\prim$, such that, 
dualizing the extension, one gets an exact sequence$\, :$ 
\[
0 \lra \sco_\piii(-2) \lra F^\prim \lra \sco_\piii(1) \oplus 2\sco_\piii(-1) 
\overset{\delta}{\lra} \omega_C(2) \lra 0\, . 
\] 
Using Remark~\ref{R:chern}(b) one deduces immediately that 
$c_1(F^{\prim \vee}(-2)) = -5$, $c_2(F^{\prim \vee}(-2)) = 13$, 
$c_3(F^{\prim \vee}(-2)) = -11$ hence that $E^\prim := F^\prim(2)$ has the Chern 
classes indicated at the beginning of the construction. Denoting by $\sck$ 
the kernel of $\delta$, one has an exact sequence$\, :$ 
\[
0 \lra \sco_\piii(-2) \lra F^\prim \lra \sck \lra 0\, . 
\] 

We will use epimorphisms $\delta$ that can be written as composite maps$\, :$ 
\[
\sco_\piii(1) \oplus 2\sco_\piii(-1) \lra \sco_\piii(1) \oplus \sci_L 
\overset{\delta^\prim}{\lra} \omega_C(2)\, , 
\]
with $L$ a line in $\piii$ and $\delta^\prim$ an epimorphism. Denoting by 
$\sck^\prim$ the kernel of $\delta^\prim$, one has an exact sequence$\, :$ 
\[
0 \lra \sco_\piii(-2) \lra \sck \lra \sck^\prim \lra 0\, . 
\] 
We shall assume, moreover, that $L \subset X$ and that $\delta^\prim$ is a 
composite map$\, :$ 
\[
\sco_\piii(1) \oplus \sci_L \lra \sco_X(1) \oplus \sco_X[-L] 
\overset{\e}{\lra} \omega_C(2)\, , 
\]
for some epimorphism $\e$. In this case, the kernel $K$ of $\e$ is a rank 2 
vector bundle on $X$ and one has an exact sequence$\, :$ 
\[
0 \lra \sco_\piii(-2) \oplus \sco_\piii(-3) \lra \sck^\prim \lra K 
\lra 0\, . 
\]

We \emph{effectively begin the construction} by considering a general conic 
$\Gamma_0 \subset \pii$ passing through $P_1$, $P_3$ and $P_4$ but not through 
$P_2$, $P_5$ and $P_6$. The strict transform $C_0 \subset X$ of $\Gamma_0$ is a 
twisted cubic curve in $\piii$. We can assume that $C_0$ intersects $C$ in 
four simple points hence $\sco_X[C_0] \vb C \simeq \omega_C(1)$. We take $L$ 
to be the line $L_{56}$. Notice that $L$ is a 2-secant of $C_0$. Using the 
exact sequence$\, :$ 
\[
0 \lra \sco_X(1) \otimes \sco_X[L] \lra \sco_X(1) \otimes \sco_X[C_0 + L] 
\lra (\sco_X(1) \otimes \sco_X[C_0 + L]) \vb C_0 \lra 0  
\]
(and the exact sequence $0 \ra \sco_X \ra \sco_X[L] \ra \sco_L(-1) \ra 0$) one 
deduces easily that the restriction map$\, :$ 
\[
\tH^0(\sco_X(1) \otimes \sco_X[C_0 + L]) \lra 
\tH^0((\sco_X(1) \otimes \sco_X[C_0 + L] ) \vb C_0) 
\]
is surjective. It follows that if $\psi$ is a general global section of 
$\sco_X(1) \otimes \sco_X[C_0 + L]$ then the intersection scheme 
$W := \{\psi = 0\} \cap C_0$ on $X$ consists of six general simple points of 
$C_0$, none of them situated on $C$ (notice that the selfintersection of 
$C_0$ on $X$ is 1). Denoting by $\phi_0$ a global section of $\sco_X[C_0]$ 
whose zero divisor is $C_0$, one has an exact sequence$\, :$ 
\[
0 \lra \sco_X[-C_0 - L] \xra{\left(\begin{smallmatrix} -\psi \\ \phi_0 
\end{smallmatrix}\right)} \sco_X(1) \oplus \sco_X[-L] 
\xra{(\phi_0 \, ,\, \psi)} \sci_{W , X}(1) \otimes \sco_X[C_0] \lra 0\, . 
\] 
We take $\e$ to be the composite map$\, :$ 
\[
\sco_X(1) \oplus \sco_X[-L] \xra{(\phi_0 \, ,\, \psi)}  
\sci_{W , X}(1) \otimes \sco_X[C_0] \lra 
(\sci_{W , X}(1) \otimes \sco_X[C_0]) \vb C \simeq \omega_C(2)\, . 
\]
In this case, the kernel $K$ of $\e$ sits into an exact sequence$\, :$ 
\[
0 \lra \sco_X[-C_0 - L] \xra{\left(\begin{smallmatrix} -\psi \\ \phi_0 
\end{smallmatrix}\right)} K \lra 
\sci_{W , X}(1) \otimes \sco_X[C_0 - C] \lra 0\, . 
\]

\noindent 
{\bf Claim 6.4.1.}\quad \emph{If} $K(2)$ \emph{is globally generated then} 
$\sck^\prim(2)$ \emph{is globally generated, too}. 

\vskip2mm 

\noindent 
\emph{Indeed}, applying the Snake Lemma to the diagram$\, :$ 
\[
\SelectTips{cm}{12}\xymatrix @C=1.5pc{ 
0\ar[r] & \tH^0(\sco_\piii \oplus \sco_\piii(-1)) \otimes \sco_\piii\ar[r]\ar[d]  
& \tH^0(\sck^\prim(2)) \otimes \sco_\piii\ar[r]\ar[d] 
& \tH^0(K(2)) \otimes \sco_\piii\ar[r]\ar[d] & 0\\ 
0\ar[r] & \sco_\piii \oplus \sco_\piii(-1)\ar[r] & 
\sck^\prim(2)\ar[r] & K(2)\ar[r] & 0}
\]   
one sees that it suffices to show that the connecting morphism $\partial 
\colon \scn \ra \sco_\piii(-1)$ is an epimorphism, $\scn$ being the kernel of 
the evaluation morphism $\tH^0(K(2)) \otimes \sco_\piii \ra K(2)$. We will show 
that $\tH^0(\partial(1)) \colon \tH^0(\scn(1)) \ra \tH^0(\sco_\piii)$ is 
surjective. Recall, from the exact sequence preceding Claim 6.4.1, that one 
has a morphism $\sco_X[-C_0 - L] \ra K$ such that the component 
$\sco_X[-C_0 - L] \ra \sco_X[-L]$ of the composite map$\, :$ 
\[
\sco_X[-C_0 - L] \lra K \hookrightarrow \sco_X(1) \oplus \sco_X[-L] 
\] 
can be identified with the inclusion map $\sci_{C_0 \cup L , X} \hookrightarrow 
\sci_{L , X}$. Since $L$ is a 2-secant of the twisted cubic curve 
$C_0$ it follows that $C_0 \cup L$ is a complete intersection of type $(2,2)$ 
in $\piii$. Let $q_0 = q_1 = 0$ be (quadratic) equations of $C_0 \cup L$ in 
$\piii$ and $f = 0$ a cubic equation of $X$. Since $C_0 \cup L \subset X$, 
there exist linear forms $h_0$ and $h_1$ such that $f = q_0h_0 + q_1h_1$. Let 
$\sigma_i$ be the global section of $K(2)$ which is the image of 
$q_i \vb X \in \tH^0(\sci_{C_0 \cup L , X}(2))$, $i = 0,\, 1$.  
One deduces that$\, :$ 
\[
\sigma_0 \otimes h_0 + \sigma_1 \otimes h_1 \in \tH^0(\scn(1))\, . 
\]
$\sigma_i$ lifts to a global section of $\tH^0(\sck^\prim(2)) \subset 
\tH^0(\sco_\piii(3) \oplus \sci_L(2))$ which must be of the form 
$(f_i\, ,\, q_i)$, $i = 0,\, 1$. Since the composite morphism$\, :$ 
\[
\sco_\piii(-2) \oplus \sco_\piii(-3) \lra \sck^\prim \hookrightarrow 
\sco_\piii(1) \oplus \sci_L 
\]
is defined by the matrix $\left(\begin{smallmatrix} f & 0\\ 0 & f 
\end{smallmatrix}\right)$, one deduces that $\partial(1)(\sigma_0 \otimes h_0 + 
\sigma_1 \otimes h_1) = 1 \in \tH^0(\sco_\piii)$. 

\vskip2mm 

\noindent 
{\bf Claim 6.4.2.}\quad $K(2)$ \emph{is globally generated (for a suitable 
choice of} $W \subset C_0$\emph{)}. 

\vskip2mm 

\noindent 
\emph{Indeed}, using the exact sequence preceding Claim 6.4.1 and the fact, 
noticed in the proof of Claim 6.4.1, that $C_0 \cup L$ is a complete 
intersection of type $(2,2)$ in $\piii$, one sees that it suffices to prove 
that, choosing 
$W \subset C_0$ carefully, $\sci_{W , X}(3) \otimes \sco_X[C_0 - C]$ is globally 
generated. Since the plane curve $\Gamma_0 \cup \overline{P_1P_2} \subset 
\pii$ has degree 3, has a node at $P_1$, passes through $P_2$, $P_3$ and $P_4$ 
but not through $P_5$ and $P_6$, one deduces that$\, :$ 
\[
C \sim C_0 + L_{12} + E_3 + E_4 
\]  
as divisors on $X$ hence $\sco_X(3) \otimes \sco_X[C_0 - C] \simeq 
\sci_{L_{12} \cup E_3 \cup E_4 , X}(3)$. Since $L_{12}$, $E_3$, $E_4$ are mutually 
disjoint lines, $\sco_X(3) \otimes \sco_X[C_0 - C]$ is globally generated. 
Take a global section $\sigma$ of $\sco_X(3) \otimes \sco_X[C_0 - C]$, not 
vanishing identically on $L_1$ or $L_2$, and such that the intersection scheme 
$\{\sigma = 0\} \cap C_0$ on $X$ consists of six simple points of $C_0$, none 
of them situated on $C$. Take $W := \{\sigma = 0\} \cap C_0$ (as we saw above, 
when we effectively begun the construction, there exists a global section 
$\psi$ of $\sco_X(1) \otimes \sco_X[C_0 + L]$ such that $\{\psi = 0\} \cap 
C_0 = W$ hence the construction can be performed with this particular $W$). 
Since $\sco_X(3) \otimes \sco_X[-C] \simeq \sco_X[2L_1 + L_2]$, there exists a 
global section $\tau$ of $\sco_X(3) \otimes \sco_X[C_0 - C]$ whose zero 
divisor is $C_0 + 2L_1 + L_2$. Since the intersection multiplicity of $C_0$ 
and $L_1$ (resp., $L_2$) on $X$ is 2 (resp., 1), it follows that 
$\{\sigma = 0\} \cap L_i = \emptyset$, $i = 1,\, 2$. One deduces that 
$\{\sigma = 0\} \cap \{\tau = 0\} = \{\sigma = 0\} \cap C_0 = W$ hence 
$\sigma$ and $\tau$ define an epimorphism $2\sco_X \ra \sci_{W , X}(3) \otimes 
\sco_X[C_0 - C]$ which shows that $\sci_{W , X}(3) \otimes \sco_X[C_0 - C]$ is 
globally generated and this completes the proof of Claim 6.4.2.      

\vskip2mm 

\noindent 
{\bf Case 7.}\quad $F$ \emph{has spectrum} $(1,1,0,-1)$. 

\vskip2mm 

\noindent 
We will show that this case \emph{cannot occur}. Indeed, assume the contrary. 
Then $r = 4$ and $c_3(F) = -6$ hence $c_3 = 6$. Using the spectrum, 
one gets that $\tH^1(E(l)) = \tH^1(F(l+2)) = 0$ for $l \leq -5$, 
$\h^1(E(-4)) = 2$, $\h^1(E(-3)) = 5$. In particular, $s := \h^1(E(-3)) - 
\h^1(E(-4)) = 3$. Moreover, by Riemann-Roch, $\h^1(E(-2)) = 7$. 

\vskip2mm 

\noindent 
{\bf Claim 7.1.}\quad \emph{The graded} $S$-\emph{module} $\tH^1_\ast(E)$ 
\emph{has two minimal generators in degree} $-4$ \emph{and at most one in 
degree} $-2$. 

\vskip2mm 

\noindent 
\emph{Indeed}, by Remark~\ref{R:h2e(-3)=0}(i), the graded $S$-module 
$\tH^1_\ast(E)$ is generated in degrees $\leq -2$. Moreover, 
Remark~\ref{R:muh1e(-4)} implies that the multiplication map $\tH^1(E(-4)) 
\otimes S_1 \ra \tH^1(E(-3))$ is surjective.  

We want to show, now, that the multiplication map $\mu \colon \tH^1(E(-3)) 
\otimes S_1 \ra \tH^1(E(-2))$ has rank at least 6. 
\emph{Indeed}, assume, by contradiction, that $\mu$ has rank $\leq 5$. 
In this case, Remark~\ref{R:beilinson} implies that there exists an exact 
sequence$\, :$  
\[
0 \lra 2\Omega^3(3) \lra 5\Omega^2(2) \lra 5\Omega^1(1) \lra Q \lra 0\, ,  
\]
with $Q$ locally free. $Q$ must have rank 2 and Chern classes $c_1(Q) = 3$ 
and $c_2(Q) = 6$. Consider the normalized rank 2 vector bundle $Q^\prim := 
Q(-2)$. It has $c_1(Q^\prim) = -1$ and $c_2(Q^\prim) = 4$. 
It is well known that such a vector bundle must have $\tH^0(Q^\prim(2)) \neq 0$ 
(see Hartshorne and Hirschowitz \cite[Ex.~1.6.3]{hh}; see, also,   
Lemma~\ref{L:h0g(2)neq0} from Appendix~\ref{A:(2;-1,4,0)} for an 
alternative argument). But $\tH^0(Q^\prim(2)) = \tH^0(Q)$ and, from the above 
exact sequence, $\tH^0(Q) = 0$. This \emph{contradiction} shows that $\mu$ 
must have rank $\geq 6$ and Claim 7.1 is proven. 

\vskip2mm 

\noindent 
{\bf Claim 7.2.}\quad $\tH^1_\ast(E^\vee) = 0$. 

\vskip2mm 

\noindent 
\emph{Indeed}, Lemma~\ref{L:h2e(-3)=0}(c),(f) implies that $\h^1(E_H^\vee(1)) 
\leq 2$, for every plane $H \subset \piii$. Since $\h^2(E^\vee) = \h^1(E(-4)) 
= 2$, one gets, from Lemma~\ref{L:h2e(-3)=0}(d), that $\h^1(E^\vee(1)) = 0$.   

Lemma~\ref{L:h2e(-3)=0}(h) implies, now, that if $\tH^1_\ast(E^\vee) \neq 0$  
then $\tH^1_\ast(E^\vee) \simeq {\underline k}(-2)$. Consider the extension$\, :$ 
\begin{equation}\label{E:e(-2)rae2ra2o(2)+o} 
0 \lra E(-2) \lra E_2 \lra 2\sco_\piii(2) \oplus \sco_\piii \lra 0 
\end{equation} 
defined by the generators of $\tH^1_\ast(E)$ from Claim 7.1. One has, by 
construction, $\tH^1_\ast(E_2) = 0$ hence $\tH^2_\ast(E_2^\vee) = 0$. Dualizing 
the extension, one gets that $\tH^1_\ast(E_2^\vee) \simeq {\underline k}$ hence 
$E_2^\vee \simeq \Omega_\piii \oplus A$, where $A$ is a direct sum of line 
bundles. It follows that $E_2 \simeq \text{T}_\piii \oplus A^\vee$. But 
\emph{this is not possible} because, using the extension 
\eqref{E:e(-2)rae2ra2o(2)+o}, $\h^0(E_2) = \h^0(2\sco_\piii(2) \oplus \sco_\piii) 
- \h^1(E(-2)) = 14$ while $\h^0(\text{T}_\piii) = 15$. It remains that 
$\tH^1_\ast(E^\vee) = 0$ and Claim 7.2 is proven. 

\vskip2mm 

Claim 7.2 and Horrocks' criterion imply that the vector bundle $E_2$ defined 
by the extension \eqref{E:e(-2)rae2ra2o(2)+o} is a direct sum of line 
bundles. $E_2$ has rank 7, $c_1(E_2) = 1$, $\h^0(E_2) = 14$ (see above), 
$\h^0(E_2(-1)) = \h^0(2\sco_\piii(1) \oplus \sco_\piii(-1)) - \h^1(E(-3)) = 3$ 
and $\h^0(E_2(-2)) = 0$. It follows that $E_2 \simeq 3\sco_\piii(1) \oplus 
2\sco_\piii \oplus 2\sco_\piii(-1)$. One thus has an exact sequence$\, :$ 
\[
0 \lra E(-2) \lra 3\sco_\piii(1) \oplus 2\sco_\piii \oplus 2\sco_\piii(-1) 
\overset{\phi}{\lra} 2\sco_\piii(2) \oplus \sco_\piii \lra 0\, . 
\]    
Since there is no epimorphism $2\sco_\piii(-1) \ra \sco_\piii$, the component 
$2\sco_\piii \ra \sco_\piii$ of $\phi$ must be nonzero hence one has, actually, 
an exact sequence$\, :$ 
\[
0 \lra E(-2) \lra 3\sco_\piii(1) \oplus \sco_\piii \oplus 2\sco_\piii(-1) 
\overset{\psi}{\lra} 2\sco_\piii(2) \lra 0\, . 
\]
Let $\psi_1 \colon 3\sco_\piii(1) \oplus \sco_\piii \ra 2\sco_\piii(2)$ and 
$\psi_2 \colon 2\sco_\piii(-1) \ra 2\sco_\piii(2)$ be the components of $\psi$. 
The composite map $2\sco_\piii(-1) \overset{\psi_2}{\lra} 2\sco_\piii(2) \ra 
\Cok \psi_1$ is an epimorphism. Let us denote it by ${\overline \psi}_2$.  

\vskip2mm 

\noindent 
{\bf Claim 7.3.}\quad \emph{The degeneracy locus of} $\psi_1 \colon 
3\sco_\piii(1) \oplus \sco_\piii \ra 2\sco_\piii(2)$ \emph{has dimension} $0$. 

\vskip2mm 

\noindent 
\emph{Indeed}, one has an exact sequence$\, :$ 
\begin{equation}\label{E:era2o(1)racokpsi1(2)} 
E \lra 2\sco_\piii(1) \xra{{\overline \psi}_2(2)} \Cok \psi_1(2) \lra 0\, . 
\end{equation}
Since $c_1(2\sco_\piii(1)) = 2$ and $\Cok \psi_1(-2)$ is globally generated 
(because $\Cok \psi_1$ is a quotient of $2\sco_\piii(2)$), 
Lemma~\ref{L:suppcok} implies that the support of $\Cok \psi_1$ is 
0-dimensional. Claim 7.3 is proven.

\vskip2mm 

The kernel of $\psi_1$ is a rank 2 reflexive sheaf $\scf$ on $\piii$. 
Dualizing the exact sequence$\, :$ 
\[
0 \lra \scf \lra 3\sco_\piii(1) \oplus \sco_\piii \overset{\psi_1}{\lra} 
2\sco_\piii(2) \lra \Cok \psi_1 \lra 0\, , 
\]     
and taking into account Claim 7.3, one gets an exact sequence$\, :$ 
\[
0 \lra 2\sco_\piii(-2) \lra \sco_\piii \oplus 3\sco_\piii(-1) \lra \scf^\vee 
\lra 0\, . 
\]
Dualizing this sequence again, one gets that $\Cok \psi_1 \simeq 
\sce xt_{\sco_\piii}^1(\scf^\vee , \sco_\piii)$. Applying \cite[Prop.~2.6]{ha}, 
one deduces that $\Cok \psi_1$ has length $c_3(\scf^\vee) = 7$. 

But, on the other hand, applying Lemma~\ref{L:lengthcok} to the exact sequence 
\eqref{E:era2o(1)racokpsi1(2)}, one gets that $\Cok \psi_1$ has length at 
most 4. 
This \emph{contradiction} shows that Case 7 cannot occur in our context.  

\vskip2mm 

\noindent 
{\bf Case 8.} $F$ \emph{has spectrum} $(1,1,0,0)$. 

\vskip2mm 

\noindent 
We will show that this case \emph{cannot occur}. Indeed, assume the contrary. 
Then $r = 3$ (hence $E = F(2)$) and $c_3(F) = -8$ hence $c_3 = 4$. Using the 
spectrum, one gets that $\tH^1(E(l)) = \tH^1(F(l+2)) = 0$ for $l \leq -5$, 
$\h^1(E(-4)) = 2$, $\h^1(E(-3)) = 6$. Since $\tH^0(E^\vee(1)) = 
\tH^0(F^\vee(-1)) = 0$ (because $F$ is stable) Remark~\ref{R:h2e(-3)=0}(ii) 
implies that the graded $S$-module $\tH^1_\ast(E)$ is generated in degrees 
$\leq -3$. By Remark~\ref{R:muh1e(-4)}, the multiplication map $\mu \colon 
\tH^1(E(-4)) \otimes S_1 \ra \tH^1(E(-3))$ has rank $\geq 5$. It follows that 
$\tH^1_\ast(E)$ has two minimal generators in degree $-4$ and at most one in 
degree $-3$. 
 
We assert that the graded $S$-module $\tH^1_\ast(E^\vee)$ is generated by 
$\tH^1(E^\vee(1))$. \emph{Indeed}, one has $\h^1(E^\vee(1)) = \h^1(F^\vee(-1)) 
= 1$ (by Riemann-Roch). Lemma~\ref{L:h0f(1)=0c2=12} implies that 
$\tH^0(F(1)) = 0$. One deduces, from Lemma~\ref{L:h0fdual}, that 
$\h^0(F^\vee) \leq 1$. Since $\chi(F^\vee) = 0$, it follows that $\h^1(F^\vee) = 
\h^0(F^\vee) \leq 1$ hence $\h^1(E^\vee(2)) \leq 1$. Since $\h^2(E^\vee(1)) = 
\h^1(E(-5)) = 0$ one gets that $\h^1(E_H^\vee(2)) \leq 1$, for every plane 
$H \subset \piii$. Lemma~\ref{L:h2e(-3)=0} implies, now, that 
$\h^1(E_H^\vee(l)) = 0$ for $l \geq 3$, for each plane $H \subset \piii$. 
One deduces that $\h^1(E^\vee(l)) = 0$, $\forall \, l \geq 3$. Consequently, 
if $\tH^1_\ast(E^\vee)$ is not generated by $\tH^1(E^\vee(1))$ then 
$\tH^1_\ast(E^\vee) \simeq {\underline k}(-1) \oplus {\underline k}(-2)$.  
Consider the extension defined by the above mentioned generators of 
$\tH^1_\ast(E)$$\, :$  
\[
0 \lra E(-3) \lra E_3 \lra 2\sco_\piii(1) \oplus \sco_\piii \lra 0\, . 
\]          
The vector bundle $E_3$ has rank 6 and $\tH^1_\ast(E_3) = 0$ hence 
$\tH^2_\ast(E_3^\vee) = 0$. Moreover, $\tH^1_\ast(E_3^\vee) \simeq 
{\underline k}(2) \oplus {\underline k}(1)$. It follows that $E_3^\vee 
\simeq \Omega_\piii(2) \oplus \Omega_\piii(1)$ hence $E_3 \simeq 
\text{T}_\piii(-1) \oplus \text{T}_\piii(-2)$, by Horrocks theory.   
But this \emph{contradicts} the fact that $c_1(E_3) = -2$. It thus remains 
that $\tH^1_\ast(E^\vee)$ is generated by $\tH^1(E^\vee(1))$.    

It follows, now, that $F = E(-2)$ is the cohomology of a Horrocks monad of the 
form$\, :$ 
\[
0 \lra \sco_\piii(-1) \overset{\beta}{\lra} B 
\overset{\alpha}{\lra} 2\sco_\piii(2) \oplus \sco_\piii(1) \lra 0\, , 
\]
where $B$ is a direct sum of line bundles. $B$ has rank 7, $c_1(B) = 3$ and 
$\h^0(B(-1)) = \h^0(2\sco_\piii(1) \oplus \sco_\piii) - \h^1(E(-3)) = 3$, hence 
$B \simeq 3\sco_\piii(1) \oplus 4\sco_\piii$.  

\vskip2mm 

\noindent 
{\bf Claim 8.1.}\quad \emph{The component} $\alpha_{21} \colon 3\sco_\piii(1) 
\ra \sco_\piii(1)$ \emph{of} $\alpha$ \emph{is non-zero}. 

\vskip2mm 

\noindent 
\emph{Indeed}, assume, by contradiction, that $\alpha_{21} = 0$. Then 
$F$ is the cohomology of a monad of the form$\, :$ 
\[
0 \lra \sco_\piii(-1) \overset{\beta^\prim}{\lra} 3\sco_\piii(1) \oplus 
\Omega_\piii(1) \overset{\alpha^\prim}{\lra} 2\sco_\piii(2) \lra 0\, . 
\]
The component $\beta^\prim_2 \colon \sco_\piii(-1) \ra \Omega_\piii(1)$ must 
be non-zero. Then the zero scheme of the global section of $\Omega_\piii(2)$ 
defining $\beta_2^\prim$ is either empty or it is a line $L \subset \piii$. 
Since the cokernel of $\tH^0(\beta_2^{\prim \vee})$ is isomorphic to 
$\tH^1(F^\vee)$ which, as we saw above, has dimension at most 1, this zero 
scheme must be empty. It follows that one has an exact sequence$\, :$ 
\[
0 \lra \sco_\piii(-1) \overset{\beta_2^\prim}{\lra} \Omega_\piii(1) \lra 
N \lra 0\, , 
\]  
where $N$ is a nullcorrelation bundle. Since $\text{Ext}^1(N , \sco_\piii(1)) 
\simeq \tH^1(N^\vee(1)) \simeq \tH^1(N(1)) = 0$, $\beta_2^\prim$ 
induces a surjection$\, :$ 
\[
\text{Hom}(\Omega_\piii(1) , \sco_\piii(1)) \twoheadrightarrow  
\text{Hom}(\sco_\piii(-1) , \sco_\piii(1))\, . 
\]
One can, thus, assume, modulo an automorphism of $3\sco_\piii(1) \oplus 
\Omega_\piii(1)$, that the other component $\beta_1^\prim \colon \sco_\piii(-1) 
\ra 3\sco_\piii(1)$ of $\beta^\prim$ is 0. One deduces an exact sequence$\, :$ 
\[
0 \lra F \lra 3\sco_\piii(1) \oplus N \lra 2\sco_\piii(2) \lra 0\, . 
\]

We assert, now, that, for any morphism $\gamma \colon 3\sco_\piii \ra 
2\sco_\piii(1)$, $\tH^0(\gamma(2)) \colon \tH^0(3\sco_\piii(2)) \ra 
\tH^0(2\sco_\piii(3))$ is not injective. This will imply that $\tH^0(F(1)) 
\neq 0$ which \emph{contradicts} Lemma~\ref{L:h0f(1)=0c2=12}, proving the 
claim. $\gamma$ is defined by a $2 \times 3$ matrix of linear forms 
\[
A := \begin{pmatrix} 
h_{11} & h_{12} & h_{13}\\ 
h_{21} & h_{22} & h_{23} 
\end{pmatrix}\, . 
\]   
We have to show that there exists a non-trivial relation with quadratic 
entries among the columns of this matrix. 
Assume, firstly, that all the $2 \times 2$ minors of $A$ are 0. 
Suppose, to fix the ideas, that $h_{11} \neq 0$. If $h_{21}$ is proportional 
to $h_{11}$ then one can assume that $h_{21} = 0$. It follows, in this case, 
that $h_{22} = 0$ and $h_{23} = 0$. If $h_{11}$ and $h_{21}$ are linearly 
independent then, using a divisibility argument, one deduces that the second 
and the third columns of $A$ are proportional to the first one. In both cases, 
there exists a non-trivial relation with linear (or constant) entries 
among the columns of $A$. 

If at least one of the $2 \times 2$ minors of $A$ is not 0 then there exists 
a nontrivial relation among the columns of $A$ whose entries are exactly 
these $2 \times 2$ minors. This proves our assertion above about $\gamma$ and, 
with it, Claim 8.1.

\vskip2mm 

It follows, from Claim 8.1, that $F = E(-2)$ is the cohomology of a monad of 
the form$\, :$ 
\begin{equation}\label{E:monad8}
0 \lra \sco_\piii(-1) \overset{\beta^\secund}{\lra} 2\sco_\piii(1) \oplus 
4\sco_\piii \overset{\alpha^\secund}{\lra} 2\sco_\piii(2) \lra 0\, . 
\end{equation}

\vskip2mm 

\noindent 
{\bf Claim 8.2.}\quad \emph{Let} $F$ \emph{be the cohomology of a monad of the 
form} \eqref{E:monad8}. \emph{Assume that} $\tH^0(F) = 0$ \emph{and} 
$\tH^0(F^\vee) = 0$. \emph{Then} $F(2)$ \emph{is not globally generated}. 

\vskip2mm 

\noindent 
\emph{Indeed}, assume, by contradiction, that there exists $F$ satisfying the 
hypothesis of Claim 8.2 such that $F(2)$ is globally generated. Since, as we 
noticed at the beginning of Case 8, $\h^1(F^\vee) = \h^0(F^\vee)$ it follows 
that $\h^1(F^\vee) = 0$. This is equivalent to the fact that 
$\tH^0(\beta^{\secund \vee}) \colon \tH^0(2\sco_\piii(-1) \oplus 4\sco_\piii) \ra 
\tH^0(\sco_\piii(1))$ is surjective. In this case, the component 
$\beta_2^\secund \colon \sco_\piii(-1) \ra 4\sco_\piii$ of $\beta^\secund$ is 
defined by four linearly independent linear forms. Up to an automorphism of 
$2\sco_\piii(1) \oplus 4\sco_\piii$, one can assume that the other component 
$\beta_1^\secund \colon \sco_\piii(-1) \ra 2\sco_\piii(1)$ of $\beta^\secund$ is 0. 
It follows that one has an exact sequence$\, :$ 
\[
0 \lra F \lra 2\sco_\piii(1) \oplus \text{T}_\piii(-1) \overset{\phi}{\lra} 
2\sco_\piii(2) \lra 0\, . 
\]  

We show, firstly, that the degeneracy locus $Q$ of the component $\phi_1 
\colon 2\sco_\piii(1) \ra 2\sco_\piii(2)$ of $\phi$ is either a nonsingular 
quadric surface or a quadric cone. \emph{Indeed}, $\phi_1$ is defined 
by a $2\times 2$ matrix of linear forms $(h_{ij})_{1 \leq i,\, j \leq 2}$. This 
matrix has the property that, for any nontrivial linear combination$\, :$ 
\[
\begin{pmatrix} h_1^\prime\\ h_2^\prime \end{pmatrix} = 
a_1 \begin{pmatrix} h_{11}\\ h_{21} \end{pmatrix} + 
a_2 \begin{pmatrix} h_{12}\\ h_{22} \end{pmatrix} 
\]  
of its columns, $h_1^\prime$ and $h_2^\prime$ are linearly independent. 
Indeed, if $h_1^\prime$ and $h_2^\prime$ would be multiples of a 
linear form $h^\prime$ then, denoting by $H^\prim$ the plane of equation 
$h^\prime = 0$, one would get that $\tH^0(F_{H^\prim}(-1)) \neq 0$, which would 
contradict Remark~\ref{R:c1=5onp2}. In particular, $h_{11}$ and 
$h_{21}$ are linearly independent, hence they define a line $L \subset \piii$. 
The degeneracy locus $Q$, which is defined by the determinant of the matrix 
$(h_{ij})$, is an effective divisor of degree 2 on $\piii$, containing the 
line $L$. If $Q$ would be the union of two planes (or a double plane) then 
one of the planes would contain $L$ hence one would have$\, :$ 
\[
h_{11}h_{22} - h_{12}h_{21} = h(b_1h_{11} + b_2h_{21}) 
\]   
for some linear form $h$ and for some constants $b_1,\, b_2 \in k$. 
Since $h_{11}$ and $h_{21}$ are linearly independent, it would follow that there 
exists $b \in k$ such that $h_{22} - b_1h = bh_{21}$ and $h_{12} + b_2h = 
bh_{11}$. But this would \emph{contradict} the above mentioned property of the 
columns of $(h_{ij})$. It remains that $Q$ is a prime divisor, as asserted. 

The above argument also shows that $\Cok \phi_1 \simeq \sci_{L,Q}(3)$ and that 
one has an exact sequence$\, :$ 
\[
0 \lra F \lra \text{T}_\piii(-1) \overset{{\overline \phi}_2}{\lra} 
\sci_{L,Q}(3) \lra 0\, , 
\]
with ${\overline \phi}_2$ defined by the other component  
$\phi_2 \colon \text{T}_\piii(-1) \ra 2\sco_\piii(2)$ of $\phi$. Denoting by 
$\sck$ the kernel of the restriction $\text{T}_\piii(-1) \vb Q \ra 
\sci_{L,Q}(3)$ of ${\overline \phi}_2$ to $Q$, one gets an exact sequence$\, :$ 
\begin{equation}\label{E:t(-3)rafrasck} 
0 \lra \text{T}_\piii(-3) \lra F \lra \sck \lra 0\, . 
\end{equation}
$\sck$ is a rank 2 reflexive $\sco_Q$-module. We will show that 
$\h^0(\sck(2)) = 2$ which will imply that $\sck(2)$ is not globally generated, 
hence that $F(2)$ is not globally generated. 

\vskip2mm 

If $Q$ is a nonsingular quadric surface, fix an isomorphism $Q \simeq \pj 
\times \pj$ such that $L$ belongs to the linear system 
$\vert \, \sco_Q(1,0)\, \vert$. Then $\sci_{L,Q}(3) \simeq \sco_Q(2,3)$ hence 
$\text{det}\, \sck \simeq \sco_Q(-1,-2)$ and $\sck^\vee \simeq \sck(1,2)$. 
Dualizing the exact sequence $0 \ra \sck \ra \text{T}_\piii(-1) \vb Q \ra 
\sco_Q(2,3) \ra 0$ and then tensorizing by $\sco_Q(1,0)$ one gets an exact 
sequence$\, :$ 
\[
0 \lra \sco_Q(-1,-3) \lra (\Omega_\piii(1) \vb Q) \otimes \sco_Q(1,0) \lra 
\sck(2,2) \lra 0\, . 
\]     
But $(\Omega_\piii(1) \vb Q) \otimes \sco_Q(1,0) \simeq 
(\Omega_\piii(2) \vb Q) \otimes \sco_Q(0,-1) \simeq 
(\Omega_\piii(2) \vb Q) \otimes \sci_{L_1,Q}$, where $L_1$ is a line belonging 
to the linear system $\vert \, \sco_Q(0,1)\, \vert$. Tensorizing by 
$\Omega_\piii(2)$ the exact sequence$\, :$ 
\[
0 \lra \sco_\piii(-2) \lra \sci_{L_1} \lra \sci_{L_1,Q} \lra 0\, , 
\]
one deduces that $\h^0((\Omega_\piii(2) \vb Q) \otimes \sci_{L_1,Q}) = 2$ hence 
$\h^0(\sck(2,2)) = 2$. 

\vskip2mm 

If $Q$ is a quadric cone, the argument is technically more complicated due to 
the singular point $P$ of $Q$. We begin by stating a general fact : let $X$ 
be a scheme of finite type over $k$, and let $\e \colon \sce \ra \sci$ be an 
epimorphism, with $\sce$ a locally free $\sco_X$-module and $\sci$ an ideal 
sheaf on $X$. Put $\sci^{-1} := \sch om_{\sco_X}(\sci , \sco_X)$ and 
$\sci^{-i} := \sch om_{\sco_X}(\sci , \sci^{-i+1})$ for $i \geq 2$. Then 
one can define a complex $\scc^\bullet$ of Koszul type$\, :$ 
\[
\cdots \lra {\textstyle \bigwedge}^3 \sce \otimes \sci^{-2} \lra 
{\textstyle \bigwedge}^2 \sce \otimes \sci^{-1} \lra 
\sce \overset{\e}{\lra} \sci \lra 0\, . 
\]  
If $U$ is an open subset of $X$ such that $\sci \vb U$ is the ideal sheaf of an 
effective Cartier divisor on $U$ then $\scc^\bullet \vb U$ is exact. 

Now, taking into account that $\sch om_{\sco_Q}(\sci_{L,Q},\sco_Q) \simeq 
\sci_{L,Q}(1)$ (both are reflexive $\sco_Q$-modules of rank 1 and are 
isomorphic over $Q \setminus \{P\}$), that $\sco_Q \izo 
\sch om_{\sco_Q}(\sci_{L,Q},\sci_{L,Q})$ (if $U \subset Q$ is a neighbourhood of 
$P$ then two morphisms $\sci_{L,Q} \vb U \ra \sci_{L,Q} \vb U$ coinciding over 
$U \setminus \{P\}$ coincide over $U$ and $\sco_Q(U) \izo 
\sco_Q(U\setminus \{P\})$) and that 
${\bigwedge}^2(\text{T}_\piii(-1)) \simeq \Omega_\piii(2)$, one 
gets a complex$\, :$ 
\[
0 \lra \sco_Q(-4) \overset{d^{-3}}{\lra} 
(\Omega_\piii \vb Q) \otimes \sci_{L,Q} \overset{d^{-2}}{\lra} 
\text{T}_\piii(-1) \vb Q \overset{d^{-1}}{\lra} \sci_{L,Q}(3) \lra 0\, ,  
\]
with $d^{-1}$ the restriction of ${\overline \phi}_2$ to $Q$. 
This complex is exact over $Q \setminus \{P\}$ hence its cohomology sheaves 
are concentrated in $P$. It follows, in particular, that $d^{-3}$ is a 
monomorphism. One has $\Ker d^{-1} = \sck$. 
Let $\sck^\prim$ be the cokernel of $d^{-3}$. 
$d^{-2}$ induces a morphism $\gamma \colon \sck^\prim \ra \sck$, which is 
an isomorphism over $Q \setminus \{P\}$. Using the exact sequence$\, :$ 
\[
0 \lra \sco_Q(-2) \lra (\Omega_\piii(2) \vb Q) \otimes \sci_{L,Q} 
\lra \sck^\prim(2) \lra 0\, , 
\] 
one deduces as above (in the case where $Q$ was a nonsingular quadric) 
that $\tH^1(\sck^\prim(2)) = 0$. Using the exact sequences$\, :$ 
\[ 
0 \ra \Ker \gamma \ra \sck^\prim \ra \text{Im}\, \gamma \ra 0\, ,\  
0 \ra \text{Im}\, \gamma \ra \sck \ra \Cok \gamma \ra 0\, , 
\]
one gets that $\tH^1(\sck(2)) = 0$. It follows, using the exact sequence 
\eqref{E:t(-3)rafrasck}, that $\tH^1(F(2)) = 0$. Applying Riemann-Roch to 
$F(2)$ one gets that $\h^0(F(2)) = 6$ hence, using again the exact 
sequence \eqref{E:t(-3)rafrasck}, $\h^0(\sck(2)) = 2$. This completes the 
proof of Claim 8.2.  

\vskip2mm 

\noindent 
{\bf Claim 8.3.}\quad \emph{Let} $F$ \emph{be the cohomology of a monad of the 
form} \eqref{E:monad8}. \emph{Assume that} $\tH^0(F) = 0$ \emph{and} 
$\h^0(F^\vee) = 1$. \emph{Then} $F(2)$ \emph{is not globally generated}.  

\vskip2mm 

\noindent 
\emph{Indeed}, assume, by contradiction, that there exists $F$ satisfying the 
hypothesis of Claim 8.3 such that $F(2)$ is globally generated. Since, as 
we noticed at the beginning of Case 8, $\h^1(F^\vee) = \h^0(F^\vee)$, it follows 
that $\h^1(F^\vee) = 1$. This means that $\tH^0(\beta^{\secund \vee}) \colon 
\tH^0(2\sco_\piii(-1) \oplus 4\sco_\piii) \ra \tH^0(\sco_\piii(1))$ has corank 1. 
One can easily show that, up to an automorphism of $2\sco_\piii(1) \oplus 
4\sco_\piii$, one can assume that the components of $\beta^\secund$ are 
$0,\, q,\, h_0,\, h_1,\, h_2,\, 0$, where $h_0,\, h_1,\, h_2$ are linearly 
independent linear forms, vanishing at a point $x \in \piii$, and $q$ is a 
quadratic form, not vanishing at $x$. If $G$ is the rank 3 vector bundle on 
$\piii$ defined by the exact sequence$\, :$ 
\[
0 \lra \sco_\piii(-1) \xra{(q,\, h_0,\, h_1,\, h_2)^{\text{t}}} \sco_\piii(1) 
\oplus 3\sco_\piii \lra G \lra 0\, , 
\]  
then one has an exact sequence$\, :$ 
\[
0 \lra F \lra \sco_\piii(1) \oplus G \oplus \sco_\piii 
\overset{\phi}{\lra} 2\sco_\piii(2) \lra 0\, . 
\]

Let $\phi_1$, $\phi_2$ and $\phi_3$ be the restrictions of $\phi$ to 
$\sco_\piii(1)$, $G$ and $\sco_\piii$, respectively, and let $\scc_{12}$ be the 
cokernel of the restriction of $\phi$ to $\sco_\piii(1) \oplus G$.  
Since the composite map $\sco_\piii \overset{\phi_3}{\lra} 2\sco_\piii(2) 
\ra \scc_{12}$ is an epimorphism, it follows that $\scc_{12} \simeq \sco_W$, for 
some closed subscheme $W$ of $\piii$. We assert that $W$ has dimension 0. 
\emph{Indeed}, $W$ cannot contain a reduced and irreducible curve $\Gamma$ 
because there is no epimorphism $2\sco_\piii(2) \ra \sco_\Gamma$. Now, one has 
an exact sequence$\, :$ 
\[
0 \lra \scf \lra \sco_\piii(1) \oplus G \lra 2\sco_\piii(2) \lra \sco_W 
\lra 0\, , 
\] 
where $\scf$ is a rank 2 reflexive sheaf. Dualizing this exact sequence, one 
gets an exact sequence$\, :$ 
\[
0 \lra 2\sco_\piii(-2) \lra \sco_\piii(-1) \oplus G^\vee \lra \scf^\vee \lra 0\, ,  
\]
from which one can compute the Chern classes of $\scf^\vee$. One gets 
$c_1(\scf^\vee) = 1$, $c_2(\scf^\vee) = 4$ and $c_3(\scf^\vee) = 8$. Since 
$\sce xt^1_{\sco_\piii}(\scf^\vee , \sco_\piii) \simeq \sco_W$, it follows, 
from \cite[Prop.~2.6]{ha}, that $W$ has length 8. 

We assert, next, that the degeneracy locus of $\phi_2 \colon G \ra 
2\sco_\piii(2)$ has codimension 2 in $\piii$. Indeed, if $\scc_2$ is the 
cokernel of $\phi_2$ then one has an exact sequence $\sco_\piii(1) 
\overset{{\overline \phi}_1}{\lra} \scc_2 \ra \scc_{12} \ra 0$, whence an exact 
sequence$\, :$ 
\begin{equation*}
0 \lra \sco_Z(1) \lra \scc_2 \lra \sco_W \lra 0\, , 
\end{equation*} 
where $Z$ is a closed subscheme of $\piii$. If one would have $\dim Z \geq 2$ 
then one could choose a reduced and irreducible curve $\Gamma \subset Z$ such 
that $\Gamma \cap W = \emptyset$. Then one could lift the epimorphism 
$\sco_Z(1) \ra \sco_\Gamma(1)$ to an epimorphism $\scc_2 \ra \sco_\Gamma(1)$ (one 
glues the restriction of $\sco_Z(1) \ra \sco_\Gamma(1)$ to $\piii \setminus W$ 
and the zero morphism $\scc_2 \ra \sco_\Gamma(1)$ restricted to $\piii 
\setminus \Gamma$). Since there is no epimorphism $2\sco_\piii(2) \ra 
\sco_\Gamma(1)$, one would get a \emph{contradiction}. It thus remains that 
$\dim Z \leq 1$. 

Let, now, $C$ be the degeneracy scheme of $\phi_2$. By definition, $\sci_C(4)$ 
is the image of $\bigwedge^2\phi_2 \colon \bigwedge^2G \ra 
\bigwedge^2(2\sco_\piii(2)) \simeq \sco_\piii(4)$. By the theory of the 
Eagon-Northcott complex, one has an exact sequence$\, :$ 
\begin{equation}\label{E:2o(-2)ragdualraic(2)}
0 \lra 2\sco_\piii(-2) \overset{\phi_2^\vee}{\lra} G^\vee \overset{\pi}{\lra} 
\sci_C(2) \lra 0\, , 
\end{equation} 
where $\pi(2) \colon G^\vee(2) \ra \sci_C(4)$ can be identified with 
$\bigwedge^2\phi_2$ via the isomorphisms $\bigwedge^2G \simeq 
\sch om_{\sco_\piii}(G , \bigwedge^3G) \simeq G^\vee(2)$. It follows that $C$ is 
a locally Cohen-Macaulay curve of degree 6. Dualizing the above exact 
sequence, one gets an exact sequence$\, :$ 
\[
0 \lra \sco_\piii(-2) \overset{\pi^\vee}{\lra} G \overset{\phi_2}{\lra} 
2\sco_\piii(2) \overset{\e}{\lra} \omega_C(2) \lra 0\, , 
\] 
hence $\scc_2 \simeq \omega_C(2)$. Using the exact sequence$\, :$ 
\[
\sco_\piii(1) \xra{\e \circ \phi_1} \omega_C(2) \lra \sco_W \lra 0\, , 
\] 
one deduces that $C$ is locally complete intersection outside $W$. 

Now, one has an exact sequence$\, :$ 
\[
0 \lra \sco_\piii(-2) \overset{\pi^\vee}{\lra} F \lra \sco_\piii(1) \oplus 
\sco_\piii \overset{\delta}{\lra} \omega_C(2) \lra 0\, , 
\] 
where the components of $\delta$ are $\e \circ \phi_1$ and $\e \circ \phi_3$. 
If $\sck$ is the kernel of $\delta$ then $F(2)$ globally generated implies 
that $\sck(2)$ is globally generated. One has an exact sequence$\, :$ 
\begin{equation}\label{E:ic(1)rasckraiw}
0 \lra \sci_C(1) \lra \sck \lra \sci_W \lra 0\, .  
\end{equation} 
Since $\sck(2)$ is globally generated it follows that $\sci_W(2)$ is 
globally generated. But $W$ is a 0-dimensional subscheme of $\piii$ of 
length 8 hence $\sci_W(2)$ globally generated implies that $W$ is a complete 
intersection of type $(2,2,2)$ in $\piii$. Taking into account that 
$\tH^1(\sci_C(3)) = 0$ (from the exact sequence 
\eqref{E:2o(-2)ragdualraic(2)}), one gets a commutative diagram$\, :$   
\[
\begin{CD} 
0 @>>> \tH^0(\sci_C(3))\otimes \sco_\piii @>>> 
\tH^0(\sck(2)) \otimes \sco_\piii @>>> 
\tH^0(\sci_W(2)) \otimes \sco_\piii @>>> 0\\ 
@. @V{\text{ev}}VV @V{\text{ev}}VV @V{\text{ev}}VV\\ 
0 @>>> \sci_C(3) @>>> \sck(2) @>>> \sci_W(2) @>>> 0 
\end{CD}
\] 
Let $\scq$ (resp., $\scn$) be the cokernel (resp., kernel) of the evaluation 
morphism of $\sci_C(3)$ (resp., $\sci_W(2)$). Then $\sck(2)$ is globally 
generated if and only if the connecting morphism $\partial \colon \scn \ra 
\scq$ associated to the above diagram is an epimorphism. 

We describe, now, $\scq$ and $\partial$ explicitly. One has an exact 
sequence$\, :$ 
\[
0 \lra \sco_\piii(1) \overset{\sigma}{\lra} G \overset{\tau}{\lra} 
\scg \lra 0\, , 
\]
where $\scg$ is the rank 2 reflexive sheaf defined by the exact sequence$\, :$ 
\[
0 \lra \sco_\piii(-1) \xra{(h_0,\, h_1,\, h_2)^{\text{t}}} 3\sco_\piii \lra 
\scg \lra 0\, . 
\]
$\phi_2 \circ \sigma \colon \sco_\piii(1) \ra 2\sco_\piii(2)$ is defined by 
two linear forms $h_{12}$ and $h_{22}$. Since $\tH^0(F_H(-1)) = 0$, for every 
plane $H \subset \piii$, by Remark~\ref{R:c1=5onp2}, it follows that $h_{12}$ 
and $h_{22}$ are linearly independent. Let $L \subset \piii$ be the line of 
equations $h_{12} = h_{22} = 0$. One has an exact sequence$\, :$ 
\[
0 \lra \sco_\piii(1) \xra{\phi_2 \circ \sigma} 2\sco_\piii(2) 
\xra{(-h_{22}\, ,\, h_{12})} \sci_L(3) \lra 0\, .  
\]
Using the commutative diagram$\, :$ 
\[
\begin{CD} 
@. @. \sco_\piii(1) @= \sco_\piii(1)\\ 
@. @. @VV{\sigma}V @VV{\phi_2 \circ \sigma}V\\ 
0 @>>> \sco_\piii(-2) @>{\pi^\vee}>> G @>{\phi_2}>> 2\sco_\piii(2)\\ 
@. @\vert @VV{\tau}V @VVV\\ 
0 @>>> \sco_\piii(-2) @>{\tau \circ \pi^\vee}>> \scg @>{\phi_2^\prime}>> 
\sci_L(3)
\end{CD}
\]
with $\phi_2^\prime$ induced by $\phi_2$, one gets an exact sequence$\, :$ 
\[
0 \lra \sco_\piii(-2) \xra{\tau \circ \pi^\vee} \scg 
\overset{\phi_2^\prime}{\lra} \sci_L(3) \overset{\e^\prim}{\lra} \omega_C(2) 
\lra 0\, , 
\]  
with $\e^\prim$ induced by $\e \colon 2\sco_\piii(2) \ra \omega_C(2)$. It 
follows that the image of $\phi_2^\prime$ is of the form $\sci_{C^\prim}(3)$, 
where $C^\prim$ is a closed subscheme of $\piii$ such that $C^\prim = C \cup L$, 
set theoretically. Since one has an exact sequence$\, :$ 
\[
0 \lra \sco_\piii(-1) \oplus \sco_\piii(-2) \lra 3\sco_\piii \lra 
\sci_{C^\prim}(3) \lra 0\, , 
\]
$C^\prim$ is locally Cohen-Macaulay, of degree 7. Considering, now, the dual 
of the above commutative diagram$\, :$ 
\[
\begin{CD} 
@. \sci_{\{x\}}(-1) @<{}<{}< \sci_L(-1)\\ 
@. @AA{\sigma^\vee}A @AA{\sigma^\vee \circ \phi_2^\vee}A\\ 
\sco_\piii(2) @<{\pi}<{}< G^\vee @<{\phi_2^\vee}<{}< 2\sco_\piii(-2) 
@<{}<{}< 0\\
@\vert @AA{\tau^\vee}A @AAA\\ 
\sco_\piii(-2) @<{\pi \circ \tau^\vee}<{}< \scg^\vee 
@<{\phi_2^{\prime \vee}}<{}< 
\sco_\piii(-3) @<{}<{}< 0    
\end{CD}
\]  
and taking into account that the image of $\pi \circ \tau^\vee$ must be 
$\sci_{C^\prim}(2)$ (because $\scg$ is reflexive of rank 2) one gets an exact 
sequence$\, :$ 
\[
0 \lra \sci_{C^\prim}(2) \lra \sci_C(2) \lra \sci_{\{x\}, L}(-1) \lra 0\, . 
\]
Since $\sci_{C^\prim}(3)$ is globally generated, one deduces that the cokernel 
$\scq$ of the evaluation morphism of $\sci_C(3)$ is isomorphic to 
$\sci_{\{x\}, L} \simeq \sco_L(-1)$. 

Let us denote, now, by $\phi_1^\prime$ (resp., $\phi_3^\prime$) the composite 
map$\, :$ 
\[
\sco_\piii(1) \overset{\phi_1}{\lra} 2\sco_\piii(2) \xra{(-h_{22}\, ,\, h_{21})} 
\sci_L(3)\  (\text{resp.,}\  \sco_\piii \overset{\phi_3}{\lra} 2\sco_\piii(2) 
\xra{(-h_{22}\, ,\, h_{21})} \sci_L(3))\, . 
\]
$\phi_1^\prime$ (resp., $\phi_3^\prime$) is defined by multiplication with a 
form $q \in \tH^0(\sci_L(2))$ (resp., $f \in \tH^0(\sci_L(3))$) and one has 
$\e \circ \phi_i = \e^\prim \circ \phi_i^\prime$, $i = 1,\, 3$. Recalling the 
exact sequence$\, :$ 
\[
0 \lra \sci_{C^\prim}(3) \lra \sci_L(3) \overset{\e^\prim}{\lra} \omega_C(2) 
\lra 0\, , 
\]
one gets that$\, :$ 
\[
\tH^0(\sck(2)) = \{(f^\prim\, ,\, q^\prim) \in \tH^0(\sco_\piii(3) \oplus 
\sco_\piii(2)) \vb f^\prim q + q^\prim f \in \tH^0(\sci_{C^\prim}(5))\}\, . 
\]
Notice that if $(f^\prim\, ,\, q^\prim) \in \tH^0(\sck(2))$ then, by the exact 
sequence \eqref{E:ic(1)rasckraiw}, $q^\prim \in \tH^0(\sci_W(2))$. Since 
$(-f\, ,\, q)$ obviously belongs to $\tH^0(\sck(2))$, it follows that 
$q \in \tH^0(\sci_W(2))$. $q$ belongs to a $k$-basis $q,\, q_1,\, q_2$ of 
$\tH^0(\sci_W(2))$ (recall that $W$ is a complete intersection of type 
$(2,2,2)$ in $\piii$). $\scn(2)$ is globally generated and $\tH^0(\scn(2))  
\subset \tH^0(\sci_W(2)) \otimes \tH^0(\sco_\piii(2))$ has a $k$-basis 
consisting of the elements$\, :$ 
\[
\rho_i := q_i \otimes q - q \otimes q_i\, ,\  i =1,\,2\, ,\ \text{and}\   
\rho_{12} := q_2 \otimes q_1 - q_1 \otimes q_2\, . 
\] 
$q_i$ can be lifted to an element $(f_i\, ,\, q_i)$ of $\tH^0(\sck(2))$, 
$i = 1,\, 2$, while $q$ lifts to the element $(-f\, ,\,q)$ of 
$\tH^0(\sck(2))$. Recalling that the cokernel $\scq$ of the evaluation 
morphism of $\sci_C(3)$ equals $(\sci_C/\sci_{C^\prim})(3) \simeq \sco_L(-1)$, 
one gets that$\, :$ 
\[
\partial(2)(\rho_i) = \text{the image of } -fq_i - qf_i\  \text{in}\  
\tH^0((\sci_C/\sci_{C^\prim})(5))\, ,\  i =1,\, 2\, .  
\]
Since, by the above description of $\tH^0(\sck(2))$, $-fq_i - qf_i \in 
\tH^0(\sci_{C^\prim}(5))$, it follows that $\partial(2)(\rho_i) = 0$, 
$i = 1,\, 2$. Since $\scq(2) \simeq \sco_L(1)$ cannot be generated by only 
one global section, one deduces that $\partial \colon \scn \ra \scq$ is not 
an epimorphism, hence $\sck(2)$ is not globally generated. Concluding, the 
assumption that $F(2)$ is globally generated leads to a \emph{contradiction} 
and Claim 8.3 is proven. 

For a result related to the above considerations, see Lemma~\ref{L:d=6g=2} 
from Appendix~\ref{A:miscellaneous}.

\vskip2mm 

\noindent 
{\bf Case 9.}\quad $F$ \emph{has spectrum} $(1,1,1,0)$. 

\vskip2mm 

\noindent 
This case \emph{cannot occur}. Indeed, assume the contrary. 
Then $r = 3$ (hence $E = F(2)$) and $c_3(F) = -10$ hence $c_3 = 2$. Using the 
spectrum, one gets that $\tH^1(E(l)) = \tH^1(F(l+2)) = 0$ for $l \leq 
-5$, $\h^1(E(-4)) = 3$, $\h^1(E(-3)) = 7$. Since $\tH^0(E^\vee(1)) = 
\tH^0(F^\vee(-1)) = 0$ (because $F$ is stable) it follows, from 
Remark~\ref{R:h2e(-3)=0}(ii), that the graded $S$-module $\tH^1_\ast(E)$ is 
generated in degrees $\leq -3$. By Remark~\ref{R:muh1e(-4)},  
the multiplication map $\mu \colon \tH^1(E(-4)) \otimes S_1 \ra \tH^1(E(-3))$  
has rank $\geq 6$. It follows that $\tH^1_\ast(E)$ has three minimal generators 
in degree $-4$ and at most one in degree $-3$. 

We will show, now, that $\tH^2_\ast(E) = 0$. Indeed, by Serre duality, this is 
equivalent to $\tH^1_\ast(E^\vee) = 0$.  By Riemann-Roch, $\h^1(E^\vee(1)) = 
\h^1(F^\vee(-1)) = 0$ and $\h^0(F^\vee) - \h^1(F^\vee) = \chi(F^\vee) = 1$. Since, 
by Lemma~\ref{L:h0fdual} and by Lemma~\ref{L:h0f(1)=0c2=12}, $\h^0(F^\vee) \leq 
1$, one deduces that $\h^1(F^\vee) = 0$, i.e., $\h^1(E^\vee(2)) = 0$. 
Lemma~\ref{L:h2e(-3)=0}(a),(b) implies, now, that $\tH^1_\ast(E^\vee) = 0$. 

Consider, now, the extension defined by the above mentioned generators of  
$\tH^1_\ast(E)$$\, :$ 
\[
0 \lra E(-3) \lra E_3 \lra 3\sco_\piii(1) \oplus \sco_\piii \lra 0\, . 
\]          
One has $\tH^1_\ast(E_3) = 0$. Moreover, the fact that $\tH^2_\ast(E) = 0$ 
implies that $\tH^2_\ast(E_3) = 0$. It follows, from Horrocks' criterion, that 
$E_3$ is a direct sum of line bundles. 
$E_3$ has rank 7, $c_1(E_3) = -1$, $\h^0(E_3(-1)) = 0$ and $\h^0(E_3) = 
\h^0(3\sco_\piii(1) \oplus \sco_\piii) - \h^1(E(-3)) = 6$. One deduces that 
$E_3 \simeq 6\sco_\piii \oplus \sco_\piii(-1)$. It follows that $F = E(-2)$ can 
be described by an exact sequence$\, :$ 
\[
0 \lra F \lra 6\sco_\piii(1) \oplus \sco_\piii \overset{\phi}{\lra}  
3\sco_\piii(2) \oplus \sco_\piii(1) \lra 0\, . 
\]        
Since there is no epimorphism $\sco_\piii \ra \sco_\piii(1)$, the component 
$6\sco_\piii(1) \ra \sco_\piii(1)$ of $\phi$ must be nonzero, whence an exact 
sequence$\, :$ 
\[
0 \lra F \lra 5\sco_\piii(1) \oplus \sco_\piii \overset{\psi}{\lra}  
3\sco_\piii(2) \lra 0\, . 
\]
Let $\psi_1 \colon 5\sco_\piii(1) \ra 3\sco_\piii(2)$ and $\psi_2 \colon 
\sco_\piii \ra 3\sco_\piii(2)$ be the components of $\psi$. Since the composite 
map $\sco_\piii \overset{\psi_2}{\lra} 3\sco_\piii(2) \ra \Cok \psi_1$ is an 
epimorphism, it follows that $\Cok \psi_1 \simeq \sco_W$, for some closed 
subscheme $W$ of $\piii$. We assert that $W$ has dimension 0. Indeed, $W$ 
cannot contain a reduced and irreducible curve $\Gamma$ because there is no 
epimorphism $3\sco_\piii(2) \ra \sco_\Gamma$. 

Now, the image of the composite map $F \ra 5\sco_\piii(1) \oplus \sco_\piii 
\ra \sco_\piii$ must be $\sci_W$. Since $W$ has codimension 3 in $\piii$, its 
length must be $c_3(F^\vee) = 10$. This implies that $\sci_W(2)$ cannot be 
globally generated, hence $E = F(2)$ cannot be either and this 
\emph{contradicts} the assumption made at the beginning of Case 9. 
Consequently, this case \emph{cannot occur in our context}. 
\end{proof}

\appendix 
\section{The case $\tH^0(E(-2)) \neq 0$}\label{A:h0e(-2)neq0} 

We prove, in this appendix, the following$\, :$ 

\begin{prop}\label{P:h0e(-2)neq0} 
Let $E$ be a globally generated vector bundle on $\piii$, with Chern classes 
$c_1 = 5$, $c_2 \leq 12$ and $c_3$, and such that ${\fam0 H}^i(E^\vee) = 0$, 
$i = 0,\, 1$. If ${\fam0 H}^0(E(-3)) = 0$ and ${\fam0 H}^0(E(-2)) \neq 0$ then 
either $\sco_\piii(2)$ is a direct summand of $E$ or $E \simeq G(3)$, for some  
stable rank $2$ vector bundle $G$ with $c_1(G) = -1$, $c_2(G) = 2$. 
\end{prop}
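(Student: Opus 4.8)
The plan is to extract a sub-line-bundle $\sco_\piii(2)$ from a section of $E(-2)$ and decide whether it splits off. First I would propagate the vanishing: since $E$ is a vector bundle (hence torsion-free), multiplication by a nonzero linear form embeds $\tH^0(E(-k-1))$ into $\tH^0(E(-k))$, so $\tH^0(E(-3)) = 0$ forces $\tH^0(E(-k)) = 0$ for every $k \geq 3$. Consequently a nonzero $s \in \tH^0(E(-2))$ cannot vanish along a divisor: otherwise it would be divisible by a nonconstant form, hence would come from a nonzero element of $\tH^0(E(-k))$ with $k \geq 3$. Thus $s$ vanishes in codimension $\geq 2$, the morphism $\sco_\piii(2) \to E$ is a saturated subsheaf, and we obtain an exact sequence
\[
0 \lra \sco_\piii(2) \overset{s}{\lra} E \lra \scg \lra 0
\]
with $\scg$ torsion-free of rank $r-1$, globally generated (being a quotient of the globally generated $E$), and $c_1(\scg) = 3$. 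The whole question reduces to when this sequence splits.

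The next step is to split into cases according to the zero scheme $Z := \{s = 0\}$. If $Z = \emptyset$, then $\scg$ is a globally generated \emph{vector bundle} with $c_1 = 3$. Here I would invoke the classification of globally generated vector bundles with $c_1 = 3$ (\cite{am}, \cite{su2}), cut down by $c_2 \leq 12$ and by $\tH^i(E^\vee) = 0$, $i = 0,1$, to list the finitely many numerical types of $\scg$ that can occur. For each, the point is to show $\text{Ext}^1(\scg , \sco_\piii(2)) = \tH^1(\scg^\vee(2)) = 0$, so that the sequence splits and $\sco_\piii(2)$ becomes a direct summand of $E$; the conditions $\tH^i(E^\vee) = 0$ enter the cohomology of $\scg^\vee$ through the dualized sequence $0 \to \scg^\vee \to E^\vee \to \sco_\piii(-2) \to 0$.

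When $Z \neq \emptyset$, $Z$ is a curve and $\scg$ is not locally free along $Z$. I would first treat $r = 2$: then $\scg = \sci_Z(3)$ and $E = G(3)$ with $G := E(-3)$ of rank $2$, $c_1(G) = -1$; the hypothesis $\tH^0(E(-3)) = \tH^0(G) = 0$ says $G$ is stable. The section of $E(-2) = G(1)$ exhibits $Z$ as a subcanonical curve with $\omega_Z \simeq \sco_Z(c_1(G(1)) - 4) = \sco_Z(-3)$ and $\deg Z = c_2(G)$, while $\sci_Z(3)$ is globally generated. Using the classification of multiple structures on lines (Remark~\ref{R:multilines}) together with the Chern bound $c_2(G) = \deg Z \leq 6$ (from $c_2 \leq 12$ and Remark~\ref{R:chern}), one narrows $Z$ to a short list; the crucial extra input is $\tH^1(E^\vee) = \tH^1(G(-2)) = 0$, which I expect to eliminate $c_2(G) = 4, 6$ and force $c_2(G) = 2$ with $Z$ a double line on a nonsingular quadric, i.e. the Hartshorne--Sols / Manolache bundles (\cite{hs}, \cite{ma}). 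For $r \geq 3$ I would show $Z \neq \emptyset$ is impossible: a nontrivial $Z$, global generation of $\scg$, and $c_2 \leq 12$ should lead (via Lemma~\ref{L:mora2o(1)} and the Chern-class computations of Remark~\ref{R:chern}) to a contradiction, so that in higher rank one is always in the split case of the previous paragraph.

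The main obstacle is precisely this degenerate case: controlling the non-locally-free quotient $\scg$ and, for $r = 2$, extracting $c_2(G) = 2$ from the combination of the subcanonical constraint $\omega_Z \simeq \sco_Z(-3)$, global generation of $\sci_Z(3)$, and the vanishing $\tH^1(E^\vee) = 0$ (the subcanonical structure alone forces very negative arithmetic genus, which is what should rule out the larger values of $c_2(G)$ once combined with $\tH^1(E^\vee)=0$). A secondary difficulty is making the splitting argument of the second paragraph uniform over the classification list for $c_1 = 3$, rather than verifying $\tH^1(\scg^\vee(2)) = 0$ type by type.
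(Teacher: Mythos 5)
Your opening is correct, and it is a genuinely different route from the paper's (which never looks at sections of $E(-2)$ directly, but first reduces $E$ by $r-2$ general sections to a rank $2$ reflexive quotient and then works with liaison and Chang's results \cite{ch}): a nonzero $s \in \tH^0(E(-2))$ vanishes in codimension $\geq 2$, giving $0 \ra \sco_\piii(2) \ra E \ra \scg \ra 0$ with $\scg$ torsion-free and globally generated, and in the case $Z = \emptyset$ your plan is plausible ($\scg$ inherits $\tH^i(\scg^\vee) = 0$ from the dualized sequence, and checking $\text{Ext}^1(\scg , \sco_\piii(2)) \simeq \tH^1(\scg^\vee(2)) = 0$ against the $c_1 = 3$ classification of \cite{am}, \cite{su2} is a finite task). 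The first genuine gap is the case $r \geq 3$, $Z \neq \emptyset$, which you dismiss by appeal to Lemma~\ref{L:mora2o(1)} and Remark~\ref{R:chern}: the former (surjectivity of $\tH^0$ for epimorphisms $m\sco_{\p^n} \ra 2\sco_{\p^n}(1)$) has no bearing here, and no argument is actually sketched. What must be excluded is a globally generated, torsion-free, \emph{non-locally-free} quotient $\scg$ of rank $\geq 2$ with $c_1 = 3$; note also that for $r \geq 3$ your $Z$ need not be a curve --- it can be $0$-dimensional, in which case $\scg$ is a singular rank $2$ reflexive sheaf and $E$ is a Serre-type extension of it by $\sco_\piii(2)$. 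There is no classification of such quotients to invoke, and the claim is true only ``a posteriori'': it amounts to knowing that in the split case $E \simeq \sco_\piii(2) \oplus W$ every section of $W(-2)$ vanishes nowhere, i.e.\ to already knowing what $W$ can be. The paper is built precisely to avoid this point: its zero scheme comes from a section of the rank $2$ reflexive sheaf $\sce^\prim(-2)$, hence is always a curve $Z$, and the rank is then controlled by the formula $r = 2 + \h^0(\omega_Z(1))$ coming from $\tH^i(E^\vee) = 0$, supplemented by Lemma~\ref{L:h0e(-1)>4} and, for $c_2 \geq 10$, by liaison.

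The second gap is the rank $2$ case, where the inputs you list provably cannot eliminate $c_2(G) \in \{4, 6\}$. Since $E^\vee \simeq G(-2)$, one has $\tH^1(E^\vee) \simeq \tH^1(\sci_Z(-2)) \simeq \tH^0(\sco_Z(-2))$, so your ``crucial extra input'' is exactly $\tH^0(\sco_Z(-2)) = 0$ --- and this is satisfied by genuine degree-$4$ candidates. Take $Z = X_1 \sqcup X_2$, two disjoint double lines with $\sci_{L_i}/\sci_{X_i} \simeq \sco_{L_i}(1)$: then $Z$ is a locally complete intersection of degree $4$ with $\omega_Z \simeq \sco_Z(-3)$ by Ferrand's criterion \cite{f}, $\tH^0(\sco_Z(-2)) = 0$, and Serre's construction (Thm.~\ref{T:serreext}) produces a stable $G$ with $c_1(G) = -1$, $c_2(G) = 4$, $\tH^0(G(1)) \neq 0$ and $\tH^1(G(-2)) = 0$. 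So ``very negative arithmetic genus plus $\tH^1(E^\vee) = 0$'' proves nothing; the proposition survives only because $\sci_Z(3)$ fails to be globally generated, and that needs its own argument: in suitable coordinates $\tH^0(\sci_Z(3))$ is spanned by the two cubics $x^2z - y^2w$ and $xz^2 - yw^2$, which can generate at most the ideal sheaf of their complete intersection, a curve of degree $9 \neq 4$. You would have to produce such ad hoc arguments for \emph{every} lci curve of degree $4$ or $6$ with $\omega \simeq \sco(-3)$ (quadruple lines, double conics, double twisted cubics, unions of these, \dots), and also address the fact that global generation of $\sci_Z(3)$ is only a necessary condition for global generation of $E = G(3)$. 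This is exactly the analysis of non-reduced space curves that the paper announces it is deliberately avoiding, trading it for Chang's classification of reflexive sheaves with small $c_2$ and the restriction statements of Lemmas~\ref{L:(2;-1,3,1)}--\ref{C:(2;-1,3,3)}.
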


This result is a particular case of \cite[Thm.~0.1]{acm2} but we provide 
here a different argument, keeping to a minimum the use of information about  
non-reduced space curves of small degree. We use, instead, some results of 
Chang \cite{ch} about stable rank 2 reflexive sheaves with small $c_2$. 
We prove (or recall), firstly, a number of auxiliary facts. 

Let $E$ be a vector bundle as in the statement of the proposition. If $r$ is 
the rank of $E$ then $r - 1$ general global sections $s_1, \ldots , 
s_{r-1}$ of $E$ define an exact sequence$\, :$ 
\begin{equation}\label{E:oeiy(5)} 
0 \lra (r-1)\sco_\piii \lra E \lra \sci_Y(5) \lra 0\, , 
\end{equation}
where, as we saw at the beginning of Section~\ref{S:prelim}, $Y$ is a 
nonsingular connected curve of degree $c_2$. Dualizing this exact sequence, 
one gets an exact sequence$\, :$ 
\[
0 \lra \sco_\piii(-5) \lra E^\vee \lra (r - 1)\sco_\piii 
\overset{\delta}{\lra} \omega_Y(-1) \lra 0\, , 
\]  
where $\delta$ is defined by $r - 1$ global sections $\sigma_1, \ldots , 
\sigma_{r-1}$ of $\omega_Y(-1)$. Since $\tH^0(E^\vee) \neq 0$, each of these 
global sections must be nonzero. Now, $s_1, \ldots , s_{r-2}$ define an 
exact sequence$\, :$ 
\[
0 \lra (r - 2)\sco_\piii \lra E \lra \sce^\prim \lra 0\, , 
\]  
where $\sce^\prim$ can be realized as an extension$\, :$ 
\[
0 \lra \sco_\piii \lra \sce^\prim \lra \sci_Y(5) \lra 0\, , 
\]
which, by dualization, produces an exact sequence$\, :$ 
\[
0 \lra \sco_\piii(-5) \lra \sce^{\prim \vee} \lra \sco_\piii 
\xra{\sigma_{r-1}} \omega_Y(-1) \lra 
\sce xt^1_{\sco_\piii}(\sce^\prim , \sco_\piii) \lra 0\, . 
\]
Since $\sigma_{r-1} \neq 0$, one deduces that the support of 
$\sce xt^1_{\sco_\piii}(\sce^\prim , \sco_\piii)$ is 0-dimensional (or empty). 
Moreover, $\sce xt^i_{\sco_\piii}(\sce^\prim , \sco_\piii) = 0$ for $i \geq 2$. 
It follows that $\sce^\prim$ is a rank 2 \emph{reflexive sheaf}. Since, by 
the hypothesis of Prop.~\ref{P:h0e(-2)neq0}, one has $\tH^0(\sce^\prim(-3)) = 
0$ and $\tH^0(\sce^\prim(-2)) \neq 0$ any nonzero global section of 
$\sce^\prim(-2)$ defines an exact sequence$\, :$ 
\[
0 \lra \sco_\piii(2) \lra \sce^\prim \lra \sci_Z(3) \lra 0\, , 
\]
where $Z$ is a locally Cohen-Macaulay closed subscheme of $\piii$, of pure 
dimension 1, locally complete intersection except at finitely many points, and 
such that $\sci_Z(3)$ is globally generated. One has $\text{deg}\, Z = 
c_2(\sce^\prim) - 6 = c_2 - 6$. Since $Z$ cannot be empty (because this would 
imply that $E \simeq \sco_\piii(3) \oplus \sco_\piii(2)$ which would contradict 
the fact that $\tH^0(E(-3)) = 0$) it follows that $c_2 \geq 7$. One also 
deduces an exact sequence$\, :$ 
\begin{equation}\label{E:erasciz(3)} 
0 \lra \sco_\piii(2) \oplus (r - 2)\sco_\piii \lra E \lra \sci_Z(3) \lra 0\, . 
\end{equation}
Applying $\sch om_{\sco_X}(- , \sco_X)$, one gets an exact sequence$\, :$ 
\[
0 \lra \sco_\piii(-3) \lra E^\vee \lra (r - 2)\sco_\piii \oplus \sco_\piii(-2) 
\lra \omega_Z(1) \lra 0\, . 
\]
Since $\tH^i(E^\vee) = 0$, $i = 0,\, 1$, it follows that 
$\tH^0((r - 2)\sco_\piii) \izo \tH^0(\omega_Z(1))$. In particular, $r = 2 + 
\h^0(\omega_Z(1))$.  

\begin{remark}\label{R:multilines} 
We recall here, from B\u{a}nic\u{a} and Forster \cite{bf}, some general 
facts about double and triple structures on a line in $\piii$. 

(a) Let $X$ be a locally Cohen-Macaulay curve of degree 2 in $\piii$, 
such that $X_{\text{red}}$ is a line $L$. Then $\sci_L^2 \subset \sci_X \subset 
\sci_L$ and $\sci_L/\sci_X \simeq \sco_L(l)$, for some integer $l$. 
Since $\sci_L/\sci_X$ is a quotient of $\sci_L/\sci_L^2 \simeq 2\sco_L(-1)$ it 
follows that $l \geq -1$. Moreover, if $l = -1$ then $X$ is the divisor $2L$ 
on some plane $H \supset L$ because $\h^0(\sco_X(1)) = 3$.  

By \cite[Prop.~2.2]{bf}, the canonical morphism $(\sci_L/\sci_X)^{\otimes j} 
\ra \sci_L^j/\sci_L^{j-1}\sci_X$ is an isomorphism, $\forall \, j \geq 1$. In 
particular, $\sci_L^2/\sci_L\sci_X \simeq \sco_L(2l)$. 

Finally, applying $\sch om_{\sco_\piii}(- , \omega_\piii)$ to the exact sequence 
$0 \ra \sco_L(l) \ra \sco_X \ra \sco_L \ra 0$, one gets an exact sequence$\, :$ 
\[
0 \lra \omega_L \lra \omega_X \lra \omega_L(-l) \lra 0\, . 
\]

(b) Let $Y$ be a locally Cohen-Macaulay curve of degree 3 in $\piii$ such 
that $Y_{\text{red}}$ is a line $L$. Then $Y$ contains, as a closed subscheme, 
a double structure $X$ on $L$. As we recalled in (a), $\sci_L/\sci_X \simeq 
\sco_L(l)$, for some $l \geq -1$. If, moreover, $Y$ is locally complete 
intersection except at finitely many points then $\sci_X/\sci_Y \simeq 
\sco_L(l^\prim)$, for some integer $l^\prim$. A local computation shows that 
the canonical morphism $\sci_L^2/\sci_L\sci_X \ra \sci_X/\sci_Y$ is an 
isomorphism at every point $x \in L$ at which $Y$ is locally complete 
intersection. Since, as we recalled in (a), $\sci_L^2/\sci_L\sci_X \simeq 
\sco_L(2l)$ it follows that $\sci_X/\sci_Y \simeq \sco_L(2l + m)$, for some 
$m \geq 0$.  

Finally, applying $\sch om_{\sco_\piii}(- , \omega_\piii)$ to the exact sequence 
$0 \ra \sco_L(2l + m) \ra \sco_Y \ra \sco_X \ra 0$, one gets an exact 
sequence$\, :$ 
\[
0 \lra \omega_X \lra \omega_Y \lra \omega_L(-2l-m) \lra 0\, . 
\] 
\end{remark}

\begin{lemma}\label{L:h0e(-1)>4} 
Under the hypothesis of Prop.~\ref{P:h0e(-2)neq0}, assume that $9 \leq c_2 
\leq 12$. If one has ${\fam0 h}^0(E(-1)) > 4$ then either $\sco_\piii(2) \oplus 
\sco_\piii(1)$ is a direct summand of $E$ or $c_2 = 9$ and $E \simeq 
2\sco_\piii(2) \oplus {\fam0 T}_\piii(-1)$.   
\end{lemma}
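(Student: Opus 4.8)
The plan is to use the structural sequence \eqref{E:erasciz(3)} to detect the quadric on which $Z$ lies, and then to realise the two line subbundles $\sco_\piii(2)$ and $\sco_\piii(1)$ explicitly before attempting to split them off.

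First I would twist \eqref{E:erasciz(3)} by $-1$ and pass to cohomology. Since $\tH^0(\sco_\piii(-1)) = \tH^1(\sco_\piii(-1)) = \tH^1(\sco_\piii(1)) = 0$, this yields a short exact sequence $0 \to \tH^0(\sco_\piii(1)) \to \tH^0(E(-1)) \to \tH^0(\sci_Z(2)) \to 0$, so that $\h^0(E(-1)) = 4 + \h^0(\sci_Z(2))$ and the restriction $\tH^0(E(-1)) \to \tH^0(\sci_Z(2))$ is surjective. Hence the hypothesis $\h^0(E(-1)) > 4$ is equivalent to $\h^0(\sci_Z(2)) \geq 1$: the curve $Z$, of degree $\text{deg}\, Z = c_2 - 6 \in \{3,4,5,6\}$ and with $\sci_Z(3)$ globally generated, lies on a quadric surface $Q = \{q = 0\}$.

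Next I would realise the two candidate summands. The inclusion $\sco_\piii(2) \hookrightarrow E$ coming from $s_0 \in \tH^0(E(-2))$ is a subbundle, because the cokernel in \eqref{E:erasciz(3)} is an extension of the torsion-free sheaf $\sci_Z(3)$ by $(r-2)\sco_\piii$, hence torsion-free, so $s_0$ is nowhere zero. Lifting $q$ through the surjection above produces $s_1 \in \tH^0(E(-1))$, i.e. a map $\sco_\piii(1) \to E$ whose image in $\sci_Z(3)$ is $q\sco_\piii(1) = \sci_Q(3)$; together with the $r-2$ trivial sections these assemble into an injection $\Psi \colon \sco_\piii(2) \oplus \sco_\piii(1) \oplus (r-2)\sco_\piii \to E$ with $\text{det}\, \Psi = q$ and cokernel $\sci_{Z,Q}(3)$, a rank $1$ sheaf on $Q$. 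Equivalently, normalising the rank $2$ reflexive sheaf $\sce^\prim$ to $G := \sce^\prim(-3)$, one obtains a stable $G$ with $c_1(G) = -1$, $c_2(G) = \text{deg}\, Z \leq 6$, sitting in $0 \to \sco_\piii(-1) \to G \to \sci_Z \to 0$, with $\tH^0(G(1)) \neq 0$ and $\h^0(G(2)) = \h^0(E(-1)) > 4$.

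It then remains to decide when $\sco_\piii(2) \oplus \sco_\piii(1)$ splits off as a direct summand. Here I would invoke Chang's classification \cite{ch} of stable rank $2$ reflexive sheaves with $c_1 = -1$ and small $c_2$: the constraints $\tH^0(G(1)) \neq 0$ and $Z \subset Q$ cut the possibilities for $G$ (hence for $\sce^\prim$ and for $E$) down to a short finite list, treated case by case according to $\text{deg}\, Z \in \{3,4,5,6\}$ and to whether $Q$ is smooth, a cone, or degenerate. For each configuration the splitting is controlled by a single obstruction — concretely, by whether the section $\tau \in \tH^0(\omega_Z(3))$ dual to $s_0$ lies in the image of the multiplication map $\tH^0((r-2)\sco_\piii(2)) \to \tH^0(\omega_Z(3))$ assembled from the remaining sections, equivalently by the vanishing of the relevant class in $\text{Ext}^1(\sci_{Z,Q}(3), \sco_\piii(2) \oplus \sco_\piii(1))$. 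In all but one configuration this obstruction vanishes, a retraction exists, and $\sco_\piii(2) \oplus \sco_\piii(1)$ is a direct summand.

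The exceptional configuration occurs only for $\text{deg}\, Z = 3$, that is $c_2 = 9$: there $\sco_\piii(2)$ still splits off (indeed twice), but the surviving obstruction is governed by an Euler relation $\sco_\piii(-1) \xra{(x_0, \ldots , x_3)} 4\sco_\piii$, whose cokernel $\text{T}_\piii(-1)$ admits no nonzero map from $\sco_\piii(1)$ since $\tH^0(\text{T}_\piii(-2)) = 0$; thus $\sco_\piii(1)$ cannot be split off, and reconstructing $E$ from the remaining data forces $E \simeq 2\sco_\piii(2) \oplus \text{T}_\piii(-1)$. I expect the main obstacle to be precisely this case-by-case splitting analysis: verifying that the obstruction vanishes uniformly outside the exceptional stratum while controlling the degenerate strata where $Q$ is a cone or $Z$ is a non-reduced multiple structure on a line (handled through Remark~\ref{R:multilines}), and thereby isolating the single non-split configuration.
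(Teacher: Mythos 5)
Your opening reduction is correct: twisting \eqref{E:erasciz(3)} by $-1$ does give $\h^0(E(-1)) = 4 + \h^0(\sci_Z(2))$, so the hypothesis is equivalent to $Z$ lying on a quadric, and the normalization $G := \sce^\prim(-3)$ is a stable rank $2$ reflexive sheaf with $c_1(G) = -1$, $c_2(G) = \deg Z = c_2 - 6$. But the core of your argument rests on ``Chang's classification \cite{ch} of stable rank $2$ reflexive sheaves with $c_1 = -1$ and small $c_2$'' cutting the possibilities for $G$ down to a short finite list for $\deg Z \in \{3,4,5,6\}$. No such classification exists beyond $c_2 \leq 3$: Chang's results (and the complements in Appendix~\ref{A:h0e(-2)neq0} of the paper, Lemmas~\ref{L:(2;-1,3,1)} and \ref{L:(2;-1,3,3)}) cover exactly $c_2 \leq 3$, while for $c_2(G) = 4,\, 5,\, 6$ (i.e.\ $c_2 = 10,\, 11,\, 12$) one faces positive-dimensional, poorly understood families --- this is precisely the difficulty that forces the paper to devote Appendix~\ref{A:(2;-1,4,0)} to partial results on the case $c_1 = -1$, $c_2 = 4$ and to leave the case $(c_2, c_3) = (12,0)$ of the main theorem undecided. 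Your reduction thus makes the invariant \emph{grow} with $c_2$, which is exactly backwards: the harder cases of the lemma become the least accessible ones. The second load-bearing assertion, that ``in all but one configuration the obstruction vanishes,'' is stated without any mechanism of proof; note also that your justification that $s_0$ is nowhere vanishing (``the cokernel is torsion-free'') is invalid as stated, since a section vanishing in codimension two can have torsion-free cokernel.

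The idea you are missing is liaison, which reverses the direction of the reduction. The paper starts from the curve $Y$ of \eqref{E:oeiy(5)} (degree $c_2$, with $\h^0(\sci_Y(3)) \geq 1$ from $\tH^0(E(-2)) \neq 0$ and $\h^0(\sci_Y(4)) = \h^0(E(-1)) > 4$) and links it by a complete intersection of type $(3,4)$ to a residual curve $Y^\secund$ of degree $12 - c_2 \in \{0,1,2,3\}$. A general section of the globally generated sheaf $\omega_{Y^\secund}(2)$ produces a rank $2$ reflexive sheaf $\scf^\secund$ with $c_1(\scf^\secund) = 0$ and $c_2(\scf^\secund) = 11 - c_2 \leq 2$ --- squarely inside the range where complete classifications (trivial bundle, nullcorrelation bundle, Chang's $c_2 = 2$ list) are available. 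Ferrand's theorem on resolutions under liaison then yields the presentation \eqref{E:edualscfsecund} of $E^\vee$ in terms of $\scf^\secund(1)$, from which $\sco_\piii(2) \oplus \sco_\piii(1)$ splits off whenever the graded module $\tH^0_\ast(\scf^\secund)$ is generated in degrees $\leq 1$; the configurations that would violate the conclusion are not eliminated by an Ext-obstruction computation but by showing that the required epimorphism $(r+1)\sco_\piii \oplus \sco_\piii(-1) \oplus \sco_\piii(-2) \ra \scf^\secund(1)$ cannot exist when the cokernel of the evaluation morphism of $\scf^\secund(1)$ is supported on a curve (Subcases 4.1 and 4.2), the exceptional case $c_2 = 9$ with $\h^0(E(-2)) \geq 2$ being handled directly since there $Y$ is a complete intersection of type $(3,3)$. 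Without this passage to the residual curve, your case-by-case splitting analysis has no classification to run on.
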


\begin{proof} 
The hypothesis implies that $\h^0(\sci_Y(4)) > 4$ ($Y$ being, of course, the 
curve from the exact sequence \eqref{E:oeiy(5)}). Since $\tH^0(\sci_Y(3)) 
\neq 0$ it follows that $Y$ is directly linked, by a complete intersection 
of type $(3,4)$, to a curve $Y^\secund$ which is locally Cohen-Macaulay and 
locally complete intersection except at finitely many points. One has 
$\text{deg}\, Y^\secund = 12 - c_2$. Using the fundamental exact sequence of 
liaison (recalled in \cite[Remark~2.6]{acm1})$\, :$ 
\[
0 \lra \sco_\piii(-7) \lra \sco_\piii(-3) \oplus \sco_\piii(-4) \lra 
\sci_Y \lra \omega_{Y^\secund}(-3) \lra 0\, , 
\]
one deduces that $\omega_{Y^\secund}(2)$ is globally generated. A general 
global section of $\omega_{Y^\secund}(2)$ generates this sheaf except at 
finitely many points and defines an extension$\, :$ 
\begin{equation}\label{E:scfsecundysecund}
0 \lra \sco_\piii(-1) \lra \scf^\secund \lra \sci_{Y^\secund}(1) \lra 0\, , 
\end{equation}
with $\scf^\secund$ a rank 2 reflexive sheaf with Chern classes $c_1^\secund = 
0$, $c_2^\secund = \text{deg}\, Y^\secund - 1 = 11 - c_2$. Using Ferrand's 
result about free resolutions under liaison (also recalled in 
\cite[Remark~2.6]{acm1}), one gets an exact sequence$\, :$ 
\[
0 \lra E^\vee \lra (r-1)\sco_\piii \oplus \sco_\piii(-1) \oplus \sco_\piii(-2) 
\lra \sci_{Y^\secund}(2) \lra 0\, , 
\] 
whence an exact sequence$\, :$ 
\begin{equation}\label{E:edualscfsecund} 
0 \lra E^\vee \lra r\sco_\piii \oplus \sco_\piii(-1) \oplus \sco_\piii(-2) 
\lra \scf^\secund(1) \lra 0\, . 
\end{equation}
Since $\tH^i(E^\vee) = 0$, $i = 0,\, 1$, it follows that $r = 
\h^0(\scf^\secund(1))$. One also sees that if the graded $S$-module 
$\tH^0_\ast(\scf^\secund)$ is generated in degrees $\leq 1$ then $\sco_\piii(2) 
\oplus \sco_\piii(1)$ is a direct summand of $E$.  

\vskip2mm 

\noindent 
{\bf Case 1.}\quad $c_2 = 12$. 

\vskip2mm 

\noindent 
In this case, $Y$ must be a complete intersection of type $(3,4)$ hence, 
by \cite[Lemma~2.1(i)]{acm1}, $E \simeq \sco_\piii(2) \oplus \sco_\piii(1) 
\oplus P(\sco_\piii(2))$. 

\vskip2mm 

\noindent 
{\bf Case 2.}\quad $c_2 = 11$. 

\vskip2mm 

\noindent 
In this case, $Y^\secund$ is a line hence $\scf^\secund \simeq 2\sco_\piii$. 
Using the exact sequence \eqref{E:edualscfsecund} one gets that $E \simeq 
\sco_\piii(2) \oplus \sco_\piii(1) \oplus 2\text{T}_\piii(-1)$. 

\vskip2mm 

\noindent 
{\bf Case 3.}\quad $c_2 = 10$. 

\vskip2mm 

\noindent 
In this case, $\scf^\secund$ has Chern classes $c_1^\secund = 0$ and $c_2^\secund 
= 1$. Since $\tH^0(\scf^\secund(-1)) = 0$, $\scf^\secund$ is semistable. It 
follows, from \cite[Thm.~8.2(a)]{ha}, that $c_3^\secund = 0$ or $c_3^\secund = 
2$. 

\vskip2mm 

\noindent 
{\bf Subcase 3.1.}\quad $c_3^\secund = 0$. 

\vskip2mm 

\noindent 
In this subcase, $\scf^\secund$ is a nullcorrelation bundle, i.e., one 
has an exact sequence$\, :$ 
\[
0 \lra \sco_\piii(-1) \lra \Omega_\piii(1) \lra \scf^\secund \lra 0\, . 
\]  
It follows easily that the graded $S$-module $\tH^0_\ast(\scf^\secund)$ is 
generated by $\tH^0(\scf^\secund(1))$. Moreover, the kernel of the evaluation 
morphism $\tH^0(\scf^\secund(1)) \otimes_k \sco_\piii \ra \scf^\secund(1)$ is 
isomorphic to $\text{T}_\piii(-2)$. One deduces that $E \simeq \sco_\piii(2) 
\oplus \sco_\piii(1) \oplus \Omega_\piii(2)$. 

\vskip2mm 

\noindent 
{\bf Subcase 3.2.}\quad $c_3^\secund = 2$. 

\vskip2mm 

\noindent 
In this subcase, by \cite[Lemma~2.1]{ch}, $\scf^\secund$ can be realized as an 
extension$\, :$ 
\[
0 \lra \sco_\piii \lra \scf^\secund \lra \sci_L \lra 0\, , 
\]   
where $L$ is a line. One deduces an exact sequence$\, :$ 
\[
0 \lra \sco_\piii(-2) \lra \sco_\piii \oplus 2\sco_\piii(-1) \lra \scf^\secund 
\lra 0\, . 
\]
It follows that the graded $S$-module $\tH^0_\ast(\scf^\secund)$ is generated in 
degrees $\leq 1$ and that the kernel of the evaluation morphism 
$\tH^0(\scf^\secund(1)) \otimes_k \sco_\piii \ra \scf^\secund(1)$ is isomorphic to 
$\sco_\piii(-1) \oplus \Omega_\piii(1)$. One deduces that $E \simeq 
\sco_\piii(2) \oplus 2\sco_\piii(1) \oplus \text{T}_\piii(-1)$. 

\vskip2mm 

\noindent 
{\bf Case 4.}\quad $c_2 = 9$. 

\vskip2mm 

\noindent 
In this case, if $\h^0(E(-2)) \geq 2$, i.e., if $\h^0(\sci_Y(3)) \geq 2$ then 
$Y$ is a complete intersection of type $(3,3)$ hence, by 
\cite[Lemma~2.1(i)]{acm1}, $E \simeq 2\sco_\piii(2) \oplus \text{T}_\piii(-1)$. 

Assume, from now on, that $\h^0(E(-2)) = 1$, i.e., that $\h^0(\sci_Y(3)) = 1$. 
One deduces, from the above fundamental exact sequence of liaison, that 
$\tH^0(\omega_{Y^\secund}) = 0$. It follows that $\tH^0(\sci_{Y^\secund}(1)) = 0$ 
(because $\text{deg}\, Y^\secund = 3$). Using the exact sequence 
\eqref{E:scfsecundysecund}, one gets that $\tH^0(\scf^\secund) = 0$, hence 
$\scf^\secund$ is stable. It has Chern classes $c_1^\secund = 0$ and 
$c_2^\secund = 2$ hence, by \cite[Thm.~8.2(b)]{ha}, $c_3^\secund \leq 4$. 

\vskip2mm 

\noindent 
{\bf Subcase 4.1.}\quad $c_3^\secund = 0$. 

\vskip2mm 

\noindent 
In this subcase, $\scf^\secund$ is a 2-instanton, hence the zero scheme $W$ of 
a general global section of $\scf^\secund(1)$ is the union of three mutually 
disjoint lines $L_1,\, L_2,\, L_3$. One thus has an exact sequence$\, :$ 
\[
0 \lra \sco_\piii \lra \scf^\secund(1) \lra \sci_W(2) \lra 0\, . 
\]   
Let $Q \subset \piii$ be the unique (nonsingular) quadric surface containing 
$W$ and fix an isomorphism $Q \simeq \pj \times \pj$. It follows that the 
cokernel of the evaluation morphism $\tH^0(\scf^\secund(1)) \otimes_k \sco_\piii 
\ra \scf^\secund(1)$ is isomorphic to $\sci_{W , Q}(2) \simeq \sco_Q(-1 , 2)$. 
Since there is no epimorphism $\sco_\piii(-1) \oplus \sco_\piii(-2) \ra 
\sco_Q(-1 , 2)$, the exact sequence \eqref{E:edualscfsecund} shows that 
\emph{this subcase cannot occur}. 

\vskip2mm 

\noindent 
{\bf Subcase 4.2.}\quad $c_3^\secund = 2$. 

\vskip2mm 

\noindent 
In this subcase, by the proof of \cite[Lemma~2.7]{ch}, there exists a plane 
$H \subset \piii$, a 0-dimensional subscheme $\Gamma$ of $H$ and an 
exact sequence$\, :$ 
\[
0 \lra \scg \lra \scf^\secund \lra \sci_{\Gamma , H}(-1) \lra 0\, , 
\] 
where $\scg$ is a stable rank 2 reflexive sheaf with Chern classes 
$c_1(\scg) = -1$, $c_2(\scg) = 1$, $c_3(\scg) = -1 + 2\text{length}\, \Gamma$ 
(see \cite[Prop.~9.1]{ha}). Since $\scg$ is stable, \cite[Thm.~8.2(d)]{ha} 
implies that $c_3(\scg) = 1$ hence $\Gamma$ consists of one simple point. 
Moreover, by \cite[Lemma~9.4]{ha}, one has an exact sequence$\, :$ 
\[
0 \lra \sco_\piii \lra \scg(1) \lra \sci_L(2) \lra 0\, , 
\] 
for some line $L \subset \piii$. One deduces that the cokernel of the 
evaluation morphism $\tH^0(\scf^\secund(1)) \otimes_k \sco_\piii \ra 
\scf^\secund(1)$ is isomorphic to $\sci_{\Gamma , H}$. Since there is no 
epimorphism $\sco_\piii(-1) \oplus \sco_\piii(-2) \ra \sci_{\Gamma , H}$, 
the exact sequence \eqref{E:edualscfsecund} shows that \emph{this subcase 
cannot occur, either}. 

\vskip2mm 

\noindent 
{\bf Subcase 4.3.}\quad $c_3^\secund = 4$. 

\vskip2mm 

\noindent 
In this subcase, \cite[Lemma~2.9]{ch} implies that $\scf^\secund(1)$ admits a 
resolution of the form$\, :$ 
\[
0 \lra 2\sco_\piii(-1) \lra 4\sco_\piii \lra \scf^\secund(1) \lra 0\, . 
\]    
One deduces easily, from the exact sequence \eqref{E:edualscfsecund}, that 
$E \simeq \sco_\piii(2) \oplus 3\sco_\piii(1)$. 
\end{proof} 

\begin{lemma}\label{L:h1k} 
Let $K$ be a vector bundle on $\p^n$, $n \geq 2$. If ${\fam0 H}^1(K_H) = 0$ for 
every hyperplane $H \subset \p^n$ then ${\fam0 h}^1(K) \leq 
{\fam0 max}(0 , {\fam0 h}^1(K(-1)) - n)$. 
\end{lemma}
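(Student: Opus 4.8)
The plan is to reduce everything, via the standard hyperplane restriction sequence, to a single application of the Bilinear Map Lemma \cite[Lemma~5.1]{ha}, exactly in the spirit of Lemma~\ref{L:h1e(l)} and Remark~\ref{R:muh1e(-4)}. Writing $S_1 := \tH^0(\sco_{\p^n}(1))$, so that $\dim_k S_1 = n+1$, I would fix a nonzero linear form $h \in S_1$ cutting out a hyperplane $H$ and consider the exact sequence $0 \to K(-1) \xrightarrow{h} K \to K_H \to 0$. Passing to cohomology yields the fragment $\tH^1(K(-1)) \xrightarrow{h} \tH^1(K) \to \tH^1(K_H)$, and since $\tH^1(K_H) = 0$ by hypothesis, multiplication by $h$ is \emph{surjective} from $\tH^1(K(-1))$ onto $\tH^1(K)$, for \emph{every} nonzero $h \in S_1$.

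The next step is to dualize this family of surjections. If $\tH^1(K) = 0$ the inequality is trivial, so I may assume $\tH^1(K) \neq 0$. For each nonzero $h$ the transpose $\tH^1(K)^\vee \to \tH^1(K(-1))^\vee$ is then injective, and these transposes assemble into a single bilinear map $\bar\mu : \tH^1(K)^\vee \times S_1 \to \tH^1(K(-1))^\vee$. The key point to check is the nondegeneracy hypothesis of the Bilinear Map Lemma: for nonzero $\phi \in \tH^1(K)^\vee$ and nonzero $h \in S_1$ one has $\bar\mu(\phi, h) \neq 0$, and this is immediate, since the partial map $\bar\mu(-, h)$ is, for each fixed nonzero $h$, the injective transpose of a surjection.

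With nondegeneracy in hand, the Bilinear Map Lemma gives $\dim_k \bar\mu(\tH^1(K)^\vee, S_1) \geq \h^1(K) + \dim_k S_1 - 1 = \h^1(K) + n$. Since this image sits inside $\tH^1(K(-1))^\vee$, I conclude $\h^1(K(-1)) \geq \h^1(K) + n$, that is, $\h^1(K) \leq \h^1(K(-1)) - n$; together with the trivial case $\tH^1(K) = 0$ this is precisely the asserted bound $\h^1(K) \leq \mathrm{max}(0, \h^1(K(-1)) - n)$.

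I do not expect a genuine obstacle: the argument is a direct transcription of the $\piii$-case already used in the paper. The only point demanding a little care is the bookkeeping of adjoints, namely to \emph{dualize} the multiplication maps so that the uniform surjectivity (valid for every $h$) turns into the fiberwise injectivity that feeds the nondegeneracy hypothesis of \cite[Lemma~5.1]{ha}; together with the separate, trivial treatment of the case $\tH^1(K) = 0$, which is exactly what produces the $\mathrm{max}$ in the statement.
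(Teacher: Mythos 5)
Your proof is correct and is essentially the paper's own argument: the paper likewise applies the Bilinear Map Lemma of \cite[Lemma~5.1]{ha} to the map $\tH^1(K)^\vee \otimes \tH^0(\sco_{\p^n}(1)) \ra \tH^1(K(-1))^\vee$ dual to the multiplication map, with the surjectivity of multiplication by each nonzero $h$ coming from the vanishing $\tH^1(K_H) = 0$. You have merely spelled out the details (the restriction sequence, the dualization, and the separate trivial case $\tH^1(K) = 0$) that the paper leaves implicit.
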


\begin{proof} 
This follows immediately by applying the Bilinear Map Lemma 
\cite[Lemma~5.1]{ha} to the map $\tH^1(K)^\vee \otimes \tH^0(\sco_{\p^n}(1)) 
\ra \tH^1(K(-1))^\vee$ deduced from the multiplication map $\tH^1(K(-1)) 
\otimes \tH^0(\sco_{\p^n}(1)) \ra \tH^1(K)$.  
\end{proof}

\begin{lemma}\label{L:mora2o(1)} 
Let $n \geq 1$ and $m$ be integers. Then, for any epimorphism $\e \colon 
m\sco_{\p^n} \ra 2\sco_{\p^n}(1)$, the map ${\fam0 H}^0(\e(1)) \colon 
{\fam0 H}^0(m\sco_{\p^n}(1)) \ra {\fam0 H}^0(2\sco_{\p^n}(2))$ is surjective.   
\end{lemma}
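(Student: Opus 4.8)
The plan is to set $K := \Ker \e$ and reduce the statement to the single vanishing $\tH^1(K(1)) = 0$. \emph{Indeed}, twisting the defining sequence $0 \ra K \ra m\sco_{\p^n} \xra{\e} 2\sco_{\p^n}(1) \ra 0$ by $1$ and using $\tH^1(m\sco_{\p^n}(1)) = 0$ (valid for all $n \geq 1$), the cokernel of $\tH^0(\e(1))$ is isomorphic to $\tH^1(K(1))$; so the assertion is exactly $\tH^1(K(1)) = 0$. I would prove this by induction on $n$, treating $n = 1$ directly: there $K$ is a subbundle of $m\sco_{\pj}$, hence splits as $\bigoplus_i \sco_{\pj}(a_i)$ with every $a_i \leq 0$ and $\sum_i a_i = c_1(K) = -2$. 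If some $a_j \leq -3$ then $\sum_{i \neq j} a_i = -2 - a_j \geq 1 > 0$, contradicting $a_i \leq 0$; hence each $a_i \geq -2$ and $\tH^1(K(1)) = \bigoplus_i \tH^1(\sco_{\pj}(a_i + 1)) = 0$.

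For the inductive step the key quantity is $\h^1(K)$. Since $\tH^1(m\sco_{\p^n}) = 0$, one has $\tH^1(K) \simeq \Cok\bigl(\tH^0(\e) \colon \tH^0(m\sco_{\p^n}) \ra \tH^0(2\sco_{\p^n}(1))\bigr)$, so $\h^1(K) = 2(n+1) - \dim W$, where $W \subseteq \tH^0(2\sco_{\p^n}(1)) = V \oplus V$ (with $V := \tH^0(\sco_{\p^n}(1))$) is the span of the columns of the $2 \times m$ matrix of linear forms defining $\e$. \textbf{The main obstacle} is to show $\dim W \geq n + 2$. I would translate the fibrewise surjectivity of $\e$ as follows: for $\lambda = (\lambda_1, \lambda_2) \in k^2$ let $\ell_\lambda \colon W \ra V$ send $(a,b) \mapsto \lambda_1 a + \lambda_2 b$; then $\e$ fails to be surjective at a point $p$ exactly when some nonzero $\lambda$ has all forms in $\ell_\lambda(W)$ vanishing at $p$, so $\e$ is surjective everywhere if and only if $\ell_\lambda \colon W \ra V$ is surjective for every $0 \neq \lambda$. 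This already forces $\dim W \geq n+1$. If $\dim W = n+1$, then every $\ell_\lambda$ is an isomorphism; identifying $W$ with $V$ via $\ell_{(1,0)}$ and putting $T := \ell_{(0,1)} \circ \ell_{(1,0)}^{-1} \in \mathrm{End}(V)$, the operator $\lambda_1\,\mathrm{id} + \lambda_2 T$ would be invertible for all $\lambda \neq 0$ — impossible over the algebraically closed field $k$, since $T$ has an eigenvalue $\mu$ and then $\ell_{(-\mu,1)}$ is singular. Hence $\dim W \geq n+2$ and $\h^1(K) \leq n$.

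Finally I would combine this bound with a hyperplane restriction. For a hyperplane $H \simeq \p^{n-1}$ the map $\e_H := \e \otimes \sco_H$ is again a surjection $m\sco_H \ra 2\sco_H(1)$ (right exactness) with kernel $K_H$ (the relevant $\sco_{\mathrm{Tor}}$ vanishes, $2\sco(1)$ being locally free), so by the inductive hypothesis $\tH^1(K_H(1)) = 0$ for \emph{every} $H$. Applying Lemma~\ref{L:h1k} to the bundle $K(1)$ — whose hypothesis $\tH^1\bigl((K(1))_H\bigr) = \tH^1(K_H(1)) = 0$ now holds — yields $\h^1(K(1)) \leq \max\bigl(0,\, \h^1(K) - n\bigr) \leq \max(0,\, n - n) = 0$, completing the induction. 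Apart from the eigenvalue argument bounding $\dim W$, which is the only genuinely non-formal ingredient, the proof is routine cohomological bookkeeping together with Lemma~\ref{L:h1k} (itself a form of the Bilinear Map Lemma \cite[Lemma~5.1]{ha}).
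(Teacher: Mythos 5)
Your proof is correct and follows essentially the same route as the paper's: reduce the statement to $\tH^1(K(1)) = 0$ for $K = \Ker \e$, induct on $n$ with the splitting of $K$ on $\pj$ as base case, and in the inductive step combine the bound $\h^1(K) \leq n$ with Lemma~\ref{L:h1k} applied to $K(1)$. The only difference is that you supply a full justification (the eigenvalue argument) for the bound $\operatorname{rank} \tH^0(\e) \geq n+2$, which the paper asserts without proof.
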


\begin{proof} 
We note, firstly, that, since $\e$ is an epimorphism, the map $\tH^0(\e) 
\colon \tH^0(m\sco_{\p^n}) \ra \tH^0(2\sco_{\p^n}(1))$ must have rank at least 
$n + 2$. The kernel $K$ of $\e$ is a vector bundle on $\p^n$ with $\h^1(K) 
\leq 2(n+1) - (n+2) = n$. We have to show that $\tH^1(K(1)) = 0$. We use 
induction on $n$. 

\vskip2mm 

\noindent 
$\bullet$\quad If $n = 1$ then either $K \simeq (m - 4)\sco_\pj \oplus 
2\sco_\pj(-1)$ or $K \simeq (m - 3)\sco_\pj \oplus \sco_\pj(-2)$. 

\vskip2mm 

\noindent 
$\bullet$\quad Assume that the conclusion is true on $\p^{n-1}$ and let us 
prove it on $\p^n$. By the induction hypothesis, $\tH^1(K_H(1)) = 0$, for 
every hyperplane $H \subset \p^n$. Since, as we saw at the beginning of the 
proof, $\h^1(K) \leq n$, Lemma~\ref{L:h1k} implies that $\h^1(K(1)) = 0$.   
\end{proof}

\begin{lemma}\label{L:kvbl} 
Let $K$ be the kernel of an epimorphism $\e \colon m\sco_{\p^n} \ra 
2\sco_{\p^n}(1)$, $n \geq 1$. If ${\fam0 H}^1(K) \neq 0$, i.e., if the map 
${\fam0 H}^0(\e) \colon {\fam0 H}^0(m\sco_{\p^n}) \ra 
{\fam0 H}^0(2\sco_{\p^n}(1))$ is not surjective, then there is a line $L 
\subset \p^n$ such that $K_L \simeq (m - 3)\sco_L \oplus \sco_L(-2)$. In 
particular, $K(1)$ is not globally generated. 
\end{lemma}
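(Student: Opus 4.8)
The plan is to reduce the statement to a question about the restriction of $K$ to lines, and then to locate a suitable line by descending through hyperplanes via induction on $n$. First I would record the numerology. From the defining sequence $0 \to K \to m\sco_{\p^n} \xrightarrow{\e} 2\sco_{\p^n}(1) \to 0$ the bundle $K$ has rank $m-2$ and $c_1(K) = -2$. Restricting to any line $L \subset \p^n$ keeps the sequence exact (all terms are locally free), so $K_L$ is a rank $m-2$ bundle on $L \simeq \pj$ of degree $-2$; writing $K_L \simeq \bigoplus \sco_L(a_i)$, the inclusion $K_L \hookrightarrow m\sco_L$ forces every $a_i \leq 0$ (a nonzero map $\sco_L(a) \to \sco_L$ requires $a \leq 0$), while $\sum a_i = -2$. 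Hence the \emph{only} two possibilities are $K_L \simeq (m-4)\sco_L \oplus 2\sco_L(-1)$ and $K_L \simeq (m-3)\sco_L \oplus \sco_L(-2)$, the first with $\tH^1(K_L) = 0$ and the second with $\tH^1(K_L) \simeq \tH^1(\sco_L(-2)) = k$.

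Thus the target splitting type occurs on $L$ \emph{if and only if} $\tH^1(K_L) \neq 0$, and in that case $K(1)_L \simeq (m-3)\sco_L(1) \oplus \sco_L(-1)$ has the non--globally--generated summand $\sco_L(-1)$; since restriction preserves global generation, this yields the final (``in particular'') assertion immediately. So it remains to produce a line $L$ with $\tH^1(K_L) \neq 0$. I would do this by induction on $n$ (with $m$ fixed). For $n = 1$ there is nothing to descend: $\p^1$ is itself a line, and the hypothesis $\tH^1(K) \neq 0$ forces, by the dichotomy above, $K \simeq (m-3)\sco_\pj \oplus \sco_\pj(-2)$, so $L = \p^1$ works. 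For the inductive step the key point is to find a hyperplane $H \simeq \p^{n-1}$ with $\tH^1(K_H) \neq 0$; then $\e_H$ is again an epimorphism with kernel $K_H$, and the induction hypothesis applied on $H$ yields a line $L \subset H \subset \p^n$ with $K_L \simeq (m-3)\sco_L \oplus \sco_L(-2)$.

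The heart of the argument, and the step I expect to be the only real obstacle, is the existence of such a hyperplane $H$. Here I would argue by contradiction: suppose $\tH^1(K_H) = 0$ for \emph{every} hyperplane $H \subset \p^n$. Twisting the defining sequence by $-1$ and using $\tH^0(\sco_{\p^n}(-1)) = 0 = \tH^1(\sco_{\p^n}(-1))$ (valid for $n \geq 2$) gives $\tH^1(K(-1)) \simeq \tH^0(2\sco_{\p^n}) = k^2$, hence $\h^1(K(-1)) = 2$. Lemma~\ref{L:h1k} then yields $\h^1(K) \leq \max(0, \h^1(K(-1)) - n) = \max(0, 2-n) = 0$ for $n \geq 2$, contradicting $\tH^1(K) \neq 0$. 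So some hyperplane $H$ has $\tH^1(K_H) \neq 0$, completing the induction (for $n = 2$ the hyperplane $H$ is itself a line, and the base case applies directly, giving $L = H$).

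All the remaining verifications are routine and I would not belabor them: the exactness of the restriction $0 \to K_H \to m\sco_H \to 2\sco_H(1) \to 0$, the cohomology computation of $\h^1(K(-1))$, and the bookkeeping of the two admissible splitting types on a line. The conceptual content is entirely contained in the two observations that a subbundle of a trivial bundle has nonpositive splitting type and that Lemma~\ref{L:h1k} propagates the vanishing of $\tH^1$ on all hyperplanes to the vanishing of $\tH^1$ on the whole space.
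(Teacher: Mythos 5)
Your proof is correct and follows essentially the same route as the paper's: induction on $n$, with the base case $n=1$ given by the dichotomy of splitting types of a degree $-2$ subbundle of $m\sco_\pj$ (which the paper cites from the proof of Lemma~\ref{L:mora2o(1)}), and the inductive step using Lemma~\ref{L:h1k} together with $\h^1(K(-1))=2$ to produce a hyperplane $H$ with $\tH^1(K_H)\neq 0$. Your write-up merely makes explicit some details the paper leaves implicit (the derivation of the two splitting types and the ``in particular'' assertion).
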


\begin{proof} 
We use induction on $n$. 

\vskip2mm 

\noindent 
$\bullet$\quad If $n = 1$ then, as we saw in the proof of 
Lemma~\ref{L:mora2o(1)}, one must have $K \simeq (m - 3)\sco_\pj \oplus 
\sco_\pj(-2)$. 

\vskip2mm 

\noindent 
$\bullet$\quad Assume that the conclusion of the lemma has been verified on 
$\p^{n-1}$ and let us prove it on $\p^n$. Since $\h^1(K(-1)) = 2$ and 
$\h^1(K) \neq 0$, Lemma~\ref{L:h1k} implies that there exists a hyperplane 
$H \subset \p^n$ such that $\tH^1(K_H) \neq 0$. Applying the induction 
hypothesis to $K_H$ one gets that there exists a line $L \subseteq H$ such 
that $K_L \simeq (m - 3)\sco_L \oplus \sco_L(-2)$. 
\end{proof}

The following two lemmata complement the results of 
Chang \cite[Thm.~3.12]{ch} and \cite[Thm.~3.13]{ch}. 

\begin{lemma}\label{L:(2;-1,3,1)} 
Let $\scf$ be a rank $2$ reflexive sheaf on $\piii$ with $c_1(\scf) = -1$, 
$c_2(\scf) = 3$, $c_3(\scf) = 1$. If ${\fam0 H}^0(\scf(1)) = 0$ then 
$\scf(1)$ is the cohomology of a monad of the form$\, :$ 
\[
0 \lra 2\sco_\piii(-1) \overset{\beta}{\lra} \sco_\piii(1) \oplus 4\sco_\piii 
\overset{\alpha}{\lra} \sco_\piii(2) \lra 0\, . 
\]
\end{lemma}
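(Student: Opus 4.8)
The plan is to pass to $\scg := \scf(1)$ and to reconstruct it from its intermediate cohomology. First I would record the basic structure of $\scg$. Since $\scf \hookrightarrow \scf(1) = \scg$, the hypothesis $\tH^0(\scf(1)) = 0$ gives $\tH^0(\scf) = 0$, so $\scf$ (hence $\scg$) is stable and, moreover, $\tH^0(\scg(m)) = 0$ for all $m \leq 0$. As $\scf$ has rank $2$ one has $\scg^\vee = \scf^\vee(-1) = \scf(1)(-1) = \scf = \scg(-1)$, a self-duality that lets me transfer every statement about $\scg$ to $\scg(-1)$. Finally, by \cite[Prop.~2.6]{ha} (cf. Remark~\ref{R:chern}(c)) the sheaf $\sce xt^1_{\sco_\piii}(\scg,\omega_\piii)$ is $0$-dimensional of length $c_3(\scg) = 1$ while $\sce xt^i_{\sco_\piii}(\scg,\sco_\piii) = 0$ for $i \geq 2$; thus $\scg$ fails to be locally free at exactly one (simple) point.

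Next I would compute the cohomology of $\scg$. Riemann-Roch (Remark~\ref{R:chern}(b)) with $c_1 = 1$, $c_2 = 3$, $c_3 = 1$ gives $\chi(\scg(m))$ for every $m$ (for instance $\chi(\scg(-2)) = -1$, $\chi(\scg(-1)) = -3$, $\chi(\scg) = -2$). Grothendieck-Serre duality in the form $\tH^3(\scg(m))^\vee \cong \text{Hom}(\scg(m),\omega_\piii) = \tH^0(\scg^\vee(-4-m)) = \tH^0(\scg(-5-m))$, together with the vanishing of $\tH^0(\scg(m))$ for $m \leq 0$, yields $\tH^3(\scg(m)) = 0$ for $m \geq -5$. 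Feeding this into $\chi$ I would determine the intermediate cohomology, the outcome being $\h^1(\scg(-2)) = 1$, $\h^1(\scg(-1)) = 3$, $\h^1(\scg) = 2$ and $\tH^2(\scg(m)) = 0$ for $m \geq -2$.

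The crux, and the step I expect to be the main obstacle, is precisely the vanishing $\tH^2(\scg(m)) = 0$ for $m \geq -2$ together with the control of the degrees in which the graded modules $\tH^1_\ast(\scg)$ and $\tH^2_\ast(\scg)$ are generated. Because $\scg$ is reflexive but not locally free, one cannot quote the bundle form of Serre duality directly: the five-term exact sequence of the local-to-global spectral sequence reduces $\tH^2(\scg(m))^\vee = \text{Ext}^1(\scg(m),\omega_\piii)$ to $\tH^1(\scg(-5-m))$ modified by the length-$1$ skyscraper $\sce xt^1_{\sco_\piii}(\scg,\omega_\piii)$, and I would have to trace this extra term through the relevant range. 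The input $\tH^1(\scg(m)) = 0$ for $m \leq -3$ I would get either from Hartshorne's spectrum of the stable rank $2$ reflexive sheaf $\scf$ (the reflexive analogue of the rank $3$ spectrum used throughout the paper) or, more elementarily, from a restriction-to-a-general-plane argument in the spirit of Remark~\ref{R:c1=5onp2}; combined with the Castelnuovo-Mumford lemma in the form \cite[Lemma~1.21]{acm1} this shows that $\tH^1_\ast(\scg)$ is generated by $\tH^1(\scg(-2))$, hence is cyclic with a single generator, while the self-duality $\scg^\vee \cong \scg(-1)$ produces the dual generation statement for $\tH^2_\ast(\scg)$.

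Finally I would assemble the monad. With the cohomology modules described, Horrocks' correspondence (Barth and Hulek \cite{bh}, as used elsewhere in the paper) shows that $\scg$ is the cohomology of a monad $0 \lra C^{-1} \lra C^0 \lra C^1 \lra 0$ of sums of line bundles. The single generator of $\tH^1_\ast(\scg)$ in degree $-2$ forces $C^1 = \sco_\piii(2)$; its relations (one linear, accounting for $\h^1(\scg(-1)) = 3$, and four quadratic, accounting for $\h^1(\scg) = 2$) force $C^0 = \sco_\piii(1) \oplus 4\sco_\piii$; and then $\text{rk}\,\scg = 2$ with $c_1(\scg) = 1$ force $\text{rk}\,C^{-1} = 2$ and $c_1(C^{-1}) = -2$, i.e. $C^{-1} = 2\sco_\piii(-1)$. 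The rank-$2$ left term (rather than a single $\sco_\piii(-1)$) is exactly what encodes, through the degeneration of $\beta \colon 2\sco_\piii(-1) \lra \sco_\piii(1) \oplus 4\sco_\piii$ along the unique non-locally-free point, the value $c_3(\scg) = 1$. As a consistency check I would verify the Chern classes of the cohomology by the K-theoretic identity $c(\scg) = (1+h)(1-h)^{-2}(1+2h)^{-1} = 1 + h + 3h^2 + h^3$, which reproduces $(c_1,c_2,c_3) = (1,3,1)$.
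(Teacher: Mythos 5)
Your reduction to $\scg := \scf(1)$ and your cohomology table are correct, but two of your key steps have genuine gaps, and they sit exactly at the places you flag as delicate. The first is the claim that the vanishing $\tH^1(\scg(m)) = 0$ for $m \leq -3$ together with the Castelnuovo--Mumford lemma makes $\tH^1_\ast(\scg)$ cyclic, generated by $\tH^1(\scg(-2))$. It does not: the CM lemma makes the multiplication $\tH^1(\scg(l)) \otimes S_1 \ra \tH^1(\scg(l+1))$ surjective only when $\tH^2(\scg(l-1)) = 0$ and $\tH^3(\scg(l-2)) = 0$, and here $\h^2(\scg(-3)) = \h^2(\scf(-2)) = 2 \neq 0$ (read off from the spectrum $(0,-1,-1)$ of $\scf$). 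So CM yields generation in degrees $\leq -1$ only and says nothing about the crucial first map $\tH^1(\scg(-2)) \otimes S_1 \ra \tH^1(\scg(-1))$, a map $k^4 \ra k^3$ whose surjectivity is not formal. This is precisely where the hypothesis $\tH^0(\scf(1)) = 0$ must work beyond giving stability: the paper argues that if this map were not surjective, the generator of the one-dimensional space $\tH^1(\scf(-1))$ would be annihilated by two independent linear forms $h_0,\, h_1$; tensoring by $\scf$ the Koszul resolution $0 \ra \sco_\piii(-1) \ra 2\sco_\piii \ra \sco_\piii(1) \ra \sco_L(1) \ra 0$ of the line $L$ of equations $h_0 = h_1 = 0$ (legitimate since $h_0,\, h_1$ is an $\scf$-regular sequence) then forces $\tH^0(\scf(1)) \neq 0$, a contradiction. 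In your write-up the hypothesis enters only through $\tH^0$-vanishing, so your argument would apply verbatim to any stable sheaf with these Chern classes, which cannot be right, since the conclusion pins down the exact Betti numbers of $\tH^1_\ast(\scg)$.

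Second, you cannot simply quote the Horrocks correspondence of Barth and Hulek for $\scg$: that correspondence is a theorem about vector bundles, and $\scg$ is not locally free, since $c_3(\scg) = 1$ means $\sce xt^1_{\sco_\piii}(\scg , \omega_\piii)$ is a length-one skyscraper. This is not cosmetic. The duality driving the correspondence -- the same self-duality $\scg^\vee \simeq \scg(-1)$ you invoke to get the ``dual generation statement'' for $\tH^2_\ast(\scg)$ -- acquires exactly this skyscraper as a correction term; consistently, in the asserted monad the map $\beta$ is a sheaf monomorphism which is \emph{not} locally split (it degenerates at the unique point where $\scf$ fails to be locally free), and no bundle-type monad can produce such cohomology. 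The paper's proof is built around this difficulty: it never applies monad theory to $\scg$ itself, but instead forms the universal extension $0 \ra \scf \ra \sce \ra \sco_\piii(1) \ra 0$ killing $\tH^1_\ast$ (possible only once cyclicity is known), checks that $\sce$ is $1$-regular with $\h^0(\sce) = 1$ and $\h^0(\sce(1)) = 8$, and then observes that the kernel $K$ of the resulting epimorphism $\sco_\piii(1) \oplus 4\sco_\piii \ra \sce(1)$ is an honest vector bundle (a reflexive sheaf on $\piii$ has projective dimension $\leq 1$ at every point, so the kernel has projective dimension $0$), to which Horrocks' splitting criterion applies, giving $K \simeq 2\sco_\piii(-1)$ and hence the monad. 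To salvage your route you would in effect have to prove a reflexive-sheaf version of the Barth--Hulek correspondence, which amounts to redoing this construction; note also that your final identification $C^{-1} = 2\sco_\piii(-1)$ from rank and $c_1$ alone is too quick, since rank $2$ and $c_1 = -2$ also allow $\sco_\piii \oplus \sco_\piii(-2)$, ruled out in the paper by $\tH^0(K) = 0$.
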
 

\begin{proof} 
Since, in particular, $\tH^0(\scf) = 0$, $\scf$ is stable. There is only one 
possibility for its spectrum, namely $k_\scf = (0,-1,-1)$ (see Hartshorne 
\cite[Sect.~7]{ha}). In particular, $\tH^1(\scf(l)) = 0$ for $l \leq -2$, 
$\h^1(\scf(-1)) = 1$ and $\tH^2(\scf(l)) = 0$ for $l \geq -1$. Moreover, 
by Riemann-Roch, $\h^1(\scf) = 3$. Since $\tH^2(\scf(-1)) = 0$ and 
$\tH^3(\scf(-2)) \simeq \tH^0(\scf^\vee(-2))^\vee \simeq \tH^0(\scf(-1))^\vee = 
0$, the Castelnuovo-Mumford lemma (in the slightly more general form stated 
in \cite[Lemma~1.21]{acm1}) implies that the graded $S$-module 
$\tH^1_\ast(\scf)$ is generated in degrees $\leq 0$. We assert that, in fact, 
it is generated by $\tH^1(\scf(-1))$. 

\emph{Indeed}, we have to show that the multiplication map $\tH^1(\scf(-1)) 
\otimes \tH^0(\sco_\piii(1)) \ra \tH^1(\scf)$ is surjective. Assume it is not. 
Then, since $\h^1(\scf(-1)) = 1$ and $\h^1(\scf) = 3$, $\tH^1(\scf(-1))$ is 
annihilated by two linearly independent linear forms $h_0$ and $h_1$. Let 
$L \subset \piii$ be the line of equations $h_0 = h_1 = 0$. Since $h_0,\, h_1$ 
is an $\scf$-regular sequence, it follows that tensorizing by $\scf$ the 
exact sequence$\, :$ 
\[
0 \lra \sco_\piii(-1) \lra 2\sco_\piii \lra \sco_\piii(1) \lra \sco_L(1) 
\lra 0\, , 
\] 
one gets an exact sequence$\, :$ 
\[
0 \lra \scf(-1) \lra 2\scf \lra \scf(1) \lra \scf_L(1) \lra 0\, . 
\]
Since $h_0$ and $h_1$ annihilate $\tH^1(\scf(-1))$ (inside $\tH^1_\ast(\scf)$) 
one gets that $\tH^0(\scf(1)) \neq 0$ which \emph{contradicts} our hypothesis. 
It thus remains that $\tH^1_\ast(\scf)$ is generated by $\tH^1(\scf(-1))$. 

Consider, now, the universal extension$\, :$ 
\[
0 \lra \scf \lra \scg \lra \sco_\piii(1) \lra 0\, . 
\]
$\scg$ is a rank 3 reflexive sheaf with $\tH^1_\ast(\scg) = 0$. Moreover, 
since $\tH^2(\scg(-1)) \simeq \tH^2(\scf(-1)) = 0$ and $\tH^3(\scg(-2)) 
\simeq \tH^3(\scf(-2)) = 0$, $\scg$ is 1-regular. Using Riemann-Roch one 
gets $\h^0(\scg) = \h^0(\sco_\piii(1)) + \chi(\scf) = 1$ and 
$\h^0(\scg(1)) = \h^0(\sco_\piii(2)) + \chi(\scf(1)) = 8$ hence there exists 
an epimorphism $\sco_\piii(1) \oplus 4\sco_\piii \ra \scg(1)$. The kernel $K$ 
of this epimorphism is a rank 2 vector bundle $K$ with $\tH^1_\ast(K) = 0$ and 
$\tH^2_\ast(K) \simeq \tH^1_\ast(\scg(1)) = 0$ hence $K$ is a direct sum of 
line bundles. Since $c_1(K) = -2$ and $\tH^0(K) = 0$ it follows that $K 
\simeq 2\sco_\piii(-1)$.  
\end{proof}

\begin{cor}\label{C:(2;-1,3,1)} 
If $\scf$ is as in Lemma~\ref{L:(2;-1,3,1)}, then there exists a line 
$L \subset \piii$ such that $\sco_L(-3)$ is a direct summand of $\scf_L$. 
\end{cor}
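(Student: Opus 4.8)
The statement to be proved is that the rank $2$ reflexive sheaf $\scf$ of Lemma~\ref{L:(2;-1,3,1)} has a \emph{maximal jumping line}: a line $L$ with $\scf_L \simeq \sco_L(-3) \oplus \sco_L(2)$. Since $\det \scf \simeq \sco_\piii(-1)$ one has $\scf^\vee \simeq \scf(1)$, so the desired conclusion is equivalent to saying that $\scf_L(2)$ contains $\sco_L(-1)$ as a direct summand, i.e. that $\scf(2)$ \emph{fails to be globally generated} along $L$, and in the most negative way allowed. This is precisely the kind of statement that Lemma~\ref{L:kvbl} is designed to detect, and the plan is to reduce to it. First I would record, from Lemma~\ref{L:(2;-1,3,1)}, that $\scf(1) = \scf^\vee$ is the cohomology of the monad with terms $2\sco_\piii(-1)$, $\sco_\piii(1) \oplus 4\sco_\piii$, $\sco_\piii(2)$; dualizing this monad (all sheaves involved are reflexive of rank $2$) presents $\scf$ itself as the cohomology of
\[
0 \lra \sco_\piii(-2) \xra{\alpha^\vee} \sco_\piii(-1) \oplus 4\sco_\piii
\xra{\beta^\vee} 2\sco_\piii(1) \lra 0 ,
\]
in which $\beta^\vee$ is an epimorphism and $\alpha^\vee$ is a subbundle inclusion away from the unique non locally free point of $\scf$ (unique because $c_3(\scf) = 1 = \text{length}\, \sce xt^1(\scf,\sco_\piii)$).

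Next I would isolate the component $\gamma \colon 4\sco_\piii \to 2\sco_\piii(1)$ of $\beta^\vee$ coming from the free summand, i.e. the transpose of the linear part $\beta_2 \colon 2\sco_\piii(-1) \to 4\sco_\piii$ of the monad differential. The key numerical observation is that the induced map on sections $\tH^0(4\sco_\piii) = k^4 \to \tH^0(2\sco_\piii(1)) = k^8$ cannot be surjective for dimension reasons, so that $\tH^1(\Ker \gamma) \neq 0$. I would then split into two cases. If $\gamma$ is an epimorphism, Lemma~\ref{L:kvbl} (with $m = 4$) applies verbatim and yields a line $L$ with $(\Ker \gamma)_L \simeq \sco_L \oplus \sco_L(-2)$, whence $(\Ker \gamma)(1)$ is not globally generated. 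If $\gamma$ is not an epimorphism, then since $\beta^\vee = (\beta_1^\vee,\gamma)$ is an epimorphism its cokernel $\Cok \gamma$ is a cyclic quotient of $2\sco_\piii(1)$ (generated by the image of the $\sco_\piii(-1)$ summand), hence of the form $\sco_Z(-1)$ supported on a line or a small curve; this case pins down the distinguished line directly. In either case the restriction $\beta^\vee_L$ of the dual monad to a suitable such $L$ degenerates maximally.

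The last step is to feed this back into the defining sequence $0 \to \sco_L(-2) \to (\Ker \beta^\vee)_L \to \scf_L \to 0$ and conclude that $\scf_L$ acquires the summand $\sco_L(-3)$, and then to check that the jump is exactly to $-3$, the latter following from the general bound on the order of a jumping line in terms of $c_2(\scf) = 3$. I expect this transfer to be the main obstacle: unlike the non global generation of the single auxiliary bundle $\Ker \gamma$, the splitting type of $\scf_L$ is controlled not by one differential but by the connecting homomorphisms of the whole monad (a naive reading through $(\Ker \beta^\vee)_L$ alone can return the generic splitting, because the relevant $\sco_L(-3)$ is produced by the non-splitting of the extension by $\sco_L(-2)$, not by a summand of $(\Ker \beta^\vee)_L$). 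The careful point will therefore be to track how the degeneration of $\gamma_L$ forces a nonzero connecting map, and to use $\tH^0(\scf(1)) = 0$ — which is exactly what guarantees the monad has the shape of Lemma~\ref{L:(2;-1,3,1)} — to rule out the intermediate splitting types and leave only $\scf_L \simeq \sco_L(-3) \oplus \sco_L(2)$.
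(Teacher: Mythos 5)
Your reduction is aimed at the wrong differential of the monad, and the resulting gap is not a detail but the reason the argument cannot close. First, the two claims you make about the dualized monad are interchanged: since $c_3(\scf)=1$, the sheaf $\scf(1)$ fails to be locally free at exactly one point $p$, so the monomorphism $\beta$ of the monad of Lemma~\ref{L:(2;-1,3,1)} must drop rank at $p$; dually, $\alpha^\vee$ is a subbundle inclusion \emph{everywhere} (it is the dual of an epimorphism of bundles), while $\beta^\vee$ is \emph{not} an epimorphism: $\Cok \beta^\vee \simeq \sce xt^1_{\sco_\piii}(\scf(1),\sco_\piii)$, of length $1$, concentrated at $p$. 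Consequently your Case 1 is vacuous ($\gamma = \beta_2^\vee$ is already non-surjective at $p$, the rank of $\beta_2 \otimes k(p)$ being at most that of $\beta \otimes k(p) \leq 1$), and your Case 2 argues from a false premise and, even after correcting it, produces no line: the degeneracy locus of a $2 \times 4$ matrix of linear forms has expected codimension $3$, and nothing in your argument shows it contains a curve, let alone a line. The decisive point, though, is that the line Lemma~\ref{L:kvbl} would hand you in Case 1 provably fails the conclusion. Put $K' := \Ker \beta^\vee$. Along any line $L$ on which $\gamma \vb L$ is surjective (such an $L$ avoids $p$, so $\scf_L$ is locally free and $\sco_L(-2)$ is a line subbundle of $K'_L$ with quotient $\scf_L$) one has $0 \ra \Ker(\gamma \vb L) \ra K'_L \ra \sco_L(-1) \ra 0$, where $\Ker(\gamma \vb L) \subset 4\sco_L$ has splitting type $(0,-2)$ or $(-1,-1)$; the extension splits for degree reasons, so $K'_L$ is $\sco_L \oplus \sco_L(-1) \oplus \sco_L(-2)$ or $3\sco_L(-1)$. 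Writing $\scf_L \simeq \sco_L(a) \oplus \sco_L(-1-a)$ with $a \geq 0$ and dualizing the surjection $K'_L \ra \scf_L$, one gets $\scf_L^\vee(-2) \subset (K'_L)^\vee(-2)$, hence $a = \h^0(\scf_L^\vee(-2)) \leq \h^0((K'_L)^\vee(-2)) \leq 1$. So on every such line $\scf_L$ is $\sco_L \oplus \sco_L(-1)$ or $\sco_L(1) \oplus \sco_L(-2)$, never $\sco_L(2) \oplus \sco_L(-3)$: the maximal jumping line lies inside the degeneracy locus of $\gamma$, exactly where your method (which needs an epimorphism to invoke Lemma~\ref{L:kvbl}) is silent.

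What is missing is the paper's key idea: analyse the epimorphism $\alpha$ of the \emph{original} monad and look for $L$ inside the distinguished plane it determines. Write $\alpha = (\alpha_0 , \alpha_1)$ with $\alpha_0 \colon \sco_\piii(1) \ra \sco_\piii(2)$ and $\alpha_1 \colon 4\sco_\piii \ra \sco_\piii(2)$. One has $\alpha_0 \neq 0$ (otherwise $\sco_\piii(1) \subset \Ker \alpha$ would map injectively to $\scf(1)$, contradicting $\tH^0(\scf(1)) = 0$), so $\Cok \alpha_0 = \sco_H(2)$ for a plane $H$, and $\alpha_1$ induces an epimorphism $4\sco_H \ra \sco_H(2)$ whose map on sections $k^4 \ra k^6$ cannot be surjective. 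The argument of Lemma~\ref{L:kvbl}, run on $H \simeq \pii$ via Lemma~\ref{L:h1k}, yields a line $L \subset H$ with $\Ker(\alpha_1 \vb L) \simeq 2\sco_L \oplus \sco_L(-2)$, and --- this is the whole point of taking $L \subset H$ --- since $\alpha_0 \vb L = 0$ the summand $\sco_L(1)$ survives into the kernel: $(\Ker \alpha)_L \simeq \sco_L(1) \oplus 2\sco_L \oplus \sco_L(-2)$. Now restrict $0 \ra 2\sco_\piii(-1) \ra \Ker\alpha \ra \scf(1) \ra 0$ to $L$ and compose with the projection of $(\Ker\alpha)_L$ onto its summand $\sco_L(-2)$: this projection kills $2\sco_L(-1)$ because $\mathrm{Hom}(\sco_L(-1),\sco_L(-2)) = 0$, hence it factors through a surjection $\scf_L(1) \ra \sco_L(-2)$, which splits because its kernel is a line bundle of degree $3$ up to torsion of length at most $1$, so the relevant $\mathrm{Ext}^1$ vanishes. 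Twisting back, $\sco_L(-3)$ is a direct summand of $\scf_L$. Your closing instinct that the transfer to $\scf_L$ is the crux was correct, but the cure is not a finer study of $\gamma$ and connecting maps: it is the choice of the other differential, and of a line inside the plane $H$ on which its linear component vanishes, that makes the kernel large enough for the splitting argument to go through.
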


\begin{proof} 
Let $K$ be the kernel of the epimorphism $\alpha$ of the monad from the 
conclusion of Lemma~\ref{L:(2;-1,3,1)}, and let $\alpha_0 \colon \sco_\piii(1) 
\ra \sco_\piii(2)$ and $\alpha_1 \colon 4\sco_\piii \ra \sco_\piii(2)$ be the 
components of $\alpha$. There exists a plane $H \subset \piii$ such that the 
cokernel of $\alpha_0$ is $\sco_H(2)$. Applying an argument similar to that 
used in the proof of Lemma~\ref{L:kvbl} to $\alpha_1 \vb H \colon 4\sco_H \ra 
\sco_H(2)$, one deduces that there exists a line $L \subset H$ such 
that $K_L \simeq \sco_L(1) \oplus 2\sco_L \oplus \sco_L(-2)$. Since one has 
an exact sequence $0 \ra 2\sco_L(-1) \ra K_L \ra \scf_L(1) \ra 0$, the 
conclusion of the lemma follows.  
\end{proof}

\begin{remark}\label{R:(2;-1,3,1)} 
The monads of the form$\, :$ 
\[
0 \lra 2\sco_\piii(-1) \overset{\beta}{\lra} \sco_\piii(1) \oplus 4\sco_\piii 
\overset{\alpha}{\lra} \sco_\piii(2) \lra 0\, , 
\]
with $\tH^0(\alpha) \colon \tH^0(\sco_\piii(1) \oplus 4\sco_\piii) \ra 
\tH^0(\sco_\piii(2))$ injective can be put together into a family with 
irreducible base. 

\vskip2mm 

\noindent 
\emph{Indeed}, it suffices to show that $\tH^0(\alpha(1))$ is surjective. 
Let $\alpha_0 \colon \sco_\piii(1) \ra \sco_\piii(2)$ and $\alpha_1 \colon 
4\sco_\piii \ra \sco_\piii(2)$ be the components of $\alpha$. 
Applying an argument similar to that used in the proof of 
Lemma~\ref{L:mora2o(1)} to the epimorphism $\alpha_1 \vb H \colon 4\sco_H \ra 
\sco_H(2)$, one deduces that $\tH^0((\alpha_1 \vb H)(1)) \colon 
\tH^0(4\sco_H(1)) \ra \tH^0(\sco_H(3))$ is surjective and this implies that 
$\tH^0(\alpha(1))$ is surjective. 
\end{remark} 

\begin{lemma}\label{L:(2;-1,3,3)} 
Let $\scf$ be a rank $2$ reflexive sheaf on $\piii$ with $c_1(\scf) = -1$, 
$c_2(\scf) = 3$, $c_3(\scf) = 3$. If ${\fam0 H}^0(\scf) = 0$ and 
${\fam0 H}^2(\scf(-1)) = 0$ then $\scf(1)$ is the cohomology of a monad of 
the form$\, :$ 
\[
0 \lra 3\sco_\piii(-1) \lra 7\sco_\piii \lra 2\sco_\piii(1) \lra 0\, . 
\]
\end{lemma}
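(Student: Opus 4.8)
The plan is to run the argument of Lemma~\ref{L:(2;-1,3,1)} with the numerics adjusted to the present Chern classes. First I would record that $\tH^0(\scf) = 0$ makes $\scf$ stable: a rank~$2$ reflexive sheaf with $c_1 = -1$ satisfies $\scf^\vee \simeq \scf(1)$, so the only possible destabilizing subsheaf would come from a section of $\scf$. Following Hartshorne \cite[Sect.~7]{ha}, $\scf$ then has a spectrum $k_\scf = (k_1, k_2, k_3)$ with $-2\sum k_i = c_3(\scf) + c_2(\scf) = 6$, i.e. $\sum k_i = -3$. The hypothesis $\tH^2(\scf(-1)) = 0$ forces, via the property controlling $\tH^2$, every $k_i \geq -1$; together with $\sum k_i = -3$ this pins the spectrum down to $(-1,-1,-1)$. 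From the spectrum I read off $\tH^1(\scf(l)) = 0$ for $l \leq -1$ and $\tH^2(\scf(l)) = 0$ for $l \geq -1$, and from Riemann--Roch (Remark~\ref{R:chern}) that $\chi(\scf) = -2$, whence $\h^1(\scf) = 2$ (using $\tH^0(\scf) = \tH^2(\scf) = \tH^3(\scf) = 0$).

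Next I would show that the graded module $\tH^1_\ast(\scf)$ is generated by its degree-zero piece $\tH^1(\scf)$. Since $\tH^2(\scf(-1)) = 0$ and $\tH^3(\scf(-2)) \simeq \tH^0(\scf(-1))^\vee = 0$, the Castelnuovo--Mumford lemma (in the form of \cite[Lemma~1.21]{acm1}) gives generation in degrees $\leq 0$, and the vanishing $\tH^1(\scf(l)) = 0$ for $l \leq -1$ then forces generation exactly in degree $0$. I would then form the universal extension
\[
0 \lra \scf \lra \scg \lra 2\sco_\piii \lra 0
\]
attached to a basis of $\tH^1(\scf) \simeq \text{Ext}^1(2\sco_\piii , \scf)$. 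By construction the connecting maps $\tH^0(2\sco_\piii(l)) \ra \tH^1(\scf(l))$ are the module-multiplication maps out of the degree-zero generators, hence surjective for all $l$ (the target vanishing for $l \leq -1$); therefore $\tH^1_\ast(\scg) = 0$. Since $\h^1(\scg) = 0$, $\tH^2(\scg(-1)) \simeq \tH^2(\scf(-1)) = 0$ and $\tH^3(\scg(-2)) \simeq \tH^3(\scf(-2)) = 0$, the rank-$4$ reflexive sheaf $\scg$ is $1$-regular.

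Finally I would resolve $\scg(1)$. From $1$-regularity, $\scg(1)$ is globally generated with $\tH^1(\scg(1)) = 0$, and Riemann--Roch gives $\h^0(\scg(1)) = \chi(\scg(1)) = \chi(\scf(1)) + 8 = 7$. The evaluation sequence reads
\[
0 \lra K \lra 7\sco_\piii \lra \scg(1) \lra 0 .
\]
Because $\scg(1)$ is reflexive on a smooth threefold it has projective dimension $\leq 1$, so $K$ is locally free of rank $3$; from the sequence $\tH^1_\ast(K) = 0$ (using that $\tH^0(\scg(1)) \otimes S_l \ra \tH^0(\scg(1+l))$ is surjective) and $\tH^2_\ast(K) \simeq \tH^1_\ast(\scg(1)) = 0$, so Horrocks' criterion makes $K$ a sum of line bundles. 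Using the canonical evaluation map one has $\tH^0(K) = 0$ and $c_1(K) = -c_1(\scg(1)) = -3$, which together force $K \simeq 3\sco_\piii(-1)$. Splicing $0 \ra 3\sco_\piii(-1) \ra 7\sco_\piii \ra \scg(1) \ra 0$ with the twist $0 \ra \scf(1) \ra \scg(1) \ra 2\sco_\piii(1) \ra 0$ of the universal extension exhibits $\scf(1)$ as the cohomology of the desired monad.

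The main obstacle is controlling the \emph{entire} module $\tH^1_\ast(\scf)$ rather than a few of its graded pieces: the argument collapses unless $\tH^1_\ast(\scf)$ is generated precisely in degree $0$ (so that $\scg$ has no intermediate cohomology) and $K$ turns out to have no sections (so that no $\sco_\piii$ summand sneaks in). Both hinge on the spectrum computation and on using the canonical evaluation map; verifying that $K$ is genuinely locally free, via the projective-dimension bound for reflexive sheaves, is the other point that must not be skipped.
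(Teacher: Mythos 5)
Your proof is correct and follows exactly the route the paper indicates: it establishes that $\scf$ is stable with spectrum $(-1,-1,-1)$ and then runs the argument of Lemma~\ref{L:(2;-1,3,1)} (universal extension killing $\tH^1_\ast$, $1$-regularity, evaluation sequence, Horrocks' criterion), which is precisely the sketch the paper gives when it defers the details to \cite[Sect.~5]{acm4}. The only refinements beyond the template are welcome ones — here the generation of $\tH^1_\ast(\scf)$ in degree $0$ is immediate from the vanishing of $\tH^1(\scf(l))$ for $l \leq -1$, and you make explicit the local-freeness of $K$ via the depth/projective-dimension bound for reflexive sheaves, which the paper uses tacitly.
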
 

The proof is similar to that of Lemma~\ref{L:(2;-1,3,1)} and can be found in 
\cite[Sect.~5]{acm4} (the idea is that the reflexive sheaf $\scf$ from 
Lemma~\ref{L:(2;-1,3,3)} is stable with spectrum $k_\scf = (-1,-1,-1)$).  

\begin{cor}\label{C:(2;-1,3,3)} 
If $\scf$ is as in Lemma~\ref{L:(2;-1,3,3)}, then there exists a line 
$L \subset \piii$ such that $\sco_L(-3)$ is a direct summand of $\scf_L$. 
\end{cor}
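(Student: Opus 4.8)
The plan is to imitate the proof of Corollary~\ref{C:(2;-1,3,1)}, but this time Lemma~\ref{L:kvbl} applies directly, with no auxiliary restriction to a plane, because the middle term of the monad is trivial. Write the monad of $\scf(1)$ furnished by Lemma~\ref{L:(2;-1,3,3)} as
\[
0 \lra 3\sco_\piii(-1) \overset{\beta}{\lra} 7\sco_\piii
\overset{\alpha}{\lra} 2\sco_\piii(1) \lra 0
\]
and set $K := \Ker \alpha$. Then $\beta$ identifies $3\sco_\piii(-1)$ with a subsheaf of $K$, and one has an exact sequence
\[
0 \lra 3\sco_\piii(-1) \lra K \lra \scf(1) \lra 0\, .
\]

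First I would observe that $\tH^1(K) \neq 0$ for purely numerical reasons. From the defining sequence $0 \ra K \ra 7\sco_\piii \overset{\alpha}{\ra} 2\sco_\piii(1) \ra 0$ one gets $\tH^1(K) \simeq \Cok \tH^0(\alpha)$, and $\tH^0(\alpha)$ is a $k$-linear map from $\tH^0(7\sco_\piii) = k^7$ to $\tH^0(2\sco_\piii(1)) = k^8$, so it cannot be surjective. Since $\alpha$ is an epimorphism $7\sco_\piii \ra 2\sco_\piii(1)$ and $\tH^1(K) \neq 0$, Lemma~\ref{L:kvbl} (with $m = 7$, $n = 3$) produces a line $L \subset \piii$ such that $K_L \simeq 4\sco_L \oplus \sco_L(-2)$.

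Next I would restrict the displayed short exact sequence to $L$. Since $- \otimes_{\sco_\piii} \sco_L$ is right exact, this yields an epimorphism
\[
3\sco_L(-1) \overset{\overline{\beta}}{\lra} K_L \simeq 4\sco_L \oplus \sco_L(-2) \lra \scf_L(1) \lra 0\, .
\]
The component of $\overline{\beta}$ landing in the summand $\sco_L(-2)$ is a morphism $3\sco_L(-1) \ra \sco_L(-2)$, which must vanish because $\text{Hom}(\sco_L(-1),\sco_L(-2)) = \tH^0(\sco_L(-1)) = 0$. Hence $\text{Im}\,\overline{\beta} \subseteq 4\sco_L$, and therefore
\[
\scf_L(1) \simeq \bigl(4\sco_L/\text{Im}\,\overline{\beta}\bigr) \oplus \sco_L(-2)\, .
\]
Thus $\sco_L(-2)$ is a direct summand of $\scf_L(1)$, equivalently $\sco_L(-3)$ is a direct summand of $\scf_L$, which is the desired conclusion.

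I do not expect a genuine obstacle here, as this is essentially a transcription of the proof of Corollary~\ref{C:(2;-1,3,1)} in the cleaner situation where Lemma~\ref{L:kvbl} can be invoked verbatim. The only point that deserves a word of care is that $\scf(1)$ is merely reflexive, not locally free, so that $- \otimes \sco_L$ is only right exact and a priori introduces a $\text{Tor}_1$ term on the left; but the argument uses solely the surjectivity of $\overline{\beta}$ onto $\scf_L(1)$ and the vanishing of the relevant $\text{Hom}$ group, so that possible torsion contribution is irrelevant.
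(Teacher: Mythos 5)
Your proof is correct and follows essentially the same route as the paper's own argument: the paper likewise takes $K = \Ker\alpha$, invokes Lemma~\ref{L:kvbl} (the hypothesis $\tH^1(K) \neq 0$ being automatic since a map $k^7 \ra k^8$ cannot surject) to get $K_L \simeq 4\sco_L \oplus \sco_L(-2)$, and concludes from the restricted sequence $0 \ra 3\sco_L(-1) \ra K_L \ra \scf_L(1) \ra 0$. The only difference is that you spell out the details the paper leaves implicit — the vanishing of $\mathrm{Hom}(\sco_L(-1),\sco_L(-2))$ that splits off the summand, and the remark that any $\mathrm{Tor}_1$ contribution from the non-locally-free sheaf $\scf(1)$ is harmless — which is a welcome clarification rather than a departure.
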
 

\begin{proof} 
Let K be the kernel of the epimorphism $7\sco_\piii \ra 2\sco_\piii(1)$ of the 
monad from the conclusion of Lemma~\ref{L:(2;-1,3,3)}. Lemma~\ref{L:kvbl} 
implies that there exists a line $L \subset \piii$ such that $K_L \simeq 
4\sco_L \oplus \sco_L(-2)$. Using the exact sequence $0 \ra 3\sco_L(-1) 
\ra K_L \ra \scf_L(1) \ra 0$ one gets the conclusion. 
\end{proof} 

\begin{proof}[Proof of Prop.~\ref{P:h0e(-2)neq0}]  
We split the proof into several cases according to the values of $c_2$. 
For $7 \leq c_2 \leq 9$ we use the exact sequence \eqref{E:erasciz(3)} and 
the fact, noticed right after that exact sequence, that the rank $r$ of $E$ 
can be expressed by the formula $r = 2 + \h^0(\omega_Z(1))$. 

\vskip2mm 

\noindent 
{\bf Case 1.}\quad $c_2 = 7$. 

\vskip2mm 

\noindent 
In this case, $Z$ must be a line. It follows that $\h^0(\omega_Z(1)) = 0$, 
hence $r = 2$. But \emph{this is not possible} because the Chern classes of 
$E$ must satisfy the relation $c_3 \equiv c_1c_2 \pmod{2}$. Consequently, 
\emph{this case cannot occur}. 

\vskip2mm 

\noindent 
{\bf Case 2.}\quad $c_2 = 8$. 

\vskip2mm 

\noindent 
In this case, $Z$ has degree 2. If $\tH^0(\sci_Z(1)) \neq 0$ then $Z$ is a 
complete intersection of type $(1,2)$. In particular, $\h^0(\omega_Z(1)) = 1$ 
hence $r = 3$. One deduces, from the exact sequence \eqref{E:erasciz(3)}, an 
exact sequence$\, :$ 
\[
0 \lra \sco_\piii \lra 2\sco_\piii(2) \oplus \sco_\piii(1) \oplus \sco_\piii 
\lra E \lra 0\, ,  
\]    
hence $E \simeq 2\sco_\piii(2) \oplus \sco_\piii(1)$. 

If $\tH^0(\sci_Z(1)) = 0$ then either $Z$ is the disjoint union of two lines 
or $Z$ is a double structure on a line $L$ such that $\sci_L/\sci_Z \simeq 
\sco_L(l)$, for some $l \geq 0$ (one cannot have $l = -1$ because, in that 
case, $\tH^0(\sci_Z(1)) \neq 0$). Using the last part of 
Remark~\ref{R:multilines}(a), one deduces that $\h^0(\omega_Z(1)) = 0$ hence 
$r = 2$. Then $G := E(-3)$ is a rank 2 vector bundle with $c_1(G) = -1$, 
$c_2(G) = 2$. Since $\tH^0(E(-3)) = 0$ one has $\tH^0(G) = 0$ hence $G$ is 
stable. 

\vskip2mm 

\noindent 
{\bf Case 3.}\quad $c_2 = 9$. 

\noindent 
In this case, $Z$ has degree 3. Using Lemma~\ref{L:h0e(-1)>4}, one can assume 
that $\h^0(E(-1)) = 4$, hence that $\tH^0(\sci_Z(2)) = 0$. This implies, in 
particular, that $Z$ cannot be reduced. It follows that either $Z = X \cup 
L^\prime$, where $X$ is a double structure on a line $L$ and $L^\prime$ is 
another line, or $Z$ is a triple structure on a line $L$. 

In the former case, $L$ and $L^\prime$ must be disjoint (otherwise 
$\tH^0(\sci_Z(2)) \neq 0$ because $\sci_L^2 \subset \sci_X$) and, moreover, 
$\sci_L/\sci_X \simeq \sco_L(l)$ with $l \geq 0$ (if one would have $l = -1$ 
then $X$ would be contained in a plane and this would imply that 
$\tH^0(\sci_Z(2)) \neq 0$). 

In the latter case, as we saw in Remark~\ref{R:multilines}(b), $Z$ contains 
a double structure $X^\prim$ on $L$ such that $\sci_L/\sci_{X^\prim} \simeq 
\sco_L(l)$, with $l \geq -1$, and $\sci_{X^\prim}/\sci_Z \simeq \sco_L(2l + m)$, 
with $m \geq 0$. Actually, one cannot have $l = -1$ because, then, $X^\prim$ 
would be contained in a plane and this would imply that $\tH^0(\sci_Z(2)) 
\neq 0$ (since $\sci_L\sci_{X^\prim} \subset \sci_Z$). 

One deduces that, in both cases, $\h^0(\omega_Z(1)) = 0$ hence $r = 2$. But 
\emph{this is not possible} because $c_3 \equiv c_1c_2 \pmod{2}$. 
Consequently, \emph{the case} $c_2 = 9$ \emph{cannot occur} under the 
assumption that $\h^0(E(-1)) = 4$.    

\vskip2mm 

For $10 \leq c_2 \leq 12$ we use another approach. Taking into account  
Lemma~\ref{L:h0e(-1)>4}, \emph{we assume that} $\h^0(E(-1)) = 4$. The curve $Y$ 
from the exact sequence \eqref{E:oeiy(5)} has the property that 
$\tH^0(\sci_Y(2)) = 0$, $\tH^0(\sci_Y(3)) \neq 0$, and $\sci_Y(5)$ is globally 
generated. It follows that $Y$ is directly linked, by a complete intersection 
of type $(3,5)$ to a curve $Y^\prim$, of degree $15 - \text{deg}\, Y = 15 - 
c_2$, locally Cohen-Macaulay, and locally complete intersection except at 
finitely many points. From the fundamental exact sequence of liaison$\, :$ 
\[
0 \lra \sco_\piii(-8) \lra \sco_\piii(-3) \oplus \sco_\piii(-5) \lra \sci_Y 
\lra \omega_{Y^\prim}(-4) \lra 0\, , 
\]  
one gets that $\tH^0(\omega_{Y^\prim}) = 0$ (because $\h^0(E(-1)) = 4$) and that 
$\omega_{Y^\prim}(1)$ is globally generated. A general global section of 
$\omega_{Y^\prim}(1)$ generates this sheaf except at finitely many points hence 
it defines an extension$\, :$ 
\[
0 \lra \sco_\piii \lra \scf^\prim(2) \lra \sci_{Y^\prim}(3) \lra 0\, , 
\]
where $\scf^\prim$ is a rank 2 reflexive sheaf with Chern classes $c_1^\prim = 
-1$, $c_2^\prim = \text{deg}\, Y^\prim - 2 = 13 - c_2$ and $c_3^\prim$.  

Moreover, using Ferrand's result about free resolution under liaison, one 
deduces, from the exact sequence \eqref{E:oeiy(5)}, an exact sequence$\, :$ 
\[
0 \lra E^\vee \lra r\sco_\piii \oplus \sco_\piii(-2) \lra \sci_{Y^\prim}(3) 
\lra 0\, , 
\] 
from which one gets an exact sequence$\, :$ 
\begin{equation}\label{E:edualscfprim}  
0 \lra E^\vee \lra (r + 1)\sco_\piii \oplus \sco_\piii(-2) \lra \scf^\prim(2) 
\lra 0\, . 
\end{equation}

\noindent 
{\bf Case 4.}\quad $c_2 = 10$. 

\vskip2mm 

\noindent 
In this case, $Y^\prim$ has degree 5. We assert that $\tH^0(\sci_{Y^\prim}(2)) 
= 0$. \emph{Indeed}, if $\tH^0(\sci_{Y^\prim}(2)) \neq 0$ then, since $Y^\prim$ 
is contained in an irreducible surface of degree 3, it is directly linked, 
be a complete intersection of type $(2,3)$, to a line $L^\prime$. One deduces, 
then, from the fundamental exact sequence of liaison$\, :$ 
\[
0 \lra \sco_\piii(-5) \lra \sco_\piii(-2) \oplus \sco_\piii(-3) \lra 
\sci_{L^\prime} \lra \omega_{Y^\prim}(-1) \lra 0\, , 
\]  
that $\tH^0(\omega_{Y^\prim}) \neq 0$, which \emph{contradicts} our assumption 
that $\h^0(E(-1)) = 4$. 

It thus remains that $\tH^0(\sci_{Y^\prim}(2)) = 0$ hence $\tH^0(\scf^\prim(1)) 
= 0$. In particular, $\scf^\prim$ is stable. One also has $\tH^2(\sci_{Y^\prim}) 
\simeq \tH^1(\sco_{Y^\prim}) \simeq \tH^0(\omega_{Y^\prim})^\vee = 0$, hence 
$\tH^2(\scf^\prim(-1)) = 0$. Since $c_1^\prim = -1$ and $c_2^\prim = 3$, one 
deduces that the possible spectra of $\scf^\prim$ are $k_{\scf^\prim} = 
(0,-1,-1)$ and $k_{\scf^\prim} = (-1,-1,-1)$ (see Hartshorne \cite[Sect.~7]{ha}  
for information about the spectrum of a stable rank 2 reflexive sheaf). In 
particular, either $c_3^\prim = 1$ or $c_3^\prim = 3$. 

Now, Cor.~\ref{C:(2;-1,3,1)} and Cor.~\ref{C:(2;-1,3,3)} imply that there 
exists a line $L \subset \piii$ and an epimorphism $\scf^\prim(2) \ra 
\sco_L(-1)$. One deduces that there is no epimorphism $(r + 1)\sco_\piii \oplus 
\sco_\piii(-2) \ra \scf^\prim(2)$ which \emph{contradicts} the existence of an 
exact sequence \eqref{E:edualscfprim}. Consequently, \emph{the case} $c_2 = 10$ 
\emph{cannot occur} under the assumption that $\h^0(E(-1)) = 4$. 

\vskip2mm 

\noindent 
{\bf Case 5.}\quad $c_2 = 11$. 

\vskip2mm 

\noindent 
In this case, $Y^\prim$ has degree 4 and $\scf^\prim$ has Chern classes 
$c_1^\prim = -1$, $c_2^\prim = 2$. One cannot have $\tH^0(\sci_{Y^\prim}(1)) \neq 
0$ because this would contradict the fact that $\tH^0(\omega_{Y^\prim}) = 0$ 
(which is a consequence of our assumption that $\h^0(E(-1)) = 4$).  
It follows that $\tH^0(\scf^\prim) = 0$ hence $\scf^\prim$ is stable. By 
\cite[Thm.~8.2(d)]{ha}, one must have $c_3^\prim \in \{0,\, 2,\, 4\}$. 

\vskip2mm 

\noindent 
{\bf Subcase 5.1.}\quad $c_3^\prim = 0$. 

\vskip2mm 

\noindent 
In this subcase, by Hartshorne and Sols \cite[Prop.~4.1(d)]{hs}, there 
exists a line $L \subset \piii$ and an epimorphism $\scf^\prim(2) \ra 
\sco_L(-1)$. It follows, as at the end of Case 4, that \emph{this subcase 
cannot occur}.  

\vskip2mm 

\noindent 
{\bf Subcase 5.2.}\quad $c_3^\prim = 2$. 

\vskip2mm 

\noindent 
In this subcase, by Chang~\cite[Lemma~2.4]{ch}, one has an exact 
sequence$\, :$ 
\[
0 \lra \sco_\piii(-1) \lra \scf^\prim \lra \sci_X \lra 0\, ,
\]         
where $X$ is either the union of two disjoint lines or a double structure on 
a line $L$ such that $\sci_L/\sci_X \simeq \sco_L$. One deduces that 
$\scf^\prim$ is 2-regular. Using, now, the exact sequence 
\eqref{E:edualscfprim}, one gets that $\sco_\piii(2)$ is a direct summand of 
$E$. One can be, actually, more specific$\, :$ the kernel $K$ of the evaluation 
morphism $\tH^0(\sci_X(2)) \otimes_k \sco_\piii \ra \sci_X(2)$ has $\tH^1_\ast(K) 
= 0$ and $\tH^2_\ast(K) \simeq {\underline k}(2)$ hence it is isomorphic to 
$\text{T}_\piii(-2)$. It follows that the kernel of the evaluation morphism 
of $\scf^\prim(2)$ is isomorphic to $\Omega_\piii(1) \oplus \text{T}_\piii(-2)$ 
and one deduces that $E \simeq \sco_\piii(2) \oplus \text{T}_\piii(-1) \oplus 
\Omega_\piii(2)$. 

\vskip2mm 

\noindent 
{\bf Subcase 5.3.}\quad $c_3^\prim = 4$. 

\vskip2mm 

\noindent 
In this subcase, by Hartshorne \cite[Lemma~9.6]{ha}, there is an exact 
sequence$\, :$ 
\[
0 \lra \sco_\piii(-1) \lra \scf^\prim \lra \sci_X \lra 0\, , 
\]
where $X$ is a complete intersection of type $(1,2)$. It follows that 
\emph{this subscase cannot occur} because, on one hand, $\h^2(\scf^\prim(-1)) 
= \h^2(\sci_X(-1)) = \h^1(\sco_X(-1)) = 1$ while, on the other hand, 
$\h^2(\scf^\prim(-1)) = \h^2(\sci_{Y^\prim}) = \h^1(\sco_{Y^\prim}) = 
\h^0(\omega_{Y^\prim}) = 0$. 

\vskip2mm 

\noindent 
{\bf Case 6.}\quad $c_2 = 12$. 

\vskip2mm 

\noindent 
In this case, $Y^\prim$ has degree 3 and $\scf^\prim$ has Chern classes 
$c_1^\prim = -1$, $c_2^\prim = 1$. One cannot have $\tH^0(\sci_{Y^\prim}(1)) \neq 
0$ because this would contradict the fact $\tH^0(\omega_{Y^\prim}) = 0$ (which 
is a consequence of our assumption that $\h^0(E(-1)) = 4$). It follows that 
$\tH^0(\scf^\prim) = 0$ hence $\scf^\prim$ is stable. By \cite[Thm.~8.2(d)]{ha}, 
one must have $c_3^\prim = 1$. Now, \cite[Lemma~9.4]{ha} implies that one has 
an exact sequence$\, :$ 
\[
0 \lra \sco_\piii(-1) \lra \scf^\prim \lra \sci_L \lra 0\, , 
\]    
for some line $L \subset \piii$. It follows that $\scf^\prim$ is 1-regular. 
Using the exact sequence \eqref{E:edualscfprim}, one deduces that 
$\sco_\piii(2)$ is a direct summand of $E$. Actually, the kernel of the 
evaluation morphism $\tH^0(\sci_L(2)) \otimes_k \sco_\piii \ra \sci_L(2)$ is 
isomorphic to $2\Omega_\piii(1)$ hence $E \simeq \sco_\piii(2) \oplus 
3\text{T}_\piii(-1)$. 
\end{proof}

\section{The spectrum of a stable rank 3 vector 
bundle}\label{A:spectrum} 

We prove, in this appendix, the properties of the spectrum of a stable rank 
3 vector bundle with $c_1 = -1$ on $\piii$ stated in Remark~\ref{R:spectrumf} 
following the method used by Hartshorne \cite[Sect.~7]{ha} and 
\cite[Prop.~5.1]{ha2} in the case of stable rank 2 reflexive sheaves. These 
results can be found in Okonek and Spindler \cite{oks2}, \cite{oks3} and in 
\cite{coa} but we give a quite different proof of the key technical point of 
the construction (Prop.~\ref{P:nsubseth1astf}(a) below) using arguments 
extracted from the proof of a result of Drezet and Le Potier 
\cite[Prop.~(2.3)]{dlp}. The rest of the arguments imitate Hartshorne's 
arguments.  

We prove firstly, as Hartshorne does in \cite[Sect.~5]{ha}, some auxiliary 
results about stable vector bundles on $\pii$. 

\begin{lemma}\label{L:c1(sct)} 
Let $\sct$ be a torsion coherent sheaf on $\pii$. Then $c_1(\sct) \geq 0$ and 
$c_1(\sct) = 0$ if and only if the support of $\sct$ consists of finitely 
many points. 
\end{lemma}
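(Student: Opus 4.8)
The plan is to reduce the statement about the first Chern class of a torsion sheaf on $\pii$ to a computation on the generic restriction to a line, using the Riemann--Roch formula for $\pii$ recalled in Remark~\ref{R:chern}(a). First I would observe that a torsion sheaf $\sct$ on $\pii$ has rank $0$, so formula \eqref{E:c2onp2} (or rather its rank-zero specialization) expresses the relevant invariants in terms of Euler characteristics. The cleanest route, however, is geometric: restrict $\sct$ to a \emph{general} line $L \subset \pii$.

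The key steps, in order, are as follows. I would choose a general line $L \subset \pii$, general enough that $L$ meets the (at most $1$-dimensional) support of $\sct$ properly; that is, $L$ avoids the finitely many points where $\dim \text{Supp}\, \sct = 0$ contributes, and $L$ meets each $1$-dimensional component of $\text{Supp}\, \sct$ transversally in finitely many points. Restricting the exact sequence that presents $c_1$ to $L$ (as in the proof of Remark~\ref{R:chern}(a), where $c_1$ is read off from the restriction to a line), I would identify $c_1(\sct)$ with the length of $\sct_L := \sct \otimes_{\sco_\pii} \sco_L$, equivalently with $\chi(\sct_L)$, which is manifestly $\geq 0$ since $\sct_L$ is a torsion (finite-length) sheaf on $L \simeq \pj$. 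This gives $c_1(\sct) \geq 0$ immediately. For the equality case, I would argue that $c_1(\sct) = 0$ forces $\sct_L = 0$ for the general line $L$; since a general line through any point of a $1$-dimensional component of $\text{Supp}\, \sct$ meets that component, the vanishing $\sct_L = 0$ for general $L$ rules out any $1$-dimensional component, leaving $\text{Supp}\, \sct$ $0$-dimensional (finitely many points). Conversely, if $\text{Supp}\, \sct$ is finite then a general $L$ misses it entirely, so $\sct_L = 0$ and $c_1(\sct) = 0$.

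I expect the main obstacle to be making the identification $c_1(\sct) = \text{length}(\sct_L)$ fully rigorous for an arbitrary (possibly non-reduced, non-pure) torsion sheaf, rather than a locally free or structure sheaf. The safe way is to take a finite locally free resolution $0 \to \scf_1 \to \scf_0 \to \sct \to 0$ on $\pii$ (which exists by Hilbert's syzygy theorem, since $\pii$ is smooth of dimension $2$ and $\sct$ has a resolution of length $\leq 2$, with the kernel of $\scf_0 \to \sct$ automatically locally free) and use additivity of Chern classes: $c_1(\sct) = c_1(\scf_0) - c_1(\scf_1)$, which is a genuine integer. Then $c_1(\sct)$ equals the degree of the divisor class cut out by the determinant of the presentation, and restricting to a general $L$ converts this into $\deg(\det \scf_0 - \det \scf_1)|_L = \chi(\scf_{0,L}) - \chi(\scf_{1,L}) = \chi(\sct_L) = \text{length}(\sct_L)$, the last equality because $\sct_L$ has finite length. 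The genericity of $L$ is what guarantees the restricted resolution stays exact (so that $\sct_L$ is computed correctly and no higher Tor terms interfere), and I would invoke the standard fact that restriction to a general line is exact on a fixed bounded complex of sheaves on $\pii$.

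Once this identification is secured, both the inequality $c_1(\sct) \geq 0$ and the characterization of equality follow formally, and the lemma is proved. The only subtlety worth flagging explicitly in the write-up is the transversality choice for $L$ in the equality case: one wants $\sct_L = 0$ to propagate back to a statement about $\text{Supp}\, \sct$, and the cleanest formulation is that $c_1(\sct)$ counts, with multiplicity, the intersection of $\text{Supp}\, \sct$ (its $1$-dimensional part) with $L$, so $c_1(\sct) = 0$ iff that $1$-dimensional part is absent.
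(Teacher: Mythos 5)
Your approach---restrict to a general line---is exactly the paper's proof, whose entire text is ``One restricts $\sct$ to a general line $L \subset \pii$''; your write-up simply supplies the details the paper leaves implicit, and its overall structure (identify $c_1(\sct)$ with $\text{length}(\sct_L)$, then read off both the inequality and the equality case) is sound.

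One subsidiary assertion is false, however: the kernel of $\scf_0 \ra \sct$ is \emph{not} automatically locally free for a torsion sheaf on $\pii$. A skyscraper sheaf $\sco_{\{x\}}$ has homological dimension $2$, with Koszul resolution $0 \ra \sco_\pii(-2) \ra 2\sco_\pii(-1) \ra \sco_\pii \ra \sco_{\{x\}} \ra 0$, and the same failure occurs for any $\sct$ whose support has isolated points or embedded points. This does not sink the argument, because the alternating sums you need are insensitive to the length of the resolution: with $0 \ra \scf_2 \ra \scf_1 \ra \scf_0 \ra \sct \ra 0$ one still has $c_1(\sct) = c_1(\scf_0) - c_1(\scf_1) + c_1(\scf_2)$ and $\sum_i (-1)^i \, \text{rk}\, \scf_i = 0$, so restriction to a general $L$ (where the restricted complex stays exact, by the Tor-vanishing you invoke) still yields $c_1(\sct) = \chi(\sct_L) = \text{length}(\sct_L)$. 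A cleaner route avoids resolutions altogether: a general line $L$, with equation $\ell$, passes through none of the associated points of $\sct$ (the generic points of the $1$-dimensional components of $\text{Supp}\, \sct$ together with finitely many embedded or isolated points), so multiplication by $\ell$ is injective on $\sct$ and
\[
0 \lra \sct(-1) \overset{\ell}{\lra} \sct \lra \sct_L \lra 0
\]
is exact; then $\text{length}(\sct_L) = \chi(\sct) - \chi(\sct(-1)) = c_1(\sct)$ by the rank-$0$ case of Riemann--Roch (Remark~\ref{R:chern}(a)). The inequality is then immediate, and for the equality case your Nakayama-type observation --- that $(\sct_L)_x = \sct_x/\ell\sct_x \neq 0$ at any point $x$ where a general $L$ meets a $1$-dimensional component of $\text{Supp}\, \sct$ --- finishes the proof exactly as you describe.
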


\begin{proof} 
One restricts $\sct$ to a general line $L \subset \pii$. 
\end{proof}

\begin{lemma}\label{L:gggc1=0} 
Let $F$ be a vector bundle of rank $r$ on $\pii$. If the evaluation morphism 
${\fam0 H}^0(F) \otimes_k \sco_\pii \ra F$ is generically an epimorphism then 
$c_1(F) \geq 0$ and if $c_1(F) = 0$ then $F$ is trivial, that is, $F \simeq 
r\sco_\pii$.  
\end{lemma}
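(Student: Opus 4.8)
The plan is to prove the two assertions by a single induction-free global section argument, reducing everything to the behaviour of $c_1$ under generic global generation. First I would recall that the hypothesis says the evaluation morphism $\text{ev}\colon \tH^0(F)\otimes_k\sco_\pii \ra F$ is an isomorphism over a nonempty (hence dense) open subset of $\pii$. Let $\scc$ denote its cokernel and $K$ its image sheaf, so that $K$ is a subsheaf of $F$ of rank $r$ (since $\text{ev}$ is generically surjective onto the rank-$r$ bundle $F$) and $\scc = F/K$ is a torsion sheaf supported on the proper closed subset where $\text{ev}$ drops rank. The first step is to observe that $c_1(F)=c_1(K)+c_1(\scc)$, and that $\scc$ is a torsion coherent sheaf, so by Lemma~\ref{L:c1(sct)} we have $c_1(\scc)\geq 0$.

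Next I would control $c_1(K)$. Writing $N$ for the kernel of $\text{ev}$, we have an exact sequence $0\ra N \ra \tH^0(F)\otimes_k\sco_\pii \ra K \ra 0$, so $c_1(K)=-c_1(N)$. The subtlety is that $N$ is a subsheaf of a trivial bundle, hence torsion-free, and I would pass to its reflexive hull (a vector bundle on the smooth surface $\pii$); a torsion-free subsheaf of $h\sco_\pii$ has $c_1\le 0$ because any of its nonzero global sections, or more safely a general quotient argument, shows its determinant is a subsheaf of $\sco_\pii$ of nonpositive degree. Concretely, $N$ embeds in $\tH^0(F)\otimes_k\sco_\pii$, so $\det N$ embeds (as a rank-one subsheaf) into an exterior power of a trivial bundle, forcing $c_1(N)\le 0$ and hence $c_1(K)\ge 0$. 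Combining with $c_1(\scc)\ge 0$ gives $c_1(F)=c_1(K)+c_1(\scc)\ge 0$, which is the first assertion.

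For the equality case, suppose $c_1(F)=0$. Then both nonnegative summands vanish: $c_1(\scc)=0$ and $c_1(K)=0$, i.e. $c_1(N)=0$. By Lemma~\ref{L:c1(sct)}, $c_1(\scc)=0$ forces $\text{Supp}\,\scc$ to be $0$-dimensional. On the other hand, $N$ is a torsion-free sheaf with $c_1(N)=0$ sitting inside a trivial bundle; I would argue that such an $N$ must itself be trivial. The cleanest route is to note that $N$ is reflexive (a rank-$s$ subsheaf of a trivial bundle on a smooth surface that is saturated is reflexive, and we may saturate $N$ inside $\tH^0(F)\otimes_k\sco_\pii$ without changing $c_1$ or the cokernel's rank), and a reflexive sheaf on $\pii$ is locally free; a subbundle $N$ of $h\sco_\pii$ with $c_1(N)=0$ splits off as a trivial subbundle, so $K\simeq (\,\tH^0(F)\otimes\sco_\pii)/N$ is trivial of rank $r$ with $c_1=0$, i.e. $K\simeq r\sco_\pii$.

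Finally I would show $\scc=0$, which upgrades $K\izo F$. Since $c_1(\scc)=0$ the support of $\scc$ is finite, but $F$ is locally free and $K\simeq r\sco_\pii \ra F$ is a morphism of vector bundles of the same rank $r$ which is an isomorphism in codimension one; its determinant is a section of $\sco_\pii$ (as $c_1(F)=c_1(K)=0$) vanishing exactly on $\text{Supp}\,\scc$, and a global section of $\sco_\pii$ is a constant, hence either identically zero or nowhere zero. It cannot be identically zero because $K\ra F$ is generically an isomorphism, so it is a nonzero constant, the determinant is invertible everywhere, and $K\ra F$ is an isomorphism. Therefore $F\simeq r\sco_\pii$. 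The main obstacle I anticipate is the careful bookkeeping that $N$ (or its saturation) is genuinely a trivial subbundle when $c_1(N)=0$: this is where one must use that the ambient bundle is trivial together with $\pii$ being a smooth projective surface, so that a $c_1$-zero reflexive subsheaf of a trivial bundle is forced to split; everything else is a routine determinant computation.
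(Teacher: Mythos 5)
Your decomposition argument for the first assertion is correct, and it is a genuinely different route from the paper's: writing $N$, $K$, $\scc$ for the kernel, image and cokernel of the evaluation morphism, additivity gives $c_1(F) = c_1(K) + c_1(\scc)$, Lemma~\ref{L:c1(sct)} gives $c_1(\scc) \geq 0$, and your exterior-power argument (the torsion-free quotient of ${\bigwedge}^s N$ injects into ${\bigwedge}^s$ of a trivial bundle, then project to a factor $\sco_\pii$) correctly yields $c_1(N) \leq 0$, hence $c_1(K) = -c_1(N) \geq 0$. Note in passing that the saturation discussion is unnecessary: $N$ is automatically saturated, because $K \subset F$ is torsion-free. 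So the inequality $c_1(F) \geq 0$ is established, if at greater length than by the paper's one-line determinant computation.

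The equality case, however, contains a genuine gap, located exactly at the step you flag as the ``main obstacle'': the claim that a reflexive (hence locally free) subsheaf $N \subset \tH^0(F)\otimes_k\sco_\pii$ with $c_1(N) = 0$ ``is forced to split'', i.e.\ is a trivial subbundle with trivial quotient, so that $K \simeq r\sco_\pii$. You assert this without proof, and it is not routine bookkeeping: it is precisely the equality statement of Lemma~\ref{L:rogmss}, which the paper deduces \emph{from} Lemma~\ref{L:gggc1=0}, so it cannot be invoked here without circularity. What your own methods do give is weaker: since $\det N \simeq \sco_\pii$, the induced map $\det N \ra {\bigwedge}^s(\tH^0(F)\otimes_k\sco_\pii)$ is defined by constants and is nonzero, hence nowhere vanishing, so $N$ is a subbundle and $K$ is locally free, globally generated, with $c_1(K) = 0$. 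But passing from ``globally generated with $c_1 = 0$'' to ``trivial'' (for $K$, for $N^\vee$, or ultimately for $F$) requires the fibre argument that constitutes the paper's entire proof: choose $r$ sections generating a single fibre $K(x)$, and observe that the determinant of the resulting map $r\sco_\pii \ra K$ is a section of $\det K \simeq \sco_\pii$, i.e.\ a constant, nonzero at $x$, hence nowhere zero. Without this step, your proposal shows only that the evaluation map is actually surjective (so that $F \simeq K$ is a quotient of a trivial bundle), not that $F$ is trivial; once the step is supplied, the same argument applied directly to $F$ settles both assertions at once — which is the paper's proof — and the detour through $N$, $K$ and $\scc$ becomes superfluous.
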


\begin{proof} 
Choose a point $x \in \pii$ such the evaluation map $\tH^0(F) \ra F(x)$ is 
surjective. Choose, now, an $r$-dimensional vector subspace $W \subset 
\tH^0(F)$ such that the composite map $W \hookrightarrow \tH^0(F) \ra F(x)$ is 
bijective. It follows that the composite morphism $W \otimes_k \sco_\pii 
\hookrightarrow \tH^0(F) \otimes_k \sco_\pii \ra F$ is generically an 
isomorphism. Taking the determinant map of this morphism one gets that 
$\tH^0(\text{det}\, F) \neq 0$ hence $c_1(F) \geq 0$. Moreover, if 
$\text{det}\, F \simeq \sco_\pii$ then the determinant map must be an 
isomorphism hence the above composite morphism is an isomorphism.      
\end{proof}

\begin{lemma}\label{L:rogmss} 
Let $\scf$ be a non-zero coherent subsheaf of the trivial vector bundle 
$r\sco_\pii$ of rank $r$ on $\pii$. Then $c_1(\scf) \leq 0$ and $c_1(\scf) = 0$ 
if and only if $\scf^{\vee \vee}$ is a trivial subbundle of $r\sco_\pii$. 
\end{lemma}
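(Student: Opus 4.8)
\textbf{Proof plan for Lemma~\ref{L:rogmss}.}
The plan is to reduce everything to a statement about the first Chern class of a torsion quotient, using the previous two lemmas as the main engine. First I would form the quotient $\scq := (r\sco_\pii)/\scf$ and invoke the exact sequence
\[
0 \lra \scf \lra r\sco_\pii \lra \scq \lra 0\, ,
\]
so that $c_1(\scf) = -c_1(\scq)$. The generic rank of $\scf$ is some integer $s$ with $1 \leq s \leq r$; over the generic point $\scf$ has rank $s$ and $\scq$ has rank $r - s$. The idea is to split $\scq$ into its torsion part and its locally free (generically nonzero) part and to bound $c_1$ of each.

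For the inequality $c_1(\scf) \leq 0$, I would argue that $\scq$ is generated by its global sections (it is a quotient of $r\sco_\pii$), so the evaluation morphism $\tH^0(\scq) \otimes_k \sco_\pii \ra \scq$ is an epimorphism, hence generically an epimorphism onto $\scq/(\text{torsion})$. Applying Lemma~\ref{L:gggc1=0} to the vector bundle $\scq^{\vee\vee}$ (or directly to the locally free part of $\scq$, after killing torsion) gives that the locally free part has $c_1 \geq 0$; combined with Lemma~\ref{L:c1(sct)}, which says the torsion subsheaf of $\scq$ has $c_1 \geq 0$, I get $c_1(\scq) \geq 0$, whence $c_1(\scf) \leq 0$. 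The one subtlety here is passing between $\scq$, its torsion-free quotient, and the associated reflexive (locally free, on a surface) sheaf $\scq^{\vee\vee}$; I would note that on the smooth surface $\pii$ a torsion-free sheaf has $c_1$ equal to that of its double dual, and that the torsion subsheaf contributes nonnegatively, so these manipulations do not change the sign of $c_1$.

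For the equality case, suppose $c_1(\scf) = 0$, i.e.\ $c_1(\scq) = 0$. Then both summands in the above bound must vanish: by Lemma~\ref{L:c1(sct)} the torsion of $\scq$ is supported on finitely many points, and by Lemma~\ref{L:gggc1=0} the locally free part of $\scq$ is trivial. I would then want to upgrade ``$c_1 = 0$ with finite torsion'' to an actual splitting statement about $\scf^{\vee\vee}$. Dualizing the defining sequence and using $\sce xt^1(\scq, \sco_\pii)$ being supported in dimension $0$, I would identify $\scf^{\vee\vee}$ inside $(r\sco_\pii)^{\vee\vee} = r\sco_\pii$ and show that the inclusion $\scf^{\vee\vee} \hookrightarrow r\sco_\pii$ has locally free (trivial) cokernel, so that $\scf^{\vee\vee}$ is a trivial \emph{subbundle}. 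Concretely, I expect $\scf^{\vee\vee} \simeq s\sco_\pii$ with the inclusion given by a constant $r \times s$ matrix of maximal rank.

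\textbf{Main obstacle.} I expect the delicate point to be the equality case, specifically verifying that when $c_1(\scq)=0$ the double dual $\scf^{\vee\vee}$ embeds as a \emph{subbundle} (split inclusion) rather than merely as a subsheaf with finite-length quotient. The sign bound is essentially formal once Lemmas~\ref{L:c1(sct)} and \ref{L:gggc1=0} are in hand, but ruling out a nontrivial $0$-dimensional quotient of $\scf^{\vee\vee}$ inside $r\sco_\pii$ requires a genuine argument: a trivial bundle has no finite-length quotient sheaf sitting as $(r\sco_\pii)/\scf^{\vee\vee}$ while keeping $\scf^{\vee\vee}$ reflexive, because such a quotient would force $\scf^{\vee\vee}$ to be non-reflexive (its double dual would be strictly larger), contradicting reflexivity. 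Making this last contradiction precise—presumably by observing that a reflexive subsheaf of a trivial bundle agreeing generically with a trivial subbundle must coincide with it—is where I would concentrate the care.
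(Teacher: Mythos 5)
Your proposal is correct and completable, but it runs on the opposite side of the exact sequence from the paper's proof, which is much shorter. The paper dualizes the inclusion: $(r\sco_\pii)^\vee \ra \scf^\vee$ is generically an epimorphism (dualization is exact at the generic point), and $\scf^\vee$ is locally free because a reflexive sheaf on a smooth surface is locally free; so Lemma~\ref{L:gggc1=0} applies directly to $\scf^\vee$ and yields $c_1(\scf^\vee) \geq 0$, with $\scf^\vee$ a trivial quotient bundle of $(r\sco_\pii)^\vee$ when equality holds. Then $c_1(\scf) = c_1(\scf^{\vee\vee}) = -c_1(\scf^\vee) \leq 0$, and dualizing once more settles the equality case for free, since the dual of a trivial quotient bundle is automatically a trivial subbundle: no torsion appears and Lemma~\ref{L:c1(sct)} is not needed. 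Your route through $\scq := r\sco_\pii/\scf$ gives the inequality just as well, but, exactly as you anticipate, the equality case then needs a genuine supplementary argument. It does go through: when $c_1(\scq) = 0$, the torsion of $\scq$ is finite and $(\scq/\mathrm{tors}\,\scq)^{\vee\vee}$ is trivial, so the composite $r\sco_\pii \ra \scq \ra (\scq/\mathrm{tors}\,\scq)^{\vee\vee} \simeq t\sco_\pii$ is a morphism of trivial bundles, hence a constant matrix, and it has rank $t$ everywhere because it is generically surjective; its kernel $K$ is therefore a trivial subbundle containing $\scf$ and agreeing with $\scf$ (hence with $\scf^{\vee\vee}$) outside a finite set. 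Your key claim --- two reflexive subsheaves of $r\sco_\pii$ agreeing outside a finite set coincide --- is correct, because an inclusion with finite-length quotient induces an isomorphism on duals (the sheaf $\sce xt^1$ of a finite-length sheaf on a surface vanishes) and hence on double duals, so $\scf^{\vee\vee} = K$. In short, both arguments are valid: the paper's buys brevity and a cost-free equality case, while yours stays with the globally generated quotient sheaves that actually occur in the application (Claim 1 in the proof of Prop.~\ref{P:nsubseth1astf}), at the price of torsion bookkeeping and the coincidence argument.
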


\begin{proof} 
Lemma~\ref{L:gggc1=0} implies that $c_1(\scf^\vee) \geq 0$ and if 
$c_1(\scf^\vee) = 0$ then $\scf^\vee$ is a trivial quotient bundle of 
$(r\sco_\pii)^\vee$. Since the canonical morphism $\scf \ra \scf^{\vee \vee}$ is 
an isomorphism except at finitely many points one deduces that $c_1(\scf) = 
c_1(\scf^{\vee \vee}) \leq 0$. 
\end{proof} 

\begin{prop}\label{P:nsubseth1astf}  
Let $F$ be a vector bundle on $\pii$ and $N$ a graded submodule of the graded 
module ${\fam0 H}^1_\ast(F)$ over the homogeneous coordinate ring $R := 
{\fam0 H}^0_\ast(\sco_\pii) \simeq k[x_0,x_1,x_2]$ of $\pii$. Put $n_i := 
\dim_kN_i$, $i \in \z$. Assume that $F$ satisfies the following condition$\, :$ 
$c_1(\scf) < 0$, $\forall \, \scf \subseteq F$ non-zero coherent subsheaf. 
Then$\, :$ 

\emph{(a)} $n_{-1} > n_{-2}$ if $N_{-2} \neq 0$$\, ;$ 

\emph{(b)} $n_{-i} > n_{-i-1}$ if $N_{-i-1} \neq 0$, $\forall \, i \geq 2$$\, ;$ 

\emph{(c)} If $n_{-i} - n_{-i-1} = 1$ for some $i \geq 2$ then there exists a 
non-zero linear form $\ell \in R_1$ such that $\ell N_{-j} = (0)$ in 
${\fam0 H}^1(F(-j+1))$, $\forall \, j \geq i$. 
\end{prop}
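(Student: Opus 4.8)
The plan is to treat all three parts through the $k$-bilinear multiplication maps $\mu_i \colon N_{-i-1} \otimes_k R_1 \to N_{-i}$, which are defined because $N$ is a graded $R$-submodule of $\tH^1_\ast(F)$, and to extract dimension bounds from the Bilinear Map Lemma \cite[Lemma~5.1]{ha}. That lemma, applied to $\mu_i$ after quotienting by the left and right annihilators, gives $n_{-i} \ge \dim \text{Im}\, \mu_i \ge (n_{-i-1} - a_i) + (3 - b_i) - 1$, where $a_i$ is the dimension of the space of $\xi \in N_{-i-1}$ killed by all of $R_1$ and $b_i := \dim R_1'$ with $R_1' := \{\ell \in R_1 : \ell N_{-i-1} = 0\}$. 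Thus assertions (a) and (b) will follow at once once I show, for the relevant $i$, that $a_i = 0$ and $b_i \le 1$. First I would record the elementary consequence of the hypothesis that I use repeatedly: a nonzero section of $F(m)$ produces a rank-$1$ subsheaf of $F$ with $c_1 \ge -m$, so $\tH^0(F(m)) = 0$ for every $m \le 0$.

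The decisive point, and the one I expect to be hardest, is $a_i = 0$, i.e. that no nonzero $\xi \in \tH^1(F(-i-1))$ is annihilated by $x_0, x_1, x_2$; here I would follow the Drezet--Le Potier idea. Writing $\xi \in \text{Ext}^1(\sco_\pii, F(-i-1))$ and applying $\text{Hom}(-, F(-i-1))$ to the Euler sequence $0 \to \Omega_\pii^1 \to 3\sco_\pii(-1) \xrightarrow{(x_0,x_1,x_2)} \sco_\pii \to 0$, the hypothesis $x_0\xi = x_1\xi = x_2\xi = 0$ says precisely that $\xi$ dies in $\text{Ext}^1(3\sco_\pii(-1), F(-i-1))$, hence $\xi = \delta(\psi)$ for some $\psi \colon \Omega_\pii^1 \to F(-i-1)$. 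The image $\scg$ of $\psi$ is a nonzero subsheaf of $F(-i-1)$ and a quotient of $\Omega_\pii^1$; since $\Omega_\pii^1$ is stable of slope $-3/2$, while every subsheaf $\scg \subseteq F(-i-1)$ has slope $< -(i+1) \le -2$ (twisting the hypothesis on $F$), a short check on $\rk \scg \in \{1,2\}$ gives a contradiction in each case (a proper quotient forces slope $\ge -1$; a rank-$2$ quotient forces $\scg \cong \Omega_\pii^1$ of slope $-3/2$), so $\psi = 0$ and $\xi = 0$. For $b_i \le 1$ I would argue separately: if two independent forms $\ell_1, \ell_2$, vanishing at a common point $p \in \pii$, both kill some $\xi \in N_{-i-1}$, then from $0 \to \sci_p \to \sco_\pii \to \sco_p \to 0$ and the vanishing $\text{Ext}^1(\sco_p, F(-i-1)) = 0$ the restriction $\tH^1(F(-i-1)) \to \text{Ext}^1(\sci_p, F(-i-1))$ is injective, while the Koszul resolution of $\sci_p$ identifies the kernel of pullback to $2\sco_\pii(-1)$ with the image of $\tH^0(F(-i+1)) = 0$; hence $\xi = 0$, so $R_1'$ cannot contain two independent forms.

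With $a_i = 0$ and $b_i \le 1$ in hand, $\mu_i$ yields $n_{-i} \ge n_{-i-1} + 1$, which is exactly (a) (case $i = 1$) and (b) (case $i \ge 2$). For (c) I would analyse the equality case: if $n_{-i} - n_{-i-1} = 1$, the same inequality forces $b_i = 1$, so there is a form $\ell$, unique up to scalar, with $\ell N_{-i-1} = 0$; moreover the bound is now sharp, so $\text{Im}\, \mu_i = N_{-i}$, i.e. $N_{-i} = R_1 \cdot N_{-i-1}$, whence $\ell N_{-i} = R_1(\ell N_{-i-1}) = 0$ by commutativity of the $R$-action. This already gives $\ell N_{-j} = 0$ for $j = i, i+1$; the remaining task is to propagate the annihilation downward to all $j \ge i$.

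The propagation is where I would imitate Hartshorne: the form $\ell$ exhibits $L = \{\ell = 0\}$ as an unstable line of $F$, and I would analyse the restriction sequences $0 \to F(-j-1) \xrightarrow{\ell} F(-j) \to F_L(-j) \to 0$, whose cohomology identifies $\ker\big(\ell \colon \tH^1(F(-j-1)) \to \tH^1(F(-j))\big)$ with the image of $\tH^0(F_L(-j))$. Tracking the single negative summand of $F_L$ responsible for this kernel, one shows inductively that $N_{-j} \subseteq \ker(\ell)$ for every $j \ge i$, which is the assertion of (c). The main obstacles are thus the image analysis for $\psi$ in (a)/(b) — where the stability of $\Omega_\pii^1$ and the strong stability of $F$ must be played against each other — and this final descent for (c), which rests on understanding the splitting type of $F_L$ along the unstable line and is the step most dependent on Hartshorne's technique.
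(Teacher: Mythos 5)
Your two auxiliary lemmas are correct, but the linear algebra that is supposed to convert them into (a) and (b) is not, and this is a genuine gap at the core of the proposal. The Bilinear Map Lemma \cite[Lemma~5.1]{ha} requires \emph{pointwise} nondegeneracy --- $\ell\xi \neq 0$ for \emph{every} nonzero $\xi$ and \emph{every} nonzero $\ell$ --- and this is not achieved by quotienting by the two-sided annihilators. The inequality you claim, $n_{-i} \geq (n_{-i-1}-a_i)+(3-b_i)-1$, is simply false for bilinear pairings: take $V = U = k^2$, $R_1 = \langle \ell_1,\ell_2,\ell_3\rangle$, and let $\ell_1,\ell_2,\ell_3$ act on $V$ by $\mathrm{id}$ and the two elementary matrices $E_{12}$, $E_{21}$. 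Then $a = b = 0$, yet the image spans a space of dimension $2$, not $\geq 4$. Worse, in this example every nonzero $\xi \in V$ has an annihilator of dimension exactly $1$, so even the two facts you do prove ($a_i = 0$ via the Euler sequence, and ``no nonzero class is killed by two independent forms'' via the Koszul complex of a point --- both arguments are fine) cannot imply $n_{-i} > n_{-i-1}$ by multilinear algebra alone. Such a pairing is even realized by a graded $R$-module concentrated in two degrees, so the obstruction to its occurring inside $\tH^1_\ast(F)$ is geometric; the degenerate case of ``null pairs'' $(\xi,\ell)$ with $\ell\xi = 0$, neither side annihilating everything, is exactly the case your plan cannot handle, and it is where all the work in the paper lies. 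There, (a) is proved via the universal extension $0 \ra F \ra G \ra \tH^1(F)\otimes_k\sco_\pii \ra 0$ and the Beilinson monad of $G$: after replacing $N_{-2}$ by a subspace minimizing $\delta(W) := \dim_k\mu(W\otimes R_1) - \dim_k W$ and maximal with that property, one shows the cokernel $\scg$ of $N_{-2}\otimes\Omega^2(2) \ra N_{-1}\otimes\Omega^1(1)$ injects into $G$, whence $n_{-2}-n_{-1} = c_1(\scg) < 0$ (this, not your Euler-sequence computation, is the Drezet--Le Potier step); and (b) is proved by the correct dichotomy of the Bilinear Map Lemma (either $n_{-i}\geq n_{-i-1}+2$ or a null pair exists), followed by restriction to the line $L = \{\ell = 0\}$, where $N' := \{\eta \in N : \ell\eta = 0\}$ embeds into $\tH^0_\ast(F_L)$ (using $\tH^0(F(m)) = 0$ for $m \leq 0$) and the pairing with $\tH^0(\sco_L(1))$ \emph{is} honestly nondegenerate, plus induction applied to the submodule $\ell N$.

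Part (c) inherits the gap twice. Your deduction that equality $n_{-i}-n_{-i-1} = 1$ forces $b_i = 1$, i.e.\ a single form annihilating all of $N_{-i-1}$, rests on the false inequality; a correct application of the Bilinear Map Lemma produces only one null pair $(\xi,\ell)$, and promoting that single $\ell$ to an annihilator of every $N_{-j}$, $j \geq i$, is precisely the content of (c), not something one may assume. Moreover, the downward propagation you defer to a splitting-type analysis of $F_L$ is unnecessary once (a) and (b) are established: in the equality case the bookkeeping in the proof of (b) (using the exact sequence $0 \ra N' \ra N \ra (\ell N)(1) \ra 0$ and the strict growth of $\dim N'_{-j}$ inside $\tH^0_\ast(F_L)$) forces $(\ell N)_{-i+1} = \ell N_{-i} = 0$, and then applying (a) and (b) to the graded submodule $\ell N \subseteq \tH^1_\ast(F)$ --- whose dimensions must strictly increase toward degree $-1$ wherever they are nonzero --- yields $\ell N_{-j} = 0$ for all $j \geq i$. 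Your Euler and Koszul lemmas are worth keeping (the latter shows the killing form $\ell$, hence the line $L$, is unique), but the bridge from them to the dimension inequalities has to be rebuilt along the lines above.
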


\begin{proof} 
(a) As we said at the beginning of the appendix, the arguments we use are 
extracted from the proof of a result of Drezet and Le Potier 
\cite[Prop.~(2.3)]{dlp}. Consider the universal extension$\, :$ 
\[
0 \lra F \lra G \lra \tH^1(F) \otimes_k \sco_\pii \lra 0\, . 
\]   
One has $\tH^1(F(l)) \izo \tH^1(G(l))$ for $l < 0$ and $\tH^1(G) = 0$. 

\vskip2mm 

\noindent 
{\bf Claim 1.}\quad $G$ \emph{satisfies the condition from the hypothesis of 
the proposition, that is} $c_1(\scg) < 0$, $\forall \, \scg \subseteq G$ 
\emph{non-zero coherent subsheaf}. 

\vskip2mm 

\noindent 
\emph{Indeed}, put $\scg^\prim := \scg \cap F$ and let $\scg^\secund$ be 
the image of the composite map $\scg \hookrightarrow G \ra \tH^1(F) \otimes 
\sco_\pii$. One has an exact sequence $0 \ra \scg^\prim \ra \scg \ra \scg^\secund 
\ra 0$. Lemma~\ref{L:rogmss} implies that $c_1(\scg^\secund) \leq 0$ hence 
$c_1(\scg) < 0$ if $\scg^\prim \neq (0)$. Assume, now, that $\scg^\prim = (0)$,  
hence that $\scg \izo \scg^\secund$, and that $c_1(\scg^\secund) = 0$. One gets, 
from Lemma~\ref{L:rogmss}, that $\scg^{\secund \vee \vee}$ is a trivial subbundle 
of $\tH^1(F) \otimes \sco_\pii$ hence $\scg^{\vee \vee}$ is a trivial subbundle 
of $G$. But this \emph{contradicts} the fact that $\tH^0(G) = 0$ (the 
condition from the hypothesis of the lemma implies that $\tH^0(F) = 0$ and the 
extension defining $G$ is the universal one). It thus remains that 
$c_1(\scg^\secund) < 0$ hence $c_1(\scg) < 0$ and Claim 1 is proven. 

\vskip2mm 

\noindent 
Now, the Beilinson monad of $G$ has the following shape$\, :$ 
\[
\tH^1(G(-2)) \otimes \Omega^2(2) \overset{d^{-1}}{\lra} 
\begin{matrix} \tH^1(G(-1)) \otimes \Omega^1(1)\\ \oplus\\ 
\tH^2(G(-2)) \otimes \Omega^2(2) \end{matrix} 
\overset{d^0}{\lra} \tH^2(G(-1)) \otimes \Omega^1(1) 
\overset{d^1}{\lra} \tH^2(G) \otimes \sco 
\]  
By the properties of Beilinson monads, the image of $d^{-1}$ is 
contained in $\tH^1(G(-1)) \otimes \Omega^1(1)$ and $d^0$ vanishes on 
$\tH^1(G(-1)) \otimes \Omega^1(1)$. Moreover, the component $\tH^1(G(-2)) 
\otimes \Omega^2(2) \ra \tH^1(G(-1)) \otimes \Omega^1(1)$ of $d^{-1}$ can be 
identified with the composite morphism$\, :$ 
\[
\begin{CD} 
\tH^1(G(-2)) \otimes \Omega^2(2) @>>> \tH^1(G(-2)) \otimes R_1 \otimes 
\Omega^1(1) @>{\mu \otimes \text{id}}>> \tH^1(G(-1)) \otimes \Omega^1(1)\\ 
@VVV @VVV\\
\tH^1(G(-2)) \otimes \overset{2}{\textstyle \bigwedge}R_1 \otimes \sco 
@>>> \tH^1(G(-2)) \otimes R_1 \otimes R_1 \otimes \sco
\end{CD}
\]     
where $\mu \colon \tH^1(G(-2)) \otimes R_1 \ra \tH^1(G(-1))$ is the 
multiplication map. We make, at this point, a \emph{technical assumption}.  
Put, for any non-zero vector subspace $W$ of 
$\tH^1(G(-2))$, $\delta(W) := \dim_k\mu(W \otimes R_1) - \dim_kW$ and let 
$\delta_{\text{min}}$ be the minimum of $\delta(W)$. We, actually, assume that 
$\delta(N_{-2}) = \delta_{\text{min}}$, that $N_{-2}$ is maximal 
among the non-zero vector subspaces $W$ of $\tH^1(G(-2))$ for which 
$\delta(W) = \delta_{\text{min}}$, and that $N_{-1} = \mu(N_{-2} \otimes R_1)$.  

\vskip2mm 

\noindent 
{\bf Claim 2.}\quad \emph{Under the previous assumptions, the morphism} 
$\left(\tH^1(G(-2))/N_{-2}\right) \otimes \Omega^2(2) \ra 
\left(\tH^1(G(-1))/N_{-1}\right) \otimes \Omega^1(1)$ \emph{induced by} 
$d^{-1}$ \emph{is a locally split monomorphism}. 

\vskip2mm 

\noindent 
\emph{Indeed}, if it is not then there exists $\xi \in \tH^1(G(-2)) \setminus 
N_{-2}$ and two linearly independent linear forms $\ell_0,\, \ell_1 \in R_1$ 
such that $\ell_i\xi \in N_{-1}$, $i = 0,\, 1$. Put $W := N_{-2} + k\xi$. 
Then $\delta(W) \leq \delta(N_{-2})$ and $W$ is strictly larger than $N_{-2}$ 
which \emph{contradicts} the assumptions preceding the claim. Consequently, 
Claim 2 is proven. 

\vskip2mm 

\noindent 
Let $\scg$ be the cokernel of the morphism $N_{-2} \otimes \Omega^2(2) \ra 
N_{-1} \otimes \Omega^1(1)$ induced by $d^{-1}$. Claim 2 implies that $\scg$ 
injects into $G$ (which is the cohomology of the Beilinson monad) and, by 
Claim 1, $-n_{-1} + n_{-2} = c_1(\scg) < 0$. 

\vskip2mm 

(b) can be proven by applying (a) to $F(-i+1)$, which obviously satisfies 
the condition from the hypothesis of the proposition. Alternatively, one can 
use induction on $i \geq 1$, the case $i = 1$ being the content of (a). 
More precisely, assume that $i \geq 2$ and that (b) has been verified for 
$i - 1$ (and any graded submodule of $\tH^1_\ast(F)$).  
Applying the Bilinear Map Lemma \cite[Lemma~5.1]{ha} to the 
multiplication map $R_1 \otimes N_{-i-1} \ra N_{-i}$ one sees that  
$n_{-i} \geq n_{-i-1} + 2$ or there exists a non-zero linear form $\ell \in 
R_1$ and a non-zero element $\xi$ of $N_{-i-1}$ such that $\ell \xi = 0$. 
Let $L \subset \pii$ be the line of equation $\ell = 0$. Consider the graded 
submodule $\ell N$ of $\tH^1_\ast(F)$. One has an exact sequence$\, :$ 
\[
0 \lra N^\prim \lra N \lra (\ell N)(1) \lra 0\, , 
\]     
where $N^\prim := \{ \eta \in N \vb \ell \eta = 0\}$. Using the exact sequence 
$0 \ra F(-1) \overset{\ell}{\lra} F \ra F_L \ra 0$ and the fact that 
$\tH^0(F) = 0$ one sees that $N^\prim_{-j}$ can be identified with a vector 
subspace of $\tH^0(F_L(-j+1))$ for $j \geq 1$. One has $N^\prim_{-i-1} \neq 0$ 
because it contains $\xi$. Applying the Bilinear Map Lemma to the 
multiplication map $\tH^0(\sco_L(1)) \otimes N^\prim_{-i-1} \ra N^\prim_{-i}$ one 
gets that $n^\prim_{-i} \geq n^\prim_{-i-1} + 1$. Since, by the induction 
hypothesis, $\dim_k(\ell N)_{-i+1} \geq \dim_k(\ell N)_{-i}$ (with equality only 
if both spaces are 0) it follows that $n_{-i} \geq n_{-i-1} + 1$. 

\vskip2mm 

(c) Using the notation from the proof of (b), one sees that if $n_{-i} = 
n_{-i-1} + 1$ then $(\ell N)_{-i+1} = 0$. Applying (b) to $\ell N$, one deduces 
that $(\ell N)_{-j+1} = 0$, $\forall j \geq i$.   
\end{proof} 

\begin{remark}\label{R:conditiononf} 
(i) If a vector bundle $F$ on $\pii$ satisfies the condition from the 
hypothesis of Prop.~\ref{P:nsubseth1astf} then $c_1(F) < 0$, $\tH^0(F) = 0$ 
and $\tH^0(F^\vee(c_1(F))) = 0$. 


(ii) Any rank 3 vector bundle $F$ on $\pii$ with $c_1(F) < 0$, $\tH^0(F) = 0$ 
and $\tH^0(F^\vee(c_1(F))) = 0$ satisfies the condition from the hypothesis of 
Prop.~\ref{P:nsubseth1astf}. 

(iii) More generally, any vector bundle $F$ on $\pii$ with $c_1(F) < 0$ and 
such that $\tH^0(\bigwedge^i F) = 0$, for $0 < i < \text{rk}\, F$,  
satisfies the condition from the hypothesis of Prop.~\ref{P:nsubseth1astf}.  
\end{remark}

\begin{remark}\label{R:nsubseth1astfc1=0} 
Prop.~\ref{P:nsubseth1astf} can be extended to the case $c_1(F) = 0$ with the 
following modifications$\, :$ 


\noindent 
$\bullet$\quad Replace the condition on $F$ from the hypothesis of 
Prop.~\ref{P:nsubseth1astf} by the condition ``$F$ \emph{stable in the sense of 
Mumford and Takemoto}'' which means that $c_1(\scf) < 0$, $\forall \scf 
\subset F$ coherent subsheaf with $0 < \text{rk}\, \scf < \text{rk}\, F$$\, ;$ 


\noindent 
$\bullet$\quad Replace item (a) of the conclusion of 
Prop.~\ref{P:nsubseth1astf} by$\, :$ ``$n_{-1} \geq n_{-2}$ \emph{with equality 
if and only if} $N_{-1} = 0$ \emph{and} $N_{-2} = 0$ \emph{or} $N_{-1} = 
\tH^1(F(-1))$ \emph{and} $N_{-2} = \tH^1(F(-2))$''.  


\noindent 
$\bullet$\quad Replace Claim 1 in the proof of Prop.~\ref{P:nsubseth1astf}(a)  
by the following statement$\, :$ ``$G$ \emph{is stable in the sense of 
Gieseker and Maruyama}'' which, taking into account that $c_1(G) = 0$ and 
$\chi(G) = 0$, means that, $\forall \, \scg \subseteq G$ coherent subsheaf, 
one has $c_1(\scg) \leq 0$ and if $c_1(\scg) = 0$ then $\chi(\scg) < 0$, 
unless $\scg = (0)$ or $\scg = G$. 

 
The proof requires only minor changes.  
\end{remark} 

\begin{defin}\label{D:spectrum} 
Let $F$ be a stable rank 3 vector bundle on $\piii$ with $c_1(F) = -1$,  
$c_2(F) = c_2 \geq 2$, $c_3(F) = c_3$. The stability is equivalent, in this 
case, to the fact that $\tH^0(F) = 0$ and $\tH^0(F^\vee(-1)) = 0$. According to 
the restriction theorem of Schneider \cite{sch} (see, also, Ein et al. 
\cite[Thm.~3.4]{ehv}), the restriction $F_H$ of $F$ to a general plane $H 
\subset \piii$ is stable. Let $h = 0$ be an equation of such a plane. Put$\, :$ 
\[
N := \text{Im}\left(\tH^1_\ast(F) \ra \tH^1_\ast(F_H)\right)\, ,\  
Q := \text{Coker}\left(\tH^1_\ast(F) \ra \tH^1_\ast(F_H)\right)\, . 
\]   
$N$ and $Q$ are graded modules over the homogeneous coordinate ring $S := 
\tH^0_\ast(\sco_\piii) \simeq k[x_0, \ldots , x_3]$ of $\piii$ (in fact, even 
over $S/hS$). Using the exact sequence$\, :$ 
\[
0 \lra F(-1) \overset{h}{\lra} F \lra F_H \lra 0\, , 
\] 
one sees that$\, :$ 
\[
N \simeq \text{Coker}\left(\tH^1_\ast(F(-1)) \overset{h}{\lra} 
\tH^1_\ast(F)\right)\, , 
\  Q \simeq \text{Ker}\left(\tH^2_\ast(F(-1)) \overset{h}{\lra} 
\tH^2_\ast(F)\right)\, . 
\]
Put $n_i := \dim_kN_i$ and $q_i := \dim_kQ_i$. Since $\tH^0(F_H) = 0$ and since  
$\tH^2(F_H(-2)) \simeq \tH^0(F_H^\vee(-1))^\vee = 0$, one gets that 
$n_i = \h^1(F(i)) - \h^1(F(i-1))$ for $i \leq 0$ and $q_i = \h^2(F(i-1)) - 
\h^2(F(i))$ for $i \geq -2$. 

\vskip2mm 

We invoke, now, Prop.~\ref{P:nsubseth1astf}. Since $F_H$ (which is a vector 
bundle on $H \simeq \pii$) satisfies the condition from the hypothesis of 
Prop.~\ref{P:nsubseth1astf}, it follows that $n_{-i} > n_{-i-1}$ if $n_{-i-1} > 
0$, $\forall \, i \geq 1$. On the other hand, $Q_i^\vee \subseteq 
\tH^1(F_H(i))^\vee \simeq \tH^1(F_H^\vee(-i-3))$. Since $F_H^\vee(-1)$ satisfies 
the condition from the hypothesis of Prop.~\ref{P:nsubseth1astf}, it follows 
that $q_i > q_{i+1}$ if $q_{i+1} > 0$, $\forall \, i \geq -1$. Moreover, by 
Prop.~\ref{P:nsubseth1astf}(c), if $q_i - q_{i+1} = 1$ for some $i \geq 0$ then 
there exists a linear form $\lambda \in S_1 \setminus kh$ such that the 
multiplication by $\lambda \colon Q_j^\vee \ra Q_{j-1}^\vee$ is the zero map 
for $j \geq i$ or, equivalently, the multiplication by $\lambda \colon Q_{j-1} 
\ra Q_j$ is the zero map for $j \geq i$. 

\vskip2mm 

Consider, now, the following vector bundles on $\pj$$\, :$ 
\[
K_+ := {\textstyle \bigoplus}_{i \geq 1}(n_{-i} - n_{-i-1})\sco_\pj(i - 1)\, \  
K_- := {\textstyle \bigoplus}_{i \geq -1}(q_i - q_{i+1})\sco_\pj(-i-2)\, , 
\]     
and put $K := K_+ \oplus K_-$. One has$\, :$ 
\begin{gather*} 
\h^1(F(-l)) - \h^1(F(-l-1)) = n_{-l} = \h^0(K(-l+1)) - \h^0(K(-l))\  
\text{for} \  l \geq 1\, ,\\ 
\h^2(F(l-1)) - \h^2(F(l)) = q_l = \h^1(K(l)) - \h^1(K(l+1))\  \text{for}\   
l \geq -1\, , 
\end{gather*}
hence $\h^1(F(l)) = \h^0(K(l+1))$ for $l \leq -1$ and $\h^2(F(l)) = 
\h^1(K(l+1))$ for $l \geq -2$. One can write $K \simeq 
\bigoplus_{i=1}^m\sco_\pj(k_i)$ with $k_1 \geq k_2 \geq \cdots \geq k_m$. 
The \emph{spectrum} of $F$ is, by definition, $k_F := (k_1, \ldots , k_m)$. 
One has$\, :$ 
\begin{gather*}
m = \text{rk}\, K = \text{rk}\, K_+ + \text{rk}\, K_- = n_{-1} + q_{-1} 
= \h^1(F_H(-1)) = c_2(F_H) = c_2\, ,\\ 
{\textstyle \sum}_{i=1}^m k_i = \chi(K(-1)) = -\chi(F(-2)) = 
-\frac{1}{2}(c_3 + c_2)\, .  
\end{gather*}

\vskip2mm 

The items (iv) and (v) from Remark~\ref{R:spectrumf} are quite clear. As for 
item (vi), if 0 does not occur in the spectrum then no positive integer occurs 
hence $n_{-1} = 0$. It follows that $q_{-1} = \h^1(F_H(-1)) = c_2$. Since 
$q_0 \leq \h^1(F_H) = c_2 - 2$, one deduces that $q_{-1} - q_0 \geq 2$, hence 
$-1$ occurs twice in the spectrum. The difficult point of 
Remark~\ref{R:spectrumf} is item (vii) which is the content of 
Prop.~\ref{P:unstableplane} below. This proposition is analogous to 
Hartshorne's result \cite[Prop.~5.1]{ha2} which treats the case of rank 2 
reflexive sheaves.    
\end{defin}  

\begin{prop}\label{P:unstableplane} 
Let $F$ be a stable rank $3$ vector bundle on $\piii$ with $c_1(F) = -1$, 
$c_2(F) = c \geq 4$ and let $k_F = (k_1, \ldots , k_c)$ be its spectrum. 
Assume that, for some $i$ with $2 \leq i \leq c - 1$, one has $-1 \geq k_{i-1} 
> k_i > k_{i+1}$. Then $F$ has an unstable plane $H_0$ of order $-k_c$ (which 
means that ${\fam0 H}^0(F_{H_0}^\vee(k_c)) \neq 0$ and 
${\fam0 H}^0(F_{H_0}^\vee(k_c - 1)) = 0$) and $k_{i+1} > k_{i+2} > \cdots > k_c$.   
\end{prop}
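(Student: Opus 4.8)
The plan is to follow the method of Hartshorne \cite[Prop.~5.1]{ha2}, converting the gap in the spectrum first into the vanishing of a multiplication map and then into a section on a special plane. First I would translate the hypothesis into the language of the modules $N$ and $Q$ of Definition~\ref{D:spectrum}, attached to a general plane $H = \{h = 0\}$ with $F_H$ stable. The strict inequalities $-1 \geq k_{i-1} > k_i > k_{i+1}$ say exactly that the value $k_i$ occurs with multiplicity $1$ in $k_F$ and that $k_i \leq -2$. Since every spectrum value $\leq -1$ is contributed by the summand $K_-$, in which $\sco_\pj(k)$ (for $k \leq -1$) appears with multiplicity $q_{-k-2} - q_{-k-1}$, this is equivalent to $q_{i_0} - q_{i_0+1} = 1$, where $i_0 := -k_i - 2$; note $i_0 \geq 0$ because $k_i \leq -2$.

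Next I would invoke Prop.~\ref{P:nsubseth1astf}(c), in the form recorded at the end of Definition~\ref{D:spectrum}: the equality $q_{i_0} - q_{i_0+1} = 1$ with $i_0 \geq 0$ produces a linear form $\lambda \in S_1 \setminus kh$ such that the multiplication $\lambda \colon Q_{j-1} \to Q_j$ vanishes for all $j \geq i_0$; equivalently $\lambda$ annihilates $Q_m$ for every $m \geq i_0 - 1$. I set $H_0 := \{\lambda = 0\}$ and $L := H \cap H_0$, a line, and $H_0$ will be the claimed unstable plane. Because $Q$ is an $\sco_H$-module and the restriction of $\lambda$ to $H$ is a nonzero linear form killing the tail $\bigoplus_{m \geq i_0 - 1} Q_m$, this tail is a (finite-length) graded module over $\sco_L \simeq \sco_\pj$, supported on $L$.

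I would then pin down the tail of the spectrum. Dualizing as in Definition~\ref{D:spectrum}, the tail becomes a graded submodule of $\tH^1_\ast$ of the restriction to $L$ of $F_H^\vee(-1)$, a vector bundle on $L \simeq \pj$; writing that restriction as a sum of line bundles and applying the Bilinear Map Lemma \cite[Lemma~5.1]{ha} to the multiplication by the two-dimensional space $\tH^0(\sco_L(1))$, I would bound each successive difference of the Hilbert function by $1$. Together with the strict inequalities $q_m > q_{m+1}$ (valid while $q_{m+1} > 0$, from Definition~\ref{D:spectrum}), this forces $q_m - q_{m+1} = 1$ for every $m$ with $i_0 \leq m \leq d$, where $d$ is the largest index with $q_d > 0$. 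Hence each value from $k_i$ down to $-d-2$ occurs exactly once in $k_F$, which yields $k_{i+1} > k_{i+2} > \cdots > k_c$ and identifies $k_c = -d-2$, i.e.\ the prospective order is $-k_c = d+2$.

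Finally I would produce the section, which I expect to be the main obstacle. I want $\tH^0(F_{H_0}^\vee(k_c)) \neq 0$ and $\tH^0(F_{H_0}^\vee(k_c - 1)) = 0$; by Serre duality on $H_0 \simeq \pii$ these are equivalent to $\tH^2(F_{H_0}(-k_c - 3)) \neq 0$ and $\tH^2(F_{H_0}(-k_c-2)) = 0$. The difficulty is that $Q$ is built from the \emph{general} plane $H$, whereas the required section lives on the \emph{special} plane $H_0$, so I must transport the $\lambda$-torsion structure of the tail of $Q$ (supported on $L = H \cap H_0$) into cohomology of $F_{H_0}$, adapting Hartshorne's construction for rank $2$ reflexive sheaves. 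Concretely I would use the vanishing $\lambda Q_m = 0$ to realize the bottom of the tail as giving a nonzero morphism $F_{H_0} \to \sco_{H_0}(-k_c)$, and then read off the least degree at which such a morphism appears; by the determination of $k_c$ in the previous step this degree is exactly $-k_c$, which produces the unstable plane of order $-k_c$ and completes the proof. The two places requiring care are the $\pj$-level bilinear estimate of the third paragraph and this passage from $H$ to $H_0$, the latter being the genuine analogue of \cite[Prop.~5.1]{ha2}.
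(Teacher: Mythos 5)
Your first two paragraphs match the paper: the hypothesis translates into $q_j - q_{j+1} = 1$ with $j := -k_i - 2 \geq 0$, and Prop.~\ref{P:nsubseth1astf}(c), as recorded in Definition~\ref{D:spectrum}, yields $\lambda \in S_1 \setminus kh$ killing $Q_{l-1} \ra Q_l$ for all $l \geq j$. After that there are two genuine gaps. First, your third paragraph cannot work as stated: the Bilinear Map Lemma produces \emph{lower} bounds on the dimension of the target of a nondegenerate pairing --- this is exactly how the strict decrease $q_m > q_{m+1}$ is proved in Prop.~\ref{P:nsubseth1astf}(b) --- so it can only re-derive the inequality you already have, never the upper bound $q_m - q_{m+1} \leq 1$ that you need. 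Nor does embedding the tail into cohomology of $F_H^\vee(-1) \vb L$ help by itself: that restriction has rank $3$, so successive differences of Hilbert functions of graded submodules of its $\tH^0_\ast$ or $\tH^1_\ast$ can a priori be as large as $3$. In the paper the inequality $q_l - q_{l+1} \leq 1$ is obtained only \emph{after} the unstable plane is constructed: the section of $F_{H_0}^\vee(k_c)$ gives a morphism $\e \colon F \ra \sco_{L_0}(k_c)$ factoring through $F_{H_0}$, where $L_0 := H \cap H_0$; setting $F^\prim := \Ker (\e \vb H)$ one gets an exact sequence $\tH^1_\ast(F^\prim) \ra Q \ra \tH^1_\ast(\sco_{L_0}(k_c)) \ra \tH^2_\ast(F^\prim)$, whence $q_l = q_l^\prim + \h^1(\sco_{L_0}(l + k_c))$ for $l \geq -1$ (using $\tH^2(F^\prim(l)) = 0$ for $l \geq -1$), and then $q_l^\prim = 0$ for $l \geq j$ by applying Prop.~\ref{P:nsubseth1astf} to $F^{\prim \vee}(-2)$; the ``exactly one per step'' comes from the rank-one sheaf $\sco_{L_0}(k_c)$. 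So your order of argument is backwards: the tail statement depends on the plane, not the other way around.

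Second, the plane itself: taking $H_0 := \{\lambda = 0\}$ is not justified. By Serre duality on a plane, $H_0$ is unstable of order $-k_c$ precisely when multiplication by its equation $\tH^2(F(p-2)) \ra \tH^2(F(p-1))$ fails to be surjective while $\tH^2(F_{H_0}(p)) = 0$, where $p := -k_c - 2$. For $\lambda$ one only knows that it kills $Q_{p-1} = \Ker (h \cdot)$; the induced map $\overline{\lambda} \colon \tH^2(F(p-2))/Q_{p-1} \ra \tH^2(F(p-1))$ is a map between spaces of equal dimension (since $\overline{h}$ is an isomorphism) and may perfectly well be an isomorphism, in which case multiplication by $\lambda$ is surjective and $\{\lambda = 0\}$ is \emph{not} unstable of order $-k_c$. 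The paper repairs this by moving in the pencil spanned by $\lambda$ and $h$: since $k$ is algebraically closed there exists $c \in k$ with $\overline{\lambda} + c\overline{h}$ singular, and one takes $H_0 := \{\lambda + ch = 0\}$; then $\tH^2(F_{H_0}(p-1)) \neq 0$, while $\tH^2(F_{H_0}(p)) = 0$ follows from $\tH^2(F(p)) = 0$ and $\tH^3(F(p-1)) \simeq \tH^0(F^\vee(-p-3))^\vee = 0$, and Serre duality on $H_0$ gives both unstable-plane conditions at once. Your sketch (``read off the least degree at which such a morphism appears'') does not engage with this mechanism, and it also relies on the value of $k_c$ pinned down in your third paragraph, so the two gaps reinforce each other. (Minor: your morphism should land in $\sco_{H_0}(k_c)$, not $\sco_{H_0}(-k_c)$.)
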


\begin{proof} 
Let $j$ (resp., $p$) be the integer defined by the relation $-j-2 = k_i$ 
(resp., $-p-2 = k_c$), that is, $j := -k_i - 2$ (resp., $p := -k_c - 2$). 
Using the notation from Definition~\ref{D:spectrum}, one has $j \geq 0$, 
$p \geq j + 1$, and $q_j > q_{j+1} > \cdots > q_p > q_{p+1} = 0$. The hypothesis 
implies that $q_j - q_{j+1} = 1$ hence (see, again, 
Definition~\ref{D:spectrum}) there exists a linear form $\lambda \in S_1 
\setminus kh$ such that multiplication by $\lambda \colon Q_{l-1} \ra Q_l$ is 
the zero map, $\forall \, l \geq j$. 

\vskip2mm 

\noindent 
{\bf Claim 1.}\quad $F$ \emph{has an unstable plane} $H_0$ \emph{of order} 
$-k_c$. 

\vskip2mm 

\noindent 
\emph{Indeed}, $\tH^2(F(l)) = 0$ for $l \geq p := -k_c - 2$ (by the 
definition of the spectrum), $Q_p \izo \tH^2(F(p-1))$ and one has an exact 
sequence$\, :$ 
\[
0 \lra Q_{p-1} \lra \tH^2(F(p-2)) \overset{h}{\lra} \tH^2(F(p-1)) \lra 0 
\]  
(recall that $h \in S_1$ is a linear form vanishing on a general plane 
$H \subset \piii$ for which $F_H$ is stable). By what has been said above, 
multiplication by $\lambda \colon Q_{p-1} \ra Q_p$ is the zero map hence  
multiplication by $\lambda \colon \tH^2(F(p-2)) \ra \tH^2(F(p-1))$
induces a map ${\overline \lambda} \colon \tH^2(F(p-2))/Q_{p-1} \ra 
\tH^2(F(p-1))$. On the other hand, multiplication by $h \colon \tH^2(F(p-2)) 
\ra \tH^2(F(p-1))$ induces an isomorphism ${\overline h} \colon 
\tH^2(F(p-2))/Q_{p-1} \izo \tH^2(F(p-1))$. Then there exists $c \in k$ such 
that ${\overline \lambda} + c{\overline h}$ is not an isomorphism. In this 
case, multiplication by $\lambda + ch \colon \tH^2(F(p-2)) \ra \tH^2(F(p-1))$ 
is not surjective. Let $H_0 \subset \piii$ be the plane of equation 
$\lambda + ch = 0$. Using the exact sequence$\, :$ 
\[
\tH^2(F(p-2)) \xra{\lambda + c\, h} \tH^2(F(p-1)) \lra \tH^2(F_{H_0}(p-1))\, ,  
\] 
one gets that $\tH^2(F_{H_0}(p-1)) \neq 0$. On the other hand, since 
$\tH^2(F(p)) = 0$ and $\tH^3(F(p-1)) \simeq \tH^0(F^\vee(-p-3))^\vee = 0$, it 
follows that $\tH^2(F_{H_0}(p)) = 0$. By Serre duality on $H_0$, 
$\tH^0(F_{H_0}^\vee(-p-2)) \neq 0$ and $\tH^0(F_{H_0}^\vee(-p-3)) = 0$. Since 
$-p-2 = k_c$, Claim 1 is proven. 

\vskip2mm 

According to Claim 1, there exists a non-zero morphism $\phi \colon F_{H_0} 
\ra \sco_{H_0}(k_c)$. Since $\tH^0(F_{H_0}^\vee(k_c - 1)) = 0$, the image of 
$\phi$ has the form $\sci_{Z , H_0}(k_c)$ for some 0-dimensional subscheme 
$Z$ of $H_0$. One can assume that the general plane $H \subset \piii$, used in 
the above definition to construct the spectrum of $F$, contains no point of 
$Z$. Put $L_0 := H \cap H_0$ and let $\e$ denote the composite epimorphism 
$F \ra F_{H_0} \overset{\phi}{\lra} \sco_{H_0}(k_c) \ra \sco_{L_0}(k_c)$. 
Let $F^\prim$ be the kernel of $\e \vb H \colon F_H \ra \sco_{L_0}(k_c)$. 
$F^\prim$ is a rank 3 vector bundle on $H \simeq \pii$, with $c_1(F^\prim) = 
-2$. 

\vskip2mm 

\noindent 
{\bf Claim 2.}\quad $F^{\prim \vee}(-2)$ \emph{satisfies the condition from the 
hypothesis of Prop.}~\ref{P:nsubseth1astf}. 

\vskip2mm 

\noindent 
\emph{Indeed}, the kernel $G$ of $\e \vb L_0 \colon F_{L_0} \ra \sco_{L_0}(k_c)$ 
is a rank 2 vector bundle on $L_0 \simeq \pj$. Comparing the exact 
sequences$\, :$ 
\[
0 \lra F^\prim \lra F_H \xra{\e \vb H} \sco_{L_0}(k_c) \lra 0\ \text{and} \  
0 \lra G \lra F_{L_0} \xra{\e \vb L_0} \sco_{L_0}(k_c) \lra 0\, ,  
\] 
one deduces an exact sequence$\, :$ 
\[
0 \lra F_H(-1) \lra F^\prim \lra G \lra 0\, . 
\]
It follows that $F^{\prim \vee}$ embeds into $F_H^\vee(1)$. Since $F_H^\vee(-1)$ 
satisfies the condition from the hypothesis of Prop.~\ref{P:nsubseth1astf}, 
the same must be true for $F^{\prim \vee}(-2)$. Claim 2 is proven. 

\vskip2mm 

\noindent 
{\bf Claim 3.}\quad $k_i > k_{i+1} > \cdots > k_c$. 

\vskip2mm 

\noindent 
\emph{Indeed}, using the notation from Definition~\ref{D:spectrum}, one knows, 
by hypothesis, that $q_j - q_{j+1} = 1$ and one wants to show that $q_l - q_{l+1} 
= 1$ for $j \leq l \leq p$. Recall that $Q$ is the cokernel of the morphism 
$\tH^1_\ast(F) \ra \tH^1_\ast(F_H)$. Since the composite morphism$\, :$ 
\[
\tH^1_\ast(F) \lra \tH^1_\ast(F_H) \xra{\tH^1_\ast(\e \vb H)} 
\tH^1_\ast(\sco_{L_0}(k_c))  
\]
is the zero morphism (because $\e \colon F \ra \sco_{L_0}(k_c)$ factorizes 
through $\sco_{H_0}(k_c)$), one gets, from the exact sequence$\, :$ 
\[
\tH^1_\ast(F^\prim) \lra \tH^1_\ast(F_H) \xra{\tH^1_\ast(\e \vb H)} 
\tH^1_\ast(\sco_{L_0}(k_c)) \lra \tH^2_\ast(F^\prim)\, , 
\] 
an exact sequence$\, :$ 
\[
\tH^1_\ast(F^\prim) \lra Q \lra \tH^1_\ast(\sco_{L_0}(k_c)) \lra 
\tH^2_\ast(F^\prim)\, .  
\]  
Let $Q^\prim$ denote the image of $\tH^1_\ast(F^\prim) \ra Q$ and put 
$q^\prim_l := \dim_kQ^\prim_l$. Since $\tH^2(F^\prim(l)) = 0$ for $l \geq -1$ 
(because, as we saw in the proof of Claim 2, $\tH^0(F^{\prim \vee}(-2)) = 0$) 
it follows that$\, :$ 
\[
q_l = q^\prim_l + \h^1(\sco_{L_0}(l + k_c))\, ,\  \forall \, l \geq -1\, . 
\]
Note, also, that$\, :$ 
\[
\h^1(\sco_{L_0}(l + k_c)) - \h^1(\sco_{L_0}(l + 1 + k_c)) = 1 \  \text{for}\  
l \leq -k_c - 2 = p\, . 
\]
Now, since $q_j - q_{j+1} = 1$ it follows that $q^\prim_j = q^\prim_{j+1}$. 
But $Q_l^{\prim \vee} \subseteq \tH^1(F^\prim(l))^\vee \simeq 
\tH^1(F^{\prim \vee}(-l - 3))$, $\forall \, l$. Applying 
Prop.~\ref{P:nsubseth1astf} to $F^{\prim \vee}(-2)$ one gets that $q^\prim_l = 0$, 
$\forall \, l \geq j$, hence $q_l - q_{l+1} = 1$ for $j \leq l \leq p$ and 
Claim 3 is proven.  
\end{proof}

\section{Rank $3$ vector bundles with $c_1 = -1$, 
$c_2 = 2$}\label{A:(3;-1,2,-)} 

The stable rank $3$ reflexive sheaves on $\piii$ with Chern classes 
$c_1 = -1$, $c_2 = 2$ were studied by Okonek and Spindler \cite{oks}. We 
prove here a number of complementary results that are needed in the main 
part of the paper. 

\begin{lemma}\label{L:(3;-1,2,2)} 
Let $F$ be a rank $3$ vector bundle on $\piii$ with $c_1(F) = -1$, $c_2(F) = 
2$, $c_3(F) = 2$ and such that ${\fam0 H}^0(F(-1)) = 0$ and 
${\fam0 H}^0(F^\vee(-1)) = 0$. 

\emph{(i)} If ${\fam0 H}^0(F) = 0$, i.e., if $F$ is stable then it has a 
resolution of the form$\, :$ 
\[
0 \lra 2\sco_\piii(-2) \lra 5\sco_\piii(-1) \lra F \lra 0\, . 
\]
Moreover, in this case, ${\fam0 H}^0(F^\vee) = 0$. 

\emph{(ii)} If ${\fam0 H}^0(F) \neq 0$ then a non-zero global section of $F$ 
defines an exact sequence$\, :$ 
\[
0 \lra \sco_\piii \lra F \lra \scg \lra 0 
\]
where $\scg$ is a stable rank $2$ reflexive sheaf. 
\end{lemma}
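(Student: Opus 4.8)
The plan is to treat the two cases by entirely different methods: a spectrum-plus-Beilinson argument yields the resolution in (i), while a section/Chern-class analysis handles both the vanishing in (i) and the whole of (ii).

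\emph{Part (i).} Since $\tH^0(F)=0$ and $\tH^0(F^\vee(-1))=0$, the bundle $F$ is stable, so Appendix~\ref{A:spectrum} applies. With $c_2=c_3=2$ one has $-2\sum k_i=c_3+c_2=4$, i.e. $\sum k_i=-2$; by Remark~\ref{R:spectrumf}(v) the only admissible length-$2$ spectrum is $(-1,-1)$. Reading off the spectrum formulae gives $\tH^1(F(l))=0$ for $l\leq -1$ and $\tH^2(F(l))=0$ for $l\geq -1$, and together with stability and Serre duality one computes that all of $\tH^\bullet(F(p))$, $p\in\{0,-1,-2,-3\}$, vanish except $\tH^2(F(-2))$ and $\tH^2(F(-3))$. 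Thus the Beilinson table of $F$ (see \cite[Thm.~1.23]{acm1}) is concentrated in the row $q=2$, so the monad degenerates to a short exact sequence
\[
0 \lra 3\sco_\piii(-1) \lra 2\Omega^2_\piii(2) \lra F \lra 0 .
\]
To reach the stated resolution I would then use $\Omega^2_\piii(2)\simeq\text{T}_\piii(-2)$ and its Koszul presentation $0\to\sco_\piii(-2)\to 4\sco_\piii(-1)\to\Omega^2_\piii(2)\to 0$: taking the fibre product of $3\sco_\piii(-1)\hookrightarrow 2\Omega^2_\piii(2)$ with $8\sco_\piii(-1)\twoheadrightarrow 2\Omega^2_\piii(2)$ and noting $\text{Ext}^1(3\sco_\piii(-1),2\sco_\piii(-2))=0$, the middle extension splits; cancelling the common summand $3\sco_\piii(-1)$ produces exactly $0\to 2\sco_\piii(-2)\to 5\sco_\piii(-1)\to F\to 0$.

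\emph{The vanishing $\tH^0(F^\vee)=0$.} A nonzero section of $F^\vee$ is a nonzero morphism $F\to\sco_\piii$, whose image is an ideal sheaf $\sci_Z$ with $Z$ of codimension $\geq 2$ (a divisorial component would give $\tH^0(F^\vee(-1))\neq 0$). Its kernel $K$ is a rank $2$ reflexive sheaf with $c_1(K)=-1$, and since $K\subseteq F$ with $\tH^0(F)=0$ we get $\tH^0(K)=0$, so $K$ is stable and hence $c_2(K)\geq 1$. Then $\deg Z=c_2-c_2(K)\leq 1$, and the formulas of Remark~\ref{R:chern}(c) force $c_3(K)=-2$ (if $\deg Z=0$) or $c_3(K)=-1$ (if $\deg Z=1$), both impossible for a reflexive sheaf. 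Hence no such section exists.

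\emph{Part (ii).} A nonzero section gives $0\to\sco_\piii\to F\to\scg\to 0$ with $\scg$ torsion-free of rank $2$, $c_1(\scg)=-1$; as above $\tH^0(F(-1))=0$ rules out divisorial zeros, so the zero scheme of the section has codimension $\geq 2$ and $\scg$ is torsion-free. The point is that $\scg$ is in fact reflexive, equivalently that the zero scheme has codimension $\geq 3$. Assuming the contrary, $\scg^{\vee\vee}/\scg$ has a curve in its support; dualizing $F\twoheadrightarrow\scg$ yields $0\to\scg^\vee\to F^\vee\to\sci_{C'}\to 0$ with $C'$ a curve, and I would compute $c_2(\scg^\vee),c_3(\scg^\vee)$ from this sequence via Remark~\ref{R:chern}(c) and compare with the values forced by $\scg^{\vee\vee}$ being stable reflexive with $c_1=-1$ (so $c_2(\scg^{\vee\vee})\geq 1$ and $c_3$ bounded by \cite[Thm.~8.2]{ha}); these are incompatible, giving the required contradiction. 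Once $\scg$ is reflexive, $\scg^\vee\simeq\scg(1)$, and dualizing the defining sequence embeds $\scg^\vee$ into $F^\vee$, whence $\tH^0(\scg)=\tH^0(\scg^\vee(-1))\subseteq\tH^0(F^\vee(-1))=0$, so $\scg$ is stable.

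The genuinely non-formal step is the reflexivity claim in (ii): ruling out a codimension-$2$ component of the zero locus, which rests on the $c_3$-comparison together with Hartshorne's bounds for rank $2$ reflexive sheaves with $c_1=-1$. Everything else is either formal cohomology bookkeeping fed by the spectrum, the Beilinson degeneration, or the uniform ``section $\Rightarrow$ reflexive subsheaf $\Rightarrow$ Chern-class contradiction'' device, which I expect to go through without difficulty.
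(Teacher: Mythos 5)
Your proposal is correct, but part (i) follows a genuinely different route from the paper's, so a comparison is in order. For the resolution, the paper argues via Castelnuovo--Mumford regularity: the spectrum $(-1,-1)$ plus Riemann--Roch give that $F$ is $1$-regular with $\h^0(F(1)) = 5$, and the kernel $K$ of the evaluation morphism $5\sco_\piii \ra F(1)$ is then a $1$-regular rank $2$ bundle with $\h^0(K(1)) = 2$, forcing $K(1) \simeq 2\sco_\piii$; you instead read off the full Beilinson table and mutate the resulting two-term complex $0 \ra 3\sco_\piii(-1) \ra 2\Omega_\piii^2(2) \ra F \ra 0$ through the Euler sequence, which also works. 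Note only that your multiplicity $3 = \h^2(F(-3))$ genuinely requires Riemann--Roch: the spectrum formula of Remark~\ref{R:spectrumf}(ii) holds only for $l \geq -2$, and its naive extension to $l = -3$ would give the wrong value $4$ (your displayed ``$3$'' is the correct one, and matches $\h^1(F^\vee(-1)) = 3$ computed from the resolution). For the vanishing $\tH^0(F^\vee) = 0$ the difference is more substantial: the paper dualizes the resolution to $0 \ra F^\vee \ra 5\sco_\piii(1) \ra 2\sco_\piii(2) \ra 0$ and invokes Lemma~\ref{L:mora2o(1)}, whereas your argument (a section of $F^\vee$ yields a stable rank $2$ reflexive kernel whose $c_3$ comes out negative by Remark~\ref{R:chern}(c), which is impossible) is independent of the resolution and has the merit of being uniform with part (ii). Part (ii) itself is essentially the paper's proof: the paper dualizes the section to $0 \ra \scg^\prim \ra F^\vee \ra \sci_Z \ra 0$, computes $c_2(\scg^\prim) = 2 - \deg Z_{\text{CM}}$ and $c_3(\scg^\prim) = 2 + 5\deg Z_{\text{CM}} - 2\chi(\sco_{Z_{\text{CM}}})$ from Remark~\ref{R:chern}(c), uses $\tH^0(\scg^\prim(-1)) \subseteq \tH^0(F^\vee(-1)) = 0$ together with \cite[Thm.~8.2(d)]{ha} to exclude the case where $Z_{\text{CM}}$ is a line, and dualizes back; your ``reflexivity of the quotient'' formulation is the same computation in different packaging.
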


\begin{proof}
(i) The first assertion is due to Okonek and Spindler \cite[Lemma~1.12]{oks}. 
We include a variant of their argument. The spectrum of $F$ must be $(-1,-1)$. 
It follows that $\tH^2(F(-1)) = 0$. Moreover, $\tH^3(F(-2)) \simeq 
\tH^0(F^\vee (-2))^\vee = 0$. Finally, by Riemann-Roch, $\tH^1(F) = 0$. It 
follows that $F$ is 1-regular. By Riemann-Roch, again, $\h^0(F(1)) = 5$. 
The kernel $K$ of the evaluation morphism $5\sco_\piii \ra F(1)$ is a 
1-regular rank 2 vector bundle. Since, by Riemann-Roch, $\h^0(K(1)) = 20 - 
\h^0(F(2)) = 2$, one deduces that $K(1) \simeq 2\sco_\piii$. 

The fact that $\tH^0(F^\vee) = 0$ follows using the exact sequence$\, :$ 
\[
0 \lra F^\vee \lra 5\sco_\piii(1) \lra 2\sco_\piii(2) \lra 0\, , 
\]
and the fact that, by Lemma~\ref{L:mora2o(1)}, the map $\tH^0(5\sco_\piii(1)) 
\ra \tH^0(2\sco_\piii(2))$ is surjective hence bijective.

\vskip2mm 

(ii) A non-zero global section of $F$ defines an exact sequence$\, :$ 
\begin{equation}\label{E:gprimfveeiz} 
0 \lra \scg^\prime \lra F^\vee \lra \sci_Z \lra 0 
\end{equation} 
where $Z$ is a closed subscheme of $\piii$ of dimension $\leq 1$ and 
$\scg^\prime$ is a rank 2 reflexive sheaf. Put $\scg := \scg^\prime (-1)$. 
It follows, from Remark~\ref{R:chern}(c), that $\scg$ has Chern classes$\, :$ 
\[
c_1(\scg) = -1\, ,\  c_2(\scg) = c_2(\scg^\prime) = 2 - 
\text{deg}\, Z_{\text{CM}}\, ,\  
c_3(\scg) = c_3(\scg^\prime) = 2 + 5 \text{deg}\, Z_{\text{CM}} - 
2\chi(\sco_{Z_{\text{CM}}})\, .  
\]
Since $\tH^0(\scg) = 0$, $\scg$ is stable hence $c_2(\scg) \geq 1$ and 
$c_3(\scg) \leq c_2(\scg)^2$ (see \cite[Thm.~8.2(d)]{ha}). One deduces that 
either $Z_{\text{CM}}$ is a line or it is the empty set. In the former case one 
gets that $c_2(\scg) = 1$ and $c_3(\scg) = 5$ which \emph{is not possible}. 
It remains that $Z_{\text{CM}} = \emptyset$ hence $c_2(\scg) = 2$, $c_3(\scg) 
= 2$ and $Z$ is a 0-dimensional subscheme of $\piii$ of length 2. 
Dualizing, now, the exact sequence \eqref{E:gprimfveeiz} one gets the exact 
sequence from the statement. 
\end{proof}

\begin{cor}\label{C:(3;-1,2,2)} 
Under the hypothesis of Lemma~\ref{L:(3;-1,2,2)}, one has$\, :$ 

\emph{(i)} If ${\fam0 H}^0(F) = 0$ then $F^\vee$ is $1$-regular. 

\emph{(ii)} If ${\fam0 H}^0(F) \neq 0$ then the cokernel of the evaluation 
morphism ${\fam0 H}^0(F^\vee (1))\otimes_k \sco_\piii \ra F^\vee (1)$ is 
isomorphic to $\sco_L(-1)$, for some line $L \subset \piii$. Moreover, 
$F^\vee(2)$ is globally generated.  
\end{cor}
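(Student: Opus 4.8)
The plan is to deduce Corollary~\ref{C:(3;-1,2,2)} directly from the structural results of Lemma~\ref{L:(3;-1,2,2)}, treating the two cases separately according to whether $\tH^0(F) = 0$ or not.

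For part (i), under the assumption $\tH^0(F) = 0$, I would use the resolution produced in Lemma~\ref{L:(3;-1,2,2)}(i), namely $0 \ra 2\sco_\piii(-2) \ra 5\sco_\piii(-1) \ra F \ra 0$. Dualizing this gives $0 \ra F^\vee \ra 5\sco_\piii(1) \ra 2\sco_\piii(2) \ra 0$ (the map is surjective as a sheaf map since $F$ is locally free, and the $\sce xt^1$ terms vanish). To check $1$-regularity of $F^\vee$ I need $\tH^1(F^\vee) = 0$, $\tH^2(F^\vee(-1)) = 0$, and $\tH^3(F^\vee(-2)) = 0$. The first follows because, by the same computation used at the end of the proof of Lemma~\ref{L:(3;-1,2,2)}(i), the map $\tH^0(5\sco_\piii(1)) \ra \tH^0(2\sco_\piii(2))$ is bijective (via Lemma~\ref{L:mora2o(1)}), so the long exact sequence forces $\tH^1(F^\vee) = 0$. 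The vanishing $\tH^2(F^\vee(-1)) \simeq \tH^1(F(-3))^\vee$ and $\tH^3(F^\vee(-2)) \simeq \tH^0(F(-2))^\vee$ should both come out of the resolution (or Serre duality together with the spectrum $(-1,-1)$, which gives $\tH^1(F(l)) = 0$ for $l \leq -2$). This case is essentially bookkeeping on the resolution.

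For part (ii), with $\tH^0(F) \neq 0$, I would start from the exact sequence $0 \ra \sco_\piii \ra F \ra \scg \ra 0$ of Lemma~\ref{L:(3;-1,2,2)}(ii), where $\scg$ is a stable rank $2$ reflexive sheaf with $c_1(\scg) = -1$, $c_2(\scg) = 2$, $c_3(\scg) = 2$. Twisting by $1$ and dualizing requires care because $\scg$ is only reflexive, not locally free; but $F$ is a vector bundle, so I would instead dualize the sequence $0 \ra \scg^\prime \ra F^\vee \ra \sci_Z \ra 0$ from \eqref{E:gprimfveeiz} (with $\scg^\prime = \scg(1)$ and $Z$ of length $2$). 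The idea is that $F^\vee(1)$ maps onto $\sci_Z(1)$, and the cokernel of the evaluation morphism of $F^\vee(1)$ should be governed by the cokernel of the evaluation morphism of $\sci_Z(1)$. For a length-$2$ subscheme $Z$ (two points or a double point), $\tH^0(\sci_Z(1))$ is the space of linear forms vanishing on $Z$, which generate the ideal sheaf of the line $L = \overline{Z}$ spanned by $Z$ rather than $Z$ itself; hence the cokernel of $\tH^0(\sci_Z(1)) \otimes \sco_\piii \ra \sci_Z(1)$ is $\sci_{Z,L}(1) \simeq \sco_L(-1)$. I would then need to verify that the contribution of $\scg^\prime = \scg(1)$ does not alter this: since $\scg(1)$ is a twist of a stable reflexive sheaf with the given Chern classes, $\scg(2)$ should be globally generated with surjective evaluation (this is where I would invoke the $1$-regularity or known generation statements for such reflexive sheaves, e.g.\ via Remark~\ref{R:chern} and the spectrum), so that the cokernel of the evaluation of $F^\vee(1)$ is exactly the cokernel coming from the $\sci_Z(1)$ quotient.

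The main obstacle I anticipate is the diagram chase in part (ii): pinning down that the cokernel of the evaluation morphism of $F^\vee(1)$ equals $\sco_L(-1)$ and not something larger requires controlling the global sections of $\scg^\prime$ and showing the evaluation map is surjective onto $\scg^\prime$. I would handle this by applying the Snake Lemma to the commutative diagram relating the evaluation morphisms of $\scg^\prime$, $F^\vee(1)$, and $\sci_Z(1)$, after first establishing that $\tH^1(\scg^\prime) = 0$ (so that $\tH^0(F^\vee(1)) \ra \tH^0(\sci_Z(1))$ is surjective) and that $\scg^\prime$ is itself globally generated. The final assertion that $F^\vee(2)$ is globally generated then follows since $\sco_L(-1)$ twisted by $1$ is $\sco_L$, which is globally generated, and an extension of globally generated sheaves by a globally generated sheaf is globally generated; I would read this off the evaluation sequence $\tH^0(F^\vee(2)) \otimes \sco_\piii \ra F^\vee(2) \ra \sco_L \ra 0$ after the twist.
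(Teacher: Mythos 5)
Your part (i) is correct and is essentially the paper's own argument: the vanishings $\tH^2(F^\vee(-1)) = 0$ and $\tH^3(F^\vee(-2)) = 0$ follow from Serre duality and the spectrum (or directly from the dualized resolution), and $\tH^1(F^\vee) = 0$ follows from Lemma~\ref{L:mora2o(1)} applied to the epimorphism $5\sco_\piii(1) \ra 2\sco_\piii(2)$. Part (ii) also follows the paper's route in outline: twist $0 \ra \scg^\prim \ra F^\vee \ra \sci_Z \ra 0$ by $1$, identify the cokernel of the evaluation of $\sci_Z(1)$ with $\sci_{Z,L}(1) \simeq \sco_L(-1)$ ($L$ being the unique line containing $Z$), and transfer this to $F^\vee(1)$ by a snake-lemma argument. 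However, the key input you leave open cannot be obtained from the tools you point at: the global generation of $\scg(2)$ and the vanishing $\tH^1(\scg(2)) = 0$ do not follow from Remark~\ref{R:chern} (which only records Chern class formulas) nor from the spectrum (which controls cohomology, not generation). The paper reads both facts off Chang's structure theorem \cite[Lemma~2.4]{ch}, which realizes $\scg$ as an extension $0 \ra \sco_\piii(-1) \ra \scg \ra \sci_X \ra 0$ with $X$ the union of two disjoint lines or a double line on a nonsingular quadric; that is the concrete statement you would have to cite or reprove. (Note also a twist slip: in your snake-lemma diagram the subsheaf is $\scg^\prim(1) = \scg(2)$, so the vanishing needed is $\tH^1(\scg(2)) = 0$, not $\tH^1(\scg^\prim) = 0$.)

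The genuine gap is your final step. The principle ``an extension of globally generated sheaves by a globally generated sheaf is globally generated'' is false: for a nonsplit extension $0 \ra \sco_C \ra B \ra \sco_\piii \ra 0$, with $C \subset \piii$ an elliptic quartic curve (so $\tH^1(\sco_C) \simeq k$), both ends are globally generated, yet the coboundary $\tH^0(\sco_\piii) \ra \tH^1(\sco_C)$ is the nonzero extension class, hence every global section of $B$ factors through $\sco_C$ and $B$ is not globally generated. What is actually required is that $\tH^0$ of the middle term surject onto $\tH^0$ of the quotient, which one usually gets from an $\tH^1$-vanishing of the subsheaf; knowing only that the cokernel of the evaluation of $F^\vee(1)$ is $\sco_L(-1)$ tells you merely that the cokernel of the evaluation of $F^\vee(2)$ is a quotient of $\sco_L$, not that it is zero. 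The repair stays inside your framework: twist the extension once more to get $0 \ra \scg(3) \ra F^\vee(2) \ra \sci_Z(2) \ra 0$; then $\scg(3)$ is globally generated with $\tH^1(\scg(3)) = 0$ (again from Chang's extension, since $\tH^1(\sco_\piii(2)) = 0$ and $\tH^1(\sci_X(3)) = 0$), and $\sci_Z(2)$ is globally generated because $Z$ is a complete intersection of type $(1,1,2)$; the correct form of the lemma (subsheaf globally generated with vanishing $\tH^1$, quotient globally generated, hence middle term globally generated) then yields that $F^\vee(2)$ is globally generated.
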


\begin{proof} 
(i) $\tH^2(F^\vee (-1)) \simeq \tH^1(F(-3))^\vee = 0$ (from the spectrum) and 
$\tH^3(F^\vee (-2)) \simeq \tH^0(F(-2))^\vee = 0$. Finally, Riemann-Roch and the 
fact that $\tH^0(F^\vee) = 0$ imply that $\tH^1(F^\vee) = 0$. 

\vskip2mm 

(ii) Twisting by 1 the exact sequence \eqref{E:gprimfveeiz} from the proof of 
Lemma~\ref{L:(3;-1,2,2)}, one gets an exact sequence$\, :$ 
\[
0 \lra \scg(2) \lra F^\vee(1) \lra \sci_Z(1) \lra 0\, . 
\] 
By Chang \cite[Lemma~2.4]{ch}, $\scg(2)$ is globally generated and 
$\tH^1(\scg(2)) = 0$. Since $Z$ is a 0-dimensional subscheme of $\piii$ of 
length 2 it is contained in a line $L \subset \piii$. The cokernel of the 
evaluation morphism $\tH^0(\sci_Z(1)) \otimes_k \sco_\piii \ra \sci_Z(1)$ is 
isomorphic to $\sco_L(-1)$.  
\end{proof} 

\begin{lemma}\label{L:(3;-1,2,0)} 
Let $F$ be a stable rank $3$ vector bundle on $\piii$ with $c_1(F) = -1$, 
$c_2(F) = 2$, $c_3(F) = 0$. Then $F$ is the cohomology of a monad of the 
form$\, :$ 
\[
0 \lra \sco_\piii(-2) \lra 3\sco_\piii \oplus 2\sco_\piii(-1) \lra 
\sco_\piii(1) \lra 0\, . 
\]  
\end{lemma}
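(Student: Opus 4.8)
The plan is to build the Horrocks monad of $F$ directly, reading its shape off the cohomology modules $\tH^1_\ast(F)$ and $\tH^2_\ast(F)$, which are governed by the spectrum. First I would determine the spectrum: since $c_2(F)=2$ and $c_3(F)=0$, Remark~\ref{R:spectrumf}(iii) gives $-2\sum k_i=c_3(F)+c_2(F)=2$, so $\sum k_i=-1$, and axioms (iv),(v) exclude $(1,-2)$, leaving $k_F=(0,-1)$. Feeding this into Remark~\ref{R:spectrumf}(i),(ii) yields $\tH^1(F(l))=0$ for $l\le-2$, $\h^1(F(-1))=1$, $\tH^2(F(l))=0$ for $l\ge-1$, and $\h^2(F(-2))=1$. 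Combining this with Riemann--Roch (Remark~\ref{R:chern}(b)) and Serre duality, together with $\tH^0(F)=\tH^0(F^\vee(-1))=0$ from stability, I would record the values I need, in particular $\h^1(F)=1$ and $\tH^3(F(-2))\simeq\tH^0(F^\vee(-2))^\vee=0$.

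Next I would show that $\tH^1_\ast(F)$ is generated by $\tH^1(F(-1))$. Since $\tH^2(F(-1))=0$ and $\tH^3(F(-2))=0$, the Castelnuovo--Mumford lemma in the form of \cite[Lemma~1.21]{acm1} shows $\tH^1_\ast(F)$ is generated in degrees $\le0$; and since for a general plane $H\simeq\pii$ the restriction $F_H$ is stable (Schneider \cite{sch}) with $\chi(F_H)=0$, hence $\tH^1(F_H)=0$, the exact sequence $0\to F(-1)\xrightarrow{h}F\to F_H\to0$ shows $\tH^1(F(-1))\to\tH^1(F)$ is onto, killing the potential degree-$0$ generator. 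As $\h^1(F(-1))=1$, the (up to scalar) unique nonzero class in $\mathrm{Ext}^1(\sco_\piii(1),F)\simeq\tH^1(F(-1))$ defines a universal extension $0\to F\to G\to\sco_\piii(1)\to0$ with $G$ locally free of rank $4$ and $\tH^1_\ast(G)=0$. A direct check against the spectrum then gives that $G$ is $1$-regular: $\tH^1(G)=0$, $\tH^2(G(-1))\simeq\tH^2(F(-1))=0$, and $\tH^3(G(-2))=0$.

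Then I would take a minimal presentation $0\to K_0\to L_0\to G\to0$ with $L_0$ a sum of line bundles coming from the minimal generators of $\tH^0_\ast(G)$. Because $G$ is $1$-regular these generators lie in degrees $\le1$; the values $\h^0(G(-1))=0$ and $\h^0(G)=3$ give exactly three in degree $0$, so $L_0=3\sco_\piii\oplus g_1\sco_\piii(-1)$. Since $\tH^0_\ast(L_0)\twoheadrightarrow\tH^0_\ast(G)$ and $\tH^1_\ast(G)=0$, the kernel $K_0$ satisfies $\tH^1_\ast(K_0)=\tH^2_\ast(K_0)=0$, so $K_0$ is itself a sum of line bundles; moreover $K_0$ is a \emph{subbundle} of $L_0$ because $G$ is locally free.

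The crux is to pin down $g_1$ and $K_0$, and here the key observation will be that $K_0$ can have no summand $\sco_\piii(-1)$: by minimality such a summand maps to $L_0$ with vanishing component into $g_1\sco_\piii(-1)$ (constant entries are forbidden), hence into $3\sco_\piii$ by three linear forms, which have a common zero; at that point the inclusion $\sco_\piii(-1)\hookrightarrow L_0$ degenerates, contradicting that $K_0$ is a subbundle. Thus every summand of $K_0$ is $\sco_\piii(-c)$ with $c\ge2$; comparing ranks ($\mathrm{rk}\,K_0=g_1-1$, which is nonzero since $\tH^2_\ast(G)\neq0$ forbids $K_0=0$) and first Chern classes ($c_1(K_0)=-g_1$, with each summand contributing $\ge2$) then forces $g_1=2$ and $K_0=\sco_\piii(-2)$, i.e.\ $L_0=3\sco_\piii\oplus2\sco_\piii(-1)$. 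Finally, composing $L_0\twoheadrightarrow G$ with $G\to\sco_\piii(1)$ gives $\alpha\colon3\sco_\piii\oplus2\sco_\piii(-1)\to\sco_\piii(1)$, and splicing the two short exact sequences exhibits $F$ as the cohomology of $0\to\sco_\piii(-2)\to3\sco_\piii\oplus2\sco_\piii(-1)\xrightarrow{\alpha}\sco_\piii(1)\to0$. I expect the main obstacle to be exactly this last step — excluding extra generators and linear syzygies of $G$ — which the local-freeness-plus-minimality argument resolves cleanly.
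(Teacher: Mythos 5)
Your proof is correct, and while it shares the paper's skeleton --- spectrum $(0,-1)$, the resulting cohomology table, bijectivity of multiplication by a general linear form from $\tH^1(F(-1))$ to $\tH^1(F)$ via Schneider's restriction theorem, the universal extension $0 \ra F \ra G \ra \sco_\piii(1) \ra 0$ (the paper's $Q$), its $1$-regularity, and a minimal presentation --- it diverges at the crux, namely pinning down the presentation as $3\sco_\piii \oplus 2\sco_\piii(-1)$ with kernel $\sco_\piii(-2)$. The paper does this by proving that the evaluation morphism $\tH^0(Q)\otimes_k\sco_\piii \ra Q$ is a sheaf monomorphism: by the Snake Lemma its kernel equals the kernel of a connecting morphism $\scg \ra F$, where $\scg$ is a stable rank $2$ reflexive sheaf with $c_1 = -1$, and stability of both $\scg$ and $F$ forbids a map of generic rank $1$, so this morphism is injective; the monomorphism property then excludes linear syzygies among the three degree-$0$ sections, giving exactly two minimal generators in degree $1$, after which the kernel of the presentation is a rank-$1$ bundle of $c_1 = -2$. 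You instead leave the number $g_1$ of degree-$1$ generators open, split the kernel $K_0$ by Horrocks' criterion, and exclude $\sco_\piii(-1)$-summands --- which encode exactly the same linear syzygies --- by the elementary remark that three linear forms on $\piii$ have a common zero, at which the subbundle inclusion $K_0 \hookrightarrow L_0$ (split because $G$ is locally free) would degenerate; rank and $c_1$ bookkeeping then force $g_1 = 2$ and $K_0 \simeq \sco_\piii(-2)$. The trade-off: the paper's stability argument is shorter and needs only $\tH^1(Q) = 0$ rather than vanishing in all twists, whereas your route replaces the reflexive-sheaf stability comparison by local freeness plus minimality (more elementary at that point), at the price of establishing $\tH^1_\ast(G) = 0$ --- which you correctly reduce to the generation of $\tH^1_\ast(F)$ by $\tH^1(F(-1))$ --- and the extra Horrocks and Chern-class bookkeeping. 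Both arguments are complete.
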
 

\begin{proof}
$F$ has spectrum $(0,-1)$. It follows that $\tH^1(F(l)) = 0$ for $l \leq -2$, 
$\h^1(F(-1)) = 1$ and $\tH^2(F(l)) = 0$ for $l \geq -1$. Moreover, by 
Riemann-Roch, $\h^1(F) = 1$. 
The restriction theorem of 
Schneider~\cite{sch} implies that, for a general plane $H \subset \piii$, 
$F_H$ is stable. In particular, $\tH^0(F_H) = 0$. If $h = 0$ is the equation 
of such a plane $H$ then the multiplication by $h \colon \tH^1(F(-1)) \ra 
\tH^1(F)$ is injective, hence bijective. 
Consider, now, the universal extension$\, :$ 
\[
0 \lra F \lra Q \overset{\e}{\lra} \sco_\piii(1) \lra 0\, . 
\]
$Q$ is a rank 4 vector bundle with $\tH^1(Q) = 0$,  
$\tH^2(Q(-1)) \simeq \tH^2(F(-1)) = 0$ and $\tH^3(Q(-2)) \simeq \tH^3(F(-2)) 
\simeq \tH^0(F^\vee(-2))^\vee = 0$. It follows $Q$ is 1-regular. 
One has $\tH^0(Q(-1)) = 0$, $\h^0(Q) = 3$ and 
$\h^0(Q(1)) = \chi(\sco_\piii(2)) + \chi(F(1)) = 14$. The image of 
$\tH^0(\e) \colon \tH^0(Q) \ra \tH^0(\sco_\piii(1))$ is a 3-dimensional vector 
subspace $W$ of $\tH^0(\sco_\piii(1))$. Consider the commutative diagram$\, :$ 
\[
\begin{CD} 
0 @>>> 0 @>>> \tH^0(Q) \otimes_k \sco_\piii @>{\sim}>> W \otimes_k \sco_\piii 
@>>> 0\\ 
@. @VVV @VV{\text{ev}}V @VV{\phi}V\\ 
0 @>>> F @>>> Q @>{\e}>> \sco_\piii(1) @>>> 0 
\end{CD}
\] 
The kernel $\scg$ of $\phi$ is a stable rank 2 reflexive sheaf with 
$c_1(\scg) = -1$. The connecting morphism $\partial \colon \scg \ra F$ induced 
by the above diagram is non-zero because the bottom row of the diagram does 
not split. Since $F$ is a stable rank 3 vector bundle with $c_1(F) = -1$, 
$\partial$ cannot have, generically, rank 1. It thus must have, generically,  
rank 2 hence it is a monomorphism. One deduces that the evaluation morphism 
$\text{ev} \colon \tH^0(Q) \otimes_k \sco_\piii \ra Q$ is a monomorphism. 

Consequently, the graded $S$-module $\tH^0_\ast(Q)$ has three minimal 
generators in degree 0 and two minimal generators in degree 1. Let $K$ be the 
kernel of the epimorphism $3\sco_\piii \oplus 2\sco_\piii(-1) \ra Q$ defined by 
these minimal generators. $K$ is a vector bundle of rank 1 and since $c_1(Q) 
= 0$ it follows that $K \simeq \sco_\piii(-2)$. 
\end{proof}

\begin{remark}\label{R:(3;-1,2,0)} 
The arguments from the proof of Lemma~\ref{L:(3;-1,2,2)}(ii) show that if 
$F$ is a rank 3 vector bundle on $\piii$ with $c_1(F) = -1$, $c_2(F) = 2$, 
$c_3(F) = 0$, $\tH^0(F(-1)) = 0$, $\tH^0(F^\vee(-1)) = 0$ and $\tH^0(F) \neq 
0$ then $F$ can be realized as an extension$\, :$ 
\[
0 \lra \sco_\piii \lra F \lra G \lra 0\, , 
\] 
where $G$ is a stable rank 2 vector bundle with $c_1(G) = -1$, $c_2(G) = 2$. 
\end{remark}

\begin{lemma}\label{L:(3;-1,2,-2)} 
Let $F$ be a stable rank $3$ vector bundle on $\piii$ with $c_1(F) = -1$, 
$c_2(F) = 2$, $c_3(F) = -2$. Then $F$ can be realized as a non-trivial 
extension$\, :$ 
\[
0 \lra \sco_\piii(-1) \lra F \lra G \lra 0\, ,  
\]
with $G$ a $2$-instanton. 
\end{lemma}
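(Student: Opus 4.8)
The plan is to realize the desired sub-line-bundle $\sco_\piii(-1) \hookrightarrow F$ as a nowhere-vanishing section of $F(1)$ and then to identify the locally free quotient as a $2$-instanton. First I would pin down the spectrum: from $c_3(F) = -2$ and the relation $-2\sum k_i = c_3(F) + c_2(F)$ of Remark~\ref{R:spectrumf}(iii), together with the list of possible spectra for $c_2 = 2$ recalled in Case~1--4 of the proof of Prop.~\ref{P:eprimstablec2=10}, the spectrum of $F$ must be $(0,0)$. Reading off cohomology via Remark~\ref{R:spectrumf}(i),(ii) gives $\tH^1(F(-2)) = 0$ and $\tH^2(F(l)) = 0$ for $l \geq -2$; combined with stability ($\tH^0(F) = 0$, $\tH^0(F^\vee(-1)) = 0$) and Riemann--Roch one gets $\h^1(F) = 2$, $\h^2(F(1)) = 0$, $\h^3(F(1)) = 0$ and $\chi(F(1)) = 3$. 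To conclude $\h^0(F(1)) = 3$ I still need $\tH^1(F(1)) = 0$: the restriction $F_H$ to a general plane $H$ is stable (restriction theorem of Schneider \cite{sch}), a Riemann--Roch computation shows $F_H$ is $1$-regular, so $\tH^1(F_H(l)) = 0$ for $l \geq 0$, and feeding this into the Bilinear Map Lemma \cite[Lemma~5.1]{ha} exactly as in Lemma~\ref{L:h1e(l)} yields $\h^1(F(1)) \leq \mathrm{max}(0,\, \h^1(F) - 3) = 0$.

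Next I would produce the subbundle. A Chern-class computation gives $c_3(F(1)) = 1 + c_1 + c_2 + c_3 = 0$, and since $\tH^0(F(l)) = 0$ for $l \leq 0$ no nonzero section of $F(1)$ can vanish on a surface; hence it suffices to exhibit a section whose zero scheme has dimension $\leq 0$, as its length would then equal $c_3(F(1)) = 0$, forcing it to be empty. I would analyse the evaluation $\phi \colon 3\sco_\piii \to F(1)$ of the three sections. \textbf{The main obstacle is to show that $\phi$ is generically an isomorphism}, i.e. that $\det \phi \in \tH^0(\sco_\piii(2))$ is nonzero. I expect to handle this by excluding, via slope-stability, the alternative that every section of $F(1)$ lies in the image $\scg$ of $\phi$, of rank $\leq 2$. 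Since $\scg$ is globally generated one has $c_1(\scg) \geq 0$, while stability of $F$ and $\tH^0(F) = 0$ force the reflexive saturation $\scf$ of $\scg(-1)$ inside $F$ to be a stable rank $2$ sheaf with $c_1(\scf) = -1$ and $\h^0(\scf(1)) \geq 3$ (the competing value $c_1(\scf) = -2$ makes $\scg \simeq 2\sco_\piii$, which has only two sections). Writing $0 \to \scf \to F \to \sci_W \to 0$ with $\dim W \leq 1$, the identity $c_3(F) = -2$ together with the non-negativity of $c_3$ of a reflexive sheaf then yields a numerical contradiction. Granting $\det \phi \neq 0$, a general section $s = \phi(a)$ vanishes only on the quadric $Q = \{\det \phi = 0\}$, and there exactly at the points whose kernel line of $\phi$ equals $\langle a \rangle$; for general $a$ this locus is $0$-dimensional (or empty), so $\mathrm{length}\,Z(s) = c_3(F(1)) = 0$ and $s$ is nowhere vanishing.

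Such a section gives an exact sequence $0 \to \sco_\piii(-1) \xrightarrow{s} F \to G \to 0$ with $G$ locally free of rank $2$, $c_1(G) = 0$, $c_2(G) = 2$. Finally I would verify that $G$ is a $2$-instanton: $\tH^0(G) = 0$ follows from $\tH^0(F) = 0$ and $\tH^1(\sco_\piii(-1)) = 0$, whence $G$ is stable; and $\tH^1(G(-2)) = 0$ follows from $\tH^1(F(-2)) = 0$ and $\tH^2(\sco_\piii(-3)) = 0$ by twisting the sequence by $-2$. The extension is automatically non-split, for a splitting would exhibit the slope-$0$ bundle $G$ as a subsheaf of $F$, contradicting the stability of $F$ (slope $-1/3$); this gives the asserted non-trivial extension.
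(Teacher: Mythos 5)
Your strategy---produce a nowhere-vanishing section of $F(1)$ by combining $c_3(F(1)) = 0$ with a genericity argument---is genuinely different from the paper's proof, but two of its key steps fail as written. Take first the step you yourself flag as the main obstacle, generic surjectivity of the evaluation. In the exact sequence $0 \to \scf \to F \to \sci_W \to 0$ one computes from Remark~\ref{R:chern}(c) that $c_2(\scf) = 2 - \deg W_{\text{CM}}$ and $c_3(\scf) = 2 + 3\deg W_{\text{CM}} - 2\chi(\sco_{W_{\text{CM}}})$. In the case $W_{\text{CM}} = \emptyset$ this gives $(c_1,c_2,c_3)(\scf) = (-1,2,2)$, which is perfectly realizable by stable rank $2$ reflexive sheaves (these are exactly the sheaves of Chang's \cite[Lemma~2.4]{ch}, quoted repeatedly in the paper), so non-negativity of $c_3$ yields no contradiction at all. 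The contradiction you need must instead come from $h^0(\scf(1)) \geq 3$ confronted with the classification of stable rank $2$ reflexive sheaves with $c_1 = -1$, $c_2 \leq 2$ (by \cite[Lemma~9.4, Lemma~9.6]{ha}, \cite{hs} and \cite[Lemma~2.4]{ch} they all have $h^0(\scf(1)) \leq 2$)---that is, precisely the non-numerical classification input on which the paper's own argument runs; alternatively one must first pin down $c_2(\scf) = 1$ using the resolution $0 \to \sco_\piii(-1) \to 3\sco_\piii \to \scg \to 0$ of the image sheaf and then invoke Hartshorne's bound $c_3 \leq c_2^2$ for stable sheaves, which again is more than non-negativity.

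Second, even granting generic surjectivity, the conclusion ``for general $a$ this locus is $0$-dimensional'' presupposes that the degeneracy strata of $\phi$ have the expected dimensions: your incidence count breaks down if the rank $\leq 1$ locus of $\phi$ is a surface (numerically possible exactly when $\det\phi$ is the square of a linear form) or if the three sections have a common zero curve. Neither possibility is excluded, and the natural exclusion---two sections proportional along a plane $\{h = 0\}$ give $s_i \wedge s_j = h\, \tau_{ij}$ in $\tH^0(\Lambda^2(F(1))) \simeq \tH^0(F^\vee(1))$ with $\tau_{ij} \in \tH^0(F^\vee)$---is unavailable, because $\tH^0(F^\vee) \neq 0$ for every $F$ satisfying the lemma: dualizing the extension gives $0 \to G \to F^\vee \to \sco_\piii(1) \to 0$, and the connecting map $\tH^0(\sco_\piii(1)) \to \tH^1(G) \simeq k^2$ has kernel of dimension $\geq 2$, so in fact $h^0(F^\vee) \geq 2$. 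A smaller gap of the same nature occurs earlier: your appeal to the Bilinear Map Lemma to get $\tH^1(F(1)) = 0$ requires multiplication by \emph{every} nonzero linear form to be surjective, i.e. $\tH^1(F_H(1)) = 0$ for \emph{every} plane $H$, whereas Schneider's restriction theorem \cite{sch} controls only the general plane. By contrast, the paper's proof avoids all genericity issues: it takes one arbitrary section of $F(1)$, dualizes it to $0 \to \scg \to F^\vee \to \sci_Z(1) \to 0$ with $\scg$ rank $2$ reflexive, and handles each possible value $\deg Z_{\text{CM}} \in \{0,1,2\}$ by hand through the classification results above (trivial bundle, nullcorrelation bundle, Chang's sheaves), in each case exhibiting an epimorphism $F^\vee \to \sco_\piii(1)$ whose kernel is a $2$-instanton.
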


\begin{proof}
$F$ has spectrum $(0,0)$. One gets, from Riemann-Roch, that $\h^0(F(1)) \geq 
3$. A non-zero global section of $F(1)$ defines an exact sequence$\, :$ 
\[
0 \lra \scg \lra F^\vee \lra \sci_Z(1) \lra 0
\]
where $Z$ is a closed subscheme of $\piii$, of dimension $\leq 1$ and $\scg$ 
is a rank 2 reflexive sheaf. Using Remark~\ref{R:chern}(c), one gets$\, :$ 
\[
c_1(\scg) = 0\, ,\  c_2(\scg) = 2 - \text{deg}\, Z_{\text{CM}}\, ,\  
c_3(\scg) = 2\, \text{deg}\, Z_{\text{CM}} - 2\chi(\sco_{Z_{\text{CM}}})\, . 
\] 
Since $\tH^0(F^\vee(-1)) = 0$ it follows that $\tH^0(\scg(-1)) = 0$ hence 
$\scg$ is semistable. In particular, $c_2 (\scg) \geq 0$. 

\vskip2mm 

\noindent 
$\bullet$\quad If $c_2(\scg) = 2$ then $Z_{\text{CM}} = \emptyset$ and  
$c_3(\scg) = 0$ hence $\scg$ is a rank 2 vector bundle. This implies that 
$Z = Z_{\text{CM}} = \emptyset$ (see Remark~\ref{R:chern}(c)). Dualizing the 
above exact sequence and taking into account that $\scg^\vee \simeq \scg$ 
(because $c_1(\scg) = 0$) one gets an exact sequence$\, :$ 
\[
0 \lra \sco_\piii(-1) \lra F \lra \scg \lra 0\, , 
\]   
from which one deduces that $\tH^0(\scg) = 0$ hence $\scg$ is stable. Since 
it has Chern classes $c_1(\scg) = 0$ and $c_2(\scg) = 2$ it is a 2-instanton. 

\vskip2mm 

\noindent 
$\bullet$\quad If $c_2(\scg) = 1$ then $Z_{\text{CM}}$ is a line $L$. 
It follows that $c_3(\scg) = 0$ hence $\scg$ is a rank 2 vector bundle. One 
deduces that $Z = Z_{\text{CM}} = L$. Moreover, since $\scg$ has Chern classes 
$c_1(\scg) = 0$, $c_2(\scg) = 1$ it must be, actually, stable hence it is a 
nullcorrelation bundle $N$. Using the commutative diagram$\, :$ 
\[
\begin{CD}
0 @>>> \sco_\piii(-1) @>{v}>> 2\sco_\piii @>{u}>> 
\sci_L(1) @>>> 0\\
@. @V{\sigma}VV @VVV @\vert\\
0 @>>> N @>>> F^\vee @>>> \sci_L(1) @>>> 0  
\end{CD}
\]   
one gets an exact sequence$\, :$ 
\[
0 \lra \sco_\piii(-1) \xra{\left(\begin{smallmatrix} \sigma\\ v 
\end{smallmatrix}\right)} N \oplus 2\sco_\piii \lra F^\vee \lra 0\, .
\]
$\sigma$ is defined by a global section $s$ of $N(1)$. 
One has an exact sequence$\, :$ 
\[
0 \lra \sco_\piii(-1) \overset{\sigma}{\lra} N 
\xra{s \wedge -} \sci_X(1) \lra 0 
\]
where $X$ (= the zero scheme of $s$) is either the union of two disjoint lines 
or a double line on a nonsingular quadric surface. Since 
$(\sigma \, ,\, v)^{\text{t}}$ is a locally split monomorphism, one must have 
$X \cap L = \emptyset$. It follows that $(s \wedge -\, ,\, u) : N \oplus 
2\sco_\piii \ra \sco_\piii(1)$ induces an epimorphism $F^\vee \ra \sco_\piii(1)$. 
The kernel $G$ of this epimorphism must be, as above, a 2-instanton. 

\vskip2mm 

\noindent 
$\bullet$\quad If $c_2(\scg) = 0$ then the semistability of $\scg$ implies 
that $\scg \simeq 2\sco_\piii$. Then one must have $Z = Z_{\text{CM}}$. Moreover, 
$Z$ has degree 2. Since $\chi(\sci_Z) = \chi(F^\vee(-1)) = -1$, it follows that 
either $Z$ is the union of two disjoint lines or it is a double line on a 
nonsingular quadric surface. In both cases one has $\omega_Z \simeq 
\sco_Z(-2)$. A nowhere vanishing global section of $\omega_Z(2) \simeq 
\sco_Z$ defines an extension$\, :$ 
\[
0 \lra \sco_\piii(-1) \lra N \lra \sci_Z(1) \lra 0\, , 
\]        
where $N$ is a nullcorrelation bundle. Since $\text{Ext}^1(N , \sco_\piii) 
\simeq \tH^1(N^\vee) \simeq \tH^1(N) = 0$, one gets a commutative diagram$\, :$ 
\[
\begin{CD} 
0 @>>> \sco_\piii(-1) @>>> N @>>> \sci_Z(1) @>>> 0\\ 
@. @VVV @VVV @\vert\\ 
0 @>>> 2\sco_\piii @>>> F^\vee @>>> \sci_Z(1) @>>> 0  
\end{CD}
\] 
from which one deduces an exact sequence$\, :$ 
\[
0 \lra \sco_\piii(-1) \lra N \oplus 2\sco_\piii \lra F^\vee \lra 0\, , 
\]
and one concludes as in the previous case. 
\end{proof}

\begin{lemma}\label{L:(3;-1,2,-4)} 
Let $F$ be a stable rank $3$ vector bundle on $\piii$ with $c_1(F) = -1$, 
$c_2(F) = 2$, $c_3(F) = -4$. Then one has an exact sequence$\, :$ 
\[
0 \lra F \lra \sco_\piii(1) \oplus 3\sco_\piii \lra \sco_\piii(2) \lra 0\, . 
\]
\end{lemma}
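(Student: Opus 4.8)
The plan is to imitate the structure of the preceding lemmas in this appendix, which compute the spectrum of $F$ and then build an explicit monad or resolution. First I would determine the spectrum of $F$. Since $F$ is stable of rank $3$ with $c_1(F)=-1$ and $c_2(F)=2$, its spectrum is a sequence $(k_1,k_2)$ with $k_1\geq k_2$, and by Remark~\ref{R:spectrumf}(iii) one has $-2\sum k_i = c_3(F)+c_2(F) = -4+2 = -2$, so $\sum k_i = 1$. Combined with the admissible spectra for this $(c_1,c_2)$ listed in the proof of Prop.~\ref{P:eprimstablec2=10} (namely $(-1,-1),(0,-1),(0,0),(1,0)$), the only possibility is $k_F=(1,0)$. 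This immediately yields the cohomological vanishings I need: from property (i) of the spectrum, $\h^1(F(l)) = \h^0(\sco_\pj(k_1+l+1)\oplus\sco_\pj(k_2+l+1))$ for $l\leq -1$, giving $\tH^1(F(l))=0$ for $l\leq -3$, and $\h^1(F(-2))=1$; from property (ii), $\tH^2(F(l))=0$ for $l\geq -1$, with $\h^2(F(-2))$ computable by Riemann-Roch.

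Next I would pass to the dual and normalize toward the target resolution. The asserted sequence $0\to F\to \sco_\piii(1)\oplus 3\sco_\piii\to\sco_\piii(2)\to 0$ says precisely that $F$ is the kernel of an epimorphism $\sco_\piii(1)\oplus 3\sco_\piii \to \sco_\piii(2)$; dualizing, $F^\vee$ has a presentation $0\to\sco_\piii(-2)\to\sco_\piii(-1)\oplus 3\sco_\piii\to F^\vee\to 0$. So the cleanest route is to establish this presentation for $F^\vee$, or equivalently to show that the graded module $\tH^0_\ast(F^\vee)$ has the expected generators and relations and that $F^\vee(k)$ is globally generated for suitable $k$. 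Following the template of Lemma~\ref{L:(3;-1,2,2)}(i) and Lemma~\ref{L:(3;-1,2,0)}, I would compute the relevant cohomology of $F^\vee$ by Serre duality: $\tH^i(F^\vee(l))\simeq\tH^{3-i}(F(-l-4))^\vee$, together with $\tH^0(F^\vee(-1))=0$ from stability. The spectrum gives $\tH^1(F^\vee(l))=0$ for $l\geq -1$ (dualizing $\tH^2(F(m))=0$ for $m\geq -1$) and also controls $\tH^2(F^\vee)$. A Castelnuovo-Mumford regularity argument (as in \cite[Lemma~1.21]{acm1}, used repeatedly above) should then show $F^\vee$ is $1$-regular, whence $F^\vee(1)$ is globally generated; Riemann-Roch fixes $\h^0(F^\vee(1))$ and $\h^0(F^\vee(2))$.

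From there I would read off the minimal free resolution of $\tH^0_\ast(F^\vee)$, using the Horrocks-type / universal-extension manoeuvre that appears in Lemma~\ref{L:(3;-1,2,0)}: form the universal extension $0\to F^\vee\to Q\to \sco_\piii(2)\to 0$ absorbing the single class responsible for the failure of global generation of $F^\vee$ in degree $0$, show $Q$ is a direct sum of line bundles by checking $\tH^1_\ast(Q)=0$ and $\tH^2_\ast(Q)=0$, and then identify $Q\simeq\sco_\piii(-1)\oplus 3\sco_\piii$ by matching ranks, $c_1$, and the dimensions $\h^0(Q(l))$. Dualizing back produces exactly the claimed exact sequence for $F$. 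The main obstacle I anticipate is verifying that the evaluation (or connecting) morphism in the universal-extension diagram is genuinely injective/locally split, so that the cokernel $Q$ really is a sum of line bundles rather than merely a reflexive sheaf; this is the step where the stability of $F$ (and hence the absence of spurious global sections of $F$ or $F^\vee$) must be invoked carefully, exactly as the rank-counting of the connecting morphism $\partial$ is handled in the proof of Lemma~\ref{L:(3;-1,2,0)}. Once that injectivity is secured, the numerical identification of $Q$ and the final dualization are routine.
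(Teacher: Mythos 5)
Your determination of the spectrum $k_F = (1,0)$ and the resulting vanishings for $F$ are correct and agree with the paper, and your overall mechanism (absorb the obstruction class into an extension whose middle term is forced to split, then identify that split bundle by rank, $c_1$ and $\h^0$) is indeed the paper's mechanism. But transposing the argument to $F^\vee$ breaks it at the decisive point, in two ways. First, the $1$-regularity of $F^\vee$ is \emph{not} a consequence of Castelnuovo--Mumford plus the spectrum: by Serre duality $\tH^1(F^\vee) \simeq \tH^2(F(-4))^\vee$, and Remark~\ref{R:spectrumf}(ii) controls $\tH^2(F(l))$ only for $l \geq -2$. Since $\chi(F^\vee) = 3$, $\tH^2(F^\vee) \simeq \tH^1(F(-4))^\vee = 0$ and $\tH^3(F^\vee) \simeq \tH^0(F(-4))^\vee = 0$, proving $\tH^1(F^\vee) = 0$ amounts to proving $\h^0(F^\vee) = 3$, i.e., to excluding extra global sections of $F^\vee$; this is precisely the non-formal part of the lemma, it requires stability, and your proposal contains no argument for it (the obstacle you anticipate, injectivity of an evaluation/connecting map, is not where the difficulty sits). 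Second, the ``universal extension'' $0 \ra F^\vee \ra Q \ra \sco_\piii(2) \ra 0$ cannot be formed: $\text{Ext}^1(\sco_\piii(2) , F^\vee) \simeq \tH^1(F^\vee(-2)) \simeq \tH^2(F(-2))^\vee = 0$ by the spectrum, so every such extension splits; and the numerics are incompatible anyway, since such a $Q$ would have $c_1(Q) = 3$ and so could never be $\sco_\piii(-1) \oplus 3\sco_\piii$. What you presumably intend is the extension $0 \ra \sco_\piii(-2) \ra Q^\prim \ra F^\vee \ra 0$ classified by $\text{Ext}^1(F^\vee , \sco_\piii(-2)) \simeq \tH^1(F(-2)) \simeq k$, which is just the dual of the paper's construction -- but then one still needs the key surjectivity below to show $Q^\prim$ splits.

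The paper avoids both problems by staying on the $F$-side, where the spectrum does give all the needed vanishings ($\tH^1(F(l)) = 0$ for $l \leq -3$, $\h^1(F(-2)) = 1$, $\h^1(F(-1)) = 3$, $\tH^2(F(-2)) = 0$): it forms the universal extension $0 \ra F \ra Q \ra \sco_\piii(2) \ra 0$ given by the one-dimensional $\tH^1(F(-2)) \simeq \text{Ext}^1(\sco_\piii(2) , F)$, and the crux is that $Q$ is $0$-regular. The vanishing $\tH^1(Q(-1)) = 0$ is equivalent to the surjectivity of the multiplication map $\tH^1(F(-2)) \otimes \tH^0(\sco_\piii(1)) \ra \tH^1(F(-1))$, a map $k^4 \ra k^3$, and this is exactly where stability enters: if it were not surjective, two linearly independent linear forms $h_0 , h_1$ would annihilate $\tH^1(F(-2))$, and tensoring $0 \ra \sco_\piii(-2) \ra 2\sco_\piii(-1) \ra \sci_L \ra 0$ (where $L = \{h_0 = h_1 = 0\}$) with $F$ would give $\tH^0(\sci_L \otimes F) \neq 0$, contradicting $\tH^0(F) = 0$. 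This stability argument is the missing idea in your proposal; once it is supplied, the $0$-regularity of $Q$, the computations $\h^0(Q(-1)) = 1$ and $\h^0(Q) = \chi(F) + \chi(\sco_\piii(2)) = 7$, and the fact that an epimorphism $\sco_\piii(1) \oplus 3\sco_\piii \ra Q$ between bundles of equal rank is an isomorphism yield $Q \simeq \sco_\piii(1) \oplus 3\sco_\piii$ directly, with no dualization needed.
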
 

\begin{proof}
$F$ has spectrum $(1,0)$. It follows that $\tH^1(F(l)) = 0$ for $l \leq -3$, 
$\h^1(F(-2)) = 1$, $\h^1(F(-1)) = 3$ and $\tH^2(F(-2)) = 0$.  
We assert that the multiplication map $\tH^1(F(-2)) \otimes 
\tH^0(\sco_\piii(1)) \ra \tH^1(F(-1))$ is surjective. 

\emph{Indeed}, if it is not then there exists two linearly independent 
linear forms $h_0$ and $h_1$ annihilating $\tH^1(F(-2))$ inside $\tH^1_\ast(F)$. 
Let $L \subset \piii$ be the line of equations $h_0 = h_1 = 0$. Tensorizing by 
$F$ the exact sequence $0 \ra \sco_\piii(-2) \ra 2\sco_\piii(-1) \ra \sci_L 
\ra 0$ one deduces that $\tH^0(\sci_L \otimes F) \neq 0$ which 
\emph{contradicts} the fact that $\tH^0(F) = 0$. 

Consider, now, the universal extension$\, :$ 
\[
0 \lra F \lra Q \lra \sco_\piii(2) \lra 0\, . 
\]
$Q$ is a rank 4 vector bundle with $\tH^1(Q(-1)) = 0$, $\tH^2(Q(-2)) \simeq 
\tH^2(F(-2)) = 0$ and $\tH^3(Q(-3)) \simeq \tH^3(F(-3)) \simeq 
\tH^0(F^\vee(-1))^\vee = 0$. It follows that $Q$ is 0-regular. One has 
$\h^0(Q(-1)) = 1$ and $\h^0(Q) = \chi(F) + \chi(\sco_\piii(2)) = 7$. One 
deduces that the graded $S$-module $\tH^0_\ast(Q)$ has one minimal generator in 
degree $-1$ and three minimal generators in degree 0. The epimorphism 
$\sco_\piii(1) \oplus 3\sco_\piii \ra Q$ defined by these generators must be 
an isomorphism because $Q$ has rank 4. 
\end{proof}

\section{Rank 2 vector bundles with $c_1 = -1$, 
$c_2 = 4$}\label{A:(2;-1,4,0)} 

We prove, in this appendix, a number of complementary facts about rank 2 
vector bundles $G$ on $\piii$ with $c_1(G) = -1$, $c_2(G) = 4$, and 
$\tH^0(G(1)) = 0$.  
The stable rank 2 vector bundles on $\piii$ with $c_1 = 
-1$ and $c_2 = 4$ were studied by B\u{a}nic\u{a} and Manolache \cite{bm}. 
We use here a different approach, based on the method of Hartshorne \cite{ha},  
which uses a construction called \emph{reduction step} to reduce the study of 
stable rank 2 vector bundles on $\piii$ admitting an \emph{unstable plane} to 
the study of stable rank 2 reflexive sheaves with smaller second Chern class. 
This method was successfully applied by Chang \cite{ch} in the study of 
stable rank 2 reflexive sheaves with $c_1 = -1,\, 0$ and $c_2 \leq 3$. We only 
sketch this alternative approach because a better method for analysing these 
bundles will appear in a forthcoming paper by Ellia, Gruson and Skiti devoted 
to the study of stable rank 2 vector bundles on $\piii$ with $c_1 = -1$, 
$c_2 = 2m$, $m \geq 2$, and spectrum $(0 , \ldots , 0 , -1 , \ldots , -1)$.    

We begin by recalling the following general result$\, :$ 

\begin{lemma}\label{L:omegaxverty} 
Let $X$ be a locally Cohen-Macaulay projective scheme of pure dimension $1$ 
and $Y$, $Z$ closed subsets of $X$, of pure dimension $1$, such that $X = 
Y \cup Z$ as sets and $\dim(Y \cap Z) \leq 0$. Endow $Y$ (resp., $Z$) with 
the structure of closed subscheme of $X$ defined by the ideal sheaf 
${\fam0 Ker}(\sco_X \ra i_\ast \sco_{X \setminus Z})$ (resp., 
${\fam0 Ker}(\sco_X \ra j_\ast \sco_{X \setminus Y})$), $i$ and $j$ being the 
inclusion morphisms. Then$\, :$ 

(a) $Y$ and $Z$ are locally Cohen-Macaulay and $X = Y \cup Z$ as schemes. 

(b) If $X$ is locally Gorenstein at each point of $Y \cap Z$ then one has an 
exact sequence$\, :$ 
\[
0 \lra \omega_Y \lra \omega_X \vb Y \lra \omega_{Y \cap Z} \lra 0\, . 
\] 
\end{lemma}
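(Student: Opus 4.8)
The plan is to reduce everything to a local computation with primary decomposition for part (a), and then to deduce the sequence in (b) by dualizing two natural short exact sequences into the dualizing sheaf $\omega_X$, the Gorenstein hypothesis entering only at the very last identification. Throughout I use that $X$ (hence $Y$, $Z$) is Cohen--Macaulay, so that $\omega_X,\,\omega_Y,\,\omega_Z$ are honest sheaves and the duality formulae for $\sce xt^\bullet_{\sco_X}(-,\omega_X)$ apply.

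For (a), I would work at a point $x\in X$ and set $A=\sco_{X,x}$, a one-dimensional Cohen--Macaulay local ring, so that $A$ has no embedded primes and $(0)=\bigcap_iP_i\cap\bigcap_jQ_j$ is the unique primary decomposition, the $P_i$ being the primary components attached to the minimal primes $\fp_i$ of the branches lying on $Y$ and the $Q_j$ those of the branches on $Z$; the hypothesis $\dim(Y\cap Z)\le 0$ guarantees that every one-dimensional branch lies on exactly one of $Y$, $Z$, so this splitting makes sense. A section of $\sco_X$ lies in $\sci_{Y,X}$ precisely when it vanishes on $X\setminus Z$, i.e. at every $\fp_i$, whence $\sci_{Y,X,x}=\bigcap_iP_i=:\fa$ and, symmetrically, $\sci_{Z,X,x}=\bigcap_jQ_j=:\fb$. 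Then $\fa\cap\fb=(0)$ gives $\sci_{Y,X}\cap\sci_{Z,X}=0$, which is exactly the assertion that $X=Y\cup Z$ as schemes; and $\sco_{Y,x}=A/\fa$ has associated primes only among the minimal $\fp_i$, so it is unmixed of dimension $1$, hence Cohen--Macaulay (in dimension one, having no embedded primes is equivalent to depth $\ge 1$). The same applies to $Z$.

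For (b), I would first record the Mayer--Vietoris sequence
\[
0\lra\sco_X\lra\sco_Y\oplus\sco_Z\lra\sco_{Y\cap Z}\lra 0,
\]
which locally is $0\to A\to A/\fa\oplus A/\fb\to A/(\fa+\fb)\to 0$ and is exact because $\fa\cap\fb=0$; here $Y\cap Z$ carries its scheme-theoretic structure, of dimension $0$. Applying the functors $\sce xt^\bullet_{\sco_X}(-,\omega_X)$ and using Cohen--Macaulay duality — $\sch om_{\sco_X}(\sco_Y,\omega_X)=\omega_Y$ and $\sch om_{\sco_X}(\sco_Z,\omega_X)=\omega_Z$ with vanishing higher $\sce xt$ (codimension $0$), while $\sch om_{\sco_X}(\sco_{Y\cap Z},\omega_X)=0$ since $\omega_X$ has depth $\ge1$ and $\sce xt^1_{\sco_X}(\sco_{Y\cap Z},\omega_X)=\omega_{Y\cap Z}$ (codimension $1$) — yields
\[
0\lra\omega_Y\oplus\omega_Z\lra\omega_X\lra\omega_{Y\cap Z}\lra 0,
\]
the injectivity on the left coming from the vanishing $\sch om_{\sco_X}(\sco_{Y\cap Z},\omega_X)=0$. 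The inclusion $\omega_Z\hookrightarrow\omega_X$ appearing here is the dual of $\sco_X\twoheadrightarrow\sco_Z$; dividing the above sequence by the subsheaf $\omega_Z$ (a nine-lemma/snake argument) produces the three-term sequence
\[
0\lra\omega_Y\lra\omega_X/\omega_Z\lra\omega_{Y\cap Z}\lra 0.
\]

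It then remains to identify $\omega_X/\omega_Z$ with $\omega_X|_Y=\omega_X\otimes_{\sco_X}\sco_Y$, i.e. to show that the image of $\omega_Z$ in $\omega_X$ equals $\sci_{Y,X}\,\omega_X$, and this is the single point where the Gorenstein hypothesis is essential — I expect it to be the main obstacle. Away from the finite set $Y\cap Z$ the identity is automatic: off $Z$ one has $\sci_{Y,X}=0$ and $\omega_Z=0$, while off $Y$ one has $\sci_{Y,X}=\sco_X$ and $\omega_Z=\omega_X$. At a point of $Y\cap Z$, the Gorenstein condition makes $\omega_X$ locally free of rank $1$; trivializing it, the inclusion $\omega_Z\hookrightarrow\omega_X$ becomes $(0:_A\fb)\hookrightarrow A$, and a short annihilator computation (parallel to the identity $(0:_A\fa)=\fb$, which follows from $\fa\fb\subseteq\fa\cap\fb=0$ together with $\fb_{\fp_i}=A_{\fp_i}$) gives $(0:_A\fb)=\fa=\fa\cdot A$, so that the image of $\omega_Z$ is indeed $\sci_{Y,X}\,\omega_X$. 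Hence $\omega_X/\omega_Z=\omega_X\otimes_{\sco_X}\sco_Y=\omega_X|_Y$ and the last display is the assertion of (b). The delicate thing to verify carefully is exactly that, without $\omega_X$ being invertible along $Y\cap Z$, the tensor product $\omega_X\otimes_{\sco_X}\sco_Y$ could differ from the quotient $\omega_X/\omega_Z$ by a torsion term supported on $Y\cap Z$; the Gorenstein hypothesis is what removes this discrepancy.
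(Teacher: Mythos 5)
Your proof is correct, and its second half takes a genuinely different route from the paper's. Both arguments pivot on the same core step: dualizing the Mayer--Vietoris sequence $0 \ra \sco_X \ra \sco_Y \oplus \sco_Z \ra \sco_{Y \cap Z} \ra 0$ into $\omega_X$ to obtain $0 \ra \omega_Y \oplus \omega_Z \ra \omega_X \ra \omega_{Y \cap Z} \ra 0$; and your primary-decomposition treatment of (a) is the local-algebra form of the paper's sheaf-theoretic one (the paper deduces Cohen--Macaulayness of $Y$ from the embedding $\sco_Y \hookrightarrow i_\ast \sco_{X \setminus Z}$, and deduces $X = Y \cup Z$ from the fact that the two schemes agree off the finite set $Y \cap Z$ while $\sco_X$, being Cohen--Macaulay of pure dimension $1$, has no nonzero subsheaf supported on points --- which is exactly your $\fa \cap \fb = 0$). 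The divergence is in how the three-term sequence is extracted from the four-term one. The paper tensors with $\sco_Y$, getting the right-exact sequence $\omega_Y \oplus (\omega_Z \vb Y) \ra \omega_X \vb Y \ra \omega_{Y \cap Z} \ra 0$, and then makes two depth arguments: $\omega_Y \ra \omega_X \vb Y$ is injective because it is an isomorphism off $Y \cap Z$ and $\omega_Y$ satisfies Serre's condition $S_1$, while $\omega_Z \vb Y \ra \omega_X \vb Y$ is zero because its source is supported on $Y \cap Z$ and $\omega_X \vb Y$ satisfies $S_1$ --- the Gorenstein hypothesis is spent on this last point, since it makes $\omega_X \vb Y$ locally isomorphic to $\sco_Y$ near $Y \cap Z$. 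You instead factor out the subsheaf $\omega_Z$, so that exactness of $0 \ra \omega_Y \ra \omega_X/\omega_Z \ra \omega_{Y \cap Z} \ra 0$ is a formal diagram chase, and you spend the Gorenstein hypothesis on the identification $\omega_X/\omega_Z \simeq \omega_X \vb Y$, i.e.\ on the equality of the image of $\omega_Z$ with the subsheaf $\sci_{Y,X}\,\omega_X$, proved at points of $Y \cap Z$ by trivializing $\omega_X$ and computing $(0 :_A \fb) = \fa$. The two uses of the hypothesis are equivalent, but your packaging isolates it in a single explicit identity and explains what fails without it (a torsion discrepancy between $\omega_X \otimes_{\sco_X} \sco_Y$ and $\omega_X/\omega_Z$ along $Y \cap Z$), whereas the paper's version avoids local algebra entirely at the price of two separate support-and-depth checks. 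One small slip in your parenthetical: the ``parallel'' identity $(0 :_A \fa) = \fb$ needs $\fa_{\fq_j} = A_{\fq_j}$, not $\fb_{\fp_i} = A_{\fp_i}$; the latter is precisely what proves the identity $(0 :_A \fb) = \fa$ that you actually use, so nothing in the argument is affected.
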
 

\begin{proof} 
(a) $Y$ and $Z$ are locally Cohen-Macaulay because their structure sheaves 
$\sco_Y$ and $\sco_Z$ embed into $i_\ast \sco_{X \setminus Z}$ and 
$j_\ast \sco_{X \setminus Y}$, respectively. 

$Y \cup Z$ is a closed subscheme of $X$. Since $X \setminus (Y \cap Z) = 
(Y \cup Z) \setminus (Y \cap Z)$ as schemes and since $X$ is locally 
Cohen-Macaulay it follows that $X = Y \cup Z$ as schemes. 

\vskip2mm 

(b) Applying $\sch om_{\sco_X}(-,\omega_X)$ to the exact sequence$\, :$ 
\[
0 \lra \sco_X \lra \sco_Y \oplus \sco_Z \lra \sco_{Y \cap Z} \lra 0 
\] 
one obtains an exact sequence$\, :$ 
\[
0 \lra \omega_Y \oplus \omega_Z \lra \omega_X \lra \omega_{Y \cap Z} \lra 0\, . 
\]
Restricting it to $Y$, one gets an exact sequence$\, :$ 
\[
\omega_Y \oplus (\omega_Z \vb Y) \lra \omega_X \vb Y \lra \omega_{Y \cap Z} 
\lra 0\, . 
\]
Since $\omega_Y \ra \omega_X \vb Y$ is an isomorphism over $Y \setminus Z$ and 
since $\omega_Y$ satisfies the condition $S_1$ of Serre (see, for example, 
Hartshorne \cite[Lemma~1.2]{hagd}) it follows that $\omega_Y \ra \omega_X \vb 
Y$ is a monomorphism. 

Since $\text{Supp}(\omega_Z \vb Y) \subseteq Y \cap Z$ and since $\omega_X \vb 
Y$ satisfies $S_1$ (here we use the hypothesis that $X$ is locally Gorenstein 
at the points of $Y \cap Z$) it follows that $\omega_Z \vb Y \ra \omega_X \vb 
Y$ is the zero morphism. 
\end{proof} 

\begin{remark}\label{R:divisorialpart} 
Let $Z$ be a closed subscheme, of dimension $\leq 1$, of a nonsingular surface 
$\Sigma$. Then $\sci_Z = \sci_D\sci_\Gamma$, where $D$ is an effective Cartier 
divisor on $\Sigma$ and $\Gamma$ is a closed subscheme of $\Sigma$, of 
dimension $\leq 0$. $D$ is called the \emph{divisorial part} of $Z$. 
\end{remark} 

\begin{prop}\label{P:doubleskewcubic} 
Let $G$ be a rank $2$ vector bundle on $\piii$ with $c_1(G) = -1$, 
$c_2(G) = 4$, and with ${\fam0 H}^0(G(1)) = 0$. If $G$ has no unstable plane 
then $G(2)$ has a global section whose zero scheme is a double structure on 
a twisted cubic curve $C \subset \piii$. 
\end{prop}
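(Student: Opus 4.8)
The plan is to exhibit the section by a cohomology count and then to pin down its zero scheme through an analysis of its connected components, the full force of the hypothesis ``no unstable plane'' being used to exclude planar pieces. First I would note that $\tH^0(G(1)) = 0$ forces $\tH^0(G) = 0$, so $G$ is stable, and that $E := G(2)$ has Chern classes $c_1(E) = 3$, $c_2(E) = 6$, whence $\chi(E) = 0$. By Serre duality and the stability of $G$ one gets $\tH^3(E) \simeq \tH^0(G(-5))^\vee = 0$ and $\tH^2(E) \simeq \tH^1(G(-5))^\vee$; the stability of $G$ together with the two hypotheses gives $\tH^1(G(-5)) = 0$ (the spectrum being the minimal one $(0,0,-1,-1)$), so $\h^0(E) = \h^1(E)$. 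The existence of a nonzero section then follows from Lemma~\ref{L:h0g(2)neq0} (the classical fact that a normalised rank $2$ bundle with $c_1 = -1$, $c_2 = 4$ has $\tH^0(G(2)) \neq 0$). Fix a nonzero $s \in \tH^0(E)$. A divisorial factor of $s$ of degree $d \geq 1$ would split off a nonzero section in $\tH^0(G(2-d))$, impossible since $\tH^0(G(2-d)) \subseteq \tH^0(G(1)) = 0$ for $d \geq 1$. Hence the zero scheme $X$ of $s$ has pure codimension $2$, is a local complete intersection, and satisfies $\deg X = 6$ and $\omega_X \simeq \sco_X(-1)$; from $\deg \omega_X = 2p_a(X) - 2 = -6$ we get $\chi(\sco_X) = 3$ and $p_a(X) = -2$.

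Next I would show that $X$ is connected, non-planar and non-reduced. Write $X = X_1 \sqcup \cdots \sqcup X_c$ for the decomposition into connected components. Since the $X_i$ are pairwise disjoint, Lemma~\ref{L:omegaxverty} yields $\omega_{X_i} \simeq \omega_X|_{X_i} \simeq \sco_{X_i}(-1)$, so that $\deg X_i = 2\chi(\sco_{X_i})$ is \emph{even}; in particular no $X_i$ is a line. If some $X_i$ were a planar curve of degree $d_i \geq 2$, lying in a plane $H_0$, then $s|_{H_0}$ would be a nonzero section of $G_{H_0}(2)$ divisible by the degree-$d_i$ equation of $X_i$, hence would produce a nonzero element of $\tH^0(G_{H_0}(2-d_i)) \subseteq \tH^0(G_{H_0})$; but $G$ having no unstable plane means $G_{H_0}$ is stable, i.e. $\tH^0(G_{H_0}) = 0$, a contradiction. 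Thus every $X_i$ is non-planar, hence of degree $\geq 3$, hence (being even) of degree $\geq 4$. As $\sum \deg X_i = 6$ there is a single component, so $X$ is connected, non-planar, of degree $6$, and since $p_a(X) = -2 < 0$ it is non-reduced.

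It remains to identify $X_{\mathrm{red}}$, which is connected and non-planar of degree $d$ with $3 \leq d < 6$. If $X_{\mathrm{red}}$ is irreducible, the generic multiplicity $m$ of $X$ along it is a positive integer with $md = \deg X = 6$; non-planarity forces $d \geq 3$ and non-reducedness forces $d < 6$, so $(d,m) = (3,2)$ and $X_{\mathrm{red}}$ is an irreducible non-planar cubic, i.e. a twisted cubic $C$, with $X$ a locally complete intersection double structure on $C$, exactly as claimed. The task is therefore reduced to excluding a \emph{reducible} $X_{\mathrm{red}}$: conics and higher planar irreducible components are ruled out just as above by restricting $s$ to their plane, and once every irreducible component of $X_{\mathrm{red}}$ is known to be non-planar, a reducible $X_{\mathrm{red}}$ would have degree $\geq 6 = \deg X$, forcing $X$ reduced and contradicting non-reducedness.

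The \emph{main obstacle} is thus the one remaining configuration: a \emph{line} appearing as one of several irreducible components glued to the rest of $X_{\mathrm{red}}$. Such a line is not a connected component, so the parity argument of the second paragraph does not apply to it, and a general plane through it sees only the divisorial part of degree $1$, which does not destabilise $G_H$. Here I would exploit ``no unstable plane'' through the pencil of planes containing the line, together with the adjunction sequence of Lemma~\ref{L:omegaxverty} applied to the multiplicity structure of $X$ along the line and to its residual (bounding the length of their intersection), and the multiplicity arithmetic $\deg X = \sum_\Gamma m_\Gamma \deg \Gamma$ over the irreducible components $\Gamma$ of $X_{\mathrm{red}}$. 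This glued-line exclusion is where I expect the real work to lie, and it is precisely the step the appendix treats only in a sketch, deferring to the reduction-step technology of Hartshorne and Chang and to the analysis of B\u{a}nic\u{a}--Manolache \cite{bm}.
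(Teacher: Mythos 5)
Your strategy is genuinely different from the paper's, and it is worth saying how. The paper never classifies the zero scheme of an arbitrary section: it uses ``no unstable plane'' through the Bilinear Map Lemma to show the multiplication map $\tH^1(G(-1)) \otimes \tH^0(\sco_\piii(1)) \ra \tH^1(G)$ is nondegenerate and to put it in Kronecker normal form, reads off the Horrocks monad $0 \ra 2\sco_\piii(-2) \ra 3\sco_\piii \oplus 3\sco_\piii(-1) \ra 2\sco_\piii(1) \ra 0$, observes that the component $3\sco_\piii \ra 2\sco_\piii(1)$ is the catalecticant matrix $\left(\begin{smallmatrix} h_0 & h_1 & h_2\\ h_1 & h_2 & h_3 \end{smallmatrix}\right)$ whose minors cut out a twisted cubic $C$, and then manufactures from the Eagon--Northcott complex a \emph{specific} section of $G(2)$ whose zero scheme contains $C$ and is forced, by degree, $\omega_Z \simeq \sco_Z(-1)$ and Lemma~\ref{L:omegaxverty}, to be a double structure on $C$. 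This design is deliberate: the appendix states it wants to keep ``to a minimum the use of information about non-reduced space curves of small degree,'' which is exactly the information your route requires. Your proof of the proposition is thus not a variant of the paper's sketchy double-reduction-step remark; the paper's actual proof is complete and sidesteps the case analysis altogether.

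Two concrete problems with your proposal. First, a circularity: Lemma~\ref{L:h0g(2)neq0} is proved in the paper \emph{using} Prop.~\ref{P:doubleskewcubic}, so you may not invoke it here; this is repairable by citing the original Hartshorne--Hirschowitz result \cite{hh} directly, but it makes your argument depend on an external input where the paper's is self-contained. Second, and this is the genuine gap, your case analysis of the zero scheme is incomplete, as you partly acknowledge. The parity and plane-restriction arguments exclude connected components that are lines and planar irreducible components of degree $\geq 2$, but they do not touch irreducible components of $X_{\mathrm{red}}$ that are \emph{lines}: restricting $s$ to a plane $H$ through such a line only produces a section of $G_H(1)$, and since $\chi(G_H(1)) = 0$ a stable $G_H$ can perfectly well have $\tH^0(G_H(1)) \neq 0$, so no contradiction results. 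Consequently your assertion that $X_{\mathrm{red}}$ is ``non-planar of degree $\geq 3$'' is unjustified, and it silently discards not only the glued-line configurations you flag as the main obstacle, but also a case your framing omits entirely: $X_{\mathrm{red}}$ a single line, i.e.\ $X$ a multiplicity-six structure on a line. Excluding all of these is precisely the hard content (essentially the B\u{a}nic\u{a}--Manolache analysis of non-reduced supports of degree $6$), and your final paragraph is a plan for attacking it, not a proof. As it stands, your argument establishes only the weaker statement that any nonzero section of $G(2)$ whose zero scheme contains no line in its support vanishes on a double structure on a twisted cubic.
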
 

\begin{proof} 
By \cite[Lemma~2]{bm}, the spectrum of $G$ must be $(0,0,-1,-1)$. This 
implies that $\tH^2(G(l)) = 0$ for $l \geq -1$, that $\tH^1(G(l)) = 0$ for 
$l \leq -2$ and that $\h^1(G(-1)) = 2$. Riemann-Roch implies, now, that 
$\h^1(G) = 5$ and $\h^1(G(1)) = 5$. The key point of the proof is the 
description of the multiplication map$\, :$ 
\[
\mu : \tH^1(G(-1)) \otimes_k \tH^0(\sco_\piii(1)) \lra \tH^1(G)\, . 
\] 
Since $G$ has no unstable plane, the bilinear map associated to $\mu$ is 
nondegenerate in the sense of the Bilinear map lemma \cite[Lemma~5.1]{ha}. 
The Bilinear map lemma implies that $\mu$ is surjective. Actually, one can 
describe $\mu$ concretely in convenient bases. Put $U := \tH^1(G(-1))$, 
$S_1 := \tH^0(\sco_\piii(1))$ and $W := \tH^1(G)$. Let $u_0,\, u_1$ be a 
$k$-basis of $U$ and $t_0,\, t_1$ the dual basis of $U^\vee$. On the projective 
space $\p (U)$ of 1-dimensional vector subspaces of $U$, the composite 
morphism, which we denote by $\phi$$\, :$ 
\[
\sco_{\p (U)}(-1) \otimes_k S_1 \lra \sco_{\p (U)} \otimes_k U \otimes_k S_1 
\xra{\text{id}\, \otimes \mu} \sco_{\p (U)} \otimes_k W 
\]
is a locally split monomorphism. Its cokernel must be isomorphic to 
$\sco_{\p (U)}(4)$ hence we have an exact sequence$\, :$ 
\[
0 \lra \sco_{\p (U)}(-1) \otimes_k S_1 \overset{\phi}{\lra} 
\sco_{\p (U)} \otimes_k W \overset{\pi}{\lra} \sco_{\p (U)}(4) \lra 0\, . 
\]
There exists a $k$-basis $w_0 , \ldots , w_4$ of $W$ such that $\pi(w_i) = 
t_0^{4-i}t_1^i$, $i = 0 , \ldots , 4$. Then there exists a basis $h_0 , \ldots , 
h_3$ of $S_1$ such that the matrix of $\phi$ with respect to those bases 
is$\, :$ 
\[
\begin{pmatrix} 
-t_1 & 0 & 0 & 0\\ 
t_0 & -t_1 & 0 & 0\\
0 & t_0 & -t_1 & 0\\ 
0 & 0 & t_0 & -t_1\\ 
0 & 0 & 0 & t_0 
\end{pmatrix}\, , 
\]
hence $\mu(u_i \otimes h_j) = (-1)^iw_{1-i+j}$, $i = 0,\, 1$, $j = 0, \ldots , 
4$. One deduces that the elements$\, :$ 
\[
u_0 \otimes h_j + u_1 \otimes h_{j+1}\, ,\  j = 0\, ,\, 1\, ,\, 2\, ,  
\]
form a $k$-basis of $\Ker \mu$. 

We describe, now, the \emph{Horrocks monad} of $G$ (see Barth and Hulek 
\cite{bh} for general information about monads). Since $\tH^2(G(-1)) = 0$ 
and $\tH^3(G(-2)) = 0$, the graded $S$-module $\tH^1_\ast(G)$ is generated in 
degrees $\leq 0$. Since $\mu$ is surjective, $\tH^1_\ast(G)$ is, actually, 
generated by $\tH^1(G(-1))$. Recall that $G^\vee \simeq G(1)$. Dualizing the 
extension$\, :$ 
\[
0 \lra G^\vee \lra K \lra 2\sco_\piii(2) \lra 0  
\]  
defined by the $k$-basis $u_0,\, u_1$ of $\tH^1(G^\vee (-2)) \simeq 
\tH^1(G(-1)) = U$, one gets an exact sequence$\, :$ 
\[
0 \lra 2\sco_\piii(-2) \lra K^\vee \lra G \lra 0\, . 
\]
It follows that $\tH^1_\ast(K^\vee) \izo \tH^1_\ast(G)$. Considering the 
extension$\, :$ 
\[
0 \lra K^\vee \lra B \lra 2\sco_\piii(1) \lra 0 
\]
defined by the $k$-basis $u_0,\, u_1$ of $\tH^1(K^\vee (-1)) \simeq 
\tH^1(G(-1)) = U$ one gets that $G$ is the middle cohomolgy of a monad$\, :$ 
\[
0 \lra 2\sco_\piii(-2) \overset{d^{-1}}{\lra} B \overset{d^0}{\lra} 
2\sco_\piii(1) \lra 0  
\]
where $B$ is a direct sum of line bundles. Since $B$ has rank 6, $c_1(B) = -3$, 
$\tH^0(B(-1)) = 0$ and $\h^0(B) = 3$ it follows that $B \simeq 3\sco_\piii 
\oplus 3\sco_\piii(-1)$. 

Finally, by the above description of $\Ker \mu$, the component $d_1^0 : 
3\sco_\piii \ra 2\sco_\piii(1)$ of $d^0 : B \ra 2\sco_\piii(1)$ is defined by the 
matrix$\, :$ 
\[
\begin{pmatrix} 
h_0 & h_1 & h_2\\ 
h_1 & h_2 & h_3
\end{pmatrix}\, . 
\]
The $2\times 2$ minors of this matrix define a twisted cubic curve $C \subset 
\piii$. Dualizing the Eagon-Northcott resolution$\, :$ 
\[
0 \lra 2\sco_\piii(-1) \xra{(d_1^0)^\vee} 3\sco_\piii \overset{\sigma}{\lra} 
\sci_C(2) \lra 0\, ,  
\]
one gets an exact sequence$\, :$ 
\[
0 \lra \sco_\piii(-2) \overset{\sigma^\vee}{\lra} 3\sco_\piii 
\overset{d_1^0}{\lra} 2\sco_\piii(1) \lra \omega_C(2) \lra 0\, . 
\]
The morphism $(\sigma^\vee , 0)^{\text{t}} : \sco_\piii(-2) \ra 3\sco_\piii \oplus 
3\sco_\piii(-1)$ induces a morphism $s : \sco_\piii(-2) \ra G$. One has $s \neq 
0$ because, otherwise, $(\sigma^\vee , 0)^{\text{t}}$ would factorize through 
$d^{-1} : 2\sco_\piii(-2) \ra 3\sco_\piii \oplus 3\sco_\piii(-1)$ which is not the 
case because $\sigma^\vee$ is not a locally split monomorphism. The image 
$\sci_Z(2)$ of $s^\vee : G^\vee \ra \sco_\piii(2)$ is contained in the image of 
$(\sigma , 0) : 3\sco_\piii \oplus 3\sco_\piii(1) \ra \sco_\piii(2)$ which is 
$\sci_C(2)$ hence $C \subset Z$. Since $\tH^0(G(1)) = 0$, $Z$ is a locally 
complete intersection curve in $\piii$, of degree 6, and with $\omega_Z \simeq 
\sco_Z(-1)$. Since, using an isomorphism $C \simeq \pj$, one has $\omega_C 
\simeq \sco_\pj(-2)$ and $\sco_C(-1) \simeq \sco_\pj(-3)$, 
Lemma~\ref{L:omegaxverty} implies that $Z$ is a double structure on $C$. 
\end{proof} 

Using Prop.~\ref{P:doubleskewcubic} one gets a new proof, based on vector 
bundles techniques, of the following result of Hartshorne and Hirschowitz 
\cite[Example~1.6.3]{hh}$\, :$ 

\begin{lemma}\label{L:h0g(2)neq0}  
If $G$ is a rank $2$ vector bundle on $\piii$ with $c_1(G) = -1$ and 
$c_2(G) = 4$ then ${\fam0 H}^0(G(2)) \neq 0$. 
\end{lemma}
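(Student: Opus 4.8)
The plan is to strip the statement down to its essential case and then to split on whether $G$ possesses an unstable plane, disposing of the two cases by Prop.~\ref{P:doubleskewcubic} and by a short cohomological dimension count, respectively.

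First I would carry out the trivial reductions. If $\tH^0(G) \neq 0$, a nonzero section embeds $\sco_\piii \hookrightarrow G$, hence $\sco_\piii(2) \hookrightarrow G(2)$ and $\tH^0(G(2)) \neq 0$; so I may assume $\tH^0(G) = 0$, i.e.\ that $G$ is stable (for a rank $2$ bundle with $c_1$ odd, semistability and stability coincide, and stability is equivalent to $\tH^0(G) = 0$). Likewise, if $\tH^0(G(1)) \neq 0$, multiplying a nonzero section by a linear form produces a nonzero section of $G(2)$; so I may also assume $\tH^0(G(1)) = 0$. Under these two normalizations $G$ satisfies exactly the standing hypotheses of Prop.~\ref{P:doubleskewcubic}.

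Next I would fix the numerics. Since $\tH^0(G(1)) = 0$, the spectrum of $G$ can contain no positive term (a positive term would force $\tH^0(G(1)) \neq 0$, as in the rank $3$ analogue \cite[Lemma~9.15]{ha}), so it must be $(0,0,-1,-1)$ (this is \cite[Lemma~2]{bm}). Using $G^\vee \simeq G(1)$ together with Serre duality and the spectrum I would record $\tH^2(G(1)) \simeq \tH^1(G(-4))^\vee = 0$ and $\tH^3(G(1)) \simeq \tH^0(G(-5))^\vee = 0$, and then Riemann--Roch (Remark~\ref{R:chern}(b)) gives $\chi(G(1)) = -5$, whence $\h^1(G(1)) = 5$. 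The same bookkeeping yields $\chi(G(2)) = 0$, which explains why a bare Euler-characteristic count cannot produce the wanted section and why a geometric input is unavoidable. Now I split into two cases. If $G$ has no unstable plane, Prop.~\ref{P:doubleskewcubic} applies verbatim and produces a global section of $G(2)$ (indeed one vanishing on a double twisted cubic), so $\tH^0(G(2)) \neq 0$. If instead $G$ has an unstable plane $H$, then $G_H$ is unstable on $H \simeq \pii$, so (as $c_1(G_H) = -1$) there is a sub-line bundle $\sco_H(a) \hookrightarrow G_H$ with $a \geq 0$; twisting gives $\sco_H(a+2) \hookrightarrow G_H(2)$ and hence $\h^0(G_H(2)) \geq \h^0(\sco_H(2)) = 6$. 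Restricting $G(2)$ to $H$ via $0 \to G(1) \to G(2) \to G_H(2) \to 0$ and using $\tH^0(G(1)) = 0$, I obtain an injection $\tH^0(G(2)) \hookrightarrow \tH^0(G_H(2))$ whose cokernel maps into $\tH^1(G(1))$, so $\h^0(G(2)) \geq \h^0(G_H(2)) - \h^1(G(1)) \geq 6 - 5 = 1$.

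The crux — and the only place where something could go wrong — lies in this last case: the argument works precisely because the strictly positive sub-line bundle that instability forces on $G_H$ contributes more sections of $G_H(2)$ than the obstruction space $\tH^1(G(1))$ can absorb. Establishing $\h^1(G(1)) = 5$ \emph{exactly}, rather than merely bounding it, is therefore the decisive step, and it rests on the spectral vanishing $\tH^2(G(1)) = 0$; this is the computation I would verify most carefully. I would emphasize that the unstable-plane case needs none of the reduction-step machinery of the appendix — the dimension count replaces it — so the sole external geometric ingredient is Prop.~\ref{P:doubleskewcubic}.
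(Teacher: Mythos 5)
Your proof is correct, and it is worth recording how it differs from the paper's. The opening is identical: reduce to $\tH^0(G(1)) = 0$ and invoke Prop.~\ref{P:doubleskewcubic} when $G$ has no unstable plane. In the unstable-plane case, however, you take a genuinely different and more elementary route. The paper, given an unstable plane $H_0$, first uses the spectrum to get $\tH^0(G_{H_0}(-1)) = 0$, so that the destabilizing sequence is exactly $0 \ra \sco_{H_0} \ra G_{H_0} \ra \sci_{\Gamma , H_0}(-1) \ra 0$ with $\Gamma$ of length $4$; it then performs a reduction step (Hartshorne \cite[Prop.~9.1]{ha}) to produce a stable rank $2$ reflexive sheaf $\scg^\prim$ with $c_1 = 0$, $c_2 = 3$, $c_3 = 4$ and an inclusion $\scg^\prim(1) \subset G(2)$, and concludes from the $\tH^2$-vanishing of \cite[Thm.~8.2(b)]{ha} together with $\chi(\scg^\prim(1)) = 1$ that $\h^0(\scg^\prim(1)) \geq 1$. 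You replace all of this by the restriction sequence $0 \ra G(1) \ra G(2) \ra G_H(2) \ra 0$: any destabilizing section gives $\sco_H \hookrightarrow G_H$, hence $\h^0(G_H(2)) \geq 6$, while the exact value $\h^1(G(1)) = 5$ (which the paper itself records in the proof of Prop.~\ref{P:doubleskewcubic}, from the spectrum and Riemann--Roch) bounds the obstruction, so $\h^0(G(2)) \geq 6 - 5 = 1$. Your argument avoids the reduction-step machinery and any input about reflexive sheaves with $c_2 = 3$, and it does not even require knowing the order of instability (no need for $\tH^0(G_{H_0}(-1)) = 0$, so the saturation subtlety disappears); what it gives up is the structural by-product of the paper's proof, where the section of $G(2)$ is exhibited inside the subsheaf $\scg^\prim(1)$, and where the reduction-step formalism is set up for reuse (see Remark~\ref{R:scgprimh0}). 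One trivial slip to fix: by Serre duality $\tH^3(G(1)) \simeq \tH^0(G(-4))^\vee$, not $\tH^0(G(-5))^\vee$; both vanish because $\tH^0(G) = 0$, so nothing in your count is affected.
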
 

\begin{proof} 
We can assume that $\tH^0(G(1)) = 0$ and, then, by 
Prop.~\ref{P:doubleskewcubic}, that $G$ has an unstable plane $H_0$. 
Since $G$ has spectrum $(0,0,-1,-1)$, one has 
$\tH^1(G(-2)) = 0$ hence $\tH^0(G_{H_0}(-1)) = 0$. Since, by our assumption, 
$\tH^0(G_{H_0}) \neq 0$ it follows that one has an exact sequence$\, :$ 
\[
0 \lra \sco_{H_0} \lra G_{H_0} \lra \sci_{\Gamma , H_0}(-1) \lra 0 
\] 
where $\Gamma$ is a 0-dimensional subscheme of $H_0$, of length 4. 
Applying, now, a reduction step (see Hartshorne \cite[Prop.~9.1]{ha}) one 
gets an exact sequence$\, :$  
\[
0 \lra \scg^\prim (-1) \lra G \lra \sci_{\Gamma , H_0}(-1) \lra 0\, ,   
\]
where $\scg^\prim$ is a rank 2 reflexive sheaf with Chern classes 
$c_1^\prim = 0$, $c_2^\prim = 3$, $c_3^\prim = 4$. Since $\tH^0(G(1)) = 0$ it 
follows that $\tH^0(\scg^\prim) = 0$. \cite[Thm.~8.2(b)]{ha} implies, now, that 
$\tH^2(\scg^\prim(l)) = 0$ for $l \geq 0$. Since, by Riemann-Roch, 
$\chi(\scg^\prim(1)) = 1$ one deduces that $\h^0(\scg^\prim(1)) \geq 1$ hence 
$\tH^0(G(2)) \neq 0$. 
\end{proof}    

\begin{remark}[Double reduction step]\label{R:scgprimh0} 
Let $G$ be a rank 2 vector bundle on $\piii$ with $c_1(G) = -1$, $c_2(G) = 4$, 
such that $\tH^0(G(1)) = 0$. Assume that $G$ has an unstable plane $H_0 \subset 
\piii$, of equation $h_0 = 0$. We propose, here, a method for studying these 
bundles by a \emph{double reduction step}. 

As in the proof of Lemma~\ref{L:h0g(2)neq0}, one has exact sequences$\, :$ 
\begin{gather*} 
0 \lra \sco_{H_0} \lra G_{H_0} \lra \sci_{\Gamma , H_0}(-1) \lra 0\, ,\\ 
0 \lra \scg^\prim (-1) \overset{\phi}{\lra} G 
\overset{\e}{\lra} \sci_{\Gamma , H_0}(-1) \lra 0\, ,
\end{gather*} 
where $\Gamma$ is a 0-dimensional subscheme of $H_0$, of length 4, and 
$\scg^\prim$ is a stable rank 2 reflexive sheaf with $c_1(\scg^\prim) = 0$, 
$c_2(\scg^\prim) = 3$, $c_3(\scg^\prim) = 4$. Applying $\sch om_{\sco_\piii}(- , 
\sco_\piii(-1))$ to the second exact sequence and $\sch om_{\sco_\piii}(- , 
\sco_\piii)$ to the exact sequence$\, :$ 
\[
0 \lra \sci_{\Gamma , H_0} \lra \sco_{H_0} \lra \sco_\Gamma \lra 0\, , 
\] 
and taking into account that $\sce xt_{\sco_\piii}^i(\sco_{H_0} , \sco_\piii) = 0$ 
for $i \geq 2$ one gets that$\, :$ 
\[
\sce xt_{\sco_\piii}^1(\scg^\prim , \sco_\piii) \simeq 
\sce xt_{\sco_\piii}^2(\sci_{\Gamma , H_0} , \sco_\piii) \simeq 
\sce xt_{\sco_\piii}^3(\sco_\Gamma , \sco_\piii) \simeq \omega_\Gamma(4) 
\simeq \sco_\Gamma(4) \, .   
\]

\noindent 
{\bf Claim 1.}\quad $H_0$ \emph{is an unstable plane of order} $1$ \emph{for} 
$\scg^\prim$. 

\vskip2mm 

\noindent 
\emph{Indeed}, applying the Snake Lemma to the diagram$\, :$ 
\begin{equation*} 
\SelectTips{cm}{12}\xymatrix{ & & G(-1) \ar @{-->}[ld]_-{\psi(-1)} \ar[d]^{h_0} 
& & \\
0\ar[r] & \scg^\prim(-1) \ar[r]^-\phi \ar[d] & G \ar[r]^-\e \ar[d] &  
\sci_{\Gamma , H_0}(-1) \ar[r] \ar @{=}[d] & 0\\ 
0\ar[r] & \sco_{H_0} \ar[r] & G_{H_0} \ar[r] &  
\sci_{\Gamma , H_0}(-1) \ar[r] & 0}
\end{equation*}
one gets an exact sequence$\, :$ 
\[
0 \lra G \overset{\psi}{\lra} \scg^\prim \lra \sco_{H_0}(1) \lra 0\, .  
\] 
Since the morphism $\phi$ in the above diagram is a monomorphism, it follows 
that the diagram$\, :$ 
\[
\SelectTips{cm}{12}\xymatrix{\scg^\prim (-1) \ar[d]_-{h_0} \ar[r]^-{\phi} & 
G \ar[ld]^-\psi\\ 
\scg^\prim & } 
\]  
is commutative. Restricting this diagram to $H_0$, one deduces that 
$\psi_{H_0} \circ \phi_{H_0} = 0$. Taking into account the exact sequence$\, :$
\[
\scg^\prim_{H_0}(-1) \xra{\phi_{H_0}} G_{H_0} \lra 
\sci_{\Gamma , H_0}(-1) \lra 0\, , 
\]
it follows that the exact sequence$\, :$ 
\[
G_{H_0} \xra{\psi_{H_0}} \scg^\prim_{H_0} \lra \sco_{H_0}(1) \lra 0 
\] 
induces an exact sequence$\, :$ 
\[
\sci_{\Gamma , H_0}(-1) \lra \scg^\prim_{H_0} \lra \sco_{H_0}(1) \lra 0\, . 
\]
Since $\sci_{\Gamma , H_0}(-1)$ is a torsion free $\sco_{H_0}$-module of rank 1 
and $\scg^\prim_{H_0}$ is a torsion free $\sco_{H_0}$-module, any nonzero 
morphism $\sci_{\Gamma , H_0}(-1) \ra \scg^\prim_{H_0}$ must be a monomorphism. 
Consequently, one gets an exact sequence$\, :$ 
\[
0 \lra \sci_{\Gamma , H_0}(-1) \lra \scg^\prim_{H_0} \lra \sco_{H_0}(1) \lra 0\, . 
\] 
Consider, now, the pushforward diagram$\, :$ 
\[
\begin{CD} 
0 @>>> \sci_{\Gamma , H_0}(-1) @>>> \scg^\prim_{H_0} @>>> \sco_{H_0}(1) @>>> 0\\ 
@. @VVV @VVV @\vert\\ 
0 @>>> \sco_{H_0}(-1) @>>> F @>>> \sco_{H_0}(1) @>>> 0 
\end{CD} 
\]
One must have $F \simeq \sco_{H_0}(1) \oplus \sco_{H_0}(-1)$, whence an exact 
sequence$\, :$ 
\begin{equation}\label{E:scgprimh0}  
0 \lra \scg^\prim_{H_0} \lra \sco_{H_0}(1) \oplus \sco_{H_0}(-1) 
\overset{\pi}{\lra} \sco_\Gamma(-1) \lra 0\, , 
\end{equation}
where the component $\pi_2 \colon \sco_{H_0}(-1) \ra \sco_\Gamma(-1)$ of $\pi$ 
is the canonical epimorphism. The cokernel (resp., image) of the component 
$\pi_1 \colon \sco_{H_0}(1) \ra \sco_\Gamma(-1)$ of $\pi$ is isomorphic to 
$\sco_{\Gamma^\prim}(-1)$ (resp., $\sco_{\Gamma^\secund}(1)$), for some subscheme 
$\Gamma^\prim$ (resp., $\Gamma^\secund$) of $\Gamma$. One derives an exact 
sequence$\, :$ 
\begin{equation}\label{E:sprimsigmaprim} 
0 \lra \sci_{\Gamma^\secund , H_0}(1) \overset{s^\prim}{\lra} \scg^\prim_{H_0} 
\overset{\sigma^\prim}{\lra} \sci_{\Gamma^\prim , H_0}(-1) \lra 0\, , 
\end{equation}
which shows that $H_0$ is an unstable plane of order 1 for $\scg^\prim$. 

\vskip2mm 

One can perform, now, a \emph{second reduction step} and one gets an exact 
sequence$\, :$ 
\begin{equation}\label{E:scgsecund} 
0 \lra \scg^\secund \lra \scg^\prim \overset{\e^\prim}{\lra}  
\sci_{\Gamma^\prim , H_0}(-1) \lra 0\, , 
\end{equation}
where $\scg^\secund$ is a rank 2 reflexive sheaf with Chern classes 
$c_1(\scg^\secund) = -1$, $c_2(\scg^\secund) = 2$, $c_3(\scg^\secund) = 
2\, \text{length}\, \Gamma^\prim$. Since $\tH^0(G(1)) = 0$ it follows that 
$\tH^0(\scg^\prim) = 0$ hence $\tH^0(\scg^\secund) = 0$. This means that 
$\scg^\secund$ is stable. \cite[Thm.~8.2(d)]{ha} implies that $c_3(\scg^\secund) 
\in \{0,\, 2,\, 4\}$ hence $\text{length}\, \Gamma^\prim \in \{0,\, 1,\, 2\}$. 
Moreover, as at the beginning of the proof of Claim 1, one has an exact 
sequence$\, :$ 
\[
0 \lra \scg^\prim(-1) \lra \scg^\secund \lra \sci_{\Gamma^\secund , H_0}(1) 
\lra 0\, . 
\] 

\noindent 
{\bf Claim 2.}\quad \emph{There is an exact sequence}$\, :$ 
\[
0 \lra \sco_{\Gamma^\prim}(-1) \lra \sce xt_{\sco_\piii}^1(\scg^\secund , 
\sco_\piii(-1)) \lra \omega_{\Gamma^\prim}(4) \lra 0\, .  
\]

\noindent 
\emph{Indeed}, applying $\sch om_{\sco_\piii}(- , \sco_\piii(-1))$ to the exact 
sequence \eqref{E:scgsecund} one gets an exact sequence$\, :$ 
\begin{gather*}
\sce xt_{\sco_\piii}^1(\sci_{\Gamma^\prim , H_0}(-1) , \sco_\piii(-1)) 
\lra \sce xt_{\sco_\piii}^1(\scg^\prim , \sco_\piii(-1)) \lra 
\sce xt_{\sco_\piii}^1(\scg^\secund , \sco_\piii(-1)) \lra\\  
\lra \sce xt_{\sco_\piii}^2(\sci_{\Gamma^\prim , H_0}(-1) , \sco_\piii(-1)) 
\lra 0\, , 
\end{gather*}  
and $\sce xt_{\sco_\piii}^2(\sci_{\Gamma^\prim , H_0}(-1) , \sco_\piii(-1)) \simeq 
\omega_{\Gamma^\prim}(4)$. It remains only to explicitate the cokernel of 
$\sce xt_{\sco_\piii}^1(\e^\prim , \sco_\piii(-1))$. 
Applying $\sch om_{\sco_\piii}(\sci_{\Gamma^\prim , H_0}(-1) , -)$ and 
$\sch om_{\sco_\piii}(\scg^\prim , -)$ to the exact sequence $0 \ra \sco_\piii(-1) 
\overset{h_0}{\lra} \sco_\piii \ra \sco_{H_0} \ra 0$ one gets a commutative 
diagram$\, :$ 
\[
\SelectTips{cm}{12}\xymatrix @C=1.5pc {0\ar[r] & 0\ar[r]\ar[d] &  
\sch om_{\sco_\piii}(\sci_{\Gamma^\prim ,H_0}(-1), 
\sco_{H_0})\ar[r]^-\partial_-\sim\ar[d]_{\sch om(\e^\prim ,\, \sco_{H_0})} 
& \sce xt_{\sco_\piii}^1(\sci_{\Gamma^\prim ,H_0}(-1), 
\sco_\piii(-1))\ar[d]^{\sce xt^1(\e^\prim ,\, \sco_\piii(-1))}\ar[r] & 0\\  
0\ar[r] & (\scg^{\prim \vee})_{H_0}\ar[r] & 
\sch om_{\sco_\piii}(\scg^\prim ,\sco_{H_0})\ar[r]^-\partial
 &  \sce xt_{\sco_\piii}^1(\scg^\prim , \sco_\piii(-1))\ar[r] & 0} 
\] 
Notice that $\sch om(\e^\prim ,\, \sco_{H_0})$ can be identified with$\, :$ 
\[
\sigma^{\prim \vee} := \sch om_{\sco_{H_0}}(\sigma^\prim , \sco_{H_0}) \colon 
\sch om_{\sco_{H_0}}(\sci_{\Gamma^\prim ,H_0}(-1), 
\sco_{H_0}) \ra \sch om_{\sco_{H_0}}(\scg^\prim_{H_0} , \sco_{H_0}) =: 
(\scg^\prim_{H_0})^\vee 
\]   
and that one has an exact sequence$\, :$ 
\[
0 \lra \sch om_{\sco_{H_0}}(\sci_{\Gamma^\prim ,H_0}(-1), \sco_{H_0}) 
\overset{\sigma^{\prim \vee}}{\lra} (\scg^\prim_{H_0})^\vee 
\overset{s^{\prim \vee}}{\lra} 
\sch om_{\sco_{H_0}}(\sci_{\Gamma^\secund ,H_0}(1), \sco_{H_0}) \lra 0\, , 
\]
which is exact to the right because the exact sequence 
\eqref{E:sprimsigmaprim} was deduced from the exact sequence 
\eqref{E:scgprimh0}. 

Now, since the restriction of $\scg^\prim$ (resp., $\scg^\prim_{H_0}$) to 
$U := \piii \setminus \Gamma$ (resp., $U_0 := H_0 \setminus \Gamma$) is 
locally free of rank 2 with trivial determinant, one has a canonical 
isomorphism (resp., monomorphism) $\mu \colon \scg^\prim \izo 
\scg^{\prim \vee}$ (resp., $\mu_0 \colon \scg^\prim_{H_0} \ra 
(\scg^\prim_{H_0})^\vee$) such that the composite morphism$\, :$ 
\[
\scg^\prim_{H_0} \xra{\mu \vb H_0} (\scg^{\prim \vee})_{H_0} \lra 
(\scg^\prim_{H_0})^\vee 
\]   
equals $\mu_0$. Moreover, $s^\prim \vb U_0$ is defined by a global section of 
$\scg^\prim_{H_0}(-1) \vb U_0$ while $\sigma^\prim \vb U_0$ is defined by the 
exterior multiplication by that global section. One deduces easily that 
the composite map$\, :$ 
\[
\scg^\prim_{H_0} \overset{\mu_0}{\lra} (\scg^\prim_{H_0})^\vee 
\overset{s^{\prim \vee}}{\lra} 
\sch om_{\sco_{H_0}}(\sci_{\Gamma^\secund ,H_0}(1), \sco_{H_0}) 
\] 
can be identified with $\scg^\prim_{H_0} \overset{\sigma^\prim}{\lra} 
\sci_{\Gamma^\prim , H_0}(-1) \hookrightarrow \sco_{H_0}(-1)$ and this implies 
that the cokernel of $\sce xt_{\sco_\piii}^1(\e^\prim , \sco_\piii(-1))$ is 
isomorphic to $\sco_{\Gamma^\prim}(-1)$. Claim 2 is proven. 

\vskip2mm 

One can take, now, advantage of the fact that the stable rank 2 reflexive 
sheaves on $\piii$ with $c_1 = - 1$ and $c_2 = 2$ have been explicitly 
described in the literature. More precisely$\, :$ 

\vskip2mm 

\noindent 
$\bullet$\quad If $\Gamma^\prim = \emptyset$ then $c_3(\scg^\secund) = 0$ hence 
$\scg^\secund$ is a rank 2 vector bundle. In this case, by the results of 
Hartshorne and Sols \cite{hs} or Manolache \cite{ma}, one has an exact 
sequence$\, :$ 
\[ 
0 \lra \sco_\piii(-1) \lra \scg^\secund \lra \sci_X \lra 0\, ,  
\]
where $X$ is a double structure on a line $L \subset \piii$, defined by an 
exact sequence$\, :$ 
\[
0 \lra \sci_X \lra \sci_L \lra \sco_L(1) \lra 0\, . 
\]   

\noindent 
$\bullet$\quad If $\text{length}\, \Gamma^\prim = 1$ then $c_3(\scg^\secund) = 
2$. In this case, by Chang \cite[Lemma~2.4]{ch}, one has an exact 
sequence$\, :$ 
\[
0 \lra \sco_\piii(-1) \lra \scg^\secund \lra \sci_X \lra 0\, ,  
\]
where $X$ is either the union of two disjoint lines or a double structure on a 
line $L \subset \piii$ defined by an exact sequence$\, :$ 
\[
0 \lra \sci_X \lra \sci_L \lra \sco_L \lra 0\, . 
\] 
Taking into account the exact sequence from Claim 2, one sees  
that, actually, $X$ \emph{must be a double structure on a line}. 

\vskip2mm 

\noindent 
$\bullet$\quad If $\text{length}\, \Gamma^\prim = 2$ then $c_3(\scg^\secund) = 
4$. In this case, by the proof of \cite[Lemma~9.6]{ha}, one has an exact 
sequence$\, :$ 
\[
0 \lra \scg^\secund \lra 2\sco_\piii \lra \sci_{W, \Pi_0}(2) \lra 0\, ,  
\]     
where $W$ is a 0-dimensional complete intersection subscheme of type (2,2)  
of a plane $\Pi_0 \subset \piii$. 

\vskip2mm 

Using Prop.~\ref{P:doubleskewcubic} and the method of the double reduction 
step described above, one can show that if $G$ is a rank 2 vector bundle on 
$\piii$ with $c_1(G) = -1$, $c_2(G) = 4$ and $\tH^0(G(1)) = 0$ then $G$ is 
the cohomology sheaf of a monad of the form$\, :$ 
\[
0 \lra 2\sco_\piii(-2) \overset{\beta}{\lra} 3\sco_\piii \oplus 3\sco_\piii(-1) 
\overset{\alpha}{\lra} 2\sco_\piii(1) \lra 0\, ,  
\]  
with the property that the degeneracy locus of the component $\alpha_1 
\colon 3\sco_\piii \ra 2\sco_\piii(1)$ of $\alpha$ has codimension 2. 

\vskip2mm 

One can also show that $G(3)$ is globally generated if and only if $G$ has 
no jumping line of order $\geq 4$, i.e., there is no line $L \subset \piii$ 
such that $G_L \simeq \sco_L(a - 1) \oplus \sco_L(-a)$ with $a \geq 4$. 
Moreover, in this case, $\h^1(G(2)) \in \{1,\, 2\}$ and $\tH^1(G(3)) = 0$. 

We omit the details.   
\end{remark}

\section{Auxiliary results about 
instantons}\label{A:instantons} 

The definition and some basic properties of instantons are recalled in 
\cite[Remark~4.7]{acm1}.   

\begin{remark}\label{R:gs} 
We recall here the results of Gruson and Skiti \cite{gs} about the 
stratification of the moduli space of 3-instantons according to the number of 
their jumping lines of maximal order 3. Let $F^\prim$ be a 3-instanton. 

\vskip2mm

(i) If $F^\prim$ has no jumping line of order 3 then $\tH^0(F^\prim (1)) = 0$, 
$\h^1(F^\prim (1)) = 1$ (by Riemann-Roch), $\tH^1(F^\prim (l)) = 0$ for 
$l \geq 2$, and the multiplication map $\tH^0(F^\prim (2)) \otimes_k S_1 \ra 
\tH^0(F^\prim (3))$ is surjective (see the proof of \cite[Prop.~1.1.1]{gs}). 
In particular, $F^\prim (2)$ is globally generated. 

\vskip2mm

(ii) If $\tH^0(F^\prim (1)) = 0$ and $F^\prim$ has a jumping line $L$ of order 3 
then $L$ is the only jumping line of order 3 of $F^\prim$ and the cokernel of 
the evaluation morphism $\tH^0(F^\prim (2)) \otimes_k \sco_\piii \ra F^\prim (2)$ 
is isomorphic to $\sco_L(-1)$ (see the proof of \cite[Prop.~1.1.2]{gs}). 
Actually, Gruson and Skiti show that the kernel $\sck$ of the composite 
morphism $F^\prim \ra F^\prim_L \ra \sco_L(-3)$ is 2-regular. Their argument 
runs as follows$\, :$ $\tH^3(\sck(-1)) \simeq \tH^3(F^\prim (-1)) = 0$. 
Since $\tH^2(F^\prim) = 0$ and since, by Lemma~\ref{L:fpriml}(a) below, the map 
$\tH^1(F^\prim) \ra \tH^1(\sco_L(-3))$ is surjective, it follows that 
$\tH^2(\sck) = 0$. Finally, by the same lemma, the map $\tH^1(F^\prim (1)) \ra 
\tH^1(\sco_L(-2))$ is surjective, hence an isomorphism because 
$\h^1(F^\prim (1)) = 1$. One deduces that $\tH^1(\sck(1)) = 0$. 

\vskip2mm

(iii) If $\h^0(F^\prim (1)) = 1$ then $F^\prim$ can be realized as an 
extension$\, :$ 
\[
0 \lra \sco_\piii(-1) \lra F^\prim \lra \sci_Z(1) \lra 0 
\]    
where $Z$ is a curve of degree 4 which is a union of multiple structures of a 
special type on mutually disjoint lines (see \cite{nt}). Assume, for 
simplicity, that $Z$ is the union of four mutually disjoint lines $L_1, 
\ldots ,L_4$. $L_1 \cup L_2 \cup L_3$ is contained in a unique nonsingular 
quadric surface $Q$. Since $\h^0(F^\prim (1)) = 1$, $L_4$ is not contained in 
$Q$ hence either $L_4$ intersects $Q$ in two distinct points $P$ and $P^\prime$ 
or $L_4$ is tangent to $Q$ at a point $P$. In the former case, consider the 
lines $L$ and $L^\prime$ passing through $P$ and $P^\prime$, respectively, and 
belonging to the other ruling of $Q$ (than $L_1$, $L_2$, $L_3$) and put 
$X = L \cup L^\prime$. 
In the latter case, if $L$ is the line passing through $P$ and 
belonging to the other ruling of $Q$ then the 0-dimensional scheme $L_4 \cap 
Q$ is contained in the divisor $2L$ on $Q$. Put, in this case, $X = 2L$. One 
has, in both cases, $\sci_{X\cap Z,X} \simeq \sco_X(-4)$ whence an exact 
sequence$\, :$ 
\[
0 \lra \sco_X(3) \lra F^\prim_X \lra \sco_X(-3) \lra 0
\]   
which must split (for cohomological reasons). Consequently, $F^\prim$ has 
two jumping lines of order 3 which might coincide. Anyway, one shows, exactly 
as in case (ii), that the cokernel of the evaluation morphism 
$\tH^0(F^\prim (2)) \otimes_k \sco_\piii \ra F^\prim (2)$ is isomorphic to 
$\sco_X(-1)$.  

\vskip2mm

(iv) Finally, if $\h^0(F^\prim (1)) = 2$ then $F^\prim$ has infinitely many 
jumping lines of order 3. They form one ruling of a nonsingular quadric 
surface $Q \subset \piii$. Since one has an epimorphism $F^\prim \ra 
\sco_Q(0,-4) \ra 0$, $F^\prim (3)$ is not globally generated.    
\end{remark} 

\begin{lemma}\label{L:fpriml} 
Let $F^\prim$ be an instanton, $L \subset \piii$ a line, and $X \subset \piii$ 
a curve of degree $2$ which is either the union of two disjoint lines or its 
degeneration, a divisor of the form $2L$ on a smooth quadric surface $Q 
\subset \piii$, $L$ being a line. Then$\, :$ 

\emph{(a)} ${\fam0 H}^1(F^\prim (l)) \ra {\fam0 H}^1(F^\prim_L(l))$ is
surjective for $l \geq -1$$\, ;$ 

\emph{(b)} ${\fam0 H}^1(F^\prim (l)) \ra {\fam0 H}^1(F^\prim_X(l))$ is
surjective for $l \geq 0$. 
\end{lemma}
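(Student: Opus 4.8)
\textbf{Proof plan for Lemma~\ref{L:fpriml}.}
The plan is to reduce the vanishing of the relevant obstruction groups to the defining cohomological property of an instanton, namely $\tH^1(F^\prim(-2)) = 0$ (equivalently $\tH^2(F^\prim(-2)) = 0$ by selfduality), together with the classification of the restriction $F^\prim_L$ to a line. First I would recall that an instanton $F^\prim$ is a stable rank $2$ vector bundle with $c_1(F^\prim) = 0$ and $\tH^1(F^\prim(-2)) = 0$, so that $F^{\prim \vee} \simeq F^\prim$. For part (a), the surjectivity of $\tH^1(F^\prim(l)) \ra \tH^1(F^\prim_L(l))$ is governed by the long exact cohomology sequence of
\[
0 \lra F^\prim(l-1) \lra F^\prim(l) \lra F^\prim_L(l) \lra 0\, ,
\]
where I use that the ideal sheaf $\sci_L$ has the Koszul presentation $0 \ra \sco_\piii(-2) \ra 2\sco_\piii(-1) \ra \sci_L \ra 0$; the cokernel of $\tH^1(F^\prim(l)) \ra \tH^1(F^\prim_L(l))$ injects into $\tH^2(F^\prim(l-1))$. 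The key step is therefore to show $\tH^2(F^\prim(l-1)) = 0$ for $l \geq -1$, i.e. $\tH^2(F^\prim(m)) = 0$ for $m \geq -2$. By Serre duality this is $\tH^1(F^{\prim \vee}(-m-4))^\vee = \tH^1(F^\prim(-m-4))^\vee$, which vanishes for all $-m-4 \leq -2$, i.e. for all $m \geq -2$, precisely because the spectrum of an instanton forces $\tH^1(F^\prim(j)) = 0$ for $j \leq -2$. This handles (a) once I confirm the range bookkeeping.

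For part (b) the curve $X$ has degree $2$, and I would again use the restriction sequence
\[
0 \lra F^\prim \otimes \sci_X(l) \lra F^\prim(l) \lra F^\prim_X(l) \lra 0\, .
\]
The cokernel of the restriction map on $\tH^1$ now injects into $\tH^2(F^\prim \otimes \sci_X(l))$, so the main point is to control this group. When $X = L_1 \cup L_2$ is a disjoint union of two lines I can split $\sci_X$ using $\sci_{L_1}$ and $\sci_{L_2}$ and reduce to the line case of (a), picking up a shift so that the threshold becomes $l \geq 0$ rather than $l \geq -1$. When $X = 2L$ on a smooth quadric $Q$, I would instead use the exact sequence relating $\sci_X$, $\sci_L$ and $\sco_L$: there is a filtration $0 \ra \sci_{X} \ra \sci_L \ra \sco_L(-1) \ra 0$ coming from the structure of the double line $2L$ (with $\sci_L/\sci_X \simeq \sco_L(-1)$, since $X$ is a divisor $2L$ on $Q$), which lets me compare the two restrictions and reduce once more to the single-line computation of (a) after a twist.

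The step I expect to be the main obstacle is the precise determination of the numerical threshold — that the shift induced by passing from a single line to a degree-$2$ curve $X$ moves the guaranteed surjectivity from $l \geq -1$ up to exactly $l \geq 0$, and no further. Concretely I will need to verify that the only potentially obstructing term, an $\tH^2$ of some twist of $F^\prim$ (or of $F^\prim$ restricted to a line), vanishes in the stated range, which rests on identifying the correct twist in each of the two geometric cases for $X$ and on the instanton vanishing $\tH^1(F^\prim(j)) = 0$ for $j \leq -2$ translated through Serre duality. The disjoint-lines case is routine once the single-line statement (a) is in hand; the degenerate case $X = 2L$ requires a little extra care because $X$ is non-reduced, and here I would lean on the description of $\sci_L/\sci_X \simeq \sco_L(-1)$ (as in Remark~\ref{R:multilines}) to carry out the reduction cleanly.
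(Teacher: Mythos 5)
Your part (a) is fixable but contains an error, and part (b) has genuine gaps. In (a), the displayed sequence $0 \ra F^\prim(l-1) \ra F^\prim(l) \ra F^\prim_L(l) \ra 0$ is false: $L$ has codimension $2$, so the kernel of $F^\prim(l) \ra F^\prim_L(l)$ is $F^\prim \otimes \sci_L(l)$, not $F^\prim(l-1)$, and the cokernel of $\tH^1(F^\prim(l)) \ra \tH^1(F^\prim_L(l))$ injects into $\tH^2(F^\prim \otimes \sci_L(l))$, not into $\tH^2(F^\prim(l-1))$. Tensoring the Koszul resolution $0 \ra \sco_\piii(-2) \ra 2\sco_\piii(-1) \ra \sci_L \ra 0$ (which you mention but do not use) with $F^\prim(l)$ traps this group between $2\tH^2(F^\prim(l-1))$ and $\tH^3(F^\prim(l-2))$; so besides the vanishing $\tH^2(F^\prim(m)) = 0$ for $m \geq -2$ that you correctly establish, you also need $\tH^3(F^\prim(l-2)) \simeq \tH^0(F^\prim(-l-2))^\vee = 0$, which holds by stability for $l \geq -2$. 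With these corrections (a) becomes a legitimate alternative to the paper's argument, which instead factors the restriction through a plane $H \supset L$, proves only the case $l = -1$ using Serre duality on $H$, and propagates to all $l \geq -1$ via the surjectivity of $\tH^1(\scf) \otimes S_1 \ra \tH^1(\scf(1))$ for sheaves $\scf$ with one-dimensional support.

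Part (b) is where the proposal fails. For $X = L_1 \cup L_2$ disjoint, surjectivity of $\tH^1(F^\prim(l)) \ra \tH^1(F^\prim_{L_i}(l))$ for each $i$ separately does \emph{not} imply surjectivity onto $\tH^1(F^\prim_X(l)) = \tH^1(F^\prim_{L_1}(l)) \oplus \tH^1(F^\prim_{L_2}(l))$: writing $K_i$ for the kernel of the $i$-th restriction map, joint surjectivity is equivalent to $K_1 + K_2 = \tH^1(F^\prim(l))$, which is strictly more than (a) gives. Indeed, for a nullcorrelation bundle $N$ and two disjoint jumping lines, componentwise surjectivity holds at $l = -1$ by (a), while joint surjectivity fails since $\h^1(N(-1)) = 1 < 2 = \h^1(N_X(-1))$; so the ``routine'' reduction cannot be the mechanism producing the shift to $l \geq 0$. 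In the degenerate case your identification $\sci_L/\sci_X \simeq \sco_L(-1)$ is also wrong: that is the planar double line (cf.\ Remark~\ref{R:multilines}(a)); for the divisor $2L$ on a smooth quadric one has $\sci_L/\sci_X \simeq \sco_L$, and even with the correct twist the reduction would require surjectivity of $\tH^1(F^\prim \otimes \sci_L(l)) \ra \tH^1(F^\prim \otimes (\sci_L/\sci_X)(l))$, which is not what (a) provides. Both defects can be repaired at once along your $\tH^2$-vanishing strategy by using the resolution $0 \ra \sco_\piii(-4) \ra 4\sco_\piii(-3) \ra 4\sco_\piii(-2) \ra \sci_X \ra 0$, valid in both cases, which yields $\tH^2(F^\prim \otimes \sci_X(l)) = 0$ for $l \geq 0$ from $\tH^1(F^\prim(-l-2)) = 0$ and $\tH^0(F^\prim(-l-1)) = 0$; the paper's proof is slicker and uniform: in both cases $X$ is a divisor of type $(2,0)$ on a smooth quadric $Q$, so the restriction factors as $F^\prim \ra F^\prim_Q \ra F^\prim_X$, and the two obstructions die in $\tH^2(F^\prim(-2)) = 0$ and in $\tH^2(F^\prim_Q(-2,0)) \simeq \tH^0(F^\prim_Q(0,-2))^\vee = 0$ by Serre duality on $Q$ together with $\tH^0(F^\prim_Q) = 0$.
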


\begin{proof} 
(a) If $\scf$ is a coherent sheaf on $\piii$ with $\dim \text{Supp}\, \scf 
\leq 1$ then the multiplication map $\tH^1(\scf) \otimes_k S_1 \ra 
\tH^1(\scf(1))$ is surjective because $\tH^2(\scf \otimes \Omega_\piii(1)) = 0$. 
It thus suffices to show that $\tH^1(F^\prim (-1)) \ra \tH^1(F^\prim_L(-1))$ is 
surjective. Consider a plane $H \supset L$. Since $\tH^0(F^\prim (-1)) = 0$ and 
$\tH^1(F^\prim (-2)) = 0$ it follows that $\tH^0(F^\prim_H(-1)) = 0$. By Serre 
duality, $\tH^2(F^\prim_H(-2)) \simeq \tH^0(F_H^{\prim \vee}(-1))^\vee \simeq  
\tH^0(F_H^\prim (-1))^\vee = 0$. Now, using the exact sequences$\, :$ 
\begin{gather*}
\tH^1(F^\prim (-1)) \lra \tH^1(F^\prim_H(-1)) \lra \tH^2(F^\prim (-2)) = 0\\
\tH^1(F_H^\prim (-1)) \lra \tH^1(F^\prim_L(-1)) \lra \tH^2(F_H^\prim (-2)) = 0
\end{gather*} 
one gets that $\tH^1(F^\prim (-1)) \ra \tH^1(F^\prim_L(-1))$ is surjective. 

\vskip2mm 

(b) As in (a), it suffices to prove that $\tH^1(F^\prim) \ra \tH^1(F^\prim_X)$ is 
surjective. $X$ is contained, in both cases, in a smooth quadric surface $Q$ 
and, choosing a convenient isomorphism $Q \simeq \pj \times \pj$, one has an 
exact sequence$\, :$ 
\[
0 \lra \sco_Q(-2,0) \lra \sco_Q \lra \sco_X \lra 0\, . 
\]
Since $\tH^0(F^\prim) = 0$ and $\tH^1(F^\prim (-2)) = 0$, it follows that 
$\tH^0(F^\prim_Q) = 0$. By Serre duality, $\tH^2(F^\prim_Q(-2,0)) \simeq 
\tH^0(F^{\prim \vee}_Q(0,-2))^\vee \simeq \tH^0(F^\prim_Q(0,-2))^\vee = 0$. Now, 
using the exact sequences$\, :$ 
\begin{gather*}
\tH^1(F^\prim) \lra \tH^1(F^\prim_Q) \lra \tH^2(F^\prim (-2)) = 0\\
\tH^1(F^\prim_Q) \lra \tH^1(F^\prim_X) \lra \tH^2(F^\prim_Q (-2,0)) = 0 
\end{gather*}
one gets the desired surjectivity. 
\end{proof} 

We shall need the following two well known results (see, for example, 
\cite[Lemma~4.1]{ctt} and the recent paper of Ellia and Gruson \cite{eg}).  

\begin{lemma}\label{L:twojumpinglines} 
Let $G$ be a semistable rank $2$ vector bundle on $\pii$ with $c_1(G) = 0$ 
and $c_2(G) = n \geq 2$ and let $L_1$, $L_2$ be two distinct lines. One has 
$G_{L_i} \simeq \sco_{L_i}(a_i) \oplus \sco_{L_i}(-a_i)$ for some non-negative 
integer $a_i$, $i = 1,\, 2$. Then $a_1 + a_2 \leq n+1$. 
\qed 
\end{lemma}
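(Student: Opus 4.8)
The plan is to turn the two jumping lines into a rank $2$ subsheaf of $G$ whose discriminant is so negative that Bogomolov's theorem forces a destabilizing sub-line-bundle of \emph{positive} degree, contradicting the semistability of $G$ unless $a_1+a_2\le n+1$. First I would record the elementary reductions. Since $c_1(G)=0$, the restriction $G_{L_i}$ has degree $0$ on $L_i\simeq\pj$, so indeed $G_{L_i}\simeq\sco_{L_i}(a_i)\oplus\sco_{L_i}(-a_i)$ with $a_i\ge 0$. Composing the (surjective) restriction $G\to G_{L_i}$ with the projection onto the negative summand gives, for $i=1,2$, an epimorphism $\phi_i\colon G\to\sco_{L_i}(-a_i)$. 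I then set $\Phi:=(\phi_1,\phi_2)\colon G\to\sco_{L_1}(-a_1)\oplus\sco_{L_2}(-a_2)=:\scq$, put $\scm:=\Ker\Phi$ and $\scm':=\scm^{\vee\vee}$. Because the image of $\Phi$ is a torsion sheaf supported on $L_1\cup L_2$ of full rank $1$ on each line, $\scm$ is a torsion-free sheaf of rank $2$ and $\scm'$ is locally free.

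Next I would pin down the Chern classes of $\scm'$ by a short Chern-character computation. Writing $0\to\scm\to G\to\scq'\to 0$ with $\scq':=\mathrm{Im}\,\Phi$, one has $c_1(\scm)=c_1(G)-c_1(\scq')=-2$ (the divisorial part of $\scq'$ being $L_1+L_2$), so $c_1(\scm')=-2$ as well. Since passing to the reflexive hull only raises $\mathrm{ch}_2$ (the quotient $\scm'/\scm$ has finite length) and since $\scq'$ differs from $\scq$ by $\Cok\Phi$, again of finite length, the computation yields the \emph{inequality}
\[
c_2(\scm')\;\le\;c_2(\scm)\;\le\; n-a_1-a_2+1,
\]
which is all that the argument needs (the exact value is attained when $\Phi$ is surjective and $\scm$ is already reflexive).

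Then I argue by contradiction: suppose $a_1+a_2\ge n+2$, so that $c_2(\scm')\le -1$ and hence $c_1(\scm')^2-4c_2(\scm')\ge 8>0$. Bogomolov's instability theorem (valid in characteristic $0$) then provides a saturated sub-line-bundle $\sco(d)\hookrightarrow\scm'$ with $2d-c_1(\scm')>0$, i.e.\ $d\ge 0$, sitting in $0\to\sco(d)\to\scm'\to\sci_W(-2-d)\to 0$ for some $0$-dimensional $W$ of length $w\ge 0$. Comparing $c_2$ gives $w=c_2(\scm')+(d+1)^2-1\ge 0$, whence $(d+1)^2\ge 1-c_2(\scm')\ge 2$; together with $d\ge 0$ this forces $d\ge 1$. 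But $\sco(d)\subset\scm'\subset G$ is then a sub-line-bundle of positive degree, contradicting the semistability of $G$ (which requires every subsheaf to have slope $\le\mu(G)=0$). Hence $a_1+a_2\le n+1$.

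The delicate point is precisely the sharpness: Bogomolov alone gives only $d\ge 0$, which is compatible with the semistability bound $d\le 0$ (and indeed $a_1+a_2=n+1$ really can occur for strictly semistable $G$). What upgrades $d\ge 0$ to the fatal $d\ge 1$ is the numerical inequality $(d+1)^2\ge 1-c_2(\scm')\ge 2$, which becomes available exactly once $a_1+a_2\ge n+2$. I would therefore be careful to keep the Chern class estimate as the inequality $c_2(\scm')\le n-a_1-a_2+1$ (controlling both $\Cok\Phi$ and $\scm'/\scm$), rather than assuming $\Phi$ surjective or $\scm$ reflexive, since it is this one-sided bound that feeds the contradiction.
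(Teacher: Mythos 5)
Your proof is correct. Note first that the paper gives no proof of this lemma at all: it is stated with a q.e.d.\ mark and attributed to the literature (\cite[Lemma~4.1]{ctt} and Ellia--Gruson \cite{eg}), so there is nothing in the text to compare against line by line, and your argument stands as a legitimate self-contained substitute. Its key steps check out: the one-sided estimate $c_2(\scm^{\vee\vee}) \leq c_2(\scm) \leq n+1-a_1-a_2$ is exactly right, since it absorbs both $\Cok \Phi$ (which can only be supported at the point $L_1 \cap L_2$, where the two lines necessarily meet) and the finite-length sheaf $\scm^{\vee\vee}/\scm$; and the decisive combination of $d \geq 0$ (from instability) with $(d+1)^2 \geq 1 - c_2(\scm^{\vee\vee}) \geq 2$ (from the length of $W$ being non-negative) correctly upgrades to $d \geq 1$, which kills semistability of $G$. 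One step you use silently and should make explicit is the inclusion $\scm^{\vee\vee} \hookrightarrow G$ behind the chain $\sco(d) \subset \scm^{\vee\vee} \subset G$: it holds because the kernel of $\scm^{\vee\vee} \ra G^{\vee\vee} = G$ meets $\scm$ trivially, hence is a finite-length subsheaf of a torsion-free sheaf, hence zero. Finally, a remark on packaging: on $\pii$ you do not need the full strength of Bogomolov's theorem. The bundle $\scm^{\vee\vee}(1)$ has $c_1 = 0$ and $c_2 \leq -2$, whereas any semistable rank~$2$ bundle $\scf$ on $\pii$ with $c_1(\scf) = 0$ satisfies $c_2(\scf) \geq 0$, since $\chi(\scf(-1)) = -c_2(\scf)$ and $\h^0(\scf(-1)) = \h^2(\scf(-1)) = 0$ (the latter by Serre duality and $\scf^\vee \simeq \scf$); so $\scm^{\vee\vee}(1)$ is unstable, and its maximal destabilizing sub-line bundle, with the same length count, has degree $\geq 2$, giving your $\sco(d)$ with $d \geq 1$. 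What Bogomolov buys is generality over arbitrary surfaces; the elementary variant keeps the whole proof at the level of Riemann--Roch and semistability, which is the toolkit the paper itself uses, and both require only characteristic $0$, which the paper assumes throughout.
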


\begin{lemma}\label{L:h0g(1)leq3} 
Let $G$ be a semistable rank $2$ vector bundle on $\pii$ with $c_1(G) = 0$ and 
$c_2(G) = n \geq 3$. Then ${\fam0 h}^0(G(1)) \leq 3$ unless $G$ has a jumping 
line of maximal order $n$. 
\qed  
\end{lemma}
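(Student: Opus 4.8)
The plan is to control the finite-length graded $S$-module $\tH^1_\ast(G)$ (finite length because $G$ is a vector bundle on the surface $\pii$) and to read off $\h^0(G(1))$ from it via Riemann--Roch. Since $c_1(G) = 0$ and $G$ is semistable one has $\tH^0(G(-1)) = 0$ and, by Serre duality together with $G^\vee \simeq G$, the self-duality $\h^1(G(t)) = \h^1(G(-t-3))$; in particular $\h^1(G(-2)) = \h^1(G(-1)) = n$ and $\h^1(G(-4)) = \h^1(G(1))$. The Riemann--Roch formula from Remark~\ref{R:chern}(a) gives $\chi(G(1)) = 2\binom{3}{2} - n = 6 - n$, while $\h^2(G(1)) = \h^0(G(-4))^\vee = 0$, so $\h^0(G(1)) = 6 - n + \h^1(G(1))$. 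Thus the whole statement reduces to the inequality $\h^1(G(1)) \leq n - 2$, together with an analysis of the equality case $\h^1(G(1)) = n-2$, which is the only way $\h^0(G(1)) \geq 4$ can occur and which I must show forces a jumping line of order $n$.

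First I would set $F := G(-1)$. It is a rank $2$ bundle with $c_1(F) = -2 < 0$ and $\tH^0(F) = 0$, hence, by Remark~\ref{R:conditiononf}(iii), it satisfies the hypothesis of Prop~\ref{P:nsubseth1astf}. It is essential to pass to this twist rather than to invoke the $c_1 = 0$ version Remark~\ref{R:nsubseth1astfc1=0} directly on $G$, since a strictly semistable $G$ need not be stable in the sense of Mumford and Takemoto. Taking $N := \tH^1_\ast(F)$ and writing $n_{-i} = \dim_k N_{-i} = \h^1(G(-i-1))$, parts (a) and (b) of Prop~\ref{P:nsubseth1astf} give the strictly decreasing chain $n = n_{-1} > n_{-2} > n_{-3} > \cdots$ (as long as the terms are positive). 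Since $n_{-3} = \h^1(G(-4)) = \h^1(G(1))$, this yields $\h^1(G(1)) \leq n - 2$ at once, proving $\h^0(G(1)) \leq 4$ in all cases.

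It then remains to treat the equality case. If $\h^0(G(1)) = 4$, i.e. $n_{-3} = n-2$, the chain forces $n_{-2} = n-1$; since $n_{-2} = \h^1(G(-3)) = n - 2 + \h^0(G)$ by Riemann--Roch (using $\tH^0(G(-3)) = 0$ and $\h^2(G(-3)) = \h^0(G)$), this gives $\h^0(G) = 1$. Moreover $n_{-2} - n_{-3} = 1$, so Prop~\ref{P:nsubseth1astf}(c), applied with $i = 2$, produces a non-zero linear form $\ell$ with $\ell N_{-j} = 0$ for all $j \geq 2$; in particular the multiplication $\ell \colon \tH^1(G(-3)) \to \tH^1(G(-2))$ is the zero map. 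Let $L \subset \pii$ be the line $\ell = 0$ and write $G_L \simeq \sco_L(a) \oplus \sco_L(-a)$. Restricting via $0 \to G(-3) \xra{\ell} G(-2) \to G_L(-2) \to 0$ and using $\tH^0(G(-2)) = 0$, $\tH^2(G(-2)) = 0$, $\h^2(G(-3)) = \h^0(G) = 1$, together with the vanishing of $\ell$ on $\tH^1$, the long exact sequence computes $\h^1(G_L(-2)) = \h^1(G(-2)) + \h^2(G(-3)) = n + 1$. On the other hand $\h^1(G_L(-2)) = \max(0, 1-a) + (a+1)$, which equals $a+1$ as soon as $a \geq 1$; comparing the two values gives $a = n$, so $L$ is the desired jumping line of maximal order $n$.

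The Riemann--Roch and Serre-duality bookkeeping is routine, and the decreasing-chain mechanism is imported wholesale from Appendix~\ref{A:spectrum}. I expect the main subtlety to be the correct application of Prop~\ref{P:nsubseth1astf} to the twist $F = G(-1)$ --- checking that its hypothesis holds precisely because $\tH^0(G(-1)) = 0$ even for strictly semistable $G$ --- and the extraction, via part (c) of that proposition, of a single linear form killing $\tH^1_\ast(F)$ in the relevant degrees; the final restriction computation then pins the splitting type on $L$ down to exactly $(n,-n)$.
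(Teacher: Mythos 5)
Your proof is correct. Note that the paper itself gives no proof of this lemma: it is stated without proof and attributed to the literature (\cite[Lemma~4.1]{ctt} and \cite{eg}), so your argument is necessarily a different route --- and, pleasantly, one that is entirely internal to the paper. The twist $F := G(-1)$ does satisfy the subsheaf hypothesis of Prop.~\ref{P:nsubseth1astf} via Remark~\ref{R:conditiononf}(iii) (for rank $2$ this amounts to $c_1(F) < 0$ and $\tH^0(F) = 0$, the latter being exactly semistability of $G$), and your observation that one must twist --- because a strictly semistable $G$ is not Mumford--Takemoto stable, so Remark~\ref{R:nsubseth1astfc1=0} cannot be applied to $G$ directly --- is precisely the right point of care. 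Parts (a), (b) then give the chain $n = \h^1(G(-2)) > \h^1(G(-3)) > \h^1(G(-4)) = \h^1(G(1))$ whenever the last term is non-zero, hence $\h^0(G(1)) = 6 - n + \h^1(G(1)) \leq 4$ in all cases; in the boundary case $\h^0(G(1)) = 4$ one gets $\h^1(G(-3)) = n-1$, so $\h^0(G) = 1$, and part (c) with $i = 2$ kills the multiplication $\ell \colon \tH^1(G(-3)) \ra \tH^1(G(-2))$, after which your restriction computation $\h^1(G_L(-2)) = \h^1(G(-2)) + \h^2(G(-3)) = n+1 = \max(0,1-a) + a + 1$ forces $a = n$ (the case $a = 0$ is excluded by your own formula, since $2 \neq n+1$ for $n \geq 3$). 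All the Serre-duality and Riemann--Roch bookkeeping ($G^\vee \simeq G$, $\h^2(G(1)) = 0$, $n_{-1} = n$, $n_{-2} = n - 2 + \h^0(G)$) checks out. What your approach buys: it keeps the paper self-contained, reusing the Appendix~\ref{A:spectrum} machinery (the Drezet--Le Potier-style strictly decreasing $\h^1$ chain, for which the paper supplies complete proofs), and it yields the slightly sharper statement that $\h^0(G(1)) \leq 4$ always, with equality occurring only in the presence of a jumping line of maximal order $n$; what the paper's citation buys is only brevity.
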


\begin{remark}\label{R:h0g(1)leq2} 
One can show that if $G$ is a stable rank 2 vector bundle on $\pii$ with 
$c_1(G) = 0$ and $c_2(G) = n \geq 4$ then $\h^0(G(1)) \leq 2$ unless $G$ has a 
jumping line of maximal order $n-1$. 
\end{remark}

\begin{lemma}\label{L:h1fprim(2)=0} 
Let $F^\prim$ be a $4$-instanton with ${\fam0 H}^0(F^\prim (1)) = 0$, having no 
jumping line of maximal order $4$. Then ${\fam0 h}^1(F^\prim(2)) \leq 1$ and 
if ${\fam0 h}^1(F^\prim(2)) = 1$ then the multiplication map 
${\fam0 H}^1(F^\prim(1)) \otimes S_1 \ra {\fam0 H}^1(F^\prim(2))$ is a perfect 
pairing.  
\end{lemma}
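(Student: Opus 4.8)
The plan is to reduce the entire statement to the single assertion that $\tH^1(F^\prim_H(2)) = 0$ for \emph{every} plane $H \subset \piii$, and then to read off both conclusions from Serre duality and the Bilinear Map Lemma. First I would record the numerics: since $F^\prim$ is a $4$-instanton one has $F^{\prim \vee} \simeq F^\prim$, $\tH^1(F^\prim(l)) = 0$ for $l \leq -2$, $\tH^2(F^\prim(l)) = 0$ for $l \geq -1$, and Riemann--Roch gives $\h^1(F^\prim(1)) = 4$; hence $\tH^2(F^\prim(-5)) \simeq \tH^1(F^\prim(1))^\vee$ has dimension $4$ and $\tH^2(F^\prim(-6)) \simeq \tH^1(F^\prim(2))^\vee$. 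For a plane $H = \{h = 0\}$, the sequence $0 \to F^\prim(1) \xra{h} F^\prim(2) \to F^\prim_H(2) \to 0$ together with $\tH^2(F^\prim(1)) = 0$ identifies $\tH^1(F^\prim_H(2))$ with the cokernel of multiplication by $h$ on $\tH^1(F^\prim(1)) \to \tH^1(F^\prim(2))$.

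So if I can show $\tH^1(F^\prim_H(2)) = 0$ for all $H$, then multiplication by every nonzero $h$ is surjective on $\tH^1(F^\prim(1)) \to \tH^1(F^\prim(2))$; dualizing, multiplication by every nonzero $h$ is \emph{injective} on $\tH^2(F^\prim(-6)) \to \tH^2(F^\prim(-5))$. Applying the Bilinear Map Lemma \cite[Lemma~5.1]{ha} to the multiplication $\tH^2(F^\prim(-6)) \otimes S_1 \to \tH^2(F^\prim(-5))$ exactly as in Remark~\ref{R:muh1e(-4)} then forces $4 = \h^2(F^\prim(-5)) \geq \h^2(F^\prim(-6)) + 3 = \h^1(F^\prim(2)) + 3$, i.e. $\h^1(F^\prim(2)) \leq 1$. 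Moreover, when $\h^1(F^\prim(2)) = 1$, the injectivity of all these multiplications makes the induced map $S_1 \to \tH^1(F^\prim(1))^\vee$ an isomorphism, which is precisely the perfect pairing claimed.

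The key claim $\tH^1(F^\prim_H(2)) = 0$ I would split according to whether $F^\prim_H$ is semistable. For \emph{every} plane one has $\tH^0(F^\prim_H(-1)) = 0$, because $\tH^0(F^\prim(-1)) = 0$ and $\tH^1(F^\prim(-2)) = 0$. If $F^\prim_H$ is semistable, then, since $F^\prim$ has no jumping line of order $4$, neither does $F^\prim_H$, so Lemma~\ref{L:h0g(1)leq3} gives $\h^0(F^\prim_H(1)) \leq 3$, whence $\h^1(F^\prim_H(1)) \leq 1$. As no line is of order $4$, one has $\tH^1(F^\prim_L(2)) = 0$ for every line $L \subset H$, so Lemma~\ref{L:h1k} applied on $H \simeq \pii$ yields $\h^1(F^\prim_H(2)) \leq \mathrm{max}(0, \h^1(F^\prim_H(1)) - 2) = 0$.

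The delicate case, which I expect to be the main obstacle, is an \emph{unstable} plane $H$. Here $\tH^0(F^\prim_H(-1)) = 0$ forces the destabilizing sub-line-bundle to be exactly $\sco_H(1)$, giving $0 \to \sco_H(1) \to F^\prim_H \to \sci_{\Gamma, H}(-1) \to 0$ with $\Gamma$ a length-$3$ subscheme of $H$; twisting by $2$ shows $\h^1(F^\prim_H(2)) = \h^1(\sci_{\Gamma, H}(1))$, which vanishes unless the three points of $\Gamma$ are collinear. The crux is to rule out collinearity using the absence of order-$4$ jumping lines: if $\Gamma$ lay on a line $L_0 \subset H$, then restricting the above sequence to $L_0$ and passing to the torsion-free quotient of $\sci_{\Gamma, H}(-1) \vb L_0$ produces a surjection $F^\prim_{L_0} \twoheadrightarrow \sco_{L_0}(-4)$; since $F^\prim_{L_0} \simeq \sco_{L_0}(b) \oplus \sco_{L_0}(-b)$, such a surjection has line-subbundle kernel $\sco_{L_0}(4)$ and hence forces $b \geq 4$, so $L_0$ would be a jumping line of order $\geq 4$ --- contradicting the hypothesis. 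Thus $\Gamma$ is not collinear and $\h^1(F^\prim_H(2)) = 0$ also for unstable planes, completing the key claim and with it the lemma. The careful bookkeeping of the torsion in $\sci_{\Gamma, H}(-1) \vb L_0$, and the verification that the resulting torsion-free quotient is exactly $\sco_{L_0}(-4)$, is the step I would expect to demand the most care.
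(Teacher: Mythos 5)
Your proof is correct, and its effective content coincides with the paper's argument: the same restriction-to-planes strategy, the same use of Lemma~\ref{L:h0g(1)leq3} plus Riemann--Roch to get $\h^1(F^\prim_H(1)) \leq 1$ and then $\tH^1(F^\prim_H(2)) = 0$, and the same application of the Bilinear Map Lemma \cite[Lemma~5.1]{ha} to the (Serre-)dualized multiplication maps; your explicit use of Lemma~\ref{L:h1k} to pass from $\h^1(F^\prim_H(1)) \leq 1$ to $\tH^1(F^\prim_H(2)) = 0$ is a legitimate way of filling in a step the paper leaves implicit.

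The one thing you misjudged is the case division, and it is worth seeing why. For a rank $2$ bundle $G$ on a plane $H \simeq \pii$ with $c_1(G) = 0$, semistability is \emph{equivalent} to $\tH^0(G(-1)) = 0$: a destabilizing rank $1$ subsheaf saturates to a sub-line bundle $\sco_H(a) \hookrightarrow G$ with $a \geq 1$, i.e.\ to a nonzero section of $G(-a)$, hence of $G(-1)$. Since you prove $\tH^0(F^\prim_H(-1)) = 0$ for \emph{every} plane $H$, every restriction $F^\prim_H$ is semistable and unstable planes simply do not exist; this is exactly how the paper phrases it, so your ``semistable case'' already covers all planes and the proof is complete at that point. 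The paragraph you devote to unstable planes --- the part you single out as the main obstacle and the step demanding the most care --- is vacuous, and moreover incoherent as written: an inclusion $\sco_H(1) \hookrightarrow F^\prim_H$ \emph{is} a nonzero section of $F^\prim_H(-1)$, contradicting the vanishing you invoke in the same sentence, and with the twists as written the relation $c_2(F^\prim_H) = 4$ would force $\Gamma$ to have length $5$, not $3$. None of this damages the result, precisely because the case is empty, but the difficulty you anticipated there is not where any difficulty lies.
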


\begin{proof} 
One has, by Riemann-Roch, $\h^1(F^\prim (1)) = 4$. 
Let $H \subset \piii$ be a plane of equation $h = 0$. Since $\tH^0(F^\prim) = 0$ 
and since $\tH^1(F^\prim (-2)) = 0$ it follows that $\tH^0(F^\prim_H(-1)) = 0$ 
hence $F_H^\prim$ is semistable. It follows, from Lemma~\ref{L:h0g(1)leq3}, 
that $\h^0(F^\prim_H(1)) \leq 3$ hence, by Riemann-Roch, $\h^1(F^\prim_H(1)) 
\leq 1$. This implies that $\tH^1(F^\prim_H(2)) = 0$. Using the exact 
sequence$\, :$ 
\[
\tH^1(F^\prim (1)) \overset{h}{\lra} \tH^1(F^\prim (2)) \lra 
\tH^1(F^\prim_H(2)) = 0 
\]
one gets 
that the mutiplication by $h : \tH^1(F^\prim (1)) \ra \tH^1(F^\prim (2))$ is 
surjective. Applying, now, the Bilinear Map Lemma \cite[Lemma~5.1]{ha} to the 
bilinear map $\tH^1(F^\prim (2))^\vee \otimes S_1 \ra \tH^1(F^\prim (1))^\vee$ 
deduced from the multiplication map $\tH^1(F^\prim (1)) \otimes S_1 \ra 
\tH^1(F^\prim (2))$ one deduces that $\h^1(F^\prim (2)) \leq 1$. If 
$\h^1(F^\prim (2)) = 1$ then the multiplication map $\tH^1(F^\prim (1)) \otimes 
S_1 \ra \tH^1(F^\prim (2))$ is a perfect pairing. 
\end{proof} 

\begin{lemma}\label{L:c1=0c2=4h1fprim(2)=0} 
Let $F^\prim$ be a stable rank $2$ vector bundle on $\piii$, with $c_1(F^\prim) 
= 0$ and $c_2(F^\prim) = 4$. If ${\fam0 H}^1(F^\prim(2)) = 0$ then $F^\prim$ is a 
$4$-instanton. 
\end{lemma}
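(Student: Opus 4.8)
The plan is to reduce, via self-duality and the spectrum, to excluding a single spectrum, and then to exclude that spectrum by means of the Serre correspondence.

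First I would record the self-duality. Since $F^\prim$ has rank $2$ and $c_1(F^\prim) = 0$, one has $F^{\prim \vee} \simeq F^\prim$, so Serre duality gives $\h^i(F^\prim(l)) = \h^{3-i}(F^\prim(-l-4))$ for all $i$ and $l$; in particular $\h^1(F^\prim(-2)) = \h^2(F^\prim(-2))$, so that the $4$-instanton condition $\tH^1(F^\prim(-2)) = 0$ is equivalent to $\tH^2(F^\prim(-2)) = 0$. Next I would invoke the spectrum of $F^\prim$ (Appendix~\ref{A:spectrum}): it is a sequence $(k_1,k_2,k_3,k_4)$ which is symmetric (this symmetry being forced by $F^{\prim \vee} \simeq F^\prim$) and subject to the connectedness properties recalled in Remark~\ref{R:spectrumf}. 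Property~(iv) rules out $(1,1,-1,-1)$ (a $1$ occurs but no $0$ does), so the only possibilities are $(0,0,0,0)$ and $(1,0,0,-1)$. Moreover, one checks, exactly as in Definition~\ref{D:spectrum}, that $\h^2(F^\prim(-2))$ equals the number of strictly negative entries of the spectrum (here $\h^2(F^\prim(-1)) = \h^1(F^\prim(-3)) = 0$ for both candidate spectra). Hence $F^\prim$ is a $4$-instanton if and only if its spectrum is $(0,0,0,0)$, and it remains to exclude the spectrum $(1,0,0,-1)$ under the hypothesis $\tH^1(F^\prim(2)) = 0$.

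Then I would reformulate the hypothesis numerically. By Riemann--Roch (Remark~\ref{R:chern}) one has $\chi(F^\prim(2)) = 4$, while $\h^2(F^\prim(2)) = \h^1(F^\prim(-6)) = 0$ and $\h^3(F^\prim(2)) = \h^0(F^\prim(-6)) = 0$ (the first by the self-duality above, both by the stability of $F^\prim$ and the vanishing of cohomology in those degrees). Thus $\tH^1(F^\prim(2)) = 0$ is equivalent to $\h^0(F^\prim(2)) = 4$, and the task becomes: show that the spectrum $(1,0,0,-1)$ forces $\h^0(F^\prim(2)) \geq 5$.

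\textbf{The main obstacle} is this last step, and I would attack it through the Serre correspondence. A nonzero section of $F^\prim(2)$ vanishes on a curve $Y \subset \piii$ of degree $c_2(F^\prim(2)) = 8$ with $\omega_Y \simeq \sco_Y$ (since $c_1(F^\prim(2)) - 4 = 0$), fitting into
\[
0 \lra \sco_\piii(-2) \lra F^\prim \lra \sci_Y(2) \lra 0\, .
\]
Twisting this sequence, one gets $\tH^1(F^\prim(-2)) \simeq \tH^1(\sci_Y)$ and $\tH^1(F^\prim(2)) \simeq \tH^1(\sci_Y(4))$. Since $\h^1(\sci_Y) = \h^0(\sco_Y) - 1$, the spectrum $(1,0,0,-1)$, for which $\h^1(F^\prim(-2)) = 1$, forces $\h^0(\sco_Y) = 2$, i.e. $Y = Y_1 \cup Y_2$ is disconnected, the union of two disjoint subcanonical curves of degrees summing to $8$. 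For such a disconnected curve the homogeneous ideal factors, $I(Y) = I(Y_1)I(Y_2)$ (see \cite[Lemma~B.1]{acm2}), and I would use this product structure to produce an extra quartic: the ``mixed'' products $I(Y_1)_a I(Y_2)_b$ with $a+b=4$ account for strictly more quartics through $Y$ than in the connected case, yielding $\h^0(\sci_Y(4)) \geq 4$, hence $\h^1(\sci_Y(4)) \geq 1$ and $\tH^1(F^\prim(2)) \neq 0$, the desired contradiction. For instance, when $Y_1$ and $Y_2$ are disjoint elliptic quartic curves this is a direct count, since $I(Y_1)_2 I(Y_2)_2$ already spans a $4$-dimensional space of quartics. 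The delicate point, which I expect to be the genuine difficulty, is to make this lower bound on $\h^0(\sci_Y(4))$ uniform over all admissible degree splittings of $Y$ and over possibly non-reduced components; choosing a general section of $F^\prim(2)$ so as to make $Y$ as smooth as possible, and then treating the finitely many degree splittings case by case, should suffice.
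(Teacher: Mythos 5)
Your reduction is fine as far as it goes: the only spectra to consider are $(0,0,0,0)$ and $(1,0,0,-1)$, the instanton condition corresponds to the first, and $\tH^1(F^\prim(2)) = 0$ is indeed equivalent to $\h^0(F^\prim(2)) = 4$ (so that it would suffice to force $\h^0(\sci_Y(4)) \geq 4$). But your exclusion of the spectrum $(1,0,0,-1)$ has two genuine gaps. (i) The step ``$\h^0(\sco_Y) = 2$, i.e.\ $Y = Y_1 \cup Y_2$ is disconnected'' is false. $Y$ is only a locally complete intersection curve, and for a \emph{connected non-reduced} curve $\tH^0(\sco_Y)$ is a local Artinian $k$-algebra whose dimension can perfectly well be $2$: already a double line $X$ with $\sci_L/\sci_X \simeq \sco_L$ has $\h^0(\sco_X) = 2$ (Remark~\ref{R:multilines}), and the proof of Lemma~\ref{L:d=6g=0} in the paper is precisely an analysis of this phenomenon. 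Such subcanonical non-reduced curves do occur as zero schemes of sections of stable rank $2$ bundles -- compare Construction 5.0 in the proof of Prop.~\ref{P:eprimstablec2=10} (double structures on twisted cubics, after B\u{a}nic\u{a}--Manolache), or take a ribbon $Y$ on an elliptic quartic $C$ with $\sci_C/\sci_Y \simeq \omega_C \simeq \sco_C$, which is connected of degree $8$ with $\omega_Y \simeq \sco_Y$. For a connected $Y$ there is no factorization $I(Y) = I(Y_1)I(Y_2)$, and your count of quartics collapses. (ii) Even before that, you need a section of $F^\prim(2)$ vanishing in codimension $2$. Nothing in your argument rules out $\h^0(F^\prim(1)) \neq 0$ for this spectrum (this is exactly Chang's ``$\alpha = 1$'' situation), and if $s^\prim$ spans $\tH^0(F^\prim(1))$ then, under your contradiction hypothesis $\h^0(F^\prim(2)) = 4$, one has $\tH^0(F^\prim(2)) = S_1 \cdot s^\prim$: every section of $F^\prim(2)$ vanishes on a plane together with the degree-$5$ curve $Z(s^\prim)$, so the Koszul sequence you start from never exists.

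These degenerations are precisely what the paper's proof bypasses by quoting Chang \cite[Prop.~1.5]{ch2}: a stable bundle with spectrum $(1,0,0,-1)$ either has an unstable plane $H$ of order $1$ -- and then the reduction step $0 \ra \scf^\secund \ra F^\prim \ra \sci_{Z,H}(-1) \ra 0$, with $Z \subset H$ of length $5$ and $\scf^\secund$ stable reflexive with $c_1 = -1$, $c_2 = 3$, $c_3 = 5$, gives $\h^1(F^\prim(2)) \geq \h^1(\sci_{Z,H}(1)) \geq 2$ since $\tH^2(\scf^\secund(2)) = 0$ -- or it is the cohomology of a selfdual monad $0 \ra \sco_\piii(-2) \ra 4\sco_\piii \ra \sco_\piii(2) \ra 0$, where the Koszul complex gives $\h^1(F^\prim(2)) = 1$. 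To repair your route you would have to reprove the substance of that structure theorem: a classification, \`a la B\u{a}nic\u{a}--Forster, of all possible zero schemes (connected or not, reduced or not) together with a separate treatment of the divisorial case. That is not a small patch; it is essentially the result your approach was designed to avoid.
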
 

\begin{proof} 
We show that if $F^\prim$ is not a 4-instanton then $\tH^1(F^\prim(2)) \neq 0$. 
\emph{Indeed}, in this case $F^\prim$ must have spectrum $(1,0,0,-1)$. 
According to Chang \cite[Prop.~1.5]{ch2}, either $F^\prim$ has an unstable plane 
$H \subset \piii$ of order 1, or it is the cohomology of a selfdual 
monad$\, :$ 
\[
0 \lra \sco_\piii(-2) \lra 4\sco_\piii \lra \sco_\piii(2) \lra 0\, . 
\]
In the former case, one has an exact sequence$\, :$ 
\[
0 \lra \scf^\secund \lra F^\prim \lra \sci_{Z,H}(-1) \lra 0\, , 
\]
with $Z$ a 0-dimensional subscheme of $H$, of length 5, and $\scf^\secund$ a 
stable rank 2 reflexive sheaf with $c_1(\scf^\secund) = -1$, 
$c_2(\scf^\secund) = 3$, $c_3(\scf^\secund) = 5$. By \cite[Thm.~8.2(d)]{ha}, 
$\tH^2(\scf^\secund(2)) = 0$ hence $\h^1(F^\prim(2)) \geq \h^1(\sci_{Z,H}(1)) 
\geq 2$. 

In the latter case, using the (geometric) Koszul complex associated to the 
epimorphism $4\sco_\piii \ra \sco_\piii(2)$, one gets that $\h^1(F^\prim(2)) 
= 1$.  
\end{proof}

\section{Serre's method of extensions}\label{A:serre} 

We describe, here, a slightly more general variant of a method of 
Serre \cite{s} for constructing vector bundles from two codimensional 
subschemes. We include some arguments, for completness.  
We begin by recalling some facts about Serre duality, mainly 
because the duality isomorphisms from Hartshorne \cite[Chap.~III,~\S 7]{hag} 
are not explicit enough for our purposes. Firstly, a general result whose 
proof can be found in \cite[Lemma~3.3]{ct}.  

\begin{lemma}\label{L:homk(a)} 
Let $\mathcal A$ be an abelian category with enough injective objects, 
${\fam0 K}(\mathcal A)$ (resp., ${\fam0 D}(\mathcal A)$) the homotopy 
(resp., derived) category of complexes in $\mathcal A$, and $X^\bullet$ 
(resp., $Y^\bullet$) a complex in $\mathcal A$ which is bounded to the right 
(resp., left). Then the canonical map 
${\fam0 Hom}_{{\fam0 K}(\mathcal A)}(X^\bullet , Y^\bullet) \ra 
{\fam0 Hom}_{{\fam0 D}(\mathcal A)}(X^\bullet , Y^\bullet)$ is$\, :$ 

\emph{(i)} surjective if ${\fam0 Ext}_{\mathcal A}^{p-q}(X^p , Y^q) = 0$ for all 
$p > q$$\, ;$ 

\emph{(ii)} injective if ${\fam0 Ext}_{\mathcal A}^{p-q-1}(X^p , Y^q) = 0$ for all 
$p > q + 1$. 
\qed 
\end{lemma}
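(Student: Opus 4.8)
The plan is to reduce everything to a single computation with a hyperext spectral sequence. Since $\mathcal{A}$ has enough injectives and $Y^\bullet$ is bounded to the left, I would first choose an injective resolution $\iota \colon Y^\bullet \izo I^\bullet$, where $I^\bullet$ is a bounded-below complex of injective objects. Such a complex is homotopy-injective, so that for every $X^\bullet$ one has a canonical isomorphism $\operatorname{Hom}_{D(\mathcal{A})}(X^\bullet , Y^\bullet) \cong \operatorname{Hom}_{K(\mathcal{A})}(X^\bullet , I^\bullet)$, and under this identification the map in the statement becomes the map $\iota_\ast \colon \operatorname{Hom}_{K(\mathcal{A})}(X^\bullet , Y^\bullet) \to \operatorname{Hom}_{K(\mathcal{A})}(X^\bullet , I^\bullet)$ induced by $\iota$. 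Thus it suffices to control $\iota_\ast$.

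Next I would introduce the hyperext spectral sequence attached to a Cartan--Eilenberg resolution of $Y^\bullet$ (equivalently, to the Hom double complex built from $X^\bullet$ and $I^\bullet$):
\[
E_1^{p,q} = \prod_i \operatorname{Ext}^q_{\mathcal{A}}(X^i , Y^{i+p}) \ \Longrightarrow\ \operatorname{Hom}_{D(\mathcal{A})}(X^\bullet , Y^\bullet[p+q])\, .
\]
The hypotheses that $X^\bullet$ is bounded to the right and $Y^\bullet$ to the left guarantee that, in each total degree, only finitely many of the factors are nonzero, so the products are finite and the spectral sequence converges. On the bottom row one has $\operatorname{Ext}^0 = \operatorname{Hom}$ and the $d_1$-differential is the differential of the total Hom complex, whence $E_2^{p,0} = \operatorname{Hom}_{K(\mathcal{A})}(X^\bullet , Y^\bullet[p])$; in particular $E_2^{0,0} = \operatorname{Hom}_{K(\mathcal{A})}(X^\bullet , Y^\bullet)$, and the edge homomorphism $E_2^{0,0} \to \operatorname{Hom}_{D(\mathcal{A})}(X^\bullet , Y^\bullet)$ is exactly the map $\iota_\ast$ above.

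It then remains to read off the two assertions from the behaviour of the edge map at the spot $(0,0)$ in total degree $0$. Writing $j = i+p$, so that $E_1^{p,q} = \prod_i \operatorname{Ext}^q(X^i , Y^j)$ with $p = j-i$, one notes that every differential \emph{leaving} $E_r^{0,0}$ (for $r \geq 2$) lands in a spot with $q < 0$ and is therefore zero. Since the piece $E_\infty^{0,0}$ sits at the bottom of the filtration on $\operatorname{Hom}_{D(\mathcal{A})}(X^\bullet , Y^\bullet)$, surjectivity of the edge map amounts to the vanishing of the remaining graded pieces $E_\infty^{p,-p}$ with $p \neq 0$. For $p > 0$ these vanish automatically because $q = -p < 0$, while for $p < 0$ it suffices that $E_1^{p,-p} = \prod_i \operatorname{Ext}^{-p}(X^i , Y^{i+p}) = 0$; since $-p = i-j$ and $i > j$ there, this is precisely hypothesis (i). For injectivity one needs in addition that no differential \emph{entering} $E_r^{0,0}$ survives, i.e.\ that $d_r \colon E_r^{-r , r-1} \to E_r^{0,0}$ vanishes for $r \geq 2$; it suffices that $E_1^{-r , r-1} = \prod_i \operatorname{Ext}^{r-1}(X^i , Y^{i-r}) = 0$, and with $j = i-r$ this reads $\operatorname{Ext}^{(i-j)-1}(X^i , Y^j) = 0$ for $i-j \geq 2$, which is exactly hypothesis (ii).

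The delicate point --- and the step I expect to cost the most care --- is the identification of the edge homomorphism $E_2^{0,0} \to \operatorname{Hom}_{D(\mathcal{A})}(X^\bullet , Y^\bullet)$ with the canonical comparison map $\iota_\ast$, rather than with some abstract isomorphism of a subquotient; this requires carefully tracing the construction of the spectral sequence through the (Cartan--Eilenberg) injective resolution and checking compatibility of the two filtrations. A secondary point to verify cleanly is that the boundedness hypotheses really do collapse the products over $i$ to finite sums in each relevant degree, so that termwise vanishing of the Ext groups yields vanishing of the corresponding $E_1$-terms. Once these are settled, (i) and (ii) follow by the bookkeeping above; alternatively one may avoid the spectral sequence by completing $\iota$ to a triangle $Y^\bullet \to I^\bullet \to C^\bullet \to Y^\bullet[1]$ with $C^\bullet$ acyclic and analysing $\operatorname{Hom}_{K(\mathcal{A})}(X^\bullet , C^\bullet[n])$ for $n = 0, -1$, though that route re-imports the same Ext computation.
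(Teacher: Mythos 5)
Your argument is sound, but there is nothing in the paper to compare it with line by line: the paper does not prove this lemma, it states it with a \verb|\qed| and refers the reader to \cite[Lemma~3.3]{ct} for the proof. On its own merits, your spectral-sequence route works. Since $\mathcal{A}$ has enough injectives and $Y^\bullet$ is bounded below, a Cartan--Eilenberg resolution exists, a bounded-below complex of injectives is indeed homotopy-injective, and the boundedness of $X^\bullet$ above and $Y^\bullet$ below makes every product $\prod_i \operatorname{Ext}^q(X^i,Y^{i+p})$ finite and the column filtration bounded in each total degree, so the hyperext spectral sequence converges. Because the $E_1$-page is concentrated in $q\geq 0$, all differentials leaving $E_r^{0,0}$ vanish, $E_\infty^{0,0}=F^0$ is the deepest nonzero filtration step of $\operatorname{Hom}_{\mathrm{D}(\mathcal{A})}(X^\bullet,Y^\bullet)$, and the edge map factors as $E_2^{0,0}\twoheadrightarrow E_\infty^{0,0}\hookrightarrow \operatorname{Hom}_{\mathrm{D}(\mathcal{A})}(X^\bullet,Y^\bullet)$; your index bookkeeping correctly turns hypothesis (i) into the vanishing of $E_1^{p,-p}$ for $p<0$ (hence surjectivity) and hypothesis (ii) into the vanishing of $E_1^{-r,r-1}$ for $r\geq 2$, which kills every differential entering $(0,0)$ (hence injectivity). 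Two remarks. First, your phrase ``in addition'' for (ii) is misleading: because of the factorization above, injectivity needs only the incoming differentials to die, so (ii) is self-contained and independent of the hypotheses of (i) --- which is what your computation actually shows. Second, the step you rightly single out, identifying the edge homomorphism at $(0,0)$ with the map induced by the augmentation $Y^\bullet\to I^{\bullet,0}$ (equivalently with $\iota_\ast$ under $\operatorname{Hom}_{\mathrm{D}}(X^\bullet,Y^\bullet)\simeq\operatorname{Hom}_{\mathrm{K}}(X^\bullet,I^\bullet)$), is a genuine verification, standard but not free. The alternative you sketch at the end --- complete $\iota$ to a triangle with acyclic bounded-below cone $C^\bullet$ and prove $\operatorname{Hom}_{\mathrm{K}(\mathcal{A})}(X^\bullet,C^\bullet)=0$, resp.\ $\operatorname{Hom}_{\mathrm{K}(\mathcal{A})}(X^\bullet,C^\bullet[-1])=0$, by building null-homotopies degree by degree, the obstructions at each stage lying exactly in the Ext groups of (i), resp.\ (ii) --- buys you exactly this: it avoids the edge-map identification and any convergence discussion, at the cost of an inductive construction; it is also the style of direct argument one would expect in the cited reference. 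Either route is a complete and correct proof.
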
  

\begin{lemma}\label{L:duality} 
Let $X$ be a projective scheme over an algebraically closed field $k$,  
of pure dimension $n$, $\sco_X(1)$ a very ample invertible sheaf on 
$X$, and $(\omega_X\, ,\, {\fam0 tr}_X \colon {\fam0 H}^n(\omega_X) \ra k)$ a 
dualizing sheaf on $X$. Assume that$\, :$ 
\begin{equation}\label{E:hiox(-j)} 
{\fam0 H}^i(\sco_X(-j)) = 0 \  for \  i < n \  and \  j >> 0\, . 
\end{equation}
Then, for any coherent $\sco_X$-module $\scf$, there exist isomorphisms$\, :$ 
\[
{\fam0 Ext}_{\sco_X}^p(\scf , \omega_X) \Izo {\fam0 H}^{n-p}(\scf)^\vee \, , \  
p \geq 0 \, . 
\]
\end{lemma}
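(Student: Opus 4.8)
\quad The plan is to realize the duality isomorphism concretely via the machinery of the derived category, rather than invoking the opaque isomorphisms of \cite[Chap.~III]{hag}. First I would fix a resolution of $\scf$ by locally free sheaves. Since $X$ is projective and $\sco_X(1)$ is very ample, for every coherent $\scf$ there exists a (possibly infinite, but bounded-to-the-right) complex $L^\bullet$ of finite direct sums of twists $\sco_X(-j)$ with a quasi-isomorphism $L^\bullet \ra \scf$; in fact one may take each $L^p$ to be a finite sum of sheaves $\sco_X(-j)$ with $j$ as large as we please in each degree. The point of pushing the twists far to the right is to make the hypothesis \eqref{E:hiox(-j)} applicable: for $j \gg 0$ the only nonvanishing cohomology of $\sco_X(-j)$ is in degree $n$.

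\textbf{Key steps.}\quad The computation of $\mathrm{Ext}^p_{\sco_X}(\scf,\omega_X)$ proceeds by identifying it with a morphism group in the derived category, $\mathrm{Hom}_{\mathrm{D}(\sco_X)}(\scf, \omega_X[p])$, and then replacing $\scf$ by $L^\bullet$. Dually, by the defining property of the dualizing pair $(\omega_X,\mathrm{tr}_X)$, for a single locally free sheaf $\sce$ one has a perfect pairing $\mathrm{Hom}(\sce,\omega_X)\times \tH^n(\sce)\ra \tH^n(\omega_X)\overset{\mathrm{tr}_X}{\ra}k$, giving $\mathrm{Hom}(\sce,\omega_X)\izo \tH^n(\sce)^\vee$ and $\mathrm{Ext}^i(\sce,\omega_X)=0$ for $i>0$; for $\sce=\sco_X(-j)$ with $j\gg 0$ the vanishing \eqref{E:hiox(-j)} guarantees that $\tH^{n-i}(\sce)=0$ for $i>0$ as well, so the hypercohomology spectral sequences degenerate. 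Then I would run the two spectral sequences computing $\mathrm{Ext}^p(L^\bullet,\omega_X)$ and $\tH^{n-p}(L^\bullet)^\vee$ from the terms $\mathrm{Ext}^q(L^{-s},\omega_X)$ and $\tH^{n-q}(L^{-s})^\vee$ respectively. By the previous sentence both $E_1$-pages are concentrated in the single row $q=0$ (after the twist argument), so each spectral sequence collapses and the termwise duality isomorphisms $\mathrm{Hom}(L^{-s},\omega_X)\izo \tH^n(L^{-s})^\vee$ assemble into the desired global isomorphism.

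\textbf{Alternative bookkeeping and the main obstacle.}\quad A cleaner route, and the one I would actually write, is to invoke Lemma~\ref{L:homk(a)} directly with $\mathcal A$ the category of coherent $\sco_X$-modules: it lets me replace the derived-category $\mathrm{Hom}$ by the homotopy-category $\mathrm{Hom}$ once I verify the vanishing of certain $\mathrm{Ext}$ groups $\mathrm{Ext}^{p-q}(L^p,\omega_X)$, which again follows from the locally free nature of the $L^p$ together with hypothesis \eqref{E:hiox(-j)}. The main technical obstacle I anticipate is the careful handling of the unbounded-to-the-left resolution: one must ensure the two spectral sequences converge and that the comparison map between them is the one induced by the termwise trace pairings, so that it is genuinely an isomorphism of functors in $\scf$ and not merely an abstract isomorphism of vector spaces. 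Naturality and convergence are where the bookkeeping is delicate; the algebraic heart of the argument (the collapse of both spectral sequences to a single row) is forced as soon as the twists are taken large, so I expect no difficulty there. Once naturality in $\scf$ is established, the statement follows for arbitrary coherent $\scf$ by passing to the resolution, and this naturality is exactly what will be needed when the lemma is applied to the extension-theoretic constructions elsewhere in the paper.
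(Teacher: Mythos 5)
Your overall route is the same as the paper's: resolve $\scf$ by finite direct sums of twists $\sco_X(-j)$ with $j \gg 0$ so that hypothesis \eqref{E:hiox(-j)} concentrates their cohomology in degree $n$, identify $\text{Ext}^p(\scf,\omega_X)$ with a Hom group in the derived category, pass to the homotopy category via Lemma~\ref{L:homk(a)}, and assemble the termwise isomorphisms $\text{Hom}(L^{-s},\omega_X) \izo \tH^n(L^{-s})^\vee$ coming from the dualizing property. The paper replaces your collapsing hypercohomology spectral sequence by elementary dimension shifting with the cokernels $\scc^{-i}$ of the resolution (which sidesteps the convergence questions for left-unbounded complexes that you flag), and it computes $\text{Hom}_{\text{K}(X)}(\scl^\bullet,\text{T}^p\omega_X)$ directly as the cokernel of $\text{Hom}(\scl^{-p+1},\omega_X) \ra \text{Hom}(\scc^{-p},\omega_X)$, applying the dualizing isomorphism also to the coherent (not locally free) sheaf $\scc^{-p}$.

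There is, however, a genuine gap in your verification of the hypotheses of Lemma~\ref{L:homk(a)} (equivalently, of the degeneration of your first spectral sequence). You assert that ``by the defining property of the dualizing pair'' one has $\text{Ext}^i(\sce,\omega_X)=0$ for $i>0$ for a locally free sheaf $\sce$; this is false. A dualizing sheaf only represents the functor $\tH^n(-)^\vee$ in degree zero; for instance $\text{Ext}^i(\sco_X,\omega_X)=\tH^i(\omega_X)$, which on a smooth projective variety equals $\tH^{n-i}(\sco_X)^\vee$ and is in general nonzero for $0<i<n$ (take an abelian surface and $i=1$). Your second attribution — that the needed vanishing ``follows from the locally free nature of the $L^p$ together with hypothesis \eqref{E:hiox(-j)}'' — is also wrong: \eqref{E:hiox(-j)} concerns $\tH^i(\sco_X(-j))$, whereas the vanishing you need is $\text{Ext}^i(\sco_X(-j),\omega_X)=\tH^i(\omega_X(j))=0$ for $i>0$; deducing the latter from the former would already presuppose the duality being proven, so the reasoning is circular as written. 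What saves the argument is Serre's vanishing theorem: since $\sco_X(1)$ is ample, $\tH^i(\omega_X(j))=0$ for $i>0$ and $j\gg 0$, so one must impose, as an additional condition when choosing the resolution (exactly as the paper does), that the twists are large enough that $\text{Ext}^i(\scl^{-j},\omega_X)=0$ for all $i>0$. With this one-line repair your proof goes through and coincides with the paper's.
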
 

\begin{proof} 
Recall that, by the definition of a dualizing sheaf, the map$\, :$ 
\[
\text{Hom}_{\sco_X}(\scg , \omega_X) \lra \tH^n(\scg)^\vee \, , \  
f \mapsto \text{tr}_X \circ \tH^n(f)\, , 
\]
is bijective, for any coherent $\sco_X$-module $\scg$. Let $\text{K}(X)$ 
(resp., $\text{D}(X)$) be the homotopy (resp., derived) category of complexes 
of quasi-coherent $\sco_X$-modules. Let $\scf$ be a coherent $\sco_X$-module. 
It follows, from condition \eqref{E:hiox(-j)}, that $\scf$ admits a locally 
free resolution $\cdots \ra \scl^{-1} \ra \scl^0 \ra \scf \ra 0$, with 
$\scl^{-i} \simeq \sco_X(-m_i)^{N_i}$, such that $\tH^i(\scl^{-j}) = 0$, for 
$i < n$, $\forall \, j \geq 0$. Let $\scc^{-i}$ denote the cokernel of 
$\scl^{-i-1} \ra \scl^{-i}$. For $1 \leq p \leq n$, one gets isomorphisms$\, :$ 
\[
\tH^{n-p}(\scf) \Izo \tH^{n-p+1}(\scc^{-1}) \Izo \cdots \Izo 
\tH^{n-1}(\scc^{-p+1})\, . 
\] 
Using the commutative diagram$\, :$ 
\[
\begin{CD} 
\text{Hom}_{\sco_X}(\scl^{-p+1} , \omega_X) @>>> 
\text{Hom}_{\sco_X}(\scc^{-p} , \omega_X) @>>> 
\text{Hom}_{\text{K}(X)}(\scl^\bullet , \text{T}^p\omega_X) @>>> 0\\ 
@V{\wr}VV @V{\wr}VV\\ 
\tH^n(\scl^{-p+1})^\vee @>>> \tH^n(\scc^{-p})^\vee @>>> 
\tH^{n-1}(\scc^{-p+1})^\vee @>>> 0 
\end{CD}
\]
(where ``T'' denotes the translation functor for complexes) one gets an 
isomorphism$\, :$ 
\[
\text{Hom}_{\text{K}(X)}(\scl^\bullet , \text{T}^p\omega_X) \Izo 
\tH^{n-p}(\scf)^\vee \, . 
\]
If one chooses the resolution $\scl^\bullet$ such that, moreover, 
$\text{Ext}_{\sco_X}^i(\scl^{-j} , \omega_X) = 0$ for $i > 0$, $\forall \, j 
\geq 0$ (this is equivalent to $\tH^i(\omega_X(m_j)) = 0$ for $i > 0$, 
$\forall \, j \geq 0$) then Lemma~\ref{L:homk(a)} implies that$\, :$ 
\[
\text{Hom}_{\text{K}(X)}(\scl^\bullet , \text{T}^p\omega_X) \simeq 
\text{Hom}_{\text{D}(X)}(\scl^\bullet , \text{T}^p\omega_X) \simeq 
\text{Ext}_{\sco_X}^p(\scf , \omega_X)\, ,  
\]
hence $\text{Ext}_{\sco_X}^p(\scf , \omega_X) \izo \tH^{n-p}(\scf)^\vee$. 

The same argument shows that $\text{Ext}_{\sco_X}^p(\scf , \omega_X) = 0$ for 
$p > n$ because $\tH^0(\scc^{n-p}) = 0$.  
\end{proof}

\begin{lemma}\label{L:inducedduality} 
Under the hypotheses of Lemma~\ref{L:duality}, assume that, moreover, $X$ is 
nonsingular and connected. Let $Y$ be a locally Cohen-Macaulay closed 
subscheme of $X$, of pure codimension $p \geq 1$. Applying 
$\sch om_{\sco_X}(- , \omega_X)$ to a locally free resolution$\, :$ 
\[
0 \lra \sce^{-p} \lra \cdots \lra \sce^{-1} \lra \sco_X \lra \sco_Y \lra 0 
\] 
of $\sco_Y$, one gets an exact sequence$\, :$ 
\[
0 \lra \omega_X \lra \sce^{\prim -p+1} \lra \cdots \lra \sce^{\prim \, 0} \lra 
\omega_Y \lra 0\, , 
\]
where $\omega_Y := {\fam0 Coker}\, \sch om_{\sco_X}(d_{\sce}^{-p} , 
{\fam0 id}_{\omega_X}) \simeq \sce xt_{\sco_X}^p(\sco_Y , \omega_X)$ and 
$\sce^{\prim -i} = (\sce^{i-p})^{\vee} \otimes \omega_X$. Decomposing this  
sequence into short exact sequences one gets a map 
${\fam0 H}^{n-p}(\omega_Y) \ra {\fam0 H}^n(\omega_X)$. Then the pair$\, :$ 
\[
(\omega_Y\, ,\, {\fam0 H}^{n-p}(\omega_Y) \ra {\fam0 H}^n(\omega_X) 
\xra{{\fam0 tr}_X} k) 
\] 
is a dualizing sheaf on $Y$ and ${\fam0 H}^i(\sco_Y(-j)) = 0$ for $i < n-p$ 
and $j >> 0$. 
\end{lemma}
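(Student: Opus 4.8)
The plan is to recognise Lemma~\ref{L:inducedduality} as the analogue, for the nonsingular base $X$ in place of $\p^N$, of Grothendieck's construction of the dualizing sheaf of a projective scheme (cf.\ Hartshorne \cite[III,~\S7]{hag}), the role of Serre duality on $\p^N$ being played here by Lemma~\ref{L:duality} on $X$. First I would settle the homological preliminaries. Since $X$ is nonsingular, its local rings are regular, so $\sco_Y$ has a locally free resolution; and because $Y$ is locally Cohen--Macaulay of pure codimension $p$, the Auslander--Buchsbaum formula forces this resolution to have length exactly $p$, as in the statement. Moreover $\sce xt^i_{\sco_X}(\sco_Y,\omega_X) = 0$ for $i \neq p$ (for $i < p$ by the grade/codimension identity over the regular local rings of $X$, for $i > p$ by the projective dimension bound), so dualizing the resolution by $\sch om_{\sco_X}(-,\omega_X)$ yields precisely the displayed exact sequence, with $\omega_Y \simeq \sce xt^p_{\sco_X}(\sco_Y,\omega_X)$ and $\dim Y = n - p$.

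Next I would dispose of the vanishing $\tH^i(\sco_Y(-j)) = 0$ for $i < n-p$ and $j \gg 0$. The key observation is that for any locally free sheaf $\sce$ on $X$ one has $\tH^i(\sce(-j)) = 0$ for $i < n$ and $j \gg 0$: indeed, by Lemma~\ref{L:duality}, $\tH^i(\sce(-j))^\vee \simeq \text{Ext}^{n-i}_{\sco_X}(\sce(-j),\omega_X) \simeq \tH^{n-i}(\sce^\vee(j)\otimes\omega_X)$, and the latter vanishes for $n-i>0$ and $j\gg 0$ by Serre's vanishing theorem. Applying this to each (locally free) term of the resolution of $\sco_Y$ twisted by $\sco_X(-j)$ and running the hypercohomology spectral sequence of that resolution (equivalently, shifting dimension through the short exact sequences into which it breaks), only the row $i = n$ survives, so $\tH^m(\sco_Y(-j))$ can be nonzero only for $n-p \le m \le n$; in particular it vanishes for $i < n-p$.

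The substance of the lemma is that $(\omega_Y,\text{tr}_Y)$ is dualizing, i.e.\ that for every coherent $\sco_Y$-module $\scg$ the map $\text{Hom}_{\sco_Y}(\scg,\omega_Y) \to \tH^{n-p}(\scg)^\vee$, $f \mapsto \text{tr}_Y \circ \tH^{n-p}(f)$, is bijective. I would transport Serre duality from $X$ to $Y$ in two steps. First, for $\scg$ coherent on $Y$ (viewed on $X$) one has $\sce xt^i_{\sco_X}(\scg,\omega_X) = 0$ for $i < p$, since $\text{Supp}\,\scg \subseteq Y$ has codimension $p$ and $X$ is regular; hence the local-to-global Ext spectral sequence collapses in total degree $p$ to give $\text{Ext}^p_{\sco_X}(\scg,\omega_X) \simeq \tH^0(\sce xt^p_{\sco_X}(\scg,\omega_X))$, while the local identity $\sce xt^p_{\sco_X}(\scg,\omega_X) \simeq \sch om_{\sco_Y}(\scg,\omega_Y)$ (valid because $Y$ is Cohen--Macaulay with canonical module $\omega_Y \simeq \sce xt^p_{\sco_X}(\sco_Y,\omega_X)$) identifies the right-hand side with $\text{Hom}_{\sco_Y}(\scg,\omega_Y)$. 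Second, Lemma~\ref{L:duality} supplies $\text{Ext}^p_{\sco_X}(\scg,\omega_X) \simeq \tH^{n-p}(\scg)^\vee$. Composing the two produces the desired bijection $\text{Hom}_{\sco_Y}(\scg,\omega_Y) \simeq \tH^{n-p}(\scg)^\vee$.

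The hard part, and the step I would spend most care on, is to check that this composite bijection is literally the trace pairing attached to the $\text{tr}_Y$ built in the statement from the iterated connecting homomorphisms $\tH^{n-p}(\omega_Y) \to \cdots \to \tH^n(\omega_X) \xra{\text{tr}_X} k$ of the resolution of $\omega_Y$. Taking $\scg = \omega_Y$ and tracking $\text{id}_{\omega_Y}$, this amounts to matching three descriptions of $\text{Ext}^p$: the Yoneda/connecting-homomorphism description underlying the definition of $\text{tr}_Y$, the spectral-sequence identification with $\text{Hom}_{\sco_Y}(\scg,\omega_Y)$, and the realisation of duality on $X$ in Lemma~\ref{L:duality}. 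Exactly as in the proof of Lemma~\ref{L:duality}, the comparison is made rigorous by Lemma~\ref{L:homk(a)}, which identifies the relevant morphisms in the derived category with honest homotopy classes of maps of complexes; the bookkeeping is the delicate point, but it is formal once the vanishing hypotheses above guarantee that all the intervening $\text{Ext}$-obstruction groups vanish. Finally, functoriality in $\scg$ shows that $\text{tr}_Y$ is independent of the chosen resolution, completing the proof that $(\omega_Y,\text{tr}_Y)$ is a dualizing sheaf.
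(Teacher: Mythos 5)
Your route to the duality isomorphism is genuinely different from the paper's and, for that half of the statement, correct. The paper never leaves the homotopy category: it realizes $\text{Hom}_{\sco_X}(\scf,\omega_Y)$ as $\text{Hom}_{\text{K}(X)}(\scl^\bullet,\sce^{\prim \bullet})$ via Lemma~\ref{L:homk(a)} and proves directly that composition with the projection of complexes $\sce^{\prim \bullet}\ra \text{T}^p\omega_X$ is bijective (its Claims 1 and 2, which your argument never needs). You instead get the abstract natural isomorphism $\text{Hom}_{\sco_Y}(\scg,\omega_Y)\simeq \tH^{n-p}(\scg)^\vee$ from the change-of-rings identification $\sce xt^p_{\sco_X}(\scg,\omega_X)\simeq \sch om_{\sco_Y}(\scg,\omega_Y)$, the collapse of the local-to-global Ext spectral sequence, and Lemma~\ref{L:duality}; all three ingredients are justified correctly. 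Your proof of $\tH^i(\sco_Y(-j))=0$ for $i<n-p$, $j\gg 0$ (duality plus Serre vanishing on the locally free terms, then dimension shifting) is likewise a valid variant of the paper's argument.

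The gap is in the step you yourself call the hard part. Representability of $\scg\mapsto \tH^{n-p}(\scg)^\vee$ by $\omega_Y$ is strictly weaker than the assertion of the lemma, which is that the \emph{specific} pairing $f\mapsto \text{tr}_Y\circ\tH^{n-p}(f)$ is perfect, and your claim that the comparison is ``formal once the vanishing hypotheses guarantee that all the intervening Ext-obstruction groups vanish'' misdiagnoses the difficulty: those vanishings only make Lemma~\ref{L:homk(a)} applicable, they do not perform the identification. By Yoneda, what must be shown is that your composite isomorphism sends $\text{id}_{\omega_Y}$ to $\text{tr}_Y$; concretely, (i) that the change-of-rings isomorphism together with the edge map of the local-to-global spectral sequence takes $\text{id}_{\omega_Y}$ to the Yoneda class of the extension $0\ra\omega_X\ra\sce^{\prim -p+1}\ra\cdots\ra\sce^{\prim \, 0}\ra\omega_Y\ra 0$ in $\text{Ext}^p_{\sco_X}(\omega_Y,\omega_X)$ (this requires unwinding the derived adjunction behind the change-of-rings isomorphism), and (ii) that the isomorphism of Lemma~\ref{L:duality} carries that Yoneda class to the iterated-connecting-homomorphism functional defining $\text{tr}_Y$ (this requires lifting $\text{id}_{\omega_Y}$ to a map of complexes $\scl^\bullet\ra\sce^{\prim \bullet}$ and chasing connecting maps --- i.e., redoing by hand what the paper's setup gives by construction). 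Neither point can be bypassed by noting that $\text{tr}_Y\neq 0$: since the two functors are abstractly isomorphic, perfection of your pairing is equivalent to $\text{tr}_Y$ corresponding to an \emph{automorphism} of $\omega_Y$, and $\text{End}_{\sco_Y}(\omega_Y)$ need not be $k$ (take $Y$ a disjoint union of two lines in $\piii$: a functional supported on one component is nonzero but not universal). Your closing appeal to functoriality only yields independence of the chosen resolution, which is not the remaining issue. So the proposal can be completed, but the trace identification is the actual content of the lemma, and it is precisely the work that your spectral-sequence route postpones rather than eliminates; the paper's design makes it a short chase because $\text{tr}_Y$ is defined through the very complexes on which its map $v$ acts, so that bijectivity of $v$ is literally the statement to be proved.
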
 

\begin{proof} 
Let $\sce^{\prim \bullet}$ denote the complex $0 \ra \omega_X \ra 
\sce^{\prim -p+1} \ra \cdots \ra \sce^{\prim \, 0} \ra 0$ (so that $\sce^{\prim -p} = 
\omega_X$) and let $\sce^{\secund \bullet}$ be the complex 
$0 \ra \sce^{\prim -p+1} \ra \cdots \ra \sce^{\prim 0} \ra 0$. Let $\scf$ be a 
coherent $\sco_X$-module. Choose, as in the proof of Lemma~\ref{L:duality}, a 
locally free resolution $\scl^\bullet$ of $\scf$ such that $\tH^i(\scl^{-j}) = 0$ 
for $i < n$, $\forall \, j \geq 0$ and such that, moreover, 
$\text{Ext}_{\sco_X}^i(\scl^{-j} , \sce^{\prim -m}) = 0$ for $i > 0$ and $0 \leq m 
\leq p$, $\forall \, j \geq 0$. 
Now, Lemma~\ref{L:homk(a)} implies that$\, :$ 
\[
\text{Hom}_{\text{K}(X)}(\scl^\bullet , \sce^{\prim \bullet}) \Izo 
\text{Hom}_{\text{D}(X)}(\scl^\bullet , \sce^{\prim \bullet}) \Izo 
\text{Hom}_{\sco_X}(\scf , \omega_Y)\, . 
\]
Taking into account the proof of Lemma~\ref{L:duality}, it follows that, in 
order to show that the pair from the statement is a dualizing sheaf on $Y$, 
it suffices to show that if $\scf$ is annihilated by $\sci_Y$, i.e., if 
$\scf$ is an $\sco_Y$-module, then map$\, :$ 
\[
v \colon \text{Hom}_{\text{K}(X)}(\scl^\bullet , \sce^{\prim \bullet}) \lra 
\text{Hom}_{\text{K}(X)}(\scl^\bullet , \text{T}^p\omega_X) 
\]
is bijective. We use, for that purpose, the exact sequence$\, :$ 
\begin{gather*}
\text{Hom}_{\text{K}(X)}(\scl^\bullet , \sce^{\secund \bullet}) 
\overset{u}{\lra} 
\text{Hom}_{\text{K}(X)}(\scl^\bullet , \sce^{\prim \bullet}) \overset{v}{\lra} 
\text{Hom}_{\text{K}(X)}(\scl^\bullet , \text{T}^p\omega_X) \overset{w}{\lra}\\  
\lra \text{Hom}_{\text{K}(X)}(\scl^\bullet , \text{T}\sce^{\secund \bullet})\, . 
\end{gather*} 

\vskip2mm 

\noindent 
{\bf Claim 1.}\quad $\text{Hom}_{\text{K}(X)}(\scl^\bullet , 
\sce^{\secund \bullet}) = 0$. 

\vskip2mm 

\noindent 
\emph{Indeed}, it follows, from Lemma~\ref{L:homk(a)}, that$\, :$ 
\begin{equation}\label{E:homlesecund}   
\text{Hom}_{\text{K}(X)}(\scl^\bullet , \sce^{\secund \bullet}) \Izo 
\text{Hom}_{\text{D}(X)}(\scl^\bullet , \sce^{\secund \bullet}) \Izo 
\text{Hom}_{\text{D}(X)}(\scf , \sce^{\secund \bullet})\, .  
\end{equation}
Moreover, one has, for $i \geq 0$, an exact sequence$\, :$ 
\begin{equation}\label{E:homd(scf,-)} 
\text{Hom}_{\text{D}(X)}(\scf , \text{T}^i\sce^{\prim -i}) \lra 
\text{Hom}_{\text{D}(X)}(\scf , \sigma^{\leq -i}\sce^{\secund \bullet}) \lra 
\text{Hom}_{\text{D}(X)}(\scf , \sigma^{\leq -i-1}\sce^{\secund \bullet})\, ,   
\end{equation} 
where $\sigma^{\leq -i}\sce^{\secund \bullet}$ denotes the complex $0 \ra 
\sce^{\prim -p+1} \ra \cdots \ra \sce^{\prim -i} \ra 0$. 
It follows, from Lemma~\ref{L:duality}, that if $\sce$ is a locally free 
$\sco_X$-module then, for $i < p$$\, :$ 
\[
\text{Ext}_{\sco_X}^i(\scf , \sce^\vee \otimes \omega_X) \simeq 
\text{Ext}_{\sco_X}^i(\scf \otimes \sce , \omega_X) \simeq 
\tH^{n-i}(\scf \otimes \sce)^\vee = 0  
\]
because $\dim \text{Supp}\, \scf \leq n-p$. 
In particular, $\text{Hom}_{\text{D}(X)}(\scf , 
\text{T}^i\sce^{\prim -i}) = 0$, $i = 0, \ldots , p-1$, hence  
$\text{Hom}_{\text{D}(X)}(\scf , \sce^{\secund \bullet}) = 0$. 
Claim 1 is proven. 

\vskip2mm 

\noindent 
{\bf Claim 2.} $w$ \emph{is the zero morphism}. 

\vskip2mm 

\noindent 
\emph{Indeed}, let $\sce^{\, \prime \prime \prime \bullet}$ be the complex 
$0 \ra \sce^{\prim -p+2} \ra \cdots \ra \sce^{\prim \, 0} \ra 0$. One has an exact 
sequence$\, :$ 
\[
\text{Hom}_{\text{K}(X)}(\scl^\bullet , 
\text{T}\sce^{\prime \prime \prime \bullet}) \lra 
\text{Hom}_{\text{K}(X)}(\scl^\bullet , \text{T}\sce^{\secund \bullet}) \lra 
\text{Hom}_{\text{K}(X)}(\scl^\bullet , \text{T}^p\sce^{\prim -p+1})\, , 
\] 
and, as in the proof of Claim 1, $\text{Hom}_{\text{K}(X)}(\scl^\bullet , 
\text{T}\sce^{\prime \prime \prime \bullet}) = 0$. It follows that the 
morphism $\text{Hom}_{\text{K}(X)}(\scl^\bullet , \text{T}\sce^{\secund \bullet}) \ra 
\text{Hom}_{\text{K}(X)}(\scl^\bullet , \text{T}^p\sce^{\prim -p+1})$ is injective. 

Now, the composite map$\, :$ 
\[
\text{Hom}_{\text{K}(X)}(\scl^\bullet , \text{T}^p\omega_X) \overset{w}{\lra} 
\text{Hom}_{\text{K}(X)}(\scl^\bullet , \text{T}\sce^{\secund \bullet}) \lra 
\text{Hom}_{\text{K}(X)}(\scl^\bullet , \text{T}^p\sce^{\prim -p+1})
\] 
can be identified with the morphism $\text{Ext}_{\sco_X}^p(\scf , \omega_X) 
\ra \text{Ext}_{\sco_X}^p(\scf , \sce^{\prim -p+1})$ hence with the morphism 
$\text{Ext}_{\sco_X}^p(\scf , \omega_X) 
\ra \text{Ext}_{\sco_X}^p(\scf \otimes \sce^{-1}, \omega_X)$ which is 0 because 
the morphism $\scf \otimes \sce^{-1} \ra \scf$ is 0. One deduces that $w = 0$. 

\vskip2mm 

The claims 1 and 2 and the exact sequence before Claim 1 show that $v$ is an 
isomorphism hence the pair from the statement is a dualizing sheaf for $Y$. 
The proof of the fact that $\tH^i(\sco_Y(-j)) = 0$ for $i < n-p$ and 
$j >> 0$ is easy because any locally free $\sco_X$-module $\sce$ embeds into 
one of the form $\sco_X(m)^N$ hence $\tH^i(\sce(-j)) = 0$ for $i < n$ and 
$j >> 0$. 
\end{proof} 

The following result is the key technical point in our proof of Serre's 
method of extensions. 

\begin{lemma}\label{L:keylift} 
Under the hypotheses of Lemma~\ref{L:inducedduality}, let $\scf$ be a 
coherent $\sco_X$-module such that ${\fam0 H}^{n-i}(\scf \otimes \sce^{i-p}) = 
0$, $i = 1, \ldots , p-1$. Then any morphism $f \colon \scf \ra \omega_Y$ with 
the property that ${\fam0 tr}_Y \circ {\fam0 H}^{n-p}(f) = 0$ lifts to a 
morphism ${\widetilde f} \colon \scf \ra \sce^{\prim \, 0}$. 
\end{lemma}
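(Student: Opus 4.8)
The plan is to turn the lifting problem into a vanishing statement about connecting homomorphisms, and then to identify the resulting terminal obstruction with the trace functional in the hypothesis. First I would break the exact sequence
\[
0 \lra \omega_X \lra \sce^{\prim -p+1} \lra \cdots \lra \sce^{\prim\, 0} \lra \omega_Y \lra 0
\]
(with the convention $\sce^{\prim -p} := \omega_X$) into short exact sequences $0 \to \scm_i \to \sce^{\prim -i} \to \scm_{i-1} \to 0$ for $0 \le i \le p-1$, where $\scm_{-1} = \omega_Y$ and $\scm_i = \text{Im}(\sce^{\prim -i-1} \to \sce^{\prim -i})$, so that $\scm_{p-1} = \omega_X$. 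Applying $\text{Hom}_{\sco_X}(\scf, -)$ to the sequence with $i = 0$ shows that $f \colon \scf \to \omega_Y$ lifts to $\sce^{\prim\, 0}$ if and only if its image $\partial_0(f)$ under the connecting map $\text{Hom}(\scf, \omega_Y) \to \text{Ext}^1(\scf, \scm_0)$ vanishes.

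Next I would propagate this obstruction down the resolution. For $1 \le i \le p-1$ the long exact sequence of the $i$-th short exact sequence produces a connecting map $\partial_i \colon \text{Ext}^i(\scf, \scm_{i-1}) \to \text{Ext}^{i+1}(\scf, \scm_i)$ which is injective as soon as $\text{Ext}^i(\scf, \sce^{\prim -i}) = 0$. This is precisely where the hypothesis enters: since $\sce^{\prim -i} = (\sce^{i-p})^\vee \otimes \omega_X$ with $\sce^{i-p}$ locally free, one has $\text{Ext}^i(\scf, \sce^{\prim -i}) \simeq \text{Ext}^i(\scf \otimes \sce^{i-p}, \omega_X)$, and Lemma~\ref{L:duality} applied to $\scf \otimes \sce^{i-p}$ identifies this with $\tH^{n-i}(\scf \otimes \sce^{i-p})^\vee$, which vanishes by assumption for $i = 1, \ldots, p-1$. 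Hence every $\partial_i$ with $i \ge 1$ is injective, and $\partial_0(f) = 0$ if and only if the total connecting map
\[
\Delta := \partial_{p-1} \circ \cdots \circ \partial_1 \circ \partial_0 \colon \text{Hom}(\scf, \omega_Y) \lra \text{Ext}^p(\scf, \scm_{p-1}) = \text{Ext}^p(\scf, \omega_X)
\]
kills $f$. By Lemma~\ref{L:duality} once more, $\text{Ext}^p(\scf, \omega_X) \simeq \tH^{n-p}(\scf)^\vee$.

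The decisive step is then to match $\Delta(f)$, viewed inside $\tH^{n-p}(\scf)^\vee$, with the functional $\text{tr}_Y \circ \tH^{n-p}(f)$. The map $\Delta$ is, up to sign, Yoneda multiplication by the class in $\text{Ext}^p(\omega_Y, \omega_X)$ carried by the resolution $\sce^{\prim \bullet}$, whereas the map $\tH^{n-p}(\omega_Y) \to \tH^n(\omega_X)$ used in Lemma~\ref{L:inducedduality} to define $\text{tr}_Y$ is the composite of the cohomology connecting homomorphisms of the very same short exact sequences. I would invoke the compatibility of the Serre duality pairing of Lemma~\ref{L:duality} (which realizes $\text{Ext}^\bullet(\scf, \omega_X) \simeq \tH^{n-\bullet}(\scf)^\vee$ as cup product followed by $\text{tr}_X$) with these connecting maps to obtain, for every $\xi \in \tH^{n-p}(\scf)$,
\[
\langle \Delta(f), \xi\rangle = \text{tr}_X\big(\text{image of } \tH^{n-p}(f)(\xi) \text{ in } \tH^n(\omega_X)\big) = \text{tr}_Y\big(\tH^{n-p}(f)(\xi)\big).
\]
Thus $\Delta(f) = 0$ exactly when $\text{tr}_Y \circ \tH^{n-p}(f) = 0$, and combining this with the previous steps yields the lift under the stated hypothesis. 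The main obstacle is exactly this last compatibility: one must verify that the a priori abstractly constructed isomorphism of Lemma~\ref{L:duality} really is the trace–cup pairing and that it commutes with the iterated connecting homomorphisms defining $\text{tr}_Y$. Tracking this carefully through the homotopy-category description $\text{Hom}_{\text{K}(X)}(\scl^\bullet, \sce^{\prim\bullet})$ employed in Lemma~\ref{L:inducedduality} is where the genuine work lies; the rest is formal diagram chasing with the long exact $\text{Ext}$-sequences.
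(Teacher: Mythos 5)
Your reduction is correct as far as it goes: breaking the resolution $\sce^{\prim \bullet}$ into short exact sequences, the lift exists if and only if $\partial_0(f)=0$; and since $\text{Ext}^i_{\sco_X}(\scf,\sce^{\prim -i}) \simeq \text{Ext}^i_{\sco_X}(\scf\otimes\sce^{i-p},\omega_X) \simeq \tH^{n-i}(\scf\otimes\sce^{i-p})^\vee = 0$ for $i=1,\ldots,p-1$ (Lemma~\ref{L:duality} plus the hypothesis), each $\partial_i$ with $i\geq 1$ is injective, so the lift exists if and only if $\Delta(f)=0$ in $\text{Ext}^p_{\sco_X}(\scf,\omega_X)\simeq \tH^{n-p}(\scf)^\vee$. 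This is exactly the same use of the hypothesis as in the paper, where it appears as $\text{Hom}_{\text{D}(X)}(\scf,\text{T}^i\sce^{\prim -i})=0$.

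The genuine gap is the step you yourself flag as ``where the genuine work lies'': the identity $\langle\Delta(f),\xi\rangle = \text{tr}_Y(\tH^{n-p}(f)(\xi))$ cannot simply be \emph{invoked} in this setting. The isomorphism $\text{Ext}^p_{\sco_X}(\scf,\omega_X)\simeq\tH^{n-p}(\scf)^\vee$ of Lemma~\ref{L:duality} is not characterized by a universal property and is nowhere identified with a cup-product-followed-by-trace pairing; it is an ad hoc construction from a chosen resolution $\scl^\bullet$ (the appendix exists precisely because the duality isomorphisms in the literature are not explicit enough), so the compatibility has to be \emph{proved} for this particular isomorphism, and that verification is the actual content of the lemma rather than formal diagram chasing. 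Closing the gap amounts to the paper's own argument: extend $f$ to a morphism of complexes $\phi\colon\scl^\bullet\ra\sce^{\prim\bullet}$ (possible because $\scl^\bullet$ is chosen with $\text{Ext}^i_{\sco_X}(\scl^{-j},\sce^{\prim -m})=0$ for $i>0$); the obstruction --- your $\Delta(f)$ --- is then represented by the single component $\phi^{-p}\colon\scl^{-p}\ra\omega_X$, viewed as a class in $\text{Hom}_{\text{K}(X)}(\scl^\bullet,\text{T}^p\omega_X)$; under the explicit duality isomorphism this class is the functional $\xi\mapsto \text{tr}_X\bigl(\tH^n(\psi)(\delta\xi)\bigr)$, where $\psi\colon\scc^{-p}\ra\omega_X$ is induced by $\phi^{-p}$ and $\delta\colon\tH^{n-p}(\scf)\ra\tH^n(\scc^{-p})$ is the composite of connecting maps; and naturality of connecting morphisms along $\phi$, combined with the definition of $\text{tr}_Y$ in Lemma~\ref{L:inducedduality}, turns this into $\text{tr}_Y\circ\tH^{n-p}(f)$. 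Note also that your identification of $\Delta$ with Yoneda multiplication by the extension class of $\sce^{\prim\bullet}$ is a further unproved, sign-sensitive claim; the paper sidesteps it entirely by never forming $\Delta$, working instead in the homotopy category where the obstruction, the exact sequence relating it to $\text{Hom}_{\text{K}(X)}(\scl^\bullet,\sce^{\secund\bullet})$, and the surjection $\text{Hom}_{\sco_X}(\scf,\sce^{\prim\,0})\ra\text{Hom}_{\text{D}(X)}(\scf,\sce^{\secund\bullet})$ are all visible at once.
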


\begin{proof} 
We use the notation from the proof of Lemma~\ref{L:inducedduality}. $f$ can 
be extended to a morphism of complexes $\phi \colon \scl^\bullet \ra 
\sce^{\prim \bullet}$. Let $\scc^{-i}$ denote the cokernel of $\scl^{-i-1} \ra 
\scl^{-i}$ and let $\phi^{\prime -p} \colon \scc^{-p} \ra \omega_X$ denote 
the morphism induced by $\phi^{-p} \colon \scl^{-p} \ra \omega_X$.  
The description of $\text{tr}_Y : \tH^{n-p}(\omega_Y) \ra k$ from 
the conclusion of Lemma~\ref{L:inducedduality} shows that the composite 
morphism$\, :$  
\[
\tH^{n-p}(\scf) \Izo \tH^n(\scc^{-p}) \xra{\tH^n(\phi^{\prime -p})} 
\tH^n(\omega_X) \xra{\text{tr}_X} k
\] 
is 0. Taking into account the isomorphism $\text{Hom}_{\text{K}(X)}(\scl^\bullet , 
\text{T}^p\omega_X) \izo \tH^{n-p}(\scf)^\vee$ from the proof of 
Lemma~\ref{L:duality}, it follows that the image of $\phi$ into 
$\text{Hom}_{\text{K}(X)}(\scl^\bullet , \text{T}^p\omega_X)$ is 0.  

Recall, now, from the proof of Lemma~\ref{L:inducedduality}, the exact 
sequence$\, :$ 
\[
\text{Hom}_{\text{K}(X)}(\scl^\bullet , \sce^{\secund \bullet}) 
\overset{u}{\lra} 
\text{Hom}_{\text{K}(X)}(\scl^\bullet , \sce^{\prim \bullet}) \overset{v}{\lra} 
\text{Hom}_{\text{K}(X)}(\scl^\bullet , \text{T}^p\omega_X)\, , 
\]
and the isomorphisms \eqref{E:homlesecund}. By Lemma~\ref{L:duality}, 
the hypothesis $\tH^{n-i}(\scf \otimes \sce^{i-p}) = 0$ is equivalent to 
$\text{Hom}_{\text{D}(X)}(\scf , \text{T}^i\sce^{\prim -i}) = 0$, $i = 1, \ldots , 
p-1$. One deduces, using the exact sequences \eqref{E:homd(scf,-)} in 
the proof of Lemma~\ref{L:inducedduality}, that one has an 
epimorphism$\, :$ 
\[
\text{Hom}_{\sco_X}(\scf , \sce^{\prim \, 0}) \lra 
\text{Hom}_{\text{D}(X)}(\scf , \sce^{\secund \bullet})\, . 
\]  
The commutative diagram$\, :$ 
\[
\begin{CD} 
\text{Hom}_{\text{K}(X)}(\scl^\bullet , \sce^{\secund \bullet}) 
@>u>> 
\text{Hom}_{\text{K}(X)}(\scl^\bullet , \sce^{\prim \bullet}) @>v>>  
\text{Hom}_{\text{K}(X)}(\scl^\bullet , \text{T}^p\omega_X)\\ 
@A{\text{epi}}AA @V{\wr}VV\\ 
\text{Hom}_{\sco_X}(\scf , \sce^{\prim \, 0}) @>>> 
\text{Hom}_{\sco_X}(\scf , \omega_Y)
\end{CD}
\]
shows, now, that $f$ can be lifted to an ${\widetilde f} \in 
\text{Hom}_{\sco_X}(\scf , \sce^{\prim \, 0})$. 
\end{proof} 

\begin{thm}\label{T:serreext} 
Let $X$ be a nonsingular connected projective variety, of dimension 
$n \geq 2$, $(\omega_X\, ,\, {\fam0 tr}_X \colon {\fam0 H}^n(\omega_X) \ra k)$ 
a dualizing sheaf on $X$, $Y$ a locally Cohen-Macaulay closed subscheme of 
$X$, of pure codimension $2$, and 
$(\omega_Y \, ,\, {\fam0 tr}_Y : {\fam0 H}^{n-2}(\omega_Y) \ra k)$ 
a dualizing sheaf on $Y$. Let $F$ be a locally free $\sco_X$-module. 
Then there exists an isomorphism $\omega_Y \simeq \sce xt_{\sco_X}^2(\sco_Y , 
\omega_X)$ such that, for any epimorphism 
$\delta \colon F^\vee \otimes \omega_X \ra \omega_Y$ with ${\fam0 tr}_Y \circ 
{\fam0 H}^{n-2}(\delta) = 0$, there exists an extension$\, :$ 
\[
0 \lra F \lra E \lra \sci_Y \lra 0, 
\]
with $E$ locally free and such that, when applying 
$\sch om_{\sco_X}(- , \omega_X)$ to it, the connecting morphism$\, :$ 
\[
\partial \colon F^\vee \otimes \omega_X \lra 
\sce xt_{\sco_X}^1(\sci_Y , \omega_X) \simeq 
\sce xt_{\sco_X}^2(\sco_Y , \omega_X)
\] 
is identified with $\delta$. Notice that the hypothesis ${\fam0 tr}_Y \circ 
{\fam0 H}^{n-2}(\delta) = 0$ is automatically satisfied if ${\fam0 H}^2(F) 
= 0$. 
\end{thm}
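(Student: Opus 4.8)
The plan is to reformulate Serre's construction in terms of the Yoneda/derived-category description of $\mathrm{Ext}^1$, using the duality machinery developed in Lemmata~\ref{L:duality}, \ref{L:inducedduality} and, crucially, the lifting result Lemma~\ref{L:keylift}. First I would fix the isomorphism $\omega_Y \simeq \sce xt_{\sco_X}^2(\sco_Y , \omega_X)$: this is exactly the identification furnished by Lemma~\ref{L:inducedduality} (with $p = 2$), which also guarantees the compatibility $\tH^{n-2}(\omega_Y) \to \tH^n(\omega_X) \xra{\mathrm{tr}_X} k$ that defines $\mathrm{tr}_Y$. The extensions $0 \to F \to E \to \sci_Y \to 0$ are classified by $\mathrm{Ext}^1_{\sco_X}(\sci_Y , F)$, and an extension has locally free middle term $E$ precisely when the connecting morphism $\partial \colon F^\vee \otimes \omega_X \to \sce xt^1_{\sco_X}(\sci_Y , \omega_X)$ is an epimorphism (locally, $E$ is free iff the extension class generates $\sce xt^1$ at each point, which is the standard ``Serre condition''). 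So the whole theorem reduces to producing, for a prescribed epimorphism $\delta$, an extension class whose associated $\partial$ equals $\delta$.

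The key computation is to identify $\mathrm{Ext}^1_{\sco_X}(\sci_Y , F)$ with a group through which $\delta$ can be read off. Applying $\sch om_{\sco_X}(-, F)$ to $0 \to \sci_Y \to \sco_X \to \sco_Y \to 0$, and using that $\sce xt^1_{\sco_X}(\sci_Y, F) \simeq \sce xt^2_{\sco_X}(\sco_Y, F)$ (since $F$ is locally free and $Y$ has pure codimension $2$, the sheaf $\sch om(\sci_Y,F) = F$ and $\sce xt^1(\sco_X,F) = 0$), I would produce the local-to-global spectral sequence or the direct exact sequence relating $\mathrm{Ext}^1_{\sco_X}(\sci_Y, F)$ to $\tH^0(X, \sce xt^2_{\sco_X}(\sco_Y, F))$. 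The point is that a class in $\mathrm{Ext}^1$ yields a global section of $\sce xt^2_{\sco_X}(\sco_Y, F) \simeq \sch om_{\sco_Y}(\omega_Y, F^\vee \otimes \omega_X) \simeq \sch om_{\sco_Y}(F^\vee \otimes \omega_X, \omega_Y)^{\text{(dual)}}$ — i.e. exactly a sheaf morphism $\delta \colon F^\vee \otimes \omega_X \to \omega_Y$, via the duality $\sce xt^2_{\sco_X}(\sco_Y, F) \simeq \sch om_{\sco_Y}(F^\vee \otimes \omega_X, \sce xt^2_{\sco_X}(\sco_Y, \omega_X))$ coming from $F$ being locally free. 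Thus the task is to \emph{lift} the given $\delta \in \tH^0(X, \sce xt^2_{\sco_X}(\sco_Y, F))$ to a genuine class in $\mathrm{Ext}^1_{\sco_X}(\sci_Y, F)$, and this lifting is possible precisely when a certain obstruction in $\tH^2(X, \sch om_{\sco_X}(\sci_Y, F)) = \tH^2(X, F)$ — or rather its image under the trace pairing — vanishes.

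The hard part, and the reason Lemma~\ref{L:keylift} was proved, is controlling this obstruction: the naive local-to-global spectral sequence leaves a differential $\tH^0(\sce xt^2) \to \tH^2(\sch om)$ whose vanishing on $\delta$ is not automatic. I would apply Lemma~\ref{L:keylift} with $\scf := F^\vee \otimes \omega_X$ (so that $\scf \otimes \sce^{i-2}$ are twists whose intermediate cohomology one must check vanishes for $i=1$, i.e. $\tH^{n-1}(F^\vee \otimes \omega_X \otimes \sce^{-1})$, where $\sce^\bullet$ is a locally free resolution of $\sco_Y$ and $\sce^{-1} = \sco_X$, giving $\tH^{n-1}(F^\vee \otimes \omega_X) \simeq \tH^1(F)^\vee$ — which I must arrange or absorb). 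Concretely, the condition $\mathrm{tr}_Y \circ \tH^{n-2}(\delta) = 0$ is exactly the hypothesis Lemma~\ref{L:keylift} requires to lift $\delta \colon \scf \to \omega_Y$ to a morphism $\widetilde\delta \colon \scf \to \sce^{\prime\,0}$; this lift is the algebraic incarnation of the desired extension class, and pushing it back through the resolution produces the extension $0 \to F \to E \to \sci_Y \to 0$ with $\partial = \delta$. I expect the main obstacle to be bookkeeping: matching the duality isomorphism of Lemma~\ref{L:inducedduality} with the connecting map $\partial$ arising from $\sch om_{\sco_X}(-,\omega_X)$ applied to the extension, i.e. verifying that the $\partial$ produced by the lift genuinely coincides with the prescribed $\delta$ (not merely with some twist or sign-altered version of it). I would close by noting that $E$ is locally free because $\partial = \delta$ is an epimorphism, and that the hypothesis is vacuous when $\tH^2(F) = 0$ since then $\tH^{n-2}(\omega_Y) \to \tH^n(\omega_X)$ factors through $\tH^2(F)^\vee = 0$ by the same duality, forcing $\mathrm{tr}_Y \circ \tH^{n-2}(\delta) = 0$ for every $\delta$.
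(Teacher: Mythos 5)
Your overall route coincides with the paper's: identify $\omega_Y \simeq \sce xt^2_{\sco_X}(\sco_Y,\omega_X)$ via Lemma~\ref{L:inducedduality} (and uniqueness of dualizing sheaves), lift $\delta$ through Lemma~\ref{L:keylift} applied to $\scf = F^\vee\otimes\omega_X$, write the lift as $\widetilde\delta = \eta\otimes\mathrm{id}_{\omega_X}$ with $\eta \colon F^\vee \ra \sce^{-2\vee}$, and obtain the extension by pushing out $0 \ra \sce^{-2} \ra \sce^{-1} \ra \sci_Y \ra 0$ along $\eta^\vee$. But there is one genuine gap, and it sits exactly where you wrote ``which I must arrange or absorb''. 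In the indexing of Lemmata~\ref{L:inducedduality} and~\ref{L:keylift} the resolution is $0 \ra \sce^{-2} \ra \sce^{-1} \ra \sco_X \ra \sco_Y \ra 0$, so for $p = 2$ the hypothesis of Lemma~\ref{L:keylift} reads $\tH^{n-1}(F^\vee\otimes\omega_X\otimes\sce^{-1}) = 0$ with $\sce^{-1}$ the term surjecting onto $\sci_Y$; it is \emph{not} true that $\sce^{-1} = \sco_X$. Your misreading converts the condition into $\tH^{n-1}(F^\vee\otimes\omega_X) \simeq \tH^1(F)^\vee = 0$, i.e.\ into a hypothesis on $F$ that the theorem does not make and that cannot simply be ``absorbed'': the statement is for an arbitrary locally free $F$. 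The missing idea is the paper's opening move: one is free to \emph{choose} the resolution, and the proof fixes an integer $m$ with $\tH^1(F(l)) = 0$ for $l \geq m$ and takes $\sce^{-1} = \sco_X(-m_1)^{N_1}$ with $m_1 \geq m$. Then $\tH^{n-1}(F^\vee\otimes\omega_X\otimes\sce^{-1})$ is a direct sum of $N_1$ copies of $\tH^{n-1}(F^\vee\otimes\omega_X(-m_1)) \simeq \tH^1(F(m_1))^\vee = 0$ by Serre duality (Lemma~\ref{L:duality}), so Lemma~\ref{L:keylift} applies with no assumption on $F$ whatsoever.

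Two smaller points. First, your detour through $\mathrm{Ext}^1_{\sco_X}(\sci_Y,F)$, the local-to-global spectral sequence, and the criterion ``$E$ locally free iff $\partial$ is an epimorphism'' is dispensable: once the pushout is formed, local freeness of $E$ follows directly from the exact sequence $0 \ra \sce^{-2} \ra \sce^{-1}\oplus F \ra E \ra 0$, because surjectivity of $\delta$ forces $(d^{-2\vee},-\eta) \colon \sce^{-1\vee}\oplus F^\vee \ra \sce^{-2\vee}$ to be an epimorphism, and the compatibility $\partial = \delta$ is read off by applying $\sch om_{\sco_X}(-,\omega_X)$ to the pushout diagram. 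Second, your justification of the final remark is slightly off: the correct argument is that $\mathrm{tr}_Y\circ\tH^{n-2}(\delta)$ is an element of $\tH^{n-2}(F^\vee\otimes\omega_X)^\vee \simeq \tH^2(F)$ (again Lemma~\ref{L:duality}), not that $\tH^{n-2}(\omega_Y) \ra \tH^n(\omega_X)$ factors through $\tH^2(F)^\vee$.
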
 

\begin{proof} 
Choose a very ample invertible sheaf $\sco_X(1)$ on $X$. There exists an 
integer $m$ such that $\tH^1(F(l)) = 0$, $\forall \, l \geq m$. Choose a 
locally free resolution$\, :$ 
\[
0 \lra \sce^{-2} \xra{d^{-2}} \sce^{-1} \xra{d^{-1}} \sco_X \lra \sco_Y \lra 0 
\] 
of $\sco_Y$, with $\sce^{-1} = \sco_X(-m_1)^{N_1}$, $m_1 \geq m$, and put$\, :$ 
\[
\omega_Y^\circ := \Cok \sch om_{\sco_X}(d^{-2} , \text{id}_{\omega_X}) 
\simeq \sce xt_{\sco_X}^2(\sco_Y , \omega_X)\, . 
\]
Let $\text{tr}_Y^\circ \colon \tH^{n-2}(\omega_Y^\circ ) \ra k$ be the trace 
map defined in Lemma~\ref{L:inducedduality}. By the uniqueness of the 
dualizing sheaf (see \cite[III,~Prop.~7.2]{hag}), there exists a unique 
isomorphism $\omega_Y \simeq \omega_Y^\circ$ identifying $\text{tr}_Y$ with 
$\text{tr}_Y^\circ$. 

Consider, now, an epimorphism $\delta$ as in the statement and let 
$\delta^\circ \colon F^\vee \otimes \omega_X \ra \omega_Y^\circ$ be the 
epimorphism obtained by composing $\delta$ with $\omega_Y \izo \omega_Y^\circ$. 
Since, by Serre duality on $X$, $\tH^{n-1}(F^\vee \otimes \omega_X \otimes 
\sce^{-1}) = 0$, Lemma~\ref{L:keylift} implies that $\delta^\circ$ lifts to a 
morphism ${\widetilde \delta} \colon F^\vee \otimes \omega_X \ra 
\sce^{-2 \vee} \otimes \omega_X$. One has ${\widetilde \delta} = 
\eta \otimes \text{id}_{\omega_X}$, with $\eta \colon F^\vee \ra \sce^{-2 \vee}$. 

Consider the extension defined by the push-forward diagram$\, :$ 
\[
\begin{CD}
0 @>>> \sce^{-2} @>{d^{-2}}>> \sce^{-1} @>{d^{-1}}>> \sci_Y @>>> 0\\
@. @V{\eta^\vee}VV @VVV @\vert\\ 
0 @>>> F @>>> E @>>> \sci_Y @>>> 0
\end{CD}
\]   
Applying $\sch om_{\sco_X}(- , \omega_X)$ to this diagram one deduces that$\, :$ 
\[
\partial \colon F^\vee \otimes \omega_X \lra \sce xt_{\sco_X}^1(\sci_Y , \omega_X) 
\simeq \omega_Y^\circ 
\]
coincides with $\delta^\circ$. Moreover, $E$ is locally free because one has 
an exact sequence$\, :$ 
\[
0 \lra \sce^{-2} \xra{\left(\begin{smallmatrix} d^{-2}\\ -\eta^\vee 
\end{smallmatrix}\right)} \sce^{-1} \oplus F \lra E \lra 0\, , 
\]
and $(d^{-2 \vee}\, ,\, -\eta) \colon \sce^{-1 \vee} \oplus F^\vee \ra 
\sce^{-2 \vee}$ is an epimorphism. (We have used tacitly the fact that 
$\omega_X$ is invertible which follows, however, easily by embedding $X$ 
into a projective space and then using Lemma~\ref{L:inducedduality}.) 
\end{proof} 

\begin{remark}\label{R:uniqueness} 
Let $X$, $Y$ and $F$ be as in Thm.~\ref{T:serreext}. Assume, moreover, that 
${\fam0 H}^1(F) = 0$. Consider two extensions$\, :$ 
\[
0 \lra F \lra E_i \lra \sci_Y \lra 0\, ,\  i = 0,\, 1\, , 
\]
with $E_0$ and $E_1$ locally free. Applying $\sch om_{\sco_X}(-,\omega_X)$ to 
these extensions, one gets exact sequences$\, :$ 
\[
0 \lra \omega_X \lra E_i^\vee \otimes \omega_X \lra F^\vee \otimes \omega_X 
\overset{\delta_i}{\lra} \omega_Y \lra 0\, ,\  i = 0,\, 1\, .  
\]
If $\delta_0 = \delta_1$ then $E_0 \simeq E_1$. 

\vskip2mm

\noindent 
\emph{Indeed},  
consider a locally free resolution of $\sco_Y$ as at the beginning of the 
proof of Thm.~\ref{T:serreext}. Since $\text{Ext}_{\sco_X}^1(\sce^{-1} , F) = 0$, 
one gets, for $i = 0,\, 1$, a commutative diagram$\, :$ 
\[
\begin{CD}
0 @>>> \sce^{-2} @>{d^{-2}}>> \sce^{-1} @>{d^{-1}}>> \sci_Y @>>> 0\\ 
@. @V{\eta_i^\vee}VV @VVV @\vert\\ 
0 @>>> F @>>> E_i @>>> \sci_Y @>>> 0
\end{CD}
\]
for a certain morphism $\eta_i \colon F^\vee \ra \sce^{-2 \vee}$. Since  
$\omega_Y \simeq \sce xt_{\sco_X}^1(\sci_Y , \omega_X) \simeq 
\Cok d^{-2 \vee}\otimes \text{id}_{\omega_X}$, one gets, applying 
$\sch om_{\sco_X}( - , \omega_X)$ to the top exact sequence of the above 
diagram, an exact sequence$\, :$ 
\[
0 \lra \omega_X \lra \sce^{-1 \vee} \otimes \omega_X \lra \sce^{-2 \vee} 
\otimes \omega_X \lra \omega_Y \lra 0\, . 
\] 
Then $\delta_i$ lifts to the morphism ${\widetilde \delta}_i := \eta_i \otimes 
\text{id}_{\omega_X} \colon F^\vee \otimes \omega_X \ra \sce^{-2 \vee} \otimes 
\omega_X$. Since $\delta_0 = \delta_1$, ${\widetilde \delta}_1 - 
{\widetilde \delta}_0$ factorizes through the kernel $\sck$ of the 
epimorphism $\sce^{-2 \vee} \otimes \omega_X \ra \omega_Y$. One also has an 
exact sequence$\, :$ 
\[
0 \lra \omega_X \lra \sce^{-1 \vee} \otimes \omega_X \lra \sck \lra 0\, .    
\] 
Since $\text{Ext}_{\sco_X}^1(F^\vee \otimes \omega_X , \omega_X) \simeq 
\tH^1(F) = 0$, one deduces that there exists a morphism $\phi \colon F^\vee 
\ra \sce^{-1 \vee}$ such that $\eta_1 - \eta_0 = \phi \circ d^{-2 \vee}$. 
Using, now, the exact sequences$\, :$ 
\[
0 \lra E_i^\vee \lra \sce^{-1 \vee} \oplus F^\vee \xra{(d^{-2 \vee}\, ,\, -\eta_i)} 
\sce^{-2 \vee} \lra 0\, ,\  i = 0,\, 1\, ,  
\]
one deduces that $E_0 \simeq E_1$. 
\end{remark}

\section{Miscellaneous auxiliary results}\label{A:miscellaneous} 

\begin{lemma}\label{L:tvbqraoq(2,4)}  
Let $Q \simeq \pj \times \pj$ be a nonsingular quadric surface in $\piii$. 
Then there exist epimorphisms $\eta \colon {\fam0 T}_\piii(-1) \vb Q \ra 
\sco_Q(1,3)$ such that the map ${\fam0 H}^0(\eta(1,1)) 
\colon {\fam0 H}^0({\fam0 T}_\piii \vb Q) \ra {\fam0 H}^0(\sco_Q(2,4))$ is 
injective (hence bijective).   
\end{lemma}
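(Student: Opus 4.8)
The plan is to reduce the statement to a single cohomological vanishing and then to produce one good morphism. Write $\eta' := \eta(1,1) \colon \text{T}_\piii\vb Q \ra \sco_Q(2,4)$, so that the lemma concerns $\eta'$ directly. Restricting the Euler sequence gives $0 \ra \sco_Q \ra 4\sco_Q(1,1) \ra \text{T}_\piii\vb Q \ra 0$, whence $\h^0(\text{T}_\piii\vb Q) = 16 - 1 = 15 = \h^0(\sco_Q(2,4))$ and likewise $\chi(\text{T}_\piii\vb Q) = 15 = \chi(\sco_Q(2,4))$. Consequently, for any epimorphism $\eta'$ with kernel $K$ (a rank $2$ bundle with $\det K \simeq \sco_Q(2,0)$ and $\chi(K) = 0$), the map $\tH^0(\eta')$ is injective if and only if $\tH^0(K) = 0$, in which case it is automatically bijective by the equality of dimensions. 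Moreover $\tH^0(K) = 0$ forces, via Serre duality ($K^\vee \otimes \omega_Q \simeq K(-4,-2)$), that $\tH^2(K) = 0$ and hence $\tH^1(K) = 0$. Thus everything reduces to exhibiting one epimorphism $\eta'$ with $\tH^0(K) = 0$.

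The key observation is that the two requirements on $\eta'$ — being an epimorphism of sheaves, and inducing an injection on global sections — are each Zariski-open conditions on the finite-dimensional ($17$-dimensional) vector space $\text{Hom}(\text{T}_\piii\vb Q , \sco_Q(2,4))$: the first because non-surjectivity of $\eta'$ is the non-emptiness of the zero locus of the associated section of $\sch om(\text{T}_\piii\vb Q , \sco_Q(2,4))$, and the second by lower semicontinuity of the rank of $\tH^0(\eta')$. Since this $\text{Hom}$ space is an affine space, hence irreducible, it suffices to show that each open set is \emph{separately} nonempty, because two nonempty opens in an irreducible variety always meet. The epimorphism locus is nonempty because $\sch om(\text{T}_\piii\vb Q , \sco_Q(2,4)) \simeq \Omega_\piii(2)\vb Q \otimes \sco_Q(0,2)$ is globally generated (being the tensor product of the globally generated sheaves $\Omega_\piii(2)\vb Q$ and $\sco_Q(0,2)$); a general global section of a globally generated rank $3$ bundle on the surface $Q$ is nowhere vanishing, which is exactly the condition that the corresponding $\eta'$ be surjective.

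It remains to show the injectivity locus is nonempty, i.e. to produce one $\eta'$ with $\tH^0(\eta')$ injective; this is the heart of the matter. I would use the identification $\tH^0(\text{T}_\piii\vb Q) \simeq \tH^0(\sco_Q(1,1))^{4}/\langle (x_0,x_1,x_2,x_3)\rangle$ coming from the restricted Euler sequence, under which a morphism $\eta'$ corresponds to a quadruple $(s_0,\ldots,s_3)$ with $s_i \in \tH^0(\sco_Q(1,3))$ and $\sum_i x_i s_i = 0$, while $\tH^0(\eta')$ sends $[(\ell_i)]$ to $\sum_i \ell_i s_i$. Injectivity of $\tH^0(\eta')$ then translates into the purely linear statement that the only degree $(1,1)$ syzygies of $(s_0,\ldots,s_3)$ are the scalar multiples of $(x_0,\ldots,x_3)$. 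In Segre coordinates, with $x_0 = u_0v_0$, $x_1 = u_0v_1$, $x_2 = u_1v_0$, $x_3 = u_1v_1$, one can parametrise the admissible quadruples $(s_i)$ explicitly and verify this minimality for one convenient choice; alternatively one may build $\eta'$ from the tangent--normal sequence
\[
0 \lra \sco_Q(2,0) \oplus \sco_Q(0,2) \lra \text{T}_\piii\vb Q \lra \sco_Q(2,2) \lra 0\, ,
\]
arranging that the restriction to $\text{T}_Q$ (governed by a general pair $(a,b) \in \tH^0(\sco_Q(0,4)) \oplus \tH^0(\sco_Q(2,2))$) and the part factoring through the normal quotient (governed by a section of $\sco_Q(0,2)$) together separate all of $\tH^0(\text{T}_\piii\vb Q)$.

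The main obstacle is precisely this final explicit verification: every reduction above is formal and clean, but pinning down one quadruple $(s_i)$ whose module of $(1,1)$ syzygies is exactly one-dimensional requires an honest, if elementary, computation, and it is here that the specific geometry of $Q \subset \piii$ (the Segre relation $x_0x_3 = x_1x_2$) genuinely enters. Once such an $\eta'$ is in hand, the two nonempty open sets meet, producing a single $\eta'$ that is simultaneously an epimorphism and injective (hence bijective) on global sections, as required.
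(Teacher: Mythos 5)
Your reduction is correct as far as it goes, but it stops exactly where the lemma begins. The dimension count $\h^0(\text{T}_\piii \vb Q) = 15 = \h^0(\sco_Q(2,4))$, the openness of the two conditions on the $17$-dimensional space $\text{Hom}(\text{T}_\piii \vb Q , \sco_Q(2,4))$, the irreducibility trick for combining them, and the nonemptiness of the epimorphism locus (via global generation of $(\Omega_\piii(2) \vb Q) \otimes \sco_Q(0,2)$ and the fact that a general section of a globally generated rank $3$ bundle on a surface vanishes nowhere) are all sound, as is the translation of injectivity of $\tH^0(\eta')$ into the statement that $(s_0,\ldots,s_3)$ admits only the Euler syzygy in bidegree $(1,1)$. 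But the nonemptiness of the injectivity locus --- the one statement that is not formal --- is precisely what you leave as ``an honest, if elementary, computation,'' with no explicit quadruple $(s_i)$ exhibited and no verification performed. Since every other step is a routine reduction, this unproven existence claim is not a detail: it is the entire content of the lemma, and as written your argument does not establish it.

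For comparison, the paper closes exactly this gap by working on the dual side. For an epimorphism $\eta$ with kernel $\scg$, one has $\scg^\vee \simeq \scg(0,2)$, and dualizing gives an exact sequence
\[
0 \lra \sco_Q(0,-4) \overset{\rho}{\lra} (\Omega_\piii \vb Q)(2,0) \lra \scg(1,1) \lra 0\, ,
\]
so that, since $\tH^0((\Omega_\piii \vb Q)(2,0)) = 0$, injectivity of $\tH^0(\eta(1,1))$ is equivalent to injectivity of $\tH^1(\rho)$. Twisting the conormal sequence $0 \ra \sco_Q(-2,-2) \ra \Omega_\piii \vb Q \ra \sco_Q(-2,0) \oplus \sco_Q(0,-2) \ra 0$ by $\sco_Q(2,0)$ and using $\text{Ext}^1_{\sco_Q}(\sco_Q(0,-4),\sco_Q(0,-2)) = 0$ together with $\tH^1((\Omega_\piii \vb Q)(2,0)) \izo \tH^1(\sco_Q(2,-2))$, the problem becomes: find $f \in \tH^0(\sco_Q(2,2))$ such that multiplication $f \cdot - \colon \tH^1(\sco_Q(0,-4)) \ra \tH^1(\sco_Q(2,-2))$ is injective. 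This is then settled by a one-line K\"{u}nneth computation: writing $f = u_0^2 \otimes f_0 + u_0u_1 \otimes f_1 + u_1^2 \otimes f_2$ with $f_0, f_1, f_2$ a basis of $k[t_0,t_1]_2$, the map kills no nonzero class $1 \otimes \xi$ with $\xi \in \tH^1(\sco_\pj(-4))$. If you want to salvage your syzygy formulation instead, you must actually produce the $s_i$ and verify the one-dimensionality of the $(1,1)$-syzygy space; the K\"{u}nneth mechanism above is essentially the systematic way to do that, and some such concrete input cannot be avoided.
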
 

\begin{proof}
The kernel $\scg$ of any epimorphism $\eta \colon \text{T}_\piii(-1) \vb Q \ra 
\sco_Q(1,3)$ is a rank 2 vector bundle on $Q$ with $\text{det}\, \scg \simeq 
\sco_Q(0,-2)$. It follows that $\scg^\vee \simeq \scg(0,2)$ (on $Q$). 
One deduces an exact sequence$\, :$ 
\[
0 \lra \sco_Q(0,-4) \overset{\rho}{\lra} (\Omega_\piii \vb Q)(2,0) 
\lra \scg(1,1) \lra 0\, ,   
\] 
where $\rho = \eta^\vee(1,-1)$.  
One has $\tH^0((\Omega_\piii \vb Q)(2,0)) = 0$ (because $\Omega_\piii \vb Q$ 
embeds into $4\sco_Q(-1,-1)$) hence $\tH^0(\eta(1,1))$ is injective iff 
$\tH^0(\scg(1,1)) = 0$ iff $\tH^1(\rho)$ is injective. 

Now, since $(\Omega_\piii \vb Q)(2,4)$ is globally generated (because 
$\Omega_\piii(2)$ is) a general morphism $\rho \colon \sco_Q(0,-4) \ra 
(\Omega_\piii \vb Q)(2,0)$ is a locally split monomorphism. It thus remains to 
show that there exist morphisms $\rho \colon \sco_Q(0,-4) \ra 
(\Omega_\piii \vb Q)(2,0)$ such that $\tH^1(\rho)$ is injective. 

Consider, for that purpose, the exact sequence$\, :$ 
\[
0 \lra \sco_Q(-2,-2) \lra \Omega_\piii \vb Q \lra \sco_Q(-2,0) \oplus 
\sco_Q(0,-2) \lra 0\, . 
\]
Tensorize it by $\sco_Q(2,0)$. Since $\text{Ext}_{\sco_Q}^1(\sco_Q(0,-4), 
\sco_Q(0,-2)) = 0$, the morphism$\, :$ 
\[
\text{Hom}(\sco_Q(0,-4), (\Omega_\piii \vb Q)(2,0)) \lra 
\text{Hom}(\sco_Q(0,-4), \sco_Q \oplus \sco_Q(2,-2))  
\]
is surjective. Moreover, $\tH^1((\Omega_\piii \vb Q)(2,0)) \izo 
\tH^1(\sco_Q(2,-2))$. It thus suffices to show that, for a general element 
$f \in \tH^0(\sco_Q(2,2))$, the multiplication by $f \cdot - \colon 
\tH^1(\sco_Q(0,-4)) \ra \tH^1(\sco_Q(2,-2))$ is injective. 

In order to prove the existence of such an $f$, let us recall 
that$\, :$ 
\begin{gather*}
\tH^0(\sco_Q(2,2)) \simeq \tH^0(\sco_\pj(2)) \otimes \tH^0(\sco_\pj(2))\  ,\  
\tH^1(\sco_Q(0,-4)) \simeq \tH^0(\sco_\pj) \otimes \tH^1(\sco_\pj(-4))\  ,\\  
\tH^1(\sco_\pj(2,-2)) \simeq \tH^0(\sco_\pj(2)) \otimes \tH^1(\sco_\pj(-2))\, . 
\end{gather*}
Write $f$ as $u_0^2 \otimes f_0 + u_0u_1 \otimes f_1 + u_1^2 \otimes f_2$, 
with $f_0,\, f_1,\, f_2 \in k[t_0,t_1]_2$. If $\xi \in \tH^1(\sco_\pj(-4))$ 
then$\, :$ 
\[
f \cdot (1\otimes \xi) = u_0^2 \otimes (f_0\xi) + u_0u_1 \otimes (f_1\xi) + 
u_1^2 \otimes (f_2\xi)\, . 
\]
If $f_0,\, f_1,\, f_2$ is a k-basis of $k[t_0,t_1]_2$ then $f_i\xi = 0$, $i = 
0,\, 1,\, 2$, implies $\xi = 0$, hence, in this case, multiplication by 
$f \cdot - \colon \tH^1(\sco_Q(0,-4)) \ra \tH^1(\sco_Q(2,-2))$ is injective. 
\end{proof}  

\begin{lemma}\label{L:2oraoy(1)} 
Let $Y$ be the union of three mutually disjoint lines in $\piii$. If $\sck$ is 
the kernel of a general epimorphism $\delta \colon 2\sco_\piii \ra \sco_Y(1)$ 
then $\sck(2)$ is globally generated.  
\end{lemma}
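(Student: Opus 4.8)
The plan is to realize $\sck(2)$ explicitly as the kernel of a concrete epimorphism and to verify global generation by restricting to the three components $L_1,\, L_2,\, L_3$ of $Y$, using the disjointness to decouple the behaviour on each line. First I would take the standard presentation of $\sco_Y(1) \simeq \omega_Y(3)$: since $Y$ is a disjoint union of three lines, one has $\sco_Y(1) \simeq \bigoplus_{i=1}^3 \sco_{L_i}(1)$, and $\h^0(\sco_Y(1)) = 6$ while $\h^0(2\sco_\piii) = 2$, so a general $\delta \colon 2\sco_\piii \ra \sco_Y(1)$ is indeed an epimorphism (on each line $L_i \simeq \pj$ the two sections span the two-dimensional space $\h^0(\sco_{L_i}(1))$ for general $\delta$, because the constants map onto $\sco_{L_i}(1)$ after twisting appropriately). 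From the defining sequence
\[
0 \lra \sck \lra 2\sco_\piii \overset{\delta}{\lra} \sco_Y(1) \lra 0
\]
I would read off that $\sck$ is a rank $2$ reflexive (in fact the relevant second-syzygy) sheaf with $c_1(\sck) = 0$, and compute its Chern classes via Remark~\ref{R:chern}; twisting by $2$ gives a sheaf $\sck(2)$ with $c_1 = 4$ that we must show is globally generated.

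The key step is to bound $\h^0$ on restrictions and to exploit disjointness. I would tensor the defining sequence by $\sco_{L_i}$ and, using that the $L_j$ for $j \neq i$ do not meet $L_i$, obtain that $\sco_Y(1) \vb L_i \simeq \sco_{L_i}(1)$ (the other two summands restrict to sheaves supported away from $L_i$, contributing nothing after restriction to the line itself). This localizes the problem: on each $L_i$ the epimorphism $\delta$ restricts to a general surjection $2\sco_{L_i} \ra \sco_{L_i}(1)$ whose kernel is $\sco_{L_i}(-1)$, so $\sck \vb L_i$ contains $\sco_{L_i}(-1)$ as the ``vertical'' part, and $\sck(2) \vb L_i$ is globally generated being built from $\sco_{L_i}(1)$ and trivial factors. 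Away from $Y$ the map $\delta$ is an isomorphism of $2\sco_\piii$ onto a rank-$2$ sheaf, so $\sck \vb (\piii \setminus Y) \simeq 2\sco_\piii$, which is trivially globally generated there.

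Having global generation on $\piii \setminus Y$ and on each $L_i$ separately, I would glue these local conclusions by a cohomological argument: it suffices to show that the evaluation map $\tH^0(\sck(2)) \otimes \sco_\piii \ra \sck(2)$ is surjective, equivalently that $\sck(2)$ has no ``base points,'' and for this I would check that $\tH^1(\sck(1)) = 0$ so that sections extend off $Y$, then use the surjectivity of $\tH^0(\sck(2)) \ra \tH^0(\sck(2) \vb L_i)$ for each $i$. The vanishing $\tH^1(\sck(1)) = 0$ follows from the long exact sequence associated to the defining sequence twisted by $1$, using $\h^1(2\sco_\piii(1)) = 0$ and the surjectivity of $\tH^0(2\sco_\piii(1)) \ra \tH^0(\sco_Y(2))$ (here $\h^0(\sco_Y(2)) = 9$ and one verifies the general $\delta$ gives a surjection after twisting, by a dimension count on each line). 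The disjointness of the three lines is what makes the global sections on $Y$ split as a direct sum over the components, so no secant or incidence obstruction arises.

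The main obstacle I anticipate is the word ``general'' in the statement: I must make precise which open condition on $\delta$ guarantees both that $\delta$ is an epimorphism and that the induced maps $\tH^0(2\sco_\piii(l)) \ra \tH^0(\sco_Y(l+1))$ are surjective for $l = 0,\, 1$. The cleanest route is probably to produce one explicit $\delta$ for which $\sck(2)$ is globally generated (choosing coordinates so that $L_1,\, L_2,\, L_3$ are the coordinate-type lines, and writing $\delta$ by an explicit $2 \times 3$-block matrix of forms as in the Constructions in the main text), and then invoke semicontinuity: global generation and the relevant $\tH^1$-vanishing are open conditions in flat families, so if they hold for one $\delta$ they hold for the general one. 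This reduces the problem to a single explicit verification on three coordinate lines, which is the kind of direct computation carried out repeatedly in Section~\ref{S:c2=12} (for instance in Construction~3.2 and Construction~5.5), and I would model the argument on those.
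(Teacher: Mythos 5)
There is a genuine gap, and it sits exactly at the two places where your plan has actual content. First, a numerical impossibility: you claim $\tH^1(\sck(1)) = 0$ via surjectivity of $\tH^0(2\sco_\piii(1)) \ra \tH^0(\sco_Y(2))$, but $\h^0(2\sco_\piii(1)) = 8$ while $\h^0(\sco_Y(2)) = 9$ (a number you yourself quote), so this map is never surjective; the defining sequence then forces $\h^1(\sck(1)) \geq 1$ for \emph{every} $\delta$. Second, your local-to-global scheme misidentifies where the obstruction lives. Restricting the defining sequence to $L_i$ is not exact on the left: $\sck$ is not locally free along $Y$ (at a point of $L_i$ its stalk is $\sci_{L_i,y} \oplus \sco_y$, needing three generators), and $\sct or_1^{\sco_\piii}(\sco_Y(1), \sco_{L_i}) \simeq 2\sco_{L_i}$ is nonzero, so $\sck \vb L_i$ has rank $3$ and is not ``built from $\sco_{L_i}(-1)$ and trivial factors.'' More importantly, generation at points \emph{off} $Y$ is not ``trivial'': a global section of $\sck(2)$ is a pair of quadrics $(f,g)$ constrained by $f\delta_1 + g\delta_2 = 0$ on $Y$, and whether such constrained pairs generate the rank-$2$ stalk at a point away from $Y$ is precisely the nontrivial content of the lemma; triviality of $\sck$ on $\piii \setminus Y$ says nothing about it.

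The paper's proof shows what your proposal never sees. Since $\delta$ factors through the unique quadric $Q \supset Y$, one gets $0 \ra 2\sco_\piii(-2) \ra \sck \ra K \ra 0$ with $K = \Ker (2\sco_Q \ra \sco_Y(1))$ a rank-$2$ bundle on $Q$, reducing the lemma to global generation of $K(2)$. Writing $Q \simeq \pj \times \pj$ with the $L_i$ of type $(1,0)$, the isomorphism $\tH^0(\sco_Q(2,1)) \izo \tH^0(\sco_Y(1))$ lifts the two components of $\delta$ to divisors $\lambda, \lambda'$ of type $(2,1)$, whose common zero locus is a set $\Gamma$ of four points of $Q$ lying \emph{off} $Y$; these points are the actual potential base locus. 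The genericity required is a condition on $\Gamma$ (no two of its points on a line of the ruling $\vert\, \sco_Q(0,1)\, \vert$), which yields $\tH^0(K(1,2)) = 0$, hence $\tH^1(K(1,2)) = 0$, hence surjectivity of $\tH^0(K(2,2)) \ra \tH^0(K_L(2))$ for every line $L$ of type $(1,0)$; one concludes because $K_L(2)$ is globally generated for each such $L$. Your fallback plan (one explicit $\delta$ plus openness of global generation) is indeed how the paper proceeds, but the entire substance of the lemma is that explicit verification, which your proposal defers and whose preparatory claims, as written, are false.
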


\begin{proof} 
Let $Q \subset \piii$ be the unique quadric surface containing $Y$ and let 
$K$ be the kernel of $\delta_Q \colon 2\sco_Q \ra \sco_Y(1)$. $K$ is a rank 2 
vector bundle on $Q$. Since one has an exact sequence$\, :$ 
\[
0 \lra 2\sco_\piii(-2) \lra \sck \lra K \lra 0\, , 
\]
$\sck(2)$ is globally generated iff $K(2)$ is globally generated. It thus 
suffices to show that if $K$ is the kernel of a general epimorphism 
$\e \colon 2\sco_Q \ra \sco_Y(1)$ then $K(2)$ is globally generated. 

In order to prove the last assertion, fix an isomorphism $Q \simeq \pj \times 
\pj$. Assume that $L_0$, $L_1$, $L_2$ belong to the linear system $\vert\, 
\sco_Q(1,0)\, \vert$. Tensorizing by $\sco_Q(2,1)$ the exact sequence$\, :$ 
\[
0 \lra \sco_Q(-3,0) \lra \sco_Q \lra \sco_Y \lra 0\, , 
\]  
one gets an exact sequence $0 \ra \sco_Q(-1,1) \ra \sco_Q(2,1) \ra \sco_Y(1) 
\ra 0$ from which one deduces that $\tH^0(\sco_Q(2,1)) \izo 
\tH^0(\sco_Y(1))$. Choose $\lambda,\, \lambda^\prime \in \tH^0(\sco_Q(2,1))$ 
such the intersection of the zero divisors of $\lambda$ and $\lambda^\prime$ 
consists of four simple points $P_1, \ldots , P_4$, none of them situated on 
$Y$, and such that none of the lines $\overline{P_iP_j}$, $1 \leq i < j \leq 
4$, belongs to the linear system $\vert \, \sco_Q(0,1)\, \vert$. [To see that 
this is possible, choose, firstly, $\lambda$ such that its zero divisor is a 
nonsingular twited cubic curve $C \subset Q$. Fix an isomorphism $\pj \izo C$. 
One has $\sco_Q(2,1) \vb C \simeq \sco_\pj(4)$. Using the exact sequence 
$0 \ra \sco_Q \ra \sco_Q(2,1) \ra \sco_\pj(4) \ra 0$, one sees that the map 
$\tH^0(\sco_Q(2,1)) \ra \tH^0(\sco_\pj(4))$ is surjective. Choose $P_1, \ldots 
, P_4$ on $C$ satisfying the above conditions. Finally, choose $\lambda^\prime 
\in \tH^0(\sco_Q(2,1))$ such that the zero divisor of $\lambda^\prime \vb C$ 
is $P_1 + \cdots + P_4$.]   

Let $\e \colon 2\sco_Q \ra \sco_Y(1)$ be the epimorphism defined by $\lambda 
\vb Y$ and $\lambda^\prim \vb Y$ and let $K$ be its kernel. 

\vskip2mm 
 
\noindent 
{\bf Claim.}\quad $\tH^0(K(1,2)) = 0$. 

\vskip2mm 

\noindent 
\emph{Indeed}, applying the Snake Lemma to the commutative diagram$\, :$ 
\[
\begin{CD} 
0 @>>> \sco_Q(-2,-1) @>{(-\lambda^\prime , \lambda)^{\text{t}}}>> 2\sco_Q 
@>{(\lambda , \lambda^\prime)}>> \sci_{\Gamma , Q}(2,1) @>>> 0\\ 
@. @VVV @VV{\e}V @VVV\\  
0 @>>> 0 @>>> \sco_Y(1) @= \sco_Y(1) @>>> 0
\end{CD}
\]  
where $\Gamma := \{P_1, \ldots , P_4\}$, one gets an exact sequence$\, :$ 
\[
0 \lra \sco_Q(-2,-1) \lra K \lra \sci_{\Gamma , Q}(-1,1) \lra 0\, .  
\]
Tensorizing this exact sequence by $\sco_Q(1,2)$ and using the fact that 
$\tH^0(\sci_{\Gamma , Q}(0,3)) = 0$ (because none of the lines 
$\overline{P_iP_j}$, $1 \leq i < j \leq 4$, belongs to the linear system 
$\vert \, \sco_Q(0,1)\, \vert$) one gets the claim.  

\vskip2mm 

Using the exact sequence $0 \ra K(1,2) \ra 2\sco_Q(1,2) \ra \sco_Y(3) \ra 0$, 
one deduces, from the above claim, 
that $\tH^1(K(1,2)) = 0$. Let $L \subset Q$ be a line belonging to the linear 
system $\vert \, \sco_Q(1,0)\, \vert$. Then $K_L \simeq \sco_L(1) \oplus 
\sco_L(-1)$ if $L = L_i$, $i = 1,\, 2,\, 3$, and $K_L \simeq 2\sco_L$ 
otherwise. Tensorizing by $K(2)$ ($= K(2,2)$) the exact sequence$\, :$  
\[
0 \lra \sco_Q(-1,0) \lra \sco_Q \lra \sco_L \lra 0\, , 
\]
one gets that $\tH^0(K(2)) \izo \tH^0(K_L(2))$. Since $K_L(2)$ is globally 
generated, for every line $L$ belonging to $\vert \, \sco_Q(1,0)\, \vert$, 
it follows that $K(2)$ is globally generated. 
\end{proof}

\begin{lemma}\label{L:2oomega(1)raol(2)} 
Consider a morphism $\delta \colon 2\sco_\piii \oplus \Omega_\piii(1) \ra 
\sco_L(2)$, where $L \subset \piii$ is a line. If the component 
$\delta^\prim \colon 2\sco_\piii \ra \sco_L(2)$ of $\delta$ is an epimorphism and 
if ${\fam0 H}^0(\delta_L) \colon {\fam0 H}^0(2\sco_L \oplus 
(\Omega_\piii(1) \vb L)) \ra {\fam0 H}^0(\sco_L(2))$ is surjective then 
${\fam0 Ker}\, \delta(1)$ is globally generated. 
\end{lemma}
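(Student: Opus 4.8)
The plan is to set $\scE := \sck(1) = \Ker \delta(1)$ and to study the twisted sequence
\[
0 \lra \scE \lra G \xra{\delta(1)} \sco_L(3) \lra 0\, ,\quad G := 2\sco_\piii(1) \oplus \Omega_\piii(2)\, ,
\]
whose middle term $G$ is globally generated and satisfies $\tH^1(G) = 0$. Since $\scE$ agrees with $G$ away from $L$, I would prove that the evaluation morphism $\tH^0(\scE) \otimes_k \sco_\piii \ra \scE$ is surjective by checking it fibrewise, treating the points of $\piii \setminus L$ and the points of $L$ separately. First I would record that $\tH^0(\delta(1)) \colon \tH^0(G) \ra \tH^0(\sco_L(3))$ is \emph{surjective}: the component $\delta^\prim$ is defined by two forms $a,\, b \in \tH^0(\sco_L(2))$ with no common zero on $L$, and the composite $\tH^0(\sco_\piii(1))^{\oplus 2} \ra \tH^0(\sco_L(1))^{\oplus 2} \xra{(a,\, b)} \tH^0(\sco_L(3))$ is onto, the restriction being surjective and the second map being bijective by the base-point-free pencil trick (its kernel is $\tH^0(\sco_L(-1)) = 0$). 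Together with $\tH^1(G) = 0$ this already gives $\tH^1(\scE) = 0$.

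For a point $x \notin L$ one has $\scE \vb U \simeq G \vb U$ near $x$, so it suffices to show that $\tH^0(\scE) \ra G(x)$ is surjective; by the lifting dichotomy this reduces to showing that the sections of $G$ vanishing at $x$ still surject onto $\tH^0(\sco_L(3))$ under $\delta(1)$. Restricting attention to the summand $2\sco_\piii(1)$, I would intersect the $4$-dimensional kernel $\{(f,g) : f\vb L = g\vb L = 0\}$ of the (surjective) map above with the conditions $f(x) = g(x) = 0$; since $x \notin L$, not every plane through $L$ passes through $x$, so each condition cuts the pencil $\{f : f\vb L = 0\}$ down by one, leaving a $2$-dimensional intersection, whence the restricted map still has rank $4$ and is onto. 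A short diagram chase then produces, for every $v \in G(x)$, a section of $\scE$ evaluating to $v$.

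The heart of the argument is the case $x \in L$, where $\scE$ is not locally free and the hypothesis on $\delta_L$ must enter. I would restrict the whole sequence to $L \simeq \pj$; using $\Omega_\piii \vb L \simeq 2\sco_L(-1) \oplus \sco_L(-2)$ one gets $G_L \simeq \sco_L \oplus 4\sco_L(1)$, and tensoring with $\sco_L$ (with $\Tor_1^{\sco_\piii}(\sco_L(3),\sco_L) \simeq 2\sco_L(2)$ coming from the conormal bundle) yields
\[
0 \lra 2\sco_L(2) \lra \scE_L \lra M \lra 0\, ,\qquad M := \Ker\bigl(G_L \ra \sco_L(3)\bigr)\, .
\]
Here $M$ is locally free of rank $4$ on $\pj$, so $\scE_L$ is a vector bundle, and $M$ is globally generated if and only if $\tH^1(M(-1)) = 0$. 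The decisive observation is that $M(-1) = \Ker(G_L(-1) \ra \sco_L(2))$ with $G_L(-1) = (2\sco_\piii \oplus \Omega_\piii(1))\vb L$ and $\tH^1(G_L(-1)) = 0$, so that $\tH^1(M(-1)) \simeq \Cok \tH^0(\delta_L)$, which vanishes precisely under the stated hypothesis. Thus $M$, and hence $\scE_L$ (an extension of a globally generated bundle by $2\sco_L(2)$, with $\tH^1(2\sco_L(2)) = 0$), is globally generated.

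The step I expect to be the main obstacle is passing from the global generation of $\scE_L$ back to that of $\scE$ at points of $L$. Using $\tH^1(\scE) = 0$ and the Nakayama reduction (the cokernel $\scQ$ of the evaluation of $\scE$ is supported on $L$, and $\scQ = 0$ iff $\scQ \otimes \sco_L = 0$), this amounts to showing that the images of $\tH^0(\scE)$ generate $\scE_L$. A chase through $\tH^0(G) \twoheadrightarrow \tH^0(G_L)$ and $\tH^0(\delta(1)) \twoheadrightarrow \tH^0(\sco_L(3))$ shows that $\tH^0(\scE)$ surjects onto $\tH^0(M)$, so the quotient part $M$ is generated; the delicate point is that the images of $\tH^0(\scE)$ must also generate the subbundle $2\sco_L(2) = \Tor_1^{\sco_\piii}(\sco_L(3),\sco_L)$, i.e. one must produce enough sections of $\scE$ vanishing on $L$ (sections in $\tH^0(G \otimes \sci_L) \cap \tH^0(\scE)$) to span these conormal directions. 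I would settle this by analysing the length-$1$ resolution of the reflexive sheaf $\sck$ together with the Koszul resolution of $\sco_L$, so as to control $\tH^0(\scE) \ra \tH^0(\scE_L)$ on the torsion-theoretic part; it is this interplay between the $\Tor$-contribution and the lifting of sections, rather than any single cohomology vanishing, that carries the real weight of the proof.
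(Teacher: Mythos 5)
Your proposal is correct in outline and takes a genuinely different route from the paper's proof, but as written it is incomplete: the step you yourself flag as carrying ``the real weight'' is announced rather than carried out. First the comparison. The paper uses the surjectivity of $\tH^0(\delta_L)$ at the very start: its kernel is one-dimensional, spanned by an element $(c_1, c_2, \sigma)$ with $0 \neq \sigma \in \tH^0(\Omega_\piii(1) \vb L)$; lifting $\sigma$ to a plane $H \supset L$ and using $\text{Hom}(\Omega_\piii(1), \sco_\piii) \izo \text{Hom}(\Omega_\piii(1) \vb H , \sco_H)$, it produces an automorphism of $2\sco_\piii \oplus \Omega_\piii(1)$ after which $\delta$ factors through $2\sco_H \oplus \Omega_H(1)$; modulo a trivial kernel $5\sco_\piii(-1)$ this reduces the lemma to a statement on $H \simeq \pii$, settled by a snake-lemma comparison of evaluation morphisms and a contradiction via $K_L \simeq \sco_L(1) \oplus 3\sco_L(-1)$. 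You instead stay on $\piii$: the treatment of points off $L$, the surjectivity of $\tH^0(\delta(1))$ via the base-point-free pencil trick, the Tor sequence $0 \ra 2\sco_L(2) \ra \sck(1) \otimes \sco_L \ra M \ra 0$ (writing $\sck := \Ker\, \delta$), and the identification $\tH^1(M(-1)) \simeq \Cok\, \tH^0(\delta_L)$ are all correct, and your route has the merit of separating cleanly the roles of the two hypotheses, which the paper's automorphism trick entangles.

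The open step closes much more easily than you fear, and needs only the hypothesis that $\delta^\prim$ is an epimorphism; no resolution of $\sck$ is required, just the Koszul description of $\text{Tor}_1$. Since $\sco_L(3)$ is an $\sco_L$-module, $\delta(1)$ kills $\sci_L G$, so $\sci_L G \subseteq \sck(1)$, and the subsheaf $2\sco_L(2) = \text{Tor}_1^{\sco_\piii}(\sco_L(3), \sco_L)$ of $\sck(1) \otimes \sco_L$ is exactly the image of $\sci_L G$ in $\sck(1)/\sci_L \sck(1)$, identified with $(\sci_L/\sci_L^2) \otimes \sco_L(3)$ by sending $x_0 g_0 + x_1 g_1$ (with $x_0,\, x_1$ the equations of $L$ and $g_i$ a local section of $G(-1) = 2\sco_\piii \oplus \Omega_\piii(1)$) to ${\overline x}_0 \otimes \delta(g_0) + {\overline x}_1 \otimes \delta(g_1)$. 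Now write $\delta^\prim = (a , b)$ with $a,\, b \in \tH^0(\sco_L(2))$, and take for $g_i$ the constant sections $(c_i , c_i^\prim , 0)$ of the summand $2\sco_\piii$: the resulting global sections of $2\sci_L(1) \subseteq \sck(1)$ have images ${\overline x}_0 \otimes (c_0 a + c_0^\prim b) + {\overline x}_1 \otimes (c_1 a + c_1^\prim b)$, and since $(a(y) , b(y)) \neq (0,0)$ for every $y \in L$ (this is exactly the hypothesis that $\delta^\prim$ is an epimorphism), these already span every fibre of $2\sco_L(2)$. Together with your surjection $\tH^0(\sck(1)) \ra \tH^0(M)$ and the global generation of $M$ --- the only place where the surjectivity of $\tH^0(\delta_L)$ is genuinely needed --- this yields generation of $\sck(1) \otimes \sco_L$, hence, by your Nakayama reduction, the lemma. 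So the feared ``interplay'' is a one-line computation, and with it your argument becomes a complete, and arguably more transparent, alternative to the paper's proof.
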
 

\begin{proof} 
One has $\Omega_\piii(1) \vb L \simeq 2\sco_L \oplus \sco_L(-1)$. Since 
$\tH^0(\delta_L)$ is surjective and since $\tH^0(\delta_L^\prim)$ must be 
injective, there exists a nonzero global section $\sigma$ of 
$\Omega_\piii(1) \vb L$ and constants $c_1,\, c_2 \in k$ such that the kernel 
of $\tH^0(\delta_L)$ is generated by $(c_1,\, c_2,\, \sigma)$. Recalling the 
exact sequence $0 \ra \Omega_\piii(1) \ra 4\sco_\piii \ra \sco_\piii(1) \ra 0$, 
there exists a plane $H \supset L$ and a nonzero global section 
$\widetilde \sigma$ of $\Omega_\piii(1) \vb H$ such that 
${\widetilde \sigma} \vb L = \sigma$. 

Now, since $\Omega_\piii(1) \vb H \simeq \sco_H \oplus \Omega_H(1)$ and 
$\tH^0(\Omega_H(1)) = 0$, there exists an automorphism $\phi$ of 
$2\sco_H \oplus (\Omega_\piii(1) \vb H)$ mapping $(c_1,\, c_2,\, 
{\widetilde \sigma})$ to $(0,\, 0,\, {\widetilde \sigma})$. Since 
$\text{Hom}_{\sco_\piii}(\Omega_\piii(1) , 
\sco_\piii) \izo \text{Hom}_{\sco_H}((\Omega_\piii(1) \vb H) , \sco_H)$, $\phi$ 
lifts to an automorphism $\psi$ of $2\sco_\piii \oplus \Omega_\piii(1)$.   
Then $\delta \circ \psi^{-1}$ factorizes as$\, :$ 
\[
2\sco_\piii \oplus \Omega_\piii(1) \overset{\gamma}{\lra} 
2\sco_H \oplus \Omega_H(1) \overset{\eta}{\lra} \sco_L(2)\, , 
\]
where $\gamma$ is the canonical epimorphism and $\eta$ is such that 
its component $\eta^\prim \colon 2\sco_H \ra \sco_L(2)$ is an epimorphism and 
$\tH^0(\eta_L) \colon \tH^0(2\sco_L \oplus (\Omega_H(1) \vb L)) \ra 
\tH^0(\sco_L(2))$ is bijective. Since the kernel of the canonical epimorphism 
$\Omega_\piii(1) \ra \Omega_H(1)$ is isomorphic to $3\sco_\piii(-1)$, one gets 
an exact sequence$\, :$ 
\[
0 \lra 5\sco_\piii(-1) \lra \Ker (\delta \circ \psi^{-1}) \lra \Ker \eta 
\lra 0\, . 
\]
It follows that, in order to show that $\Ker \delta(1)$ is globally generated, 
it suffices to show that $\Ker \eta(1)$ is globally generated. 

Let $K$ be the kernel of $\eta$ and $K^\prim$ the kernel of $\eta^\prim$. One 
has an exact sequence $0 \ra K^\prim \ra K \ra \Omega_H(1) \ra 0$.  
Since $\Ker \eta_L^\prim \simeq \sco_L(-2)$, one also has an exact 
sequence$\, :$ 
\[
0 \lra 2\sco_H(-1) \lra K^\prim \lra \sco_L(-2) \lra 0\, , 
\] 
hence the cokernel of the evaluation morphism $\tH^0(K^\prim(1)) \otimes 
\sco_H \ra K^\prim(1)$ is isomorphic to $\sco_L(-1)$ and $\tH^1(K^\prim(1)) = 0$. 
Applying the Snake Lemma to the commutative diagram$\, :$ 
\[
\begin{CD} 
0 @>>> \tH^0(K^\prim(1)) \otimes \sco_H @>>> \tH^0(K(1)) \otimes \sco_H 
@>>> 3\sco_H @>>> 0\\ 
@. @V{\text{ev}}VV @V{\text{ev}}VV @V{\text{ev}}VV\\ 
0 @>>> K^\prim(1) @>>> K(1) @>>> \Omega_H(2) @>>> 0  
\end{CD}
\] 
one deduces that $K(1)$ is globally generated if and only if the connecting 
morphism $\partial \colon \sco_H(-1) \ra \sco_L(-1)$ is an epimorphism.  
This happens if and only if $\partial \neq 0$. If $\partial = 0$ then the 
cokernel of the evaluation morphism of $K(1)$ is isomorphic to $\sco_L(-1)$. 
But \emph{this is not possible} because $K_L \simeq \sco_L(1) \oplus 
3\sco_L(-1)$, as one sees using the exact sequence$\, :$ 
\[
K_L \lra 2\sco_L \oplus (\sco_L \oplus \sco_L(-1)) \xra{\eta \vb L} \sco_L(2) 
\lra 0 
\]
and the fact that $c_1(K) = -2$. 
\end{proof} 

\begin{lemma}\label{L:lineinp5} 
Let $V$ be a $4$-dimensional $k$-vector space and $W$ a $2$-dimensional 
vector subspace of $\bigwedge^2 V$. Let $\Delta$ be the image of 
the wedge product $V \times V \ra \bigwedge^2 V$. $\Delta$ is the 
affine cone over the Pl\"{u}cker hyperquadric $\mathbb{G} \subset 
\p(\bigwedge^2 V) \simeq \pv$. 

\emph{(a)} If $\p(W)$ intersects $\mathbb{G}$ in two distinct points then 
there exists a basis $v_0 , \ldots , v_3$ of $V$ such that $v_0 \wedge v_1$ 
and $v_2 \wedge v_3$ is a basis of $W$. 

\emph{(b)} If $\p(W)$ is tangent to $\mathbb{G}$ then there exists a basis 
$v_0 , \ldots , v_3$ of $V$ such that $v_0 \wedge v_1$ and $v_0 \wedge v_3 + 
v_1 \wedge v_2$ is a basis of $W$. 

\emph{(c)} If $\p(W) \subset \mathbb{G}$ then there exists a basis 
$v_0 , \ldots , v_3$ of $V$ such that $v_0 \wedge v_1$ and $v_0 \wedge v_2$ 
is a basis of $W$.  
\end{lemma}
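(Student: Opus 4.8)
The plan is to translate the projective geometry of the Plücker quadric into elementary multilinear algebra on $\bigwedge^2 V$ and then treat the three listed cases one at a time. First I would fix a volume form to identify $\bigwedge^4 V \simeq k$ and introduce the symmetric bilinear form $b(\omega , \eta) := \omega \wedge \eta$ on $\bigwedge^2 V$, together with its quadratic form $q(\omega) := \omega \wedge \omega$. I would recall the standard fact that, since $\dim V = 4$, an element $\omega \in \bigwedge^2 V$ is decomposable if and only if $q(\omega) = 0$; hence $\Delta = q^{-1}(0)$ and $\mathbb{G} = \{[\omega] : q(\omega) = 0\}$, a smooth quadric because $b$ is nondegenerate. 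For a basis $\omega_1 , \omega_2$ of $W$, the intersection $\p(W) \cap \mathbb{G}$ is cut out inside $\p(W) \simeq \pj$ by the binary form $q(s\omega_1 + t\omega_2) = s^2 q(\omega_1) + 2st\, b(\omega_1 , \omega_2) + t^2 q(\omega_2)$. I would also record the transversality dictionary: for decomposable $\omega_1 = v_0 \wedge v_1$ and $\omega_2 = w_0 \wedge w_1$, one has $b(\omega_1 , \omega_2) = \omega_1 \wedge \omega_2 \neq 0$ if and only if the planes $\langle v_0 , v_1 \rangle$ and $\langle w_0 , w_1 \rangle$ meet only in $0$, i.e. are complementary.

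For part (a), I would take the two distinct points of $\p(W) \cap \mathbb{G}$ as the basis $\omega_1 , \omega_2$. Then $q(\omega_1) = q(\omega_2) = 0$ (both lie on $\mathbb{G}$), while $b(\omega_1 , \omega_2) \neq 0$ (otherwise $q|_W \equiv 0$ and the line would lie in $\mathbb{G}$, contradicting the hypothesis of two points). Both $\omega_i$ are decomposable, say $\omega_1 = v_0 \wedge v_1$ and $\omega_2 = w_0 \wedge w_1$; by the dictionary the two planes are complementary, so $v_0 , v_1 , w_0 , w_1$ is a basis of $V$ and I set $v_2 := w_0$, $v_3 := w_1$. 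For part (c), the hypothesis $\p(W) \subset \mathbb{G}$ means $q|_W \equiv 0$, so every member of $W$ is decomposable. Writing a basis as $\omega_1 = P_1$, $\omega_2 = P_2$ (planes), the relation $q(\omega_1 + \omega_2) = 2 b(\omega_1 , \omega_2) = 0$ forces $\omega_1 \wedge \omega_2 = 0$, so $P_1$ and $P_2$ are not transverse; as they are distinct, $\dim(P_1 \cap P_2) = 1$. I would let $v_0$ span $P_1 \cap P_2$, complete it within $P_1$ and $P_2$ to get $v_0 , v_1 , v_2$, rescale so that $\omega_1 = v_0 \wedge v_1$ and $\omega_2 = v_0 \wedge v_2$, and extend to a basis of $V$.

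The real work is part (b), which I expect to be the only genuine obstacle. Here I would take $\omega_1$ to be the (double) tangency point, so $q(\omega_1) = 0$ and $\omega_1 = v_0 \wedge v_1$ is decomposable, together with a complementary $\omega_2$. Tangency to the smooth quadric at $[\omega_1]$ is equivalent to the binary form above having a double root at $(s:t) = (1:0)$, which by the factorization $q = t\big(2s\, b(\omega_1,\omega_2) + t\, q(\omega_2)\big)$ happens precisely when $b(\omega_1 , \omega_2) = 0$, i.e. $\omega_1 \wedge \omega_2 = 0$; moreover $q(\omega_2) \neq 0$ since the line is not contained in $\mathbb{G}$. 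Extending $v_0 , v_1$ to a basis $v_0 , v_1 , u_2 , u_3$ and expanding $\omega_2$, the vanishing $\omega_1 \wedge \omega_2 = 0$ kills exactly the $u_2 \wedge u_3$-coefficient; after subtracting a suitable multiple of $\omega_1$ (harmless, since only the line $W$ matters) I reach $\omega_2' = v_0 \wedge (b u_2 + c u_3) + v_1 \wedge (d u_2 + e u_3)$.

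The crux is then the computation $q(\omega_2') = \pm 2(be - cd)\, v_0 \wedge v_1 \wedge u_2 \wedge u_3$, so that $q(\omega_2') \neq 0$ is equivalent to the invertibility of $\det\begin{pmatrix} b & c \\ d & e \end{pmatrix} = be - cd$. This nonvanishing, which is guaranteed by $q(\omega_2) = q(\omega_2') \neq 0$, ensures that $v_3 := b u_2 + c u_3$ and $v_2 := d u_2 + e u_3$ form a basis of $\langle u_2 , u_3 \rangle$; hence $v_0 , v_1 , v_2 , v_3$ is a basis of $V$ and $\omega_2' = v_0 \wedge v_3 + v_1 \wedge v_2$, while $\omega_1 = v_0 \wedge v_1$, which is exactly the normal form asserted in (b). I would note finally that the three cases are mutually exclusive and, over the algebraically closed field $k$, exhaust all positions of a line relative to a smooth quadric, so no further verification of completeness is needed beyond the three arguments above.
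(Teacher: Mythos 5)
Your proof is correct, and in parts (a) and (c) it is in substance the paper's own argument: the dichotomy you phrase as $b(\omega_1,\omega_2)=\omega_1\wedge\omega_2\neq 0$ versus $=0$ is exactly the paper's dichotomy of whether the two $2$-planes $U,U'$ corresponding to the points of $\p(W)\cap\mathbb{G}$ span $V$ or meet in a line. The real difference is in part (b). The paper quotes the standard description of the embedded tangent space, $\mathrm{T}_P\mathbb{G}=\p(U\wedge V)$ for $P=[v_0\wedge v_1]$, writes a second basis vector of $W$ as $v_0\wedge v_3+v_1\wedge v_2$ directly from $W\subset U\wedge V$, and then proves that $v_0,\ldots,v_3$ span $V$ by the slick observation that otherwise $\p(W)\subset\p\bigl(\textstyle\bigwedge^2 V'\bigr)\subset\mathbb{G}$ for a $3$-dimensional $V'$, contradicting tangency. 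You instead derive the tangency condition $b(\omega_1,\omega_2)=0$, $q(\omega_2)\neq 0$ from the factorization of the binary form $q|_W$, and you certify linear independence of $v_0,\ldots,v_3$ by the explicit computation $q(\omega_2')=\pm 2(be-cd)\,v_0\wedge v_1\wedge u_2\wedge u_3\neq 0$. The two routes are equivalent — the kernel of $b(v_0\wedge v_1,\cdot)$ is precisely $U\wedge V$, so your linear-algebra condition is the paper's tangent hyperplane — but yours is self-contained (no appeal to the tangent space of the Grassmannian as a known fact) at the cost of a computation, while the paper's "span must be all of $V$" argument is shorter and reuses the same containment trick in all three parts. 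One minor remark: your identities $q(s\omega_1+t\omega_2)=s^2q(\omega_1)+2st\,b(\omega_1,\omega_2)+t^2q(\omega_2)$ and $q(\omega_1+\omega_2)=2b(\omega_1,\omega_2)$ use division by $2$ implicitly when polarizing; this is harmless here since the paper works over a field of characteristic $0$, but it is the one place where your formulation is less robust than the paper's plane-geometry formulation.
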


\begin{proof} 
(a) The two points $P$ and $P^\prim$ in which $\p(W)$ intersects $\mathbb{G}$ 
correspond to two distinct vector subspaces $U$ and $U^\prim$ of $V$, of 
dimension 2. Let $V^\prim := U + U^\prim$. One must have $V^\prim = V$ because, 
otherwise, $\p(W) \subset \p(\bigwedge^2 V^\prim) \subset 
\mathbb{G}$. One chooses a basis $v_0,\, v_1$ of $U$ and a basis $v_2,\, v_3$ 
of $U^\prim$. 

(b) The point $P$ at which $\p(W)$ is tangent to $\mathbb{G}$ corresponds to 
a 2-dimensional vector subspace $U$ of $V$. The (embedded) tangent space to 
$\mathbb{G}$ at $P$ is $\text{T}_P\mathbb{G} = \p(U \wedge V)$ hence 
$W \subset U \wedge V$. Choose a basis $v_0,\, v_1$ of $U$. $W$ has a basis 
of the form $v_0 \wedge v_1$, $v_0 \wedge v_3 + v_1 \wedge v_2$, where 
$v_2$ and $v_3$ are two other vectors in $V$. Let $V^\prim$ be the subspace 
of $V$ spanned by $v_0 , \ldots , v_3$. One must have $V^\prim = V$ because, 
otherwise, $\p(W) \subset \p(\bigwedge^2 V^\prim) \subset 
\mathbb{G}$. 

(c) Choose two distinct points $P$ and $P^\prim$ of $\p(W)$. Using the notation 
from the proof of (a), one must have $V^\prim \neq V$. Since $W \subset 
\bigwedge^2 V^\prim$, the assertion follows easily.    
\end{proof} 


\begin{lemma}\label{L:t(-2)ra2o} 
Let $S_1 := {\fam0 H}^0(\sco_\piii(1))$ and let $W$, $\Delta$ and $\mathbb{G}$ 
be as in Lemma~\ref{L:lineinp5} for $V = S_1$. The map $L \mapsto 
[\bigwedge^2 {\fam0 H}^0(\sci_L(1))]$ identifies the lines in 
$\piii$ with the points of $\mathbb{G}$. Recall the exact sequence 
$0 \ra \Omega_\piii(1) \overset{u}{\lra} \sco_\piii \otimes_k S_1 
\overset{{\fam0 ev}}{\lra} \sco_\piii(1) \ra 0$ and the Koszul complex$\, :$ 
\[
\cdots \lra \sco_\piii(-1) \otimes_k {\textstyle \bigwedge}^2 S_1 
\overset{d_2}{\lra} \sco_\piii \otimes_k S_1 \overset{d_1}{\lra} \sco_\piii(1) 
\lra 0\, , 
\]
with $d_1$ the evaluation morphism. Let us denote by $\rho$ the composite 
morphism$\, :$ 
\[
\sco_\piii \otimes_k W \lra \sco_\piii \otimes_k 
{\textstyle \bigwedge}^2 S_1 \xra{d_2(1)} \Omega_\piii(2)\, . 
\]

\emph{(a)} If $\p(W)$ intersects $\mathbb{G}$ in two distinct points,  
corresponding to two disjoint lines $L$ and $L^\prime$ in $\piii$, then 
${\fam0 Coker}\, \rho^\vee \simeq \sco_{L\cup L^\prime}$.  

\emph{(b)} If $\p(W)$ is tangent to $\mathbb{G}$ at a point corresponding to 
a line $L \subset \piii$ then ${\fam0 Coker}\, \rho^\vee \simeq \sco_X$, where 
$X$ is the divisor $2L$ on a nonsingular quadric surface $Q \supset L$. 

\emph{(c)} If $\p(W) \subset \mathbb{G}$ then ${\fam0 Coker}\, \rho^\vee 
\simeq \sci_{\{x\},H}(1)$, for some plane $H \subset \piii$ and some point 
$x \in H$. 
\end{lemma}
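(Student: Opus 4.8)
The plan is to reduce, by means of Lemma~\ref{L:lineinp5}, to an explicit normal form for $W$ in each of the three cases and then to compute $\Cok \rho^\vee$ directly. First I would make $\rho$ explicit. Since $d_1$ is the evaluation morphism, $\Ker d_1 = \Omega_\piii(1)$, and the Koszul differential induces an epimorphism $d_2(1) \colon \sco_\piii \otimes_k \bigwedge^2 S_1 \ra \Omega_\piii(2)$ sending $x_i \wedge x_j$ to the section $x_ie_j - x_je_i$ of $\Omega_\piii(2) \subset \sco_\piii(1) \otimes_k S_1$, where $x_0, \ldots , x_3$ is the chosen basis of $S_1$ and $e_0, \ldots , e_3$ the corresponding basis appearing in the Euler sequence. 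Thus $\rho$ is the restriction of $d_2(1)$ to $\sco_\piii \otimes_k W$, and $\rho^\vee \colon \text{T}_\piii(-2) \ra W^\vee \otimes_k \sco_\piii = 2\sco_\piii$ is its transpose. The guiding principle is that, at a point $p \in \piii$, the map $\rho^\vee$ is surjective exactly when $\rho_p \colon W \ra \Omega_\piii(2)(p)$ is injective; hence the support of $\Cok \rho^\vee$ is the set of points $p$ at which some nonzero $w \in W$ has $\rho(w)(p) = 0$, a locus read off at once from the explicit coefficients of $\rho(w)$.

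In case (a), with $W = \langle x_0 \wedge x_1 , x_2 \wedge x_3 \rangle$, the sections $s_1 := x_0e_1 - x_1e_0$ and $s_2 := x_2e_3 - x_3e_2$ vanish precisely on the disjoint lines $L = \{x_0 = x_1 = 0\}$ and $L^\prime = \{x_2 = x_3 = 0\}$, and the principle above yields $\text{Supp}\, \Cok \rho^\vee = L \cup L^\prime$. Since $L \cap L^\prime = \emptyset$, near $L$ the section $s_2$ is nowhere zero, so there the component $\rho_2^\vee$ is an epimorphism; eliminating the corresponding copy of $\sco_\piii$ identifies $\Cok \rho^\vee$, locally around $L$, with $\sco_{Z(\bar s_1)}$, where $\bar s_1$ is the section of $\Omega_\piii(2)/\langle s_2 \rangle$ induced by $s_1$, and as $s_1$ vanishes simply along $L$ this is $\sco_L$. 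Symmetrically near $L^\prime$, whence $\Cok \rho^\vee \simeq \sco_{L \cup L^\prime}$. In case (b), with $W = \langle x_0 \wedge x_1 , x_0 \wedge x_3 + x_1 \wedge x_2 \rangle$, the second section $s_2 := (x_0e_3 - x_3e_0) + (x_1e_2 - x_2e_1)$ has coefficient vector $(-x_3 , -x_2 , x_1 , x_0)$ and so is nowhere zero; consequently $\rho_2^\vee$ is an everywhere split epimorphism whose kernel is a rank $2$ bundle $\scm^\vee$, where $\scm := \Omega_\piii(2)/\langle s_2 \rangle$, and the same elimination gives $\Cok \rho^\vee \simeq \sco_X$ with $X = Z(\bar s_1)$ the zero scheme of the section $\bar s_1$ of $\scm$ induced by $s_1 = x_0e_1 - x_1e_0$. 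A Chern class computation gives $c_2(\scm) = c_2(\Omega_\piii(2)) = 2$, so $X$ is a curve of degree $2$ supported, set-theoretically, on the line $L = \{x_0 = x_1 = 0\}$ attached to the tangency point of $\p(W)$.

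In case (c), with $W = \langle x_0 \wedge x_1 , x_0 \wedge x_2 \rangle$, the common factor $x_0$ forces $s_1 = x_0e_1 - x_1e_0$ and $s_2 = x_0e_2 - x_2e_0$ to become proportional (both multiples of $e_0$) along the whole plane $H = \{x_0 = 0\}$, so $\Cok \rho^\vee$ is supported on $H$; it has rank $1$ over the generic point of $H$ and jumps only at the point $x = (0 : 0 : 0 : 1)$ where $s_1$ and $s_2$ both vanish, which is what should identify $\Cok \rho^\vee$ with $\sci_{\{x\} , H}(1)$. The routine parts of the argument are the coefficient bookkeeping and the rank and Chern class counts; the real obstacle is upgrading the set-theoretic support to the asserted scheme structure in the two degenerate cases. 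In (b) I must show that the degree $2$ scheme $X$ is the divisor $2L$ on a \emph{nonsingular} quadric $Q \supset L$ and not a planar double line, which comes down to analysing the behaviour of $\bar s_1$ to second order transversally to $L$ and recovering $Q$ from the tangent vector $x_0 \wedge x_3 + x_1 \wedge x_2$ to $\mathbb{G}$; in (c) I must check that the embedded structure at $x$ is exactly $\sci_{\{x\} , H}$, carrying the correct twist $\sco_\piii(1)$. Both reduce to local computations in the frames furnished by the Koszul complex, carried out on the normal forms of Lemma~\ref{L:lineinp5}.
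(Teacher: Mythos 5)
Your route is genuinely different from the paper's. The paper never analyses $\rho^\vee$ directly: in each case it builds, from the normal form of $W$, an explicit morphism out of $\Omega_\piii(2)$ (via an auxiliary endomorphism or functional of $S_1$) whose image is a twisted ideal sheaf, obtains the short exact sequence $0 \ra \sco_\piii\otimes_k W \overset{\rho}{\lra} \Omega_\piii(2) \lra \sci_Z(2) \lra 0$ with $Z = L\cup L^\prime$ in (a) and $Z = X$ in (b), and then dualizes it, using $\sce xt^1_{\sco_\piii}(\sci_Z(2),\sco_\piii) \simeq \omega_Z(2) \simeq \sco_Z$. Your dualize-first-and-eliminate strategy does work for (a) and (b), and in (b) it can even be finished cheaply: the quotient $\scm = \Omega_\piii(2)/\langle s_2\rangle$ is $N(1)$ for a nullcorrelation bundle $N$, and the zero scheme of a section of $N(1)$ with one-dimensional support is either two disjoint lines or a double line on a nonsingular quadric (a fact the paper itself uses, e.g. in the proof of Lemma~\ref{L:(3;-1,2,-2)}); since here the support is the single line $L$, it must be the divisor $2L$ on a nonsingular quadric. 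One caveat on (a): the justification ``as $s_1$ vanishes simply along $L$ this is $\sco_L$'' is not valid as stated, because simple vanishing of $s_1$ does not imply simple vanishing of its image $\bar s_1$ in the quotient bundle — case (b) is exactly the counterexample, where $s_1$ also vanishes simply along $L$ and yet $Z(\bar s_1) = 2L$. In (a) the conclusion is correct, but only because the computation of $Z(\bar s_1)$ in a local frame of the quotient gives the reduced structure, not because of the principle you invoke.

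The genuine gap is case (c), and it is structural, not just a deferred computation. Since $\p(W)\subset\mathbb{G}$, \emph{every} member $as_1+bs_2$ of the pencil is decomposable, hence vanishes along a line, and all these lines pass through $x$; in particular every member of the pencil vanishes at $x$, so there is no section to eliminate near $x$ and no rank-two quotient bundle to reduce to — the device that powers (a) and (b) is unavailable precisely at the point where the structure must be determined. Moreover, the asserted answer $\sci_{\{x\},H}(1)$ contains data that no ``local computation in frames'' can produce: knowing that $\Cok\rho^\vee$ is supported on $H$, is generically of rank $1$ there, and jumps at $x$ is equally consistent with $\sci_{\{x\},H}(d)$ for every $d$ and with sheaves having an embedded point at $x$, such as $\sco_H\oplus\sco_{\{x\}}$. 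You still have to prove that $\Cok\rho^\vee$ is annihilated by $h_0$ (i.e. is an $\sco_H$-module for the reduced structure), that it is torsion free over $\sco_H$, and you must compute the twist, which is a global invariant. The paper resolves (c) by a different, homological mechanism: it resolves $\sci_{\{x\}}(1)$ by the truncated Koszul complex on $V^\prim = kh_0+kh_1+kh_2$, fits $\sco_\piii\otimes_k W \ra \Omega_\piii(2)$ into a short exact sequence of complexes whose quotient row is $\sco_\piii(-1)\overset{h_0}{\lra}\sco_\piii$, and dualizes that sequence of complexes, obtaining the four-term exact sequence $0 \ra \sco_\piii(-1) \ra \text{T}_\piii(-2) \overset{\rho^\vee}{\lra} \sco_\piii\otimes_k W^\vee \ra \sci_{\{x\},H}(1) \ra 0$. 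To complete your proof you need either this argument or a substitute for it — for instance, identify $\Ker \rho^\vee \simeq \sco_\piii(-1)$, establish the local structure of the cokernel along $H$, and pin down the twist by restricting the presentation to a line in $H$ avoiding $x$ or by a Hilbert polynomial count. As written, ``which is what should identify'' is a placeholder, not an argument.
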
 


\begin{proof} 
(a) According to Lemma~\ref{L:lineinp5}(a), there exists a basis $h_0, \ldots 
, h_3$ of $S_1$ such that $W$ is spanned by $h_0 \wedge h_1$ and $h_2 \wedge 
h_3$. $L$ (resp., $L^\prime$) is the line of equations $h_0 = h_1 = 0$ (resp., 
$h_2 = h_3 = 0$). Let $\alpha \colon S_1 \ra S_1$ be the linear automorphism 
defined by $\alpha(h_i) = -h_i$, $i = 0,\, 1$, $\alpha(h_i) = h_i$, 
$i = 2,\, 3$, and let $\theta$ be the composite morphism$\, :$ 
\[
\Omega_\piii(1) \overset{u}{\lra} \sco_\piii\otimes_k S_1 \overset{\alpha}{\lra} 
\sco_\piii\otimes_k S_1 \overset{\text{ev}}{\lra} \sco_\piii(1)\, . 
\] 
Recall that $d_2(1)$ maps $h \wedge h^\prime \in \bigwedge^2 S_1$ to 
$h \otimes h^\prime - h^\prime \otimes h \in \tH^0(\sco_\piii(1)) \otimes_k S_1$. 
Computing $\theta(1)(h_i \wedge h_j)$ one sees easily that $\text{Im}\, \theta 
= \sci_{L \cup L^\prime}(1)$ and that one has an exact sequence$\, :$ 
\[
0 \lra \sco_\piii \otimes_k W \overset{\rho}{\lra} \Omega_\piii(2) 
\xra{\theta(1)} \sci_{L \cup L^\prime}(2) \lra 0\, . 
\]
Dualizing this exact sequence and taking into account that$\, :$ 
\[
\sce xt_{\sco_\piii}^1(\sci_{L \cup L^\prime}(2)\, ,\, \sco_\piii) \simeq 
\omega_{L \cup L^\prime}(2) \simeq \sco_{L \cup L^\prime}\, , 
\]
one gets the result from the statement. 

\vskip3mm 

(b) According to Lemma~\ref{L:lineinp5}(b), there exists a basis $h_0, \ldots 
, h_3$ of $S_1$ such that $W$ is spanned by $h_0 \wedge h_1$ and $h_0 \wedge 
h_3 - h_1 \wedge h_2$. $L$ is the line of equations $h_0 = h_1 = 0$. 
Let $\beta \colon S_1 \ra S_1$ be the linear endomorphism 
defined by $\beta(h_i) = 0$, $i = 0,\, 1$, $\beta(h_i) = h_{i-2}$, 
$i = 2,\, 3$, and let $\tau$ be the composite morphism$\, :$ 
\[
\Omega_\piii(1) \overset{u}{\lra} \sco_\piii\otimes_k S_1 \overset{\beta}{\lra} 
\sco_\piii\otimes_k S_1 \overset{\text{ev}}{\lra} \sco_\piii(1)\, . 
\]   
Computing $\tau(1)(h_i \wedge h_j)$ one sees easily that $\text{Im}\, \tau 
= \sci_X(1)$, where $X$ is the divisor $2L$ on the quadric surface $Q$ of 
equation $h_0h_3 - h_1h_2 = 0$ and that one has an exact sequence$\, :$ 
\[
0 \lra \sco_\piii \otimes_k W \overset{\rho}{\lra} \Omega_\piii(2) 
\xra{\tau(1)} \sci_X(2) \lra 0\, . 
\]
One concludes as in (a). 

\vskip3mm 

(c) According to Lemma~\ref{L:lineinp5}(c), there exists a basis $h_0, \ldots 
, h_3$ of $S_1$ such that $W$ is spanned by $h_0 \wedge h_1$ and $h_0 \wedge 
h_2$. Let $x \in \piii$ be the point of equations $h_0 = h_1 = h_2 = 0$ and 
$H$ the plane of equation $h_0 = 0$. Let $\lambda : S_1 \ra k$ be the linear 
function defined by $\lambda(h_i) = 0$, $i = 0,\, 1,\, 2$, $\lambda(h_3) = 1$,  
and let $\pi$ be the composite morphism$\, :$ 
\[
\Omega_\piii(1) \overset{u}{\lra} \sco_\piii \otimes_k S_1 
\overset{\lambda}{\lra} \sco_\piii \, . 
\]  
Computing $\pi(1)(h_i \wedge h_j)$ one sees easily that $\text{Im}\, \pi 
= \sci_{\{x\}}$ and that one has an exact sequence$\, :$ 
\[
0 \ra \sco_\piii(-1) \otimes_k {\textstyle \bigwedge}^3 V^\prim 
\xra{d_3(1)} \sco_\piii \otimes_k {\textstyle \bigwedge}^2 V^\prim 
\xra{d_2(1)} \Omega_\piii(2) \xra{\pi(1)} \sci_{\{x\}}(1) \ra 0\, , 
\]
where $V^\prim = kh_0 + kh_1 + kh_2$. Let $\mu \colon \bigwedge^2 V^\prim \ra k$ 
be the linear function vanishing on $W$ and such that 
$\mu(h_1 \wedge h_2) = 1$. Dualizing the short exact sequence of (horizontal) 
complexes$\, :$ 
\[
\begin{CD} 
0 @>>> 0 @>>> \sco_\piii \otimes W @>{\rho}>> \Omega_\piii(2) @>>> 0\\  
@VVV @VVV @VVV @\vert @VVV\\ 
0 @>>> \sco_\piii(-1) \otimes {\textstyle \bigwedge}^3 V^\prim 
@>{d_3(1)}>> \sco_\piii \otimes {\textstyle \bigwedge}^2 V^\prim 
@>{d_2(1)}>> \Omega_\piii(2) @>>> 0\\ 
@VVV @V{\wr}VV @V{\mu}VV @VVV @VVV\\ 
0 @>>> \sco_\piii(-1) @>{h_0}>> \sco_\piii @>>> 0 @>>> 0   
\end{CD}
\] 
and writting a convenient part of the long exact sequence of cohomology 
sheaves one gets an exact sequence$\, :$ 
\[
0 \lra \sco_\piii(-1) \xra{\pi^\vee(-1)} \text{T}_\piii(-2) 
\overset{\rho^\vee}{\lra} \sco_\piii \otimes_k W^\vee \lra \sci_{\{x\},H}(1) 
\lra 0\, . 
\qedhere 
\] 
\end{proof} 

\begin{lemma}\label{L:hyperplaneinp5} 
Let $V$ be a $4$-dimensional $k$-vector space, $V^\prim$ a $3$-dimensional 
vector subspace of $V$, and $W^\prim$ a $5$-dimensional vector subspace of 
$\bigwedge^2 V$. If $\bigwedge^2 V^\prim \subset W^\prim$ 
then there exists a $2$-dimensional vector subspace $U$ of $V^\prim$ such that 
$W^\prim = U \wedge V$. 
\end{lemma}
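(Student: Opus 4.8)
The statement is pure linear algebra about the Pl\"ucker embedding, and the cleanest route is via duality. The plan is to identify the $5$-dimensional subspace $W' \subset \bigwedge^2 V$ (where $\dim\bigwedge^2 V = 6$) as a hyperplane, hence as the kernel of a nonzero functional $\omega \in (\bigwedge^2 V)^\vee \cong \bigwedge^2 V^\vee$, which we regard as an alternating $2$-form on $V$. Under the pairing $\langle \phi \wedge \psi,\, v \wedge w\rangle = \phi(v)\psi(w) - \phi(w)\psi(v)$, the hypothesis $\bigwedge^2 V' \subseteq W'$ translates precisely into the condition that $\omega$ restricts to zero on $V'$, i.e.\ that the $3$-dimensional subspace $V'$ is isotropic for $\omega$. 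This reformulation is the conceptual heart of the argument; everything afterwards is extracting $U$ from $\omega$.

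The first substantive step is to show that $\omega$ is \emph{decomposable}. Since $\dim V = 4$, a nonzero alternating form $\omega$ has rank either $2$ or $4$. If the rank were $4$, then $\omega$ would be a symplectic form, for which every isotropic subspace has dimension at most $2$ (the Lagrangian bound); this contradicts $V'$ being isotropic of dimension $3$. Hence $\omega$ has rank $2$, and I would write $\omega = \phi \wedge \psi$ with $\phi,\psi \in V^\vee$ linearly independent. At this point a direct computation already shows that, setting $U_0 := \Ker \phi \cap \Ker \psi$ (which is $2$-dimensional because $\phi,\psi$ are independent), one has $U_0 \wedge V \subseteq \Ker \omega$, and since both spaces are $5$-dimensional (a basis computation shows $\dim(U_0 \wedge V) = 5$), they coincide: $W' = U_0 \wedge V$.

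The remaining, and genuinely delicate, point is that the subspace $U_0$ produced this way actually lies in $V'$, which is what the statement demands. Here I would use the isotropy condition a second time: $\omega|_{V'} = 0$ says that $\phi|_{V'} \wedge \psi|_{V'} = 0$ in $\bigwedge^2 (V')^\vee$, so $\phi|_{V'}$ and $\psi|_{V'}$ are linearly dependent. They cannot both vanish, for that would force $V' \subseteq \Ker\phi \cap \Ker\psi = U_0$, impossible by dimension ($3 > 2$). So, after replacing $\psi$ by a suitable combination $\psi - c\phi$ (which leaves $\phi\wedge\psi$, and hence $\omega$ and $U_0$, unchanged), I may arrange $\psi|_{V'} = 0$ and $\phi|_{V'} \neq 0$. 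Then $V' \subseteq \Ker\psi$, so $\Ker\phi \cap V' \subseteq U_0$; both are $2$-dimensional (the first because $\phi|_{V'}\neq 0$), whence $U_0 = \Ker\phi \cap V' \subseteq V'$. Taking $U := U_0$ finishes the proof. I expect the normalization replacing $\psi$ by $\psi - c\phi$ to be the main obstacle to watch, since it is exactly the maneuver that pins the intrinsic subspace $U_0$ inside $V'$; the decomposability step, while essential, is a standard rank argument.
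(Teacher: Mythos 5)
Your proof is correct, but it runs along a genuinely different line from the paper's. The paper's argument is a bare dimension count: choosing a basis $v_0, v_1, v_2$ of $V^\prim$ and extending by $v_3$ to a basis of $V$, one has $\bigwedge^2 V = \bigwedge^2 V^\prim \oplus (V^\prim \wedge v_3)$; since $W^\prim$ contains $\bigwedge^2 V^\prim$ and is not all of $\bigwedge^2 V$, the intersection $W^\prim \cap (V^\prim \wedge v_3)$ has dimension exactly $2$, hence equals $U \wedge v_3$ for some $2$-dimensional $U \subseteq V^\prim$, and then $W^\prim = \bigwedge^2 V^\prim \oplus (U \wedge v_3) = U \wedge V$ because $\bigwedge^2 V^\prim = U \wedge V^\prim$. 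In that construction the inclusion $U \subseteq V^\prim$ is automatic, whereas in your dual approach it is the delicate last step. What your route buys is conceptual clarity and a uniqueness statement: writing $W^\prim = \Ker \omega$ for an alternating $2$-form $\omega$, the hypothesis says $V^\prim$ is $\omega$-isotropic, the Lagrangian bound forces $\omega$ to have rank $2$, i.e.\ $\omega = \phi \wedge \psi$ decomposable, and $U$ is then intrinsically the radical $\{v \in V : v \wedge V \subseteq W^\prim\} = \Ker \phi \cap \Ker \psi$ --- in particular unique. Geometrically, decomposability of $\omega$ says exactly that the hyperplane $\p(W^\prim)$ is tangent to the Pl\"ucker quadric $\mathbb{G}$, so your proof exhibits the lemma as the statement that any hyperplane containing the plane $\p(\bigwedge^2 V^\prim) \subset \mathbb{G}$ is a tangent hyperplane, touching $\mathbb{G}$ at a point of that plane; this meshes well with how the lemma is applied in Lemma~\ref{L:5oraomega(2)}, where the dichotomy is precisely transverse versus tangent hyperplanes. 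One small repair to your write-up: the replacement $\psi \mapsto \psi - c\phi$ presupposes $\phi|_{V^\prim} \neq 0$; in the case $\phi|_{V^\prim} = 0$ (and then necessarily $\psi|_{V^\prim} \neq 0$) you must first interchange $\phi$ and $\psi$, which is harmless since $\psi \wedge \phi = -\omega$ cuts out the same hyperplane.
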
 

\begin{proof} 
Let $v_0 , \ldots , v_3$ be a $k$-basis of $V$ such that $v_0,\, v_1,\, v_2$ is 
a basis of $V^\prim$. Then $\bigwedge^2 V = 
\bigwedge^2 V^\prim \oplus (V^\prim \wedge v_3)$. Since 
$W^\prim \cap (V^\prim \wedge v_3)$ has dimension 2 (it has dimension at least 
2 and cannot have dimension 3 because $W^\prim$ is smaller than 
$\bigwedge^2 V$) it follows that $W^\prim = 
\bigwedge^2 V^\prim \oplus (W^\prim \cap (V^\prim \wedge v_3))$. 
There exists a 2-dimensional vector subspace $U$ of $V^\prim$ such that 
$W^\prim \cap (V^\prim \wedge v_3) = U \wedge v_3$. Since 
$\bigwedge^2 V^\prim = U \wedge V^\prim$ it follows that 
$W^\prim = U \wedge V$. 
\end{proof}

\begin{lemma}\label{L:5oraomega(2)} 
With the notation from Lemma~\ref{L:t(-2)ra2o} let $W^\prim$ be a 
$5$-dimensional vector subspace of $\bigwedge^2 S_1$ and let 
$\rho^\prim$ denote the composite morphism$\, :$ 
\[
\sco_\piii \otimes_k W^\prim \lra \sco_\piii \otimes_k 
{\textstyle \bigwedge}^2 S_1 \xra{d_2(1)} \Omega_\piii(2)\, . 
\] 

\emph{(a)} If $\p(W^\prim)$ intersects $\mathbb{G}$ transversely then 
$\rho^\prim$ is an epimorphism. 

\emph{(b)} If $\p(W^\prim)$ is tangent to $\mathbb{G}$ at a point $P$ 
corresponding to a line $L \subset \piii$ then ${\fam0 Coker}\, \rho^\prim 
\simeq \sco_L$. 
\end{lemma}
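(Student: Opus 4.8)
The plan is to analyze $\rho^\prim$ fibrewise and reduce everything to the projective geometry of the Pl\"{u}cker quadric $\mathbb{G} \subset \pv$. First I would record, from the Koszul complex written in Lemma~\ref{L:t(-2)ra2o}, that $d_2(1) \colon \sco_\piii \otimes_k \bigwedge^2 S_1 \to \Omega_\piii(2)$ is an epimorphism (its image is $\Ker(\sco_\piii(1) \otimes S_1 \to \sco_\piii(2)) = \Omega_\piii(2)$) whose kernel is $\text{Im}\, d_3(1)$. Since $\rho^\prim$ is the restriction of $d_2(1)$ to the corank-one subbundle $\sco_\piii \otimes W^\prim$, its cokernel is supported on the locus $Z$ where $\rho^\prim$ is not surjective on fibres, and the snake lemma applied to the diagram comparing $0 \to \sco_\piii \otimes W^\prim \to \sco_\piii \otimes \bigwedge^2 S_1 \to \sco_\piii \to 0$ with $d_2(1)$ shows that $\Cok \rho^\prim \simeq \sco_\piii/\scj$, where $\scj$ is the image in $\sco_\piii$ of $\Ker d_2(1) = \text{Im}\, d_3(1)$ under the projection $\sco_\piii \otimes \bigwedge^2 S_1 \to \sco_\piii \otimes (\bigwedge^2 S_1 / W^\prim)$.

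The key point is a fibre computation. At a point $p \in \piii$, writing $H_p := \tH^0(\sci_{\{p\}}(1))$ for the $3$-dimensional space of linear forms vanishing at $p$, one has $\Omega_\piii(2)(p) \simeq H_p$, and the fibre of $d_2(1)$ is the map $\bigwedge^2 S_1 \to H_p$ sending $h \wedge h^\prime$ to $h(p)h^\prime - h^\prime(p)h$. Choosing a basis $h_0,\, h_1,\, h_2$ of $H_p$ and a form $h_3$ with $h_3(p) \neq 0$, one checks at once that this map is surjective with kernel exactly $\bigwedge^2 H_p$, and that $\p(\bigwedge^2 H_p)$ is the plane $\Pi_p \subset \mathbb{G}$ parametrizing the lines of $\piii$ through $p$. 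Hence $\rho^\prim$ is surjective at $p$ if and only if $\bigwedge^2 H_p \not\subset W^\prim$, that is, if and only if $\Pi_p \not\subset \p(W^\prim)$; equivalently, $Z = \{p \in \piii : \Pi_p \subset \p(W^\prim)\}$.

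This already settles (a): there $\p(W^\prim) \cap \mathbb{G}$ is a nonsingular quadric threefold, whose maximal linear subspaces are lines, so it contains no plane; thus $\Pi_p \not\subset \p(W^\prim)$ for every $p$, $Z = \emptyset$, and $\rho^\prim$ is an epimorphism. For (b), $\p(W^\prim)$ is the tangent hyperplane $\text{T}_P\mathbb{G}$ at the decomposable point $P = [\bigwedge^2 \tH^0(\sci_L(1))]$, whence $W^\prim = \tH^0(\sci_L(1)) \wedge S_1$ (this is the content of Lemma~\ref{L:hyperplaneinp5}); now $\p(W^\prim) \cap \mathbb{G}$ is a cone with vertex $P$, and since every plane contained in such a cone passes through its vertex, $\Pi_p \subset \p(W^\prim)$ forces $P \in \Pi_p$, i.e. $p \in L$. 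Conversely, if $p \in L$ then $\bigwedge^2 \tH^0(\sci_L(1)) \subset \bigwedge^2 H_p$ and, because $\bigwedge^4 H_p = 0$, one gets $\bigwedge^2 H_p \subset W^\prim$. Therefore $Z = L$ as sets.

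The \emph{main obstacle} is to upgrade $Z = L$ to the scheme-theoretic identity $\Cok \rho^\prim \simeq \sco_L$, and here the explicit normal form from case (b) pays off. Writing $W^\prim = \langle h_0,\, h_1 \rangle \wedge S_1$ with $L = \{h_0 = h_1 = 0\}$ and taking $\omega_0 := h_2 \wedge h_3$ as a generator of $\bigwedge^2 S_1 / W^\prim$, I would compute the $\omega_0$-components of $d_3(1)$ on the basis $h_i \wedge h_j \wedge h_k$ of $\bigwedge^3 S_1$: only $d_3(h_0 \wedge h_2 \wedge h_3)$ and $d_3(h_1 \wedge h_2 \wedge h_3)$ contribute, with components $h_0$ and $h_1$ respectively, while the two triples containing both $h_0$ and $h_1$ contribute $0$. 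Consequently the ideal $\scj$ above equals the image of $(h_0,\, h_1) \colon 2\sco_\piii(-1) \to \sco_\piii$, namely $\sci_L$, so that $\Cok \rho^\prim \simeq \sco_\piii/\sci_L = \sco_L$, as asserted. The only care needed is to confirm that no embedded or multiple structure creeps in, which is exactly what the computation of $\scj = \sci_L$ rules out.
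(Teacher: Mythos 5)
Your proposal is correct and follows essentially the same route as the paper's own proof: for (a), the fibrewise identification of the kernel of $d_2(1)$ at a point $p$ with $\bigwedge^2 \tH^0(\sci_{\{p\}}(1))$, and for (b), projecting the Koszul syzygies $\text{Im}\, d_3(1)$ onto $\sco_\piii \otimes_k \bigl(\bigwedge^2 S_1/W^\prim\bigr) \simeq \sco_\piii$ and computing the image ideal to be $\sci_L$ (your coefficient-of-$h_2 \wedge h_3$ projection is exactly the paper's functional $\mu^\prim$). The only cosmetic difference is in (a): you exclude $\bigwedge^2 \tH^0(\sci_{\{p\}}(1)) \subset W^\prim$ by the fact that a smooth quadric threefold contains no planes, whereas the paper invokes Lemma~\ref{L:hyperplaneinp5} to see that such an inclusion would force $\p(W^\prim)$ to be a tangent hyperplane of $\mathbb{G}$ — two equivalent formulations of the same linear-algebra fact.
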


\begin{proof} 
(a) Let $x$ be a point in $\piii$. If $V^\prim = \tH^0(\sci_{\{x\}}(1))$ then 
the kernel of the surjective map$\, :$ 
\[
{\textstyle \bigwedge}^2 S_1 \Izo \tH^0(\Omega_\piii(2)) \lra 
\Omega_\piii(2)(x) 
\]
is $\bigwedge^2 V^\prim$. Since $\p(W^\prim )$ is not tangent to 
$\mathbb{G}$ at any of its points, Lemma~\ref{L:hyperplaneinp5} implies 
that $W^\prim$ does not contain $\bigwedge^2 V^\prim$. It follows 
that the composite map$\, :$ 
\[
W^\prim \xra{\tH^0(\rho^\prim )} \tH^0(\Omega_\piii(2)) \lra \Omega_\piii(2)(x) 
\] 
is surjective. 

(b) Let $U = \tH^0(\sci_L(1)) \subset S_1$. One has $W^\prim = U \wedge S_1$. 
Extend a basis $h_0,\, h_1$ of $U$ to a basis $h_0, \ldots ,h_3$ of $S_1$. 
Let $\mu^\prim \colon \bigwedge^2 S_1 \ra k$ be the linear function 
vanishing on $W^\prim$ and such that $\mu^\prim (h_2 \wedge h_3) = 1$. The image 
of the composite morphism$\, :$ 
\[
\sco_\piii(-1) \otimes {\textstyle \bigwedge}^3 S_1 \xra{d_3(1)} 
\sco_\piii \otimes {\textstyle \bigwedge}^2 S_1 
\overset{\mu^\prim}{\lra} \sco_\piii 
\]  
is $\sci_L$. One can conclude, now, as in the last part of the proof of 
Lemma~\ref{L:t(-2)ra2o}(c) (without dualizing the short exact sequence of 
complexes). 
\end{proof} 

\begin{lemma}\label{L:igamma(2)} 
Let $\Gamma$ be a subscheme of $\piii$ consisting of five simple points 
$P_1 , \ldots , P_5$ such that no four of them are contained in a plane. Then 
$\sci_\Gamma(2)$ is globally generated. 
\end{lemma}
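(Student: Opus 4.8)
The plan is to reduce to an explicit normal form and then verify global generation fibrewise. First I would exploit the hypothesis that no four of the $P_i$ are coplanar: four points of $\piii$ lie in a plane exactly when their representing vectors are linearly dependent, so the assumption forces every four of the five points to be projectively independent. In particular $P_1,\dots,P_4$ form a projective frame, and after a linear change of coordinates I may assume that $P_i$ is the $i$-th coordinate point for $1\le i\le 4$. Writing $P_5=(a_0:a_1:a_2:a_3)$, the no-four-coplanar condition forces every $a_i\neq 0$ (if say $a_3=0$ then $P_5$ lies in the plane $x_3=0$ together with the three coordinate points $P_1,P_2,P_3$, a contradiction), so after rescaling the coordinates I may assume $P_5=(1:1:1:1)$.

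Next I would compute $\tH^0(\sci_\Gamma(2))$ in these coordinates. Vanishing at the four coordinate points kills every square $x_i^2$, so a quadric through $P_1,\dots,P_4$ is a combination $\sum_{i<j}a_{ij}x_ix_j$ of the six products $x_ix_j$ with $i<j$; imposing vanishing at $(1:1:1:1)$ adds the single equation $\sum_{i<j}a_{ij}=0$. Hence $\tH^0(\sci_\Gamma(2))$ is the $5$-dimensional space of quadrics $\sum_{i<j}a_{ij}x_ix_j$ with $\sum_{i<j}a_{ij}=0$, so in particular $\Gamma$ imposes independent conditions on quadrics. To prove global generation I would then treat points off $\Gamma$ and points of $\Gamma$ separately. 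For $x\in\piii\setminus\Gamma$ it suffices to exhibit a quadric through $\Gamma$ not vanishing at $x$, i.e. to show that the base locus of $\tH^0(\sci_\Gamma(2))$ is exactly $\Gamma$. A point $x=(x_0:\cdots:x_3)$ lies in this base locus iff $\sum_{i<j}a_{ij}x_ix_j=0$ for all $(a_{ij})$ with $\sum a_{ij}=0$, which says precisely that the vector $(x_ix_j)_{i<j}$ is proportional to $(1,\dots,1)$; solving the equalities $x_0x_1=x_0x_2=\cdots=x_2x_3$ yields (when the common value is nonzero) the point $(1:1:1:1)$ and (when it is zero) the four coordinate points, that is, exactly $\Gamma$.

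The remaining, and most delicate, step will be global generation at the points of $\Gamma$ themselves: here $\tH^0(\sci_\Gamma(2))$ is small, so I must check that the quadrics through $\Gamma$ separate tangent directions at each $P_i$. By Nakayama this amounts to showing that the linear parts at $P_i$ of the quadrics in $\tH^0(\sci_\Gamma(2))$ span the three-dimensional cotangent space $\fm_{P_i}/\fm_{P_i}^2$. At a coordinate point, say $P_1=(1:0:0:0)$, the quadrics $x_0x_1-x_1x_2$, $x_0x_2-x_1x_2$, $x_0x_3-x_1x_2$ all lie in $\tH^0(\sci_\Gamma(2))$ and, in the affine chart $x_0=1$, have local linear parts $x_1,x_2,x_3$, which span; the other three coordinate points follow by the coordinate-permutation symmetry of the configuration, which fixes $(1:1:1:1)$ and preserves $\tH^0(\sci_\Gamma(2))$. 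At $P_5=(1:1:1:1)$ the same spanning reduces to a one-line linear-algebra check, namely that the three ``linear part'' functionals $a_{01}+a_{12}+a_{13}$, $a_{02}+a_{12}+a_{23}$, $a_{03}+a_{13}+a_{23}$ together with the functional $\sum_{i<j}a_{ij}$ are linearly independent on the coefficient space $k^6$. Establishing this fibrewise surjectivity at every point of $\piii$ is exactly what the evaluation morphism $\tH^0(\sci_\Gamma(2))\otimes_k\sco_\piii\to\sci_\Gamma(2)$ requires, and so yields the global generation of $\sci_\Gamma(2)$; I expect this tangent-separation verification at the points of $\Gamma$ to be the only real content, since everything off $\Gamma$ is settled by the base-locus computation.
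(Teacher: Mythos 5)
Your proof is correct, and it reaches the conclusion by a genuinely different route than the paper, even though both arguments share the skeleton forced by the statement: first show the base locus of $\vert \tH^0(\sci_\Gamma(2)) \vert$ is exactly $\Gamma$, then show the quadrics through $\Gamma$ separate tangent directions at each $P_i$. You normalize coordinates — the no-four-coplanar hypothesis lets you take $P_1, \ldots , P_4$ to be the coordinate points and $P_5 = (1:1:1:1)$ — after which everything is explicit linear algebra on the six coefficients $a_{ij}$: the base-locus computation and the spanning of $\fm_{P_i}/\fm_{P_i}^2$ become finite checks, and the symmetry of the normal form under coordinate permutations disposes of all four coordinate points at once. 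The paper instead argues synthetically, with no choice of coordinates: the base locus is controlled by restricting the linear system to the planes spanned by triples of the points (the kernel of the restriction map being the products $hh^\prime$ with $h^\prime$ vanishing at the two remaining points), and tangent directions at $P_1$ are separated by reducible quadrics $hh^\prime$ whose factors are planes through complementary subsets of $\Gamma$. What each buys: your normal form makes the proof a self-contained computation and uses the hypothesis very transparently (it is exactly what produces the projective frame and forces all coordinates of $P_5$ to be nonzero); the paper's synthetic argument is coordinate-free, adapts more easily to configurations without a normal form, and is supplemented by the citation of Saint-Donat and Green--Lazarsfeld, which gives the stronger fact that the homogeneous ideal $I(\Gamma)$ is generated by its quadrics. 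One small remark: you state but do not carry out the linear-independence check at $P_5$; it does hold, and is immediate — in a relation $\alpha c_1 + \beta c_2 + \gamma c_3 + \delta \lambda = 0$ the coefficients of $a_{01}$ and $a_{12}$ give $\beta = 0$, then the coefficient of $a_{02}$ gives $\delta = 0$, whence $\alpha = \gamma = 0$ — so this leaves no gap.
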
 

\begin{proof}
One has $\h^0(\sci_\Gamma(2)) = 5$ because the points of $\Gamma$ impose 
independent conditions on quadratic forms. 
It is easy to see that the linear system $\vert \, \tH^0(\sci_\Gamma(2)) \, 
\vert$ has no base point outside the union of the planes containing three of 
the points of $\Gamma$. Let $H$ be the plane containing $P_1,\, P_2,\, P_3$ 
and let $h = 0$ be an equation of $H$. The kernel of the restriction map 
$\tH^0(\sci_\Gamma(2)) \ra \tH^0(\sci_{H \cap \Gamma , H}(2))$ consists of the 
elements of the form $hh^\prime$, where $h^\prime$ is a linear form vanishing at 
$P_4$ and $P_5$. One deduces that the above restriction map is surjective 
hence the base points of the linear system $\vert \, \tH^0(\sci_\Gamma(2)) \, 
\vert$ contained in $H$ are $P_1$, $P_2$ and $P_3$. Consequently, the base 
points of $\vert \, \tH^0(\sci_\Gamma(2)) \, \vert$ are $P_1, \ldots , P_5$. 

Finally, considering the elements of $\tH^0(\sci_\Gamma(2))$ of the form 
$hh^\prime$ where either $h$ vanishes at $P_1$ and $P_2$ and $h^\prime$ vanishes 
at $P_3$, $P_4$ and $P_5$ or $h$ vanishes at $P_1$ and $P_3$ and $h^\prime$ 
vanishes at $P_2$, $P_4$ and $P_5$, one sees that the map 
$\tH^0(\sci_\Gamma(2)) \ra (\sci_{\{P_1\}}/\sci^2_{\{P_1\}})(2)$ is surjective. 

There are, actually, general results of Saint-Donat \cite[Lemme~3]{s-d} and 
Green and Lazarsfeld \cite[Thm.~1]{gl} which imply that the homogeneous ideal 
$I(\Gamma) \subset S$ is generated by $I(\Gamma)_2$.    
\end{proof} 

\begin{lemma}\label{L:3oqraix(1,3)} 
Let $Q \subset \piii$ be a nonsingular quadric surface and $x$ a point of 
$Q$. Fix an isomorphism $Q \simeq \pj \times \pj$. If $\scg$ is the kernel 
of a general epimorphism $3\sco_Q \ra \sci_{\{x\},Q}(1,3)$ then $\scg(2,2)$ is 
globally generated and ${\fam0 H}^1(\scg(2,2)) = 0$.  
\end{lemma}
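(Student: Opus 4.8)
The plan is to realise $\scg$ as an honest kernel bundle, reduce both assertions to statements on $\pj$ via one of the two rulings of $Q$, and carry out the fibrewise analysis there.

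First I would fix the geometry. Writing $x$ as the intersection of divisors of classes $(1,0)$ and $(0,1)$, the Koszul complex of $x$ gives, after twisting by $(1,3)$,
\[
0 \lra \sco_Q(0,2) \xra{\psi} \sco_Q(0,3) \oplus \sco_Q(1,2) \xra{\pi} \sci_{\{x\},Q}(1,3) \lra 0 .
\]
In particular $\sci_{\{x\},Q}(1,3)$ is globally generated with base locus exactly $\{x\}$; since $\dim Q = 2$, three general sections generate it, so a general $\epsilon\colon 3\sco_Q\to\sci_{\{x\},Q}(1,3)$ is surjective. Lifting $\epsilon$ through $\pi$ and forming the fibre product of $\epsilon$ and $\pi$ (which splits off a copy of $\sco_Q(0,2)$ because $\tH^1(\sco_Q(0,2))=0$) yields an exact sequence of \emph{vector bundles}
\[
0 \lra \scg \lra 3\sco_Q \oplus \sco_Q(0,2) \xra{\Psi} \sco_Q(0,3) \oplus \sco_Q(1,2) \lra 0 ,
\]
whose matrix $\Psi$ has last column $(v_1,-u_1)^{\mathrm t}$ and first three columns a lift of $\epsilon$. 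In particular $\scg=\Ker\epsilon$ is locally free of rank $2$, with $c_1(\scg)=(-1,-3)$ and $c_2(\scg)=5$. The decisive point is that this is a sequence of bundles, hence stays exact after restriction to any subscheme.

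Since global generation of $\scg(2,2)$ and the vanishing $\tH^1(\scg(2,2))=0$ are open conditions on the irreducible family of epimorphisms $\epsilon$, it suffices to verify them for a suitably general $\epsilon$. I would use the first projection $p_1\colon Q\to\pj$. The aim is to show that for general $\epsilon$ one has $\scg(2,2)\vb F_t\simeq\sco_\pj(1)\oplus\sco_\pj$ for \emph{every} fibre $F_t=\{t\}\times\pj$. Granting this, cohomology and base change give $R^1p_{1\ast}\scg(2,2)=0$, so $\scp:=p_{1\ast}\scg(2,2)$ is a rank $3$ bundle on $\pj$, the natural map $p_1^\ast\scp\to\scg(2,2)$ is surjective, and $\tH^1(\scg(2,2))=\tH^1(\pj,\scp)$; thus both conclusions follow at once from the single statement that $\scp$ is globally generated.

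For the fibrewise claim I would restrict the bundle sequence to $F_t$, obtaining $0\to\scg\vb F_t\to 3\sco_\pj\oplus\sco_\pj(2)\to\sco_\pj(3)\oplus\sco_\pj(2)\to 0$; away from the fibre $F_{t_0}$ through $x$ the last column is nondegenerate, and eliminating it identifies $\scg\vb F_t$ with the kernel of a map $3\sco_\pj\to\sco_\pj(3)$ given by three cubics $\tilde g_1,\tilde g_2,\tilde g_3$ depending on $t$. Such a kernel is $\sco_\pj(-1)\oplus\sco_\pj(-2)$ unless the $\tilde g_i$ are linearly dependent, in which case it is $\sco_\pj\oplus\sco_\pj(-3)$. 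The cubics form a pencil $t_0P+t_1Q$ of $3\times4$ coefficient matrices, and dependence is the rank $\le 2$ locus, of codimension $2$; for general $\epsilon$ this pencil meets that locus in $\emptyset$, while $F_{t_0}$ (where the last column becomes $(v_1,0)^{\mathrm t}$) is treated directly from the same restricted sequence. \textbf{This uniformity --- the absence of jumping lines in the $p_1$-ruling --- is the main obstacle}, since a single bad fibre already destroys global generation; it is exactly here that generality of $\epsilon$ is used, and it is verified by exhibiting one pencil with empty rank $\le 2$ locus and invoking openness.

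Finally, applying $p_{1\ast}$ to the sequence twisted by $\sco_Q(2,2)$, and using $p_{1\ast}\sco_Q(a,b)=\sco_\pj(a)^{\oplus(b+1)}$ together with $R^1p_{1\ast}\scg(2,2)=0$, presents $\scp$ as the kernel of a map $\sco_\pj(2)^{14}\to\sco_\pj(2)^{6}\oplus\sco_\pj(3)^{5}$; it has rank $3$ and degree $1$. For general $\epsilon$ this map is general enough that $\scp\simeq\sco_\pj(1)\oplus 2\sco_\pj$, which is globally generated and satisfies $\tH^1(\pj,\scp)=0$. By the reduction above this gives simultaneously that $\scg(2,2)$ is globally generated and that $\tH^1(\scg(2,2))=0$, completing the proof.
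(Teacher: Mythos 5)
Your reduction is sound as far as it goes, and its core coincides with the paper's: both arguments come down to showing that, for general $\epsilon$, the kernel $\scg$ has no jumping line in the ruling $\vert \, \sco_Q(1,0) \, \vert$ (your ``no jumping fibre of $p_1$'' is the paper's ``$\tH^0(\scg_L)=0$ for every $L \in \vert \, \sco_Q(1,0)\, \vert$''), and the Koszul lifting, the base-change mechanism and the numerology (rank $3$, degree $1$ for $p_{1\ast}\scg(2,2)$) are all correct. The genuine gap is your final step. Writing $\scq := p_{1\ast}\scg(2,2) \simeq \bigoplus_i \sco_\pj(a_i)$ with $a_1+a_2+a_3=1$, you have reduced \emph{both} assertions of the lemma to the single statement $\scq \simeq \sco_\pj(1)\oplus 2\sco_\pj$, and you justify it by saying that the map $\sco_\pj(2)^{14} \ra \sco_\pj(2)^6 \oplus \sco_\pj(3)^5$ is ``general enough'' for general $\epsilon$. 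It is not: that map is manufactured from the fixed Koszul matrix of $x$ together with $\epsilon$, so it moves in a proper subfamily of all such maps, and genericity of $\epsilon$ gives you no right to treat it as a generic map. Nothing you have proved excludes the types $(1,1,-1)$ or $(2,0,-1)$, which also have degree $1$ and for which $\scq$ is not globally generated; since everything was funneled through this splitting, the proof collapses here. The repair is to show $\h^0(\scq(-1)) \leq 1$: as $\scq(-1) = p_{1\ast}\scg(1,2)$ and $\sum a_i = 1$, this forces the type $(1,0,0)$. Since $\chi(\scg(1,2)) = 1$ and $\tH^2(\scg(1,2)) = 0$, the inequality is equivalent to $\tH^1(\scg(1,2)) = 0$ --- and this is exactly the paper's opening move, valid for \emph{every} epimorphism: dualizing $0 \ra \scg \ra 3\sco_Q \ra \sci_{\{x\},Q}(1,3) \ra 0$ and using $\scg^\vee \simeq \scg(1,3)$ gives
\[
0 \lra \sco_Q(-1,-3) \lra 3\sco_Q \lra \scg(1,3) \lra \sco_{\{x\}} \lra 0\, ,
\]
from which $\tH^1(\scg(1,2)) = 0$ (and $\tH^1(\scg(2,2)) = 0$) follow at once. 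This duality computation is the missing idea; with it, your pushforward argument closes.

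The second unfinished point is the no-jumping statement itself: you defer it to ``exhibiting one pencil with empty rank $\leq 2$ locus'', but no such pencil is exhibited, and this is not a formality --- in the paper this existence assertion is the bulk of the proof (the rational quartic $C \ni x$, the pencil $\Lambda$, and the verification of the Assumption). Your framework does, however, admit a completion that needs no explicit example: for a fixed fibre $F_t$ not containing $x$, the restriction map $\tH^0(\sci_{\{x\},Q}(1,3)) \ra \tH^0(\sco_{F_t}(3))$ is a surjection of a $7$-dimensional space onto a $4$-dimensional one, so the triples $(\epsilon_1,\epsilon_2,\epsilon_3)$ whose restrictions to $F_t$ are linearly dependent form the preimage, under a surjective linear map, of the rank $\leq 2$ locus of $3\times 4$ matrices, hence a subset of codimension $2$; sweeping $t$ over the $1$-dimensional base $\pj$ leaves a proper closed subset, and the fibre through $x$ is handled by your direct analysis (for general $\epsilon$ the restrictions $\epsilon_i \vb F_{t_0}$, all divisible by the equation of $x$ in $F_{t_0}$, have linearly independent quotients). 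With these two repairs your proof is complete, and indeed more economical than the paper's, whose explicit curve construction your dimension count replaces.
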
 

\begin{proof} 
Consider, for the moment, an arbitrary epimorphism $\phi \colon 3\sco_Q \ra 
\sci_{\{x\},Q}(1,3)$ and let $\scg$ be its kernel. One has $\text{det}\, \scg 
\simeq \sco_Q(-1,-3)$. Since $\scg$ is locally free of rank 2 it follows that 
$\scg^\vee \simeq \scg(1,3)$. Dualizing the exact sequence$\, :$ 
\[
0 \lra \scg \lra 3\sco_Q \overset{\phi}{\lra} \sci_{\{x\},Q}(1,3) \lra 0\, , 
\] 
one gets, consequently, an exact sequence$\, :$ 
\[
0 \lra \sco_Q(-1,-3) \lra 3\sco_Q \lra \scg(1,3) \lra \sco_{\{x\}} \lra 0\, . 
\]
One deduces, easily, that $\tH^1(\scg(1,2)) = 0$ and $\tH^1(\scg(2,2)) = 0$. 
If $L \subset Q$ is a line belonging to the linear system 
$\vert \, \sco_Q(1,0) \, \vert$ then, using the exact sequence$\, :$ 
\[
0 \lra \scg(1,2) \lra \scg(2,2) \lra \scg_L(2) \lra 0\, , 
\]
one gets that the map $\tH^0(\scg(2,2)) \ra \tH^0(\scg_L(2))$ is surjective. 
Consequently, $\scg(2,2)$ is globally generated if and only if $\scg_L(2)$ is 
globally generated, for every line $L \subset Q$ belonging to the linear
system $\vert \, \sco_Q(1,0) \, \vert$. 

For such a line $L$, $\scg_L$ is a subsheaf of $3\sco_L$. Since 
$\text{det}\, \scg_L \simeq \sco_L(-3)$ it follows that either $\scg_L \simeq 
\sco_L(-1) \oplus \sco_L(-2)$ or $\scg_L \simeq \sco_L \oplus \sco_L(-3)$. 
One deduces that $\scg(2,2)$ is globally generated if and only if 
$\tH^0(\scg_L) = 0$, for every line $L \subset Q$ belonging to the linear 
system $\vert \, \sco_Q(1,0) \, \vert$. 

We shall construct, now, an epimorphism $\phi \colon 3\sco_Q \ra 
\sci_{\{x\},Q}(1,3)$ such that its kernel $\scg$ satisfies the preceding 
condition. Choose $f_0 \in \tH^0(\sco_Q(1,3))$ such that its zero divisor is a 
nonsingular quartic rational curve $C \subset Q$ containing $x$. Let 
$L_0 \subset Q$ be the line from the linear system $\vert \, \sco_Q(1,0) \, 
\vert$ passing through $x$.   
If $p_1,\, p_2 \colon C \ra \pj$ are the canonical projections then $p_2$ is 
an isomorphism and $p_1^\ast\sco_\pj(1) \simeq p_2^\ast\sco_\pj(3)$ hence $p_1$ 
is defined by a base point free 2-dimensional vector subspace $\Lambda_1 
\subset \tH^0(p_2^\ast\sco_\pj(3))$. Choose $0 \neq \lambda \in 
\tH^0(p_2^\ast\sco_\pj(1))$ vanishing at $x$ and a base point free 2-dimensional 
vector subspace $\Lambda \subset \tH^0(p_2^\ast\sco_\pj(5))$. We work under the 
following$\, :$ 

\vskip2mm 

\noindent 
\emph{Assumption.}\quad None of the zero divisors of the elements of 
$\Lambda \setminus \{0\}$ contains a fiber of $p_1$ or the divisor 
$(L_0 \cap C) - x$ as a subscheme. 

\vskip2mm 

\noindent 
Denoting by $q_0$ a nonzero element of $\tH^0(p_2^\ast\sco_\pj(2))$ whose 
zero divisor is $(L_0 \cap C) - x$, the above assumption is equivalent to the 
fact that, on one hand, $\Lambda$ intersects the image of the bilinear 
multiplication map $\Lambda_1 \times \tH^0(p_2^\ast\sco_\pj(2)) \ra 
\tH^0(p_2^\ast\sco_\pj(5))$ only in $0$ and, on the 
other hand, that it intersects the vector subspace 
$q_0\tH^0(p_2^\ast\sco_\pj(3))$ of $\tH^0(p_2^\ast\sco_\pj(5))$ only in $0$. It 
follows that a general $\Lambda$ satisfies the assumption. 

\vskip2mm 

Choose, now, a $k$-basis $g_1,\, g_2$ of $\Lambda$.  
One has $\sco_Q(1,3) \vb C \simeq p_2^\ast\sco_\pj(6)$ and, using the exact 
sequence$\, :$ 
\[
0 \lra \sco_Q \overset{f_0}{\lra} \sco_Q(1,3) \lra \sco_Q(1,3) \vb C 
\lra 0\, , 
\]   
one sees that the map $\tH^0(\sco_Q(1,3)) \ra \tH^0(\sco_Q(1,3) \vb C)$ is 
surjective. Lift the element $\lambda g_i$ of $\tH^0(p_2^\ast\sco_\pj(6))$ to  
$f_i \in \tH^0(\sco_Q(1,3))$, $i = 1,\, 2$. $f_0,\, f_1,\, f_2$ define an 
epimorphism $\phi \colon 3\sco_Q \ra \sci_{\{x\},Q}(1,3)$. Let $\scg$ be its 
kernel.

Let us denote by $\psi$ the composite morphism $3\sco_Q \overset{\phi}{\lra} 
\sci_{\{x\},Q}(1,3) \hookrightarrow \sco_Q(1,3)$. If $L \subset Q$ is a line 
from the linear system $\vert \, \sco_Q(1,0) \, \vert$ then our assumption 
implies that $\tH^0(\psi_L) \colon \tH^0(3\sco_L) \ra \tH^0(\sco_L(3))$ is 
injective [\emph{indeed}, if $c_1f_1 + c_2f_2$ is a nontrivial linear 
combination of $f_1$ and $f_2$ then $(c_1f_1 + c_2f_2) \vb L$ does not vanish 
on the divisor $L \cap C$ because, if it does, then either $c_1g_1 +c_2g_2$ 
vanishes on $L \cap C$, if $L \neq L_0$, or it vanishes on $(L_0 \cap C) - x$, 
if $L = L_0$. Since the zero divisor of $f_0 \vb L$ is $L \cap C$, one derives  
that $f_0 \vb L,\, f_1 \vb L,\, f_2 \vb L$ are linearly independent]. 
Denoting by $\phi_L$ the morphism$\, :$ 
\[
\phi \otimes_{\sco_Q} \text{id}_{\sco_L} \colon 3\sco_L \lra 
\sci_{\{x\},Q}(1,3) \otimes_{\sco_Q} \sco_L  
\]  
one deduces that $\tH^0(\phi_L)$ is injective, hence $\tH^0(\scg_L) = 0$. 
\end{proof}

\begin{lemma}\label{L:2oraogamma} 
Let $\Gamma$ be a subscheme of $\p^n$, $n \geq 2$, consisting of $n + 1$ 
simple points $P_0, \ldots , P_n$ not contained in a hyperplane. Consider a 
morphism $\e \colon 2\sco_{\p^n} \ra \sco_\Gamma(1)$ with the property that the 
composite map$\, :$ 
\[
{\fam0 H}^0(2\sco_{\p^n}) \xra{{\fam0 H}^0(\e)} {\fam0 H}^0(\sco_\Gamma(1)) \lra 
{\fam0 H}^0(\sco_{\{P_i , P_j\}}(1)) 
\]
is bijective, for $0 \leq i < j \leq n$. Then one has an exact sequence$\, :$ 
\[
{\textstyle \binom{n+1}{3}}\sco_{\p^n}(-3) \overset{d_2}{\lra} 
(n+1)\sco_{\p^n}(-1) 
\overset{d_1}{\lra} 2\sco_{\p^n} \overset{\e}{\lra} \sco_\Gamma(1) \lra 0\, . 
\]
\end{lemma}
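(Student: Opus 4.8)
The plan is to reduce to the case where the $P_i$ are the coordinate points and then to recognise the asserted sequence as the tail of a Buchsbaum--Rim complex. First I would observe that $n+1$ points of $\p^n$ fail to lie in a hyperplane exactly when representative vectors in $k^{n+1}$ form a basis; hence, after a projective change of coordinates, I may assume $P_i$ is the $i$-th coordinate point. Then $\sco_{\p^n}(1)$ evaluates isomorphically onto $\bigoplus_i\sco_{\{P_i\}}(1)$ and $\sco_\Gamma(1)=\bigoplus_{i=0}^n\sco_{\{P_i\}}(1)$, while $\e$ is described at each $P_i$ by a functional $(s,t)\mapsto a_is+b_it$ on the two trivial summands. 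The bijectivity hypothesis on the composite $\tH^0(2\sco_{\p^n})\to\tH^0(\sco_{\{P_i,P_j\}}(1))$ says precisely that $a_ib_j-a_jb_i\neq0$ for all $i<j$; in particular each $(a_i,b_i)\neq0$, so $\e$ is an epimorphism.

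Next I would write down $d_1\colon(n+1)\sco_{\p^n}(-1)\to2\sco_{\p^n}$ with $i$-th column $\left(\begin{smallmatrix}-b_ix_i\\ a_ix_i\end{smallmatrix}\right)$. A one-line check gives $\e\circ d_1=0$. The $2\times2$ minors of $d_1$ are $(a_ib_j-a_jb_i)x_ix_j$, nonzero scalar multiples of the $x_ix_j$, so the ideal $I_2(d_1)$ of maximal minors is the saturated ideal $(x_ix_j:i\neq j)$ of $\Gamma$; since the coordinate ring is Cohen--Macaulay, $\text{grade}\,I_2(d_1)$ equals its height, which is $n$ because $V(I_2(d_1))=\Gamma$ has codimension $n=(n+1)-2+1$. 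This maximal grade is exactly the input that makes the Buchsbaum--Rim complex of $d_1$ acyclic (by the acyclicity criterion of Buchsbaum and Eisenbud). Writing $F=(n+1)\sco_{\p^n}(-1)$ and $G=2\sco_{\p^n}$, its last terms are
\[
{\textstyle\bigwedge}^3F\otimes{\textstyle\bigwedge}^2G^\vee\xrightarrow{\,d_2\,}F\xrightarrow{\,d_1\,}G\lra\Cok d_1\lra0,
\]
and $\bigwedge^2G^\vee\simeq\sco_{\p^n}$, $\bigwedge^3F\simeq\binom{n+1}{3}\sco_{\p^n}(-3)$, which accounts for the first term and the map $d_2$. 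Concretely the column of $d_2$ attached to a triple $i<j<l$ is the quadratic syzygy with $h_i=\det(v_j,v_l)\,x_jx_l$, and cyclically, where $v_m=(-b_m,a_m)$; the linear dependence of the three plane vectors $v_i,v_j,v_l$ produces exactly these relations among the $x_mv_m$.

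It then remains to identify $\Cok d_1$ with $\sco_\Gamma(1)$. Since $\e\circ d_1=0$, the map $\e$ factors through a surjection $\bar\e\colon\Cok d_1\to\sco_\Gamma(1)$ of sheaves supported on $\Gamma$. I would compute the length of $\Cok d_1$ locally at $P_i$: there $x_i$ is a unit, so the $i$-th column is the nonzero vector $v_i$ and splits off a free rank-one summand, while the remaining columns $x_mv_m$ ($m\neq i$) reduce, modulo $\sco\, v_i$, to $(x_m:m\neq i)=\fm_{P_i}$ times a complementary generator. Hence $\Cok d_1$ has length $1$ at each $P_i$ and total length $n+1=\text{length}\,\sco_\Gamma(1)$; a surjection between sheaves of equal finite length is an isomorphism, so $\bar\e$ is an isomorphism and, after substituting it, the Buchsbaum--Rim tail becomes the asserted sequence with $\e$ itself.

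The step I expect to be the main obstacle is the exactness at the middle term $(n+1)\sco_{\p^n}(-1)$, namely that $\text{Im}\,d_2=\Ker d_1$ with only $\binom{n+1}{3}$ generators; this is the one genuinely non-formal point. It is entirely governed by the grade computation $\text{grade}\,I_2(d_1)=n$, after which Buchsbaum--Rim acyclicity does the work. If one prefers to avoid invoking that theory, the alternative is to prove by hand that the triple syzygies above generate all syzygies of the $x_mv_m$, which amounts to reproving the relevant portion of the Buchsbaum--Rim complex. The reduction to coordinate points, the identity $\e\circ d_1=0$, and the length count are all routine.
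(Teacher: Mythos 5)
Your proof is correct, and it takes a genuinely different route from the one in the paper. You put $\Gamma$ in normal form (the coordinate points), write $d_1$ explicitly with columns $x_iv_i$, $v_i = (-b_i\, ,\, a_i)^{\text{t}}$, observe that the bijectivity hypothesis says precisely that the $2\times 2$ minors of $d_1$ are nonzero multiples of the $x_ix_j$, so that $I_2(d_1)$ is the ideal of $\Gamma$ and has the maximal grade $n = (n+1)-2+1$, and then let Buchsbaum--Rim acyclicity deliver exactness at $(n+1)\sco_{\p^n}(-1)$ --- which is indeed the only non-formal point --- while a local length count identifies $\Cok d_1$ with $\sco_\Gamma(1)$; all of these steps check out, including the cyclic form of the triple syzygies. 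The paper argues intrinsically instead: since $\tH^0(\sco_{\p^n}(1)) \izo \tH^0(\sco_\Gamma(1))$, the morphism $\e$ factors as $2\sco_{\p^n} \ra \sci_L(1) \hookrightarrow \sco_{\p^n}(1) \ra \sco_\Gamma(1)$, where $L$ is the codimension-$2$ linear space $h = h' = 0$, the hypothesis translating into $L$ missing all the lines $\overline{P_iP_j}$; it then shows that $\sck := \Ker \e$ is $2$-regular with $\tH^0(\sck) = 0$ and --- this is its key claim, proved by a Snake Lemma computation for $\sci_{L \cup \Gamma}$ --- that the multiplication map $\tH^0(\sck(1)) \otimes S_1 \ra \tH^0(\sck(2))$ is bijective, after which a general observation about $1$-regular sheaves with bijective multiplication produces a resolution of the asserted shape, the number $\binom{n+1}{3}$ coming out of the explicit triple relations listed at the end of that proof. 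What your route buys is economy and a conceptual explanation of the middle exactness (it is forced by the grade count, given the Buchsbaum--Rim/Buchsbaum--Eisenbud machinery as a black box); what the paper's route buys is independence from determinantal technology, uniformity with the regularity and multiplication-map techniques used throughout the paper, and the explicit description of $\tH^0(\sck(1))$ as the span of the columns of a concrete matrix in $h$, $h'$ and the hyperplanes $h_i = 0$ spanned by $\Gamma \setminus \{P_i\}$ --- which is exactly what is quoted later, in Construction 3.6 of the proof of Prop.~\ref{P:h2e(-3)=0h1e(-3)neq0c2=12}. Your normalized matrices carry the same information, so your argument would support those later uses equally well.
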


\begin{proof} 
We begin with a general observation$\, :$ let $\scf$ be a coherent sheaf on 
$\p^n$ such that $\tH^0(\scf(-1)) = 0$, $\scf$ is 1-regular, and the 
multiplication map $\tH^0(\scf) \otimes S_1 \ra \tH^0(\scf(1))$ is bijective. 
Then the graded $S$-module $\tH^0_\ast(\scf)$ has a graded resolution of the 
form$\, :$ 
\[
0 \lra \beta_nS(-n-1) \lra \cdots \lra \beta_1S(-2) \lra 
\tH^0(\scf) \otimes_k S \lra \tH^0_\ast(\scf) \lra 0\, . 
\]

\noindent 
\emph{Indeed}, the graded $S$-module $\tH^0_\ast(\scf)$ is generated in degrees 
$\leq 1$ (because $\scf$ is 1-regular) hence it is, actually, generated by 
$\tH^0(\scf)$. The evaluation morphism $\tH^0(\scf) \otimes_k \sco_{\p^n} \ra 
\scf$ of $\scf$ is, consequently, an epimorphism and its kernel $\scg$ is 
2-regular and $\tH^0(\scg(1)) = 0$. The above assertion follows immediately. 
Notice, also, that$\, :$ 
\[
\beta_1 = \dim_k\Ker(\tH^0(\scf) \otimes S_2 \ra \tH^0(\scf(2)))\, . 
\]

Now, since $\tH^0(\sco_{\p^n}(1)) \izo \tH^0(\sco_\Gamma(1))$, there exists two 
linear forms $h,\, h^\prime \in \tH^0(\sco_{\p^n}(1))$ such that $\e$ is defined 
by $h \vb \Gamma$ and $h^\prime \vb \Gamma$. If $L$ is the linear subspace of 
$\p^n$ of equations $h = h^\prime = 0$ then the hypothesis of the lemma is 
equivalent to the fact that $L$ does not intersect any of the lines 
$\overline{P_iP_j}$, $0 \leq i < j \leq n$. In particular, $L$ has codimension 
2 in $\p^n$. 

Since $\e$ can be written as the composite map$\, :$ 
\[
2\sco_{\p^n} \xra{(h\, ,\, h^\prime)} \sci_L(1) \hookrightarrow \sco_{\p^n}(1) 
\twoheadrightarrow \sco_\Gamma(1)\, , 
\] 
it follows that $\e$ is an epimorphism and that if $\sck$ is its kernel then 
one has exact sequences$\, :$ 
\begin{gather*} 
0 \lra \sco_{\p^n}(-1) \lra \sck \lra \sci_{L \cup \Gamma}(1) \lra 0\, ,\\ 
0 \lra \sci_{L \cup \Gamma} \lra \sci_L \lra \sco_\Gamma \lra 0\, . 
\end{gather*}
The map $\tH^0(\sci_L(2)) \ra \tH^0(\sco_\Gamma(2))$ is clearly surjective, 
hence $\sci_{L \cup \Gamma}$ is 3-regular. Moreover, 
$\tH^0(\sci_{L \cup \Gamma}(1)) = 0$. One deduces that $\sck$ is 2-regular and 
$\tH^0(\sck) = 0$.   

\vskip2mm 

\noindent 
{\bf Claim.}\quad \emph{The multiplication map} $\mu \colon 
\tH^0(\sci_{L \cup \Gamma}(2)) \otimes S_1 \ra \tH^0(\sci_{L \cup \Gamma}(3))$ 
\emph{is bijective}. 

\vskip2mm 

\noindent 
\emph{Indeed}, applying the Snake Lemma to the diagram$\, :$ 
\[
\begin{CD} 
0 @>>> \tH^0(\sci_{L \cup \Gamma}(2)) \otimes S_1 @>>> \tH^0(\sci_L(2)) 
\otimes S_1 @>>> \tH^0(\sco_\Gamma(2)) \otimes S_1 @>>> 0\\ 
@. @VV{\mu}V @VV{\mu_L}V @VV{\mu_\Gamma}V\\ 
0 @>>> \tH^0(\sci_{L \cup \Gamma}(3)) @>>> \tH^0(\sci_L(3)) @>>> 
\tH^0(\sco_\Gamma(3)) @>>> 0 
\end{CD}
\]
one sees that it suffices to show that the map $\Ker \mu_L \ra 
\Ker \mu_\Gamma$ is surjective (because, by dimensional reasons, it will be 
bijective). Let $h_i = 0$ (resp., $h_i^\prime = 0$) be an equation of the 
hyperplane containing $\Gamma \setminus \{P_i\}$ (resp., $L \cup \{P_i\}$), and 
let $e_i$ be the element of $\tH^0(\sco_\Gamma)$ defined by $e_i(P_j) = 
\delta_{ij}$, $j = 0, \ldots , n$, $i = 0, \ldots , n$. Since 
$\mu_\Gamma(e_i \otimes h_j) = \delta_{ij}e_i$ it follows that $\Ker \mu_\Gamma$ 
is generated by the elements $e_i \otimes h_j$ with $i \neq j$. But, for 
$i \neq j$, 
\[
\Ker \mu_L \ni h_ih_j^\prime \otimes h_j - h_jh_j^\prime \otimes h_i \mapsto 
(h_ih_j^\prime)(P_i) \cdot e_i \otimes h_j \in \Ker \mu_\Gamma  
\]   
hence the map $\Ker \mu_L \ra \Ker \mu_\Gamma$ is surjective. 

\vskip2mm 

One deduces, from the claim, that the multiplication map $\tH^0(\sck(1)) 
\otimes S_1 \ra \tH^0(\sck(2))$ is bijective. The conclusion of the lemma 
follows, now, by applying to $\scf : = \sck(1)$ the observation from the 
beginning of the proof. One can, actually, do something more, namely one can 
explicitate the matrices defining the differentials of the exact sequence 
from the conclusion of the lemma.  

\vskip2mm 

\noindent 
\emph{Indeed}, $h_i^\prime$ can be expressed as a linear combination 
$h_i^\prime = a_ih + a_i^\prime h^\prime$, $a_i,\, a_i^\prime \in k$, $i = 0, 
\ldots ,n$. Since $h_ih_i^\prime$ vanishes on $\Gamma$, $i = 0, \ldots , n$, 
one deduces that the differential $d_1$ is defined by the matrix$\, :$ 
\[
\begin{pmatrix} 
a_0h_0 & a_1h_1 & \ldots & a_nh_n\\ 
a_0^\prime h_0 & a_1^\prime h_1 & \ldots & a_n^\prime h_n
\end{pmatrix}\, . 
\]  
Notice that the hypothesis of the lemma implies that all the $2 \times 2$ 
minors of this matrix are non-zero. Notice, also, that if one chooses 
$h = h_0^\prime$ and $h^\prime = h_1^\prime$ then $a_0 = 1$, $a_1 = 0$, $a_0^\prime 
= 0$, and $a_1^\prime = 1$. 

Let $\gamma_0, \ldots , \gamma_n$ be the columns of the above matrix. 
For $0 \leq i < j < l \leq n$, let $(x_{ijl} , y_{ijl} , z_{ijl}) \in k^3$ be a 
non-trivial solution of the system of linear equations$\, :$ 
\[
\begin{cases} 
a_ix + a_jy + a_lz = 0\, ,\\
a_i^\prime x + a_j^\prime y + a_l^\prime z = 0\, .
\end{cases}
\] 
One must have $x_{ijl} \neq 0$, $y_{ijl} \neq 0$, $z_{ijl} \neq 0$ (because of 
the non-vanishing of all the $2\times 2$ minors of the above matrix). One 
deduces that$\, :$ 
\[
x_{ijl}h_jh_l \cdot \gamma_i + y_{ijl}h_ih_l \cdot \gamma_j + 
z_{ijl}h_ih_j \cdot \gamma_l = 0\, ,\  0 \leq i < j < l \leq n\, , 
\]
is a complete system of quadratic relations between the columns of the above 
matrix. One deduces that the transpose of the column of the matrix of $d_2$ 
indexed by a triple $(i,j,l)$ with $0 \leq i < j < l \leq n$ is$\, :$ 
\[
(0\, , \ldots , 0\, , x_{ijl}h_jh_l\, , 0\, , \ldots , 0\, , y_{ijl}h_ih_l\, , 
0\, , \ldots , 0\, , z_{ijl}h_ih_j\, , 0\, , \ldots , 0)\, . 
\qedhere 
\]  
\end{proof} 

\begin{lemma}\label{L:vahlenquintic} 
Let $C \subset \piii$ be a nonsingular rational quintic curve, not contained 
in a quadric surface, and let $L \subset \piii$ be the unique $4$-secant of 
$C$. Then one has an exact sequence$\, :$ 
\[
{\fam0 H}^0(\sci_C(3)) \otimes_k \sco_\piii \overset{{\fam0 ev}}{\lra} 
\sci_C(3) \lra \sci_{L \cap C\, ,\, L}(3) \lra 0\, .  
\] 
Moreover, ${\fam0 H}^1(\sci_C(3)) = 0$.  
\end{lemma}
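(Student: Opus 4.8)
<br>

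\textbf{Plan of proof.} The curve $C$ is the image of an embedding $\nu \colon \pj \izo C$ defined by an epimorphism $\pi \colon 4\sco_\pj \ra \sco_\pj(5)$. Since $C$ is not contained in a quadric surface, the kernel of $\pi$ must be $\sco_\pj(-1) \oplus 2\sco_\pj(-2)$ (the resolution of a rational quintic on $\piii$ has this shape, and a summand $\sco_\pj(-2)$ appearing with multiplicity $3$ would force $C$ into a quadric). The unique linear relation, corresponding to the summand $\sco_\pj(-1)$, among the four binary quintics defining $\nu$ yields the unique $4$-secant line $L$; this is the classical fact recalled in Construction~3.5. First I would pin down $\h^0(\sci_C(3))$ and the vanishing $\tH^1(\sci_C(3)) = 0$ by a direct computation: restricting $\sci_C(3)$ to a general plane and using the Hilbert polynomial of $C$ (degree $5$, genus $0$), or more cleanly by computing $\chi(\sci_C(3))$ via Riemann--Roch together with the standard fact that a nondegenerate rational quintic not on a quadric is $4$-regular, so that $\tH^1(\sci_C(3)) = 0$ and hence $\h^0(\sci_C(3)) = \chi(\sci_C(3))$.

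\textbf{Identifying the cokernel.} The heart of the statement is that the cokernel of the evaluation morphism of $\sci_C(3)$ is exactly $\sci_{L \cap C\, ,\, L}(3)$. The approach is to show two things: that $\sci_C(3)$ is globally generated away from $L$, and that along $L$ the global sections of $\sci_C(3)$ cut out precisely the divisor $L \cap C$ (a length-$4$ subscheme of $L$). The second point is where the $4$-secant enters decisively. Since every cubic through $C$ meets $L$ in a divisor containing $L \cap C$ (because $L \cap C \subset C$), the restriction of each section of $\sci_C(3)$ to $L \simeq \pj$ lands in $\tH^0(\sci_{L \cap C\, ,\, L}(3)) \simeq \tH^0(\sco_\pj(3-4)) = \tH^0(\sco_\pj(-1)) = 0$. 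Thus the image of the evaluation morphism, restricted to $L$, is forced into the subsheaf $\sci_{L \cap C\, ,\, L}(3)$, which on $L$ is $\sco_L(-1)$. I would then show that the induced map onto $\sci_{L \cap C\, ,\, L}(3)$ is surjective, i.e. that the global sections of $\sci_C(3)$ do generate $\sci_C(3)$ modulo this boundary sheaf.

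\textbf{Assembling the exact sequence.} To organize this I would use the unique quadric-free resolution above, or rather work with the map $\text{ev} \colon \tH^0(\sci_C(3)) \otimes \sco_\piii \ra \sci_C(3)$ and analyze its cokernel $\scq$ directly. One knows $\scq$ is supported on the locus where $\sci_C(3)$ fails to be globally generated, which for a curve whose ideal is generated in degree $\leq 3$ except for the $4$-secant obstruction is contained in $L$. Restricting the evaluation sequence to $L$ and using $\sci_C \vb L \twoheadrightarrow \sci_{L \cap C\, ,\, L}$ (the natural surjection of ideal sheaves) together with the computation that sections of $\sci_C(3)$ generate $\sci_{L' \cap C\, ,\, L'}(3)$ for every line $L'$ other than $L$ (where this sheaf is $\sco_{L'}$, hence globally generated), one concludes $\scq \simeq \sci_{L \cap C\, ,\, L}(3)$. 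The cleanest packaging is probably via the commutative diagram relating $\sci_C$ to $\sci_L$ and $\sco_{L \cap C}$, i.e. using the exact sequence $0 \ra \sci_{C \cup L} \ra \sci_C \ra \sci_{L \cap C\, ,\, L} \ra 0$ analogous to the one appearing in Construction 5.1, and checking that $\sci_{C \cup L}(3)$ is globally generated with $\tH^0(\sci_C(3)) = \tH^0(\sci_{C \cup L}(3))$ (since $L$ is a $4$-secant, every cubic through $C$ already contains $L$).

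\textbf{The main obstacle.} The delicate point I expect is proving that $\sci_{C \cup L}(3)$ (equivalently $\sci_C(3)$ on the complement of the relevant sheaf) really is globally generated, i.e. that there are no further base points of the cubics through $C$ beyond the expected boundary along $L$, and that $\tH^0(\sci_C(3)) = \tH^0(\sci_{C \cup L}(3))$. This requires the hypothesis that $C$ is not contained in a quadric (which rules out a pencil of cubics through $C$ with extra common components) and the uniqueness of the $4$-secant; the genericity and nondegeneracy of $C$ must be used to control $\tH^1(\sci_{C \cup L}(3))$, and I would verify this by the liaison/resolution computation sketched above rather than geometrically.
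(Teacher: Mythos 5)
Your skeleton is the same as the paper's: the exact sequence $0 \ra \sci_{C \cup L} \ra \sci_C \ra \sci_{L \cap C\, ,\, L} \ra 0$, the observation that a cubic through $C$ restricts on $L$ to a section of $\sco_L(3)$ vanishing on the length-$4$ divisor $L \cap C$ and hence vanishes identically (so $\tH^0(\sci_C(3)) = \tH^0(\sci_{C \cup L}(3))$), and the reduction of the cokernel identification to the global generation of $\sci_{C \cup L}(3)$. But that last step --- which you yourself single out as ``the main obstacle'' --- is exactly where your proposal stops being a proof. The substitutes you offer do not suffice: controlling $\tH^1(\sci_{C \cup L}(3))$ is strictly weaker than what is needed, since global generation of $\sci_{C \cup L}(3)$ via Castelnuovo--Mumford requires also $\tH^1(\sci_{C \cup L}(2)) = 0$ and $\tH^2(\sci_{C \cup L}(1)) = 0$, i.e. full $3$-regularity; and the resolution $0 \ra \sco_\pj(-1) \oplus 2\sco_\pj(-2) \ra 4\sco_\pj \ra \sco_\pj(5) \ra 0$ is a statement about the parametrization on $\pj$ --- it identifies the $4$-secant but computes nothing about the cohomology of $\sci_{C \cup L}$ on $\piii$. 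Your line-by-line remark (that sections generate $\sci_{L' \cap C\, ,\, L'}(3)$ for lines $L' \neq L$) is likewise not an argument for global generation away from $L$.

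The paper closes this gap with an elementary computation your proposal lacks: the Mayer--Vietoris sequence $0 \ra \sco_{C \cup L} \ra \sco_C \times \sco_L \ra \sco_{C \cap L} \ra 0$. Since $C \simeq \pj$ and $\sco_C(l)$ has degree $5l$, the map $\tH^0(\sco_C(l)) \ra \tH^0(\sco_{C \cap L}(l))$ is surjective for $l \geq 1$, whence $\tH^1(\sco_{C \cup L}(l)) = 0$ and $\h^0(\sco_{C \cup L}(l)) = \h^0(\sco_C(l)) + \h^0(\sco_L(l)) - 4$ for $l \geq 1$. Comparing dimensions then gives $\tH^1(\sci_{C \cup L}(2)) = 0$ and $\tH^2(\sci_{C \cup L}(1)) = 0$, so $\sci_{C \cup L}$ is $3$-regular; regularity yields the global generation of $\sci_{C \cup L}(3)$, and the count $\h^0(\sci_C(3)) = \h^0(\sci_{C \cup L}(3)) = 4 = \chi(\sci_C(3))$ gives $\tH^1(\sci_C(3)) = 0$ as a byproduct, so your appeal to the Gruson--Lazarsfeld--Peskine regularity bound, while correct, is also unnecessary. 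If you want to rescue the ``liaison'' suggestion you would have to carry it out: link $C \cup L$ by two cubics to a degree-$3$ curve and apply Ferrand's resolution result; but this presupposes $\h^0(\sci_{C \cup L}(3)) \geq 2$ and information about the residual curve, which the Mayer--Vietoris computation delivers more cheaply.
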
 

\begin{proof} 
Consider the exact sequence$\, :$ 
\[
0 \lra \sco_{C \cup L} \lra \sco_C \times \sco_L \lra \sco_{C \cap L} \lra 0\, . 
\]
The map $\tH^0(\sco_C(l)) \ra \tH^0(\sco_{C \cap L}(l))$ is surjective for 
$l \geq 1$. One deduces that$\, :$ 
\[
\h^0(\sco_{C \cup L}(l)) = \h^0(\sco_C(l)) + \h^0(\sco_L(l)) - 4\  \text{and}\  
\tH^1(\sco_{C \cup L}(l)) = 0\, ,\  \forall l \geq 1\, . 
\]
It follows, in particular, that $\tH^1(\sci_{C \cup L}(2)) = 0$ and 
$\tH^2(\sci_{C \cup L}(1)) = 0$ hence $\sci_{C \cup L}$ is 3-regular. Since 
$\h^0(\sci_{C \cup L}(3)) = 4$ and $\h^0(\sci_{C \cup L}(4)) = 13$, one gets an 
exact sequence$\, :$ 
\[
0 \lra 3\sco_\piii(-1) \lra 4\sco_\piii \lra \sci_{C \cup L}(3) \lra 0\, . 
\] 
Since $\tH^0(\sci_{C \cup L}(3)) = \tH^0(\sci_C(3))$ (any cubic surface 
containing $C$ must also contain its 4-secant $L$), one deduces that the 
cokernel of the evaluation morphism of $\sci_C(3)$ is$\, :$ 
\[
\sci_C(3)/\sci_{C \cup L}(3) \simeq \sci_{C \cap L}(3)/\sci_L(3) =: 
\sci_{C \cap L\, ,\, L}(3)\, . 
\]
Moreover, since $4 = \h^0(\sci_{C \cup L}(3)) = \h^0(\sci_C(3))$ it follows that 
$\tH^1(\sci_C(3)) = 0$. 
\end{proof}

\begin{lemma}\label{L:3lines+4points} 
Let $Y$ be the union of three mutually disjoint lines $L_1,\, L_2,\, L_3$ in 
$\piii$ and let $P_0 , \ldots , P_3$ be four points in $\piii$ such that 
$\Gamma := \{P_0 , \ldots , P_3\}$ is not contained in a plane. We assume that 
none of the four points belongs to the quadric surface $Q \subset \piii$ 
containing $Y$ and that ${\fam0 H}^0(\sci_{(Y \setminus L_l) \cup \Gamma}(2)) 
= 0$, $l = 1,\, 2,\, 3$. Then the homogeneous ideal of $Y \cup \Gamma$ is 
generated by cubic forms.  
\end{lemma}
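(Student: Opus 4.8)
The plan is to combine a Castelnuovo--Mumford regularity computation with a dimension count that reduces the statement to the \emph{absence of linear syzygies} among the cubic generators, and then to produce, from any such syzygy, a quadric forbidden by the hypotheses. First I would record the geometry: three pairwise skew lines lie on a unique quadric, which is necessarily smooth (a quadric cone contains no two disjoint lines), so $Q \simeq \pj \times \pj$, and, the three lines lying in one ruling, $\sci_{Y,Q} \simeq \sco_Q(-3,0)$. Since $\Gamma$ is disjoint from $Q$ and spans $\piii$ (it is not contained in a plane), evaluation gives an isomorphism $\tH^0(\sco_\piii(1)) \izo \tH^0(\sco_\Gamma(1))$. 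As $\Gamma$ is disjoint from $Y$, there is an exact sequence
\[
0 \lra \sci_{Y \cup \Gamma}(t) \lra \sci_Y(t) \lra \sco_\Gamma \lra 0\, ,
\]
and $\sci_Y$ is $3$-regular (a direct check from $\h^0(\sco_Y(t)) = 3t+3$). Writing $q$ for the equation of $Q$ and noting $q$ vanishes at no point of $\Gamma$, the submodule $S_1 q \subset \tH^0(\sci_Y(3))$ already surjects onto $\tH^0(\sco_\Gamma)$ via the above isomorphism; hence $\tH^0(\sci_Y(t)) \to \tH^0(\sco_\Gamma)$ is surjective for $t \geq 3$. Combined with the vanishing of $\tH^2$ and $\tH^3$ inherited from $\sci_Y$ through the exact sequence, this shows $\sci_{Y\cup\Gamma}$ is $4$-regular, while $\tH^0(\sci_{Y\cup\Gamma}(t)) = 0$ for $t \leq 2$ (there are no quadrics, $q$ being the only quadric through $Y$).

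Next I would reduce to syzygies. By $4$-regularity the graded ideal $I(Y\cup\Gamma) = \tH^0_\ast(\sci_{Y\cup\Gamma})$ is generated in degrees $\leq 4$ and the multiplications $\tH^0(\sci_{Y\cup\Gamma}(t)) \otimes S_1 \to \tH^0(\sci_{Y\cup\Gamma}(t+1))$ are surjective for $t \geq 4$; since there are no quadrics, it remains only to prove that $\mu \colon S_1 \otimes \tH^0(\sci_{Y\cup\Gamma}(3)) \to \tH^0(\sci_{Y\cup\Gamma}(4))$ is surjective. From the exact sequence one computes $\h^0(\sci_{Y\cup\Gamma}(3)) = 8 - 4 = 4$ and $\h^0(\sci_{Y\cup\Gamma}(4)) = 20 - 4 = 16$, so source and target of $\mu$ both have dimension $16$, and surjectivity is \emph{equivalent to injectivity}: a basis $c_1,\ldots,c_4$ of $\tH^0(\sci_{Y\cup\Gamma}(3))$ must admit no nontrivial linear syzygy $\sum_k a_k c_k = 0$, $a_k \in S_1$.

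Finally I would rule out such syzygies. Restriction to $Q$ identifies the $c_k\vb Q$ with a basis of $\tH^0(\sco_Q(0,3)) \simeq k[s_0,s_1]_3$; since $\tH^0(\sco_\piii(1)) \izo \tH^0(\sco_Q(1,1))$, a genuine linear syzygy injects into the $6$-dimensional space of syzygies of the rational normal cubic (the two $t$-directions times the three Koszul syzygies of $s_0^3, s_0^2 s_1, s_0 s_1^2, s_1^3$). In the basic, two-term case $a_1 c_1 + a_2 c_2 = 0$ with $a_1, a_2$ independent, coprimality forces a common quadratic factor $f$ with $c_1 = -a_2 f$, $c_2 = a_1 f$; as each $c_i$ vanishes on all three lines and a plane contains at most one of three skew lines, $f$ vanishes on at least two of $L_1,L_2,L_3$, say on $Y \setminus L_l$, and — at every point of $\Gamma$ off the line $\{a_1 = a_2 = 0\}$ — on $\Gamma$ as well, exhibiting a nonzero element of $\tH^0(\sci_{(Y\setminus L_l)\cup\Gamma}(2))$, contrary to hypothesis.

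The main obstacle is exactly this last step made complete. I must (i) reduce an arbitrary liftable linear syzygy — a priori a four-term relation coming from a general combination of the six basic $Q$-syzygies — to one producing such a quadric, by controlling the ``defect quadric'' $\sum_k a_k \ell_k$ that measures the failure of a $Q$-syzygy to lift, where $c_k = c_k^0 - \ell_k q$ is the correction realizing the vanishing at $\Gamma$; and (ii) dispose of the degenerate case in which the residual line $\{a_1 = a_2 = 0\}$ meets $\Gamma$, where the freedom to choose which of the three lines to peel off, together with $\Gamma$ not being coplanar (so at most two of its points are collinear), should suffice to still force a forbidden quadric.
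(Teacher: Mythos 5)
Your reduction steps are correct, and on two points even cleaner than the paper's own argument: your proof of the $4$-regularity of $\sci_{Y\cup\Gamma}$ via the sections $hq$, $h \in S_1$, uses only $\Gamma \cap Q = \emptyset$ and the spanning of $\Gamma$ (not the quadric hypothesis), and your dimension count $\h^0(\sci_{Y\cup\Gamma}(3)) = 4$, $\h^0(\sci_{Y\cup\Gamma}(4)) = 16$ turns the surjectivity of $\mu \colon S_1 \otimes \tH^0(\sci_{Y\cup\Gamma}(3)) \ra \tH^0(\sci_{Y\cup\Gamma}(4))$ into the absence of linear syzygies among the four cubics. Your two-term case is also essentially sound, and your worry (ii) evaporates: if $a_1c_1 + a_2c_2 = 0$ with $a_1,\, a_2$ independent then $c_1 = -a_2f$, $c_2 = a_1f$, and $f$ can fail to vanish on a line $L_j$, or at a point $P_i$, only if that line or point lies on $M := \{a_1 = a_2 = 0\}$; if $M$ is one of the three lines then $M \subset Q$, so $\Gamma \cap M = \emptyset$ and $f$ is a nonzero element of $\tH^0(\sci_{(Y\setminus M)\cup\Gamma}(2))$, a contradiction, while if $M$ is none of them then $f$ vanishes on all of $Y$, hence $f = cq$, which vanishes at no point of $\Gamma$, forcing $\Gamma \subset M$ — impossible, since no three of the $P_i$ are collinear.

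The genuine gap is your point (i), and it is not a technicality: there is no reduction from a general linear syzygy to a two-term one. Restriction to $Q$ identifies the kernel of $\mu$ with a subspace of the $6$-dimensional space of $Q$-syzygies (your ``two $t$-directions times three Koszul relations''), and with respect to the basis $c_1, \ldots , c_4$ the generic element of that space has all four coefficients nonzero and contains no two-term sub-relation, so the factoriality argument gives nothing. What your approach actually requires is the injectivity of the linear ``defect'' map from this $6$-dimensional space to the $6$-dimensional space $\tH^0(\sci_\Gamma(2))$, sending a $Q$-syzygy $(a_k)$ (lifted uniquely to $S_1$) to the quadric $g$ with $\sum_k a_kc_k = qg$; this is where the hypothesis $\tH^0(\sci_{(Y\setminus L_l)\cup\Gamma}(2)) = 0$ must be used, nothing in the proposal does it, and a map between two $6$-dimensional spaces is not visibly easier to handle than the original $16 \times 16$ one. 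The paper avoids this entirely by arguing on the other side of the snake lemma: from the diagram comparing $\mu$, $\mu_Y$ and $\mu_\Gamma$ it suffices that $\Ker \mu_Y \ra \Ker \mu_\Gamma$ be surjective, and this is proved \emph{constructively} — for each $i,\, l$ the element $h_{il} \otimes h_{il}^\prime q_{il} - h_{il}^\prime \otimes h_{il}q_{il}$ of $\Ker \mu_Y$ hits $h_{il} \otimes (h_{il}^\prime q_{il})(P_i)e_i$, where $q_{il}$ is the quadric through $(Y\setminus L_l) \cup (\Gamma \setminus \{P_i\})$ not vanishing at $P_i$, whose existence is exactly what the hypothesis (read as bijectivity of $\tH^0(\sci_{Y\setminus L_l}(2)) \ra \tH^0(\sco_\Gamma(2))$) supplies. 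To complete your version you would need an analogous explicit use of the quadrics $q_{il}$ to kill the whole $6$-dimensional space of potential syzygies, not only the two-term ones.
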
 

\begin{proof} 
The hypothesis $\tH^0(\sci_{(Y \setminus L_l) \cup \Gamma}(2)) = 0$ implies that 
$\tH^0(\sci_{Y \setminus L_l}(2)) \izo \tH^0(\sco_\Gamma(2))$, $l = 1,\, 2,\, 3$. 
One deduces easily that the map $\tH^0(\sci_Y(3)) \ra \tH^0(\sco_\Gamma(3))$ 
is surjective hence $\tH^1(\sci_{Y \cup \Gamma}(3)) = 0$ and 
$\h^0(\sci_{Y \cup \Gamma}(3)) = 4$. It follows that $\sci_{Y \cup \Gamma}$ is 
4-regular. It remains to show that the multiplication map  
$
\mu \colon S_1 \otimes \tH^0(\sci_{Y \cup \Gamma}(3)) \ra 
\tH^0(\sci_{Y \cup \Gamma}(4))  
$ 
is surjective. Using the commutative diagram$\, :$ 
\[
\begin{CD} 
0 @>>> S_1 \otimes \tH^0(\sci_{Y \cup \Gamma}(3)) @>>> 
S_1 \otimes \tH^0(\sci_Y(3)) @>>> S_1 \otimes \tH^0(\sco_\Gamma(3)) @>>> 0\\ 
@. @VV{\mu}V @VV{\mu_Y}V @VV{\mu_\Gamma}V\\ 
0 @>>> \tH^0(\sci_{Y \cup \Gamma}(4)) @>>> \tH^0(\sci_Y(4)) @>>> 
\tH^0(\sco_\Gamma(4)) @>>> 0 
\end{CD}
\] 
one sees that it suffices to show that the map $\Ker \mu_Y \ra \Ker \mu_\Gamma$ 
induced by this diagram is surjective. 

For $0 \leq i \leq 3$, let $h_i = 0$ be an equation of the plane containing 
$\Gamma \setminus \{P_i\}$ and let $e_i$ be the element of $\tH^0(\sco_\Gamma)$ 
defined by $e_i(P_j) = \delta_{ij}$, $j = 0, \ldots , 3$. Since 
$\mu_\Gamma(h_j \otimes e_i) = h_j(P_i)e_i$ it follows that $\Ker \mu_\Gamma$ has 
a $k$-basis consisting of the elements $h_j \otimes e_i$, with $0 \leq i \leq 
3$, $0 \leq j \leq 3$ and $i \neq j$. 

Take an $i \in \{0, \ldots , 3\}$. For $1 \leq l \leq 3$, let $h_{il} = 0$ 
be an equation of the plane containing $L_l \cup \{P_i\}$. We assert that 
$h_{i1},\, h_{i2},\, h_{i3}$ are linearly independent. \emph{Indeed}, if they 
are linearly dependent then they vanish on a line $L \subset \piii$ 
containing $P_i$. $L$ is, then, a 3-secant of $Y = L_1 \cup L_2 \cup L_3$ 
hence $L$ is contained in the quadric surface $Q \subset \piii$ containing 
$Y$ and this \emph{contradicts} the fact that $P_i \notin Q$. It, thus, 
remains that $h_{i1},\, h_{i2},\, h_{i3}$ are linearly independent. 

For $1 \leq l \leq 3$, let $q_{il} = 0$ be an equation of the unique quadric 
surface containing $(Y \setminus L_l) \cup (\Gamma \setminus \{P_i\})$ (recall 
that $\tH^0(\sci_{Y \setminus L_l}(2)) \izo \tH^0(\sco_\Gamma(2))$). Choose, also, 
$h_{il}^\prime \in S_1$ vanishing on $L_l$ but not at $P_i$. Then$\, :$ 
\[
h_{il} \otimes h_{il}^\prime q_{il} - h_{il}^\prime \otimes h_{il}q_{il} 
\]  
belongs to $\Ker \mu_Y$ and its image into $\Ker \mu_\Gamma$ is 
$h_{il} \otimes (h_{il}^\prime q_{il})(P_i)e_i$. Since $h_{il}^\prime$ and $q_{il}$ 
do not vanish at $P_i$, one deduces that $h_{il} \otimes e_i$ belongs to the 
image of $\Ker \mu_Y \ra \Ker \mu_\Gamma$. 

Finally, if $j \in \{0, \ldots , 3\} \setminus \{i\}$ then $h_j(P_i) = 0$. 
Since $h_{i1},\, h_{i2},\, h_{i3}$ vanish at $P_i$ and are linearly independent, 
it follows that $h_j$ is a linear combination of $h_{i1},\, h_{i2},\, h_{i3}$ 
hence $h_j \otimes e_i$ belongs to the image of $\Ker \mu_Y \ra 
\Ker \mu_\Gamma$. Since $i \in \{0, \ldots , 3\}$ and $j \in \{0, \ldots , 3\} 
\setminus \{i\}$ were arbitrary, the map $\Ker \mu_Y \ra \Ker \mu_\Gamma$ is 
surjective. 
\end{proof} 

\begin{lemma}\label{L:d=6g=2} 
If $C$ is a nonsingular (connected) curve of degree $6$ and genus $2$ in 
$\piii$ then $\sci_C(2)$ is the cohomology sheaf of a monad of the form$\, :$ 
\[
0 \lra 2\sco_\piii(-2) \lra 3\sco_\piii \oplus \sco_\piii(-1) \lra 
\sco_\piii(1) 
\lra 0\, . 
\]
\end{lemma}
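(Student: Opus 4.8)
The plan is to pin down $\sci_C$ by liaison: I will link $C$, through a complete intersection of two cubics, to a curve of degree $3$ so simple that its free resolution is explicit, and then feed that resolution into the construction of Remark~\ref{R:monadsandliaison}. First I would record the numerical preliminaries. A plane sextic has genus $10$, so $C$ is nondegenerate and Riemann--Roch on $C$ gives $\h^0(\sco_C(l)) = 6l-1$ and $\h^1(\sco_C(l)) = 0$ for $l \geq 1$. From $0 \to \sci_C(l) \to \sco_\piii(l) \to \sco_C(l) \to 0$ one gets $\tH^2(\sci_C(l)) \simeq \tH^1(\sco_C(l)) = 0$ and $\tH^3(\sci_C(l)) = 0$ for $l \geq 1$, hence $\h^0(\sci_C(3)) = \chi(\sci_C(3)) + \h^1(\sci_C(3)) = 3 + \h^1(\sci_C(3)) \geq 3$. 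The one fact I must prove by hand is that $C$ lies on no quadric, i.e. $\h^0(\sci_C(2)) = 0$: a curve of degree $6$ on a smooth quadric has type $(1,5),(2,4)$ or $(3,3)$, hence genus $0,3$ or $4$; on a quadric cone (class $3H$) it has genus $4$; on a reducible or double quadric it is planar of genus $10$. None equals $2$, so no quadric contains $C$.

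Next I would carry out the linkage. Since $C$ is contained in no surface of degree $\leq 2$ and $\h^0(\sci_C(3)) \geq 3$, the cubic system $\vert \tH^0(\sci_C(3))\vert$ has no fixed surface component (a fixed component of degree $\leq 2$ would force $C$ into the intersection of two surfaces of degree $\leq 2$), so two general cubics $f,g \in \tH^0(\sci_C(3))$ meet properly and cut out a complete intersection $X \supset C$ of type $(3,3)$, of degree $9$. Let $C^\prime$ be the curve linked to $C$ by $X$; by the liaison formulae (recalled in \cite[Remark~2.6]{acm1}) it has $\text{deg}\, C^\prime = 3$ and $p_a(C^\prime) = p_a(C) - \tfrac12(6-3)(3+3-4) = 2 - 3 = -1$. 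A locally Cohen--Macaulay curve of degree $3$ with $p_a = -1$ has $\chi(\sco_{C^\prime}) = 2$, hence exactly two connected components; splitting the degree as $2+1$ forces each component to have $p_a = 0$, so $C^\prime$ is the disjoint union of a (possibly degenerate, even non-reduced) plane conic $C_0$ and a line $L$. In every such case $C_0$ and $L$ are arithmetically Cohen--Macaulay with resolutions $0 \to \sco_\piii(-3) \to \sco_\piii(-1)\oplus\sco_\piii(-2) \to \sci_{C_0} \to 0$ and $0 \to \sco_\piii(-2) \to 2\sco_\piii(-1) \to \sci_L \to 0$, and since the components are disjoint, $\sci_{C^\prime} \simeq \sci_{C_0}\cdot\sci_L$ is resolved by the tensor product of these complexes (\cite[Lemma~B.1]{acm2}), namely
\[
0 \lra \sco_\piii(-5) \lra \sco_\piii(-3)\oplus 3\sco_\piii(-4) \lra
2\sco_\piii(-2)\oplus 2\sco_\piii(-3) \lra \sci_{C^\prime} \lra 0 \, .
\]

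Finally I would apply Remark~\ref{R:monadsandliaison} to $Y = C$, $Y^\prime = C^\prime$ with $a = b = 3$: dualizing the above resolution and appending $\sco_\piii(3)\oplus\sco_\piii(3)$ (the lift of $(f,g)$) yields a monad of $\sci_C(6)$ of shape $0 \to 2\sco_\piii(2)\oplus 2\sco_\piii(3) \to 3\sco_\piii(3)\oplus 3\sco_\piii(4) \to \sco_\piii(5) \to 0$. The degree-$3$ minimal generators of $I(C^\prime)$ span the $2\sco_\piii(-3)$ summand of $A_0$; because $C$ lies on no quadric, the general cubics $f,g$ are not divisible by the plane of $C_0$, so they map isomorphically onto this summand. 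Hence the component $2\sco_\piii(3)$ of $A_0^\vee$ is carried isomorphically onto the appended $\sco_\piii(3)\oplus\sco_\piii(3)$, and cancelling these redundant summands leaves $0 \to 2\sco_\piii(2) \to \sco_\piii(3)\oplus 3\sco_\piii(4) \to \sco_\piii(5) \to 0$; twisting by $-4$ produces exactly the asserted monad with cohomology $\sci_C(2)$ (the Chern classes $c_1=2$, $c_2=6$, $c_3=14$ of the cohomology sheaf can be checked against those of $\sci_C(2)$ via Remark~\ref{R:chern} as an independent consistency test). The step I expect to be the main obstacle is the last one: showing uniformly, over all the degenerations of the residual $C_0$ (smooth conic, two lines, or double line), that the two linking cubics hit a minimal generating set of $I(C^\prime)$ in degree $3$ so that precisely the two $\sco_\piii(3)$'s cancel; the no-quadric property of $C$ is what makes this genericity argument go through in every case.
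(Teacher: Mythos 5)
Your strategy is the same as the paper's: link $C$ by two cubics to a residual curve $Y$ of degree $3$, write down the length-two free resolution of $\sci_Y$, and run the construction of Remark~\ref{R:monadsandliaison}, cancelling the two appended $\sco_\piii(3)$'s because the linking cubics remain linearly independent modulo $S_1 I(Y)_2$. Your numerics, your treatment of the disconnected residual, and the shape of the final cancellation all match the paper. However, there is a genuine gap in your classification of $Y$: the inference ``$\chi(\sco_{C'}) = 2$, hence exactly two connected components'' is false for non-reduced curves. A \emph{connected}, locally Cohen--Macaulay curve of degree $3$ can have $\chi(\sco) = 2$: for instance a quasiprimitive triple structure $Y \supset X \supset L$ on a line $L$ with $\sci_L/\sci_X \simeq \sco_L(-1)$ and $\sci_X/\sci_Y \simeq \sco_L$ (notation of Remark~\ref{R:multilines}(b)), for which $\chi(\sco_Y) = 1 + 0 + 1 = 2$; or the union of a line with an \emph{incident} double line of type $l = 0$. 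Nothing in your argument excludes such residuals, and they are precisely the cases that occupy most of the paper's proof: there one uses the full liaison duality $\tH^1_\ast(\sci_Y)(6) \simeq \tH^1_\ast(\sci_C)^\vee(4)$ --- which gives the individual values $\h^1(\sci_Y) = \h^1(\sci_Y(1)) = 1$, not merely $\chi(\sco_Y) = 2$ --- to pin down the parameters of the multiple structures, and then the results on multiplicity structures from \cite{acm2} (Lemma~B.9, Prop.~B.10, \S\,A.5) to show that in these connected cases $\sci_Y$ still admits a resolution of the same numerical shape and, crucially, that $\tH^0(\sci_Y(2))$ still consists of multiples of a single plane equation. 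Until these connected non-reduced residuals are handled or excluded, the proof is incomplete.

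A smaller point, fixable on the spot: in the cancellation step what you must verify is that no \emph{nonzero linear combination} $\alpha f + \beta g$ lies in $S_1 I(Y)_2$; asserting that $f$ and $g$ themselves are not divisible by the plane of $C_0$ is weaker than this. The clean argument, which is the paper's, runs as follows: since $I(Y)_2$ lies in $h\cdot S_1$ for a single linear form $h$ (this is exactly the case-by-case fact mentioned above), every nonzero element of $S_1 I(Y)_2$ is divisible by $h$, hence reducible; on the other hand every nonzero element of $kf + kg$ is an irreducible cubic, because it contains the irreducible curve $C$, which lies on no plane and on no quadric. Hence $(kf + kg) \cap S_1 I(Y)_2 = (0)$, which is the condition needed to cancel both $\sco_\piii(3)$ summands.
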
 

\begin{proof} 
$C$ is not contained in a surface of degree 2 (see 
\cite[Chap.~IV,~Remark~6.4.1]{hag}). Using Riemann-Roch on $C$ one deduces 
that $\h^1(\sci_C(1)) = 1$, $\h^1(\sci_C(2)) = 1$ and that $\h^0(\sci_C(3)) 
\geq 3$. It follows that $C$ is directly linked, by a complete intersection 
of type $(3,3)$, to a curve $Y$ of degree 3, locally Cohen-Macaulay and 
locally complete intersection except at finitely many points. One has 
$\h^1(\sci_Y) = 1$ and $\h^1(\sci_Y(1)) = 1$ hence $\h^0(\sco_Y) = 2$ and 
$\h^0(\sco_Y(1)) = 5$.  

If $Y$ is not connected then one of its connected components must be a line 
and the union of the other connected components is a curve $X$ of degree 
2 with $\h^0(\sco_X) = 1$ hence $X$ is a complete intersection of type 
$(1,2)$. One deduces that, in this case, $\sci_Y$ has a resolution of the 
form$\, :$ 
\begin{equation}\label{E:resiy} 
0 \ra \sco_\piii(-5) \lra \sco_\piii(-3) \oplus 3\sco_\piii(-4) \lra 
2\sco_\piii(-2) \oplus 2\sco_\piii(-3) \lra \sci_Y \ra 0 
\end{equation} 
(see, for example, \cite[Lemma~B.1]{acm2}). Notice that the elements of 
$\tH^0(\sci_Y(2))$ are multiples of the equation of the plane containing $X$.  

If $Y$ is connected then it cannot be reduced (because $\h^0(\sco_Y) = 2$) 
hence it is either the union of a line $L$ and of a double structure $X$ 
on another line $L_1$ intersecting $L$ or it is a quasiprimitive triple 
structure one a line $L$.  

In the former case one has $\sci_X/\sci_{L_1} \simeq \sco_{L_1}(l)$ 
for some $l \geq -1$. The proof of \cite[Prop.~B.10]{acm2} shows that one has 
an exact sequence of the form$\, :$ 
\[
0 \lra \sci_{L \cup L_1^{(1)}} \lra \sci_{L \cup X} \lra \sco_L(-l^\prim) 
\lra 0\, , 
\]  
where $l^\prim$ is either $l+2$ or $l+3$, and $L_1^{(1)}$ is the first 
infinitesimal neighbourhood of $L_1$ in $\piii$. Moreover, by 
\cite[Lemma~B.9]{acm2}, $L \cup L_1^{(1)}$ is arithmetically Cohen-Macaulay 
in $\piii$, $\h^2(\sci_{L \cup L_1^{(1)}}) = 1$ and 
$\h^2(\sci_{L \cup L_1^{(1)}}(1)) 
= 0$. Since $\h^1(\sci_Y(t)) = 1$, $t = 0,\, 1$, it follows that 
$l^\prim = 3$. 
Using \cite[Lemma~B.9]{acm2} again, one deduces that $\sci_Y$ has a resolution 
of the form \eqref{E:resiy} above. Notice, also, that $\tH^0(\sci_Y(2)) = 
\tH^0(\sci_{L \cup L_1^{(1)}}(2))$ hence the elements of $\tH^0(\sci_Y(2))$ 
are multiples of the equation of the plane containing $L \cup L_1$. 

In the latter case, $Y$ contains, as a subscheme, a double structure $X$ on 
the line $L$ such that $\sci_L/\sci_X \simeq \sco_L(l)$ for some $l \geq -1$ 
and $\sci_X/\sci_Y \simeq \sco_L(2l+m)$ for some $m \geq 0$ (see, for example, 
\cite[\S A.5]{acm2}). Since $\h^0(\sco_Y) = 2$ it follows that $l = -1$ and 
$m = 2$. Since $l = -1$, $X$ is the divisor $2L$ one some plane $H \supset L$. 
Moreover, one has an exact sequence$\, :$ 
\[
0 \lra \sci_L\sci_X \lra \sci_Y \lra \sco_L(-3) \lra 0 
\]    
(see the exact sequence (A.33) at the end of \cite[\S A.5]{acm2}). One 
deduces that $\sci_Y$ has a resolution of the form \eqref{E:resiy} above and 
that the elements of $\tH^0(\sci_Y(2)) = \tH^0((\sci_L\sci_X)(2))$ are 
multiples of the equation of $H$. 

Let, now, $f = g = 0$ be the equations of the complete intersection linking 
$C$ to $Y$. Since the linear system $\vert \,  kf + kg \, \vert$ consists 
of irreducible cubic surfaces (because all of them contain $C$) it follows 
that $S_1I(Y)_2 \cap (kf + kg) = (0)$. Applying, now, Ferrand's result about 
resolutions under liaison (see \cite[Prop.~2.5]{ps}) to the resolution 
\eqref{E:resiy} of $\sci_Y$ one deduces that $\sci_C(2)$ is the cohomology of 
a monad of the form from the statement.    
\end{proof} 

The next result is related to Construction 6.4 in the proof of 
Prop.~\ref{P:h2e(-3)=0h1e(-3)neq0c2=12}. 

\begin{lemma}\label{L:d=6g=0} 
Let $C$ be a nonsingular (connected) rational curve of degree $6$ in $\piii$, 
not contained in a quadric surface. If ${\fam0 H}^1(\sci_C(3)) \neq 0$ then 
$C$ admits a $5$-secant. 
\end{lemma}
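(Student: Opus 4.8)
The plan is to reduce the statement, by liaison, to a computation with an effective divisor of arithmetic genus $-3$ on a cubic surface, where the $5$-secant turns up as a doubled line in the residual curve.

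First I would rephrase the hypothesis cohomologically. Since $C$ is smooth, rational and of degree $6$, one has $\chi(\sco_C(3)) = 19 = \h^0(\sco_C(3))$, while $\h^0(\sco_\piii(3)) = 20$ and $\tH^1(\sco_\piii(3)) = 0$; the sequence $0 \to \sci_C(3) \to \sco_\piii(3) \to \sco_C(3) \to 0$ then gives $\h^0(\sci_C(3)) = 1 + \h^1(\sci_C(3))$, so $\tH^1(\sci_C(3)) \neq 0$ is equivalent to $\h^0(\sci_C(3)) \geq 2$, i.e. to the existence of two linearly independent cubics through $C$. Next I would note that \emph{every} cubic containing $C$ is irreducible: a reducible cubic is a union of surfaces of degree $\leq 2$, and as $C$ is irreducible it would then lie in a plane or on a quadric, both excluded. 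Hence two independent cubics $f,g \in \tH^0(\sci_C(3))$ share no component, and $Z := V(f) \cap V(g)$ is a complete intersection of type $(3,3)$, of degree $9$, containing the smooth (hence locally Cohen--Macaulay) curve $C$. By liaison $C$ is geometrically linked through $Z$ to a residual curve $Y$ of degree $3$, with $Z = C \cup Y$; the fundamental exact sequence of liaison (\cite[Remark~2.6]{acm1}, in the shape $0 \to \sco_\piii(-6) \to 2\sco_\piii(-3) \to \sci_C \to \omega_Y(-2) \to 0$), or equivalently the genus formula $\chi(\sco_Y) - \chi(\sco_C) = \tfrac12(3+3-4)(\deg C - \deg Y) = 3$, yields $\chi(\sco_Y) = 4$, i.e. $p_a(Y) = -3$.

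Then I would pass to a cubic surface $X \supset C$ that is smooth (at least along the components of $Y$), so that $Y$ is an effective Cartier divisor on $X$ with $Y \cdot H = 3$ and $p_a(Y) = -3$ ($H$ the hyperplane class, $K_X = -H$). A short computation with adjunction, $p_a(D) = 1 + \tfrac12(D^2 + K_X \cdot D)$, shows that the only effective divisor $D$ of degree $3$ with $p_a(D) = -3$ is $D = 2L_1 + L_2$ with $L_1,L_2$ disjoint lines: one finds $p_a(2L_1 + L_2) = -3 + 2\,(L_1 \cdot L_2)$, while the remaining configurations (three distinct lines, conic-plus-line, an irreducible cubic, or a triple line) give $p_a \in \{-5,-2,-1,0,1\}$. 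Since $C + Y \sim 3H$ and $L_1$ is a line on $X$ (a $(-1)$-curve, $L_1^2 = -1$), I would conclude
\[
\text{length}(C \cap L_1) = C \cdot L_1 = 3H \cdot L_1 - Y \cdot L_1 = 3 - (2L_1^2 + L_2 \cdot L_1) = 3 - (-2 + 0) = 5,
\]
so $L_1$ is a $5$-secant of $C$. This is precisely the mechanism of Construction 6.4 in the proof of Prop.~\ref{P:h2e(-3)=0h1e(-3)neq0c2=12}, where $L_1$ occurs as the $5$-secant and $L_2$ as a $4$-secant.

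The hard part will be the reduction in the previous step, namely guaranteeing that $C$ lies on a cubic surface smooth along the components of $Y$, so that $Y$ is genuinely Cartier and the intersection number equals the secant length. I would argue this by Bertini: the base scheme of the pencil $\vert \tH^0(\sci_C(3)) \vert$ is $Z$, a general member is smooth off $Z$, and along $Z$ a member can be singular only where the gradients of $f$ and $g$ are proportional, a condition that for the general member excludes all but finitely many points, each singular for only one member. The delicate point is ruling out that $f$ and $g$ are \emph{simultaneously} singular along a whole component of $Z$; this degenerate situation corresponds to an extra thickening of $Y$ (a triple line, which on a smooth cubic would have $p_a = -5$ and so does not occur there), and in it the multiple line persists as an even higher multisecant, which I would treat by hand. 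As a cross-check on the whole scheme, I would record the purely algebraic reformulation: writing $C$ as the image of $\pj$ under a base-point-free $4$-dimensional space $V \subset \tH^0(\sco_\pj(6))$, a $5$-secant corresponds to a subspace $D \cdot \tH^0(\sco_\pj(1)) \subset V$ with $D$ a binary quintic, and $\h^1(\sci_C(3))$ is the corank of $\mathrm{Sym}^3 V \to \tH^0(\sco_\pj(18))$, so the lemma asserts that non-surjectivity of this multiplication map forces such a $D$ to exist.
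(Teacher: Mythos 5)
Your liaison skeleton (two independent cubics through $C$ are automatically without common component, the linked curve $Y$ has degree $3$ and $\chi(\sco_Y)=4$) agrees with the paper's, and your divisor computation on a smooth cubic surface is correct as far as it goes. But the step you yourself flag as "the hard part" is a genuine gap, and Bertini cannot close it: the non-reduced curve $Y$ lies entirely \emph{inside} the base locus of the pencil, which is exactly where Bertini gives no smoothness. Worse, the obstruction is not merely technical. Since $\h^0(\sco_Y)=4$, $Y$ is non-reduced, and the classification of locally Cohen--Macaulay multiple structures on lines (B\u{a}nic\u{a}--Forster, Remark~\ref{R:multilines}) leaves two possibilities once one rules out planar degree-$2$ subschemes: either $Y = X \cup L^\prime$ with $X$ a double structure on a line $L$ with $\sci_L/\sci_X \simeq \sco_L(1)$, or $Y$ is a quasiprimitive triple structure on $L$ containing a double structure $X$ with $\sci_L/\sci_X \simeq \sco_L$. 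In the second case \emph{no} cubic surface smooth along $L$ can contain $Y$: a curve inside a smooth surface is a divisor, so $X$ would be the divisor $2L$, forcing $L^2 = 0$, whereas adjunction on a cubic surface ($K = -H$, $p_a(L)=0$) gives $L^2 = -1$. So in that case your argument has no surface to run on, and your parenthetical remark that the multiple line "persists as an even higher multisecant" is false: $C$ admits no $k$-secant with $k \geq 6$ at all (if $L$ were one, every plane $H \supset L$ would satisfy $C \cap H = C \cap L \subset L$, whence $C \subset L$), so in the deferred case $L$ is exactly a $5$-secant --- which is precisely the statement you are supposed to prove, not something you may assume.

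The paper avoids choosing any surface. After showing $Y$ is non-reduced and (using the absence of $6$-secants) has no subscheme which is a complete intersection of type $(1,2)$, it pins down the two configurations above, with $l=1$ in the first and $l=0$, $m=1$ in the second (forced by $\h^0(\sco_Y)=4$), and in \emph{both} cases exhibits a locally complete intersection subscheme $Y^\prim \subset Y$ of degree $2$ with $\chi(\sco_{Y^\prim}) = 2$ (namely $L \cup L^\prime$, resp.\ the double substructure $X$). Linking $Y^\prim$ inside the same complete intersection gives residual curve $C \cup L$, and the liaison formula $\chi(\sco_{C \cup L}) - \chi(\sco_{Y^\prim}) = -5$ yields $\text{length}(C \cap L) = 7 - \chi(\sco_{Y^\prim}) = 5$. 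Your proof would become complete if you replaced the smooth-surface step by this sub-linkage computation, which subsumes your intersection-theoretic count in the generic case and handles the degenerate one uniformly.
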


\begin{proof} 
The condition $\tH^1(\sci_C(3)) \neq 0$ is equivalent to $\h^0(\sci_C(3)) \geq 
2$. Then $C$ is directly linked, by a complete intersection of type $(3,3)$, 
to a curve $Y$ of degree 3, locally Cohen-Macaulay and locally complete 
intersection except at finitely many points. As it is well known from 
liaison theory, $\tH^1_\ast(\sci_Y)(6) \simeq \tH^1_\ast(\sci_C)^\vee(4)$ 
hence 
$\h^1(\sci_Y) = \h^1(\sci_C(2)) = 3$. It follows that $\h^0(\sco_Y) = 4$. In 
particular, $Y$ is not reduced.  

Now, we make the following observation$\, :$ let $Y^\prim$ be a locally 
complete intersection curve of degree 2, which is a subscheme of $Y$. If 
$C^\prim$ is the residual curve of $Y^\prim$ in the above complete 
intersection 
then one must have $C^\prim = C \cup L$, for some line $L$. Again, from 
general facts about liaison, $\chi(\sco_{C^\prim}) - \chi(\sco_{Y^\prim}) 
= -5$ hence the length of $C \cap L$ is $7 - \chi(\sco_{Y^\prim})$. Since 
$C$ admits no 6-secant one deduces that $Y$ has no subscheme which is a 
complete intersection of type $(1,2)$. 

It follows, from the above observation, that either $Y = X \cup L^\prime$ 
where $X$ is a double structure on a line $L$ such that $\sci_L/\sci_X \simeq 
\sco_L(l)$ for some $l \geq 0$ and $L^\prime$ is another line, not 
intersecting 
$L$, or $Y$ is a triple structure on a line $L$, containing a double 
structure $X$ on $L$ such that $\sci_L/\sci_X \simeq \sco_L(l)$ for some 
$l \geq 0$ and $\sci_X/\sci_Y \simeq \sco_L(2l + m)$, for some $m \geq 0$ 
(see, for example, \cite[\S A.4,~\S A.5,~\S B.1]{acm2}). Since 
$\h^0(\sco_Y) = 4$, one deduces that $l = 1$ in the former case and that 
$l = 0$, $m = 1$ in the latter one. Anyway, in both cases $Y$ admits  
a closed subscheme of $Y^\prim$ which is a locally complete intersection curve 
of degree 2, with $\chi(\sco_{Y^\prim}) = 2$ ($Y^\prim = L \cup L^\prime$ in 
the 
former case and $Y^\prim = X$ in the latter one) hence, by the above 
observation, $L$ is a 5-secant of $C$.      
\end{proof}


\begin{thebibliography}{9999}
\bibitem{acm1} C. Anghel, I. Coand\u{a}, N. Manolache, \emph{Globally 
               generated vector bundles with small $c_1$ on projective spaces}, 
               Memoirs A.M.S. Vol.~253, No.~1209 (2018), also   
               \texttt{arXiv:1305.3464 [math.AG]}. 
\bibitem{acm2} C. Anghel, I. Coand\u{a}, N. Manolache, \emph{Locally 
               Cohen-Macaulay space curves defined by cubic equations and 
               globally generated vector bundles}, 
               \texttt{arXiv:1502.05553 [math.AG]}. 
\bibitem{acm3} C. Anghel, I. Coand\u{a}, N. Manolache, \emph{Four generated} 
               4-\emph{instantons}, \texttt{arXiv:1604.01970 [math.AG]}. 
\bibitem{acm4} C. Anghel, I. Coand\u{a}, N. Manolache, \emph{On the vector 
               bundles from Chang and Ran's proof of the unirationality of 
               $\mathcal{M}_g$, $g \leq 13$}, 
               \texttt{arXiv:1711.06060 [math.AG]}. 
\bibitem{am}   C. Anghel, N. Manolache, \emph{Globally generated vector bundles 
               on $\p^n$ with $c_1 = 3$}, Math. Nachr. {\bf 286} (2013), No. 
               14--15, 1407--1423. 
\bibitem{bf}   C. B\u{a}nic\u{a}, O. Forster, \emph{Multiplicity structures on 
               space curves}, in: Algebraic Geometry, Proc. Lefschetz Centen. 
               Conf., Mexico City/Mex. 1984, Part I, Contemp. Math. 
               {\bf 58} (1986), 47--64. 
\bibitem{bm}   C. B\u{a}nic\u{a}, N. Manolache, \emph{Rank $2$ stable vector 
               bundles on $\piii(\C)$ with Chern classes $c_1 = -1$, 
               $c_2 = 4$}, Math. Z. {\bf 190} (1985), 315--339. 
\bibitem{b}    W. Barth, \emph{Some properties of stable rank-2 vector 
               bundles on} $\p_n$, Math. Ann. {\bf 226} (1977), 125--150. 
\bibitem{bh}   W. Barth, K. Hulek, \emph{Monads and moduli of vector bundles}, 
               Manuscripta Math. {\bf 25} (1978), 323--347. 
\bibitem{bei}  A.A. Beilinson, \emph{Coherent sheaves on} $\p^n$ \emph{and 
               problems of linear algebra}, Funktsional'nyi Analiz i Ego 
               Prilozheniya {\bf 12} (1978), 68--69. English translation in 
               Funct. Anal. Appl. {\bf 12} (1978), 214--215. 
\bibitem{ch}   M.-C. Chang, \emph{Stable rank $2$ reflexive sheaves on $\piii$ 
               with small $c_2$ and applications}, Trans. Amer. Math. Soc. 
               {\bf 284} (1984), 57--89.
\bibitem{ch2}  M.-C. Chang, \emph{Stable rank $2$ bundles on $\piii$ with 
               $c_1 = 0$, $c_2 = 4$, and $\alpha = 1$}, 
               Math. Z. {\bf 184} (1983), 407--415. 
\bibitem{cr}   M.-C. Chang, Z. Ran, \emph{Unirationality of the moduli spaces 
               of curves of genus $11$, $13$ (and $12$)}, 
               Invent. Math. {\bf 76} (1984), 41--54. 
\bibitem{ce}   L. Chiodera, Ph. Ellia, \emph{Rank two globally generated vector 
               bundles with $c_1 \leq 5$}, Rend. Istit. Mat. Univ. Trieste 
               {\bf 44} (2012), 1--10, also \texttt{arXiv:1111.5725 [math.AG]}. 
\bibitem{coa}  I. Coand\u{a}, \emph{On the spectrum of a stable rank} $3$ 
               \emph{reflexive sheaf on} $\piii$, J. reine angew. Math. 
               {\bf 367} (1986), 155--171.  
\bibitem{co2}  I. Coand\u{a}, \emph{On Barth's restriction theorem}, 
               J. reine angew. Math. {\bf 428} (1992), 97--110. 
\bibitem{co3}  I. Coand\u{a}, \emph{Restriction theorems for vector bundles 
               via the methods of Roth and Strano}, J. reine angew. Math. 
               {\bf 487} (1997), 1--25. 
\bibitem{ctt}  I. Coand\u{a}, A. Tikhomirov, G. Trautmann, 
               \emph{Irreducibility and smoothness of the moduli space of 
               mathematical $5$-instantons over $\p_3$}, International J. 
               Math. {\bf 14} (2003), 1--45. 
\bibitem{ct}   I. Coand\u{a}, G. Trautmann, \emph{Horrocks theory and the 
               Bernstein-Gel'fand-Gel'fand correspondence}, Trans. Amer. 
               Math. Soc. {\bf 358} (2005), 1015--1031. 
\bibitem{dlp}  J.M. Drezet, J. Le Potier, \emph{Fibr\'{e}s stables et 
               fibr\'{e}s exceptionels sur} $\p_2$, Ann. Scient. \'{E}c. 
               Norm. Sup. (4) {\bf 18} (1985), 193--244. 
\bibitem{ehv}  L. Ein, R. Hartshorne, H. Vogelaar, \emph{Restriction theorems 
               for stable rank} $3$ \emph{vector bundles on} $\p^n$, 
               Math. Ann. {\bf 259} (1982), 541--569. 
\bibitem{efs}  D. Eisenbud, G. Fl\o ystad, F.-O. Schreyer, \emph{Sheaf 
               cohomology and free resolutions over exterior algebras}, 
               Trans. Amer. Math. Soc. {\bf 355} (2003), 4397--4426.  
\bibitem{eg}   Ph. Ellia, L. Gruson, \emph{A restriction theorem for stable 
               rank two vector bundles on $\piii$}, 
               J. Algebra {\bf 438} (2015), 85--89. 
\bibitem{f}    D. Ferrand, \emph{Courbes gauches et fib\'{e}s de rang $2$}, 
               C. R. Acad. Sci. Paris, S\'{e}rie A {\bf 281} (1975), 
               345--347. 
\bibitem{gl}   M. Green, R. Lazarsfeld, \emph{Some results on the syzygies of 
               finite sets and algebraic curves}, Compos. Math. {\bf 67} 
               (1988), 301--314. 
\bibitem{gs}   L. Gruson, M. Skiti, \emph{$3$-Instantons et r\'{e}seaux de 
               quadriques}, Math. Ann. {\bf 298} (1994), 253--273. 
\bibitem{hag}  R. Hartshorne, Algebraic Geometry, Graduate Texts Math. 
               {\bf 52} (1977), Springer--Verlag.  
\bibitem{ha}   R. Hartshorne, \emph{Stable reflexive sheaves}, Math. Ann. 
               {\bf 254} (1980), 121--176. 
\bibitem{ha2}  R. Hartshorne, \emph{Stable reflexive sheaves. II}, Invent. 
               Math. {\bf 66} (1982), 165--190. 
\bibitem{hagd} R. Hartshorne, \emph{Generalized divisors and biliaison}, 
               Illinois J. Math. {\bf 51} (2007), 83--98. 
\bibitem{hadt} R. Hartshorne, Deformation Theory, Graduate Texts Math. 
               {\bf 257} (2010), Springer--Verlag. 
\bibitem{hh}   R. Hartshorne, A. Hirschowitz, \emph{Cohomology of a general 
               instanton bundle}, Annales Sci. de l'\'{E}.N.S. $4^e$ s\'{e}rie 
               {\bf 15} (1982), 365--390.  
\bibitem{hh2}  R. Hartshorne, A. Hirschowitz, \emph{Smoothing algebraic space 
               curves}, in: Algebraic Geometry, Sitges 1983, Lect. Notes 
               Math. {\bf 1124} (1985), Springer--Verlag, 98--131. 
\bibitem{hs}   R. Hartshorne, I. Sols, \emph{Stable rank $2$ vector bundles on 
               $\piii$ with $c_1 = -1$, $c_2 = 2$}, J. reine angew. Math. 
               {\bf 325} (1981), 145--152. 
\bibitem{ma}   N. Manolache, \emph{Rank $2$ stable vector bundles on $\piii$ 
               with Chern classes $c_1 = -1$, $c_2 = 2$}, Revue Roum. Math. 
               Pures Appl. {\bf 26} (1981), 1203--1209.  
\bibitem{mrt}  R.M. Mir\'{o}-Roig, G. Trautmann, \emph{The moduli scheme} 
               $M(0,2,4)$ \emph{over} $\p_3$, Math. Z. {\bf 216} (1994), 
               283--315. 
\bibitem{nt}   T. N\"{u}\ss ler, G. Trautmann, \emph{Multiple Koszul 
               structures on lines and instanton bundles}, International J. 
               Math. {\bf 5} (1994), No. 3, 373--388. 
\bibitem{oks}  C. Okonek, H. Spindler, \emph{Stabile reflexive Garben vom 
               Rang $3$ auf $\piii$ mit kleinen Chernklassen}, Math. Ann. 
               {\bf 264} (1983), 91--118. 
\bibitem{oks2} C. Okonek, H.Spindler, \emph{Reflexive Garben vom Rang} $r > 2$ 
               \emph{auf} $\p^n$, J. reine angew. Math. {\bf 344} (1983), 
               38--64. 
\bibitem{oks3} C. Okonek, H. Spindler, \emph{Das Spektrum torsionsfreier 
               Garben I}, Manuscripta Math. {\bf 47} (1984), 187--228. 
\bibitem{ps}   C. Peskine, L. Szpiro, \emph{Liaison des vari\'{e}t\'{e}s 
               alg\'{e}briques (I)}, Invent. Math. {\bf 26} (1974), 
               271--302. 
\bibitem{s-d}  B. Saint-Donat, \emph{Sur les \'{e}quations d\'{e}finissant 
               une courbe alg\'{e}brique}, C. R. Acad. Sci. Paris, 
               S\'{e}rie A {\bf 274} (1972), 324--327.   
\bibitem{sch}  M. Schneider, \emph{Einschr\"{a}nkung stabiler  
               Vektorraumb\"{u}ndel vom Rang} $3$ \emph{auf Hyperebenen des 
               projektiven Raumes}, J. reine angew. Math. {\bf 323} (1981), 
               177--192. 
\bibitem{s}    J.-P. Serre, \emph{Sur les modules projectifs}, S\'{e}m. 
               Dubreil-Pisot 1960/61, expos\'{e} 2.
\bibitem{su0}  J.C. Sierra, L. Ugaglia, \emph{On double Veronese embeddings in 
               the Grassmannian ${\mathbb G}(1,N)$}, Math. Nachr. 
               {\bf 279} (2006), 798--804. 
\bibitem{su}   J.C. Sierra, L. Ugaglia, \emph{On globally generated vector 
               bundles on projective spaces}, J. Pure Appl. Algebra 
               {\bf 213} (2009), 2141--2146. 
\bibitem{su2}  J.C. Sierra, L. Ugaglia, \emph{On globally generated vector 
               bundles on projective spaces II}, J. Pure Appl. Algebra 
               {\bf 218} (2014), 174--180.  
\bibitem{sp}   H. Spindler, \emph{Der Satz von Grauert--M\"{u}lich f\"{u}r 
               beliebige semistabile holomorphe Vektorb\"{u}ndel \"{u}ber 
               dem $n$-dimensionalen komplex-projektiven Raum}, 
               Math. Ann. {\bf 243} (1979), 131--141.  
\end{thebibliography}
\end{document}